\definecolor{puces}{cmyk}{0,0,0,.5} 
\theoremstyle{plain}
\newtheorem{thm}{Theorem}[section]
\newtheorem{lem}[thm]{Lemma}
\newtheorem{coro}[thm]{Corollary}
\newtheorem{prop}[thm]{Proposition}
\newtheorem{conj}{Conjecture}[section]
\theoremstyle{remark} 
\newtheorem{rem}[thm]{Remark}
\DeclareMathOperator{\Hom}{\mathscr{H}\text{\kern -3pt {\calligra\large om}}\,}
\numberwithin{equation}{section}
    \author{Emilien Zabeth}
\address{Université Clermont Auvergne, CNRS, LMBP\\
F-63000 Clermont-Ferrand, France}
\email{emilien.zabeth@uca.fr}
\begin{document}
	\title[Perverse sheaves on the semi-infinite flag variety]{Perverse sheaves on the semi-infinite flag variety and representations of the Frobenius kernel}

	\maketitle
     \begin{abstract}
 We study a category of Iwahori-equivariant modular perverse sheaves on some avatar of the semi-infinite flag variety, by adapting the work of Arkhipov-Bezrukavnikov-Braverman-Gaitsgory-Mirkovi\'c. We then construct a functor between the latter category and the category of graded representations of the Frobenius kernel, and conjecture that this functor is an equivalence. 
 \end{abstract}
 \setcounter{tocdepth}{1}
	\tableofcontents
  \section{Introduction}
Let $\mathbb{k}$ be a finite field of characteristic $\ell$. In this paper, we study a category of modular (with coefficients in $\mathbb{k}$) perverse sheaves $\mathrm{Perv}_{I_\mathrm{u}}(\mathcal{F}l^{\frac{\infty}{2}})$ on the semi-infinite flag variety associated with a reductive algebraic group $G$ defined over an algebraically closed field $\mathbb{F}$ of prime characteristic $p\neq\ell$. Under certain conditions on $\ell$ and $G$, we construct an exact functor between the latter category and the extended principal block $\mathrm{Rep}_{[0]}(\check{\mathbf{G}}_1\check{\mathbf{T}})$ of the category of $\check{\mathbf{G}}_1\check{\mathbf{T}}$-modules, where $\check{\mathbf{G}}_1$ is the Frobenius kernel of $\check{\mathbf{G}}$, the split reductive group over $\mathbb{k}$ whose Frobenius twist coincides with the Langlands dual group of $G$ over $\mathbb{k}$, and $\check{\mathbf{T}}\subset \check{\mathbf{G}}$ is a maximal torus. This functor is expected to be an equivalence, and we give a condition under which it is. Here, the semi-infinite flag variety $\mathcal{F}l^{\frac{\infty}{2}}$ is constructed using the Drinfeld compactification space $\overline{\mathrm{Bun}}_{N^-}$, and our proof follows closely the steps taken in \cite{ABBGM}, where the authors work with coefficients of characteristic zero. Namely, we construct a functor between  $\mathrm{Perv}_{I_\mathrm{u}}(\mathcal{F}l^{\frac{\infty}{2}})$ and the category of modules over the ``regular perverse sheaf'' -- which is known to be equivalent to $\mathrm{Rep}_{[0]}(\check{\mathbf{G}}_1\check{\mathbf{T}})$ thanks to the recent work of Achar-Riche \cite{achar2022geometric}. The key result in the construction is the fact that the IC-sheaf on the stack $\overline{\mathrm{Bun}}_{N^-}$ satisfies the Hecke eigensheaf property with respect to $\mathrm{Rep}(\check{\mathbf{G}}^{(1)})$ (see Proposition \ref{prop hecke geometric}). This result was known for characteristic zero coefficients and used in the context of the geometric Langlands program (see \cite{BBGM}), but had not been observed yet (to the knowledge of the author) in the case of positive characteristic.

\subsection{Representations of the Frobenius kernel}
We fix a Borel subgroup $B\subset G$ and a maximal torus $T\subset B$. Let $\mathrm{Fr}:\check{\mathbf{G}}\to \check{\mathbf{G}}^{(1)}$ denote the Frobenius endomorphism, and $\check{\mathbf{G}}_1$ be its kernel. If we let $L(\lambda)$ be the simple $\check{\mathbf{G}}$-module
associated with a dominant character $\lambda$ of $\check{\mathbf{T}}$, then $L(\lambda)$ remains simple as a $\check{\mathbf{G}}_1$-module when $\lambda$
is $\ell$-restricted, and any simple $\check{\mathbf{G}}_1$-module is obtained as such a restriction. Thus (thanks to Steinberg's tensor product formula), the understanding of simple $\check{\mathbf{G}}_1$-modules is equivalent to that of simple $\check{\mathbf{G}}$-modules (under the assumption that the derived subgroup of $\check{\mathbf{G}}$ is simply connected).

For technical reasons, it is often more convenient to work with $\check{\mathbf{G}}_1\check{\mathbf{T}}$ -modules. Simple $\check{\mathbf{G}}_1\check{\mathbf{T}}$-modules
$\{\widehat{L}(\lambda),~\lambda\in X^*(\check{\mathbf{T}})\}$ are labeled by the lattice of characters $X^*(\check{\mathbf{T}})$, and remain simple when viewed as $\check{\mathbf{G}}_1$-modules. For each $\lambda\in X^*(\check{\mathbf{T}})$, the object $\widehat{L}(\lambda)$ is defined as the socle of the baby co-Verma module $\widehat{Z}'(\lambda)$, and coincides with the head of the baby Verma module $\widehat{Z}(\lambda)$.

A central question in the theory of modular representations of algebraic groups is to understand how baby Verma modules decompose in the basis of simple modules in the Grothendieck group of $\mathrm{Rep}(\check{\mathbf{G}}_1\check{\mathbf{T}})$, as one would then get character formulas for simple modules. Recent progress have been obtained by Riche-Williamson in \cite{riche2021simple}, where the authors obtain a formula -- proved in the case where $\ell\geq 2h-1$, where $h$ is the Coxeter number of $\check{\mathbf{G}}$ -- involving the $\ell$-canonical basis of the affine Hecke algebra. Their proof does not rely on a geometric model for the category $\mathrm{Rep}(\check{\mathbf{G}}_1\check{\mathbf{T}})$, but rather on the geometric Satake equivalence, which gives a geometric model for the category of $\check{\mathbf{G}}$-modules. The goal of this paper is to obtain a geometric model for the category of $\check{\mathbf{G}}_1\check{\mathbf{T}}$-modules (or rather the principal block of this category), with the hope that one might be able to use new techniques afforded by the geometry for the understanding of the characters of simple modules.

We will in fact restrict our attention to the extended principal block $\mathrm{Rep}_{[0]}(\check{\mathbf{G}}_1\check{\mathbf{T}})$ of the category of $\check{\mathbf{G}}_1\check{\mathbf{T}}$-modules, whose
understanding is sufficient for the study of the general case (we assume for this that $\ell > h$). This is the
Serre subcategory of $\mathrm{Rep}(\check{\mathbf{G}}_1\check{\mathbf{T}})$ generated by the family
$$\{\widehat{L}(w\bullet_\ell0)~|~w\in W_{\mathrm{ext}}\},$$
where $\bullet_\ell$ denotes the ``dot" action of the extended affine Weyl group $W_{\mathrm{ext}}:=W\ltimes X^*(\check{\mathbf{T}})$ on the affine space $X^*(\check{\mathbf{T}})\otimes_\mathbb{Z}\mathbb{R}$.
\begin{rem}
    In view of \cite{riche2022smith}, it would be desirable to obtain a geometric model for the whole category $\mathrm{Rep}(\check{\mathbf{G}}_1\check{\mathbf{T}})$, instead of merely the principal block. Indeed, assume that there exists a space $X$ (for instance an ind-algebraic stack over $\mathbb{F}$), endowed with an action of the multiplicative group $\mathbb{G}_\mathrm{m}$ and an equivalence of categories \begin{equation}\label{eq conjecture}
        \mathrm{Perv}(X)\xrightarrow{\sim}\mathrm{Rep}(\check{\mathbf{G}}_1\check{\mathbf{T}}),
    \end{equation}(where the category $\mathrm{Perv}(X)$ might involve an equivariance condition or ``factorization" property, cf. §\ref{section Main definition}). Then, following the philosophy of \textit{loc. cit.}, the characters of projective indecomposable $\check{\mathbf{G}}_1\check{\mathbf{T}}$-modules (which are the analogues of indecomposable tilting $\check{\mathbf{G}}$-modules) might be expressed in terms of the stalks of indecomposable parity sheaves on the connected components of the fixed points $X^{\mu_\ell}$, where $\mu_\ell\subset\mathbb{G}_\mathrm{m}$ is the subgroup of $\ell$-th roots of unity. To the knowledge of the author, there is no conjecture in the literature concerning a space $X$ realizing the equivalence \eqref{eq conjecture}.
\end{rem}
 \subsection{The geometric Satake equivalence and Finkelberg-Mirkovi\'c conjecture} The present work builds on several other celebrated equivalences of categories. First, we have an equivalence of monoidal categories (whose first complete proof was obtained in \cite{Mirkovic2004GeometricLD})
 $$\mathrm{Sat}:(\mathrm{Perv}_{G[[t]]}(\mathrm{Gr}),\star)\xrightarrow{\sim} (\mathrm{Rep}_\mathbb{k}(\check{\mathbf{G}}^{(1)}),\otimes_\mathbb{k}),$$
 where the left-hand side denotes the \textit{Satake category} (equipped with a convolution product), consisting of perverse (\'etale) sheaves on the affine Grassmannian $\mathrm{Gr}$, with coefficients in $\mathbb{k}$ and which respect an equivariance condition for the left action of the positive loop group $G[[t]]$ on $\mathrm{Gr}$. This result is called the \textit{geometric Satake equivalence}, and is ubiquitous in this paper.
 
 Next, denote by $\mathrm{Rep}_{[0]}(\check{\mathbf{G}})$ the extended principal block of the category $\mathrm{Rep}(\check{\mathbf{G}})$, defined as the Serre subcategory generated by the family $\{L(\lambda),~\lambda\in (W_{\mathrm{ext}}\bullet_\ell 0)\cap\mathbf{Y}^+\}$. We let $\mathrm{Perv}_{I_\mathrm{u}}(\mathrm{Gr})$ denote the category of perverse sheaves on $\mathrm{Gr}$ with coefficients in $\mathbb{k}$, equivariant with respect to the Iwahori subgroup $I_\mathrm{u}$ associated with $B$. The latter category admits a right action of the monoidal category  $(\mathrm{Perv}_{G[[t]]}(\mathrm{Gr}),\star)$. Below, the functor $\mathrm{sw}^*$ denotes a certain auto-equivalence of the category $\mathrm{Perv}_{G[[t]]}(\mathrm{Gr})$ (see §\ref{section The affine Grassmannian and the affine flag variety} for the definition), and $W^S_\mathrm{ext}$ a certain subset of $W_\mathrm{ext}$ labelling the simple objects $\{\mathrm{L}_w,~w\in W^S_\mathrm{ext}\}$ of $\mathrm{Perv}_{I_\mathrm{u}}(\mathrm{Gr})$.  The following statement is known as the Finkelberg-Mirkovi\'c conjecture (cf. \cite[§1.5]{finkelberg1997semiinfinite}). It was recently proved by Bezrukavnikov-Riche (it is still conjectured to hold when $\ell>h$).  
	\begin{thm}\label{conj FM}
		Assume that $\ell> h$, and that $\ell\neq19$ (resp. $\ell\neq31$) if $\check{\mathbf{G}}$ has a component of type $E_7$ (resp. $E_8$). There exists an equivalence of highest weight categories
		$$\mathrm{FM}:\mathrm{Perv}_{I_\mathrm{u}}(\mathrm{Gr})\xrightarrow{\sim} \mathrm{Rep}_{[0]}(\check{\mathbf{G}}) $$
		which satisfies
		$$\mathrm{FM}(\mathrm{L}_w)\simeq L(w^{-1}\bullet_\ell0)~~\forall  w\in W^S_\mathrm{ext}.$$
		Moreover, for any $\mathcal{F}\in \mathrm{Perv}_{I_\mathrm{u}}(\mathrm{Gr}),~\mathcal{G}\in \mathrm{Perv}_{G[[t]]}(\mathrm{Gr})$, there exists a bifunctorial isomorphism
		$$\mathrm{FM}(\mathcal{F}\star \mathcal{G})\simeq \mathrm{FM}(\mathcal{F})\otimes \mathrm{Fr}^*(\mathrm{Sat}(\mathrm{sw}^*\mathcal{G})).$$
	\end{thm}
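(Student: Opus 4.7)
The strategy is to exploit highest weight structures on both sides, build the functor using a central sheaf construction together with tilting generators, and then verify the monoidal compatibility formula on a sufficiently rich family of objects.

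First I would equip both categories with their natural highest weight structures. The category $\mathrm{Perv}_{I_\mathrm{u}}(\mathrm{Gr})$ is highest weight with simple objects $\mathrm{L}_w$ indexed by $W^S_\mathrm{ext}$ and standard objects given by $!$-extensions of shifted constant sheaves along the $I_\mathrm{u}$-orbits. Under the assumption $\ell>h$, the linkage principle implies that $\mathrm{Rep}_{[0]}(\check{\mathbf{G}})$ is also highest weight, with standards the Weyl modules $V(w^{-1}\bullet_\ell 0)$, so the two label sets match compatibly with the expected assignment $\mathrm{L}_w\mapsto L(w^{-1}\bullet_\ell 0)$ on simples.

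To construct $\mathrm{FM}$, I would pass through the affine flag variety $\mathcal{F}l$ and invoke Gaitsgory's central sheaf functor from $\mathrm{Perv}_{G[[t]]}(\mathrm{Gr})$ to the bi-Iwahori-equivariant perverse sheaves on $\mathcal{F}l$. Combined with the Wakimoto sheaves and the analysis of convolution with standards, this isolates the class of tilting perverse sheaves, which on the representation side correspond to tilting modules generated under Frobenius-twisted tensor products by $\check{\mathbf{G}}^{(1)}$-modules. Matching tilting generators on the two sides --- with the monoidal actions identified via $\mathrm{Sat}$ --- one defines $\mathrm{FM}$ first on tilting objects and then extends to the whole category using the highest weight structure.

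The monoidal formula $\mathrm{FM}(\mathcal{F}\star\mathcal{G})\simeq \mathrm{FM}(\mathcal{F})\otimes\mathrm{Fr}^*(\mathrm{Sat}(\mathrm{sw}^*\mathcal{G}))$ would then be checked by reduction to the case where $\mathcal{F}$ is a standard or a Wakimoto sheaf and $\mathcal{G}$ is arbitrary. The auto-equivalence $\mathrm{sw}^*$ appears because right convolution on $\mathrm{Gr}$ is naturally opposite to the tensor product of $\check{\mathbf{G}}$-modules, and the Frobenius twist accounts for the distinction between $\check{\mathbf{G}}^{(1)}$, produced by the Satake equivalence, and the ambient $\check{\mathbf{G}}$ acting on $\mathrm{Rep}_{[0]}(\check{\mathbf{G}})$.

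The main obstacle will be proving that $\mathrm{FM}$ is actually an equivalence in positive characteristic. Full faithfulness requires computing the relevant $\mathrm{Ext}$-algebras on both sides, and essential surjectivity hinges on classifying tilting perverse sheaves; both demand the full strength of modular parity sheaf techniques in the spirit of Juteau--Mautner--Williamson and of the Achar--Riche description of the $I_\mathrm{u}$-equivariant derived category. The exclusion of $\ell=19$ in type $E_7$ and $\ell=31$ in type $E_8$ reflects failures of formality or unwanted torsion in these $\mathrm{Ext}$-algebras at small bad primes, and handling those cases uniformly is precisely the core difficulty.
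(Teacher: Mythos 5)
This statement is not proved in the paper at all: it is the Finkelberg--Mirkovi\'c conjecture, quoted as an external result of Bezrukavnikov--Riche (the paper cites \cite{bezrukavnikov2024modular} and uses the theorem as a black box, e.g.\ in Theorem \ref{thm BR}). So there is no internal argument to compare yours against, and the only question is whether your text constitutes a proof on its own.

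It does not. What you have written is a plan that correctly names the relevant landscape (highest weight structures on both sides, Weyl modules as standards under $\ell>h$, a monoidal compatibility to be checked on standards, the role of $\mathrm{sw}^*$ and of the Frobenius twist), but every step that carries actual content is deferred: the functor $\mathrm{FM}$ is never constructed (you invoke central sheaves, Wakimoto sheaves and a matching of ``tilting generators'' without defining the matching or proving that it is well defined and extends), and you explicitly flag full faithfulness and essential surjectivity --- i.e.\ the entire theorem --- as ``the core difficulty'' without resolving it. The gap is therefore not a missing lemma but the whole proof; this is a deep result whose known demonstration occupies a separate long paper. A secondary remark: the route you sketch (Gaitsgory's central functor plus Wakimoto filtrations plus tilting objects) is the characteristic-zero Arkhipov--Bezrukavnikov-style strategy; the modular proof of Bezrukavnikov--Riche is organized differently, passing through Iwahori--Whittaker models and the structure of the regular quotient, precisely because the formality/semisimplicity inputs available in characteristic zero fail modularly. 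Your closing speculation that the exclusions $\ell=19$, $\ell=31$ ``reflect failures of formality'' is also not a substitute for identifying where those bounds actually enter.
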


\subsection{Results of Achar-Riche}
In their recent work \cite{achar2022geometric}, Achar and Riche give an incarnation of $\mathrm{Rep}_{[0]}(\check{\mathbf{G}}_1\check{\mathbf{T}})$ in terms of perverse sheaves. More precisely, they define a category $\mathrm{mod}^\mathbf{Y}_{I_\mathrm{u}}(\mathcal{R})$ consisting of $\mathbf{Y}$-graded ind-objects of the category $\mathrm{Perv}_{I_\mathrm{u}}(\mathrm{Gr})$, endowed with a right-action of the ``regular perverse sheaf" $\mathcal{R}$. We refer to §\ref{section Perverse sheaves on affine Grassmannians} for a detailed review of this construction. In particular, they prove that the category $\mathrm{mod}^\mathbf{Y}_{I_\mathrm{u}}(\mathcal{R})$ is equivalent to $\mathrm{Rep}_{[0]}(\check{\mathbf{G}}_1\check{\mathbf{T}})$ under the assumptions of Theorem \ref{conj FM} (cf. Theorem \ref{thm BR}). This result will be important for us, since it allows the theory of perverse sheaves to enter the picture.
  
 \subsection{Main statement} The main part of this paper is dedicated to the definition and study of an ``artificial" category of perverse sheaves on the semi-infinite flag variety $\mathcal{F}l^{\frac{\infty}{2}}$, equivariant with respect to the unipotent radical of the Iwahori subgroup $I_\mathrm{u}$. The idea of considering perverse sheaves on $\mathcal{F}l^{\frac{\infty}{2}}$ goes back to Lusztig in the 80's (see \cite[§11]{lusztigintersection}). It was then defined as the quotient 
 $$G(\mathbb{F}((t)))/T\cdot N(\mathbb{F}((t))),$$
 where $t$ is a formal variable. The above set can be endowed with a structure of ind-scheme (see \cite[§4]{finkelberg1997semiinfinite}), on which the group $I_\mathrm{u}$ acts. The orbits are labelled by the affine Weyl group and their closure relations are given by Lusztig's periodic order (these are the first requirements for the category of $I_\mathrm{u}$-equivariant perverse sheaves on it to be equivalent to $\mathrm{Rep}_{[0]}(\check{\mathbf{G}}_1\check{\mathbf{T}})$); the problem is that the closures of these orbits are schemes of infinite type (so, in particular, are not varieties). The usual methods are therefore not suited to define $I_\mathrm{u}$-equivariant perverse sheaves on such a space. 
 
 To overcome this difficulty, a general idea is to replace a local problem by a global problem: namely, replacing the formal disc $\mathrm{Spec}(\mathbb{F}[[t]])$ by a smooth complete curve $X$ (for technical simplicity, we will take $X$ to be $\mathbb{P}^1$ in the main body text). The formal disc is seen as the formal neighborhood of a fixed point $x$. This point of view led the authors of \cite{finkelberg1997semiinfinite} and \cite{feigin2semi} to consider perverse sheaves on spaces of Drinfeld compactifications (or more precisely on quasimap spaces), and was taken up in \cite{ABBGM} to establish an equivalence of categories between $\mathrm{Perv}_{I_\mathrm{u}}(\mathcal{F}l^{\frac{\infty}{2}})$ and the extended principal block of the category of graded modules for the small quantum group (associated with $\check{\mathbf{G}}$) at an $\ell$-th root of unity -- the latter category being the ``quantum analogue'' of $\mathrm{Rep}_{[0]}(\check{\mathbf{G}}_1\check{\mathbf{T}})$. In these previous references, the authors worked with sheaves whose field of coefficients is of characteristic zero. We check that the constructions of \cite{ABBGM} still make sense for a field of positive characteristic. Let us give a brief overview of these constructions\footnote{To simplify exposition, we assume in this part that $G$ is simply connected.} .
 
 We denote by $\mathbf{X}$ (resp. $\mathbf{Y}$) the lattice of characters (resp. cocharacters) of $T$, and by $\mathbf{X}^+\subset\mathbf{X}$ the subset of dominant characters determined by $B$. Let also $B^-$ be the Borel subgroup wich is opposite to $B$ with respect to $T$, and $N$ (resp. $N^-$) be the unipotent radical of $B$ (resp. of $N^-$). Fix a smooth complete curve $X$ over $\mathbb{F}$, a closed point $x\in X$, and let $\mathrm{Bun}_G$ (resp. $\mathrm{Bun}_{N^-}$) denote the stack of principal $G$-bundles (resp. principal $N^-$-bundles) over $X$. The space $\overline{\mathrm{Bun}}_{N^-}$ is an algebraic stack locally of finite type over $\mathbb{F}$ which is a ``compactification" of the canonical projection $\mathrm{Bun}_{N^-}\to \mathrm{Bun}_{G}$, whose construction is recalled in §\ref{section Drinfeld's compactification and variants}. For any test scheme $S$, a point of $\overline{\mathrm{Bun}}_{N^-}(S)$ consists of a pair $(\mathcal{F}_G,(\kappa^-_\lambda)_{\lambda\in\mathbf{X}^+})$, where $\mathcal{F}_G\in \mathrm{Bun}_{G}(S)$ and $\kappa^-_\lambda:\mathcal{V}_{\mathcal{F}_G}^\lambda\to \mathcal{O}_{X\times S}$ is a morphism of coherent sheaves whose dual is injective, such that the collection $(\kappa^-_\lambda)$ satisfies the Pl\"ucker relations. 
 
 In order to get a space which is stratified by the extended affine Weyl group, we need to introduce some variants of the previous stack. First, we consider the ind-algebraic stack $_\infty\overline{\mathrm{Bun}}_{N^-}$, where the morphism $\kappa^-_\lambda$ is allowed to have a pole at $x$ for all $\lambda$. For any $\nu\in\mathbf{Y}$, we let $_{\leq \nu}\overline{\mathrm{Bun}}_{N^-}$ be the variant where  each $\kappa^-_\lambda$ is now allowed to have a pole of order $\leq\langle\nu,\lambda\rangle$ at $x$ (in particular,  $_\infty\overline{\mathrm{Bun}}_{N^-}$ is the direct limit of the stacks $_{\leq \nu}\overline{\mathrm{Bun}}_{N^-}$ for $\nu\in\mathbf{Y}$), and $_{\nu}\overline{\mathrm{Bun}}_{N^-}$ be its open substack where we impose that the pole is of order exactly $\langle\nu,\lambda\rangle$ at $x$. Next, in order to make the Weyl group appear in the stratification, we consider the stacks $^1_\infty\overline{\mathrm{Bun}}_{N^-}$, $^1_{\leq\nu}\overline{\mathrm{Bun}}_{N^-}$ and $^1_{\nu}\overline{\mathrm{Bun}}_{N^-}$, where we add a structure of level one at $x$ for $\mathcal{F}_G$ (namely, an isomorphism between $G$ and the fiber of $\mathcal{F}_G$ at $x$). In particular, the group $G$ acts on the latter three stacks, and we denote by $^{I_\mathrm{u}}_\infty\overline{\mathrm{Bun}}_{N^-}$ the quotient stack $N\backslash{^{1}_\infty\overline{\mathrm{Bun}}_{N^-}}$. We have an isomorphism
 $$^1_\nu\overline{\mathrm{Bun}}_{N^-}\simeq G\times^{N^-}{^1_\nu\mathcal{N}}$$
 where $^1_\nu\mathcal{N}$ is the canonical $N^-$-torsor over $_\nu\overline{\mathrm{Bun}}_{N^-}$ (cf. §\ref{section torsors}). The Bruhat decomposition of $G/B^{-}$ induces a ``stratification" (each stratum is not necessarily smooth, since $_\nu\overline{\mathrm{Bun}}_{N^-}$ is not) of $^1_\nu\overline{\mathrm{Bun}}_{N^-}$ by Schubert strata. For $x:=wt_\nu\in W_\mathrm{ext}$, we denote by 
 $$\mathrm{IC}_x^{\frac{\infty}{2}},\quad\Delta_x^{\frac{\infty}{2}},\quad \nabla_x^{\frac{\infty}{2}} $$ the objects of $\mathrm{Perv}_T(^1_\infty\overline{\mathrm{Bun}}_{N^-})$ (where the index $T$ stands for $T$-equivariant) respectively defined as the intermediate extension, the $!$-extension and the $*$-extension of the IC-sheaf on the Schubert stratum in $^1_\nu\overline{\mathrm{Bun}}_{N^-}$ associated with $w$. In particular, these objects are equivariant with respect to $N$ and each $\mathrm{IC}_x^{\frac{\infty}{2}}$ is a simple object of $\mathrm{Perv}(^1_\infty\overline{\mathrm{Bun}}_{N^-})$.

 In the theorem below, $\mathrm{Perv}_{I_\mathrm{u}}(\mathcal{F}l^{\frac{\infty}{2}})$ denotes a certain full subcategory of the category of $T$-equivariant perverse sheaves $\mathrm{Perv}_{T}(^{I_\mathrm{u}}_\infty\overline{\mathrm{Bun}}_{N^-})$ (see §\ref{section Main definition}).  The simple objects of $\mathrm{Perv}_{I_\mathrm{u}}(\mathcal{F}l^{\frac{\infty}{2}})$ are the IC-sheaves $\{\mathrm{IC}_x^{\frac{\infty}{2}},~x\in W_\mathrm{ext}\}$. For any $\mathcal{S}\in\mathrm{Perv}_{I_\mathrm{u}}(\mathrm{Gr})$, the object $\mathcal{S}\star\mathcal{IC}^\frac{\infty}{2}_0$ denotes the object of $\mathrm{Perv}_{I_\mathrm{u}}(\mathcal{F}l^{\frac{\infty}{2}})$ obtained by the convolution process (§\ref{section Definition of convolution}), and we put $$\widetilde{\mathrm{For}}_0:\mathrm{Perv}_{I_\mathrm{u}}(\mathrm{Gr})\to \mathrm{Perv}_{I_\mathrm{u}}(\mathcal{F}l^{\frac{\infty}{2}}),~\mathcal{S}\mapsto\mathcal{S}\star\mathcal{IC}^\frac{\infty}{2}_0.$$
Moreover, the monoidal category $(\mathrm{Perv}_{G[[t]]}(\mathrm{Gr}),\star)$ acts on the left on $\mathrm{Perv}_{I_\mathrm{u}}(\mathcal{F}l^{\frac{\infty}{2}})$. Finally, recall that the forgetful functor induces a functor
 $$\mathrm{For}_0:\mathrm{Rep}_{[0]}(\check{\mathbf{G}})\to \mathrm{Rep}_{[0]}(\check{\mathbf{G}}_1\check{\mathbf{T}}). $$
 The following conjecture has its roots in \cite[§1.5]{finkelberg1997semiinfinite}.
 \begin{conj}\label{main conj}
    Assume that $\ell\geq h$. There exists an equivalence of categories 
     $$\widetilde{\mathrm{FM}}:\mathrm{Perv}_{I_\mathrm{u}}(\mathcal{F}l^{\frac{\infty}{2}})\xrightarrow{\sim}\mathrm{Rep}_{[0]}(\check{\mathbf{G}}_1\check{\mathbf{T}})$$
     such that
     $$\widetilde{\mathrm{FM}}\circ\widetilde{\mathrm{For}}_0\simeq \mathrm{For}_0\circ \mathrm{FM}, $$
     satisfying
     $$\widetilde{\mathrm{FM}}(\mathrm{IC}_x^{\frac{\infty}{2}})\simeq \widehat{\mathrm{L}}(x^{-1}\cdot_\ell0),~\widetilde{\mathrm{FM}}(\nabla_x^{\frac{\infty}{2}})\simeq \widehat{\mathrm{Z}}'(x^{-1}\cdot_\ell0),~~\widetilde{\mathrm{FM}}(\Delta_x^{\frac{\infty}{2}})\simeq \widehat{\mathrm{Z}}(x^{-1}\cdot_\ell0) $$
     for all $x\in W_\mathrm{ext}$ and
     $$\widetilde{\mathrm{FM}}(\mathcal{G}\star\mathcal{F})\simeq \widetilde{\mathrm{FM}}(\mathcal{F})\otimes\mathrm{Fr}^*(\mathrm{Sat}(\mathrm{sw}^*\mathcal{G})) $$
     functorially in $\mathcal{F}\in\mathrm{Perv}_{I_\mathrm{u}}(\mathcal{F}l^{\frac{\infty}{2}})$, $\mathcal{G}\in\mathrm{Perv}_{G[[t]]}(\mathrm{Gr})$.
 \end{conj}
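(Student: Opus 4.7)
The plan is to factor $\widetilde{\mathrm{FM}}$ through the Achar--Riche equivalence $\mathrm{AR}: \mathrm{mod}^\mathbf{Y}_{I_\mathrm{u}}(\mathcal{R}) \xrightarrow{\sim} \mathrm{Rep}_{[0]}(\check{\mathbf{G}}_1\check{\mathbf{T}})$ recalled in Theorem \ref{thm BR}. Thus the main task is to construct an exact functor $\mathbf{F}: \mathrm{Perv}_{I_\mathrm{u}}(\mathcal{F}l^{\frac{\infty}{2}}) \to \mathrm{mod}^\mathbf{Y}_{I_\mathrm{u}}(\mathcal{R})$ and set $\widetilde{\mathrm{FM}} := \mathrm{AR} \circ \mathbf{F}$. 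Following the strategy of \cite{ABBGM}, the functor $\mathbf{F}$ is built from a ``fiber-at-$x$'' construction: each object of $\mathrm{Perv}_{I_\mathrm{u}}(\mathcal{F}l^{\frac{\infty}{2}})$, living on the global model $^{I_\mathrm{u}}_\infty\overline{\mathrm{Bun}}_{N^-}$, produces by suitable restriction (or by nearby cycles in a family) an $I_\mathrm{u}$-equivariant perverse sheaf on $\mathrm{Gr}$ which is naturally $\mathbf{Y}$-graded by the connected components $^1_\nu\overline{\mathrm{Bun}}_{N^-}$. The $\mathcal{R}$-module structure comes from convolving with the IC-sheaf on $\overline{\mathrm{Bun}}_{N^-}$ and invoking Proposition \ref{prop hecke geometric}: the Hecke eigensheaf property transforms the right action of $(\mathrm{Perv}_{G[[t]]}(\mathrm{Gr}), \star) \simeq (\mathrm{Rep}(\check{\mathbf{G}}^{(1)}), \otimes)$ into the desired action of the regular object $\mathcal{R}$.

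I would then check the identifications of objects listed in the conjecture by a stratum-by-stratum analysis. On each piece $^1_\nu\overline{\mathrm{Bun}}_{N^-}$ the Schubert stratification is indexed by $W$, so that the objects $\mathrm{IC}_x^{\frac{\infty}{2}}, \Delta_x^{\frac{\infty}{2}}, \nabla_x^{\frac{\infty}{2}}$ attached to $x = w t_\nu \in W_\mathrm{ext}$ are sent by $\mathbf{F}$ to the simple, standard and costandard objects of $\mathrm{mod}^\mathbf{Y}_{I_\mathrm{u}}(\mathcal{R})$ labelled by the same element, up to the $(\bullet_\ell)$-normalization built into Achar--Riche. The compatibility $\widetilde{\mathrm{FM}} \circ \widetilde{\mathrm{For}}_0 \simeq \mathrm{For}_0 \circ \mathrm{FM}$ reduces to the identification $\mathbf{F}(\mathcal{IC}^\frac{\infty}{2}_0) \simeq \mathcal{R}$, which is in turn the image under $\mathrm{FM}$ of the unit object together with the monoidal compatibility in Theorem \ref{conj FM}. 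The commutation with the left action of $(\mathrm{Perv}_{G[[t]]}(\mathrm{Gr}), \star)$ then follows from the associativity of convolution combined with the same monoidal statement.

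The principal difficulty is proving that $\widetilde{\mathrm{FM}}$ is an equivalence. The approach is to exploit the highest weight structures on both sides. Once standards are sent to standards and costandards to costandards, full faithfulness reduces, by the standard argument for highest weight categories, to an $\mathrm{Ext}$-vanishing statement of the form $\mathrm{Ext}^i(\Delta_x^{\frac{\infty}{2}}, \nabla_y^{\frac{\infty}{2}}) = 0$ for $(i,x) \neq (0,y)$, together with the rank-one computation when $x=y$; essential surjectivity is then obtained from the bijection on simples and a Jordan--H\"older argument. The hard part is the $\mathrm{Ext}$-vanishing on the geometric side: in characteristic zero it is built into the construction of \cite{ABBGM} via a cleanness property of the standard extensions, but in characteristic $\ell$ this cleanness fails in general. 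This is precisely the point at which an auxiliary hypothesis must be imposed, yielding the condition under which the equivalence is established; beyond that hypothesis the statement remains a conjecture.
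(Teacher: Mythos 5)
This statement is a \emph{conjecture} in the paper, and the paper does not prove it; it only constructs one half of the candidate functor and reduces the equivalence statement to an auxiliary Conjecture \ref{conj commut diagram}. Your proposal is therefore necessarily also a sketch of a sketch, and the right question is whether it matches the strategy the paper lays out. It does not, in two important respects.

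First, you propose to build a functor $\mathbf{F}:\mathrm{Perv}_{I_\mathrm{u}}(\mathcal{F}l^{\frac{\infty}{2}})\to\mathrm{mod}^\mathbf{Y}_{I_\mathrm{u}}(\mathcal{R})$ by a ``fiber-at-$x$'' or nearby-cycles construction and then post-compose with the Achar--Riche equivalence. The paper's Remark immediately following the conjecture explicitly says that the strategy is the \emph{opposite}: build a functor $\mathrm{Conv}^{\mathrm{Hecke}}:\mathrm{mod}^\mathbf{Y}_{I_\mathrm{u}}(\mathcal{R})\to\mathrm{Perv}_{I_\mathrm{u}}(\mathcal{F}l^{\frac{\infty}{2}})$, and the paper carries this out concretely as a coequalizer in §\ref{Construction of the functor and first properties}. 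This is not a cosmetic choice. The ``forward'' direction starting from $\mathcal{R}$-modules lets one use Propositions \ref{prop can isom}, \ref{lem exact} and \ref{prop hecke geometric} to verify exactness, the action of $\mathrm{Perv}_{G[[t]]}(\mathrm{Gr})$, and the identification of simples by a soft inductive argument. A geometric-to-algebraic ``fiber'' construction would have to contend with showing that one lands in \emph{finite-length} $\mathcal{R}$-modules and that the $\mathbf{Y}$-grading and the $\mathcal{R}$-action arise compatibly; none of that is addressed, and the phrase ``nearby cycles in a family'' does not correspond to anything in the paper's construction (the grading comes from the pieces ${_\nu\overline{\mathrm{Bun}}}_{N^-}$ only a posteriori through the coequalizer).

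Second, your diagnosis of the remaining obstruction is off. You say the $\mathrm{Ext}$-vanishing/cleanness that ABBGM use fails in positive characteristic and that this is where an auxiliary hypothesis enters. In fact the paper \emph{does} establish the $\mathrm{Ext}$-vanishing (Proposition \ref{prop ext}) and the cleanness theorem for Whittaker objects (Theorem \ref{prop clean IC}), both conditionally only on Conjecture \ref{conj stalks} (independence of the stalks of $\mathrm{IC}_{\overline{\mathrm{Bun}}_{N^-}}$ of $\mathrm{char}\,\mathbb{k}$), which is proved here for $G=\mathrm{SL}_n$ and is expected in general. The genuine outstanding obstruction to the equivalence is Conjecture \ref{conj commut diagram}: a commutativity of certain Hecke-action diagrams, required to construct the comparison morphism $\gamma:\nabla^{\frac{\infty}{2}}_0\to\mathrm{Conv}^{\mathrm{Hecke}}(\widehat{\mathcal{Z}}'_0)$ and hence to identify the images of baby (co)Verma modules with the (co)standard semi-infinite sheaves. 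Once that diagram commutativity is granted, §\ref{section Strategy of proof for the conjecture} completes the equivalence by the highest-weight formalism exactly as you anticipate --- but your proposal does not identify this as the bottleneck.

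In short: your high-level outline (factor through Achar--Riche, compare highest-weight data, use $\mathrm{Ext}$-vanishing) is in the right spirit, but the direction of the key functor is reversed relative to the paper, the construction you gesture at is not the one the paper uses, and the auxiliary hypothesis you flag is not the one the paper actually leaves open.
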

 \begin{rem}
 \begin{enumerate}
     \item The strategy to prove the above conjecture is to construct a functor from  $\mathrm{mod}^\mathbf{Y}_{I_\mathrm{u}}(\mathcal{R})$ to $\mathrm{Perv}_{{I_\mathrm{u}}}(\mathcal{F}l^{\frac{\infty}{2}})$.
     \item  The objects of $\mathrm{Perv}_{I_\mathrm{u}}(\mathcal{F}l^{\frac{\infty}{2}})$ are perverse sheaves on $^{I_\mathrm{u}}_{\infty}\overline{\mathrm{Bun}}_{N^-}$ satisfying a certain ``factorization property". This property implies that the objects are constructible with respect to the Schubert strata (see Remark \ref{rem construct}) and that the simple objects are labelled by $W_\mathrm{ext}$. From this perspective, one can think of the factorization property as a replacement for the $I_\mathrm{u}$-equivariance on $\mathcal{F}l^\frac{\infty}{2}$.
 \end{enumerate}
 \end{rem}
 \subsection{What is done in this paper} We study the category $\mathrm{Perv}_{I_\mathrm{u}}(\mathcal{F}l^{\frac{\infty}{2}})$, and relate it to the category $\mathrm{mod}^\mathbf{Y}_{I_\mathrm{u}}(\mathcal{R})$. For our construction to work, we need to make the following hypothesis.
 \begin{conj}\label{conj stalks}
Let $\mathbb{k}$ be either a finite field or a finite extension of $\mathbb{Q}_\ell$. For any $i\in\mathbb{Z}$ and closed point $x\in \overline{\mathrm{Bun}}_{N^-}$, the dimension of the $\mathbb{k}$-vector space $\mathrm{H}^i((\mathrm{IC}_{\overline{\mathrm{Bun}}_{N^-}})_x)$ does not depend on the characteristic of $\mathbb{k}$.
 \end{conj}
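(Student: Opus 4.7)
The strategy is to exploit the factorization structure of $\overline{\mathrm{Bun}}_{N^-}$ in order to reduce the statement to a characteristic-independence claim for IC-stalks on Zastava spaces, and then to address the latter via a parity-sheaf argument. More precisely, I would first invoke the fact (due to Braverman-Finkelberg-Gaitsgory-Mirkovi\'c) that, smooth-locally around any closed point $x$, the stack $\overline{\mathrm{Bun}}_{N^-}$ is isomorphic to a product $\mathrm{Bun}_{N^-}\times Z^\mu$, where $Z^\mu$ is the Zastava space of degree $\mu\in\mathbf{Y}$ recording the defect at $x$. Since $\mathrm{Bun}_{N^-}$ is smooth, the stalks of $\mathrm{IC}_{\overline{\mathrm{Bun}}_{N^-}}$ differ from those of $\mathrm{IC}_{Z^\mu}$ only by a cohomological shift, and the conjecture reduces to characteristic-independence of IC-stalks on Zastava spaces. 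Factorization of $Z^\mu$ over the configuration space $X^\mu$ then reduces the problem further to the IC-stalk at the ``central'' point of $Z^\mu$, i.e.\ over a divisor of the form $\mu\cdot x$ for a single $x\in X$: indeed, étale locally around a point lying over a divisor $D=\sum n_j\cdot x_j$ with $\mu=\sum \mu_j$, the Zastava space splits as $\prod_j Z^{\mu_j}$, and a K\"unneth argument propagates characteristic-independence from the factors to $Z^\mu$.

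Second, I would try to construct a resolution $\pi:\widetilde{Z}^\mu\to Z^\mu$ --- a natural candidate being a Kontsevich-style moduli of stable quasi-maps $\mathbb{P}^1\to G/B^-$ --- and equip its central fibre with a $\mathbb{G}_\mathrm{m}$-action whose fixed locus is finite. Bia\l{}ynicki-Birula theory would then yield an affine paving whose combinatorics is manifestly independent of $\mathrm{char}(\mathbb{k})$, for instance indexed by Mirkovi\'c-Vilonen-type polytopes attached to $\mu$. If $\pi$ can be chosen small, one has $\pi_*\mathrm{IC}_{\widetilde{Z}^\mu}=\mathrm{IC}_{Z^\mu}$, and the resulting parity vanishing for $\pi_*\mathbb{Q}_\ell$, combined with an integral $\mathbb{Z}_\ell$-lift, would imply characteristic-independence by the standard reduction-modulo-$\ell$ formalism for parity complexes: one lifts $\mathrm{IC}_{\widetilde{Z}^\mu}$ to a parity sheaf over $\mathbb{Z}_\ell$, and the dimensions of its stalks agree with those of the generic fibre $\mathrm{IC}_{Z^\mu,\mathbb{Q}_\ell}$, which are known from the characteristic-zero computation of Feigin-Finkelberg-Kuznetsov-Mirkovi\'c.

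The main obstacle is that the decomposition theorem is unavailable over $\mathbb{Z}_\ell$, so the path used in characteristic zero --- extracting $\mathrm{IC}_{Z^\mu}$ as a direct summand of $\pi_*\mathbb{Q}_\ell$ for an arbitrary proper resolution --- is not accessible here. One must therefore either exhibit a \emph{genuinely small} resolution of $Z^\mu$, so that the identity $\pi_*\mathrm{IC}_{\widetilde{Z}^\mu}=\mathrm{IC}_{Z^\mu}$ becomes formal, or produce a direct combinatorial model of the IC-stalks (e.g.\ in terms of Mirkovi\'c-Vilonen polytopes) from which characteristic-independence can be read off without any appeal to decomposition. Verifying smallness of $\pi$ --- equivalently, bounding the dimension of every fibre of $\pi$ by half the codimension of the stratum it lies over --- is the delicate combinatorial-geometric step, and I would expect the substantial work of any proof to lie here.
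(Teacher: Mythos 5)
Before anything else: the statement you were asked to prove is labelled \emph{Conjecture} in the paper, and the paper itself does \emph{not} prove it in general --- it explicitly states that no proof is available, refers to forthcoming work of Achar--Riche--Dhillon, and only establishes the special case $G = \mathrm{SL}_n$ (Lemma~\ref{lem kuznetsov}, proved in \S\ref{section Laumon}). So your proposal should be read as an attempt at the general case, and judged against the paper's proof of the $\mathrm{SL}_n$ case.

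Your initial reduction steps mirror the paper's strategy quite closely. The paper likewise uses the smooth projection from Zastava spaces to $\overline{\mathrm{Bun}}_{B^-}^\theta$ and the factorization of Zastava spaces over $X^\mu$ (Proposition~\ref{prop factorization zastava}) to localize the question to the stalk over the ``central'' defect point $\theta\cdot x$, together with a constancy-on-strata argument (of the flavor of Corollary~\ref{coro constructible}). The paper then passes to a quasimap space $\mathcal{Q}^\theta = (\overline{\mathfrak{p}}^\theta)^{-1}(\mathcal{F}^0_G)$, which is open in $\overline{\mathrm{Bun}}_{B^-}^\theta$ by Lemma~\ref{lem trivial open}, so this localization agrees with yours up to presentation.

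Where your proposal diverges --- and where the genuine gap lies --- is in the existence of the small resolution. For $\mathrm{SL}_n$ the paper uses Laumon's compactification ${}^L\mathcal{Q}^\theta \to \mathcal{Q}^\theta$, whose smallness and pseudoaffine paving of fibres are results of Kuznetsov specific to type $A$; small\-ness there makes $\pi_*\mathbb{k}$ literally equal to the IC-sheaf with no appeal to the decomposition theorem, and the pseudoaffine paving makes the fibre cohomology manifestly characteristic-independent without invoking the full machinery of parity sheaves over $\mathbb{Z}_\ell$. For general $G$ you propose a ``Kontsevich-style moduli of stable quasi-maps'' as the candidate resolution, but that morphism is not known --- and is generally expected not --- to be small; this is precisely the obstruction that led the authors to leave the general case as a conjecture. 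You correctly flag ``verifying smallness of $\pi$'' as the delicate step, but as stated it is not merely delicate: no suitable $\pi$ is currently available in the literature for arbitrary $G$, so this is a missing input rather than a verification. The $\mathbb{G}_\mathrm{m}$-action / Bia\l{}ynicki--Birula / MV-polytope combinatorics you outline is a reasonable speculative route, but it would require first producing such a resolution, and the parity-sheaf bridge is heavier machinery than the paper needs once one has a paving: given smallness and an affine or pseudoaffine paving of the fibres, characteristic independence is immediate, with no need to lift to integral coefficients.

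In summary: your factorization reductions essentially reproduce the paper's; the substantive content --- a small resolution with characteristic-free fibres --- is supplied in the paper only for $\mathrm{SL}_n$ via Laumon/Kuznetsov, and your proposed replacement for general $G$ rests on an unproven (and doubtful) smallness assertion, which is exactly what the paper records as the open problem.
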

 Here is our main result.
 \begin{thm}
Fix $\ell$, and  assume that Conjecture \ref{conj stalks} holds. Then there is an exact functor
$$\mathrm{Conv}^{\mathrm{Hecke}}:\mathrm{mod}^\mathbf{Y}_{I_\mathrm{u}}(\mathcal{R})\to\mathrm{Perv}_{{I_\mathrm{u}}}(\mathcal{F}l^{\frac{\infty}{2}}), $$
which commutes with Verdier duality, and induces a bijection between the isomorphism classes of simple objects.
 \end{thm}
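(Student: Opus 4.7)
My plan is to adapt the characteristic-zero construction of \cite{ABBGM} to positive characteristic, using Conjecture \ref{conj stalks} as a substitute for the purity and decomposition-theorem arguments available in characteristic zero. The essential geometric input is the Hecke eigensheaf property of the IC-sheaf on $\overline{\mathrm{Bun}}_{N^-}$ recorded as Proposition \ref{prop hecke geometric}: it equips $\mathrm{IC}^{\frac{\infty}{2}}_0$ with a natural left action of $\mathcal{R}$ that is compatible, via $\mathrm{Sat}$, with the convolution of perverse sheaves on the affine Grassmannian. An object $M$ of $\mathrm{mod}^\mathbf{Y}_{I_\mathrm{u}}(\mathcal{R})$ comes with a right $\mathcal{R}$-action, so the natural candidate $\mathrm{Conv}^{\mathrm{Hecke}}(M)$ is the tensor-product-over-$\mathcal{R}$ of $M$ with $\mathrm{IC}^{\frac{\infty}{2}}_0$, defined as the coequalizer of the two ways in which $\mathcal{R}$ can act.

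The first technical step is to verify that the na\"ive convolution $-\star\mathrm{IC}^{\frac{\infty}{2}}_0$ is t-exact from $\mathrm{Perv}_{I_\mathrm{u}}(\mathrm{Gr})$ to $\mathrm{Perv}_T({}^1_\infty\overline{\mathrm{Bun}}_{N^-})$. In characteristic zero this is a semismallness-type estimate on the stalks of $\mathrm{IC}_{\overline{\mathrm{Bun}}_{N^-}}$ along the stratification of $_{\leq\nu}\overline{\mathrm{Bun}}_{N^-}$; Conjecture \ref{conj stalks} guarantees that the same stalk dimensions persist in positive characteristic, so the ABBGM argument carries over unchanged. The factorization property required to land in $\mathrm{Perv}_{I_\mathrm{u}}(\mathcal{F}l^{\frac{\infty}{2}})$ is built in by construction, inherited from the factorization structures of $\mathcal{R}$ and of $\overline{\mathrm{Bun}}_{N^-}$ over the curve $X$; in particular the constructibility of the output along Schubert strata (Remark~\ref{rem construct}) is automatic.

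Once t-exactness of $-\star\mathrm{IC}^{\frac{\infty}{2}}_0$ is in hand, exactness of the coequalizer functor $\mathrm{Conv}^{\mathrm{Hecke}}$ is a formal consequence. Commutation with Verdier duality then follows from self-duality of $\mathrm{IC}^{\frac{\infty}{2}}_0$, the compatibility of convolution with $\mathbb{D}$, and the fact that the $\mathcal{R}$-module structure is preserved under $\mathbb{D}$ in the Achar-Riche picture of Theorem \ref{thm BR}. For the bijection on simples, the target category has simple objects $\{\mathrm{IC}^{\frac{\infty}{2}}_x\}_{x\in W_\mathrm{ext}}$; via Theorems \ref{conj FM} and \ref{thm BR}, the simples of $\mathrm{mod}^\mathbf{Y}_{I_\mathrm{u}}(\mathcal{R})$ are labeled by the same index set through $W^S_\mathrm{ext}\times\mathbf{Y}\ni(w,\nu)\mapsto wt_\nu$. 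A stratum-by-stratum analysis on $^1_\nu\overline{\mathrm{Bun}}_{N^-}$, using the eigensheaf property to convert the translation part $t_\nu$ into a geometric shift and the ABBGM description of $\mathrm{IC}^{\frac{\infty}{2}}_{wt_\nu}$ on each fixed stratum, matches the two labelings.

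I expect the main obstacle to be the t-exactness of $-\star\mathrm{IC}^{\frac{\infty}{2}}_0$, precisely because in characteristic zero this rests on the decomposition theorem, which is unavailable in our setting; this is the sole point at which the argument genuinely depends on Conjecture \ref{conj stalks}. The remaining verifications are either formal (exactness of the coequalizer, Verdier-duality compatibility) or reduce, via the Hecke eigensheaf property, to standard finite-type computations about IC-sheaves of Schubert varieties and the Finkelberg-Mirkovi\'c correspondence for the finite Weyl group factor.
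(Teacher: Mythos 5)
Your overall architecture matches the paper's: define $\mathrm{Conv}^{\mathrm{Hecke}}$ as the coequalizer computing the tensor product over $\mathcal{R}$ of the right module $M$ with the left $\mathcal{R}$-module $\mathcal{IC}^{\frac{\infty}{2}}=\bigoplus_\lambda\mathcal{IC}^{\frac{\infty}{2}}_\lambda$, use the Hecke eigensheaf property to get that left action, match simples via $\widehat{\mathcal{L}}_{yt_\lambda}=\Phi(\mathrm{L}_y)\langle-\lambda\rangle\mapsto \mathrm{L}_y\star\mathcal{IC}^{\frac{\infty}{2}}_\lambda\simeq\mathrm{IC}^{\frac{\infty}{2}}_{yt_\lambda}$ (a stratified-smallness computation, Theorem \ref{thm conv simples}), and deduce duality from the free-resolution description of $\mathbb{D}$. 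However, you have misplaced the role of Conjecture \ref{conj stalks}. The $t$-exactness of $\mathcal{S}\mapsto\mathcal{S}\star\mathcal{IC}^{\frac{\infty}{2}}_\nu$ (Theorem \ref{thm exact}) is \emph{unconditional}: the paper reduces to right $t$-exactness by Verdier duality and then runs a fiber-dimension estimate on the strata $X_{\mu',\lambda}$, using only the generic negativity of $*$-restrictions of IC-sheaves, never the precise stalk dimensions. The conjecture enters elsewhere, and earlier: it gives the costalk estimate of Lemma \ref{lem stalks}/Corollary \ref{coro stalks} (the $!$-restriction of $\mathcal{IC}^{\frac{\infty}{2}}_{\nu_2}$ to smaller strata lives in perverse degrees $\geq 2$), which is what proves semi-simplicity of $\mathrm{Perv}_{G[[t]]}(\mathcal{F}l^{\frac{\infty}{2}})$ (Theorem \ref{thm ss}) and hence the eigensheaf decomposition $\mathcal{G}\star\mathcal{IC}^{\frac{\infty}{2}}_\nu\simeq\bigoplus_\mu\mathcal{IC}^{\frac{\infty}{2}}_{\nu+\mu}\otimes\mathrm{Sat}(\mathcal{G})(w_0(\mu))$ of Proposition \ref{prop hecke geometric}, together with the identification of the multiplicity spaces with weight spaces via corestriction to semi-infinite orbits. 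Without this, the morphisms $H_{\mu,\lambda}:\mathcal{R}_\mu\star\mathcal{IC}^{\frac{\infty}{2}}_\lambda\to\mathcal{IC}^{\frac{\infty}{2}}_{\lambda+\mu}$ of Proposition \ref{prop action IC} cannot be constructed, so the coequalizer you want to form is not even defined. Treating the conjecture as relevant only to $t$-exactness would leave the definition of your functor unsupported.

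The second gap is your claim that exactness of the coequalizer functor is ``a formal consequence'' of $t$-exactness of convolution. A coequalizer (equivalently, a relative tensor product) is only right exact for formal reasons; left exactness genuinely fails for general tensor products and must be argued. The paper's proof (Proposition \ref{lem exact}) reduces a short exact sequence $0\to\mathcal{S}_1\to\mathcal{S}_2\to\mathcal{S}\to 0$ by induction on length to the case where $\mathcal{S}$ is simple, hence of the form $\Phi(\mathcal{F})\langle\mu\rangle$, lifts $\mathcal{F}$ to a free cover $\Phi(\mathcal{F}')\langle\mu\rangle\to\mathcal{S}_2$, and then uses the key identification $\mathrm{Conv}^{\mathrm{Hecke}}(\Phi(\mathcal{G})\langle\mu\rangle)\simeq\mathcal{G}\star\mathcal{IC}^{\frac{\infty}{2}}_{\mu}$ (Proposition \ref{prop can isom}) to convert injectivity into exactness of plain convolution. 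That identification on free modules, which also drives the Verdier duality argument of Lemma \ref{lem verdier commute}, is the real workhorse and deserves to be stated as a step of your proof rather than absorbed into ``formal consequences.'' Finally, a small indexing slip: the simples of $\mathrm{mod}^{\mathbf{Y}}_{I_\mathrm{u}}(\mathcal{R})$ are parametrized by writing $w=yt_\lambda$ with $y\in W^{\mathrm{res}}_{\mathrm{ext}}$ and $\lambda\in\mathbf{Y}$, not by $W^S_{\mathrm{ext}}\times\mathbf{Y}$; the latter does not biject onto $W_{\mathrm{ext}}$.
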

Conjecture \ref{conj stalks} is expected to hold for a general reductive group $G$, but there is no proof available at the moment.
\begin{rem}
    In an upcoming work, Achar-Riche-Dhillon should give a proof of Conjecture \ref{conj stalks} (with minor restrictions on $\ell$).
\end{rem}
Below (§\ref{section Intersection cohomology}), we add the 
 following observation, well-known by experts.
 \begin{prop}
     Conjecture \ref{conj stalks} holds when $G=\mathrm{SL}_n$.
 \end{prop}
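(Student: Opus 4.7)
The plan is to reduce the statement to a computation on Zastava spaces, and then invoke Laumon's small resolution, which is specific to type $A$.

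First, I would use the factorization property of $\overline{\mathrm{Bun}}_{N^-}$ to break the stalk of $\mathrm{IC}_{\overline{\mathrm{Bun}}_{N^-}}$ at any closed point $x$ into a tensor product of stalks of IC sheaves on Zastava spaces. Any such $x$ carries an associated defect divisor $D = \sum_{i} \mu_{i} \cdot x_{i}$ with $\mu_{i} \in \mathbf{Y}^{+}$ supported on distinct points of $X$; factorization (in the spirit of \cite{BBGM}) yields, étale-locally around $x$, an isomorphism
$$(\mathrm{IC}_{\overline{\mathrm{Bun}}_{N^-}})_{x} \simeq \bigotimes_{i}(\mathrm{IC}_{Z^{\mu_{i}}})_{y_{i}},$$
where $Z^{\mu_{i}}$ denotes the local Zastava model at $x_{i}$ and $y_{i}\in Z^{\mu_{i}}$ is the (unique) closed point of maximal defect. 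It therefore suffices to prove that the dimensions of the stalks of $\mathrm{IC}_{Z^{\mu}}$ do not depend on the characteristic of~$\mathbb{k}$.

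Second, I would invoke the classical theorem of Kuznetsov, which is specific to $G=\mathrm{SL}_{n}$: it produces a proper birational morphism $\pi : \widetilde{Z}^{\mu} \to Z^{\mu}$ from Laumon's smooth space of quasiflags, and shows that $\pi$ is \emph{small}. Smallness of $\pi$ combined with smoothness of $\widetilde{Z}^{\mu}$ forces, through the support and cosupport conditions characterizing the IC complex, the isomorphism
$$\mathrm{IC}_{Z^{\mu}} \simeq \pi_{*}\mathbb{k}_{\widetilde{Z}^{\mu}}[\dim Z^{\mu}]$$
over any field $\mathbb{k}$; crucially, this argument bypasses the decomposition theorem, which is delicate in positive characteristic. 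Thus the stalk of $\mathrm{IC}_{Z^{\mu}}$ at $y$ is computed by the cohomology of the fiber $\pi^{-1}(y)$ with coefficients in $\mathbb{k}$.

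Third, I would verify that these fibers admit an affine paving — using the modular description of quasiflags as chains of locally free subsheaves of a fixed vector bundle on $X$, together with a $\mathbb{G}_{\mathrm{m}}$-action (induced by a regular cocharacter of the torus of $\mathrm{SL}_{n}$ and/or the scaling action on~$\mathbb{A}^{1}$) whose Białynicki-Birula attracting cells are the strata. A variety with an affine paving has torsion-free cohomology whose Betti numbers depend only on the combinatorics of the cells; in particular, they are independent of the characteristic of $\mathbb{k}$, completing the proof. I expect the main subtlety to lie in the first step: one must confirm that the factorization isomorphism for the IC stalks on $\overline{\mathrm{Bun}}_{N^-}$ remains valid with $\mathbb{k}$-coefficients of positive characteristic. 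However, since it reduces to the geometric factorization of the stack together with the characterization of $\mathrm{IC}$ by support conditions, no fundamentally new input should be required.
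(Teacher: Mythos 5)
Your proposal takes essentially the same route as the paper: localize via Zastava spaces, invoke Laumon's small resolution for $\mathrm{SL}_n$ via Kuznetsov, and conclude that the fiber cohomology is characteristic-independent from a favorable stratification. The steps are arranged slightly differently, though. First, your factorization claim $(\mathrm{IC}_{\overline{\mathrm{Bun}}_{N^-}})_x \simeq \bigotimes_i (\mathrm{IC}_{Z^{\mu_i}})_{y_i}$ is morally right but imprecise as written: $\overline{\mathrm{Bun}}_{N^-}$ itself is not a factorization space, only the Zastava spaces are. One must first pass to a Zastava space $Z^\mu$ as a smooth local model (this uses Proposition \ref{zastava prop}), and only then apply factorization to split off the defect at each point; the paper is careful to do this, and then further uses constancy of stalks along each defect stratum to reduce to the single distinguished point $x$. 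Second, the paper does not quote a small resolution of the Zastava space directly. Instead it uses that the trivial bundle is open in $\mathrm{Bun}_G$ (Lemma \ref{lem trivial open}, which is where $X = \mathbb{P}^1$ enters) so that the quasimap space $\mathcal{Q}^\theta$ is open in $\overline{\mathrm{Bun}}_{B^-}^\theta$, and then applies Kuznetsov's smallness result for $^L\mathcal{Q}^\theta \to \mathcal{Q}^\theta$, which is what \cite{kuznetsov1996laumon} literally proves. Your Zastava version of this is equivalent via factorization, but you would have to say so. Third, the paper concludes via the pseudoaffine stratification of the fibers from \cite[Corollary 2.4.4]{kuznetsov1996laumon}, not a Białynicki--Birula affine paving; both mechanisms give characteristic-independence, but the pseudoaffine statement is the one that is actually in the cited reference, so your claim of an affine paving would need independent justification. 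Finally, your subtlety flag about coefficients in the factorization step is misplaced: factorization is a purely geometric statement about the stacks and holds over any coefficient field, so nothing new is required there; the real technical point is the passage through Zastava spaces described above.
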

 We conjecture that $\mathrm{Conv}^{\mathrm{Hecke}}$ is an equivalence of categories. In §\ref{section Strategy of proof for the conjecture}, we give a strategy of proof for this conjecture, which follows the same steps as the analogue statement of \cite{ABBGM}. We plan to achieve the proof of this conjecture in future work.
 \subsection{Overview} After having introduced the necessary conventions in §\ref{section conventions}, we recall the main results of \cite{achar2022geometric} in §\ref{section Perverse sheaves on affine Grassmannians}. Namely, we explain the construction of the category $\mathrm{mod}^\mathbf{Y}_{I_\mathrm{u}}(\mathcal{R})$, which consists of certain ind-objects in the category $\mathrm{Perv}_{I_\mathrm{u}}(\mathrm{Gr})$. Thanks to \textit{loc. cit.} together with \cite{bezrukavnikov2024modular}, this category is equivalent to $\mathrm{Rep}_{[0]}(\check{\mathbf{G}}_1\check{\mathbf{T}})$ under certain hypothesis. In §\ref{section Drinfeld's compactifications}, we recall the constructions of the main geometric objects, namely the Drinfeld compactification and Zastava spaces. In §\ref{section Perverse sheaves on the semi-infinite flag variety}, we check that the definitions of the categories of perverse sheaves on the semi-infinite flag variety from \cite{ABBGM} still make sense in the modular setting. The main result of this section is that the dimensions of the stalks of the IC-sheaf $\mathcal{IC}^\frac{\infty}{2}_0$ on $\overline{\mathrm{Bun}}_{N^-}$ do not depend on the characteristic (see Lemma \ref{lem kuznetsov}) when $G=\mathrm{SL}_n$. This allows us to take back the main constructions of \textit{loc. cit.}. In particular, we exhibit an action of the monoidal category $(\mathrm{Perv}_{G[[t]]}(\mathrm{Gr}),\star)$ on $\mathrm{Perv}_{G[[t]]}(\mathcal{F}l^{\frac{\infty}{2}})$, see Proposition \ref{prop hecke geometric}. This property plays a crucial role in §\ref{section equivalence}, where we are able to construct a functor $\mathrm{mod}^\mathbf{Y}_{I_\mathrm{u}}(\mathcal{R})\to\mathrm{Perv}_{{I_\mathrm{u}}}(\mathcal{F}l^{\frac{\infty}{2}})$ which is exact, sends simple objects to simple objects, and commutes with Verdier duality and convolution.
 \subsection{Acknowledgments} This work was conducted during PhD studies at the Université Clermont Auvergne, under the supervision of Simon Riche and Geordie Williamson. I especially want to thank S.R. for many rereadings of earlier versions.

 This project has received funding from the European Research Council
(ERC) under the European Union’s Horizon 2020 research and innovation
programme (grant agreement No. 101002592).

	\section{Conventions}\label{section conventions}
	\subsection{The group $G$}\label{subsection the group}
	We let $\mathbb{k}$ be a finite field of characteristic $\ell>0$, $\mathbb{F}$ be an algebraically closed field of prime characteristic $p\neq \ell$, and $\check{\mathbf{G}}$ be a split connected reductive group over $\mathbb{k}$. We denote by $G$ the reductive group defined over $\mathbb{F}$, such that the Langlands dual group $G^\vee_\mathbb{k}$ over $\mathbb{k}$ is the Frobenius twist $\check{\mathbf{G}}^{(1)}$ of $\check{\mathbf{G}}$. We let $T\subset B\subset G$ be a maximal torus and Borel subgroup, $W=N_G(T)/T$ (where $N_G(T)$ is the normalizer of $T$ in $G$) denote the associated Weyl group, and $\check{\mathfrak{R}}_s\subset\check{\mathfrak{R}}_+\subset\check{\mathfrak{R}}$ (resp. $\mathfrak{R}_s\subset \mathfrak{R}_+\subset\mathfrak{R}$) denote the sets of simple coroots, positive\footnote{We take opposite conventions compared with \cite{achar2022geometric}.} coroots and coroots (resp. simple roots, positive roots and roots) determined by $B$ (i.e. the positive roots consist of the $T$ weights in $\mathrm{Lie}(B)$). We let $B^-$ be the Borel subgroup which is opposite to $B$ (i.e. such that $B\cap B^{-}=T$), put $N=[B,B]$, $N^-=[B^-,B^-]$.  
 
 Denote by $\mathbf{Y}:=X_*(T)$ (resp. $\mathbf{X}:=X^*(T)$) the coweight lattice (resp. weight lattice) of $G$, let $\mathbf{Y}^{pos}$ (resp. $\mathbf{X}^{pos}$) be the sub-semigroup spanned by $\check{\mathfrak{R}}_s$ (resp. by the positive roots), and $\mathbf{Y}^+$ (resp. $\mathbf{X}^+$) be the subset of dominant coweights (resp. dominant weights) determined by $B$. We also define the set of strictly dominant coweights
 $$\mathbf{Y}^{++}=\{\lambda\in\mathbf{Y}~|~\langle\alpha,\lambda\rangle>0~\forall \alpha\in\mathfrak{R}_+\},$$
 where $\langle\cdot,\cdot\rangle:\mathbf{X}\times \mathbf{Y}\to\mathbb{Z}$ is the usual perfect pairing. Denote by $W_{\mathrm{ext}}:=W\ltimes\mathbf{Y}$ the extended affine Weyl group. For $(w,\lambda)\in W\times\mathbf{Y}$, we will denote by $wt_\lambda$ the corresponding element of $W_\mathrm{ext}$; we will simply denote by $\lambda$ the element $et_\lambda$, where $e\in W$ is the identity. For $\lambda_1,\lambda_2\in \mathbf{Y}$, we will write $\lambda_1\leq \lambda_2$ if $\lambda_2-\lambda_1\in \mathbf{Y}^{pos}$.
	
	We assume that the quotient $\mathbf{X}/\mathbb{Z}\mathfrak{R}$ of $\mathbf{X}$ by the subgroup spanned by the roots is torsion-free. This means that the morphism of groups
	$$\mathbf{Y}\to\mathrm{Hom}_\mathbb{Z}(\mathbb{Z}\mathfrak{R},\mathbb{Z})$$
	induced by restriction is surjective, and is equivalent to requiring that the scheme-theoretic center of $G$ be a torus. In particular, there exists $\zeta\in\mathbf{Y}$ such that $\langle\alpha,\zeta\rangle=1$ for all $\alpha\in\mathfrak{R}_s$.

 We let $\check{\mathbf{T}}\subset\check{\mathbf{B}}\subset\check{\mathbf{G}}$ be the maximal torus and Borel subgroup such that $T^\vee_\mathbb{k}\simeq\check{\mathbf{T}}^{(1)}$ and $B^\vee_\mathbb{k}\simeq\check{\mathbf{B}}^{(1)}$, where the superscript $(1)$ denotes the Frobenius twist, and $T^\vee_\mathbb{k}$ (resp. $B^\vee_\mathbb{k}$) denotes the split maximal torus (resp. Borel subgroup) of the Langlands dual group of $G$ over $\mathbb{k}$ corresponding to $T$ (resp. to $B$). In the sequel, we will identify the character lattice $X^*(\check{\mathbf{T}})$ with $\mathbf{Y}$, in such a way that the morphism of $\mathbb{Z}$-modules 
$$X^*(\check{\mathbf{T}}^{(1)})\to \mathbf{Y}$$ induced by the Frobenius morphism $\mathrm{Fr}|_{\check{\mathbf{T}}}:\check{\mathbf{T}}\to \check{\mathbf{T}}^{(1)}$ identifies with $\lambda\mapsto\ell\cdot\lambda$. 
	
	For any group scheme $H$ over $\mathbb{k}$, we will denote by $\mathrm{Rep}(H)$ the category of finite dimensional representations of $H$ over $\mathbb{k}$. For any $V\in\mathrm{Rep}(\check{\mathbf{G}}^{(1)})$ and $\mu\in\mathbf{Y}$, we will denote by $V(\mu)$ the corresponding $\mu$-weight space.

	For all $\lambda\in \mathbf{X}^+$, we let $V^\lambda$ be the Weyl module of $G$ associated with the weight $\lambda$, and fix\footnote{There is a canonical choice for such a vector, see the proof of \cite[Lemma II.2.13]{jantzen2003representations}.} a highest weight vector $v_\lambda\in V^\lambda$. For $\lambda_1,\lambda_2\in\mathbf{X}^+$, there is a uniquely defined $G$-equivariant morphism \begin{equation}\label{canonical morphism}
		V^{\lambda_1+\lambda_2}\to V^{\lambda_1}\otimes V^{\lambda_2}
	\end{equation}
	sending $v_{\lambda_1+\lambda_2}$ to $v_{\lambda_1}\otimes v_{\lambda_2}$. If $\mathcal{F}_G$ is a $G$-bundle and $\mathcal{F}_T$ is a $T$-bundle on some scheme $S$ and $\lambda\in\Lambda$, we will denote by 
	$$\mathcal{V}_{\mathcal{F}_G}^\lambda:=\mathcal{F}_G\times^GV^\lambda,\quad \mathcal{L}_{\mathcal{F}_T}^\lambda:=\mathcal{F}_T\times^{T}\mathbb{A}^1_\mathbb{F}$$ 
	the induced vector bundle and line bundle respectively on $S$, where $T$ acts on the affine line $\mathbb{A}^1_\mathbb{F}$ via the character $\lambda$ in the second twisted product.
	\subsection{Stacks}
	In this note, we will study algebraic stacks and schemes defined over $\mathbb{F}$. The stacks will be attached to the smooth complete curve $X$ and to $G$.  For any group scheme $H$ over $\mathbb{F}$, we will denote by $\mathrm{Bun}_H$ the stack of principal $H$-bundles over $X$. This means that $\mathrm{Bun}_H$ is a functor from the category of $\mathbb{F}$-schemes to the category of groupoids, such that for any test scheme $S$, $\mathrm{Bun}_H(S)$ consists of the category of principal $H$-bundles over $X\times S$. We will denote by $\mathcal{F}^0_H:=H\times X\times S$ the trivial $H$-bundle. For any morphism of schemes $S'\to S$, the functor $\mathrm{Bun}_H(S)\to \mathrm{Bun}_H(S')$ is obtained by pullback. 
	
	In the sequel, we will work with variants of this stack, known as Drinfeld's compactifications. When introducing such a stack $\mathcal{X}$, we will say that it classifies \textit{something}, where something will be the category $\mathcal{X}(\mathbb{F})$. More precisely, $S$ will always be a test $\mathbb{F}$-scheme, and the groupoid $\mathcal{X}(S)$ will consist in the data of some vector bundles over $X\times S$ together with some morphisms of sheaves between them, and the functors $\mathcal{X}(S)\to \mathcal{X}(S')$ will be obtained by pullback for any morphism of $\mathbb{F}$-schemes $S'\to S$.
	
	When defining locally closed substacks of a given stack, we will often only work with the field-valued points. This is justified by the following facts. For $\mathcal{X}$ an algebraic stack, we denote by $|\mathcal{X}|$ its set of points, endowed with the Zariski topology. Recall that by definition, $|\mathcal{X}|$ consists of the set $\bigsqcup_{(\mathbb{L},f)}\mathrm{Ob}(\mathcal{X}(\mathrm{Spec}(\mathbb{L})))/\sim$, where $(\mathbb{L},f)$ runs through the maps $\mathrm{Spec}(\mathbb{L})\to \mathrm{Spec}(\mathbb{F})$, with $\mathbb{L}$ a field. Two points $x_{(\mathbb{L},f)}$, $x'_{(\mathbb{L}',f')}$ are equivalent if there is a common field extension $\mathbb{L}''$ of $\mathbb{L}$ and $\mathbb{L}'$ such that both points become isomorphic when pullbacked to $\mathcal{X}(\mathrm{Spec}(\mathbb{L}''))$. The opens of $|\mathcal{X}|$ are the subsets $|U|$, where $U\hookrightarrow\mathcal{X}$ is an open substack.  When $\mathcal{X}$ is a scheme, we recover the underlying topological space. If $p:X\to \mathcal{X}$ is a presentation, with $X$ a scheme, then the induced morphism $|X|\to |\mathcal{X}|$ is surjective, continuous and open (\cite[§ 100.4]{stacks-project}). If $T\subset|\mathcal{X}|$ is a locally closed subset, then there exists a unique reduced locally closed substack $\mathcal{Z}\subset\mathcal{X}$ such that $T=|\mathcal{Z}|$ as subsets of $|\mathcal{X}|$ (\cite[Remark 100.10.5]{stacks-project}).
	
	\subsection{Group arc schemes}\label{section Group arc schemes}
	For any closed point $x\in X$, we denote by $\hat{\mathcal{O}}_{x}$ the completion of the local ring of $X$ at $x$, let $\mathcal{K}_{x}$ be the fraction field of $\hat{\mathcal{O}}_{x}$, and put $D_x:=\mathrm{Spec}(\hat{\mathcal{O}}_{x})$. If $t$ denotes a local coordinate on $X$ at $x$, then we get isomorphisms  $\hat{\mathcal{O}}_{x}\simeq\mathbb{F}[[t]]$, $\mathcal{K}_{x}\simeq\mathbb{F}((t))$. For any $\mathbb{F}$-algebra $R$, we consider the completed tensor products $R\hat{\otimes}\hat{\mathcal{O}}_{x}$ and $R\hat{\otimes}\mathcal{K}_{x}$, so that we get identifications
	$$R\hat{\otimes}\hat{\mathcal{O}}_{x}\simeq R[[t]]\qquad  R\hat{\otimes}\mathcal{K}_{x}\simeq R((t)).$$
	For an affine group scheme $H$ over $\mathbb{F}$, we will denote by $H[[t]]$, resp. $H((t))$, resp. $H[t^{-1}]$, the functor defined on points via
	$$H[[t]](R):=H(R\hat{\otimes}\hat{\mathcal{O}}_{x}),\quad~H((t)):=H(R\hat{\otimes}\mathcal{K}_{x}),\quad H[t^{-1}]:=H(R[t^{-1}]). $$
	It is a classical fact that $H[[t]]$ is a group scheme and $H((t))$ a group ind-scheme. By \cite[§2.3]{zhu2016introduction}, $H[t^{-1}]$ is a group ind-scheme.
	
	We fix once and for all a closed point $x\in X$. For $k\in\mathbb{Z}_{\geq 0}$, we let $G_k$ be the group scheme representing the functor $R\mapsto G(R[t]/(t^k))$, and $G^k$ be the congruence subgroup defined as the kernel of the canonical morphism $G[[t]]\to G_k$. Notice that $G^0=G[[t]]$.
	
	Finally, we define the Iwahori subgroup $I\subset G[[t]]$ (resp. $I_\mathrm{u}\subset I$, resp. $I^-\subset G[[t]]$, resp. $I_\mathrm{u}^-\subset I$) as the pre-image of $B$ (resp. of $N$, resp. of $B^-$, resp. of $N^-$) through the map $G[[t]]\to G,~t\mapsto0$.
	\subsection{Perverse sheaves on stacks}\label{section Perverse sheaves on stacks}
	This paper will be concerned with defining various categories of perverse sheaves on algebraic stacks defined over $\mathbb{F}$. Since our stacks will be locally of finite type, we will freely use the formalism developed in \cite{LO}. For an algebraic stack $\mathcal{X}$ locally of finite type, we will denote by $\mathrm{D}(\mathcal{X})$ the derived category of bounded complexes of $\mathbb{k}$-sheaves, whose cohomology is constructible. We will also denote by $({^p\mathrm{D}(\mathcal{X})}^{\leq 0},{^p\mathrm{D}(\mathcal{X})}^{\geq 0} )$ the perverse $t$-structure, and by $\mathrm{Perv}(\mathcal{X})$ its heart. It follows from the constructions that, if $\mathcal{X}$ is an $\mathbb{F}$-scheme of finite type acted on by a connected $\mathbb{F}$-algebraic group $K$, then the constructible equivariant derived category $\mathrm{D}^b_K(\mathcal{X})$ of $\mathbb{k}$-sheaves -- in the sense of Bernstein-Lunts \cite{MR1299527}-- coincides with $\mathrm{D}(K\backslash\mathcal{X})$, where $K\backslash\mathcal{X}$ denotes the quotient stack. Likewise, the perverse $t$-structures coincide. We will often write  $\mathrm{D}_K(\mathcal{X})$ (resp. $\mathrm{Perv}_K(\mathcal{X})$) instead of $\mathrm{D}(K\backslash\mathcal{X})$ (resp. $\mathrm{Perv}(K\backslash\mathcal{X})$).
	
	In the sequel, we will often have the following situations. We are given a system of $\mathbb{F}$-stacks locally of finite type $(\mathcal{X}_n)_{n\in\mathbb{N}}$ together with closed embeddings $\mathcal{X}_m\hookrightarrow \mathcal{X}_n$ for $m\leq n$, and denote by $\mathcal{X}:=\varinjlim_{n}\mathcal{X}_n$ the associated ind-stack. Then for any $m\leq n$, the $*$-pushforward through the morphism $\mathcal{X}_m\hookrightarrow \mathcal{X}_m$ induces a fully-faithful and $t$-exact functor $\mathrm{D}(\mathcal{X}_m)\to \mathrm{D}(\mathcal{X}_n)$. We let $\mathrm{D}(\mathcal{X})$ and $\mathrm{Perv}(\mathcal{X})$ be the categories defined as direct limits of the systems $(\mathrm{D}(\mathcal{X}_n))_{n\geq 0}$ and $(\mathrm{Perv}(\mathcal{X}_n))_{n\geq 0}$ respectively.  Finally, assume that each $\mathcal{X}_n$ is a scheme (so that $\mathcal{X}$ is an ind-scheme)\footnote{The cases we have in mind are when $\mathcal{X}$ is either the affine Grassmannian or the affine flag variety.} and that $\mathcal{X}$ is acted on by a group scheme $K\subset G[[t]]$ such that, for all $n\geq 0$, each $\mathcal{X}_n$ is stable under the action of $K$, and there exists $u_n\in\mathbb{Z}_{\geq 0}$ such that the subgroup $K^{u_n}:=K\cap G^{u_n}$ acts trivially on $\mathcal{X}_n$. Then we put $\mathrm{D}(K\backslash \mathcal{X}_n):=\mathrm{D}(K[t/t^{u_n}]\backslash \mathcal{X}_n)$, where $K[t/t^{u_n}]$ is defined as the algebraic group $ K/K^{u_n}$. Using the fact that, for any $m\geq u_n$, the kernel of the canonical morphism $K[t/t^m]\to K[t/t^{u_n}]$ is unipotent, one can show that the previous definition does not depend on the choice of $u_n$ (cf. the appendix of \cite{baumann:hal-01491529} for a detailed treatment in the analytic case). For $m\leq n$, pushforward induces a fully faithful and $t$-exact functor $\mathrm{D}(K\backslash \mathcal{X}_m)\to \mathrm{D}(K\backslash \mathcal{X}_n)$. We define the categories $\mathrm{D}(K\backslash \mathcal{X})$ and $\mathrm{Perv}(K\backslash\mathcal{X})$ as direct limits.
	\subsection{Ind-objects} Our constructions will require to pass to categories of ind-objects. An ind-object of a (locally small) category $\mathcal{C}$ is a formal filtered colimit of objects of $\mathcal{C}$, where ``formal" means that the colimit is taken in the category of presheaves $\mathrm{Fun}(\mathcal{C}^\mathrm{op},\mathrm{Sets})$. The inductive limits in the latter category will be denoted by $\varinjlim $. For a complete account of these constructions, we refer the reader to \cite[Chapter 6]{Kash}. We will denote by $\mathrm{Ind}(\mathcal{C})$ the full subcategory of $\mathrm{Fun}(\mathcal{C}^\mathrm{op},\mathrm{Sets})$ consisting of ind-objects of $\mathcal{C}$. If $\mathcal{C}$ is abelian, then so is $\mathrm{Ind}(\mathcal{C})$. The natural functor $\mathcal{C}\to \mathrm{Ind}(\mathcal{C})$ is fully faithful. We will use the fact that a functor $F:\mathcal{C}\to \mathcal{G}$ between two categories induces a functor $\mathrm{Ind}(\mathcal{C})\to \mathrm{Ind}(\mathcal{G})$, which we will still denote by $F$. 
	\section{Perverse sheaves on affine Grassmannians}\label{section Perverse sheaves on affine Grassmannians}
	\subsection{The extended affine Weyl group} We state a few well known combinatorial facts about the extended affine Weyl group. A detailed treatment of this topic can be found in \cite[§2]{achar2021geometric} (notice that we take opposite conventions with respect to \textit{loc. cit.}, so that our positive roots correspond to the negative roots in \textit{loc. cit.}). 
	
	Recall that the affine Weyl group $W_{\mathrm{aff}}:=W\ltimes\mathbb{Z}\check{\mathfrak{R}}$ is a subgroup of $W_\mathrm{ext}$, which is endowed with a structure of Coxeter group. Namely, each root $\alpha\in \mathfrak{R}$ defines a reflection $s_\alpha\in W$, and we will denote by $S$ the set of simple reflections (i.e. associated with a simple root) of the finite Weyl group, which is known to generate $W$. It is also well known (cf. for instance \cite[II.6.3]{jantzen2003representations}) that the set of simple reflections 
	$$S_{\mathrm{aff}}:=\{(s,0),s\in S_0\}\cup\{(s_\beta,-\beta^\vee)\},$$ where $\beta^\vee$ runs through the set of largest short coroots of the irreducible components of $\mathfrak{R}^\vee$, generates $W_{\mathrm{aff}}$. The set $S_{\mathrm{aff}}$ is then a Coxeter generating system for $W_{\mathrm{aff}}$. We will denote by $\ell:W_{\mathrm{aff}}\to\mathbb{Z}_{\geq0}$ the associated length function, and by $\leq$ the associated Bruhat order. The length function and Bruhat order can be extended to the group $W_{\mathrm{ext}}$. We let $W^S_\mathrm{ext}$ denote the subset of $W_\mathrm{ext}$ consisting of elements $w$ whose length is minimal in $wW$.
	
	The group $W_{\mathrm{ext}}$ acts naturally on the vector space $E:=\mathbf{Y}\otimes_\mathbb{Z} \mathbb{R}$ via the  action 
	$$wt_\mu\cdot \lambda:=w(\lambda+\mu)$$ for any $w\in W,~\mu\in\mathbf{Y},~\lambda\in \mathbf{Y}\otimes_\mathbb{Z} \mathbb{R}$. This action defines a hyperplane arrangement in $E$, and thus a set of alcoves (which are the connected components of the complement of $E$ by the set of hyperplanes), on which $W_\mathrm{ext}$ acts. The \textit{fundamental alcove} is the alcove defined as
	$$\mathfrak{A}_{\mathrm{fund}}:=\{v\in E~|~\forall\beta\in \check{\mathfrak{R}}_+,~-1<\langle \beta,v\rangle<0\},$$
	and is a fundamental domain for the action of $W_\mathrm{ext}$ on $E$. If we set
	$$\mathcal{C}:=\{v\in E~|~\forall\beta\in \check{\mathfrak{R}}_+,~\langle \beta,v\rangle<0\}, $$
	then we have
	\begin{equation}\label{equ dominant}
		W^S_\mathrm{ext}=\{w\in W_\mathrm{ext}~|~w^{-1}\cdot(\mathfrak{A}_{\mathrm{fund}})\subset\mathcal{C}\}. 
	\end{equation}
	We next define the \textit{fundamental box} 
	$$\Pi:=\{v\in E~|~\forall\beta\in \check{\mathfrak{R}}_s,~-1<\langle \beta,v\rangle<0\},$$
	together with the set of restricted elements
	$$W^\mathrm{res}_\mathrm{ext}=\{w\in W_\mathrm{ext}~|~w^{-1}\cdot(\mathfrak{A}_{\mathrm{fund}})\subset\Pi\}.$$
	Since any alcove inside $\mathcal{C}$ is included in a subset of the form $\lambda+\Pi$ for a unique $\lambda\in -\mathbf{Y}^+$, we have
	$$W^S_\mathrm{ext}=\{wt_\lambda,~w\in W^\mathrm{res}_\mathrm{ext},~\lambda\in\mathbf{Y}^+ \}. $$
	Lastly, we will need the following property.
	\begin{lem}\label{lem length}
		For any $w\in W^S_\mathrm{ext}$ and $\lambda\in\mathbf{Y}^+$ we have $\ell(wt_\lambda w_0)=\ell(w)+\ell(t_\lambda w_0)$.
	\end{lem}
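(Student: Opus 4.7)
The plan is to combine two standard inputs: the Iwahori--Matsumoto length formula
$$\ell(u' t_\mu) = \sum_{\alpha \in \mathfrak{R}_+} \bigl|\langle \alpha, \mu\rangle - \chi(u', \alpha)\bigr|,$$
(with $\chi(u', \alpha) := 1$ if $u'\alpha \in \mathfrak{R}_-$ and $0$ otherwise, in the convention compatible with having $\mathfrak{A}_\mathrm{fund} \subset \mathcal{C}$), together with a numerical translation of the characterization~\eqref{equ dominant}. Writing $w = ut_\nu$ with $u \in W$ and $\nu \in \mathbf{Y}$, and computing $w^{-1}\cdot v = u^{-1}v - \nu$ for $v \in \mathfrak{A}_\mathrm{fund}$, a short check shows that the condition $w^{-1}\cdot \mathfrak{A}_\mathrm{fund} \subset \mathcal{C}$ is equivalent to
$$\langle \alpha, \nu\rangle \geq \chi(u, \alpha) \qquad \forall\,\alpha \in \mathfrak{R}_+.$$
In particular every summand in the length formula for $\ell(w)$ is non-negative, so $\ell(w) = \sum_{\alpha \in \mathfrak{R}_+} \bigl(\langle \alpha, \nu\rangle - \chi(u, \alpha)\bigr)$.

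Next, using the identities $t_\lambda w_0 = w_0\, t_{w_0\lambda}$ and $wt_\lambda w_0 = (uw_0)\, t_{w_0(\nu+\lambda)}$, I would apply the length formula and perform the bijective change of variable $\alpha \mapsto \beta := -w_0\alpha$ on $\mathfrak{R}_+$; this, together with $\chi(w_0,\cdot)\equiv 1$ and the $W$-invariance of the pairing, transforms $\chi(uw_0,\alpha)$ into $1-\chi(u,\beta)$ and $\langle\alpha, w_0(\cdot)\rangle$ into $-\langle\beta,\cdot\rangle$. Using the dominance of $\lambda$ to drop absolute values on the first sum, one obtains
\begin{align*}
\ell(t_\lambda w_0) &= \sum_{\beta \in \mathfrak{R}_+} \bigl(\langle \beta, \lambda\rangle + 1\bigr), \\
\ell(wt_\lambda w_0) &= \sum_{\beta \in \mathfrak{R}_+} \bigl|\langle \beta, \nu + \lambda\rangle + 1 - \chi(u, \beta)\bigr|.
\end{align*}

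The lemma then follows from the purely arithmetical identity
$$\langle \beta, \nu + \lambda\rangle + 1 - \chi(u, \beta) = \bigl(\langle \beta, \nu\rangle - \chi(u, \beta)\bigr) + \bigl(\langle \beta, \lambda\rangle + 1\bigr),$$
in which the first parenthesized term is $\geq 0$ by the characterization of $W^S_\mathrm{ext}$ established above, and the second is $\geq 1$ because $\lambda$ is dominant. Hence the absolute values in the expression for $\ell(wt_\lambda w_0)$ drop out term by term and the sum splits as $\ell(w) + \ell(t_\lambda w_0)$. The main obstacle in writing this rigorously is to pin down the various sign conventions --- the choice of anti-dominant fundamental alcove, the associated notion of affine positivity, and the resulting precise shape of the length formula --- after which the argument is a combinatorial verification requiring no deeper input.
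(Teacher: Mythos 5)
Your argument is correct, but it is a genuinely different route from the one taken in the paper. The paper's proof is a two-line reduction: it rewrites $wt_\lambda w_0 = ww_0\,t_{w_0(\lambda)}$, notes that $w_0(\lambda)\in-\mathbf{Y}^+$, and invokes \cite[Lemma 2.7]{achar2022geometric} (modulo the opposite alcove convention). You instead give a self-contained verification via the Iwahori--Matsumoto length formula. I checked the pieces: your numerical characterization of $W^S_\mathrm{ext}$ (namely $\langle\alpha,\nu\rangle\geq\chi(u,\alpha)$ for all $\alpha\in\mathfrak{R}_+$, where $w=ut_\nu$) does follow from \eqref{equ dominant} and the definition of $\mathfrak{A}_{\mathrm{fund}}$, since $\langle u(\alpha),v\rangle$ ranges over $(-1,0)$ or $(0,1)$ on $\mathfrak{A}_{\mathrm{fund}}$ according to the sign of $u(\alpha)$; your length formula is the correct one for the anti-dominant fundamental alcove (it returns $\ell(w_0)=|\mathfrak{R}_+|$ and $\ell(t_\mu)=\langle2\rho,\mu\rangle$ for dominant $\mu$); and the substitution $\beta=-w_0\alpha$ together with $\chi(uw_0,\alpha)=1-\chi(u,\beta)$ gives exactly the displayed expressions, after which the term-by-term positivity makes the absolute values split. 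What the paper's approach buys is brevity at the cost of outsourcing the combinatorics (and of a slightly delicate translation of conventions between the two papers); what yours buys is transparency --- the mechanism (each positive root contributes a non-negative quantity to $\ell(w)$ and a strictly positive one to $\ell(t_\lambda w_0)$, so the contributions add) is visible, and no external lemma is needed. The convention-pinning you flag as the main obstacle is real but resolves correctly as you set it up.
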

 \begin{proof}
     Notice that we have $wt_\lambda w_0=ww_0w_0t_\lambda w_0=ww_0t_{w_0(\lambda)}$. Since $w_0(\lambda)\in-\mathbf{Y}^+$ (and we have chosen opposite conventions for the choice of the fundamental alcove compared \cite{achar2022geometric}), the result then follows from \cite[Lemma 2.7]{achar2022geometric}.
 \end{proof}
	\subsection{Coset representatives} We introduce the set 
 $$^SW_\mathrm{ext}^S\subset W_\mathrm{ext},$$  
 consisting of elements $w\in W_\mathrm{ext}$ such that $w$ is minimal in $Ww$ and $vw\in W^S_\mathrm{ext}$ for all $v\in W$ (see \cite[Lemma 2.4]{achar2021geometric} for equivalent definitions). In particular, we have the inclusion $^SW_\mathrm{ext}^S\subset W_\mathrm{ext}^S$.

 For any $\lambda\in \mathbf{Y}$, we denote by $w^R_\lambda$ the minimal element in $t_\lambda W$.
 \begin{lem}{{\cite[Lemma 2.5]{achar2021geometric}}}\label{lem maximal elmt}
     Let $\lambda\in\mathbf{Y}$. We have $w^R_\lambda\in{^SW_\mathrm{ext}^S}$ iff $w_0(\lambda)\in\mathbf{Y}^{++}$. Moreover, in this case we have $w^R_\lambda=w_0t_{w_0(\lambda)}$.
 \end{lem}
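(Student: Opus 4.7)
The plan is to identify $w^R_\lambda$ explicitly under the hypothesis and then verify the two conditions defining $^SW_\mathrm{ext}^S$ directly. I would work uniformly with the action of $W_\mathrm{ext}$ on the affine root system: writing any element in the form $t_\mu v$ with $v\in W$, $\mu\in\mathbf{Y}$, one has
\begin{equation*}
(t_\mu v)(\alpha_s)=v(\alpha_s)-\langle v(\alpha_s),\mu\rangle\,\delta
\end{equation*}
for each simple root $\alpha_s\in\mathfrak{R}_s$, where $\delta$ denotes the imaginary root. Membership of a $w$ in $W^S_\mathrm{ext}$ is equivalent to $\ell(ws)>\ell(w)$ for every simple reflection $s\in W$, which by the standard combinatorics of affine Weyl groups translates into the positivity (as an affine root) of each $w(\alpha_s)$.

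\textbf{Identifying $w^R_\lambda$.} Applied to the candidate $w:=w_0t_{w_0(\lambda)}=t_\lambda w_0$, the formula reads
\begin{equation*}
(t_\lambda w_0)(\alpha_s)=w_0(\alpha_s)-\langle\alpha_s,w_0(\lambda)\rangle\,\delta.
\end{equation*}
Since $w_0$ sends each simple root to a negative root, $w_0(\alpha_s)<0$, so positivity of this affine root is controlled by the sign of the $\delta$-coefficient. Under the hypothesis $w_0(\lambda)\in\mathbf{Y}^{++}$ this coefficient is strictly of the correct sign for every simple $\alpha_s$, so $t_\lambda w_0\in W^S_\mathrm{ext}$. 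Since the minimal element of a left coset $t_\lambda W$ in $W_\mathrm{ext}$ is unique, one concludes $w^R_\lambda=w_0t_{w_0(\lambda)}$, establishing the formula of the lemma.

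\textbf{Verification of membership in $^SW_\mathrm{ext}^S$.} The condition that $w$ be minimal in $Ww$ amounts to positivity of each $w^{-1}(\alpha_s)=w_0(\alpha_s)+\langle\alpha_s,\lambda\rangle\,\delta$, and is handled by a symmetric computation. The condition that $vw\in W^S_\mathrm{ext}$ for every $v\in W$ is the essential one: since $v$ acts on the finite part only and fixes $\delta$, the $\delta$-coefficient of $v(w(\alpha_s))$ coincides with that of $w(\alpha_s)$. Therefore, the strict sign of this single coefficient — ensured exactly by the hypothesis — handles all $v$ simultaneously. For the converse, if $w_0(\lambda)\notin\mathbf{Y}^{++}$ then some pairing $\langle\alpha_s,w_0(\lambda)\rangle$ has the wrong sign or is zero, and using the formula above I would exhibit a specific $v\in W$ (by choosing $v$ to send the corresponding $w_0(\alpha_s)$ to a positive root when the boundary case occurs, or unconditionally otherwise) for which $v(w^R_\lambda(\alpha_s))$ fails to be a positive affine root, so that $vw^R_\lambda\notin W^S_\mathrm{ext}$.

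The main obstacle I anticipate is keeping the sign conventions consistent, since the paper's fundamental alcove $\mathfrak{A}_\mathrm{fund}$ sits in the antidominant cone $\mathcal{C}$ while the length function on $W_\mathrm{aff}$ is the Coxeter length attached to $S_\mathrm{aff}$, whose natural fundamental alcove is dominant. Operating purely at the level of the action on affine roots, as above, is the cleanest way to sidestep this ambiguity and reduce the claim to a small number of strict inequalities controlled uniformly by the hypothesis on $w_0(\lambda)$.
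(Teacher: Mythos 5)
The lemma is quoted by the paper from \cite[Lemma 2.5]{achar2021geometric} without proof, so there is no internal argument to compare against; your proposal has to stand on its own.

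Your strategy --- compute $(t_\lambda w_0)(\alpha_s)$ via the affine-root action, read membership in $W^S_\mathrm{ext}$ from a sign, then use $W$-invariance of the $\delta$-coefficient to handle all $v\in W$ simultaneously --- is the natural direct proof, and the uniformity-in-$v$ observation is exactly the right idea. However, the stated length criterion has the wrong sign for the paper's conventions, and the appeal to "the correct sign" masks this rather than resolving it. As the paper records in \S 3.1 and in the proof of Lemma~\ref{lem length}, it takes the \emph{anti-dominant} fundamental alcove, so the simple affine root attached to a finite $s\in S$ is $-\alpha_s$, not $\alpha_s$; the correct criterion is therefore $\ell(ws)>\ell(w)\Leftrightarrow w(\alpha_s)<0$. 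Your own computation
\[(t_\lambda w_0)(\alpha_s)=w_0(\alpha_s)-\langle\alpha_s,w_0(\lambda)\rangle\,\delta\]
has strictly negative $\delta$-coefficient under the hypothesis $w_0(\lambda)\in\mathbf{Y}^{++}$, so it is a \emph{negative} affine root --- in either convention, since a negative $\delta$-coefficient already excludes positivity --- not a positive one as asserted. The argument closes once the criterion is flipped, but contrary to your closing remark, working at the level of affine roots does \emph{not} sidestep the alcove ambiguity: positivity of an affine root is precisely where the choice of alcove enters, and it must be nailed down.

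The converse direction is a second genuine gap. When $w_0(\lambda)\notin\mathbf{Y}^{++}$, your formula describes $(t_\lambda w_0)(\alpha_s)$, but $t_\lambda w_0$ is then no longer equal to $w^R_\lambda$; the quantity $v\bigl(w^R_\lambda(\alpha_s)\bigr)$ you propose to analyze is not what your computation produces, and you have no explicit description of $w^R_\lambda$. You would need either to identify $w^R_\lambda$ in general, or to argue differently --- for instance, that $w^R_\lambda\in{^SW^S_\mathrm{ext}}$ forces $v\,w^R_\lambda=w^R_{v(\lambda)}$ for every $v\in W$, and then derive a contradiction from $\langle\alpha_s,w_0(\lambda)\rangle\leq 0$ for some $s$. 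As written the converse is a sketch, not a proof.
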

 \begin{coro}\label{coro domi}
     We have a bijection
     $$\mathbf{Y}^{++}\to  {^SW_\mathrm{ext}^S},~w_0(\lambda)\mapsto w_0t_{w_0(\lambda)}.$$
 \end{coro}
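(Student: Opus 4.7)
The plan is to reduce the corollary to Lemma \ref{lem maximal elmt} by parametrising ${^SW_\mathrm{ext}^S}$ through the ``minimal translation'' representatives $w^R_\lambda$.

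First I would observe that, since ${^SW_\mathrm{ext}^S}\subset W^S_\mathrm{ext}$ (by applying the defining condition with $v = e$), every element $w\in{^SW_\mathrm{ext}^S}$ is in particular the minimal element of its left coset $wW$. Now $W_\mathrm{ext} = W\ltimes \mathbf{Y}$, so the cosets $\{t_\lambda W : \lambda\in \mathbf{Y}\}$ exhaust $W_\mathrm{ext}/W$ and are pairwise distinct. Hence there is a unique $\lambda\in\mathbf{Y}$ with $wW = t_\lambda W$, and then by minimality $w = w^R_\lambda$. Conversely, every $w^R_\lambda$ is by construction the minimal element of $t_\lambda W$. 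This identifies the map $\lambda\mapsto w^R_\lambda$ as a bijection between $\mathbf{Y}$ and the set of minimal representatives of $W_\mathrm{ext}/W$.

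Next I would apply Lemma \ref{lem maximal elmt} directly: under the above identification, $w^R_\lambda$ lies in ${^SW_\mathrm{ext}^S}$ exactly when $w_0(\lambda)\in\mathbf{Y}^{++}$, and in that case $w^R_\lambda = w_0 t_{w_0(\lambda)}$. Therefore the above bijection restricts to a bijection between $\{\lambda\in\mathbf{Y} : w_0(\lambda)\in\mathbf{Y}^{++}\}$ and ${^SW_\mathrm{ext}^S}$, sending $\lambda$ to $w_0 t_{w_0(\lambda)}$.

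Finally, since $w_0$ is an involution of $\mathbf{Y}$, the reparametrisation $\mu := w_0(\lambda)$ turns the source set into $\mathbf{Y}^{++}$, and the composed bijection $\mathbf{Y}^{++}\to{^SW_\mathrm{ext}^S}$ reads $w_0(\lambda)\mapsto w_0 t_{w_0(\lambda)}$, which is exactly the statement. There is no real obstacle here: the content is entirely concentrated in Lemma \ref{lem maximal elmt}, and the corollary is just a bookkeeping of the coset/translation parametrisation combined with the involutivity of $w_0$.
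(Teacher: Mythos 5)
Your proof is correct and follows essentially the same route as the paper: both reduce the statement to the bijection $\lambda\mapsto w^R_\lambda$ between $\mathbf{Y}$ and $W_\mathrm{ext}^S$ combined with Lemma \ref{lem maximal elmt}. You merely spell out the coset bookkeeping and the reparametrisation by $w_0$ that the paper leaves implicit.
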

 \begin{proof}
     This follows from the fact that the map $\mathbf{Y}\to W_\mathrm{ext}^S,~\lambda\to w^R_\lambda$ is a bijection, together with Lemma \ref{lem maximal elmt}.
 \end{proof}
 Finally, if we put 
 $${^SW_\mathrm{ext}^\mathrm{res}}:={^SW_\mathrm{ext}^S}\cap W_\mathrm{ext}^\mathrm{res},$$
 then as above we have
 $$^SW^S_\mathrm{ext}=\{wt_\lambda,~w\in {^SW_\mathrm{ext}^\mathrm{res}},~\lambda\in\mathbf{Y}^+ \}. $$
	
	\subsection{Perverse sheaves on flag varieties}\label{section The affine Grassmannian and the affine flag variety} We start by recalling a few well-known facts about the affine Grassmannian and affine flag variety, which can be found for instance in \cite{MR1961134}. We consider the $\mathbb{F}$-ind-schemes $\mathrm{Gr}$ and $\mathrm{Fl}$ defined as the fppf quotients
	$$\mathrm{Gr}:=G((t))/G[[t]],\qquad\mathrm{Fl}:=G((t))/I. $$
	For any $\lambda\in\mathbf{Y}$, we denote by $t^\lambda\in T((t))$ the image of $t$ through $\lambda:\mathbb{G}_\mathrm{m}((t))\to T((t))$. For any $v\in W$, we choose a lift $\dot{v}\in N_G(T)$ and, for any element $w:=vt_\lambda\in W_\mathrm{ext}$, we set $\dot{w}:=\dot{v}t^\lambda\in G((t))$. We define $\mathrm{Fl}_w$ as the $I$-orbit of the image of $\dot{w}$ under the canonical projection $G((t))\to\mathrm{Fl}$. Notice that $\mathrm{Fl}_w$ is also the $I_\mathrm{u}$-orbit of that same element. It is well known that each $\mathrm{Fl}_w$ is an affine $\mathbb{F}$-space of dimension $\ell(w)$, and that $\overline{\mathrm{Fl}_v}\subset \overline{\mathrm{Fl}_w}\Leftrightarrow v\leq w$ for all $v,w\in W_\mathrm{ext}$. We have a decomposition of the associated reduced ind-scheme
	$$(\mathrm{Fl})_{\mathrm{red}}=\bigsqcup_{w\in W_\mathrm{ext}}\mathrm{Fl}_w.$$
	Likewise, for any $w\in W_\mathrm{ext}$, we denote by $\mathrm{Gr}_w$ the $I$-orbit (which coincides with the $I_\mathrm{u}$-orbit) of the image of $\dot{w}$ under the canonical projection $G((t))\to\mathrm{Gr}$. Each $\mathrm{Gr}_w$ is an affine space over $\mathbb{F}$, of dimension $\ell(w)$ when $w\in W_\mathrm{ext}^S$.  From the fact that the canonical projection $\mathrm{Fl}\to \mathrm{Gr}$ is $I$-equivariant, one can deduce the stratification
	$$(\mathrm{Gr})_{\mathrm{red}}=\bigsqcup_{w\in W_\mathrm{ext}^S}\mathrm{Gr}_w,\quad\text{and}~\overline{\mathrm{Gr}_v}\subset \overline{\mathrm{Gr}_w}\Leftrightarrow v\leq w ~\forall v,w\in W_\mathrm{ext}^S.$$
	Finally, for any $\lambda\in \mathbf{Y}$, we let $\mathrm{Gr}^\lambda:=G[[t]]\cdot[t^\lambda]$ be the $G[[t]]$-orbit of the image $[t^\lambda]$ of $t^\lambda$ in $\mathrm{Gr}$, and let $j^\lambda:\mathrm{Gr}^\lambda\hookrightarrow\mathrm{Gr}$ denote the locally closed immersion. We have 
	$$\mathrm{dim}(\mathrm{Gr}^\lambda)=\langle2\rho,\lambda\rangle.$$
	Using the Cartan decomposition, one can show that the action of $G[[t]]$ on $\mathrm{Gr}$ induces the following equality
	$$(\mathrm{Gr})_{\mathrm{red}}=\bigsqcup_{\lambda\in \mathbf{Y}^+}\mathrm{Gr}^\lambda.$$
	Each orbit $\mathrm{Gr}^\lambda$ is a smooth $\mathbb{F}$-scheme of finite type and
	$$\overline{\mathrm{Gr}}^\lambda\subset \overline{\mathrm{Gr}}^\mu \Leftrightarrow \lambda\leq\mu\qquad\forall\lambda,\mu\in \mathbf{Y}^+.$$
	
	Next, we define three categories of perverse sheaves which will be constructible with respect to the above stratifications. Each time, the formalism recalled in the second paragraph of §\ref{section Perverse sheaves on stacks} applies.
	
	First notice that, for each $\lambda\in \mathbf{Y}^+$, there exists a large enough $k$ such that $G^k$ acts trivially on $\overline{\mathrm{Gr}}^\lambda$. So as recalled in §\ref{section Perverse sheaves on stacks}, we can define the $G[[t]]$-equivariant categories $\mathrm{D}_{G[[t]]}(\mathrm{Gr})$ and $\mathrm{Perv}_{G[[t]]}(\mathrm{Gr})$. The latter category is a \textit{highest weight category}, with weight poset $\mathbf{Y}^+$. To each $\mu\in\mathbf{Y}^+$, we associate the standard, costandard and simple objects
	$$\mathcal{I}^\mu_!:={^p\tau}^{\geq0}(j^\mu_!\underline{\mathbb{k}}_{\mathrm{Gr}^\mu}[\langle2\rho,\mu\rangle]),\quad  \mathcal{I}^\mu_*:={^p\tau}^{\leq0}(j^\mu_*\underline{\mathbb{k}}_{\mathrm{Gr}^\mu}[\langle2\rho,\mu\rangle]),\quad \mathcal{IC}^\mu:=j^\mu_{!*}\underline{\mathbb{k}}_{\mathrm{Gr}^\mu}[\langle2\rho,\mu\rangle].$$
	
	The category $\mathrm{Perv}_{I_\mathrm{u}}(\mathrm{Gr})$ is also a highest weight category, with weight poset $W^S_\mathrm{ext}$. For each $w\in W^S_\mathrm{ext}$, we will denote by $\Delta_{w}$, $\nabla_{w}$ the corresponding standard and costandard objects. They are respectively defined as the $!$-pushforward and  $*$-pushforward of the constant sheaf $\underline{\mathbb{k}}_{\mathrm{Gr}_w}[\ell(w)]$ under the inclusion $\mathrm{Gr}_w\hookrightarrow\mathrm{Gr}$ (note that these objects are indeed perverse since the locally closed inclusion is affine). The simple object associated with $w$ will be denoted by $\mathrm{L}_w$: it is defined as the intersection cohomology complex associated with $\mathrm{Gr}_w$ (and coincides with the image of the canonical morphism $\Delta_{w}\to\nabla_{w}$).
	
	We introduce notation for the category $\mathrm{Perv}_{I_\mathrm{u}}(\mathrm{Fl})$. For $w\in W_\mathrm{ext}$, we let $\mathcal{D}_w$ and $\mathcal{N}_w$ be the associated standard and costandard perverse sheaves, defined respectively as the $!$-pushforward and  $*$-pushforward of the constant sheaf $\underline{\mathbb{k}}_{\mathrm{Fl}_w}[\ell(w)]$ under the inclusion $\mathrm{Fl}_w\hookrightarrow\mathrm{Fl}$. We will denote by $\mathcal{L}_w\in \mathrm{Perv}_{I_\mathrm{u}}(\mathrm{Fl})$ the intersection cohomology complex associated with $\mathrm{Fl}_w$ (it coincides with the image of the canonical morphism $\mathcal{D}_w\to \mathcal{N}_w$). The family $(\mathcal{L}_w,~w\in W_\mathrm{ext})$ are then representatives for the isomorphism classes of the simple objects in $\mathrm{Perv}_{I_\mathrm{u}}(\mathrm{Fl})$. Likewise, the standard and costandard objects of the category $\mathrm{Perv}_{I}(\mathrm{Fl})$ are labelled by $W_\mathrm{ext}$, and we will still denote by $\mathcal{D}_w$, $\mathcal{N}_w$ the corresponding objects associated with $w$.

	\subsection{Convolution}
	We now briefly recall the construction of convolution products defined on the above categories, which will be used profusely in the sequel. We start with the bifunctor 
	\begin{equation}\label{eq monoid}
	    (-)\star(-):\mathrm{D}_{G[[t]]}(\mathrm{Gr})\times \mathrm{D}_{G[[t]]}(\mathrm{Gr})\to \mathrm{D}_{G[[t]]}(\mathrm{Gr}), 
	\end{equation}
	which will endow $\mathrm{D}_{G[[t]]}(\mathrm{Gr})$ with a structure of monoidal category (cf. \cite[§3.2.2]{BG}). For that, we consider the diagram
	\begin{equation}\label{conv diagram}
		\mathrm{Gr}\times\mathrm{Gr}\xleftarrow{p}G((t))\times \mathrm{Gr}\xrightarrow{q} G((t))\times^{G[[t]]} \mathrm{Gr}\xrightarrow{m}\mathrm{Gr}, 
	\end{equation}
	where $p$ and $q$ are the canonical projections, and $m$ is the multiplication morphism. For $(\mathcal{F},\mathcal{G})\in \mathrm{D}_{G[[t]]}(\mathrm{Gr})\times  \mathrm{D}_{G[[t]]}(\mathrm{Gr})$, $\mathcal{F}\star\mathcal{G}$ is defined as $m_*(\mathcal{F}\widetilde{\boxtimes}\mathcal{G})$, where $\mathcal{F}\widetilde{\boxtimes}\mathcal{G}$ is the image of $p^*(\mathcal{F}\boxtimes\mathcal{G})$ through the inverse of the equivalence of categories
	$$q^*:\mathrm{D}_{G[[t]]\times G[[t]]}(G((t))\times \mathrm{Gr})\to  \mathrm{D}_{G[[t]]}(G((t))\times^{G[[t]]} \mathrm{Gr}).$$
	It is a classical result (cf. for instance \cite[§6.3]{baumann:hal-01491529}) that the convolution product is $t$-exact, i.e. that $\mathcal{F}\star\mathcal{F}'\in \mathrm{Perv}_{G[[t]]}(\mathrm{Gr})$ when $\mathcal{F},\mathcal{F}'\in \mathrm{Perv}_{G[[t]]}(\mathrm{Gr})$. For any subgroup scheme $K\subset G[[t]]$, a similar construction endows the category $\mathrm{D}_{K}(\mathrm{Gr})$ with a right action of the monoidal category $\mathrm{D}_{G[[t]]}(\mathrm{Gr})$.
	
	In the sequel, we will also need the convolution bifunctor 
	$$(-)\star^I(-):\mathrm{D}_{I_\mathrm{u}}(\mathrm{Fl})\times  \mathrm{D}_{I}(\mathrm{Gr})\to \mathrm{D}_{I_\mathrm{u}}(\mathrm{Gr}).$$
	The definition is similar to that \eqref{eq monoid}, but using the diagram 
	$$\mathrm{Fl}\times\mathrm{Gr}\xleftarrow{p}G((t))\times \mathrm{Gr}\xrightarrow{q} G((t))\times^{I} \mathrm{Gr}\xrightarrow{m}\mathrm{Gr} $$
	instead of \eqref{conv diagram}. We have the following standard properties, which are stated in \cite[Lemma 3.1]{achar2022geometric}.
	\begin{lem}\label{lem conv standard}
 \begin{enumerate}
     \item For any $w\in W_\mathrm{ext}$, there exist canonical isomorphisms
     $$\mathcal{D}_w\star^I\mathcal{N}_{w^{-1}}\simeq \mathcal{D}_e\simeq \mathcal{N}_w\star^I\mathcal{D}_{w^{-1}}. $$
     \item For $w,y\in W_\mathrm{ext}$ such that $\ell(ww_0y)=\ell(w)+\ell(w_0y)$ and both $wy$ and $y$ belong to $W_\mathrm{ext}^S$, there exist canonical isomorphisms
		$$\mathcal{D}_w\star^I\Delta_y\simeq\Delta_{wy}\qquad \mathcal{N}_w\star^I\nabla_y\simeq\nabla_{wy}.$$
 \end{enumerate}
	\end{lem}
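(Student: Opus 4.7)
The plan is to deduce part (1) from part (2) together with the special case of a simple affine reflection, and to prove part (2) by a direct geometric analysis of the convolution diagram. Throughout I will use the compatibility of the twisted external product $\widetilde{\boxtimes}$ with the formation of $!$- and $*$-extensions of constant sheaves on Schubert cells.

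For part (2), the key geometric input is the following: under the stated length-additivity hypothesis, the restriction of the multiplication morphism $m$ to the closed substack $\overline{\mathrm{Fl}_w}\times^I\overline{\mathrm{Gr}_y} \subset G((t))\times^I\mathrm{Gr}$ is proper, has image $\overline{\mathrm{Gr}_{wy}}$, and restricts to an isomorphism $\mathrm{Fl}_w\times^I\mathrm{Gr}_y \xrightarrow{\sim} \mathrm{Gr}_{wy}$. This is a Bott--Samelson-type statement which reduces to an orbit-by-orbit analysis using the Bruhat decomposition of $G((t))$. Granting it, the twisted product $\mathcal{D}_w\widetilde{\boxtimes}\Delta_y$ is the $!$-extension of a shifted constant sheaf supported on the open cell, and pushing forward along the above isomorphism identifies $\mathcal{D}_w\star^I\Delta_y$ with $\Delta_{wy}$. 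Dually, $\mathcal{N}_w\star^I\nabla_y\simeq\nabla_{wy}$ follows by proper base change applied to the $*$-extension, using the properness statement above.

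For part (1), I fix a reduced expression $w=\tau s_1\cdots s_n$ with $\tau\in W_\mathrm{ext}$ of length zero. Iterating part (2) yields factorizations
$$\mathcal{D}_w\simeq \mathcal{D}_\tau\star^I\mathcal{D}_{s_1}\star^I\cdots\star^I\mathcal{D}_{s_n},\qquad \mathcal{N}_{w^{-1}}\simeq \mathcal{N}_{s_n}\star^I\cdots\star^I\mathcal{N}_{s_1}\star^I\mathcal{N}_{\tau^{-1}}.$$
It then suffices to verify two base cases: $\mathcal{D}_s\star^I\mathcal{N}_s\simeq\mathcal{D}_e$ for a simple affine reflection $s$, and $\mathcal{D}_\tau\star^I\mathcal{N}_{\tau^{-1}}\simeq\mathcal{D}_e$ for $\tau$ of length zero. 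The latter is immediate because $\tau$ acts as an automorphism of $\mathrm{Fl}$, so both factors are skyscrapers on translates of the base point. The former reduces, via the $\mathbb{P}^1$-fibration $\mathrm{Fl}\to\mathrm{Fl}^s$ onto the parahoric flag variety associated with $s$, to a direct calculation on $\overline{\mathrm{Fl}_s}\simeq\mathbb{P}^1$: the distinguished triangle relating the constant sheaf on $\mathbb{P}^1$ to the $!$- and $*$-extensions from $\mathrm{Fl}_s\simeq\mathbb{A}^1$ and the point $\mathrm{Fl}_e$ makes $\mathcal{D}_s\star^I\mathcal{N}_s$ collapse, via an explicit push--pull calculation, to the skyscraper $\mathcal{D}_e$ at the base point.

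The main obstacle is the careful verification that the convolution product, which involves descent through the free right $I$-action in the diagram $G((t))\times\mathrm{Gr}\to G((t))\times^I\mathrm{Gr}$, commutes with the formation of $!$- and $*$-extensions of constant sheaves on Schubert cells. This requires manipulating equivariant derived categories on the relevant quotient ind-stacks in the sense of \cite{LO} (as recalled in §\ref{section Perverse sheaves on stacks}) and checking smooth base-change identities for the convolution diagram. Once this formalism is in place, both parts of the lemma reduce to the combinatorics of the length function on $W_\mathrm{ext}$ and elementary geometry on $\mathbb{P}^1$.
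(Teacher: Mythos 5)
The paper offers no proof of this lemma: it is quoted verbatim from \cite[Lemma 3.1]{achar2022geometric}, so there is nothing internal to compare against. Your argument is the standard one (and is essentially what the cited source does): length-additivity makes the convolution morphism an isomorphism over the open cell, properness over the closure lets one push forward the $!$- and $*$-extensions, and part (1) telescopes down to the $\mathbb{P}^1$-computation for a simple affine reflection once standards and costandards are factored along a reduced expression. Two points deserve more care than you give them. First, the factorization $\mathcal{D}_w\simeq\mathcal{D}_\tau\star^I\mathcal{D}_{s_1}\star^I\cdots\star^I\mathcal{D}_{s_n}$ used in part (1) is not an instance of part (2), whose second argument lives on $\mathrm{Gr}$; you need the $\mathrm{Fl}$-analogue $\mathcal{D}_a\star^I\mathcal{D}_b\simeq\mathcal{D}_{ab}$ for $\ell(ab)=\ell(a)+\ell(b)$ (proved by the same mechanism, but it should be stated), together with associativity of $\star^I$. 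Second, in part (2) the combinatorial heart is hidden in your ``orbit-by-orbit analysis'': since $\pi^{-1}(\mathrm{Gr}_y)=\bigsqcup_{v\in W}\mathrm{Fl}_{yv}$ for $y\in W^S_\mathrm{ext}$, the claim that $m$ is injective on $\mathrm{Fl}_w\times^I\mathrm{Gr}_y$ requires that length-additivity for $w$ against the \emph{longest} representative of $yW$ forces $\ell(w\,yv)=\ell(w)+\ell(yv)$ for \emph{every} $v\in W$ (in particular $\ell(wy)=\ell(w)+\ell(y)$, which fixes the perverse shift). This implication is true and standard, but it is exactly where the hypothesis of the lemma enters, so it should be made explicit rather than absorbed into ``Bruhat decomposition''. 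With those two items supplied, the proof is complete and matches the approach of the reference the paper relies on.
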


	\subsection{The geometric Satake equivalence and Finkelberg-Mirkovi\'c conjecture} Recall that $G^\vee_\mathbb{k}$ is $\check{\mathbf{G}}^{(1)}$. The following celebrated result will be used profusely in the sequel.
	\begin{thm}[\cite{Mirkovic2004GeometricLD}]\label{satake equivalence1}
		The convolution product $\star$ makes $\mathrm{Perv}_{G[[t]]}(\mathrm{Gr})$ a monoidal category, and there is an equivalence of monoidal categories 
		$$\mathrm{Sat}:(\mathrm{Perv}_{G[[t]]}(\mathrm{Gr}),\star)\xrightarrow{\sim} (\mathrm{Rep}_\mathbb{k}(\check{\mathbf{G}}^{(1)}),\otimes_\mathbb{k}).$$
	\end{thm}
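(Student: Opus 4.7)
My plan is to follow the Tannakian strategy of Mirkovi\'c-Vilonen. The four main ingredients will be: a symmetric monoidal structure on $(\mathrm{Perv}_{G[[t]]}(\mathrm{Gr}),\star)$, a fiber functor to $\mathbb{k}$-vector spaces, Tannakian reconstruction of an affine group scheme $\widetilde{G}$, and identification of $\widetilde{G}$ with $\check{\mathbf{G}}^{(1)}$ via its root datum.

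For the monoidal structure, I would first verify that $\star$ is $t$-exact. This reduces to showing that the convolution morphism $m$ in diagram \eqref{conv diagram}, restricted to twisted products $\overline{\mathrm{Gr}}^\lambda\times^{G[[t]]}\overline{\mathrm{Gr}}^\mu$ of closures of spherical orbits, is stratified semismall with respect to the $G[[t]]$-orbit stratification; the dimension formula $\dim\mathrm{Gr}^\lambda=\langle 2\rho,\lambda\rangle$ reduces this to a concrete combinatorial check. The associativity constraint follows from the triple-product diagram. For the commutativity constraint, I would introduce the Beilinson-Drinfeld Grassmannian $\mathrm{Gr}_{X^2}\to X^2$, whose fiber over the diagonal is $\mathrm{Gr}$ and over its complement is $\mathrm{Gr}\times\mathrm{Gr}$, and define a fusion product via the nearby-cycles functor along the diagonal. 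One then checks that fusion agrees with convolution (using the factorization structure of $\mathrm{Gr}_{X^2}$) and inherits a natural swap symmetry coming from permuting the two marked points; the technical input is that nearby cycles preserves perversity, which follows from the ULA property of the relevant IC-sheaves over $X^2$.

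Next I would build the fiber functor $\mathbf{H}:=\bigoplus_i H^i(\mathrm{Gr},-)$. Consider the semi-infinite orbits $S_\mu:=N((t))\cdot[t^\mu]$. The key geometric input is that each intersection $\overline{\mathrm{Gr}}^\lambda\cap S_\mu$ has pure dimension $\langle\rho,\lambda+\mu\rangle$, so that for any $\mathcal{F}\in\mathrm{Perv}_{G[[t]]}(\mathrm{Gr})$ the compactly supported cohomology $H^*_c(S_\mu,\mathcal{F})$ is concentrated in degree $\langle 2\rho,\mu\rangle$. Decomposing along the stratification by $S_\mu$ yields
$$\mathbf{H}(\mathcal{F})\;\simeq\;\bigoplus_{\mu\in\mathbf{Y}}H^{2\langle\rho,\mu\rangle}_c(S_\mu,\mathcal{F}),$$
from which exactness and a canonical $\mathbf{Y}$-grading follow at once; the monoidal structure is supplied by Künneth applied over $\mathrm{Gr}_{X^2}$. \emph{The main obstacle throughout is precisely this concentration-in-one-degree statement in the modular setting}: the original characteristic-zero argument uses purity and weight filtrations that are unavailable here, so one must either deduce it by a parity-sheaf analysis or establish it directly by induction on $\lambda$, using the MV cycles to describe a canonical basis of $\mathbf{H}(\mathcal{IC}^\lambda)$.

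With the fiber functor in hand, Tannakian reconstruction produces an affine group scheme $\widetilde{G}$ over $\mathbb{k}$ with $\mathrm{Perv}_{G[[t]]}(\mathrm{Gr})\simeq\mathrm{Rep}_\mathbb{k}(\widetilde{G})$, and the canonical $\mathbf{Y}$-grading on $\mathbf{H}$ embeds a maximal torus with cocharacter lattice $\mathbf{Y}$. To pin down the root datum I would analyze the convolutions $\mathcal{IC}^\lambda\star\mathcal{IC}^\mu$ for small $\lambda,\mu$: the MV cycles furnish a distinguished basis of the weight spaces of $\mathbf{H}(\mathcal{IC}^\lambda)$, and counting them in the semismall fibers of $m$ reproduces the Weyl dimension and character formulas for the irreducible of $\check{\mathbf{G}}^{(1)}$ of highest weight $\lambda$. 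Reductivity of $\widetilde{G}$ is obtained from semisimplicity of the subcategory generated by minuscule IC-sheaves. Together these identify $\widetilde{G}$ with $\check{\mathbf{G}}^{(1)}$ as split connected reductive groups over $\mathbb{k}$, which combined with the monoidality of $\mathbf{H}$ yields the desired equivalence $\mathrm{Sat}$.
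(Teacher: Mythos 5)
The paper does not prove this theorem; it is quoted directly from \cite{Mirkovic2004GeometricLD}, so there is no internal argument to compare against. Your sketch does correctly reproduce the overall Tannakian architecture of Mirkovi\'c--Vilonen: $t$-exactness of $\star$ via stratified semismallness of the convolution morphism, the commutativity constraint via fusion over the Beilinson--Drinfeld Grassmannian and nearby cycles (with the ULA input), the fiber functor built from compactly supported cohomology along the semi-infinite orbits $S_\mu$, and Tannakian reconstruction followed by root-datum identification. One small imprecision: you suggest the concentration $H^\ast_c(S_\mu,\mathcal F)$ in degree $\langle2\rho,\mu\rangle$ relies on purity in characteristic zero and needs to be reworked for modular coefficients; in fact the argument in \cite{Mirkovic2004GeometricLD} is already coefficient-agnostic, combining the $\ast$-restriction degree estimate from $\dim(\overline{\mathrm{Gr}}^\lambda\cap S_\mu)\le\langle\rho,\lambda+\mu\rangle$ with the dual $!$-restriction estimate from the opposite semi-infinite orbits, so no purity is used and no parity-sheaf detour is needed.

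The genuine gap is in the final step. You write that ``reductivity of $\widetilde G$ is obtained from semisimplicity of the subcategory generated by minuscule IC-sheaves,'' but in positive characteristic this subcategory is \emph{not} semisimple: already for $\check{\mathbf G}=\mathrm{SL}_2$ in characteristic $2$, the tensor square of the minuscule IC-sheaf (i.e.\ of the $2$-dimensional standard module) has a nonsplit composition series. More generally, $\mathrm{Perv}_{G[[t]]}(\mathrm{Gr})$ with modular coefficients is far from semisimple, and no Tannakian criterion ``semisimple $\Rightarrow$ reductive'' is available. The actual argument of Mirkovi\'c--Vilonen first produces a flat affine group scheme over $\mathbb Z$, identifies a split maximal torus via the $\mathbf{Y}$-grading of the fiber functor, pins down the root spaces of the Lie algebra through explicit (quasi-)minuscule computations, and shows the resulting group scheme is the Chevalley group scheme of the expected root datum; reductivity of the special fibers then follows from that identification, not from a semisimplicity statement. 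As written, your step would fail, and this is precisely where the modular case is harder than the characteristic-zero case.
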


	This equivalence respects the highest weight structures. This means that $\mathcal{I}^\mu_!,~\mathcal{I}^\mu_*$ and $ \mathcal{IC}^\mu$ respectively correspond -- via the functor $\mathrm{Sat}$ -- to the Weyl module $\mathrm{M}^{(1)}(\mu)$, the induced module $\mathrm{N}^{(1)}(\mu)$, and the simple module $\mathrm{L}^{(1)}(\mu)$ of highest weight $\mu$. 

  We let $\mathrm{sw}:\mathrm{Gr}\to\mathrm{Gr}$ be the isomorphism induced by the inversion map $G((t))\to G((t)),~g\mapsto g^{-1}$, so that pulling back induces an equivalence of categories $$\mathrm{sw}^*:\mathrm{Perv}_{G[[t]]}(\mathrm{Gr})\to \mathrm{Perv}_{G[[t]]}(\mathrm{Gr}).$$
  \begin{prop}{{\cite[VI.12.1]{fargues2024geometrization}}}\label{prop Chevalley inv}
      The auto-equivalence $\mathrm{sw}^*$ corresponds-- via $\mathrm{Sat}$-- to the Chevalley involution on  $\mathrm{Rep}_\mathbb{k}(\check{\mathbf{G}}^{(1)})$. In particular, for any $\mathcal{G}\in \mathrm{Perv}_{G[[t]]}(\mathrm{Gr})$ and $\mu\in\mathbf{Y}$, we have an isomorphism
      $$\mathrm{Sat}(\mathrm{sw}^*\mathcal{G})(-\mu)\simeq \mathrm{Sat}(\mathcal{G})(w_0(\mu)). $$
  \end{prop}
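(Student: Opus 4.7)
The strategy is to identify $\mathrm{sw}^{*}$, via $\mathrm{Sat}$, with the monoidal auto-equivalence of $\mathrm{Rep}_{\mathbb{k}}(\check{\mathbf{G}}^{(1)})$ induced by the Chevalley involution, and then to derive the weight-space formula by a direct character calculation. As a first step, I would analyse the effect of $\mathrm{sw}$ on the $G[[t]]$-orbits. The inversion map sends $t^{\lambda}$ to $t^{-\lambda}$, and for $\lambda\in\mathbf{Y}^{+}$ the point $[t^{-\lambda}]$ lies in the $G[[t]]$-orbit of the dominant representative $[t^{-w_{0}(\lambda)}]$. Hence $\mathrm{sw}$ restricts to an isomorphism $\mathrm{Gr}^{\lambda}\xrightarrow{\sim}\mathrm{Gr}^{-w_{0}(\lambda)}$, which forces $\mathrm{sw}^{*}\mathcal{IC}^{\lambda}\simeq\mathcal{IC}^{-w_{0}(\lambda)}$. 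Via $\mathrm{Sat}$ this matches $\mathrm{L}^{(1)}(\lambda)\mapsto \mathrm{L}^{(1)}(-w_{0}(\lambda))\simeq \mathrm{L}^{(1)}(\lambda)^{*}$, i.e.\ the expected effect of the Chevalley involution on simple objects.

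The main obstacle is endowing $\mathrm{sw}^{*}$ with a \emph{monoidal} structure compatible with the symmetric monoidal structure of the Satake category. Since $g\mapsto g^{-1}$ is an anti-involution of $G((t))$, one obtains a priori only an anti-monoidal isomorphism $\mathrm{sw}^{*}(\mathcal{F}\star\mathcal{G})\simeq \mathrm{sw}^{*}\mathcal{G}\star\mathrm{sw}^{*}\mathcal{F}$. The key point is that the commutativity constraint on $(\mathrm{Perv}_{G[[t]]}(\mathrm{Gr}),\star)$, constructed via the fusion product on the Beilinson--Drinfeld Grassmannian, already identifies $\star$ with $\star^{\mathrm{rev}}$, so that this anti-monoidality can be upgraded to a genuine monoidal structure on $\mathrm{sw}^{*}$. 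To make this rigorous, I would lift $\mathrm{sw}$ to a global involution of the Beilinson--Drinfeld Grassmannian via pointwise inversion and check compatibility with the factorization isomorphisms; this is the technical heart of the argument.

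Once monoidality is established, Tannakian reconstruction yields an automorphism $\sigma$ of $\check{\mathbf{G}}^{(1)}$ such that $\mathrm{Sat}\circ \mathrm{sw}^{*}\circ \mathrm{Sat}^{-1}\simeq (-)^{\sigma}$. By the previous step, $\sigma$ sends each simple $\mathrm{L}^{(1)}(\lambda)$ to $\mathrm{L}^{(1)}(-w_{0}(\lambda))$, which characterises it, modulo inner automorphisms, as the Chevalley involution of $\check{\mathbf{G}}^{(1)}$. The weight-space identity is then immediate: writing $V=\mathrm{Sat}(\mathcal{G})$, the action of $\sigma$ on the character lattice $X^{*}(\check{\mathbf{T}}^{(1)})$, together with the Weyl-invariance $\dim V(\xi)=\dim V(w_{0}\xi)$ of weight multiplicities, gives $V^{\sigma}(-\mu)\simeq V(w_{0}(\mu))$, which is precisely the claimed isomorphism $\mathrm{Sat}(\mathrm{sw}^{*}\mathcal{G})(-\mu)\simeq \mathrm{Sat}(\mathcal{G})(w_{0}(\mu))$.
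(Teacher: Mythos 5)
The paper does not actually prove this statement: it is quoted directly from Fargues--Scholze \cite[VI.12.1]{fargues2024geometrization}, so there is no internal argument to compare against. Your sketch is the standard proof of that cited result and is sound in outline: the computation $\mathrm{sw}(\mathrm{Gr}^{\lambda})=\mathrm{Gr}^{-w_{0}(\lambda)}$, the upgrade of the a priori anti-monoidal structure on $\mathrm{sw}^{*}$ to a monoidal one using the fusion commutativity constraint, the Tannakian identification of the resulting automorphism with the Chevalley involution (which is indeed pinned down, up to inner automorphisms, by $L^{(1)}(\lambda)\mapsto L^{(1)}(-w_{0}(\lambda))$), and the weight computation $V^{\sigma}(-\mu)=V(\sigma^{*}(-\mu))=V(w_{0}(\mu))$ since $\sigma^{*}=-w_{0}$ on $X^{*}(\check{\mathbf{T}}^{(1)})$ (the appeal to Weyl-invariance of weight multiplicities is then not even needed, though it harmlessly covers the ambiguity by inner automorphisms).

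Two points deserve explicit mention if this were to be written out. First, for the Tannakian step you must check that $\mathrm{Sat}\circ\mathrm{sw}^{*}$ is compatible with the fibre functor, i.e.\ that $H^{\bullet}(\mathrm{Gr},\mathrm{sw}^{*}\mathcal{F})\simeq H^{\bullet}(\mathrm{Gr},\mathcal{F})$ as tensor functors; this is immediate since $\mathrm{sw}$ is an automorphism of $\mathrm{Gr}$, but it is what licenses the conclusion that $\mathrm{Sat}\circ\mathrm{sw}^{*}\circ\mathrm{Sat}^{-1}$ is induced by a group automorphism rather than merely being some tensor auto-equivalence. Second, an alternative and more direct route to the displayed weight-space formula — closer to how it is used elsewhere in the paper, cf.\ the isomorphism \eqref{can iso} — is to compute the image under $\mathrm{sw}$ of the semi-infinite orbits $N^{-}((t))\cdot[t^{\mu}]$ defining the weight functors; this avoids the monoidality discussion entirely if one only wants the dimension identity, at the cost of some bookkeeping with orbit conventions. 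Either way, your argument establishes the proposition.
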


 The following lemma will be useful in the sequel.
 \begin{lem}\label{lem Steinberg weights}
     Let $r\in\mathbb{Z}_{\geq0}$ and put $\mu_r:=(\ell^r-1)\zeta$. The canonical morphisms
  $$\mathcal{I}^{\mu_r}_!\to\mathcal{IC}^{\mu_r} \to\mathcal{I}^{\mu_r}_*$$
  are isomorphisms. In particular, the dimensions of the stalks of $\mathcal{IC}^{\mu_r}$ are independant of $\mathrm{char}(\mathbb{k})$.
 \end{lem}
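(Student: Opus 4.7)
The plan is to transport the question across the geometric Satake equivalence and then invoke the classical Steinberg simplicity theorem at the weight $\mu_r$.

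Observe first that under the identification $X^*(\check{\mathbf{T}}^{(1)})\cong\mathbf{Y}$, the cocharacter $\zeta$ (characterized by $\langle\alpha,\zeta\rangle=1$ for every $\alpha\in\mathfrak{R}_s$) plays the role of the half-sum of positive roots of $\check{\mathbf{G}}^{(1)}$, because the simple roots of $G$ are precisely the simple coroots of $\check{\mathbf{G}}^{(1)}$. Hence $\mu_r=(\ell^r-1)\zeta$ is the $r$-th Steinberg weight of $\check{\mathbf{G}}^{(1)}$. By Theorem \ref{satake equivalence1}, together with the compatibility of $\mathrm{Sat}$ with highest-weight structures recalled after its statement, the canonical chain $\mathcal{I}^{\mu_r}_!\to\mathcal{IC}^{\mu_r}\to\mathcal{I}^{\mu_r}_*$ corresponds, term by term, to the canonical chain
\[
\mathrm{M}^{(1)}(\mu_r)\longrightarrow \mathrm{L}^{(1)}(\mu_r)\longrightarrow \mathrm{N}^{(1)}(\mu_r)
\]
of Weyl, simple, and induced modules in $\mathrm{Rep}_\mathbb{k}(\check{\mathbf{G}}^{(1)})$. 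Thus it suffices to prove that these three modules coincide.

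This is exactly Steinberg's theorem. On the one hand, Weyl's character formula gives $\dim\mathrm{M}^{(1)}(\mu_r)=\ell^{r|\mathfrak{R}^+|}$; on the other hand, the Steinberg module $\mathrm{L}^{(1)}(\mu_r)$ is known to have this same dimension (cf. Jantzen, \textit{Representations of Algebraic Groups}, II.3.18). Since $\mathrm{L}^{(1)}(\mu_r)$ is the head of $\mathrm{M}^{(1)}(\mu_r)$, matching dimensions force $\mathrm{M}^{(1)}(\mu_r)=\mathrm{L}^{(1)}(\mu_r)$, and the dual argument gives $\mathrm{L}^{(1)}(\mu_r)=\mathrm{N}^{(1)}(\mu_r)$.

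For the ``in particular'' statement, the same argument applied with coefficients in $\overline{\mathbb{Q}}_\ell$ (or any field of characteristic $0$, where all Weyl modules are simple) yields the analogous identifications $\mathcal{I}^{\mu_r,\overline{\mathbb{Q}}_\ell}_!\simeq\mathcal{IC}^{\mu_r,\overline{\mathbb{Q}}_\ell}\simeq\mathcal{I}^{\mu_r,\overline{\mathbb{Q}}_\ell}_*$. Fixing an integral form of the IC-sheaf over a suitable DVR and reducing modulo $\ell$, upper-semicontinuity of stalk dimensions gives the inequality $\dim\mathcal{H}^i(\mathcal{IC}^{\mu_r,\mathbb{k}})_x\geq \dim\mathcal{H}^i(\mathcal{IC}^{\mu_r,\overline{\mathbb{Q}}_\ell})_x$, while the triviality of the decomposition matrix at $\mu_r$ in both characteristics (which is what the first part of the lemma provides) rules out any strict inequality. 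Hence the stalk dimensions match those in characteristic $0$ and are, in particular, independent of $\mathrm{char}(\mathbb{k})$.

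The main conceptual input is Steinberg's simplicity theorem; once that is in hand, everything else is formal, and the only point requiring a little care is the reduction-mod-$\ell$ argument for the stalks, where one must ensure that the integral IC-sheaf behaves well under specialization—this is standard for closures of affine Grassmannian orbits.
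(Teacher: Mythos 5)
Your proof of the first assertion is essentially the paper's primary argument: transport across $\mathrm{Sat}$ and invoke Steinberg's theorem (the paper cites \cite[II.3.19, (4)]{jantzen2003representations}; your dimension count via the Weyl character formula is a correct way to extract the same conclusion). One small imprecision: $\zeta$ need not be the half-sum of positive roots of $\check{\mathbf{G}}^{(1)}$ --- it is only required to pair to $1$ with each simple coroot, so it agrees with $\rho$ up to a character orthogonal to all coroots; since $\mathrm{L}^{(1)}((\ell^r-1)\zeta)$ then differs from the Steinberg module only by a one-dimensional twist, your comparison of dimensions still goes through. Note that the paper also records a second, purely geometric proof (via the block decomposition of $\mathrm{Perv}_{G[[t]]}(\mathrm{Gr})$: any composition factor $\mathcal{IC}^{\lambda}$ of $\mathcal{I}^{\mu_r}_!$ must satisfy $\lambda\in W_{\mathrm{aff}}\bullet_{\ell^r}\mu_r$ and $\lambda\leq\mu_r$, and minimality of $\mu_r$ forces $\lambda=\mu_r$), which avoids Satake entirely.

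The ``in particular'' part is where your argument has a genuine gap. The inequality you invoke, $\dim\mathrm{H}^i((\mathcal{IC}^{\mu_r,\mathbb{k}})_x)\geq\dim\mathrm{H}^i((\mathcal{IC}^{\mu_r,\overline{\mathbb{Q}}_\ell})_x)$, is not what semicontinuity gives: reduction of an integral form controls the complex $\mathbb{k}\otimes^{L}\mathcal{IC}_{\mathcal{O}}$, whose stalks dominate both the characteristic-zero stalks and those of $\mathcal{IC}^{\mu_r,\mathbb{k}}$ (the latter being only a subquotient of the reduction); there is no a priori inequality between the two IC-sheaves themselves. Indeed the inequality is false in general for spherical Schubert varieties: already for $\overline{\mathrm{Gr}}^{\alpha^\vee}$ in type $A_1$ with $\ell=2$ (Juteau), the modular IC-sheaf is clean at the point stratum while the characteristic-zero one is not. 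Moreover, part 1 does not literally give you ``triviality of the decomposition matrix'' --- it gives $\mathcal{I}^{\mu_r}_!\simeq\mathcal{IC}^{\mu_r}$. The correct (and much shorter) way to finish, which is what the paper does, is to bypass the IC-sheaf: part 1 in characteristic $\ell$, and semisimplicity of $\mathrm{Perv}_{G[[t]]}(\mathrm{Gr})$ in characteristic $0$, identify $\mathcal{IC}^{\mu_r}$ with the standard object $\mathcal{I}^{\mu_r}_!$ in every characteristic, and the stalks of $\mathcal{I}^{\mu_r}_!$ are characteristic-independent (they are computed by the free $\mathbb{Z}$-form of Mirkovi\'c--Vilonen). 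Your conclusion is correct, but the intermediate semicontinuity step should be deleted and replaced by this direct identification.
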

 \begin{proof}This follows from the geometric Satake equivalence together with \cite[II.3.19, (4)]{jantzen2003representations}. Let us propose a geometric proof. In \cite{zabeth2022block}, we gave a description of the blocks of the category $\mathrm{Perv}_{G[[t]]}(\mathrm{Gr})$. From this description, we see that if $\mathcal{IC}^\lambda$ is a composition factor of $\mathcal{I}^{\mu_r}_!$ (or of $\mathcal{I}^{\mu_r}_*$), then we must have $\lambda\in W_{\mathrm{aff}}\bullet_{\ell^r}\mu_r$, and $\lambda\leq\mu_r$. But $\mu_r$ is minimal in $(W_{\mathrm{aff}}\bullet_{\ell^r}\mu_r)\cap\mathbf{Y}^+$, so that $\lambda=\mu_r$. We deduce that $\mathcal{I}^{\mu_r}_!$ is simple, concluding the proof of the first assertion.

 Finally, the last assertion follows from the fact that, when $\mathrm{char}(\mathbb{k})=0$, the category $\mathrm{Perv}_{G[[t]]}(\mathrm{Gr})$ is known to be semi-simple, so that the canonical morphisms from the assertion are also isomorphisms. But the stalks of $\mathcal{I}^{\mu_r}_!$ are clearly independant of the characteristic of $\mathbb{k}$.
 \end{proof}
	
	The category $\mathrm{Perv}_{I_\mathrm{u}}(\mathrm{Gr})$ also has an interpretation in terms of representation theory. We let $\mathrm{Rep}_{[0]}(\check{\mathbf{G}})$ denote the extended principal block of the category $\mathrm{Rep}(\check{\mathbf{G}})$, defined as the Serre subcategory generated by the family $\{L(\lambda),~\lambda\in (W_{\mathrm{ext}}\bullet_\ell 0)\cap\mathbf{Y}^+\}$. Notice that, for any $w\in W^S_\mathrm{ext}$, the weight $w^{-1}\bullet_\ell0$ is dominant by \eqref{equ dominant}. We also denote by $\mathrm{Fr}:\check{\mathbf{G}}\to \check{\mathbf{G}}^{(1)}$ the Frobenius morphism. The following statement is known as the Finkelberg-Mirkovi\'c conjecture (cf. \cite[§1.5]{finkelberg1997semiinfinite}). A proof was recently obtained in \cite{bezrukavnikov2024modular}.
	\begin{thm}\label{Thm FM}
		Assume that, for any indecomposable constituent in the root system of $(\check{\mathbf{G}},\check{\mathbf{T}})$, $\ell$ is greater than the bound in Figure \ref{figure 1}. There exists an equivalence of highest weight categories
		$$\mathrm{FM}:\mathrm{Perv}_{I_\mathrm{u}}(\mathrm{Gr})\xrightarrow{\sim} \mathrm{Rep}_0(\check{\mathbf{G}}) $$
		which satisfies
		$$\mathrm{FM}(\mathrm{L}_w)\simeq L(w^{-1}\bullet_\ell0)~~\forall  w\in W^S_\mathrm{ext}.$$
		Moreover, for any $\mathcal{F}\in \mathrm{Perv}_{I_\mathrm{u}}(\mathrm{Gr}),~\mathcal{G}\in \mathrm{Perv}_{G[[t]]}(\mathrm{Gr})$, there exists a bifunctorial isomorphism
		$$\mathrm{FM}(\mathcal{F}\star \mathcal{G})\simeq \mathrm{FM}(\mathcal{F})\otimes \mathrm{Fr}^*(\mathrm{Sat}(\mathrm{sw}^*\mathcal{G})).$$
	\end{thm}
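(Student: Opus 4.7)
The plan is to construct $\mathrm{FM}$ by realizing both categories as modules over an explicit algebra (or equivalently via a derived coherent model), following the general philosophy pioneered by Arkhipov--Bezrukavnikov--Mirkovi\'c--Ginzburg in characteristic zero and adapting it to the modular setting by replacing semisimple arguments with tilting/parity theory. The Iwahori-equivariant category $\mathrm{Perv}_{I_\mathrm{u}}(\mathrm{Gr})$ is highest-weight with poset $W^S_\mathrm{ext}$ and carries a right action of $(\mathrm{Perv}_{G[[t]]}(\mathrm{Gr}),\star)$; after applying $\mathrm{Sat}$ and $\mathrm{sw}^*$, this becomes a right action of $\mathrm{Rep}(\check{\mathbf{G}}^{(1)})$. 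On the representation side, $\mathrm{Rep}_0(\check{\mathbf{G}})$ is highest-weight with poset $(W_\mathrm{ext}\bullet_\ell 0)\cap \mathbf{Y}^+\cong W^S_\mathrm{ext}$ and carries a right action of $\mathrm{Rep}(\check{\mathbf{G}}^{(1)})$ via $V\otimes\mathrm{Fr}^*(-)$. Matching these two module structures is the core of the proof.

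Concretely, I would first choose a prospective projective generator of $\mathrm{Perv}_{I_\mathrm{u}}(\mathrm{Gr})$ -- e.g. a direct sum of Wakimoto or big tilting objects built by successive convolutions $\mathcal{N}_w\star^I\Delta_0$ and $\mathcal{D}_w\star^I\nabla_0$ -- and compute its endomorphism algebra. The Finkelberg--Mirkovi\'c functor is then defined as $\mathcal{F}\mapsto \mathrm{Hom}(\mathcal{P},\mathcal{F})$, landing in modules over that algebra. In parallel, one constructs a tilting generator of $\mathrm{Rep}_0(\check{\mathbf{G}})$ and identifies its endomorphism algebra via Soergel-type bimodules or the Bezrukavnikov--Mirkovi\'c--Rumynin localization theorem for $\check{\mathbf{G}}_1$. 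Identifying the two endomorphism algebras -- as graded algebras with an action of $\mathrm{Rep}(\check{\mathbf{G}}^{(1)})$ -- yields the equivalence. The compatibility with $\mathrm{sw}^*$ is a consequence of Proposition~\ref{prop Chevalley inv}: convolution on the right by $\mathcal{G}$ corresponds, under Satake, to the left action of the Chevalley-dual representation on $\mathrm{Rep}(\check{\mathbf{G}}^{(1)})$, which matches the conventions for $\mathrm{Fr}^*$.

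The labelling of simples is then pinned down by comparing standards to standards. On the topological side, Lemma~\ref{lem conv standard} combined with Lemma~\ref{lem length} shows that the $\Delta_w$'s are built from $\Delta_e$ by convolution with standard objects $\mathcal{D}_?$; on the representation side, the corresponding objects -- baby Verma-like modules in the extended principal block -- are built from $L(0)$ by translation functors, whose geometric counterparts under Satake are precisely the convolutions with $\mathcal{IC}^\mu$. Since both constructions are dictated by the same combinatorics of $W^S_\mathrm{ext}$ and the dot action, the bijection $\mathrm{L}_w\leftrightarrow L(w^{-1}\bullet_\ell 0)$ falls out. Exactness of $\mathrm{FM}$, and the fact that it is a highest-weight equivalence rather than just a derived one, is obtained by checking that it sends tilting objects to tilting objects and intertwines Ringel duality; the Riche--Williamson tilting character formula from \cite{riche2021simple} provides the numerical backbone.

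The hard part is the modular input: in characteristic zero $\mathrm{Perv}_{G[[t]]}(\mathrm{Gr})$ is semisimple and most compatibilities are automatic, but in characteristic $\ell$ one must prove parity vanishing, formality, and tilting-character statements that genuinely require bounds on $\ell$. The hypotheses in Figure~\ref{figure 1} (including the excluded primes $19$ and $31$ for types $E_7,E_8$) are exactly what ensures the absence of $\ell$-torsion in the relevant IC-stalks and Ext-groups, i.e. that parity sheaves on the affine flag variety behave like in characteristic zero. Handling these exceptional cases -- presumably by a case-by-case analysis reducing to small-rank verifications -- is where I expect the most delicate work to lie; the general structural part of the argument, once those cohomological inputs are granted, is a formal consequence of the highest-weight formalism and the geometric Satake equivalence.
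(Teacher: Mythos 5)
This statement is not proved in the paper at all: it is quoted as an external input, with the proof attributed to \cite{bezrukavnikov2024modular} (and the earlier variant, Theorem \ref{conj FM}, likewise cited from the Bezrukavnikov--Riche work). So there is no ``paper's own proof'' to match your argument against, and your proposal has to be judged on its own as a proof of the Finkelberg--Mirkovi\'c conjecture. As such it is not a proof but a research outline: every step where the actual mathematical content lives is deferred. ``Compute the endomorphism algebra of a projective generator,'' ``identify the two endomorphism algebras via Soergel-type bimodules or BMR localization,'' and ``check that tilting objects go to tilting objects'' are statements of what must be done, not arguments for why it can be done. In particular, the existence of a projective generator in $\mathrm{Perv}_{I_\mathrm{u}}(\mathrm{Gr})$ is itself delicate (the category lives on an ind-scheme and only has enough projectives after truncating to finite closed unions of strata or passing to pro-objects), and the identification of the two endomorphism algebras \emph{as algebras in $\mathrm{Rep}(\check{\mathbf{G}}^{(1)})$-module categories} is precisely the theorem; nothing in your sketch explains how that identification is produced in characteristic $\ell$, where one cannot appeal to semisimplicity or to uniqueness-of-deformation arguments available over $\mathbb{Q}_\ell$.

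Two further points are off. First, the compatibility $\mathrm{FM}(\mathcal{F}\star\mathcal{G})\simeq\mathrm{FM}(\mathcal{F})\otimes\mathrm{Fr}^*(\mathrm{Sat}(\mathrm{sw}^*\mathcal{G}))$ does not ``fall out'' of Proposition \ref{prop Chevalley inv}; that proposition only identifies $\mathrm{sw}^*$ with the Chevalley involution under $\mathrm{Sat}$, whereas the bifunctorial isomorphism requires constructing a monoidal structure on $\mathrm{FM}$ relative to the two actions, which is a separate (and substantial) verification. Second, invoking the Riche--Williamson character formula as the ``numerical backbone'' inverts the logical order: in the literature that formula is established independently of a geometric model for $\mathrm{Rep}(\check{\mathbf{G}}_1\check{\mathbf{T}})$, and the bounds of Figure \ref{figure 1} are inherited from the specific prerequisite results used by Bezrukavnikov--Riche, not from a torsion-freeness statement about IC-stalks that you would need to prove case by case. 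If you want to use this theorem, do what the paper does and cite it; if you want to prove it, each of the boxes in your outline needs to be opened.
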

 \begin{figure}\label{figure 1}\begin{tabular}{|l|l|l|l|l|l|l|l|l|}
				\hline
				$\mathbf{A}_n$ $(n\geq1)$ & $\mathbf{B}_n$ $(n\geq2)$ & $\mathbf{C}_n$ $(n\geq3)$ & $\mathbf{C}_n$ $(n\geq4)$ & $\mathbf{E}_6$ &$\mathbf{E}_7$ &$\mathbf{E}_8$ &$\mathbf{F}_4$ & $\mathbf{G}_2$\\ \hline
				$n+1$ & $2n$ & $2n$ & $2n-2$ & $12$ &19 &31 &12 &6 \\
				\hline
		\end{tabular}\caption{Bounds on $\ell$}\label{figure 1}\end{figure}
 \begin{rem}
     The above theorem takes a more natural form in \cite{bezrukavnikov2024modular}, as the category $\mathrm{Perv}_{I_\mathrm{u}}(\mathrm{Gr})$ is replaced by the category of equivariant perverse sheaves on the \textit{opposite affine Grassmannian} $\mathrm{Gr}^{\mathrm{op}}:=G[[t]]\backslash G((t))$. For our purposes, it is however more convenient to work with the usual affine Grassmannian.  
 \end{rem}

\subsection{Iwahori-Whittaker model for the Satake equivalence}\label{section Iwahori-Whittaker model for the Satake equivalence}
	Fix a primitive $p$-th root of unity $\zeta\in\mathbb{k}$, and consider the Artin-Schreier map $\mathrm{AS}:\mathbb{G}_\mathrm{a}\to \mathbb{G}_\mathrm{a}$ determined by the map of rings $x\mapsto x^p-x$. This morphism is a Galois cover of group $\mathbb{Z}/p\mathbb{Z}$, so determines a continuous group morphism $\pi_1(\mathbb{G}_\mathrm{a},0)\to \mathbb{Z}/p\mathbb{Z}$, where $\pi_1(\mathbb{G}_\mathrm{a},0)$ is the \'etale fundamental group of $\mathbb{G}_\mathrm{a}$ with geometric base point $0$. The composition of this map with the morphism $\mathbb{Z}/p\mathbb{Z}\to \mathbf{k}^\times$ (induced by $\zeta$) yields a continuous representation of the fundamental group, and thus a local system on $\mathbb{G}_\mathrm{a}$ of rank one. We denote this local system by $\mathcal{L}_{\mathrm{AS}}$. We also let $\psi:N^-\to\mathbb{G}_a$ be a morphism of $\mathbb{F}$-algebraic groups which restricts to a non-zero morphism on each subgroup $N_{-\alpha}$ for all $\alpha\in\mathfrak{R}_s$, where $N_{-\alpha}\simeq\mathbb{G}_\mathrm{a}$ is the root subgroup associated with $-\alpha$.
	
	Define $\chi:I_{\mathrm{u}}^-\to \mathbb{G}_a$ to be the composition of $\psi$ with the evaluation map. For any $Y\subset \mathrm{Gr}$ which is a locally closed finite union of $I_{\mathrm{u}}^-$-orbits, one can show (cf. \cite[Lemma 3.2]{ciappara2021hecke}) that the $I_{\mathrm{u}}^-$-action on $Y$ factors through a quotient group of finite type $J$ such that $\chi$ factors through $\chi_J:J\to \mathbb{G}_\mathrm{a}$; we can then consider the $(J,\chi_J^*\mathcal{L}_{\mathrm{AS}})$-equivariant derived category of \'etale $\mathbb{k}$-sheaves $\mathrm{D}_{J,\chi_J^*\mathcal{L}_{\mathrm{AS}}}(Y)$. This category is by definition the subcategory of $\mathrm{D}(Y)$ consisting of constructible complexes of \'etale $\mathbb{k}$-sheaves $\mathcal{F}$ such that there exists an isomorphism $a^*\mathcal{F}\simeq \chi_J^*\mathcal{L}_{\mathrm{AS}}\boxtimes\mathcal{F}$, where $a:J\times Y\to Y$ is the action map; this definition does not depend on the choice of $J$. We then define the category $\mathrm{D}_{\mathcal{IW}}(\mathrm{Gr})$ as a direct limit of the categories $\mathrm{D}_{J,\chi_J^*\mathcal{L}_{\mathrm{AS}}}(Y)$, indexed by finite and closed unions of $I^-_\mathrm{u}$-orbits $Y$ which are ordered by inclusion. Notice that, since the transition maps are push-forwards of closed immersions, they are fully faithful functors, and one can see this limit as an increasing union of categories.
	
	The category $\mathrm{D}_{\mathcal{IW}}(\mathrm{Gr})$ admits a canonical perverse $t$-structure, and we will denote by $\mathrm{Perv}_{\mathcal{IW}}(\mathrm{Gr})$ its heart. Since the $I_\mathrm{u}^-$ orbits are affine $\mathbb{F}$-schemes, the category $\mathrm{Perv}_{\mathcal{IW}}(\mathrm{Gr})$ admits a transparent highest weight structure (in the sense of  \cite[3.7]{riche}).

 Note that the same construction (simply replacing $\mathrm{Gr}$ with $\mathrm{Fl}$) allows one to define the category $\mathrm{D}_{\mathcal{IW}}(\mathrm{Fl})$.

 For each $\lambda\in \mathbf{Y}$, let us denote by $Y_\lambda\subset\mathrm{Gr}$ the $I_\mathrm{u}^-$-orbit associated with $w_0(\lambda)$ and by $j^\lambda:Y_\lambda\hookrightarrow\mathrm{Gr}$ the locally closed immersion.  One can show that an orbit $Y_\lambda$ supports a non-zero Iwahori-Whittaker local system iff $\lambda$ is strictly dominant (i.e if $\lambda\in \zeta+\mathbf{Y}_+$), and that in this case there exists exactly one (up to isomorphism) such local system of rank one on $Y_\lambda$, which we will denote by $\mathcal{L}^\lambda_{\mathrm{AS}}$. Thus, the weight poset is given by $\mathbf{Y}^{++}=\zeta+\mathbf{Y}^{+}$, and the standard, costandard and simple objects associated with some $\lambda\in\mathbf{Y}^{++}$ are respectively given by
	$$\Delta^{\mathcal{IW}}_\lambda:=j^\lambda_{ !}\mathcal{L}^\lambda_{\mathrm{AS}}[\mathrm{dim}(Y_\lambda)],\qquad \nabla^{\mathcal{IW}}_\lambda:=j^\lambda_{ *}\mathcal{L}^\lambda_{\mathrm{AS}}[\mathrm{dim}(Y_\lambda)],\qquad \mathrm{L}_\lambda^\mathcal{IW},$$
	where $\mathrm{L}_\lambda^\mathcal{IW}$ is obtained as the image of the canonical morphism $\Delta^{\mathcal{IW}}_\lambda\to \nabla^{\mathcal{IW}}_\lambda$.  Moreover, the usual convolution product (cf. for instance \cite[§2.4]{JEP_2019__6__707_0}) endows $\mathrm{D}_{\mathcal{IW}}(\mathrm{Gr})$ with a right action 
	$$(-)\star (-):\mathrm{D}_{\mathcal{IW}}(\mathrm{Gr})\times \mathrm{D}_{G[[t]]}(\mathrm{Gr})\to \mathrm{D}_{\mathcal{IW}}(\mathrm{Gr})$$
	of the monoidal category $\mathrm{D}_{G[[t]]}(\mathrm{Gr})$. 
	
	The following result -- which gives another incarnation of the category $\mathrm{Rep}_\mathbb{k}(G^\vee)$ -- was proved in \cite[§2.2.1]{ABBGM} in the case where $\mathrm{char}(\mathbb{k})=0$, and generalized by the authors of \cite{JEP_2019__6__707_0} to the case of positive characteristic. 
	\begin{thm}\label{thm IW equivalence}
		The functor  $\mathcal{F}\mapsto \nabla^{\mathcal{IW}}_{\zeta}\star \mathcal{F}$ induces an equivalence of categories 
		$$\mathrm{Perv}_{G[[t]]}(\mathrm{Gr})\xrightarrow{\sim}\mathrm{Perv}_{\mathcal{IW}}(\mathrm{Gr}).$$
	\end{thm}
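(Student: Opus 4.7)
The plan is to show that $F := \nabla^{\mathcal{IW}}_\zeta \star (-)$ is a $t$-exact functor between the perverse hearts, sending standard (resp.\ costandard) objects bijectively to standard (resp.\ costandard) objects, and conclude with a standard recognition criterion for equivalences of highest weight categories. First I would verify $t$-exactness: the right action of $(\mathrm{D}_{G[[t]]}(\mathrm{Gr}),\star)$ on $\mathrm{D}_{\mathcal{IW}}(\mathrm{Gr})$ preserves perverse hearts, which follows from the same semi-smallness analysis of the multiplication map $m:\mathrm{Gr}\times^{G[[t]]}\overline{\mathrm{Gr}^\mu}\to\overline{\mathrm{Gr}^\mu}$ used to establish $t$-exactness of convolution on $\mathrm{Perv}_{G[[t]]}(\mathrm{Gr})$; the only new ingredient is that the Artin--Schreier equivariance descends along $q$ in the convolution diagram, which is automatic since the right factor is $G[[t]]$-equivariant.

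The core geometric computation is the identification
\[ F(\mathcal{I}^\mu_!) \simeq \Delta^{\mathcal{IW}}_{\zeta+\mu}, \qquad F(\mathcal{I}^\mu_*) \simeq \nabla^{\mathcal{IW}}_{\zeta+\mu} \]
for every $\mu\in\mathbf{Y}^+$. The key input is that the multiplication morphism $Y_\zeta\times^{I_\mathrm{u}^-}\mathrm{Gr}^\mu \to \mathrm{Gr}$ lands in $\overline{Y_{\zeta+\mu}}$, restricts to an isomorphism onto the open orbit $Y_{\zeta+\mu}$ (with matching dimensions $\mathrm{dim}(Y_\zeta)+\langle 2\rho,\mu\rangle=\mathrm{dim}(Y_{\zeta+\mu})$), and is compatible with the Artin--Schreier characters. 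A direct analysis of the convolution stalks, combined with the fact that $Y_{\zeta+\mu}$ is the unique maximal Whittaker-admissible orbit in the support of the convolution, pins down the standard and costandard.

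Finally, the map $\mu\mapsto\zeta+\mu$ is an order-preserving bijection $\mathbf{Y}^+\xrightarrow{\sim}\mathbf{Y}^{++}$ between the weight posets of the two highest weight categories, so $F$ becomes an exact functor between highest weight categories sending standards (resp.\ costandards) bijectively onto standards (resp.\ costandards). Such a functor is automatically fully faithful on standardly-filtered objects and essentially surjective (since it hits every simple), hence an equivalence of abelian categories. The hard part is the geometric computation above: in characteristic zero one can invoke semi-simplicity of $\mathrm{Perv}_{G[[t]]}(\mathrm{Gr})$ and reduce to the analysis of $\mathcal{IC}^\mu$ as in \cite{ABBGM}, but in positive characteristic this is unavailable and one must argue directly with the orbit geometry and the Artin--Schreier sheaf as in \cite{JEP_2019__6__707_0}; the strict dominance of $\zeta$ (ensured by the hypothesis that $\mathbf{X}/\mathbb{Z}\mathfrak{R}$ is torsion-free) is what forces the support condition making the computation tractable.
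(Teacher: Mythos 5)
The paper does not prove this statement: it is quoted as Theorem \ref{thm IW equivalence} with a citation to \cite[\S 2.2.1]{ABBGM} for characteristic zero and to \cite{JEP_2019__6__707_0} for the positive-characteristic case, so there is no in-paper argument to compare against. Your outline does reproduce the strategy of the cited reference: compute $\nabla^{\mathcal{IW}}_{\zeta}\star\mathcal{I}^\mu_!\simeq\Delta^{\mathcal{IW}}_{\zeta+\mu}$ and $\nabla^{\mathcal{IW}}_{\zeta}\star\mathcal{I}^\mu_*\simeq\nabla^{\mathcal{IW}}_{\zeta+\mu}$ via the geometry of the map $Y_\zeta\times^{G[[t]]}\overline{\mathrm{Gr}}^\mu\to\overline{Y_{\zeta+\mu}}$ (isomorphism over the open orbit, matching dimensions, no Whittaker-admissible orbits on the boundary other than the $Y_{\zeta+\nu}$ with $\nu<\mu$), observe that $\mu\mapsto\zeta+\mu$ is an order isomorphism $\mathbf{Y}^+\to\mathbf{Y}^{++}$ of weight posets, and invoke the recognition criterion for exact functors between highest weight categories matching standards and costandards. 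That last step is fine (one only needs to check that the induced map on the one-dimensional spaces $\mathrm{Hom}(\mathcal{I}^\mu_!,\mathcal{I}^\mu_*)$ is nonzero, which follows since $F(\mathrm{L}^{(1)}\text{-type simples})\neq 0$).

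The one point where your sketch invokes the wrong mechanism is $t$-exactness. The Mirkovi\'c--Vilonen semi-smallness argument applies to the convolution of two $G[[t]]$-equivariant IC-sheaves; here the left factor is a $*$-extension from a single $I^-_{\mathrm{u}}$-orbit, and convolution of an arbitrary Iwahori-constructible perverse sheaf with a spherical one is not controlled by semi-smallness of $m$ alone. The actual ingredient --- which is also the hinge of the whole proof and which your write-up leaves implicit --- is the cleanness of the minimal Whittaker orbit: the canonical map $\Delta^{\mathcal{IW}}_{\zeta}\to\nabla^{\mathcal{IW}}_{\zeta}$ is an isomorphism because the orbits in $\overline{Y_\zeta}\setminus Y_\zeta$ are not strictly dominant and hence support no nonzero Iwahori--Whittaker objects. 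This single fact gives simultaneously (i) right exactness of $F=\Delta^{\mathcal{IW}}_{\zeta}\star(-)$ and left exactness of $F=\nabla^{\mathcal{IW}}_{\zeta}\star(-)$ (via affineness of the relevant locally closed embeddings and Artin vanishing), hence $t$-exactness, and (ii) the ability to compute both the standard and the costandard image with the one functor $\nabla^{\mathcal{IW}}_{\zeta}\star(-)$. Without isolating this cleanness statement your computation of $F(\mathcal{I}^\mu_!)$ as a $!$-extension does not go through, so you should add it as an explicit preliminary lemma; with it, your argument is the proof of \cite{JEP_2019__6__707_0}.
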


In the sequel, we will work with the $t$-exact ``averaging functor''
\begin{align*}
    &\mathrm{Av}_{I_\mathrm{u},\psi}:\mathrm{D}_{I_\mathrm{u}}(\mathrm{Gr})\to \mathrm{D}_{\mathcal{IW}}(\mathrm{Gr}),\\
    &\mathrm{Av}_{I_\mathrm{u},\psi}:\mathrm{D}_{I_\mathrm{u}}(\mathrm{Fl})\to \mathrm{D}_{\mathcal{IW}}(\mathrm{Fl}). 
\end{align*}
We refer to \cite[§3.6-3.8]{achar2021geometric} for the construction of this functor.
 \begin{rem}
     Compared with \cite{JEP_2019__6__707_0}, we work with the opposite Iwahori subgroup $I_\mathrm{u}^-$ for the equivariance condition in $\mathrm{Perv}_{\mathcal{IW}}(\mathrm{Gr})$, as opposed to $I_\mathrm{u}$ in \textit{loc. cit.}. This condition is imposed by our choice to work with $\mathrm{D}_{I_\mathrm{u}}(\mathrm{Gr})$ (as opposed to $\mathrm{D}_{I_\mathrm{u}^-}(\mathrm{Gr})$).
 \end{rem}
 \subsection{Representations of Frobenius kernels}
We define the group schemes
 $$\check{\mathbf{G}}_1:=\mathrm{ker}(\mathrm{Fr}),\quad\check{\mathbf{B}}_1:=\mathrm{ker}(\mathrm{Fr}|_{\check{\mathbf{B}}}). $$
Recall (§\ref{subsection the group}) that we have identified the character lattice $X^*(\check{\mathbf{T}})$ with $\mathbf{Y}$. For any $\lambda\in\mathbf{Y}$, we denote by $\mathbb{k}_\lambda$ the one-dimensional $\check{\mathbf{B}}_1\check{\mathbf{T}}$ induced by $\lambda$, and define the associated baby co-Verma module:
$$\widehat{Z}'(\lambda):=\mathrm{ind}_{\check{\mathbf{B}}_1\check{\mathbf{T}}}^{\check{\mathbf{G}}_1\check{\mathbf{T}}}\mathbb{k}_\lambda.$$
One can show (cf. \cite[Part II, Proposition 9.6]{jantzen2003representations}) that each $\widehat{Z}'(\lambda)$ has a simple socle -- denoted by $\widehat{L}(\lambda)$ -- and that any simple $\check{\mathbf{G}}_1\check{\mathbf{T}}$-module is isomorphic to exactly one such $\widehat{L}(\lambda)$. As in \cite[Proposition 9.6]{jantzen2003representations}, we define the baby Verma module
$$\widehat{Z}(\lambda):=\mathrm{coind}_{\check{\mathbf{B}}_1\check{\mathbf{T}}}^{\check{\mathbf{G}}_1\check{\mathbf{T}}}\mathbb{k}_\lambda.$$

In the sequel, we will work with twisted modules. For any $w\in W$, we choose a lift $\dot{w}\in N_{\check{\mathbf{G}}}(\check{\mathbf{T}})$. Since $\dot{w}$ normalises $\check{\mathbf{T}}$ and $\check{\mathbf{G}}_1$ (because the latter is a normal subgroup of $\check{\mathbf{G}}$), it also normalises $\check{\mathbf{G}}_1\check{\mathbf{T}}$. Therefore we can twist any $\check{\mathbf{G}}_1\check{\mathbf{T}}$-module $M$ by $\dot{w}$ (cf. \cite[§I.2.15]{jantzen2003representations}). The resulting module will be denoted by ${^wM}$. 
 
	\subsection{Representations of Frobenius kernels through representations of reductive groups}\label{section Representations of Frobenius kernels}
	We briefly recall how one can see $\check{\mathbf{G}}_1\check{\mathbf{T}}$-modules as $\check{\mathbf{G}}$-modules with extra structure (cf. \cite{ARKHIPOV2003114} for the original reference treating the case of quantum groups, and \cite[§4.1]{achar2022geometric} for our present setting). This identification comes from the following equivalences of categories
	\begin{equation}\label{equivalences}
		\mathrm{Rep}(\check{\mathbf{G}}_1\check{\mathbf{T}})\simeq \mathrm{Coh}^{\check{\mathbf{G}}_1\check{\mathbf{T}}}(\mathrm{pt})\simeq \mathrm{Coh}^{\check{\mathbf{G}}\times\check{\mathbf{T}}^{(1)}}(\check{\mathbf{G}}\times\check{\mathbf{T}}^{(1)}/ \check{\mathbf{G}}_1\check{\mathbf{T}}),
	\end{equation}
	where $\check{\mathbf{G}}_1\check{\mathbf{T}}$ is seen as a subgroup of $\check{\mathbf{G}}\times\check{\mathbf{T}}^{(1)}$ through the morphism $g\mapsto (g,\mathrm{Fr}(g))$. In fact, the map $(g,t)\mapsto \mathrm{Fr}(g)t^{-1}$ induces an isomorphism
	$$\check{\mathbf{G}}\times\check{\mathbf{T}}^{(1)}/ \check{\mathbf{G}}_1\check{\mathbf{T}}\simeq \check{\mathbf{G}}^{(1)}.$$
	We deduce that the category of $\check{\mathbf{G}}_1\check{\mathbf{T}}$-modules is equivalent, via \eqref{equivalences}, to the category of $\mathcal{O}(\check{\mathbf{G}}^{(1)})$-modules of finite type, which are  $\check{\mathbf{G}}$-equivariant and endowed with a $\mathbf{Y}$-grading compatible with the $\mathbf{Y}$-grading of $\mathcal{O}(\check{\mathbf{G}}^{(1)})$. Here $\mathcal{O}(\check{\mathbf{G}}^{(1)})$ is endowed with the left regular $\check{\mathbf{G}}$-representation structure, and the grading is given by
	$$\mathcal{O}(\check{\mathbf{G}}^{(1)})_\lambda=\mathrm{Ind}_{\check{\mathbf{T}}^{(1)}}^{\check{\mathbf{G}}^{(1)}}(-\lambda)\qquad\forall \lambda\in X^*(\check{\mathbf{T}}^{(1)})=\mathbf{Y}. $$
 The equivalence \eqref{equivalences} takes a $\check{\mathbf{G}}_1\check{\mathbf{T}}$-module $M$ to the $\mathbf{Y}$-graded object whose component associated with $\lambda\in\mathbf{Y}$ is
 $$\mathrm{Ind}_{\check{\mathbf{G}}_1\check{\mathbf{T}}}^{\check{\mathbf{G}}}(M\otimes\mathrm{Fr}^*(\mathbb{k}_{\check{\mathbf{T}}}(-\lambda))). $$
It follows from \cite[Lemma 4.2]{achar2022geometric} that, for $\lambda\in X^*(\check{\mathbf{T}}^{(1)})$, $\mathcal{O}(\check{\mathbf{G}}^{(1)})_\lambda$ is an ind-object of $\mathrm{Rep}(\check{\mathbf{G}}^{(1)})$. 

Finally, we spell out how to express twists by elements of $W$ in the category on the right-hand side of \eqref{equivalences}. Let $M\in\mathrm{Rep}(\check{\mathbf{G}}_1\check{\mathbf{T}})$, $w\in W$, and denote by $\widetilde{M}$ the corresponding $\mathbf{Y}$-graded, $\check{\mathbf{G}}$-equivariant, $\mathcal{O}(\check{\mathbf{G}}^{(1)})$-module of finite type. We let $^w\widetilde{M}$ be the object such that, for any $\lambda\in\mathbf{Y}$, we have
$$^w\widetilde{M}_\lambda:=M_{w(\lambda)}, $$
and such that the structure of $\mathcal{O}(\check{\mathbf{G}}^{(1)})$-module is obtained by twisting by $w$ the corresponding action on $M$.
\begin{prop}\label{prop twist coh}
    For any $w\in W$, and $M\in\mathrm{Rep}(\check{\mathbf{G}}_1\check{\mathbf{T}})$, the image of the representation $^wM$ through the equivalence \eqref{equivalences} is canonically isomorphic to $^w\widetilde{M}$.
\end{prop}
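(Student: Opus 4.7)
The plan is to unpack the explicit form of the equivalence~\eqref{equivalences} and construct the desired isomorphism via right translation by $\dot{w}$, then verify that this intertwines the $\mathcal{O}(\check{\mathbf{G}}^{(1)})$-module structures.

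First I would recall that the equivalence sends $M\in\mathrm{Rep}(\check{\mathbf{G}}_1\check{\mathbf{T}})$ to the $\mathbf{Y}$-graded $\check{\mathbf{G}}$-equivariant $\mathcal{O}(\check{\mathbf{G}}^{(1)})$-module whose degree-$\lambda$ component is
\[
\widetilde{M}_\lambda=\mathrm{Ind}_{\check{\mathbf{G}}_1\check{\mathbf{T}}}^{\check{\mathbf{G}}}\bigl(M\otimes\mathrm{Fr}^*(\mathbb{k}_{\check{\mathbf{T}}}(-\lambda))\bigr),
\]
with $\check{\mathbf{G}}$-action by left translation on the induction and $\mathcal{O}(\check{\mathbf{G}}^{(1)})$-action arising from Frobenius reciprocity combined with the weight decomposition $\mathcal{O}(\check{\mathbf{G}}^{(1)})=\bigoplus_\mu\mathrm{Ind}_{\check{\mathbf{T}}^{(1)}}^{\check{\mathbf{G}}^{(1)}}(-\mu)$.

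Next I would construct the isomorphism on degree-$\lambda$ components. For any $N\in\mathrm{Rep}(\check{\mathbf{G}}_1\check{\mathbf{T}})$, right translation by $\dot{w}$ yields a canonical, $\check{\mathbf{G}}$-equivariant, functorial isomorphism
\[
R_{\dot{w}}:\mathrm{Ind}_{\check{\mathbf{G}}_1\check{\mathbf{T}}}^{\check{\mathbf{G}}}({}^{w}N)\xrightarrow{\sim}\mathrm{Ind}_{\check{\mathbf{G}}_1\check{\mathbf{T}}}^{\check{\mathbf{G}}}(N).
\]
Applying this with $N=M\otimes\mathrm{Fr}^*(\mathbb{k}(-w(\lambda)))$, and using that the twist is monoidal together with the identity ${}^{w}\mathrm{Fr}^*(\mathbb{k}(\mu))\simeq\mathrm{Fr}^*(\mathbb{k}(w(\mu)))$ --- which follows from the commutation $\mathrm{Fr}\circ\mathrm{Ad}(\dot{w})=\mathrm{Ad}(\mathrm{Fr}(\dot{w}))\circ\mathrm{Fr}$, noting that $\mathrm{Fr}(\dot{w})$ is a lift of the same element $w$ in $N_{\check{\mathbf{G}}^{(1)}}(\check{\mathbf{T}}^{(1)})$ --- one reads off a canonical isomorphism $\widetilde{{}^wM}_\lambda\simeq\widetilde{M}_{w(\lambda)}$ on the underlying $\mathbf{Y}$-graded $\check{\mathbf{G}}$-modules.

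The main obstacle I expect is checking that the resulting piece-wise isomorphism is $\mathcal{O}(\check{\mathbf{G}}^{(1)})$-linear. The action of $\mathrm{Ind}_{\check{\mathbf{T}}^{(1)}}^{\check{\mathbf{G}}^{(1)}}(-\mu)$ sends the degree-$\lambda$ piece of $\widetilde{M}$ into its degree-$(\lambda+\mu)$ piece, and I would verify that the maps $R_{\dot{w}}$ intertwine this action with its $w$-twisted analogue on $\widetilde{{}^wM}$. This reduces to a diagram chase for the multiplication morphism of $\mathcal{O}(\check{\mathbf{G}}^{(1)})$, using that this multiplication is equivariant for the $W$-action induced by $\mathrm{Ad}$ of elements of $N_{\check{\mathbf{G}}^{(1)}}(\check{\mathbf{T}}^{(1)})$, together with the functoriality of induction. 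Assembling these compatible isomorphisms over all $\lambda\in\mathbf{Y}$ then yields the claimed canonical isomorphism $\widetilde{{}^wM}\simeq{}^w\widetilde{M}$ in the target category of \eqref{equivalences}.
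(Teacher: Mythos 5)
Your proposal is correct and follows essentially the same route as the paper: the paper likewise computes $\widetilde{{}^wM}_\lambda=\mathrm{Ind}_{\check{\mathbf{G}}_1\check{\mathbf{T}}}^{\check{\mathbf{G}}}({}^wM\otimes\mathrm{Fr}^*(\mathbb{k}_{\check{\mathbf{T}}}(-\lambda)))\simeq\mathrm{Ind}_{\check{\mathbf{G}}_1\check{\mathbf{T}}}^{\check{\mathbf{G}}}(M\otimes\mathrm{Fr}^*(\mathbb{k}_{\check{\mathbf{T}}}(-w(\lambda))))=\widetilde{M}_{w(\lambda)}$ by moving the twist inside the tensor product and then using \cite[I.3.5(4)]{jantzen2003representations} together with translation by $\dot{w}$ (your $R_{\dot w}$ is exactly the composite of these last two steps). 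Your extra attention to the $\mathcal{O}(\check{\mathbf{G}}^{(1)})$-linearity of the assembled isomorphism is a point the paper leaves implicit, and is handled correctly.
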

\begin{proof}
    Notice that we have canonical isomorphisms
    \begin{align*}
        \mathrm{Ind}_{\check{\mathbf{G}}_1\check{\mathbf{T}}}^{\check{\mathbf{G}}}\left({^wM}\otimes\mathrm{Fr}^*(\mathbb{k}_{\check{\mathbf{T}}}(-\lambda))\right)&\simeq \mathrm{Ind}_{\check{\mathbf{G}}_1\check{\mathbf{T}}}^{\check{\mathbf{G}}}\left({^w(M\otimes\mathrm{Fr}^*(\mathbb{k}_{\check{\mathbf{T}}}(-w(\lambda)))}\right)\\
        &\simeq {^w\left(\mathrm{Ind}_{\check{\mathbf{G}}_1\check{\mathbf{T}}}^{\check{\mathbf{G}}}\left(M\otimes\mathrm{Fr}^*(\mathbb{k}_{\check{\mathbf{T}}}(-w(\lambda)))\right)\right)},
    \end{align*}
    where the last isomorphism follows from equation (4) in \cite[§I.3.5]{jantzen2003representations}. Finally, left-translation by $\dot{w}$ induces an isomorphism of $\check{\mathbf{G}}$-modules
    $$\mathrm{Ind}_{\check{\mathbf{G}}_1\check{\mathbf{T}}}^{\check{\mathbf{G}}}\left(M\otimes\mathrm{Fr}^*(\mathbb{k}_{\check{\mathbf{T}}}(-w(\lambda)))\right)\simeq {^w\left(\mathrm{Ind}_{\check{\mathbf{G}}_1\check{\mathbf{T}}}^{\check{\mathbf{G}}}\left(M\otimes\mathrm{Fr}^*(\mathbb{k}_{\check{\mathbf{T}}}(-w(\lambda)))\right)\right)}. $$
    One the easily deduces the desired result.
\end{proof}
 
	\subsection{The regular perverse sheaf}\label{section The regular perverse sheaf}

	Let us define the ``regular perverse sheaf" $\mathcal{R}$, following \cite[§5.2]{achar2022geometric}. For any $\lambda\in\mathbf{Y}^+$, we have by construction a canonical morphism
	$$\mathcal{I}^\lambda_!\to\mathcal{I}^\lambda_*. $$
	Since the category $(\mathrm{Perv}_{G[[t]]}(\mathrm{Gr}),\star)$ is \textit{rigid} with unit object $\mathcal{IC}^0$ and  $\mathcal{I}^{-w_0(\lambda)}_*\simeq (\mathcal{I}^\lambda_!)^\vee$ (these facts can be proved directly or using the geometric Satake equivalence), the above morphism induces a morphism
	\begin{equation}\label{can0}
		\mathcal{IC}^0\to  \mathcal{I}^\lambda_*\star\mathcal{I}^{-w_0(\lambda)}_*.
	\end{equation}
	Also notice that, for any $\delta,\delta'\in\mathbf{Y}^+$, the perverse sheaf $\mathcal{I}^\delta_*\star\mathcal{I}^{\delta'}_*$ is supported on $\overline{\mathrm{Gr}}^{\delta+\delta'}$, and its restriction to $\mathrm{Gr}^{\delta+\delta'}$ is $\underline{\mathbb{k}}_{\mathrm{Gr}^{\delta+\delta'}}[\langle2\rho,\delta+\delta'\rangle]$. Thus, adjunction yields a canonical morphism
	\begin{equation}\label{can1}
		\mathcal{I}^\delta_*\star\mathcal{I}^{\delta'}_*\to \mathcal{I}^{\delta+\delta'}_*.
	\end{equation}
	Let $\mu\in\mathbf{Y}$. We put 
	$$\mathcal{R}_\mu:=\varinjlim_{\lambda\in \mathbf{Y}^+\cap(-w_0(\mu)+\mathbf{Y}^+)}  \mathcal{I}^{w_0(\mu)+\lambda}_*\star\mathcal{I}^{-w_0(\lambda)}_*.$$
	
	The transition morphisms are given by
	\begin{align*}
		\mathcal{I}^{w_0(\mu)+\lambda}_*\star\mathcal{I}^{-w_0(\lambda)}_*&\to \mathcal{I}^{w_0(\mu)+\lambda}_*\star(\mathcal{I}^\nu_*\star \mathcal{I}^{-w_0(\nu)}_*) \star\mathcal{I}^{-w_0(\lambda)}_*\to \mathcal{I}^{w_0(\mu)+\lambda+\nu}_*\star\mathcal{I}^{-w_0(\lambda+\nu)}_*,
	\end{align*}
	where the first, resp. second, morphism is given by \eqref{can0}, resp. \eqref{can1}. 
	\begin{prop}\label{Proposition regular perverse sheaf}
		For all $\mu\in\mathbf{Y}$, we have an isomorphism of ind-objects
		$$ \mathrm{Sat}(\mathrm{sw}^*(\mathcal{R}_\mu))\simeq  \mathcal{O}(\check{\mathbf{G}}^{(1)})_\mu.$$
	\end{prop}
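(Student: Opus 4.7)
The strategy is to move everything to $\mathrm{Rep}(\check{\mathbf{G}}^{(1)})$ through the geometric Satake equivalence, compute the ind-object explicitly, and then identify it with the coordinate-ring graded piece via a universal-property argument.

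First, I would unwind $\mathrm{sw}^*\mathcal{R}_\mu$ term-by-term. Since the inversion map $\mathrm{sw}$ sends $t^\lambda$ to $t^{-\lambda}$, it restricts to an isomorphism $\mathrm{Gr}^\lambda\xrightarrow{\sim}\mathrm{Gr}^{-w_0(\lambda)}$ of smooth $G[[t]]$-orbits; as $\mathrm{sw}^*$ is $t$-exact, this gives $\mathrm{sw}^*\mathcal{I}^\lambda_*\simeq\mathcal{I}^{-w_0(\lambda)}_*$, which via $\mathrm{Sat}$ becomes $\mathrm{N}^{(1)}(-w_0(\lambda))\simeq\mathrm{M}^{(1)}(\lambda)^*$ (using the Weyl/induced duality). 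By Proposition \ref{prop Chevalley inv}, $\mathrm{sw}^*$ is (up to a natural isomorphism) monoidal, being identified with the Chevalley involution of $\check{\mathbf{G}}^{(1)}$. Applying $\mathrm{sw}^*$ termwise to the defining colimit for $\mathcal{R}_\mu$, and re-indexing by $\nu := -w_0(\lambda)\in\mathbf{Y}^+\cap(\mu+\mathbf{Y}^+)$, I get
\[
\mathrm{Sat}(\mathrm{sw}^*\mathcal{R}_\mu)\;\simeq\;\varinjlim_{\nu\in\mathbf{Y}^+\cap(\mu+\mathbf{Y}^+)}\mathrm{N}^{(1)}(\nu-\mu)\otimes \mathrm{N}^{(1)}(-w_0(\nu)),
\]
with transition maps induced, via the rigidity identification $\mathrm{N}^{(1)}(-w_0(\nu))\simeq\mathrm{M}^{(1)}(\nu)^*$, by the canonical coevaluation $\mathbb{k}\to\mathrm{N}^{(1)}(\eta)\otimes\mathrm{M}^{(1)}(\eta)^*$ (Satake-dual to \eqref{can0}, built from the canonical morphism $\mathrm{M}^{(1)}(\eta)\to\mathrm{N}^{(1)}(\eta)$) and the multiplication maps $\mathrm{N}^{(1)}(\eta)\otimes\mathrm{N}^{(1)}(\eta')\to\mathrm{N}^{(1)}(\eta+\eta')$ (Satake-dual to \eqref{can1}).

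Next, I would identify both ind-objects by the universal property. For any finite-dimensional $V\in\mathrm{Rep}(\check{\mathbf{G}}^{(1)})$, Frobenius reciprocity gives $\mathrm{Hom}\bigl(V,\mathrm{Ind}_{\check{\mathbf{T}}^{(1)}}^{\check{\mathbf{G}}^{(1)}}(-\mu)\bigr)\simeq V(\mu)^*$ naturally in $V$. On the colimit side, tensor-hom adjunction gives
\[
\mathrm{Hom}\bigl(V,\mathrm{N}^{(1)}(\nu-\mu)\otimes \mathrm{M}^{(1)}(\nu)^*\bigr)\;\simeq\;\mathrm{Hom}\bigl(V\otimes \mathrm{M}^{(1)}(\nu),\mathrm{N}^{(1)}(\nu-\mu)\bigr).
\]
For $\nu$ sufficiently dominant with respect to the weights of $V$, Mathieu's theorem implies that $V\otimes\mathrm{M}^{(1)}(\nu)$ has a good filtration; combining this with Kempf vanishing and the orthogonality $\mathrm{Ext}^{>0}(\mathrm{M}(\lambda),\mathrm{N}(\lambda'))=0$, the right-hand side stabilises to the multiplicity $V(\mu)^*$, and the transition maps (built from the morphism $\mathrm{M}^{(1)}(\eta)\to\mathrm{N}^{(1)}(\eta)$) are the identity on this stable piece. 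So both ind-objects corepresent the same functor $V\mapsto V(\mu)^*$ on $\mathrm{Rep}(\check{\mathbf{G}}^{(1)})$, yielding a canonical isomorphism.

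The main obstacle is the compatibility step: one must check that the canonical morphism $\mathcal{IC}^0\to\mathcal{I}^\lambda_*\star\mathcal{I}^{-w_0(\lambda)}_*$ used in the definition of $\mathcal{R}_\mu$ corresponds under $\mathrm{Sat}$ precisely to the coevaluation built out of $\mathrm{M}^{(1)}(\lambda)\to\mathrm{N}^{(1)}(\lambda)$. This is conceptually the content of rigidity of $(\mathrm{Perv}_{G[[t]]}(\mathrm{Gr}),\star)$, but in positive characteristic -- where Weyl and induced modules are no longer isomorphic -- it requires careful tracking of dualities. A secondary obstacle is justifying the stabilisation of the Hom spaces; in characteristic zero this is automatic from semisimplicity, but in the modular setting one genuinely needs the Donkin-Mathieu good-filtration machinery.
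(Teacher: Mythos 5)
The paper's proof is essentially a one-line citation: it transfers the colimit defining $\mathcal{R}_\mu$ through $\mathrm{Sat}\circ\mathrm{sw}^*$ using $\mathrm{sw}^*\mathcal{I}^\lambda_*\simeq\mathcal{I}^{-w_0(\lambda)}_*$ (the same observation you make) and then matches the result against the explicit colimit description of $\mathcal{O}(\check{\mathbf{G}}^{(1)})_\mu$ supplied by \cite[Lemma 4.2]{achar2022geometric}; since the morphisms \eqref{can0} and \eqref{can1} are taken verbatim from \cite[\S 5.2]{achar2022geometric}, the transition maps match by construction and there is nothing further to check. Your argument, by contrast, reproves the content of that external lemma from scratch via the universal property of the ind-completion: you compute $\mathrm{Hom}(V,-)$ on both ind-objects for finite-dimensional $V$, use Frobenius reciprocity to reduce the $\mathrm{Ind}_{\check{\mathbf{T}}^{(1)}}^{\check{\mathbf{G}}^{(1)}}$-side to weight spaces, and use tensor-hom adjunction plus filtration combinatorics to show the colimit side corepresents the same functor. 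This is a valid and self-contained route, and you correctly identify the two genuine technical burdens it entails: (i) matching the Satake image of \eqref{can0} with the coevaluation coming from $\mathrm{M}^{(1)}(\lambda)\to\mathrm{N}^{(1)}(\lambda)$ under rigidity, and (ii) stabilization of the Hom-spaces in positive characteristic. These are exactly the points that the paper silently offloads onto the cited lemma, so your flagging them is appropriate. One correction to (ii): for $\nu$ sufficiently dominant you want $V\otimes\mathrm{M}^{(1)}(\nu)$ to admit a \emph{Weyl} (standard) filtration, not a good filtration, so that the orthogonality $\mathrm{Ext}^{>0}(\mathrm{M}(\lambda),\mathrm{N}(\lambda'))=0$ and $\mathrm{Hom}(\mathrm{M}(\lambda),\mathrm{N}(\lambda'))\simeq\delta_{\lambda,\lambda'}\mathbb{k}$ let you read off $\mathrm{Hom}(V\otimes\mathrm{M}^{(1)}(\nu),\mathrm{N}^{(1)}(\nu-\mu))$ as a filtration multiplicity; this follows more directly from the tensor identity and Kempf vanishing (once $\nu$ dominates the weights of $V$) than from the full Donkin--Mathieu theorem, and avoids the need for $V$ itself to have any filtration structure.
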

	\begin{proof}
		This follows from the description of $\mathcal{O}(\check{\mathbf{G}}^{(1)})_\mu$ as an ind-object of $\mathrm{Rep}(\check{\mathbf{G}}^{(1)})$ (\cite[Lemma 4.2]{achar2022geometric}), together with the fact that $\mathrm{sw}^*(\mathcal{I}^{\lambda}_*)\simeq\mathcal{I}^{-w_0(\lambda)}_* $ for all $\lambda\in\mathbf{Y}^+$ thanks to Proposition \ref{prop Chevalley inv}.
	\end{proof}
	We then define the $\mathbf{Y}$-graded ind-object
	$$\mathcal{R}:=\bigoplus_{\mu\in\mathbf{Y}} \mathcal{R}_\mu.$$
	The morphism \eqref{can0} defines a unit map $\eta:\mathcal{IC}^0\to \mathcal{R}_0$, and one can define multiplication morphisms $\mathcal{R}_\mu\star\mathcal{R}_{\mu'}\to \mathcal{R}_{\mu+\mu'}$ for all $\mu,\mu'\in\mathbf{Y}$, in a similar way to the definition of the transition morphisms above (see \cite[(5.5)]{achar2022geometric}). These morphisms make $\mathcal{R}$ an algebra object, called the \textit{regular perverse sheaf}.
	
	\subsection{Modules over the regular perverse sheaf}\label{section Modules over the regular perverse sheaf} Here we follow \cite[§5.4]{achar2022geometric} (here we only consider the case $A=\emptyset$, in the notation of \textit{loc. cit.}). By definition, a $\mathbf{Y}$-graded right $\mathcal{R}$-module
	$$\mathcal{F}=\bigoplus_{\lambda\in\mathbf{Y}}\mathcal{F}_\lambda$$
	is a $\mathbf{Y}$-graded ind-object in the category $\mathrm{Perv}_{I_\mathrm{u}}(\mathrm{Gr})$, along with a collection of morphisms
	$$\mathcal{F}_\mu\star\mathcal{R}_\lambda\to \mathcal{F}_{\mu+\lambda}$$
	for all $\lambda,\mu$, satisfying obvious associativity and unit axioms. We let 
	$$\mathrm{Mod}^\mathbf{Y}_{I_\mathrm{u}}(\mathcal{R}) $$
	denote the category of $\mathbf{Y}$-graded right $\mathcal{R}$-modules (which is a full subcategory of the category of $\mathbf{Y}$-graded ind-objects in $\mathrm{Perv}_{I_\mathrm{u}}(\mathrm{Gr})$). This category is abelian. For any $\nu\in\mathbf{Y}$, there is a \textit{shift-of-grading} endofunctor $\langle\nu\rangle$ defined by
	$$(\mathcal{F}\langle\nu\rangle)_\lambda:=\mathcal{F}_{\lambda-\nu} \quad \forall \lambda\in\mathbf{Y}.$$
	Next, we define a functor 
	$$\Phi:\mathrm{Perv}_{I_\mathrm{u}}(\mathrm{Gr})\to \mathrm{Mod}^\mathbf{Y}_{I_\mathrm{u}}(\mathcal{R}),~\mathcal{F}\mapsto\bigoplus_{\mu\in\mathbf{Y}}\mathcal{F}\star\mathcal{R}_\mu. $$
	This functor is exact (by exactness of the convolution product) and faithful, since it doesn't kill any nonzero object. Indeed, since the unit map $\eta$ is injective and $\star$ is exact, we get an injective map $\mathcal{F}\simeq \mathcal{F}\star\mathcal{IC}^0\to \mathcal{F}\star\mathcal{R}_0$. 
  \begin{prop}{{\cite[Lemma 5.1]{achar2022geometric}}}\label{prop iso morphisms}
     Let $\mathcal{F}\in \mathrm{Perv}_{I_\mathrm{u}}(\mathrm{Gr})$, $\mathcal{G}\in \mathrm{Mod}^\mathbf{Y}_{I_\mathrm{u}}(\mathcal{R})$. There is a natural isomorphism
     $$\mathrm{Hom}_{\mathrm{Mod}^\mathbf{Y}_{I_\mathrm{u}}(\mathcal{R})}(\Phi(\mathcal{F}),\mathcal{G})\simeq \mathrm{Hom}_{\mathrm{Perv}_{I_\mathrm{u}}(\mathrm{Gr})}(\mathcal{F},\mathcal{G}_0).$$
 \end{prop}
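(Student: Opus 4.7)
The plan is to recognize this as a standard ``extension-of-scalars'' adjunction: $\Phi$ is the free module functor, left adjoint to the degree-zero component functor $\mathcal{G} \mapsto \mathcal{G}_0$. I would therefore exhibit the isomorphism by constructing mutually inverse natural maps in both directions explicitly.

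For the forward map, given $f : \Phi(\mathcal{F}) \to \mathcal{G}$ in $\mathrm{Mod}^{\mathbf{Y}}_{I_\mathrm{u}}(\mathcal{R})$, I take its degree-$0$ component $f_0 : \mathcal{F} \star \mathcal{R}_0 \to \mathcal{G}_0$ and precompose with the morphism
$$\mathcal{F} \simeq \mathcal{F} \star \mathcal{IC}^0 \xrightarrow{\mathrm{id}_{\mathcal{F}} \star \eta} \mathcal{F} \star \mathcal{R}_0$$
obtained from the unit $\eta : \mathcal{IC}^0 \to \mathcal{R}_0$ of §\ref{section The regular perverse sheaf}. This gives an element of $\mathrm{Hom}_{\mathrm{Perv}_{I_\mathrm{u}}(\mathrm{Gr})}(\mathcal{F},\mathcal{G}_0)$.

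For the inverse map, starting from $g : \mathcal{F} \to \mathcal{G}_0$ I build $\tilde g : \Phi(\mathcal{F}) \to \mathcal{G}$ componentwise, setting
$$\tilde g_\mu : \mathcal{F} \star \mathcal{R}_\mu \xrightarrow{g \star \mathrm{id}_{\mathcal{R}_\mu}} \mathcal{G}_0 \star \mathcal{R}_\mu \xrightarrow{\;\mathrm{act}\;} \mathcal{G}_\mu,$$
where the second arrow is the structural action of $\mathcal{R}$ on $\mathcal{G}$. Then I have to verify that $\tilde g$ is $\mathcal{R}$-linear, i.e. that the squares
$$\tilde g_{\mu+\lambda} \circ (\mathrm{id}_{\mathcal{F} \star \mathcal{R}_\mu} \star m_{\mu,\lambda}) = \mathrm{act}_{\mu,\lambda} \circ (\tilde g_\mu \star \mathrm{id}_{\mathcal{R}_\lambda})$$
commute for all $\lambda,\mu \in \mathbf{Y}$; this is a direct consequence of the associativity axiom for the $\mathcal{R}$-module structure on $\mathcal{G}$ together with the associativity of the convolution product $\star$.

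The two roundtrip verifications are then formal: the composition ``forward then backward'' recovers $f$ by the unit axiom for $\mathcal{G}$ as an $\mathcal{R}$-module (applied in degree $\mu$) combined with the $\mathcal{R}$-linearity of $f$; the composition ``backward then forward'' recovers $g$ using only the unit axiom $\mathrm{act}_{0,0} \circ (\mathrm{id} \star \eta) = \mathrm{id}_{\mathcal{G}_0}$. I expect the main subtlety, rather than an obstacle, to be bookkeeping at the level of ind-objects: since $\Phi(\mathcal{F}) = \bigoplus_\mu \mathcal{F} \star \mathcal{R}_\mu$ and each $\mathcal{R}_\mu$ is a filtered colimit, the statement has to be read as an isomorphism of $\mathrm{Hom}$ groups in $\mathrm{Ind}(\mathrm{Perv}_{I_\mathrm{u}}(\mathrm{Gr}))$; but since $\mathrm{Hom}$ from a compact object commutes with filtered colimits in the target, and since morphisms out of a direct sum are determined componentwise, this causes no real trouble and the formal adjunction argument carries through verbatim.
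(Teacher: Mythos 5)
Your proposal is correct and follows exactly the same route as the paper's proof: the forward map is the degree-zero component precomposed with the unit $\mathcal{F}\to\mathcal{F}\star\mathcal{R}_0$, and the inverse sends $g$ to the componentwise composition $\mathcal{F}\star\mathcal{R}_\lambda\xrightarrow{g\star\mathrm{id}}\mathcal{G}_0\star\mathcal{R}_\lambda\to\mathcal{G}_\lambda$. Your added checks of $\mathcal{R}$-linearity and the roundtrip identities are the details the paper leaves to the reader.
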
Since the above result will be used a lot in §\ref{section equivalence}, we recall its proof.
\begin{proof}
    A morphism $\alpha:\Phi(\mathcal{F})\to \mathcal{G}$ is sent to the composition of the morphism $\mathcal{F}\to \mathcal{F}\star\mathcal{R}_0$ with the degree-zero part $\alpha_0:\mathcal{F}\star\mathcal{R}_0=\Phi(\mathcal{F})_0\to \mathcal{G}_0$. 
    
    Conversely, consider a morphism $\beta:\mathcal{F}\to \mathcal{G}_0$. For any $\lambda\in\mathbf{Y}$, we construct a morphism $\Phi(\mathcal{F})_\lambda\to \mathcal{G}_\lambda$ as the composition
    $$\mathcal{F}\star\mathcal{R}_\lambda\xrightarrow{\beta\star\mathrm{id}}\mathcal{G}_0\star\mathcal{R}_\lambda\to \mathcal{G}_\lambda, $$
    where the last morphism comes from the $\mathcal{R}$-module structure of $\mathcal{G}$.

    One can easily check that the two functors that we have constructed are inverse to each other.
\end{proof}	
An object of $\mathrm{Mod}^\mathbf{Y}_{I_\mathrm{u}}(\mathcal{R})$ is called a \textit{free graded $\mathcal{R}$-module of finite type} if it is isomorphic to a direct sum of shifts of objects of the form $\Phi(\mathcal{F})$, for $\mathcal{F}\in\mathrm{Perv}_{I_\mathrm{u}}(\mathrm{Gr})$.  An object of $\mathrm{Mod}^\mathbf{Y}_{I_\mathrm{u}}(\mathcal{R})$ is said to be \textit{finitely generated} if it is  a quotient of a free graded $\mathcal{R}$-module.

	We now describe the simple objects of the abelian category $\mathrm{Mod}^\mathbf{Y}_{I_\mathrm{u}}(\mathcal{R})$. Let $w\in W_{\mathrm{ext}}$. By construction of the set $W_{\mathrm{ext}}^{\mathrm{res}}$, there exist $y\in W_{\mathrm{ext}}^{\mathrm{res}},~\lambda\in \mathbf{Y}$ such that $w=yt_\lambda$. We then put 
	$$\widehat{\mathcal{L}}_w:=\Phi(\mathrm{L}_y)\langle-\lambda\rangle\in \mathrm{Mod}^\mathbf{Y}_{I_\mathrm{u}}(\mathcal{R})$$
 (here the choice of a couple $(y,\lambda)\in W_{\mathrm{ext}}^{\mathrm{res}}\times\mathbf{Y}$ such that $yt_\lambda=w$ is non-unique, but a different choice $(y',\lambda')$ yields an isomorphic object $\Phi(\mathrm{L}_{y'})\langle-\lambda'\rangle$ thanks to the comments in \cite[§5.5]{achar2022geometric}). We thus have the equality $\widehat{\mathcal{L}}_{yt_\lambda}=\widehat{\mathcal{L}}_y\langle-\lambda\rangle$ for all $y\in W_{\mathrm{ext}}^{\mathrm{res}},~\lambda\in \mathbf{Y}$.
	\begin{thm}{{\cite[Theorem 5.6]{achar2022geometric}}}\label{thm simple g1t}
		The assignment $w\mapsto \widehat{\mathcal{L}}_{w}$ induces a bijection between $W_{\mathrm{ext}}$ and the set of isomorphism classes of simple objects of $\mathrm{Mod}^\mathbf{Y}_{I_\mathrm{u}}(\mathcal{R})$.
	\end{thm}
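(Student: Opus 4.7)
The plan is to verify the three properties underlying the bijection: (a) each $\widehat{\mathcal{L}}_w$ is a well-defined simple object, (b) different $w \in W_{\mathrm{ext}}$ give non-isomorphic simples, and (c) every simple of $\mathrm{Mod}^{\mathbf{Y}}_{I_\mathrm{u}}(\mathcal{R})$ arises this way. I would organize the argument around Proposition \ref{prop iso morphisms}, which is the main computational tool, and reduce everything for which I can to the behavior of the convolution $\mathrm{L}_y \star \mathcal{R}_\mu$ in $\mathrm{Perv}_{I_\mathrm{u}}(\mathrm{Gr})$.

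For well-definedness in (a), suppose $yt_\lambda = y' t_{\lambda'}$ with $y,y' \in W^{\mathrm{res}}_{\mathrm{ext}}$. Then $(y')^{-1}y \in W$ lies in the subgroup of $W_{\mathrm{ext}}$ that preserves $\Pi$ up to lattice translation, which forces $y'=wy$ for some $w$ in a stabilizer and $\lambda'-\lambda$ in the corresponding lattice; the convolution identities of Lemma \ref{lem conv standard} combined with the $\mathcal{R}$-module structure of $\Phi(\mathrm{L}_y)$ (more precisely, the existence of internal ``translation'' morphisms from the $\mathcal{R}_\mu$'s) produce the required canonical isomorphism $\Phi(\mathrm{L}_y)\langle-\lambda\rangle \simeq \Phi(\mathrm{L}_{y'})\langle -\lambda'\rangle$. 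For the simplicity half of (a), the grading shift reduces one to showing $\Phi(\mathrm{L}_y)$ is simple when $y \in W^{\mathrm{res}}_{\mathrm{ext}}$. By Proposition \ref{prop iso morphisms}, this is controlled by $\mathrm{Hom}(\mathrm{L}_y, \mathrm{L}_y \star \mathcal{R}_0)$ and, more generally, by $\mathrm{Hom}(\mathrm{L}_y, N_0)$ for an arbitrary nonzero subobject $N \subseteq \Phi(\mathrm{L}_y)$. One must trace how the unit $\eta : \mathcal{IC}^0 \to \mathcal{R}_0$ embeds $\mathrm{L}_y$ into $\Phi(\mathrm{L}_y)_0$ and argue that any other potential composition factor of $\mathrm{L}_y \star \mathcal{R}_\mu$ is forbidden: the convolution $\mathrm{L}_y \star \mathcal{I}^{\nu}_*$ is supported on translates controlled by $\nu$, so restrictedness of $y$ (i.e.\ the alcove of $y^{-1}$ lies in the fundamental box $\Pi$) prevents spurious factors of the form $\mathrm{L}_y$ from appearing in higher degrees.

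For (c), given a simple $M$, I would pick any $\mu$ with $M_\mu \neq 0$ and a simple subobject $\mathrm{L}_w \hookrightarrow M_\mu$ in $\mathrm{Perv}_{I_\mathrm{u}}(\mathrm{Gr})$ (with $w \in W^S_\mathrm{ext}$). Proposition \ref{prop iso morphisms} lifts the inclusion to a map $\Phi(\mathrm{L}_w)\langle-\mu\rangle \to M$, which is nonzero and hence surjective by simplicity of $M$. Writing $w = yt_\lambda$ with $y \in W^\mathrm{res}_\mathrm{ext}$ and $\lambda \in \mathbf{Y}^+$ (decomposition from \S 3.1), the identity $\Phi(\mathrm{L}_w) \simeq \Phi(\mathrm{L}_y \star^I \mathcal{D}_{t_\lambda})$ combined with Lemma \ref{lem conv standard} and the construction of $\mathcal{R}$ let one replace this surjection by a surjection from the simple object $\widehat{\mathcal{L}}_{wt_{-\mu}} = \Phi(\mathrm{L}_y)\langle -\mu - \lambda\rangle$ onto $M$, yielding the desired isomorphism. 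For (b), the non-isomorphism of different $\widehat{\mathcal{L}}_w$'s follows by inspecting, for each grading $\lambda$, which $\mathrm{L}_y$ occur as composition factors of $(\widehat{\mathcal{L}}_w)_\lambda$: the ``extremal'' factor $\mathrm{L}_y$ occurring in the lowest admissible degree $-\lambda$ reads off the pair $(y,\lambda)$ uniquely modulo the ambiguity already accounted for in (a), hence determines $w \in W_\mathrm{ext}$.

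The main obstacle is the simplicity assertion in (a), specifically the fact that no unexpected subobjects of $\Phi(\mathrm{L}_y)$ appear in higher-degree graded pieces. I expect this to require a careful comparison of the support of $\mathrm{L}_y \star \mathcal{I}^\nu_*$ against the Schubert support of $\mathrm{L}_y$ itself, together with the translation-by-$\lambda$ feature of the transition maps in $\mathcal{R}_\mu$; the restrictedness of $y$ is what ensures this support analysis rules out exactly the right factors. All other steps are essentially formal manipulations once this structural fact is in hand.
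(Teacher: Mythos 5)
The paper does not actually prove this statement; Theorem \ref{thm simple g1t} is simply imported from \cite[Theorem 5.6]{achar2022geometric}, so there is no in-paper proof to compare against. Evaluating your argument on its own terms: the skeleton (well-definedness, simplicity of each $\widehat{\mathcal{L}}_w$, injectivity, exhaustion) is the right shape, and the use of Proposition \ref{prop iso morphisms} to lift a simple subobject $\mathrm{L}_w \hookrightarrow M_\mu$ to a surjection $\Phi(\mathrm{L}_w)\langle-\mu\rangle \twoheadrightarrow M$ is exactly the correct move for exhaustion. However, the crucial step --- simplicity of $\Phi(\mathrm{L}_y)$ for $y \in W^{\mathrm{res}}_{\mathrm{ext}}$ --- is left as a sketch, and you explicitly flag it as ``the main obstacle.'' The proposed support-theoretic argument (``the convolution $\mathrm{L}_y \star \mathcal{I}^\nu_*$ is supported on translates controlled by $\nu$, so restrictedness of $y$ prevents spurious factors'') is not worked out, and as stated it would only control composition factors of $\Phi(\mathrm{L}_y)_\mu$, not rule out a proper nonzero subobject that hits nonzero degree but misses degree zero. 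Proposition \ref{prop iso morphisms} gives $\mathrm{Hom}(\Phi(\mathrm{L}_y), N) \simeq \mathrm{Hom}(\mathrm{L}_y, N_0)$ for a subobject $N \subset \Phi(\mathrm{L}_y)$, but you need an argument showing $N_0 \neq 0$ (and moreover contains $\mathrm{L}_y$), and then that generation by $\mathrm{L}_y$ in degree zero forces $N = \Phi(\mathrm{L}_y)$; this is precisely what is left unproven.

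Two smaller issues. In your well-definedness argument, if $y t_\lambda = y' t_{\lambda'}$ then $(y')^{-1} y = t_{\lambda' - \lambda}$, which is a pure translation, not an element of $W$; the ambiguity in the decomposition $(y,\lambda) \in W^{\mathrm{res}}_\mathrm{ext} \times \mathbf{Y}$ is controlled by translations orthogonal to all coroots (central directions), not by a finite Weyl stabilizer. In the exhaustion step, the claimed ``identity $\Phi(\mathrm{L}_w) \simeq \Phi(\mathrm{L}_y \star^I \mathcal{D}_{t_\lambda})$'' is not an identity: for $w = y t_\lambda$ one has $\mathcal{D}_{t_\lambda} \star^I \Delta_y \simeq \Delta_w$ relating \emph{standard} objects (Lemma \ref{lem conv standard}), but there is no such simple relation among intersection cohomology sheaves; replacing a surjection from $\Phi(\mathrm{L}_w)\langle -\mu\rangle$ by one from the simple $\widehat{\mathcal{L}}_{wt_{-\mu}}$ requires knowing that $\Phi(\mathrm{L}_w)$ has $\widehat{\mathcal{L}}_{wt_{-\mu}}$ as its unique simple quotient, i.e. a head computation, which again reduces to the unproven structural claim. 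Finally, you make no use of the Iwahori--Whittaker averaging functor $\mathrm{Av}_{I_\mathrm{u},\psi}$ and the semi-simplicity of $\mathrm{mod}^{\mathbf{Y}}_{\mathcal{IW}}(\mathcal{R})$ (Propositions \ref{Prop averaging simples} and \ref{prop semi-simple IW}), which in the cited reference serve exactly as the detection tool for such head/socle and simplicity statements; this is likely the technical input your sketch is missing.
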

	We will denote by $\mathrm{mod}^\mathbf{Y}_{I_\mathrm{u}}(\mathcal{R})$ the full subcategory of $\mathrm{Mod}^\mathbf{Y}_{I_\mathrm{u}}(\mathcal{R})$ consisting of finite length objects.  
	\begin{prop}{{\cite[Theorem 7.9]{achar2022geometric}}}\label{prop finite length}
		An object of $\mathrm{Mod}^\mathbf{Y}_{I_\mathrm{u}}(\mathcal{R})$ is finitely generated iff it is of finite length.
	\end{prop}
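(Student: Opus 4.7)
We treat the two implications separately.

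The easy direction is that finite length implies finitely generated. By Theorem \ref{thm simple g1t}, every simple object of $\mathrm{Mod}^\mathbf{Y}_{I_\mathrm{u}}(\mathcal{R})$ is of the form $\widehat{\mathcal{L}}_w=\Phi(\mathrm{L}_y)\langle-\lambda\rangle$ with $y\in W^{\mathrm{res}}_{\mathrm{ext}}$ and $\lambda\in\mathbf{Y}$, hence is a shift of a free graded module of finite type, so is finitely generated. The general case then follows by induction on the length, using the standard fact that in the ambient abelian category, the class of finitely generated objects is stable under extensions: given $0\to M'\to M\to M''\to 0$ with surjections $P'\twoheadrightarrow M'$ and $P''\twoheadrightarrow M''$ from free graded modules of finite type, Proposition \ref{prop iso morphisms} (applied componentwise to each summand of $P''$) provides a lift $P''\to M$, and then $P'\oplus P''\twoheadrightarrow M$ is surjective.

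For the nontrivial direction, finitely generated implies finite length. A finitely generated object is a quotient of a finite direct sum of shifts of $\Phi(\mathcal{F}_i)$ with $\mathcal{F}_i\in\mathrm{Perv}_{I_\mathrm{u}}(\mathrm{Gr})$, and since quotients and finite direct sums preserve finite length it suffices to show that each $\Phi(\mathcal{F})$ has finite length. Because $\mathrm{Perv}_{I_\mathrm{u}}(\mathrm{Gr})$ is locally finite, every $\mathcal{F}$ has a finite composition series in this category; exactness of $\Phi$ then reduces us to showing that $\Phi(\mathrm{L}_w)$ has finite length for each $w\in W^S_\mathrm{ext}$.

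Fix such a $w$ and write $w=yt_\mu$ with $y\in W^{\mathrm{res}}_{\mathrm{ext}}$ and $\mu\in\mathbf{Y}^+$. By Proposition \ref{prop iso morphisms} and Theorem \ref{thm simple g1t}, the multiplicity of a simple $\widehat{\mathcal{L}}_{y't_{\lambda}}$ (with $y'\in W^{\mathrm{res}}_{\mathrm{ext}}$) in $\Phi(\mathrm{L}_w)$ is controlled by $\mathrm{Hom}_{\mathrm{Perv}_{I_\mathrm{u}}(\mathrm{Gr})}(\mathrm{L}_w,\mathrm{L}_{y'}\star\mathcal{R}_\lambda)$, that is, by the multiplicities $[\mathrm{L}_{y'}\star\mathcal{R}_\lambda:\mathrm{L}_w]$ of $\mathrm{L}_w$ in the composition series of the ind-perverse sheaf $\mathrm{L}_{y'}\star\mathcal{R}_\lambda$. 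Using Proposition \ref{Proposition regular perverse sheaf} to identify $\mathrm{Sat}(\mathrm{sw}^*\mathcal{R}_\lambda)$ with $\mathcal{O}(\check{\mathbf{G}}^{(1)})_\lambda$, and then the Finkelberg--Mirkovi\'c equivalence (Theorem \ref{Thm FM}) to translate convolution into tensoring by a Frobenius twist, these multiplicities match those of $L((yt_\mu)^{-1}\bullet_\ell 0)$ as a $\check{\mathbf{G}}$-composition factor of $L((y')^{-1}\bullet_\ell 0)\otimes\mathrm{Fr}^*(\mathcal{O}(\check{\mathbf{G}}^{(1)})_\lambda)$; by Steinberg's tensor product formula this forces $y'=y$ and pins $\lambda$ down modulo the finite set of weights of a fixed representation of $\check{\mathbf{G}}^{(1)}$. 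Hence only finitely many pairs $(y',\lambda)$ can contribute, and a Jordan--H\"older argument (using that the filtration of $\mathcal{R}_\lambda$ by its defining ind-system interacts compatibly with $\star\mathrm{L}_{y'}$ by $t$-exactness of convolution) bounds each multiplicity by a finite number. Therefore $\Phi(\mathrm{L}_w)$ has finitely many composition factors, each with finite multiplicity, so is of finite length.

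The main obstacle is this last step: one must control the composition factors of the ind-object $\mathrm{L}_{y'}\star\mathcal{R}_\lambda$. The crucial input is the combination of Theorem \ref{Thm FM} with the description of $\mathcal{R}$ from Proposition \ref{Proposition regular perverse sheaf}, which transports the problem from perverse sheaves to $\check{\mathbf{G}}_1\check{\mathbf{T}}$-representations, where the analogous finiteness is well known (Steinberg tensor formula plus finite-dimensionality of weight spaces of finite-dimensional $\check{\mathbf{G}}^{(1)}$-modules).
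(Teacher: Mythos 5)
The paper does not actually prove this statement; it cites \cite[Theorem 7.9]{achar2022geometric} verbatim, so your argument has to stand on its own, and it does not. The serious gap is in the direction ``finitely generated $\Rightarrow$ finite length''. After the (correct) reduction to $\Phi(\mathrm{L}_w)$, you attempt to bound the length by bounding Hom-spaces. But Proposition \ref{prop iso morphisms} only computes $\mathrm{Hom}(\Phi(\mathrm{L}_w),\widehat{\mathcal{L}}_{y't_\lambda})$ and $\mathrm{Hom}(\widehat{\mathcal{L}}_{y't_\lambda},\Phi(\mathrm{L}_w))$, which detect the head and the socle of $\Phi(\mathrm{L}_w)$, not the multiplicities of its composition factors; an object can have simple head and socle and still fail to have finite length, so no bound on these Hom-spaces gives the conclusion. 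The subsequent appeal to Steinberg's formula to ``pin $\lambda$ down modulo the finite set of weights of a fixed representation'' does not make sense as stated: $\mathcal{O}(\check{\mathbf{G}}^{(1)})_\lambda$ is an infinite-dimensional ind-representation with unbounded weights, so the finiteness you invoke is precisely what has to be proved. Finally, your route goes through Theorem \ref{Thm FM}, which carries explicit bounds on $\ell$ that are absent from the statement and which \cite{achar2022geometric} deliberately avoids; their proof instead first establishes that the objects $\widehat{\mathcal{Z}}'_w$ have finite length (via the Iwahori--Whittaker averaging functor and the analysis of partially integrable objects) and then shows that free graded modules admit finite filtrations controlled by these.

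The easy direction also has a flaw. To show that finitely generated objects are stable under extensions you ``lift'' $P''\to M''$ along $M\to M''$ using Proposition \ref{prop iso morphisms}. That proposition is an adjunction, not a projectivity statement: $\Phi(\mathcal{F})$ is not projective, since the functor $\mathcal{G}\mapsto\mathrm{Hom}_{\mathrm{Ind}(\mathrm{Perv}_{I_\mathrm{u}}(\mathrm{Gr}))}(\mathcal{F},\mathcal{G}_0)$ is not right-exact, so the lift need not exist. The claim is nevertheless true and can be repaired: write each graded piece $M_{\lambda_i}$ as a filtered union of honest subobjects $Q_k\in\mathrm{Perv}_{I_\mathrm{u}}(\mathrm{Gr})$, note that for $k$ large the image of $Q_k$ in $M''_{\lambda_i}$ contains the (finite length) image of $\mathcal{F}''_i$, and then check that the maps $\Phi(Q_k)\langle\lambda_i\rangle\to M$, together with a free cover of $M'$, generate $M$.
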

 We end this subsection with the crucial \textit{Hecke eigensheaf property} of objects of the category $\mathrm{Mod}^\mathbf{Y}_{I_\mathrm{u}}(\mathcal{R})$.
 \begin{prop}\label{prop Hecke property}
     Let $\mathcal{F}\in \mathrm{Mod}^\mathbf{Y}_{I_\mathrm{u}}(\mathcal{R}),~\mathcal{G}\in \mathrm{Rep}(G^\vee_\mathbf{k})$. We have canonical isomorphisms
     $$\mathcal{F}\star\mathcal{G}\simeq\bigoplus_{\nu\in\mathbf{Y}}\mathrm{Sat}(\mathrm{sw}^*\mathcal{G})(\nu)\otimes\mathcal{F}\langle\nu\rangle\simeq\bigoplus_{\nu\in\mathbf{Y}}\mathrm{Sat}(\mathcal{G})(-w_0(\nu))\otimes\mathcal{F}\langle\nu\rangle.  $$
 \end{prop}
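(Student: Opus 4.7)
The second isomorphism is an immediate consequence of Proposition \ref{prop Chevalley inv}, which yields the weight-space identification $\mathrm{Sat}(\mathrm{sw}^*\mathcal{G})(\nu)\simeq\mathrm{Sat}(\mathcal{G})(-w_0(\nu))$ for every $\nu\in\mathbf{Y}$. So the heart of the statement is the first isomorphism, and my plan is to first establish the case $\mathcal{F}=\mathcal{R}$ and then propagate by functoriality.

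For $\mathcal{F}=\mathcal{R}$, looking componentwise, the claim reduces to exhibiting isomorphisms
$$\mathcal{R}_\mu\star\mathcal{G}\simeq\bigoplus_{\nu\in\mathbf{Y}}V(\nu)\otimes\mathcal{R}_{\mu-\nu},\qquad V:=\mathrm{Sat}(\mathrm{sw}^*\mathcal{G}),$$
for every $\mu\in\mathbf{Y}$. Since $\mathrm{sw}$ comes from the anti-automorphism $g\mapsto g^{-1}$ of $G((t))$, the functor $\mathrm{sw}^*$ is anti-monoidal for $\star$; combined with the monoidality of $\mathrm{Sat}$ (Theorem \ref{satake equivalence1}) and the identification $\mathrm{Sat}(\mathrm{sw}^*\mathcal{R}_\lambda)\simeq\mathcal{O}(\check{\mathbf{G}}^{(1)})_\lambda$ of Proposition \ref{Proposition regular perverse sheaf}, the claim is translated into the representation-theoretic identity
$$V\otimes\mathrm{Ind}_{\check{\mathbf{T}}^{(1)}}^{\check{\mathbf{G}}^{(1)}}(-\mu)\simeq\bigoplus_{\nu\in\mathbf{Y}}V(\nu)\otimes\mathrm{Ind}_{\check{\mathbf{T}}^{(1)}}^{\check{\mathbf{G}}^{(1)}}(\nu-\mu),$$
which follows from the tensor (projection) identity $V\otimes\mathrm{Ind}_H^G(W)\simeq\mathrm{Ind}_H^G(V|_H\otimes W)$ specialized to $H=\check{\mathbf{T}}^{(1)}$, after decomposing the restriction $V|_{\check{\mathbf{T}}^{(1)}}$ into its weight spaces. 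This identity is canonical and characteristic-free, so no appeal to semisimplicity is needed.

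To pass from the regular perverse sheaf to an arbitrary $\mathcal{F}$, I would first upgrade the $\mathcal{F}=\mathcal{R}$ case to every free module $\Phi(\mathcal{F}_0)=\mathcal{F}_0\star\mathcal{R}$ by convolving on the left with $\mathcal{F}_0\in\mathrm{Perv}_{I_\mathrm{u}}(\mathrm{Gr})$ and invoking associativity of $\star$. Since Proposition \ref{prop iso morphisms} implies that every $\mathcal{R}$-module admits a presentation $\Phi(\mathcal{F}_1)\to\Phi(\mathcal{F}_0)\to\mathcal{F}\to 0$ by free modules, I would then conclude by exactness of both $-\star\mathcal{G}$ and $\bigoplus_\nu V(\nu)\otimes(-)\langle\nu\rangle$ combined with naturality of the constructed isomorphism in $\mathcal{F}_0$. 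The main obstacle I foresee lies in the bookkeeping of the central step: one must verify that the isomorphism produced by the projection formula is genuinely an isomorphism of $\mathcal{R}$-modules, i.e.\ compatible with the transition maps defining $\mathcal{R}_\mu$ as an ind-object and with the multiplication morphisms $\mathcal{R}_\mu\star\mathcal{R}_{\mu'}\to\mathcal{R}_{\mu+\mu'}$, and not only of the underlying $\mathbf{Y}$-graded ind-perverse sheaves.
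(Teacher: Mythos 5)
Your key computation is the same one the paper relies on: the identity $\mathcal{R}_\mu\star\mathcal{G}\simeq\bigoplus_\nu\mathrm{Sat}(\mathrm{sw}^*\mathcal{G})(\nu)\otimes\mathcal{R}_{\mu-\nu}$, obtained by transporting through $\mathrm{Sat}\circ\mathrm{sw}^*$, identifying $\mathcal{R}_\mu$ with $\mathcal{O}(\check{\mathbf{G}}^{(1)})_\mu=\mathrm{Ind}_{\check{\mathbf{T}}^{(1)}}^{\check{\mathbf{G}}^{(1)}}(-\mu)$, and applying the tensor identity (the paper only needs the case $\mu=0$, citing \cite[Lemma 5.2]{achar2022geometric}). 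Where you diverge is the globalization step. The paper does not pass through free presentations: it builds, for arbitrary $\mathcal{F}$ at once, a morphism $\mathcal{F}\star\mathcal{G}\to\bigoplus_\nu\mathrm{Sat}(\mathrm{sw}^*\mathcal{G})(\nu)\otimes\mathcal{F}\langle\nu\rangle$ as the composite of the unit map $\mathcal{F}\to\mathcal{F}\star\mathcal{R}_0$ convolved with $\mathcal{G}$, the isomorphism for $\mathcal{R}_0\star\mathcal{G}$, and the action maps $\mathcal{F}\star\mathcal{R}_{-\nu}\to\mathcal{F}\langle\nu\rangle$; it then checks this canonical morphism is invertible by the compatibilities of \cite[Proposition 2.3]{ARKHIPOV2003114}. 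Your dévissage is workable in principle, but note two points. First, the descent along a presentation $\Phi(\mathcal{F}_1)\xrightarrow{\alpha}\Phi(\mathcal{F}_0)\to\mathcal{F}\to0$ requires the isomorphism on free modules to be natural with respect to \emph{arbitrary} $\mathcal{R}$-module morphisms $\alpha$ — which, by Proposition \ref{prop iso morphisms}, are generally not of the form $\beta\star\mathrm{id}_{\mathcal{R}}$ for some $\beta:\mathcal{F}_1\to\mathcal{F}_0$ in $\mathrm{Perv}_{I_\mathrm{u}}(\mathrm{Gr})$ — so naturality in $\mathcal{F}_0$ alone is insufficient; this is precisely the compatibility you flag at the end, and the cleanest way to secure it is to define the isomorphism via the unit and action maps, i.e.\ to fall back on the paper's construction. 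Second, the statement is for all of $\mathrm{Mod}^\mathbf{Y}_{I_\mathrm{u}}(\mathcal{R})$, whereas presentations by free modules of finite type only exist for finitely generated (equivalently, finite length) objects by Proposition \ref{prop finite length}; you would need either infinite direct sums of shifted free modules or a passage to filtered colimits, both of which are harmless since $-\star\mathcal{G}$ and $\bigoplus_\nu V(\nu)\otimes(-)\langle\nu\rangle$ commute with them, but this should be said. With these repairs your argument is correct; what the paper's route buys is a single canonical morphism whose naturality in $\mathcal{F}$ is manifest, while yours makes the underlying representation-theoretic content (the projection formula) more explicit.
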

 \begin{proof}
     Consider the canonical morphism $\mathcal{F}\to\mathcal{F}\star\mathcal{R}_0$ induced by the unit map. Convolving with $\mathcal{G}$ we get a map $\mathcal{F}\star\mathcal{G}\to \mathcal{F}\star\mathcal{R}_0\star \mathcal{G}$. But one can show (by applying the equivalence $(\mathrm{Sat}\circ\mathrm{sw}^*)^{-1}$ and using Proposition \ref{Proposition regular perverse sheaf}, cf. the proof of \cite[Lemma 5.2]{achar2022geometric}) that we have a canonical isomorphism
     $$\mathcal{R}_0\star \mathcal{G}\simeq \bigoplus_\nu\mathrm{Sat}(\mathrm{sw}^*\mathcal{G})(\nu)\otimes\mathcal{R}_{-\nu}. $$
    So we obtain a morphism 
    $$\mathcal{F}\star\mathcal{G}\to\bigoplus_\nu \mathrm{Sat}(\mathrm{sw}^*\mathcal{G})(\nu)\otimes\mathcal{F}\star\mathcal{R}_{-\nu}. $$
    Composing the above morphism with the morphism $\bigoplus_\nu\mathcal{F}\star\mathcal{R}_{-\nu}\to\bigoplus_\nu\mathcal{F}\langle\nu\rangle$ coming from the $\mathcal{R}$-module structure of $\mathcal{F}$, we obtain a map
    $$\mathcal{F}\star\mathcal{G}\to\bigoplus_{\nu\in\mathbf{Y}}\mathrm{Sat}(\mathrm{sw}^*\mathcal{G})(\nu)\otimes\mathcal{F}\langle\nu\rangle.$$
    This morphism is functorial in $\mathcal{F}$ and $\mathcal{G}$, and satisfies an obvious compatibility with the tensor product and convolution products (cf. \cite[§1.3.6]{ABBGM}). Therefore, one can easily check (as in the proof of \cite[Proposition 2.3]{ARKHIPOV2003114}) that this morphism is in fact an isomorphism.
 \end{proof}
	\subsection{Verdier duality}\label{section Verdier} We now recall the definition of the Verdier duality operation on $\mathrm{mod}^\mathbf{Y}_{I_\mathrm{u}}(\mathcal{R})$ (cf. \cite[§9.1]{achar2022geometric}). The goal is to explain the construction of an exact and involutive anti-equivalence $\mathbb{D}:\mathrm{mod}^\mathbf{Y}_{I_\mathrm{u}}(\mathcal{R})\to \mathrm{mod}^\mathbf{Y}_{I_\mathrm{u}}(\mathcal{R})$ which satisfies
	\begin{equation}\label{eq prescribe D}
		\mathbb{D}(\Phi(\mathcal{F})\langle\lambda\rangle)\simeq \Phi(\mathbb{D}(\mathcal{F}))\langle\lambda\rangle\qquad \forall \mathcal{F}\in\mathrm{Perv}_{I_\mathrm{u}}(\mathrm{Gr}),~\lambda\in\mathbf{Y}.
	\end{equation}
	In fact, \eqref{eq prescribe D} allows one to define $\mathbb{D}$ for free graded $\mathcal{R}$-modules of finite type. This functor can then be extended (although this is not completely obvious) to the whole category $\mathrm{mod}^\mathbf{Y}_{I_\mathrm{u}}(\mathcal{R})$.
	
	We let $\mathrm{Free}^\mathbf{Y}_{I_\mathrm{u}}(\mathcal{R})$ denote the additive $\mathbb{k}$-linear category whose objects are formal finite direct sums
	$$\bigoplus_{j\in J} (\mathcal{F}_j,\lambda_j), $$
	where $J$ is a finite set, and $\mathcal{F}_j\in\mathrm{Perv}_{I_\mathrm{u}}(\mathrm{Gr})$, $\lambda_j\in\mathbf{Y}$ for all $j$. The morphisms are defined by
	\begin{align*}
		\mathrm{Hom}_{\mathrm{Free}^\mathbf{Y}_{I_\mathrm{u}}(\mathcal{R})}(\bigoplus_{j\in J} (\mathcal{F}_j,\lambda_j),\bigoplus_{i\in I} (\mathcal{F}'_i,\lambda_i))&:=\mathrm{Hom}_{\mathrm{mod}^\mathbf{Y}_{I_\mathrm{u}}(\mathcal{R})}(\bigoplus_{j\in J} \Phi(\mathcal{F}_j)\langle\lambda_j\rangle,\bigoplus_{i\in I} \Phi(\mathcal{F}'_i)\langle\lambda_i\rangle)\\
		&\simeq \bigoplus_{j,i}\mathrm{Hom}_{\mathrm{mod}^\mathbf{Y}_{I_\mathrm{u}}(\mathcal{R})}( \Phi(\mathcal{F}_j), \Phi(\mathcal{F}'_i)\langle\lambda_i-\lambda_j\rangle)
	\end{align*}
	for an object $\bigoplus_{i\in I} (\mathcal{F}'_i,\lambda_i)$ of $\mathrm{Free}^\mathbf{Y}_{I_\mathrm{u}}(\mathcal{R})$. We define the exact and fully faithful functor 
	$$\Phi_2:\mathrm{Free}^\mathbf{Y}_{I_\mathrm{u}}(\mathcal{R})\to  \mathrm{mod}^\mathbf{Y}_{I_\mathrm{u}}(\mathcal{R}),~\bigoplus_{j\in J} (\mathcal{F}_j,\lambda_j)\to \bigoplus_{j\in J} \Phi(\mathcal{F}_j)\langle\lambda_j\rangle.$$
	By construction, the essential image of $\Phi_2$ coincides with the free graded $\mathcal{R}$-modules.
	
	We now define a functor 
	$$\mathbb{D}_{\mathrm{fr}}:\mathrm{Free}^\mathbf{Y}_{I_\mathrm{u}}(\mathcal{R})\to \mathrm{Free}^\mathbf{Y}_{I_\mathrm{u}}(\mathcal{R})^\mathrm{op},$$
 where $(-)^\mathrm{op}$ means that we take the opposite category. On objects, it sends $\bigoplus_{j\in J} (\mathcal{F}_j,\lambda_j)$ to $\bigoplus_{j\in J} (\mathbb{D}(\mathcal{F}_j),\lambda_j)$. To describe how it behaves on morphisms, it is enough to treat the case of a morphism between two objects of the form $(\mathcal{F},\lambda),(\mathcal{G},\mu)$ by additivity. We will construct a bifunctorial isomorphism  
 $$\mathrm{Hom}_{\mathrm{Free}^\mathbf{Y}_{I_\mathrm{u}}(\mathcal{R})}((\mathcal{F},\lambda),(\mathcal{G},\mu))\to \mathrm{Hom}_{\mathrm{Free}^\mathbf{Y}_{I_\mathrm{u}}(\mathcal{R})}((\mathbb{D}(\mathcal{G}),\mu),(\mathbb{D}(\mathcal{F}),\lambda)).$$  
 By definition, the space $\mathrm{Hom}_{\mathrm{Free}^\mathbf{Y}_{I_\mathrm{u}}(\mathcal{R})}((\mathcal{F},\lambda),(\mathcal{G},\mu))$ coincides with the first space below
	\begin{align*}
		\mathrm{Hom}_{\mathrm{mod}^\mathbf{Y}_{I_\mathrm{u}}(\mathcal{R})}(\Phi(\mathcal{F}),\Phi(\mathcal{G})\langle\mu-\lambda\rangle)&\simeq  \mathrm{Hom}_{\mathrm{mod}^\mathbf{Y}_{I_\mathrm{u}}(\mathcal{R})}(\mathcal{F},(\Phi(\mathcal{G})\langle\mu-\lambda\rangle)_0)\\
		&=\mathrm{Hom}_{\mathrm{mod}^\mathbf{Y}_{I_\mathrm{u}}(\mathcal{R})}(\mathcal{F},\mathcal{G}\star\mathcal{R}_{\lambda-\mu})\\
		&=\varinjlim_\nu\mathrm{Hom}_{\mathrm{Perv}_{I_\mathrm{u}}(\mathrm{Gr})}(\mathcal{F},\mathcal{G}\star\mathcal{I}^{w_0(\lambda-\mu)+\nu}_*\star\mathcal{I}^{-w_0(\nu)}_*),
	\end{align*}
	where $\nu\in \mathbf{Y}^+\cap(-w_0(\lambda-\mu)+\mathbf{Y}^+)$. But for any such $\nu$, we have the following isomorphisms
	\begin{align*}
		\mathrm{Hom}_{\mathrm{Perv}_{I_\mathrm{u}}(\mathrm{Gr})}(\mathcal{F},\mathcal{G}\star\mathcal{I}^{w_0(\mu-\lambda)+\nu}_*\star\mathcal{I}^{-w_0(\nu)}_*)&\simeq \mathrm{Hom}_{\mathrm{Perv}_{I_\mathrm{u}}(\mathrm{Gr})}(\mathcal{F}\star\mathcal{I}^{\lambda-\mu-w_0(\nu)}_!\star\mathcal{I}^{\nu}_!,\mathcal{G})\\
		&\simeq \mathrm{Hom}_{\mathrm{Perv}_{I_\mathrm{u}}(\mathrm{Gr})}(\mathbb{D}(\mathcal{G}),\mathbb{D}(\mathcal{F})\star\mathcal{I}^{\lambda-\mu-w_0(\nu)}_*\star\mathcal{I}^{\nu}_*),
	\end{align*}
	where the last isomorphism is due to the fact that Verdier duality commutes with the convolution product and sends standard objects to costandard ojects. These isomorphisms yield an isomorphism between $\mathrm{Hom}_{\mathrm{Free}^\mathbf{Y}_{I_\mathrm{u}}(\mathcal{R})}((\mathcal{F},\lambda),(\mathcal{G},\mu))$ and 
	\begin{equation}\label{eq lim D}
		\varinjlim_\nu \mathrm{Hom}_{\mathrm{Perv}_{I_\mathrm{u}}(\mathrm{Gr})}(\mathbb{D}(\mathcal{G}),\mathbb{D}(\mathcal{F})\star\mathcal{I}^{\lambda-\mu-w_0(\nu)}_*\star\mathcal{I}^{\nu}_*).
	\end{equation}
	Setting $\nu':=-w_0(\mu-\lambda)+\nu$, we obtain an isomorphism between $\varinjlim_\nu \mathcal{I}^{\lambda-\mu-w_0(\nu)}_*\star\mathcal{I}^{\nu}_*$ and $\mathcal{R}_{\mu-\lambda}$. And finally between \eqref{eq lim D} and 
	\begin{align*}
		\mathrm{Hom}_{\mathrm{mod}^\mathbf{Y}_{I_\mathrm{u}}(\mathcal{R})}(\mathbb{D}(\mathcal{G}),(\Phi(\mathbb{D}(\mathcal{F}))\langle\lambda-\mu\rangle)_0)&\simeq \mathrm{Hom}_{\mathrm{mod}^\mathbf{Y}_{I_\mathrm{u}}(\mathcal{R})}(\Phi(\mathbb{D}(\mathcal{G})),\mathbb{D}(\Phi(\mathcal{F}))\langle\lambda-\mu\rangle)\\
		&=\mathrm{Hom}_{\mathrm{Free}^\mathbf{Y}_{I_\mathrm{u}}(\mathcal{R})}((\mathbb{D}(\mathcal{G}),\mu),(\mathbb{D}(\mathcal{F}),\lambda)).
	\end{align*}
	\begin{lem}{{\cite[Lemma 9.2]{achar2022geometric}}}\label{lem exact D}
		The functor  $\mathbb{D}_{\mathrm{fr}}$ is exact.
	\end{lem}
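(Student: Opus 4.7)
The category $\mathrm{Free}^\mathbf{Y}_{I_\mathrm{u}}(\mathcal{R})$ is additive but not abelian, so we first fix the relevant notion: it inherits an exact structure from the fully faithful and exact embedding $\Phi_2$ into the abelian category $\mathrm{mod}^\mathbf{Y}_{I_\mathrm{u}}(\mathcal{R})$, where a candidate sequence is declared short exact precisely when its $\Phi_2$-image is short exact in $\mathrm{mod}^\mathbf{Y}_{I_\mathrm{u}}(\mathcal{R})$. Showing that $\mathbb{D}_{\mathrm{fr}}$ is exact means showing that, given a short exact sequence $0\to X'\to X\to X''\to 0$ in this sense, the sequence $0\to \Phi_2(\mathbb{D}_{\mathrm{fr}}(X''))\to \Phi_2(\mathbb{D}_{\mathrm{fr}}(X))\to \Phi_2(\mathbb{D}_{\mathrm{fr}}(X'))\to 0$ is short exact in $\mathrm{mod}^\mathbf{Y}_{I_\mathrm{u}}(\mathcal{R})$.

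The strategy is to extract from the construction a bifunctorial isomorphism
$$\mathrm{Hom}_{\mathrm{mod}}(\Phi_2(Y),\Phi_2(\mathbb{D}_{\mathrm{fr}}(Z)))\simeq \mathrm{Hom}_{\mathrm{mod}}(\Phi_2(Z),\Phi_2(\mathbb{D}_{\mathrm{fr}}(Y)))$$
for all $Y,Z\in\mathrm{Free}^\mathbf{Y}_{I_\mathrm{u}}(\mathcal{R})$. This is nothing but the isomorphism of Hom-spaces already assembled above, read at the level of bifunctors. Each step in that chain is functorial in both arguments and compatible with exact sequences: Proposition \ref{prop iso morphisms} is tautologically bifunctorial, the rigidity adjunction $\mathrm{Hom}(\mathcal{F},\mathcal{G}\star A)\simeq\mathrm{Hom}(\mathcal{F}\star A^\vee,\mathcal{G})$ in the rigid monoidal category $(\mathrm{Perv}_{G[[t]]}(\mathrm{Gr}),\star)$ is bifunctorial, Verdier duality $\mathbb{D}$ on $\mathrm{Perv}_{I_\mathrm{u}}(\mathrm{Gr})$ is a contravariant bifunctorial exact equivalence which intertwines $\star$ with itself and swaps $\mathcal{I}_!^\bullet$ with $\mathcal{I}_*^\bullet$, and the reindexing $\nu\mapsto\nu+w_0(\lambda-\mu)$ of the filtered limit is natural. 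Since convolution with any fixed perverse sheaf is exact, all the morphism-level formulas depend naturally on the two variable objects in a way that commutes with kernels and cokernels of morphisms in the source.

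With this bifunctorial isomorphism in hand, I would apply $\mathrm{Hom}_{\mathrm{mod}}(\Phi_2(Y),-)$ to the candidate image and identify the resulting sequence with $\mathrm{Hom}_{\mathrm{mod}}(\Phi_2(-),\Phi_2(\mathbb{D}_{\mathrm{fr}}(Y)))$ applied to the initial short exact sequence; the latter is left-exact by the standard left-exactness of contravariant Hom. Running the test over all free objects $Y$, which form a generating family by Proposition \ref{prop finite length}, yields the injectivity of $\Phi_2(\mathbb{D}_{\mathrm{fr}}(X''))\to \Phi_2(\mathbb{D}_{\mathrm{fr}}(X))$ and exactness in the middle. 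The main obstacle is the remaining surjectivity of $\Phi_2(\mathbb{D}_{\mathrm{fr}}(X))\to \Phi_2(\mathbb{D}_{\mathrm{fr}}(X'))$, which Hom-testing does not see directly. I would settle it by exploiting involutivity: each of the bifunctorial isomorphisms above squares to the identity (Verdier duality is involutive, rigid duality is involutive, and the reindexing is an involution), hence $\mathbb{D}_{\mathrm{fr}}\circ\mathbb{D}_{\mathrm{fr}}\simeq\mathrm{Id}$ canonically, so $\mathbb{D}_{\mathrm{fr}}$ is an anti-equivalence of additive categories. Applying the already established left-exactness to the sequence $0\to\mathbb{D}_{\mathrm{fr}}(X'')\to\mathbb{D}_{\mathrm{fr}}(X)\to\mathbb{D}_{\mathrm{fr}}(X')\to 0$ and invoking the involution yields the missing surjectivity, completing the proof.
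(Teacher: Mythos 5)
The paper does not actually prove this lemma: it is quoted from \cite[Lemma 9.2]{achar2022geometric}, so there is no in-paper argument to compare against. Judged on its own terms, the first half of your proposal is sound: the Hom-space isomorphism defining $\mathbb{D}_{\mathrm{fr}}$ on morphisms is indeed natural in both variables, free modules generate $\mathrm{mod}^\mathbf{Y}_{I_\mathrm{u}}(\mathcal{R})$ (every finite length object is a quotient of a free one, by Proposition \ref{prop finite length}), and testing with $\mathrm{Hom}(\Phi_2(Y),-)$ against all free $Y$, combined with left-exactness of contravariant Hom applied to $0\to X'\to X\to X''\to 0$, does yield exactness of $0\to\Phi_2(\mathbb{D}_{\mathrm{fr}}X'')\to\Phi_2(\mathbb{D}_{\mathrm{fr}}X)\to\Phi_2(\mathbb{D}_{\mathrm{fr}}X')$.

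The final step, however, is circular. Your left-exactness statement takes as \emph{input} a short exact sequence of free modules; to apply it to $0\to\mathbb{D}_{\mathrm{fr}}(X'')\to\mathbb{D}_{\mathrm{fr}}(X)\to\mathbb{D}_{\mathrm{fr}}(X')\to 0$ you must already know that this sequence is short exact, i.e.\ that $\mathbb{D}_{\mathrm{fr}}(X)\to\mathbb{D}_{\mathrm{fr}}(X')$ is surjective --- which is exactly what remains to be proved. Involutivity of $\mathbb{D}_{\mathrm{fr}}$ alone does not convert ``dual of a surjection is an injection'' into its converse, and Hom-testing against free objects cannot see surjectivity since the $\Phi(\mathcal{F})\langle\lambda\rangle$ are not projective. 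The gap is repairable without changing your framework: $\mathbb{D}_{\mathrm{fr}}$ preserves lengths, because $\Phi$ is exact, the shift $\langle\lambda\rangle$ is an autoequivalence, and Verdier duality on $\mathrm{Perv}_{I_\mathrm{u}}(\mathrm{Gr})$ fixes each simple object, so $\Phi(\mathbb{D}\mathcal{F})$ and $\Phi(\mathcal{F})$ have the same image in the Grothendieck group. Comparing $\mathrm{length}(\mathbb{D}_{\mathrm{fr}}X)=\mathrm{length}(X)=\mathrm{length}(X')+\mathrm{length}(X'')$ with the length count in the already-established left exact sequence forces the image of $\mathbb{D}_{\mathrm{fr}}(X)\to\mathbb{D}_{\mathrm{fr}}(X')$ to have full length, hence equal $\mathbb{D}_{\mathrm{fr}}(X')$. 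With that substitution your argument closes.
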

	We will admit the following fact, which concludes the desired construction and whose full proof is given in \cite[§9.1]{achar2022geometric}.
	\begin{prop}\label{prop D}
		There exists an exact and involutive anti-equivalence $\mathbb{D}:\mathrm{mod}^\mathbf{Y}_{I_\mathrm{u}}(\mathcal{R})\to \mathrm{mod}^\mathbf{Y}_{I_\mathrm{u}}(\mathcal{R})$ which coincides with (the functor induced by) $\mathbb{D}_{\mathrm{fr}}$ on the subcategory of free graded $\mathcal{R}$-modules.
	\end{prop}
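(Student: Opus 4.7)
The plan is to extend $\mathbb{D}_{\mathrm{fr}}$ from the subcategory of free graded $\mathcal{R}$-modules to all of $\mathrm{mod}^\mathbf{Y}_{I_\mathrm{u}}(\mathcal{R})$ by means of finite presentations. The first task is to show that every object $M \in \mathrm{mod}^\mathbf{Y}_{I_\mathrm{u}}(\mathcal{R})$ admits a two-term presentation
$$P_1 \xrightarrow{\delta_M} P_0 \twoheadrightarrow M,$$
with $P_0, P_1$ free graded $\mathcal{R}$-modules of finite type. By Proposition \ref{prop finite length}, $M$ is finitely generated, which gives the surjection $P_0 \twoheadrightarrow M$. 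One then has to verify that its kernel $K$ is again finitely generated, so that a further surjection $P_1 \twoheadrightarrow K$ exists; I would establish this via a coherence-type property of the category of finitely generated $\mathbf{Y}$-graded $\mathcal{R}$-modules (equivalently, that subobjects of objects of $\mathrm{mod}^\mathbf{Y}_{I_\mathrm{u}}(\mathcal{R})$ remain finitely generated), exploiting the fact that the support of a finite length object in the Grassmannian side is bounded.

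Granted such presentations, define
$$\mathbb{D}(M) := \ker\bigl(\mathbb{D}_{\mathrm{fr}}(\delta_M) : \mathbb{D}_{\mathrm{fr}}(P_0) \to \mathbb{D}_{\mathrm{fr}}(P_1)\bigr),$$
which lies again in $\mathrm{mod}^\mathbf{Y}_{I_\mathrm{u}}(\mathcal{R})$. The well-definedness up to canonical isomorphism, and the extension of $\mathbb{D}$ to morphisms, follow from the standard ``projective resolution'' pattern: given two presentations of $M$, or a morphism $f: M \to N$ together with presentations of both source and target, one uses freeness of the $P_i$ to lift through the surjections $P_0 \twoheadrightarrow M$ and construct a morphism of presentations; any two such lifts differ by a chain homotopy, which induces the same map on kernels after applying $\mathbb{D}_{\mathrm{fr}}$. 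This produces a functor whose restriction to free graded $\mathcal{R}$-modules agrees with $\mathbb{D}_{\mathrm{fr}}$, since in that case one may take the identity presentation.

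For exactness, combine Lemma \ref{lem exact D} with a horseshoe-style construction: given a short exact sequence $0 \to M' \to M \to M'' \to 0$, lift it to a short exact sequence of two-term presentations $0 \to P_\bullet' \to P_\bullet \to P_\bullet'' \to 0$, apply the exact functor $\mathbb{D}_{\mathrm{fr}}$ termwise, and invoke the snake lemma to obtain exactness of $0 \to \mathbb{D}(M'') \to \mathbb{D}(M) \to \mathbb{D}(M') \to 0$. Involutivity reduces, via the biduality $\mathbb{D}_{\mathrm{fr}} \circ \mathbb{D}_{\mathrm{fr}} \simeq \mathrm{id}$ on the free subcategory (which itself follows from $\mathbb{D}^2 \simeq \mathrm{id}$ on $\mathrm{Perv}_{I_\mathrm{u}}(\mathrm{Gr})$ together with the explicit construction of $\mathbb{D}_{\mathrm{fr}}$), to a diagram chase: starting from a presentation of $M$, one identifies $\mathbb{D}\mathbb{D}(M)$ with the cokernel of a map canonically isomorphic, after dualizing twice, to the original $\delta_M$, hence with $M$. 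The anti-equivalence property is then automatic from involutivity.

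The main obstacle is the first step, namely the existence of two-term presentations by free modules of finite type: since free objects $\Phi(\mathcal{F})$ are not themselves of finite length (they are genuine ind-objects because of the ind-structure of $\mathcal{R}$), one cannot simply invoke Proposition \ref{prop finite length} to conclude that the kernel of a surjection $P_0 \twoheadrightarrow M$ is finitely generated. Securing this coherence property, and checking the many compatibility diagrams needed for functoriality, is where the real work lies; for the precise implementation I would follow the argument of \cite[§9.1]{achar2022geometric} mutatis mutandis.
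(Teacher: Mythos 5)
The paper does not actually prove this proposition: it is explicitly admitted, with the full proof deferred to \cite[\S 9.1]{achar2022geometric}, and the only indication of the construction is Remark \ref{rem duality}, which describes $\mathbb{D}$ exactly as you do (present $M$ as the cokernel of a map $\delta$ between free graded modules of finite type and set $\mathbb{D}(M)=\ker\mathbb{D}_{\mathrm{fr}}(\delta)$). So your outline follows the intended route, and you correctly single out the first real difficulty: the existence of two-term free presentations is not a formal consequence of Proposition \ref{prop finite length}, since the kernel of $P_0\twoheadrightarrow M$ is a subobject of a free module, which is not of finite length.

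There is a second difficulty which your ``standard projective resolution pattern'' silently assumes away: free graded $\mathcal{R}$-modules are not projective objects of $\mathrm{mod}^{\mathbf{Y}}_{I_\mathrm{u}}(\mathcal{R})$. By Proposition \ref{prop iso morphisms} one has $\mathrm{Hom}(\Phi(\mathcal{F})\langle\lambda\rangle,\mathcal{G})\simeq\mathrm{Hom}_{\mathrm{Perv}_{I_\mathrm{u}}(\mathrm{Gr})}(\mathcal{F},\mathcal{G}_\lambda)$; the functor $(-)_\lambda$ is exact, but $\mathcal{F}$ is in general not projective in $\mathrm{Perv}_{I_\mathrm{u}}(\mathrm{Gr})$, so this functor is not exact in $\mathcal{G}$. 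Consequently a morphism out of $P_0$ need not lift through a surjection $Q_0\twoheadrightarrow N$, two lifts of a given morphism need not differ by a homotopy factoring through $Q_1$, and the horseshoe lemma is not available as stated. Every step of your scheme --- independence of the presentation, functoriality, exactness, involutivity --- leans on such lifts, so none of them can be quoted off the shelf. Circumventing this, together with the coherence point you do flag, is precisely the content of \cite[\S 9.1]{achar2022geometric}, which the present paper chooses to cite rather than reproduce; as written, your argument reduces the proposition to that same reference but leaves a genuine gap in the homological scaffolding in between.
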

	\begin{rem}\label{rem duality}
		We can get an ``explicit" description of $\mathbb{D}$. Indeed, any object $\mathcal{F}\in \mathrm{mod}^\mathbf{Y}_{I_\mathrm{u}}(\mathcal{R})$ can be written as the cokernel of a morphism $\alpha:\mathcal{F}_1\to \mathcal{F}_2$ between two free graded $\mathcal{R}$-modules thanks to Proposition \ref{prop finite length}. The object $\mathbb{D}(\mathcal{F})$ then coincides with the kernel of the morphism $\mathbb{D}(\alpha):\mathbb{D}(\mathcal{F}_2)\to \mathbb{D}(\mathcal{F}_1)$, whose construction was explained above. 
	\end{rem}
	From \eqref{eq prescribe D} and the definition of the object $\widehat{\mathcal{L}}_w$ (together with the fact that Verdier duality preserves simple objects of the category $\mathrm{Perv}_{I_\mathrm{u}}(\mathrm{Gr})$, since those are IC-sheaves), we get that
	\begin{equation}\label{eq D reserves simples}
		\mathbb{D}(\widehat{\mathcal{L}}_w)\simeq \widehat{\mathcal{L}}_w\qquad\forall w\in W_{\mathrm{ext}}. 
	\end{equation}
	\subsection{Baby Verma and co-Verma modules}\label{section Baby Verma and co-Verma modules} In this subsection we follow \cite[§5.6]{achar2022geometric}, defining the analogue of baby co-Verma modules as ind-objects of $\mathrm{Perv}_{I_\mathrm{u}}(\mathrm{Gr})$. For any $\mu\in\mathbf{Y}^+$, the orbit $\mathrm{Gr}_{t_{\mu}}$ is open inside $\mathrm{Gr}^\mu$ (recall that our choice of Borel subgroup $B$ corresponds to the opposite Borel subgroup in \cite{achar2022geometric}). So by adjunction, we obtain a canonical morphism 
 \begin{equation}\label{eq convo}
	    \mathcal{I}^\mu_*\to \nabla_{\mu}.
	\end{equation} Thanks to Lemma \ref{lem conv standard}, we get a canonical isomorphism 
 $$\mathcal{N}_w\star^I\nabla_{\mu}\simeq \nabla_{wt_{\mu}},$$
 for all $w\in W_{\mathrm{ext}}^S$, $\mu\in\mathbf{Y}^+$ (notice that $\ell(wt_{\mu}w_0)=\ell(w)+\ell(t_{\mu}w_0)$ by Lemma \ref{lem length}. since $w\in W_{\mathrm{ext}}^S$). On the other hand, if we denote by $\pi:\mathrm{Fl}\to\mathrm{Gr}$ the canonical projection, we easily obtain isomorphisms
 \begin{equation}\label{eq standard}
     \mathcal{N}_w\star^I \mathcal{I}^\mu_*\simeq(\pi_*\mathcal{N}_w)\star\mathcal{I}^\mu_*\simeq \nabla_{w}\star \mathcal{I}^\mu_*,
 \end{equation} 
 where the first isomorphism follows from \cite[Lemma 2.5]{JEP_2019__6__707_0}, and the second one from the fact that $\pi_*\mathcal{N}_w\simeq\nabla_w$ (see \cite[(3.4)]{achar2022geometric}). 

  Thus, we get a canonical morphism
	\begin{equation}\label{eq 513}
		\nabla_{w}\star \mathcal{I}^\mu_*\to \nabla_{wt_{\mu}}
	\end{equation}
by convolving \eqref{eq convo} with $\mathcal{N}_{w}$. 
	
	Next, for $w\in W_{\mathrm{ext}}$, $\mu\in\mathbf{Y}$, we put
	$$(\widehat{\mathcal{Z}}'_w)_\mu=\varinjlim_{\lambda} \nabla_{wt_{\mu+\lambda}}\star \mathcal{I}^{-w_0(\lambda)}_*,$$
	where $\lambda$ runs over the elements of $\mathbf{Y}^+$ such that $wt_{\mu+\lambda}\in W^S_\mathrm{ext}$. The transition morphisms in the above inductive limit are given by
	$$ \nabla_{wt_{\mu+\lambda}}\star \mathcal{I}^{-w_0(\lambda)}_*\to \nabla_{wt_{\mu+\lambda}}\star\mathcal{I}^{\nu}_*\star\mathcal{I}^{-w_0(\nu)}_*\star \mathcal{I}^{-w_0(\lambda)}_*\to \nabla_{wt_{\mu+\lambda+\nu}}\star \mathcal{I}^{-w_0(\lambda+\nu)}_* $$
	where the first, resp. second, morphism is obtained via \eqref{can0}, resp. \eqref{can1} and \eqref{eq 513}. We then put 
	$$\widehat{\mathcal{Z}}'_w:=\bigoplus_{\mu\in\mathbf{Y}}(\widehat{\mathcal{Z}}'_w)_\mu. $$
	The morphisms $(\widehat{\mathcal{Z}}'_w)_\mu\star \mathcal{R}_{\mu'}\to (\widehat{\mathcal{Z}}'_w)_{\mu+\mu'}$ giving the structure of $\mathcal{R}$-module are obtained thanks to the morphisms
	\begin{align*}
		(\nabla_{wt_{\mu+\lambda}}\star \mathcal{I}^{-w_0(\lambda)}_*) \star(\mathcal{I}^{w_0(\mu')+\lambda'}_*\star\mathcal{I}^{-w_0(\lambda')}_*)&\to (\nabla_{wt_{\mu+\lambda}}\star \mathcal{I}^{w_0(\mu')+\lambda'}_*\star\mathcal{I}^{-w_0(\lambda'+\lambda)}_*)\\
		& \to\nabla_{wt_{\mu+\mu'+(\lambda+\lambda')}}\star\mathcal{I}^{-w_0(\lambda'+\lambda)}_*,
	\end{align*}
	where the first morphism is obtained by the commutativity constraint and \eqref{can0}, and the second one follows from \eqref{eq 513}.
	
	The next result allows to recover all the co-Verma modules by acting on the one associated with the trivial element $0$. This will later allow us to reduce our considerations to the case of $\widehat{\mathcal{Z}}'_0$.
	\begin{prop}\label{prop convolution costandard}
		For any  $\gamma\in\mathbf{Y}^+,~\nu\in\mathbf{Y}$, we have a canocical isomorphism
		$$ \mathcal{N}_\gamma\star^I\widehat{\mathcal{Z}}'_\nu\simeq \widehat{\mathcal{Z}}'_{\gamma+\nu} $$
	\end{prop}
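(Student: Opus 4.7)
The plan is to compare the two sides graded piece by graded piece, reducing to the identity $\mathcal{N}_w \star^I \nabla_\mu \simeq \nabla_{wt_\mu}$ (for $w \in W_{\mathrm{ext}}^S$ and $\mu \in \mathbf{Y}^+$) recalled just before \eqref{eq 513} as a direct consequence of Lemma \ref{lem conv standard}(2) and Lemma \ref{lem length}, and then verifying compatibility with the transition morphisms of the inductive systems and with the $\mathcal{R}$-module structures.

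First I would fix $\mu \in \mathbf{Y}$ and apply $\mathcal{N}_{t_\gamma} \star^I (-)$ to the defining expression
$(\widehat{\mathcal{Z}}'_\nu)_\mu = \varinjlim_\lambda \nabla_{t_{\nu+\mu+\lambda}} \star \mathcal{I}^{-w_0(\lambda)}_*$. Two standard facts come into play: left convolution by $\mathcal{N}_{t_\gamma}$ commutes with filtered colimits, and the left $\star^I$-action of $\mathrm{Perv}_{I_\mathrm{u}}(\mathrm{Fl})$ commutes, up to canonical associator, with the right $\star$-action of $\mathrm{Perv}_{G[[t]]}(\mathrm{Gr})$ on $\mathrm{Perv}_{I_\mathrm{u}}(\mathrm{Gr})$; both follow formally from the construction of the two convolution products via the corresponding fibered products. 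This yields
$$
\mathcal{N}_{t_\gamma} \star^I (\widehat{\mathcal{Z}}'_\nu)_\mu \;\simeq\; \varinjlim_{\lambda}\, \bigl( \mathcal{N}_{t_\gamma} \star^I \nabla_{t_{\nu+\mu+\lambda}} \bigr) \star \mathcal{I}^{-w_0(\lambda)}_{*},
$$
with $\lambda$ ranging over the same indexing set.

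Second, since $\gamma \in \mathbf{Y}^+$ forces $t_\gamma \in W_{\mathrm{ext}}^S$, and for $\lambda$ sufficiently dominant the element $\nu + \mu + \lambda$ lies in $\mathbf{Y}^+$, the recalled identity directly gives
$\mathcal{N}_{t_\gamma} \star^I \nabla_{t_{\nu+\mu+\lambda}} \simeq \nabla_{t_{\gamma+\nu+\mu+\lambda}}$.
Substituting into the previous colimit and recognising the right-hand side as $(\widehat{\mathcal{Z}}'_{\gamma+\nu})_\mu$ yields a canonical degree-wise isomorphism
$\mathcal{N}_{t_\gamma} \star^I (\widehat{\mathcal{Z}}'_\nu)_\mu \simeq (\widehat{\mathcal{Z}}'_{\gamma+\nu})_\mu$.

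Finally, summing over $\mu \in \mathbf{Y}$, one must check that the resulting isomorphism is compatible with (i) the transition morphisms inside the defining colimits, assembled from \eqref{can0}, \eqref{can1} and \eqref{eq 513}, and (ii) the multiplication maps $(\widehat{\mathcal{Z}}')_\mu \star \mathcal{R}_{\mu'} \to (\widehat{\mathcal{Z}}')_{\mu+\mu'}$ encoding the $\mathcal{R}$-action. Each of these structural morphisms is built from canonical maps (units of adjunctions and comparisons of restrictions of convolutions to their open strata), all of which are manifestly natural with respect to left convolution by $\mathcal{N}_{t_\gamma}$. The main obstacle is then the somewhat laborious verification that, under the degree-wise identifications just constructed, these two collections of structural morphisms match; this is essentially a matter of carefully tracking the associativity and unit constraints of the $(\mathrm{Perv}_{I_\mathrm{u}}(\mathrm{Fl}),\mathrm{Perv}_{G[[t]]}(\mathrm{Gr}))$-bimodule structure on $\mathrm{Perv}_{I_\mathrm{u}}(\mathrm{Gr})$ throughout the colimit construction, with no conceptual surprises beyond that bookkeeping.
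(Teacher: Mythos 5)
Your proposal is correct and follows essentially the same route as the paper: compute degree by degree, commute $\mathcal{N}_{t_\gamma}\star^I(-)$ past the filtered colimit and the right $\star$-action, apply the identity $\mathcal{N}_{t_\gamma}\star^I\nabla_{t_{\nu+\mu+\lambda}}\simeq\nabla_{t_{\gamma+\nu+\mu+\lambda}}$ coming from Lemma \ref{lem conv standard} and Lemma \ref{lem length}, and then check that the transition morphisms (built from \eqref{can0}, \eqref{can1} and \eqref{eq 513}) are matched. The paper's proof is exactly this, stated slightly more tersely.
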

	\begin{proof}
		Recall that, for any $\mu\in\mathbf{Y}$, we have
		$$(\widehat{\mathcal{Z}}'_\nu)_\mu=\varinjlim_{\lambda\in\mathbf{Y}^+-\mu-\nu} \nabla_{\nu+\mu+\lambda}\star \mathcal{I}^{-w_0(\lambda)}_*.$$
		Since $\ell(t_\gamma t_{\nu+\mu+\lambda})=\ell(t_\gamma)+\ell(t_{\mu+\lambda})$, we can apply  Lemma \ref{lem conv standard} to get a canonical isomorphism
		$$\mathcal{N}_\gamma\star^I\nabla_{\nu+\mu+\lambda}\simeq\nabla_{\gamma+\nu+\mu+\lambda}.$$
		Thus, we have a canonical isomorphism 
		$$\mathcal{N}_\gamma\star^I(\nabla_{\nu+\mu+\lambda}\star \mathcal{I}^{-w_0(\lambda)}_*)\simeq \nabla_{\gamma+\nu+\mu+\lambda}\star \mathcal{I}^{-w_0(\lambda)}_*$$ whenever $\gamma$ and $\nu+\mu+\lambda$ are dominant. Therefore we obtain a canonical isomorphism
		$$\mathcal{N}_\gamma\star^I(\widehat{\mathcal{Z}}'_\nu)_\mu\simeq \varinjlim_{\lambda} \nabla_{\gamma+\nu+\mu+\lambda}\star \mathcal{I}^{-w_0(\lambda)}_*, $$
		where $\lambda$ runs though the set of elements of $\mathbf{Y}^+$ such that $\gamma+\nu+\mu+\lambda$ is dominant. The fact that the transition morphisms in the above limit coincide with the transition morphisms appearing in the definition of $(\widehat{\mathcal{Z}}'_{\gamma+\nu})_\mu$ follows from the construction of the morphism \eqref{eq 513}.
	\end{proof}
 Similarly, one can can check the following result.
 \begin{prop}\label{prop convolution costandard 2}
     For any $w,w'\in W$ such that $\ell(ww'w_0)=\ell(w)+\ell(w'w_0)$ and $\lambda\in \mathbf{Y}$, we have a canonical isomorphism
     $$ \mathcal{N}_{w}\star^I\widehat{\mathcal{Z}}'_{w't_\lambda}\simeq \widehat{\mathcal{Z}}'_{ww't_\lambda}. $$
 \end{prop}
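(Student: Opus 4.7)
The plan is to follow the proof of Proposition \ref{prop convolution costandard} step by step, replacing the translation $\gamma \in \mathbf{Y}^+$ with a general Weyl group element $w \in W$. Fix $\mu \in \mathbf{Y}$ and recall that, by definition,
$$(\widehat{\mathcal{Z}}'_{w't_\lambda})_\mu = \varinjlim_{\gamma} \nabla_{w't_{\lambda+\mu+\gamma}} \star \mathcal{I}^{-w_0(\gamma)}_*,$$
where $\gamma$ runs over the elements of $\mathbf{Y}^+$ such that $w't_{\lambda+\mu+\gamma} \in W^S_\mathrm{ext}$. Since convolution on the right with $\mathcal{I}^{-w_0(\gamma)}_*$ and convolution on the left with $\mathcal{N}_w$ both commute with the ind-structure (as they are exact and factor through the usual convolution diagrams), I would apply $\mathcal{N}_w \star^I (-)$ termwise.

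The key identification I need is $\mathcal{N}_w \star^I \nabla_{w't_{\lambda+\mu+\gamma}} \simeq \nabla_{ww't_{\lambda+\mu+\gamma}}$, which follows from Lemma \ref{lem conv standard}(2) provided that both $w't_{\lambda+\mu+\gamma}$ and $ww't_{\lambda+\mu+\gamma}$ belong to $W_\mathrm{ext}^S$ and that the length condition $\ell(ww_0 \cdot w't_{\lambda+\mu+\gamma}) = \ell(w) + \ell(w_0 w' t_{\lambda+\mu+\gamma})$ is satisfied. For $\gamma$ sufficiently dominant the membership in $W_\mathrm{ext}^S$ for both elements is automatic, and the length condition reduces to $\ell(ww'w_0) + \ell(t_{\lambda+\mu+\gamma} w_0) = \ell(w) + \ell(w'w_0) + \ell(t_{\lambda+\mu+\gamma} w_0)$ via Lemma \ref{lem length} (applied to the minimal coset representatives $w'w_0$ and $ww'w_0$ in $W_\mathrm{ext}^S$), which is exactly the hypothesis. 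Associativity of convolution then yields
$$\mathcal{N}_w \star^I (\nabla_{w't_{\lambda+\mu+\gamma}} \star \mathcal{I}^{-w_0(\gamma)}_*) \simeq \nabla_{ww't_{\lambda+\mu+\gamma}} \star \mathcal{I}^{-w_0(\gamma)}_*.$$

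Passing to the colimit over the cofinal subsystem of sufficiently dominant $\gamma$ gives $\mathcal{N}_w \star^I (\widehat{\mathcal{Z}}'_{w't_\lambda})_\mu \simeq (\widehat{\mathcal{Z}}'_{ww't_\lambda})_\mu$. To conclude, I would check that these isomorphisms are compatible with the transition morphisms (which follow by naturality of the canonical maps \eqref{can0}, \eqref{can1} and \eqref{eq 513} under convolution with $\mathcal{N}_w$) and with the $\mathcal{R}$-module structure maps, and then sum over $\mu \in \mathbf{Y}$.

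The main subtlety, as in the previous proposition, is bookkeeping: ensuring that the length hypothesis $\ell(ww'w_0) = \ell(w) + \ell(w'w_0)$ in the finite Weyl group lifts cleanly to the affine length condition needed by Lemma \ref{lem conv standard}(2) for all sufficiently dominant $\gamma$. Once this is established, the rest is a mechanical diagram chase following verbatim the pattern of Proposition \ref{prop convolution costandard}.
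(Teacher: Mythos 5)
Your proposal is correct and matches what the paper intends: the paper gives no written proof of this statement, merely asserting that it follows ``similarly'' to Proposition \ref{prop convolution costandard}, and your argument is exactly that proof with the translation $t_\gamma$ replaced by $w\in W$, the termwise identification supplied by Lemma \ref{lem conv standard}(2), and the finite length hypothesis $\ell(ww'w_0)=\ell(w)+\ell(w'w_0)$ feeding into the affine length condition via Lemma \ref{lem length}. The compatibility with the transition morphisms via the construction of \eqref{eq 513} is also the same point the paper invokes in the previous proposition, so nothing is missing.
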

	\begin{prop}{{\cite[Corollary 6.9]{achar2022geometric}}}\label{prop head}
		For any $w\in W_{\mathrm{ext}}$ the object $\widehat{\mathcal{Z}}'_w$ has finite length and a simple socle, isomorphic to $\widehat{\mathcal{L}}_w$. In particular, $\widehat{\mathcal{Z}}'_w$ is indecomposable.
	\end{prop}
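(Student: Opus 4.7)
The plan is to construct the socle morphism $\widehat{\mathcal{L}}_w\to\widehat{\mathcal{Z}}'_w$ explicitly from the colimit definition of $\widehat{\mathcal{Z}}'_w$, and then to use the Hecke eigensheaf property (Proposition \ref{prop Hecke property}) together with support estimates in $\mathrm{Perv}_{I_\mathrm{u}}(\mathrm{Gr})$ to show that this realises the entire socle and to deduce finite length.

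First I would reduce to the base case $w = w_0$. Directly from the definition of the graded components one has $\widehat{\mathcal{Z}}'_{yt_\lambda}\simeq\widehat{\mathcal{Z}}'_y\langle-\lambda\rangle$, and by construction $\widehat{\mathcal{L}}_{yt_\lambda}\simeq\widehat{\mathcal{L}}_y\langle-\lambda\rangle$; since the shift of grading is an auto-equivalence of $\mathrm{Mod}^\mathbf{Y}_{I_\mathrm{u}}(\mathcal{R})$, one reduces to $w\in W$. Then Proposition \ref{prop convolution costandard 2}, applied with $w' = w_0$ (for which the length hypothesis is trivially satisfied, using $w_0^2 = e$), yields $\widehat{\mathcal{Z}}'_y \simeq \mathcal{N}_{yw_0}\star^I\widehat{\mathcal{Z}}'_{w_0}$ for all $y\in W$. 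Translating the desired statements about socles and length through this isomorphism — using Lemma \ref{lem conv standard}(1) for the quasi-inverse $\mathcal{D}_{w_0 y^{-1}}\star^I(-)$ and checking that $\mathcal{N}_{yw_0}\star^I(-)$ matches $\widehat{\mathcal{L}}_{w_0}$ with $\widehat{\mathcal{L}}_y$ via the definition of these objects in §\ref{section Modules over the regular perverse sheaf} — the problem reduces to the single base case $w = w_0$.

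For the base case, the term $\lambda = 0$ in the colimit $(\widehat{\mathcal{Z}}'_{w_0})_0 = \varinjlim_\lambda\nabla_{w_0 t_\lambda}\star\mathcal{I}^{-w_0(\lambda)}_*$ is $\nabla_{w_0}$, whose perverse socle in $\mathrm{Perv}_{I_\mathrm{u}}(\mathrm{Gr})$ equals $\mathrm{L}_{w_0}$. The composition $\mathrm{L}_{w_0}\hookrightarrow\nabla_{w_0}\to(\widehat{\mathcal{Z}}'_{w_0})_0$ lifts, by Proposition \ref{prop iso morphisms}, to a morphism of graded $\mathcal{R}$-modules $\widehat{\mathcal{L}}_{w_0}=\Phi(\mathrm{L}_{w_0})\to\widehat{\mathcal{Z}}'_{w_0}$. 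Injectivity in each degree $\mu$ is checked by unwinding the explicit formula from the proof of Proposition \ref{prop iso morphisms} and exploiting the exactness of convolution. To show that this exhausts the socle, suppose $\widehat{\mathcal{L}}_v\hookrightarrow\widehat{\mathcal{Z}}'_{w_0}$: the Hecke property (Proposition \ref{prop Hecke property}) constrains the $\mathbf{Y}$-grading of $\widehat{\mathcal{L}}_v$ via the weights of the relevant Satake object, while a socle computation of each $\nabla_{w_0 t_{\mu+\lambda}}\star\mathcal{I}^{-w_0(\lambda)}_*$ that is compatible with the transition morphisms \eqref{eq 513} forces $v = w_0$. Finite length then follows from Proposition \ref{prop finite length}: a symmetric construction gives a surjection $\Phi(\nabla_{w_0})\twoheadrightarrow\widehat{\mathcal{Z}}'_{w_0}$, exhibiting the target as finitely generated over $\mathcal{R}$.

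The main obstacle is the socle exhaustion step, which boils down to a Hom-vanishing of the form $\mathrm{Hom}_{\mathrm{Perv}_{I_\mathrm{u}}(\mathrm{Gr})}(\mathrm{L}_y, \nabla_{w_0 t_\mu}\star\mathcal{I}^\delta_*) = 0$ for $y$ outside a specified set, which must moreover be compatible with the transition morphisms \eqref{eq 513} in the colimit so that no simple contribution survives in the limit. Establishing this requires support estimates for convolutions of costandards with $G[[t]]$-equivariant objects, the highest weight structure of $\mathrm{Perv}_{I_\mathrm{u}}(\mathrm{Gr})$, and a careful interplay between the $\mathcal{R}$-module structure and the Hecke property.
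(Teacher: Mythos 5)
The paper does not actually prove this statement; it is imported wholesale from Achar--Riche (\cite[Corollary 6.9]{achar2022geometric}), so there is no internal proof to compare against. Evaluating your proposal on its own terms, there are genuine gaps.

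The most concrete problem is the base case. You reduce to $w = w_0$ and then claim that the $\lambda = 0$ term of the colimit $(\widehat{\mathcal{Z}}'_{w_0})_0$ is $\nabla_{w_0}$, whose socle in $\mathrm{Perv}_{I_\mathrm{u}}(\mathrm{Gr})$ is $\mathrm{L}_{w_0}$. But $w_0$ is the \emph{longest} element of $W$, hence the \emph{maximal} length element of $w_0W = W$, so $w_0\notin W^S_\mathrm{ext}$ (which by definition consists of minimal-length coset representatives). Consequently neither $\nabla_{w_0}$ nor $\mathrm{L}_{w_0}$ is defined in $\mathrm{Perv}_{I_\mathrm{u}}(\mathrm{Gr})$, and the index $\lambda = 0$ does not occur in the colimit defining $(\widehat{\mathcal{Z}}'_{w_0})_\mu$, since the index set is $\{\lambda\in\mathbf{Y}^+ : w_0 t_{\mu+\lambda}\in W^S_\mathrm{ext}\}$ and this excludes $\lambda = 0$ when $\mu = 0$. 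The grading-shift reduction you perform actually lands you in $W^\mathrm{res}_\mathrm{ext}$, not in $W$ and certainly not at $w_0$; these are distinct fundamental domains. The same problem recurs in your finite-length argument, which invokes $\Phi(\nabla_{w_0})$.

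The second issue is the further reduction via $\mathcal{N}_{yw_0}\star^I(-)$. Lemma \ref{lem conv standard}(1) shows this is a triangulated auto-equivalence of $\mathrm{D}_{I_\mathrm{u}}(\mathrm{Gr})$ (with quasi-inverse $\mathcal{D}_{w_0 y^{-1}}\star^I(-)$), but it is \emph{not} $t$-exact in general, and the paper uses this very fact elsewhere (proof of Lemma \ref{lem indecomposable}(3): $\mathcal{N}_s\star\mathcal{S}$ lives in perverse degree $1$ for partially integrable simple $\mathcal{S}$). An equivalence of derived categories that is not $t$-exact need not preserve socles or composition length, so the transport of the desired structure from $\widehat{\mathcal{Z}}'_{w_0}$ to $\widehat{\mathcal{Z}}'_y$ does not follow. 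Finally, you yourself flag the socle-exhaustion step as "the main obstacle" and do not resolve it; this is precisely where the hard content of \cite[Corollary 6.9]{achar2022geometric} lies, so the proposal does not constitute a proof.
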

	We are thus allowed to define the analogue of the baby Verma module
	
	$$\widehat{\mathcal{Z}}_w:=\mathbb{D}(\widehat{\mathcal{Z}}'_w) \in \mathrm{mod}^\mathbf{Y}_{I_\mathrm{u}}(\mathcal{R})$$
	for all $w\in W_{\mathrm{ext}}$. Since $\mathbb{D}$ is an (exact) equivalence of categories sending $\widehat{\mathcal{L}}_w$ to $\widehat{\mathcal{L}}_w$, we deduce from Proposition \ref{prop head} that $\widehat{\mathcal{Z}}_w$ is indecomposable and has a simple head, isomorphic to $\widehat{\mathcal{L}}_w$.
	\begin{prop}{{\cite[Proposition 9.25]{achar2022geometric}}}\label{prop ext ind objects}
		For $w,w'\in W_{\mathrm{ext}}$ we have
		$$\mathrm{Ext}^i_{\mathrm{mod}^\mathbf{Y}_{I_\mathrm{u}}(\mathcal{R})}(\widehat{\mathcal{Z}}_w,\widehat{\mathcal{Z}}'_{w'})=\begin{cases} \mathbb{k}\quad\text{if}~i=0~\text{and}~w=w',\\0\quad\text{otherwise}.
		\end{cases}$$
	\end{prop}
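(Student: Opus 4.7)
I plan to follow the template of \cite[Proposition 9.25]{achar2022geometric}, which transposes to the geometric setting the classical $\mathrm{Ext}$-pairing between baby Verma and baby co-Verma modules in $\mathrm{Rep}(\check{\mathbf{G}}_1\check{\mathbf{T}})$ (see \cite[II.9.6]{jantzen2003representations}). The core ingredients are the socle/head structure of the objects $\widehat{\mathcal{Z}}_w$ and $\widehat{\mathcal{Z}}'_{w'}$, the Verdier duality $\mathbb{D}$ exchanging the two families, and the convolution isomorphisms of Propositions \ref{prop convolution costandard} and \ref{prop convolution costandard 2} (together with their Verdier duals), which allow us to reduce the question to a graded-shift computation on the base objects $\widehat{\mathcal{Z}}_0$ and $\widehat{\mathcal{Z}}'_0$.

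For $i=0$ I use that $\widehat{\mathcal{Z}}_w$ has simple head $\widehat{\mathcal{L}}_w$ (deduced from Proposition \ref{prop head} together with \eqref{eq D reserves simples}, since $\mathbb{D}$ is an anti-equivalence exchanging head and socle) and that $\widehat{\mathcal{Z}}'_{w'}$ has simple essential socle $\widehat{\mathcal{L}}_{w'}$. For a nonzero morphism $f:\widehat{\mathcal{Z}}_w\to\widehat{\mathcal{Z}}'_{w'}$, essentiality of the socle forces $\widehat{\mathcal{L}}_{w'}\subset\mathrm{Im}(f)$, while $\mathrm{Im}(f)$ is a quotient of $\widehat{\mathcal{Z}}_w$ and hence admits $\widehat{\mathcal{L}}_w$ as simple head. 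The multiplicity-one appearance of $\widehat{\mathcal{L}}_w$ (resp. $\widehat{\mathcal{L}}_{w'}$) at the relevant spot of $\widehat{\mathcal{Z}}_w$ (resp. $\widehat{\mathcal{Z}}'_{w'}$) forces $w=w'$, and every morphism is then a scalar multiple of the composition $\widehat{\mathcal{Z}}_w\twoheadrightarrow\widehat{\mathcal{L}}_w\hookrightarrow\widehat{\mathcal{Z}}'_w$.

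For $i>0$ the strategy is induction via convolution adjunction. Applying $\mathbb{D}$ to Proposition \ref{prop convolution costandard 2} and using that $\mathbb{D}$ sends $\mathcal{N}_v$ to $\mathcal{D}_v$ yields $\mathcal{D}_v\star^I \widehat{\mathcal{Z}}_u\simeq\widehat{\mathcal{Z}}_{vu}$ under the analogous length condition. Lemma \ref{lem conv standard}(1) shows that $\mathcal{D}_v\star^I(-)$ is exact with inverse $\mathcal{N}_{v^{-1}}\star^I(-)$, giving a derived adjunction
$$\mathrm{Ext}^i(\mathcal{D}_v\star^I X,Y)\simeq \mathrm{Ext}^i(X,\mathcal{N}_{v^{-1}}\star^I Y).$$
Iterating this adjunction on both sides brings the problem down to the case $w,w'\in t_\mathbf{Y}$; the grading shifts $\widehat{\mathcal{Z}}_{t_\mu}\simeq\widehat{\mathcal{Z}}_0\langle-\mu\rangle$ and $\widehat{\mathcal{Z}}'_{t_\mu}\simeq\widehat{\mathcal{Z}}'_0\langle-\mu\rangle$ (immediate from the definitions) reduce it further to computing $\mathrm{Ext}^i(\widehat{\mathcal{Z}}_0,\widehat{\mathcal{Z}}'_0\langle\nu\rangle)$ for arbitrary $\nu\in\mathbf{Y}$.

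The main obstacle is this base computation. My plan is to resolve $\widehat{\mathcal{Z}}_0$ by finite direct sums of shifts of free graded $\mathcal{R}$-modules $\Phi(\mathcal{F})$ (available by Proposition \ref{prop finite length}). Via Proposition \ref{prop iso morphisms}, each relevant Ext-term then rewrites as $\mathrm{Ext}^i_{\mathrm{Perv}_{I_\mathrm{u}}(\mathrm{Gr})}(\mathcal{F},(\widehat{\mathcal{Z}}'_0\langle\nu\rangle)_\lambda)$ for suitable $\lambda$; combined with the colimit description of $(\widehat{\mathcal{Z}}'_0)_\bullet$ in terms of convolutions $\nabla_?\star\mathcal{I}^?_*$, grading considerations and Ext-vanishing in $\mathrm{Perv}_{I_\mathrm{u}}(\mathrm{Gr})$ should force the required vanishing outside $(i,\nu)=(0,0)$. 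The technical heart lies in controlling the interchange of $\mathrm{Ext}$ with filtered colimits in the ind-object category and in verifying that the derived convolution adjunction above indeed holds at all $\mathrm{Ext}$-levels (and not only at $\mathrm{Hom}$) in $\mathrm{mod}^\mathbf{Y}_{I_\mathrm{u}}(\mathcal{R})$.
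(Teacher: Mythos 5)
The paper does not actually prove Proposition \ref{prop ext ind objects}: it is quoted from \cite[Proposition 9.25]{achar2022geometric} without argument, so your proposal has to be judged as a reconstruction of the proof of \emph{loc.\ cit.} The overall architecture you describe (reduce by convolution equivalences and grading shifts to the pair $(\widehat{\mathcal{Z}}_0,\widehat{\mathcal{Z}}'_0\langle\nu\rangle)$, then resolve by free graded modules and use the adjunction of Proposition \ref{prop iso morphisms}) is the right shape, but the two decisive steps are not actually carried out, and one of them would fail as stated. For $i=0$: knowing that $\mathrm{Im}(f)$ has simple head $\widehat{\mathcal{L}}_w$ (being a nonzero quotient of $\widehat{\mathcal{Z}}_w$) and simple socle $\widehat{\mathcal{L}}_{w'}$ (being a nonzero subobject of $\widehat{\mathcal{Z}}'_{w'}$) does \emph{not} force $w=w'$; an indecomposable object may have non-isomorphic head and socle. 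All this yields is $[\widehat{\mathcal{Z}}_w:\widehat{\mathcal{L}}_{w'}]\neq 0$ and $[\widehat{\mathcal{Z}}'_{w'}:\widehat{\mathcal{L}}_w]\neq 0$, and to conclude one needs an ordering constraint on the composition factors of baby (co-)Verma modules --- a genuine highest-weight input that your proposal never supplies. The ``multiplicity-one appearance at the relevant spot'' is exactly the assertion that has to be proved.

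For $i>0$, the base computation that you flag as ``the main obstacle'' is where the entire content lies, and ``grading considerations and Ext-vanishing in $\mathrm{Perv}_{I_\mathrm{u}}(\mathrm{Gr})$'' is not a proof. After upgrading Proposition \ref{prop iso morphisms} to higher $\mathrm{Ext}$ (legitimate, since $\Phi$ and $(-)_0$ are both exact) and commuting with the defining colimit, the groups to be killed are of the form $\mathrm{Ext}^i_{\mathrm{Perv}_{I_\mathrm{u}}(\mathrm{Gr})}(\mathcal{F},\nabla_{t_\mu}\star\mathcal{I}^{\lambda}_*\star\mathcal{I}^{\lambda'}_*)$ for the terms $\mathcal{F}$ of a free resolution of $\widehat{\mathcal{Z}}_0$. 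The missing engine is the highest weight structure: one needs (i) a resolution of $\widehat{\mathcal{Z}}_0$ whose terms are shifts of $\Phi(\mathcal{F})$ with $\mathcal{F}$ admitting a standard filtration, and (ii) the fact that $\nabla_{t_\mu}\star\mathcal{I}^{\lambda}_*\star\mathcal{I}^{\lambda'}_*$ admits a costandard filtration with identifiable subquotients, so that $\mathrm{Ext}^{>0}(\Delta_y,-)$ vanishes on it and $\mathrm{Hom}(\Delta_y,-)$ is computed by the multiplicities (which is also what rescues the $i=0$, $w\neq w'$ case). Neither point is stated, and (ii) in particular is a nontrivial geometric statement (the analogue of ``tensoring a good-filtration module with an induced module preserves good filtrations''). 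Together with the unaddressed comparison of $\mathrm{Ext}$ computed in $\mathrm{mod}^\mathbf{Y}_{I_\mathrm{u}}(\mathcal{R})$ versus the ambient category of all graded $\mathcal{R}$-modules, these are exactly the points where the proof in \cite{achar2022geometric} does its work; as written, the proposal is an outline whose load-bearing steps are absent.
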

 \subsection{Averaging functor}
 We denote by $\mathrm{Mod}^\mathbf{Y}_{\mathcal{IW}}(\mathcal{R})$ the version of $\mathrm{Mod}^\mathbf{Y}_{I_\mathrm{u}}(\mathcal{R})$ where the category $\mathrm{Perv}_{I_\mathrm{u}}(\mathrm{Gr})$ is replaced by $\mathrm{Perv}_{\mathcal{IW}}(\mathrm{Gr})$. Namely, an object $\mathcal{F}$ of $\mathrm{Mod}^\mathbf{Y}_{\mathcal{IW}}(\mathcal{R})$ is a $\mathbf{Y}$-graded ind-object in $\mathrm{Perv}_{\mathcal{IW}}(\mathrm{Gr})$, endowed with a right-action of $\mathcal{R}$ respecting the grading on $\mathcal{R}$ and $\mathcal{F}$. There is an exact and faithful functor 
 $$\Phi^\mathcal{IW}:\mathrm{Perv}_{\mathcal{IW}}(\mathrm{Gr})\to \mathrm{Mod}^\mathbf{Y}_{\mathcal{IW}}(\mathcal{R}),~\mathcal{F}\mapsto\bigoplus_{\mu\in\mathbf{Y}}\mathcal{F}\star\mathcal{R}_\mu. $$
The above functor satisfies the same type of properties as $\Phi$. Namely, for any $w\in {^SW_\mathrm{ext}}$, written as $w=yt_\lambda$ for some $(y,\lambda)\in {^SW_\mathrm{ext}^\mathrm{res}}\times\mathbf{Y}$, we define 
$$\widehat{\mathcal{L}}_w^{\mathcal{IW}}:=\Phi^\mathcal{IW}(\mathrm{L}^\mathcal{IW}_y)\langle-\lambda\rangle\in \mathrm{Mod}^\mathbf{Y}_{\mathcal{IW}}(\mathcal{R}),$$
where $y$ is seen as an element of $\mathbf{Y}^{++}$ through the bijection of Corollary \ref{coro domi}. Then the above object is simple, and the assignment  $w\mapsto \widehat{\mathcal{L}}_{w}^\mathcal{IW}$ induces a bijection between $^SW_{\mathrm{ext}}$ and the set of isomorphism classes of objects of $\mathrm{Mod}^\mathbf{Y}_{\mathcal{IW}}(\mathcal{R})$. We denote by $\mathrm{mod}^\mathbf{Y}_{\mathcal{IW}}(\mathcal{R})$ the subcategory of objects of finite length. As in §\ref{section Baby Verma and co-Verma modules}, one can define the baby co-Verma module $\widehat{\mathcal{Z}}'^{\mathcal{IW}}_w$ attached to an element $w\in {^SW_{\mathrm{ext}}}$, and show that this object admits a unique simple socle, isomorphic to $\widehat{\mathcal{L}}_{w}^\mathcal{IW}$. Using the geometric Satake equivalence together with Theorem \ref{thm IW equivalence} (and the identifications obtained in §\ref{section Representations of Frobenius kernels}), one can check that the category $\mathrm{Mod}^\mathbf{Y}_{\mathcal{IW}}(\mathcal{R})$ identifies with the category of $\mathbf{Y}$-graded ind-objects of $\mathrm{Rep}(\check{\mathbf{G}}^{(1)})$ endowed with a graded action of $\mathcal{O}(\check{\mathbf{G}}^{(1)})$. We therefore obtain the following result.
\begin{prop}{{\cite[Proposition 5.7]{achar2022geometric}}}\label{prop semi-simple IW}
    The category $\mathrm{Mod}^\mathbf{Y}_{\mathcal{IW}}(\mathcal{R})$ (resp. $\mathrm{mod}^\mathbf{Y}_{\mathcal{IW}}(\mathcal{R})$) is equivalent to the category of algebraic $T^\vee_\mathbb{k}$-modules (resp. finite dimensional algebraic $T^\vee_\mathbb{k}$-modules). In particular, the category $\mathrm{mod}^\mathbf{Y}_{\mathcal{IW}}(\mathcal{R})$ is semi-simple, and we have isomorphisms
    $$\widehat{\mathcal{Z}}'^{\mathcal{IW}}_w\simeq  \widehat{\mathcal{L}}_w^{\mathcal{IW}}\quad\forall w\in {^SW_{\mathrm{ext}}}.$$
\end{prop}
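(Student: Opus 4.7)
The plan is to run the Iwahori--Whittaker geometric Satake equivalence through the formalism of graded $\mathcal{R}$-modules. The starting input is Theorem \ref{thm IW equivalence} combined with Theorem \ref{satake equivalence1}: the composition $\mathcal{F} \mapsto \mathrm{Sat}(\mathrm{sw}^*(\nabla^{\mathcal{IW}}_\zeta \star \mathcal{F}))^{-1}\textnormal{-like}$ functor---more precisely, $\mathcal{F}\mapsto \nabla^{\mathcal{IW}}_\zeta\star\mathcal{F}$ followed by $\mathrm{Sat}$---is an equivalence $\mathrm{Perv}_{\mathcal{IW}}(\mathrm{Gr})\xrightarrow{\sim}\mathrm{Rep}(\check{\mathbf{G}}^{(1)})$ which intertwines the right action of $(\mathrm{Perv}_{G[[t]]}(\mathrm{Gr}),\star)$ on the source with the tensor product of $\mathrm{Rep}(\check{\mathbf{G}}^{(1)})$ on itself. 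First I would pass to $\mathbf{Y}$-graded ind-objects and observe that since the $\mathcal{R}$-action on objects of $\mathrm{Mod}^\mathbf{Y}_{\mathcal{IW}}(\mathcal{R})$ is by definition given by convolution with constituents of $\mathrm{Perv}_{G[[t]]}(\mathrm{Gr})$, this equivalence transfers to an equivalence between $\mathrm{Mod}^\mathbf{Y}_{\mathcal{IW}}(\mathcal{R})$ and the category of $\mathbf{Y}$-graded ind-objects of $\mathrm{Rep}(\check{\mathbf{G}}^{(1)})$ carrying a graded right action of the Satake image of $\mathcal{R}$. By Proposition \ref{Proposition regular perverse sheaf} that image is identified, as a graded algebra in $\mathrm{Ind}(\mathrm{Rep}(\check{\mathbf{G}}^{(1)}))$, with $\mathcal{O}(\check{\mathbf{G}}^{(1)})=\bigoplus_\mu \mathcal{O}(\check{\mathbf{G}}^{(1)})_\mu$ equipped with its natural $\mathbf{Y}$-grading and left-regular $\check{\mathbf{G}}^{(1)}$-action.

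Next I would reinterpret this algebraically. A $\mathbf{Y}$-graded $\check{\mathbf{G}}^{(1)}$-equivariant $\mathcal{O}(\check{\mathbf{G}}^{(1)})$-module is the same data as a $\check{\mathbf{G}}^{(1)}\times\check{\mathbf{T}}^{(1)}$-equivariant quasi-coherent sheaf on $\check{\mathbf{G}}^{(1)}$, with $\check{\mathbf{G}}^{(1)}$ acting by left translation and $\check{\mathbf{T}}^{(1)}$ by right translation (the latter recording the grading), exactly as in the setup of §\ref{section Representations of Frobenius kernels}. Because $\check{\mathbf{G}}^{(1)}$ acts simply transitively on itself from the left, faithfully flat descent (or equivalently restriction to the identity section $\mathrm{pt}\hookrightarrow \check{\mathbf{G}}^{(1)}$) yields an equivalence with the category of algebraic $\check{\mathbf{T}}^{(1)}$-modules, and under $\check{\mathbf{T}}^{(1)}\simeq T^\vee_\mathbb{k}$ this is the category of algebraic $T^\vee_\mathbb{k}$-modules. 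Tracking the finiteness condition, the full subcategory $\mathrm{mod}^\mathbf{Y}_{\mathcal{IW}}(\mathcal{R})$ of finite-length (equivalently, finitely generated) objects corresponds to the finite-dimensional $T^\vee_\mathbb{k}$-representations, just as in the Iwahori setting.

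Semi-simplicity is then automatic since a finite-dimensional algebraic representation of a torus decomposes as a direct sum of characters. For the last isomorphism, the paragraph preceding the statement records that $\widehat{\mathcal{Z}}'^{\mathcal{IW}}_w$ has simple socle $\widehat{\mathcal{L}}_w^{\mathcal{IW}}$ (by the same construction that produced the analogous fact in Proposition \ref{prop head} in the Iwahori case). In a semi-simple abelian category every finite-length object coincides with its socle, so $\widehat{\mathcal{Z}}'^{\mathcal{IW}}_w$ must itself be simple and therefore isomorphic to $\widehat{\mathcal{L}}_w^{\mathcal{IW}}$; the finite-length hypothesis on $\widehat{\mathcal{Z}}'^{\mathcal{IW}}_w$ is, once the equivalence with torus modules is in hand, visible directly from its explicit ind-presentation.

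The main obstacle lies in the very first step: one must verify carefully that the functor $\mathcal{F}\mapsto \nabla^{\mathcal{IW}}_\zeta\star\mathcal{F}$ not only is an equivalence of abelian module categories over $(\mathrm{Perv}_{G[[t]]}(\mathrm{Gr}),\star)$ but also converts $\mathcal{R}$-module structures (defined via the transition maps \eqref{can0}, \eqref{can1} and the associative product on $\mathcal{R}$) into genuine $\mathcal{O}(\check{\mathbf{G}}^{(1)})$-module structures, with the $\mathbf{Y}$-grading aligned on both sides and the twist by $\mathrm{sw}^*$ handled consistently. Once this compatibility is in place, the remainder of the argument is formal and reduces to standard facts on equivariant quasi-coherent sheaves on algebraic groups and on representations of tori.
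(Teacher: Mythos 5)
Your proposal tracks the paper's own (very brief) argument: the paper simply cites Achar--Riche and records, in the paragraph immediately preceding the statement, exactly the chain you spell out --- transport through the Iwahori--Whittaker equivalence of Theorem \ref{thm IW equivalence} and geometric Satake to obtain an identification of $\mathrm{Mod}^\mathbf{Y}_{\mathcal{IW}}(\mathcal{R})$ with $\mathbf{Y}$-graded $\check{\mathbf{G}}^{(1)}$-equivariant $\mathcal{O}(\check{\mathbf{G}}^{(1)})$-modules, reinterpret these as $\check{\mathbf{G}}^{(1)}\times\check{\mathbf{T}}^{(1)}$-equivariant sheaves on $\check{\mathbf{G}}^{(1)}$, and descend to $T^\vee_\mathbb{k}$-modules; semi-simplicity and the isomorphism $\widehat{\mathcal{Z}}'^{\mathcal{IW}}_w\simeq\widehat{\mathcal{L}}_w^{\mathcal{IW}}$ then follow from the socle statement already recorded in the text. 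One small slip in your first paragraph: the functor $\mathcal{F}\mapsto\nabla^{\mathcal{IW}}_\zeta\star\mathcal{F}$ goes \emph{from} $\mathrm{Perv}_{G[[t]]}(\mathrm{Gr})$ \emph{to} $\mathrm{Perv}_{\mathcal{IW}}(\mathrm{Gr})$, so ``followed by $\mathrm{Sat}$'' is not literally composable --- what you want is $\mathrm{Sat}$ precomposed with the \emph{inverse} of $\nabla^{\mathcal{IW}}_\zeta\star(-)$; the rest of the paragraph makes clear you understand this, but the sentence as written has the arrows in the wrong direction.
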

Recall the averaging functor
$$\mathrm{Av}_{I_\mathrm{u},\psi}:\mathrm{Perv}_{I_\mathrm{u}}(\mathrm{Gr})\to \mathrm{Perv}_{\mathcal{IW}}(\mathrm{Gr}). $$
 This functor induces an exact functor between the abelian categories
$$\mathrm{Av}_{I_\mathrm{u},\psi}:\mathrm{mod}^\mathbf{Y}_{I_\mathrm{u}}(\mathcal{R})\to \mathrm{mod}^\mathbf{Y}_{\mathcal{IW}}(\mathcal{R}), $$
and the latter functor commutes with $\Phi$, in the sense that we have a functorial isomorphism
$$\Phi^\mathcal{IW}(\mathrm{Av}_{I_\mathrm{u},\psi}(\mathcal{F}))\simeq \mathrm{Av}_{I_\mathrm{u},\psi}(\Phi(\mathcal{F})) $$
for any $\mathcal{F}\in\mathrm{mod}^\mathbf{Y}_{I_\mathrm{u}}(\mathcal{R})$. We now describe the behaviour of the averaging functor with respect to costandard (and therefore also standard) objects.
\begin{prop}{{\cite[Lemma 6.3]{achar2022geometric}}}\label{Prop averaging simples}
   Let $w\in W_\mathrm{ext}$, and write it as $w=vu$, with $v\in W$, $u\in {^SW_{\mathrm{ext}}}$. We have
   $$\mathrm{Av}_{I_\mathrm{u},\psi}(\widehat{\mathcal{Z}}'_{w})\simeq\widehat{\mathcal{Z}}'^{\mathcal{IW}}_{u}.$$
\end{prop}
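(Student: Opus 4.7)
The plan is to apply $\mathrm{Av}_{I_\mathrm{u},\psi}$ termwise to the inductive limit defining $(\widehat{\mathcal{Z}}'_w)_\mu$, and to recognize the result as the analogous limit defining $(\widehat{\mathcal{Z}}'^{\mathcal{IW}}_u)_\mu$. Fix $\mu \in \mathbf{Y}$; since $\mathrm{Av}_{I_\mathrm{u},\psi}$ is exact, commutes with filtered colimits, and satisfies the projection formula $\mathrm{Av}_{I_\mathrm{u},\psi}((-) \star \mathcal{S}) \simeq \mathrm{Av}_{I_\mathrm{u},\psi}(-) \star \mathcal{S}$ for $\mathcal{S} \in \mathrm{Perv}_{G[[t]]}(\mathrm{Gr})$ (which is built into the construction of $\mathrm{Av}_{I_\mathrm{u},\psi}$ via smooth base change and the $G[[t]]$-equivariance of $\mathcal{S}$), the problem reduces to
$$\mathrm{Av}_{I_\mathrm{u},\psi}\bigl((\widehat{\mathcal{Z}}'_w)_\mu\bigr) \simeq \varinjlim_\lambda \mathrm{Av}_{I_\mathrm{u},\psi}(\nabla_{wt_{\mu+\lambda}}) \star \mathcal{I}^{-w_0(\lambda)}_*,$$
so the whole argument hinges on identifying $\mathrm{Av}_{I_\mathrm{u},\psi}(\nabla_y)$ for $y$ of the form $wt_{\mu+\lambda}$.

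The key claim is the following: for $y = v\, y_0 \in W_\mathrm{ext}$ with $v \in W$ and $y_0 \in {^SW_\mathrm{ext}^S}$ (identified with an element of $\mathbf{Y}^{++}$ via Corollary \ref{coro domi}), one has $\mathrm{Av}_{I_\mathrm{u},\psi}(\nabla_y) \simeq \nabla^{\mathcal{IW}}_{y_0}$. To prove this I would proceed in three steps: (i) apply Lemma \ref{lem conv standard} to get $\nabla_y \simeq \mathcal{N}_v \star^I \nabla_{y_0}$, using the length additivity coming from $y_0 \in W^S_\mathrm{ext}$ and the decomposition $y = v y_0$; (ii) invoke the Whittaker-absorbing identity $\mathrm{Av}_{I_\mathrm{u},\psi}(\mathcal{N}_v \star^I (-)) \simeq \mathrm{Av}_{I_\mathrm{u},\psi}(-)$ for $v \in W$, proved by induction on $\ell(v)$ via a direct computation on the minimal parahoric $P_s$ attached to each simple reflection $s$ (averaging vanishes on sheaves pulled back from $P_s / I$, which gives the invariance step); and (iii) check by unwinding the convolution defining $\mathrm{Av}_{I_\mathrm{u},\psi}$ on the cell $\mathrm{Gr}_{y_0}$ -- using that for $y_0 \in {^SW_\mathrm{ext}^S}$ the Schubert stratum $\mathrm{Gr}_{y_0}$ meets the Iwahori--Whittaker orbit $Y_{y_0}$ transversally -- that the resulting averaging is the $*$-extension of the Whittaker local system, namely $\nabla^{\mathcal{IW}}_{y_0}$.

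Applying this to $y = wt_{\mu+\lambda}$ with $w = vu$, and noting that for $\lambda$ cofinally large $ut_{\mu+\lambda}$ lies in ${^SW_\mathrm{ext}^S}$, gives $\mathrm{Av}_{I_\mathrm{u},\psi}(\nabla_{wt_{\mu+\lambda}}) \simeq \nabla^{\mathcal{IW}}_{ut_{\mu+\lambda}}$, independent of $v$. Substituting back into the colimit and recognizing the resulting system -- with transition morphisms coming from the Whittaker counterparts of \eqref{can0}, \eqref{can1} and \eqref{eq 513} -- as $(\widehat{\mathcal{Z}}'^{\mathcal{IW}}_u)_\mu$, summed over $\mu$, produces the desired isomorphism $\mathrm{Av}_{I_\mathrm{u},\psi}(\widehat{\mathcal{Z}}'_w) \simeq \widehat{\mathcal{Z}}'^{\mathcal{IW}}_u$. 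The main obstacle is step (ii): the absorption of $\mathcal{N}_v$ by Whittaker averaging is a folklore fact, but here one must track carefully the opposite-Iwahori convention flagged in the remark after Theorem \ref{thm IW equivalence}, since the equivariance on the Whittaker side is with respect to $I_\mathrm{u}^-$ while the source carries an $I_\mathrm{u}$-equivariant structure, so the simple-reflection base case has to be performed on the correct minimal parahoric.
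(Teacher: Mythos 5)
The paper does not prove this statement: it is given with a citation to \cite[Lemma 6.3]{achar2022geometric}, and no argument is supplied in the text. So there is no internal proof to compare against, and your proposal must be judged on its own. Your overall strategy — apply $\mathrm{Av}_{I_\mathrm{u},\psi}$ termwise to the colimit defining $(\widehat{\mathcal{Z}}'_w)_\mu$, reduce to the behaviour of the averaging functor on a single costandard $\nabla_y$, and then recognize the resulting system — is the natural one and the right overall target, namely the identification $\mathrm{Av}_{I_\mathrm{u},\psi}(\nabla_y)\simeq\nabla^{\mathcal{IW}}_{y_0}$.

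However, your step (i) does not go through as written. You claim $\nabla_y\simeq\mathcal{N}_v\star^I\nabla_{y_0}$ via Lemma \ref{lem conv standard}, which requires the hypothesis $\ell(vw_0y_0)=\ell(v)+\ell(w_0y_0)$. Since $y_0\in{^SW_\mathrm{ext}^S}$ is in particular the minimal element of the left coset $Wy_0$, the standard parabolic-coset length formula gives $\ell(w'y_0)=\ell(w')+\ell(y_0)$ for every $w'\in W$, and therefore $\ell(vw_0y_0)=\ell(vw_0)+\ell(y_0)=\ell(w_0)-\ell(v)+\ell(y_0)$, whereas $\ell(v)+\ell(w_0y_0)=\ell(v)+\ell(w_0)+\ell(y_0)$. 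These agree only when $\ell(v)=0$, i.e.\ $v=e$ — precisely the case where there is nothing to prove. So the lemma you cite does not yield the decomposition $\nabla_y\simeq\mathcal{N}_v\star^I\nabla_{y_0}$ for a nontrivial $v\in W$, and with it the rest of the reduction collapses. (One can also sanity-check this in type $A_1$, where, with the conventions of §3.1, $y_0=st_n$ gives $\ell(sw_0y_0)=\ell(st_n)=2n-1$ while $\ell(s)+\ell(w_0y_0)=1+2n$.) Note also that steps (ii) and (iii) are only sketched: the Whittaker absorption identity and the computation of $\mathrm{Av}_{I_\mathrm{u},\psi}(\nabla_{y_0})$ for $y_0\in{^SW_\mathrm{ext}^S}$ require arguments at the level of those in \cite[§6]{achar2022geometric}, and the convention issue with $I_\mathrm{u}^-$ that you flag is real. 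As it stands, the proposal identifies the right target statements but does not supply a correct route to the key reduction.
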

	\subsection{Representations of the Frobenius kernel in terms of $\mathcal{R}$-modules}\label{section Representations of the Frobenius kernel in terms of}
 The following statement is a consequence of the main results of \cite{achar2022geometric}, togeher with the proof of the Finkelberg-Mirkovi\'c-conjecture in \cite{bezrukavnikov2024modular}.
 \begin{thm}{{\cite[Proposition 1.3]{achar2022geometric}}}\label{thm BR}
     Assume that, for any indecomposable constituent in the root system of $(\check{\mathbf{G}},\check{\mathbf{T}})$, $\ell$ is greater than the bound in Figure \ref{figure 1}. Then there exists an equivalence of categories
     $$\mathrm{FM}_{\mathrm{Frob}}:\mathrm{mod}^\mathbf{Y}_{I_\mathrm{u}}(\mathcal{R})\xrightarrow{\sim} \mathrm{Rep}_{[0]}(\check{\mathbf{G}}_1\check{\mathbf{T}})$$
     such that 
     $$\mathrm{FM}_{\mathrm{Frob}}\circ\Phi\simeq\mathrm{FM}\circ\mathrm{For}_0$$
     which satisfies
     $$\mathrm{FM}_{\mathrm{Frob}}(\widehat{\mathcal{L}}_w)\simeq \widehat{\mathrm{L}}(w^{-1}\cdot_\ell0),~\mathrm{FM}_{\mathrm{Frob}}(\widehat{\mathcal{Z}}'_w)\simeq \widehat{\mathrm{Z}}'(w^{-1}\cdot_\ell0),~~\mathrm{FM}_{\mathrm{Frob}}(\widehat{\mathcal{Z}}_w)\simeq \widehat{\mathrm{Z}}(w^{-1}\cdot_\ell0) $$
     for any $w\in W_\mathrm{ext}$, and such that
     $$\mathrm{FM}_{\mathrm{Frob}}(\mathcal{F}\star\mathcal{G})\simeq\mathrm{FM}_{\mathrm{Frob}}(\mathcal{F})\otimes\mathrm{Fr}^*(\mathrm{Sat}(\mathrm{sw}^*\mathcal{G})) $$
     functorially in $\mathcal{F}\in \mathrm{mod}^\mathbf{Y}_{I_\mathrm{u}}(\mathcal{R})$, $\mathcal{G}\in\mathrm{Perv}_{G[[t]]}(\mathrm{Gr})$.
 \end{thm}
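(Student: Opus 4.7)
\smallskip

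\noindent\textbf{Proof plan for Theorem \ref{thm BR}.} The plan is to bootstrap the equivalence from the Finkelberg--Mirkovi\'c equivalence of Theorem \ref{Thm FM} via a Morita-type argument over the regular perverse sheaf, and then identify the resulting category with $\mathrm{Rep}_{[0]}(\check{\mathbf{G}}_1\check{\mathbf{T}})$ using the discussion in §\ref{section Representations of Frobenius kernels}. First, I extend $\mathrm{FM}$ to a $t$-exact equivalence between the category of $\mathbf{Y}$-graded ind-objects of $\mathrm{Perv}_{I_\mathrm{u}}(\mathrm{Gr})$ and the category of $\mathbf{Y}$-graded ind-objects of $\mathrm{Rep}_{[0]}(\check{\mathbf{G}})$. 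Then, using the compatibility $\mathrm{FM}(\mathcal{F}\star\mathcal{G})\simeq \mathrm{FM}(\mathcal{F})\otimes \mathrm{Fr}^*(\mathrm{Sat}(\mathrm{sw}^*\mathcal{G}))$ together with Proposition \ref{Proposition regular perverse sheaf}, the algebra object $\mathcal{R}=\bigoplus_\mu \mathcal{R}_\mu$ goes to $\bigoplus_\mu \mathrm{Fr}^*(\mathcal{O}(\check{\mathbf{G}}^{(1)})_\mu)$, i.e.\ to the algebra $\mathrm{Fr}^*\mathcal{O}(\check{\mathbf{G}}^{(1)})$ equipped with its $\mathbf{Y}$-grading. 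In particular, $\mathrm{FM}$ transports the right $\mathcal{R}$-module structure on objects of $\mathrm{Mod}^\mathbf{Y}_{I_\mathrm{u}}(\mathcal{R})$ to a right $\mathrm{Fr}^*\mathcal{O}(\check{\mathbf{G}}^{(1)})$-module structure in the extended principal block.

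The next step is to identify the resulting category with $\mathrm{Rep}_{[0]}(\check{\mathbf{G}}_1\check{\mathbf{T}})$. A $\mathbf{Y}$-graded $\mathrm{Fr}^*\mathcal{O}(\check{\mathbf{G}}^{(1)})$-module in $\mathrm{Rep}_{[0]}(\check{\mathbf{G}})$ is exactly an object of $\mathrm{Coh}^{\check{\mathbf{G}}\times\check{\mathbf{T}}^{(1)}}(\check{\mathbf{G}}\times \check{\mathbf{T}}^{(1)}/\check{\mathbf{G}}_1\check{\mathbf{T}})$ belonging to the block $[0]$, and under the chain of equivalences \eqref{equivalences} this is the extended principal block $\mathrm{Rep}_{[0]}(\check{\mathbf{G}}_1\check{\mathbf{T}})$. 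Restricting to the full subcategory $\mathrm{mod}^\mathbf{Y}_{I_\mathrm{u}}(\mathcal{R})$ of finite length objects (equivalently, finitely generated objects by Proposition \ref{prop finite length}) corresponds on the representation-theoretic side to finite-dimensional $\check{\mathbf{G}}_1\check{\mathbf{T}}$-modules. This defines $\mathrm{FM}_{\mathrm{Frob}}$, and the commutation relation $\mathrm{FM}_{\mathrm{Frob}}\circ\Phi\simeq \mathrm{FM}\circ \mathrm{For}_0$ is built into the construction since $\Phi(\mathcal{F})=\bigoplus_\mu \mathcal{F}\star\mathcal{R}_\mu$ corresponds to the free $\mathrm{Fr}^*\mathcal{O}(\check{\mathbf{G}}^{(1)})$-module generated by $\mathrm{FM}(\mathcal{F})$, which by \eqref{equivalences} is precisely $\mathrm{For}_0(\mathrm{FM}(\mathcal{F}))$.

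It remains to match the distinguished objects. For simples: writing $w=yt_\lambda$ with $y\in W_{\mathrm{ext}}^{\mathrm{res}}$, the object $\widehat{\mathcal{L}}_w=\Phi(\mathrm{L}_y)\langle -\lambda\rangle$ goes to the twist by $-\lambda$ of the free module generated by $\mathrm{FM}(\mathrm{L}_y)=L(y^{-1}\bullet_\ell 0)$; a direct computation using the compatibility between the grading shift $\langle-\lambda\rangle$ and tensoring by $\mathrm{Fr}^*(\mathbb{k}_{\check{\mathbf{T}}}(\lambda))$ identifies this with $\widehat{L}(w^{-1}\cdot_\ell 0)$. For the (co-)Verma modules: the inductive limit definition of $\widehat{\mathcal{Z}}'_w$ from costandard objects $\nabla_{wt_\mu}$ corresponds, via $\mathrm{FM}$ (which sends costandards to induced modules) and the induction $\mathrm{ind}_{\check{\mathbf{B}}_1\check{\mathbf{T}}}^{\check{\mathbf{G}}_1\check{\mathbf{T}}}$, to the baby co-Verma $\widehat{Z}'(w^{-1}\cdot_\ell 0)$; by commutation of $\mathrm{FM}_{\mathrm{Frob}}$ with Verdier duality (Proposition \ref{prop D}) and the duality $\widehat{Z}=\mathbb{D}\widehat{Z}'$, the Verma module case follows. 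Finally, the convolution compatibility of $\mathrm{FM}_{\mathrm{Frob}}$ is inherited directly from that of $\mathrm{FM}$ in Theorem \ref{Thm FM}, since the $\mathrm{Perv}_{G[[t]]}(\mathrm{Gr})$-action respects the $\mathcal{R}$-module structure.

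The main obstacle is verifying cleanly the identification of $\mathrm{Mod}^\mathbf{Y}_{I_\mathrm{u}}(\mathcal{R})$ with the category of $\mathbf{Y}$-graded $\check{\mathbf{G}}$-equivariant $\mathcal{O}(\check{\mathbf{G}}^{(1)})$-modules inside the extended principal block, together with tracking the precise parametrisation of simple objects through $W_{\mathrm{ext}}^{\mathrm{res}}\times\mathbf{Y}$ versus $X^*(\check{\mathbf{T}})$. Once this dictionary is set up in block form, Theorem \ref{Thm FM} provides all necessary input and the equivalence $\mathrm{FM}_{\mathrm{Frob}}$ is essentially formal; this is the content of \cite[Proposition~1.3]{achar2022geometric} modulo \cite{bezrukavnikov2024modular}.
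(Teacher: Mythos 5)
Your reconstruction matches the intended derivation: the paper gives no proof of Theorem \ref{thm BR}, citing \cite[Proposition 1.3]{achar2022geometric} (combined with \cite{bezrukavnikov2024modular} to supply Theorem \ref{Thm FM}), and that reference proceeds exactly as you describe — transporting the $\mathcal{R}$-action through $\mathrm{FM}$ via Proposition \ref{Proposition regular perverse sheaf} and identifying $\mathbf{Y}$-graded equivariant $\mathcal{O}(\check{\mathbf{G}}^{(1)})$-modules with $\check{\mathbf{G}}_1\check{\mathbf{T}}$-modules via \eqref{equivalences}. The one caveat is that the classification of simples, the equivalence of finite length with finite generation, and the identification of $\widehat{\mathcal{Z}}'_w$ with baby co-Verma modules are substantive theorems of \textit{loc.\ cit.}\ rather than formal consequences of the dictionary, but these are precisely the statements recorded here as Theorem \ref{thm simple g1t} and Propositions \ref{prop finite length} and \ref{prop head}, so your appeal to them is legitimate.
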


 The next result follows from the constructions of baby Verma and co-Verma modules in terms of coinduced and induced representations respectively.
 \begin{prop}
     There is an isomorphism of $\check{\mathbf{G}}_1\check{\mathbf{T}}$-modules
     $$\widehat{\mathrm{Z}}(0)\simeq {^{w_0}\widehat{\mathrm{Z}}'((w_0t_{-2\zeta})^{-1}\cdot_\ell0)}. $$
 \end{prop}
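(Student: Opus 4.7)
\emph{Proof plan.} The argument splits naturally into a combinatorial simplification of the right-hand side, followed by a classical computation with distribution algebras of Frobenius kernels.

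First, I would unravel the affine action to evaluate $(w_0 t_{-2\zeta})^{-1}\cdot_\ell 0$. Writing $(w_0 t_{-2\zeta})^{-1}=t_{2\zeta}w_0$ and using $w_0\cdot_\ell 0=w_0(\rho)-\rho=-2\rho$, one computes
$$
(w_0 t_{-2\zeta})^{-1}\cdot_\ell 0 \;=\; t_{2\zeta}\cdot_\ell(-2\rho) \;=\; -2\rho+2\ell\zeta.
$$
The defining condition $\langle\alpha,\zeta\rangle=1$ for every simple root $\alpha$ of $G$ from §\ref{subsection the group}, under the identification $\mathbf{Y}=X^*(\check{\mathbf{T}})$, identifies $\zeta$ with the half-sum of positive roots of $\check{\mathbf{G}}$, i.e.\ with the standard $\rho$ on the dual side. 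Hence the right-hand weight equals $2(\ell-1)\rho$, and the statement reduces to the classical identity
$$
\widehat{\mathrm{Z}}(0)\;\simeq\;{^{w_0}\widehat{\mathrm{Z}}'(2(\ell-1)\rho)}.
$$

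The second step is to invoke the standard theory of Frobenius kernels. Realise $\widehat{\mathrm{Z}}(0)=\mathrm{coind}_{\check{\mathbf{B}}_1\check{\mathbf{T}}}^{\check{\mathbf{G}}_1\check{\mathbf{T}}}\mathbb{k}_0$ and $\widehat{\mathrm{Z}}'(2(\ell-1)\rho)=\mathrm{ind}_{\check{\mathbf{B}}_1\check{\mathbf{T}}}^{\check{\mathbf{G}}_1\check{\mathbf{T}}}\mathbb{k}_{2(\ell-1)\rho}$. Twisting the latter by a lift of $w_0$, which conjugates $\check{\mathbf{B}}$ to $\check{\mathbf{B}}^-$ and acts on $\check{\mathbf{T}}$-weights by $w_0$, yields
$$
{^{w_0}\widehat{\mathrm{Z}}'(2(\ell-1)\rho)}\;\simeq\;\mathrm{ind}_{\check{\mathbf{B}}_1^-\check{\mathbf{T}}}^{\check{\mathbf{G}}_1\check{\mathbf{T}}}\mathbb{k}_{-2(\ell-1)\rho}.
$$
The desired isomorphism is then the ``coinduction-equals-induction-from-opposite-Borel-with-modular-shift'' identity for the finite Frobenius extension $\mathrm{Dist}(\check{\mathbf{G}}_1\check{\mathbf{T}})\supset\mathrm{Dist}(\check{\mathbf{B}}_1^-\check{\mathbf{T}})$: the modular character is the top $\check{\mathbf{T}}$-weight $\sum_{\alpha\in\check{\mathfrak{R}}_+}(\ell-1)\alpha=2(\ell-1)\rho$ of the finite-dimensional Hopf algebra $\mathrm{Dist}(\check{\mathbf{N}}_1)$, which accounts precisely for the shift by $-2(\ell-1)\rho$ (compare Jantzen, \emph{Representations of algebraic groups}, Part II, §9.3 and the surrounding discussion).

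The main obstacle is purely bookkeeping of sign and ordering conventions: which Borel is declared ``positive'', the precise form of the level-$\ell$ dot action, and the direction in which $w_0$ acts on weights under the twist functor of Proposition \ref{prop twist coh}. Once these are pinned down, the substantive content is entirely classical and no new representation-theoretic input is required.
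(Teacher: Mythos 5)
Your proposal is correct and follows essentially the same route as the paper: after evaluating the dot action to reduce the claim to $\widehat{\mathrm{Z}}(0)\simeq{^{w_0}\widehat{\mathrm{Z}}'(2(\ell-1)\zeta)}$, both arguments rest on Jantzen's identity (Part II, §9.2) realising the baby Verma module as an induction from the opposite Frobenius--Borel shifted by $-2(\ell-1)\zeta$, combined with the standard behaviour of induction under a $w_0$-twist. The only cosmetic difference is that you apply the twist to $\widehat{\mathrm{Z}}'$ whereas the paper twists $\widehat{\mathrm{Z}}(0)$.
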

 \begin{proof}
     We want to establish an isomorphism $\widehat{\mathrm{Z}}(0)\simeq {^{w_0}\widehat{\mathrm{Z}}'(2(\ell-1)\zeta}) $, or equivalently $^{w_0}\widehat{\mathrm{Z}}(0)\simeq \widehat{\mathrm{Z}}'(2(\ell-1)\zeta) $. By \cite[Lemma 9.2, Part II]{jantzen2003representations}, we have an isomorphism
     $$\widehat{\mathrm{Z}}(0)\simeq \mathrm{ind}^{\check{\mathbf{G}}_1\check{\mathbf{T}}}_{\check{\mathbf{B}}_1^+\check{\mathbf{T}}}(-2(\ell-1)\zeta). $$
     We deduce that
     $$^{w_0}\widehat{\mathrm{Z}}(0)\simeq \mathrm{ind}^{\check{\mathbf{G}}_1\check{\mathbf{T}}}_{w_0(\check{\mathbf{B}}_1^+\check{\mathbf{T}})}(w_0(-2(\ell-1)\zeta))=\mathrm{ind}^{\check{\mathbf{G}}_1\check{\mathbf{T}}}_{\check{\mathbf{B}}_1\check{\mathbf{T}}}(2(\ell-1)\zeta)=\widehat{\mathrm{Z}}'(2(\ell-1)\zeta), $$
     where the first isomorphism is due to \cite[(4)-I.3.5]{jantzen2003representations}.
 \end{proof}
\begin{rem}
     In fact, on can easily deduce from the previous proposition how to express any baby Verma module as a twisted baby co-Verma module (in the same fashion as \cite[Corollary 3.2.11]{ABBGM}), but only the above case will be useful for us in the sequel. The point is that it allows one to express baby Verma modules as ind-objects of the category $\mathrm{Perv}_{I_\mathrm{u}}(\mathrm{Gr})$.
\end{rem}

Let $\mathcal{F}\in \mathrm{Mod}^\mathbf{Y}_{I_\mathrm{u}}(\mathcal{R})$, and $w\in W$. We define the object $^w\mathcal{F}\in \mathrm{Mod}^\mathbf{Y}_{I_\mathrm{u}}(\mathcal{R})$ by putting 
$$(^w\mathcal{F})_\lambda:=\mathcal{F}_{w(\lambda)}\quad\forall \lambda\in\mathbf{Y}$$
and defining the right-$\mathcal{R}$ action by twisting by $w$ the corresponding action on $\mathcal{F}$ (the twist of $\mathcal{R}$ by $w$ can be defined via the geometric Satake equivalence). 
\begin{coro}\label{coro ind verma}
    Assume that, for any indecomposable constituent in the root system of $(\check{\mathbf{G}},\check{\mathbf{T}})$, $\ell$ is greater than the bound in Figure \ref{figure 1}. We have an isomorphism
   $$\widehat{\mathcal{Z}}_0\simeq {^{w_0}(\widehat{\mathcal{Z}}'_{w_0t_{-2\zeta}})}.$$ 
\end{coro}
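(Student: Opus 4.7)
The plan is to transport the isomorphism of $\check{\mathbf{G}}_1\check{\mathbf{T}}$-modules from the preceding proposition back to $\mathrm{mod}^\mathbf{Y}_{I_\mathrm{u}}(\mathcal{R})$ through the equivalence $\mathrm{FM}_{\mathrm{Frob}}$ of Theorem \ref{thm BR}, and to do this one needs to verify that $\mathrm{FM}_{\mathrm{Frob}}$ intertwines the twist operations by elements of $W$ on both sides.

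First, by Theorem \ref{thm BR} applied to $w=0$ and $w=w_0t_{-2\zeta}$ respectively, we have
\[ \mathrm{FM}_{\mathrm{Frob}}(\widehat{\mathcal{Z}}_0)\simeq \widehat{\mathrm{Z}}(0),\qquad \mathrm{FM}_{\mathrm{Frob}}(\widehat{\mathcal{Z}}'_{w_0t_{-2\zeta}})\simeq \widehat{\mathrm{Z}}'((w_0t_{-2\zeta})^{-1}\cdot_\ell 0). \]
In view of the preceding proposition, it will then suffice to establish a functorial isomorphism
\[ \mathrm{FM}_{\mathrm{Frob}}({^w\mathcal{F}})\simeq {^w\bigl(\mathrm{FM}_{\mathrm{Frob}}(\mathcal{F})\bigr)} \]
for $\mathcal{F}\in \mathrm{mod}^\mathbf{Y}_{I_\mathrm{u}}(\mathcal{R})$ and $w\in W$; applied to $\mathcal{F}=\widehat{\mathcal{Z}}'_{w_0t_{-2\zeta}}$ and $w=w_0$, combined with the preceding proposition, this gives
\[ \mathrm{FM}_{\mathrm{Frob}}\bigl({^{w_0}(\widehat{\mathcal{Z}}'_{w_0t_{-2\zeta}})}\bigr)\simeq {^{w_0}\widehat{\mathrm{Z}}'((w_0t_{-2\zeta})^{-1}\cdot_\ell 0)}\simeq \widehat{\mathrm{Z}}(0)\simeq \mathrm{FM}_{\mathrm{Frob}}(\widehat{\mathcal{Z}}_0), \]
and the result then follows from the fact that $\mathrm{FM}_{\mathrm{Frob}}$ is an equivalence.

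The main obstacle is precisely the compatibility of $\mathrm{FM}_{\mathrm{Frob}}$ with the twist by $w\in W$. The definition of $^w\mathcal{F}$ for $\mathcal{F}\in\mathrm{Mod}^\mathbf{Y}_{I_\mathrm{u}}(\mathcal{R})$ given just before the statement exactly mirrors the definition of $^w\widetilde{M}$ from §\ref{section Representations of Frobenius kernels}: the $\mathbf{Y}$-grading is permuted by $w$ and the $\mathcal{R}$-action is twisted by $w$ via the Satake equivalence. Under the identifications recalled in §\ref{section Representations of Frobenius kernels} (the equivalence \eqref{equivalences} together with Proposition \ref{Proposition regular perverse sheaf}), the functor $\mathrm{FM}_{\mathrm{Frob}}$ matches the $\mathcal{R}$-module description of $\mathrm{Rep}_{[0]}(\check{\mathbf{G}}_1\check{\mathbf{T}})$ with the coherent-sheaf description as $\mathbf{Y}$-graded $\check{\mathbf{G}}$-equivariant $\mathcal{O}(\check{\mathbf{G}}^{(1)})$-modules, and Proposition \ref{prop twist coh} computes the effect of $^w(-)$ in that coherent picture. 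Verifying the compatibility therefore amounts to tracing through the construction of $\mathrm{FM}_{\mathrm{Frob}}$ and checking that the twist by $w$ on $\mathcal{R}$-modules corresponds to the twist by $w$ on $\mathbf{Y}$-graded $\check{\mathbf{G}}$-equivariant $\mathcal{O}(\check{\mathbf{G}}^{(1)})$-modules. This is a bookkeeping check (using functoriality of $\mathrm{Sat}$ under the action of $N_{\check{\mathbf{G}}}(\check{\mathbf{T}})/\check{\mathbf{T}}=W$ on both sides), and is essentially built into the construction of the equivalence; once this is done, the corollary is immediate.
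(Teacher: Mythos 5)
Your proposal is correct and follows essentially the same route as the paper: transport the representation-theoretic isomorphism $\widehat{\mathrm{Z}}(0)\simeq {^{w_0}\widehat{\mathrm{Z}}'((w_0t_{-2\zeta})^{-1}\cdot_\ell 0)}$ through $\mathrm{FM}_{\mathrm{Frob}}$, the only substantive point being the compatibility of $\mathrm{FM}_{\mathrm{Frob}}$ with the twist by $w_0$, which the paper likewise reduces to Proposition \ref{prop twist coh} together with Theorem \ref{thm BR} without further detail.
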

\begin{proof}
   Thanks to the assumptions, we may use Theorem \ref{thm BR} to prove this isomorphism. We have isomorphisms
   $$\mathrm{FM}_{\mathrm{Frob}}({^{w_0}(\widehat{\mathcal{Z}}'_{w_0t_{-2\zeta}})})\simeq {^{w_0}\mathrm{FM}_{\mathrm{Frob}}({(\widehat{\mathcal{Z}}'_{w_0t_{-2\zeta}})})}\simeq {^{w_0}\widehat{\mathrm{Z}}'((w_0t_{-2\zeta})^{-1}\cdot_\ell0)}$$
thanks to Proposition \ref{prop twist coh} and Theorem \ref{thm BR}. We can then conclude thanks to Corollary \ref{coro ind verma}.
\end{proof}

	\section{Drinfeld's compactifications}\label{section Drinfeld's compactifications}
	\subsection{Drinfeld's compactification and variants}\label{section Drinfeld's compactification and variants}
	We put $X:=\mathbb{P}^1$. Thanks to \cite[§4.1.1]{ABBGM}, there exists a reductive group $\tilde{G}$ over $\mathbb{F}$ whose derived subgroup $[\tilde{G},\tilde{G}]$ is simply connected, endowed with a central isogeny $\varphi:\tilde{G}\to G$ such that $\mathrm{ker}(\varphi)$ is connected. We fix such a pair $(\tilde{G},\varphi)$ and consider the maximal torus and Borel subgroup $ \tilde{T}:=\varphi^{-1}(T)$, $\tilde{B}:=\varphi^{-1}(B)$. We also let $\tilde{N}^-$ be the unipotent radical of the Borel subgroup which is opposite to $\tilde{B}$ with respect to $\tilde{T}$. Notice that, if we denote by $\widetilde{\mathbf{X}}^+\subset\widetilde{\mathbf{X}}$ the set of dominant characters and characters associated with the pair $(\tilde{T},\tilde{B})$, then $\varphi$ induces a morphism $\mathbf{X}\to \widetilde{\mathbf{X}}$ that sends $\mathbf{X}^+$ to $\widetilde{\mathbf{X}}^+$.
	
	We have a diagram
	\begin{equation*}
		\xymatrix{
			\mathrm{Bun}_B\ar[r]^{\mathfrak{p}^+}\ar[d]_{\mathfrak{q}^+} &\mathrm{Bun}_T  \\
			\mathrm{Bun}_G
		}
	\end{equation*}
	defined by sending a $B$-bundle $\mathcal{F}_B$ to $\mathfrak{q}^+(\mathcal{F}_B):=\mathcal{F}_B\times^{B}G$ (resp. $\mathfrak{p}^+(\mathcal{F}_B):=\mathcal{F}_B\times^{B}T$). There is an important description of the stack $\mathrm{Bun}_B$, called the Pl\"ucker description (cf. \cite[§1.2.1]{BG}). Namely, when the derived subgroup $[G,G]$ is simply connected, $\mathrm{Bun}_B$ classifies the data of a principal $G$-bundle $\mathcal{F}_G$ over $X\times S$, endowed with a $B$-reduction, i.e. a principal $T$-bundle $\mathcal{F}_T\in\mathrm{Bun}_T(S)$ together with a collection of morphisms of vector bundles
	$$\kappa^\lambda:\mathcal{L}_{\mathcal{F}_T}^\lambda\to \mathcal{V}_{\mathcal{F}_G}^\lambda\qquad\forall \lambda\in \mathbf{X}^+,$$
	which are required to satisfy the Pl\"ucker relations. This means that, for $\lambda_1,\lambda_2\in\mathbf{X}^+$, the morphism
	$$\mathcal{L}_{\mathcal{F}_T}^{\lambda_1+\lambda_2}\xrightarrow{\kappa^{\lambda_1+\lambda_2}} \mathcal{V}_{\mathcal{F}_G}^{\lambda_1+\lambda_2}\to \mathcal{V}_{\mathcal{F}_G}^{\lambda_1}\otimes \mathcal{V}_{\mathcal{F}_G}^{\lambda_2} $$
	(where the second arrow is induced by \eqref{canonical morphism}) must coincide with the composition
	$$\mathcal{L}_{\mathcal{F}_T}^{\lambda_1+\lambda_2}\simeq \mathcal{L}_{\mathcal{F}_T}^{\lambda_1}\otimes \mathcal{L}_{\mathcal{F}_T}^{\lambda_2}\xrightarrow{\kappa^{\lambda_1}\otimes \kappa^{\lambda_2}}  \mathcal{V}_{\mathcal{F}_G}^{\lambda_1}\otimes \mathcal{V}_{\mathcal{F}_G}^{\lambda_2}.  $$
	One can then define the Drinfeld compactification $\overline{\mathrm{Bun}}_B$. Since we want to work with a general group $G$, we first need to define a larger space $\overline{\mathrm{Bun}}'_B(G)$. It is the functor classifying the data of a principal $G$-bundle $\mathcal{F}_G$, endowed with a \textit{degenerate} $B$-reduction, i.e. a principal $T$-bundle $\mathcal{F}_T$ together with a collection of morphisms 
	$$\kappa^\lambda:\mathcal{L}_{\mathcal{F}_T}^\lambda\to \mathcal{V}_{\mathcal{F}_G}^\lambda\qquad\forall \lambda\in \mathbf{X}^+,$$
	satisfying the Pl\"ucker relations, but which are no longer required to be morphisms of vector bundles. We merely ask that, for every geometric point $s$ of $S$, the restriction $\kappa^\lambda|_{X\times s}$ is injective.  When $[G,G]$ is simply connected, $\overline{\mathrm{Bun}}'_B(G)$ is the sought for Drinfeld compactification. But in general, $\overline{\mathrm{Bun}}_B$ will be a closed substack of $\overline{\mathrm{Bun}}'_B(G)$.
	
	As in the previous paragraph, one can define the space $\overline{\mathrm{Bun}}_{\tilde{B}}'(\tilde{G})$ attached to $\tilde{G}$. There is a morphism of stacks  $\overline{\mathrm{Bun}}_{\tilde{B}}'(\tilde{G})\to\overline{\mathrm{Bun}}'_B(G)$, obtained by inducing the bundles via the isogeny $\tilde{G}\to G$ (the Pl\"ucker relations for $\tilde{G}$ induce Pl\"ucker relations for $G$ since $\mathbf{X}^+\subset\widetilde{\mathbf{X}}^+$). The same arguments as in the proof of \cite[Lemma 4.1.2]{ABBGM} can be used to prove the next result.
	\begin{prop}\label{prop closed immersion}
		The morphism  of stacks $\overline{\mathrm{Bun}}_{\tilde{B}}'(\tilde{G})\to\overline{\mathrm{Bun}}'_B(G)$ is a closed embedding, and its image does not depend on the choice of $(\tilde{G},\varphi)$.
	\end{prop}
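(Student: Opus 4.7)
I propose to adapt the proof of \cite[Lemma 4.1.2]{ABBGM} to the present setting. The morphism $\Psi:\overline{\mathrm{Bun}}_{\tilde{B}}'(\tilde{G})\to\overline{\mathrm{Bun}}'_B(G)$ is defined on $S$-points by inducing bundles along $\tilde{G}\to G$ and $\tilde{T}\to T$, and restricting the collection of Pl\"ucker morphisms from $\widetilde{\mathbf{X}}^+$ to $\mathbf{X}^+$. The key compatibility is that for $\lambda\in\mathbf{X}^+\subset\widetilde{\mathbf{X}}^+$, the representation $V^\lambda$ of $\tilde{G}$ factors through $\varphi$ (since $\lambda$ is trivial on the central subgroup $\ker(\varphi)$), so $\mathcal{V}^\lambda_{\tilde{\mathcal{F}}_{\tilde{G}}}\simeq\mathcal{V}^\lambda_{\mathcal{F}_G}$ and $\mathcal{L}^\lambda_{\tilde{\mathcal{F}}_{\tilde{T}}}\simeq\mathcal{L}^\lambda_{\mathcal{F}_T}$, and the Pl\"ucker maps transport canonically.

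To show $\Psi$ is a closed embedding, I would first establish properness by working over $\mathrm{Bun}_G\times\mathrm{Bun}_T$, where on both sides the Pl\"ucker data forms a projective scheme; this uses that the Drinfeld-type compactifications are proper over the bundle moduli. I would then analyze the fibers of $\Psi$ at a fixed point $(\mathcal{F}_G,\mathcal{F}_T,(\kappa^\lambda))$, showing they are either empty or a single reduced point. The crucial input is that the Pl\"ucker data $\tilde{\kappa}^{\tilde{\lambda}}$ for $\tilde{\lambda}\in\widetilde{\mathbf{X}}^+$, subject to the Pl\"ucker relations and the generic injectivity condition, rigidifies any apparent ambiguity in the lift of the underlying bundles along the central morphism $\varphi$. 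Combined with properness, this yields that $\Psi$ is a closed immersion.

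For the independence of the image under the choice of $(\tilde{G},\varphi)$, given two such pairs $(\tilde{G}_i,\varphi_i)$ I would form the fiber product $\tilde{G}_3:=\tilde{G}_1\times_G\tilde{G}_2$. This group is again reductive; its derived subgroup equals the simply connected cover of $[G,G]$ (the common simply connected cover of both $[\tilde{G}_i,\tilde{G}_i]$, embedded diagonally); and its kernel over $G$ is $\ker(\varphi_1)\times\ker(\varphi_2)$, which is connected. Applying the first part of the proposition to the projections $\tilde{G}_3\to\tilde{G}_i$, one obtains closed embeddings $\overline{\mathrm{Bun}}'_{\tilde{B}_3}(\tilde{G}_3)\hookrightarrow\overline{\mathrm{Bun}}'_{\tilde{B}_i}(\tilde{G}_i)$, which I would then show are isomorphisms by comparing the non-degenerate open loci (where the fiber structure can be controlled explicitly) and extending by closedness. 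This identifies all three images in $\overline{\mathrm{Bun}}'_B(G)$.

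The main obstacle I expect is the fiber analysis in the closed immersion step: one must show precisely that the additional Pl\"ucker maps $\tilde{\kappa}^{\tilde{\lambda}}$ for $\tilde{\lambda}\in\widetilde{\mathbf{X}}^+\setminus\mathbf{X}^+$ are uniquely determined by the restricted data together with the bundle structures, even though the underlying bundles themselves admit nontrivial families of lifts along $\varphi$. The argument of \cite{ABBGM} should carry over here since it is purely geometric and does not rely on the characteristic of the coefficient field.
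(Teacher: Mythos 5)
Your plan coincides with the paper's: the paper's entire proof of this proposition is the single sentence that ``the same arguments as in the proof of \cite[Lemma 4.1.2]{ABBGM} can be used'', so adapting that lemma is exactly what is intended, and your observation that the argument is purely geometric and characteristic-independent is the point being made implicitly.

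Two remarks on your elaboration. First, the claim that over $\mathrm{Bun}_G\times\mathrm{Bun}_T$ ``the Pl\"ucker data forms a projective scheme'' is not correct as stated: the fibre of $\overline{\mathrm{Bun}}'_B(G)\to\mathrm{Bun}_G\times\mathrm{Bun}_T$ is a locally closed subscheme of the affine space of collections of sheaf morphisms $\kappa^\lambda\colon\mathcal{L}^\lambda_{\mathcal{F}_T}\to\mathcal{V}^\lambda_{\mathcal{F}_G}$ cut out by the Pl\"ucker relations and the injectivity condition, so properness does not come from projectivity of these fibres. Second, the ``main obstacle'' you flag is indeed the heart of the matter, and the mechanism that resolves it is worth making explicit: the extra dominant weights $\tilde\lambda\in\widetilde{\mathbf{X}}^+\setminus\mathbf{X}^+$ differ from elements of $\mathbf{X}^+$ by characters of $\tilde{G}$ itself (weights trivial on $[\tilde{G},\tilde{G}]$, all of which are ``dominant''), and for such a character $\tilde\mu$ the Pl\"ucker relations applied to the pair $\tilde\mu,-\tilde\mu$ force $\tilde\kappa^{\tilde\mu}$ to be an isomorphism of line bundles. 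This both determines the remaining Pl\"ucker maps from the restricted data and trivializes the $\mathrm{Bun}_{\ker(\varphi)}$-worth of ambiguity in lifting $(\mathcal{F}_G,\mathcal{F}_T)$ along $\varphi$, which is what makes $\Psi$ a monomorphism; without this step your fibre analysis cannot be completed, since $\mathrm{Bun}_{\tilde{G}}\to\mathrm{Bun}_G$ alone is very far from being a monomorphism. Your fibre-product construction $\tilde{G}_1\times_G\tilde{G}_2$ for the independence statement is correct and is the standard device here.
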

	
	We define $\overline{\mathrm{Bun}}_B$ as the image of the above morphism inside $\overline{\mathrm{Bun}}'_B(G)$. One can show that $\overline{\mathrm{Bun}}_B$ is an algebraic stack in which $\mathrm{Bun}_B$ is a dense open substack, and that the canonical projection $\overline{\mathfrak{p}^+}:\overline{\mathrm{Bun}}_B\to \mathrm{Bun}_G$ is proper (\cite[Proposition 1.2.2]{BG}). Notice that the isomorphism $B\simeq B^-$ (obtained by conjugation by $w_0$) yields an isomorphism of stacks between $\mathrm{Bun}_B$ and $\mathrm{Bun}_{B^-}$ (resp. $\overline{\mathrm{Bun}}_B$ and $\overline{\mathrm{Bun}}_{B^-}$). We denote by $\mathfrak{q}$ (resp. $\overline{\mathfrak{q}}$) the canonical projection $\mathrm{Bun}_{B^-}\to \mathrm{Bun}_{G}$ (resp. $\overline{\mathrm{Bun}}_{B^-}\to \mathrm{Bun}_{G}$), and by $\mathfrak{p}$  (resp. $\overline{\mathfrak{p}}$) the canonical projection $\mathrm{Bun}_{B^-}\to \mathrm{Bun}_{T}$ (resp. $\overline{\mathrm{Bun}}_{B^-}\to \mathrm{Bun}_{T}$).
	
	One can easily check that the connected components of $|\mathrm{Bun}_T|$ are parametrized by $\mathbf{Y}$: the connected component associated with $\mu$ consists in the $T$-bundles $\mathcal{F}_T$ such that the line bundle $\mathcal{L}_{\mathcal{F}_T}^\lambda$ has degree $-\langle\lambda,\mu\rangle$ for all $\lambda\in\mathbf{X}^+$. Likewise, we have a decomposition into connected components
 $$\mathrm{Bun}_B=\bigsqcup_{\mu\in\mathbf{Y}}\mathrm{Bun}_B^\mu, $$
 where $\mathrm{Bun}_B^\mu$ is the preimage through $\mathfrak{p}^+$ of the connected component of $\mathrm{Bun}_T$ associated with $\mu$. We will denote by $\overline{\mathrm{Bun}}_{B^-}^\theta$ the preimage of the connected component associated with $\theta\in \mathbf{Y}$ through $\overline{\mathfrak{p}}$.
	
	Next, we define the stack $\overline{\mathrm{Bun}}_{N^-}$, which is a ``compactification" of the canonical morphism $\mathrm{Bun}_{N^-}\to \mathrm{Bun}_{G}$ (more precisely, there is a canonical action of $T$ on $\overline{\mathrm{Bun}}_{N^-}$ such that the induced morphism $\overline{\mathrm{Bun}}_{N^-}/T\to\mathrm{Bun}_G$ is representatble and proper, cf. \cite[Lemma 2.2.4]{FGV}). By definition, $\overline{\mathrm{Bun}}_{N^-}$ sits in the Cartesian diagram 
 \begin{equation*}
			\xymatrix{
				\overline{\mathrm{Bun}}_{N^-}\ar[d] \ar[r]&\overline{\mathrm{Bun}}_{B^-}\ar[d]^{\overline{\mathfrak{q}}}\\
				pt\ar[r]&\mathrm{Bun}_{T},}
		\end{equation*}
  where the bottom horizontal arrow is the inclusion of the trivial bundle. Let us spell out this definition. As before, we first need to define a larger space $\overline{\mathrm{Bun}}_{N^-}'(G)$. It classifies the data of a principal $G$-bundle $\mathcal{F}_G\in\mathrm{Bun}_G(S)$, together with a collection of morphisms of sheaves
	$$\kappa^-:=(\kappa^{\lambda,-}:\mathcal{V}_{\mathcal{F}_G}^\lambda\to \mathcal{O}_{X\times S})_{\lambda\in \mathbf{X}^+}$$
	satisfying the Pl\"ucker relations, and such that the dual of $\kappa^{\lambda,-}|_{X\times s}$ is injective for every geometric point $s$ (for technical reasons which will appear in §\ref{section Zastava spaces}, we consider here dual morphisms in the definition). One can also define the analogue stack $\overline{\mathrm{Bun}}_{{\tilde{N}^-}}'(\tilde{G})$ associated with $\tilde{G}$, which is an algebraic stack locally of finite type over $\mathbb{F}$. Consider the natural morphism of stacks $\overline{\mathrm{Bun}}_{\tilde{N}^-}'(\tilde{G})\to\overline{\mathrm{Bun}}_{N^-}'(G)$, which is a closed embedding (\cite[Lemma 4.1.2]{ABBGM}), and define $\overline{\mathrm{Bun}}_{N^-}$ as the image of the latter morphism.

	Now, we introduce some variations of $\overline{\mathrm{Bun}}_{N^-}$ (as found in \cite[§4.1]{ABBGM}). Recall that we have fixed a closed point $x\in X$. For any $\nu\in \mathbf{Y}$, the stack $_{\leq\nu}\overline{\mathrm{Bun}}_{N^-}'(G)$ (respectively $_{\nu}\overline{\mathrm{Bun}}_{N^-}'$) classifies the data of a principal $G$-bundle $\mathcal{F}_G$, together with a collection of morphisms of sheaves
	$$\kappa_\nu^-:=(\kappa^{\lambda,-}_\nu:\mathcal{V}_{\mathcal{F}_G}^\lambda\to \mathcal{O}_{X\times S}((x\times S)\cdot \langle\nu,\lambda\rangle))_{\lambda\in \mathbf{X}^+} $$
	satisfying the Pl\"ucker relations (resp. and such that the divisor of zeros of $\kappa^{\lambda,-}_\nu$ does not contain $x$ for all $\lambda$). The stack $_{\nu}\overline{\mathrm{Bun}}_{N^-}'$ is thus open in $_{\leq \nu}\overline{\mathrm{Bun}}_{N^-}'(G)$. We also define $_{\nu}\mathrm{Bun}_{N^-}'$ to be the smooth open substack of $_{\nu}\overline{\mathrm{Bun}}_{N^-}'$, where the morphisms of sheaves are required to be morphisms of vector bundles. Next, choose a cocharacter $\nu'\in X_*(\tilde{T})$ which is sent to $\nu$ via the isogeny $\varphi$. Then one can define the version $_{\leq\nu'}\overline{\mathrm{Bun}}_{\tilde{N}^-}'(\tilde{G})$ attached to $\tilde{G}$ and $\nu'$, and the same arguments as the ones used to prove Proposition \ref{prop closed immersion} show that the natural morphism $_{\leq\nu'}\overline{\mathrm{Bun}}_{\tilde{N}^-}'(\tilde{G})\to {_{\leq\nu}\overline{\mathrm{Bun}}_{N^-}'}(G)$ is a closed embedding. Moreover, its image is easily seen not to depend on the choice of $\nu'$. So we let $_{\leq\nu}\overline{\mathrm{Bun}}_{N^-}$ be the image of the latter morphism, and put 
	$$_{\nu}\overline{\mathrm{Bun}}_{N^-}:={_{\leq\nu}\overline{\mathrm{Bun}}_{N^-}}(G)\cap {_{\nu}\overline{\mathrm{Bun}}_{N^-}'},\quad _{\nu}\mathrm{Bun}_{N^-}:={_{\leq\nu}\overline{\mathrm{Bun}}_{N^-}}(G)\cap{_{\nu}\mathrm{Bun}_{N^-}'}.$$
By construction we have 
\begin{equation}\label{eq strat}
    _{\nu}\overline{\mathrm{Bun}}_{N^-}={_{\leq\nu}\overline{\mathrm{Bun}}_{N^-}}-\bigcup_{\nu'<\nu}{_{\leq\nu'}\overline{\mathrm{Bun}}_{N^-}}.
\end{equation}
Note that all the previous stacks admit a natural action of $T$: an element $t$ multiplies each $\kappa_\nu^{\lambda,-}$ by $\lambda(t)$, for all $\nu\in\mathbf{Y}$, $\lambda\in\mathbf{X}^+$.

  \begin{prop}\label{prop open substack}
     For any $\nu\in\mathbf{Y}$, there is a canonical morphism ${_{\leq\nu}\overline{\mathrm{Bun}}_{N^-}}\to \overline{\mathrm{Bun}}_{B^-}$ which is smooth.
     \end{prop}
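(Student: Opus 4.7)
The plan is to exhibit this morphism as a base change of a smooth morphism along a point of $\mathrm{Bun}_T$. First, I would construct the morphism on $S$-points. Fix the $T$-bundle $\mathcal{F}_T^\nu$ on $X$ characterized by $\mathcal{L}_{\mathcal{F}_T^\nu}^\lambda\simeq\mathcal{O}_X(\langle\nu,\lambda\rangle\cdot x)$ for all $\lambda\in\mathbf{X}^+$ (which exists since $X=\mathbb{P}^1$). Given $(\mathcal{F}_G,(\kappa^{\lambda,-}_\nu)_{\lambda\in\mathbf{X}^+})\in{_{\leq\nu}\overline{\mathrm{Bun}}_{N^-}}(S)$, pulling $\mathcal{F}_T^\nu$ back to $X\times S$ allows one to rewrite each $\kappa^{\lambda,-}_\nu$ as a morphism $\mathcal{V}_{\mathcal{F}_G}^\lambda\to\mathcal{L}_{(\mathcal{F}_T^\nu)_S}^\lambda$, so the triple $(\mathcal{F}_G,(\mathcal{F}_T^\nu)_S,(\kappa^{\lambda,-}_\nu)_\lambda)$ defines an $S$-point of $\overline{\mathrm{Bun}}_{B^-}$.

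Next, I would show that this morphism fits into a Cartesian diagram
$$\xymatrix{
    {_{\leq\nu}\overline{\mathrm{Bun}}_{N^-}}\ar[r]\ar[d] & \overline{\mathrm{Bun}}_{B^-}\ar[d]^{\overline{\mathfrak{p}}}\\
    \mathrm{pt}\ar[r]^-{\mathcal{F}_T^\nu} & \mathrm{Bun}_T
}$$
where the bottom arrow classifies $\mathcal{F}_T^\nu$. This amounts to a direct comparison of $S$-points: an $S$-point of the fiber product is the data of $(\mathcal{F}_G,\mathcal{F}_T,(\kappa^{\lambda,-})_\lambda)\in\overline{\mathrm{Bun}}_{B^-}(S)$ together with an isomorphism $\mathcal{F}_T\simeq(\mathcal{F}_T^\nu)_S$, which is precisely the data packaged in $_{\leq\nu}\overline{\mathrm{Bun}}_{N^-}$.

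The crucial ingredient is then the smoothness of the bottom arrow $\mathcal{F}_T^\nu:\mathrm{pt}\to\mathrm{Bun}_T$. Since $X=\mathbb{P}^1$, every $T$-bundle is of the form $\mathcal{F}_T^\mu$ for a unique $\mu\in\mathbf{Y}$, so $\mathrm{Bun}_T\simeq\bigsqcup_{\mu\in\mathbf{Y}}BT$; the map $\mathcal{F}_T^\nu:\mathrm{pt}\to\mathrm{Bun}_T$ then identifies with the canonical atlas $\mathrm{pt}\to BT$ of the relevant component, which is a $T$-torsor and hence smooth of relative dimension $\dim T$. Smoothness being stable under base change, one deduces that the morphism $_{\leq\nu}\overline{\mathrm{Bun}}_{N^-}\to\overline{\mathrm{Bun}}_{B^-}$ is smooth.

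The only step that may require additional care is verifying the Cartesian square at the level of the stacks $\overline{\mathrm{Bun}}_{N^-}$ and $\overline{\mathrm{Bun}}_{B^-}$ themselves, as opposed to the larger stacks ${_{\leq\nu}\overline{\mathrm{Bun}}_{N^-}'}(G)$ and $\overline{\mathrm{Bun}}_{B^-}'(G)$ used to define them. For this one invokes Proposition \ref{prop closed immersion}: both stacks are closed substacks realized as the images of the corresponding stacks attached to a simply connected cover $\tilde{G}$, where the Pl\"ucker description is literal, so the Cartesian square descends from this case using that the construction commutes with induction of bundles along $\varphi:\tilde{G}\to G$.
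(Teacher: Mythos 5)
Your proof is correct and follows essentially the same route as the paper: both exhibit the morphism via the Cartesian square over the point $\mathrm{pt}\to\mathrm{Bun}_T$ classifying the twisting $T$-bundle, and deduce smoothness from the decomposition of $\mathrm{Bun}_T$ over $\mathbb{P}^1$ into components isomorphic to $BT$ (the paper writes this as $(pt/\tilde{T})\times\mathrm{Pic}(\mathbb{P}^1)^r$ after first reducing to $\tilde{G}$, which is your final remark about descending from the simply connected cover).
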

     \begin{proof}
 By construction it is equivalent to construct a canonical morphism $_{\leq\nu}\overline{\mathrm{Bun}}_{\tilde{N}^-}'(\tilde{G})\to \overline{\mathrm{Bun}}_{\tilde{B}^-}'(\tilde{G})$ and to check that it is smooth. First recall that there is an isomorphism of stacks 
     $$\mathrm{Bun}_{\tilde{T}}\simeq (pt/\tilde{T})\times \mathrm{Pic}(\mathbb{P}^1)^r, $$
  where $r:=\mathrm{rank}(\tilde{T})$, $pt/\tilde{T}$ is the stack of principal $\tilde{T}$-bundles, and $\mathrm{Pic}(\mathbb{P}^1)$ is the Picard scheme of $\mathbb{P}^1$. Now, we have a Cartesian square (see \cite[(2.4)]{FGV})
     \begin{equation*}
			\xymatrix{
				_{\leq\nu}\overline{\mathrm{Bun}}_{\tilde{N}^-}'(\tilde{G})\ar[d] \ar[r]&\overline{\mathrm{Bun}}_{\tilde{B}^-}'(\tilde{G})\ar[d]^{\overline{\mathfrak{q}}}\\
				pt\ar[r]&\mathrm{Bun}_{\tilde{T}},}
		\end{equation*}
  where the bottom arrow is determined by the maps 
  $$pt\to pt/\tilde{T},\quad pt\to\{\nu\}\subset\mathbf{Y}\simeq \mathrm{Pic}(\mathbb{P}^1)^r.$$ 
  But $pt\to pt/\tilde{T}$ is smooth and $\mathrm{Pic}(\mathbb{P}^1)\simeq\mathbb{Z}$ is discrete, so the bottom arrow is smooth.
 \end{proof}
 We will also need the following fact in the sequel, which follows from  \cite[Corollary 3.2.4]{FGV} and says that $_{\nu}\mathrm{Bun}_{N^-}$ is topologically ``contractible".
 \begin{prop}\label{prop contractible}
     For any $\nu\in\mathbf{Y}$, the cohomology group
     $$H^k(_{\nu}\mathrm{Bun}_{N^-},\underline{\mathbb{k}}_{_{\nu}\mathrm{Bun}_{N^-}}) $$
     vanishes when $k\neq0$, and is isomorphic to $\mathbb{k}$ when $k=0$.
 \end{prop}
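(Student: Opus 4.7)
The statement is essentially a direct appeal to \cite[Corollary 3.2.4]{FGV}, and my plan is to invoke that result. For the reader's benefit, let me sketch the idea underlying loc.~cit. First I would reduce to the case where the derived subgroup $[G,G]$ is simply connected: using Proposition \ref{prop closed immersion} and the isogeny $\varphi : \tilde{G} \to G$, one checks that $_{\nu}\mathrm{Bun}_{N^-}$ is stratified by the images of $_{\leq\nu'}\overline{\mathrm{Bun}}_{\tilde{N}^-}'(\tilde{G})$ for suitable lifts $\nu'$ of $\nu$, and the desired acyclicity is stable under taking such closed images and unions.

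Once in the simply connected case, the Pl\"ucker description gives a concrete presentation of a point of $_{\nu}\mathrm{Bun}_{N^-}$ as a principal $G$-bundle $\mathcal{F}_G$ together with morphisms of vector bundles $\kappa^{\lambda,-}_\nu : \mathcal{V}_{\mathcal{F}_G}^\lambda \to \mathcal{O}_X(\langle \nu,\lambda\rangle \cdot x)$, non-vanishing at $x$ and satisfying the Pl\"ucker relations. Next I would project onto $\mathrm{Bun}_G$ and use the Harder--Narasimhan stratification, which on $X = \mathbb{P}^1$ reduces by Grothendieck's theorem to a discrete invariant. Over the open substack corresponding to the trivial $G$-bundle, the Pl\"ucker data become sections of line bundles $\mathcal{O}(\langle\nu,\lambda\rangle\cdot x)$ with prescribed leading behavior at $x$; the vanishing $H^1(\mathbb{P}^1,\mathcal{O}(n)) = 0$ for $n \geq -1$ identifies this locus with an affine space over $\mathrm{pt}/\mathrm{Aut}(\mathcal{F}_G^0)$, whose cohomology is $\mathbb{k}$ in degree $0$ (after modding out by the unipotent part of the automorphism group, which acts trivially on constant sheaves).

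The main obstacle is to check that the remaining Harder--Narasimhan strata (those where $\mathcal{F}_G$ is non-trivial) do not contribute to the cohomology. This is the content of the detailed analysis in \cite[§3]{FGV}: one exploits a $\mathbb{G}_\mathrm{m}$-action on $_{\nu}\mathrm{Bun}_{N^-}$ that contracts onto the locus of trivial $G$-bundles, combined with the factorization structure of the Drinfeld compactification, to run a Mayer--Vietoris-type argument and conclude that the inclusion of the open stratum induces an isomorphism on cohomology. Since the detailed execution of this argument is already carried out in full in loc.~cit., for the purposes of the present paper it suffices to cite \cite[Corollary 3.2.4]{FGV} directly.
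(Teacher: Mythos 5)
Your proof ultimately rests on the same citation as the paper's, namely \cite[Corollary 3.2.4]{FGV}, but the way that result is used differs, and your reconstruction of "the idea underlying loc.~cit." contains a genuine error. What the corollary actually provides is a \emph{geometric structure} statement: there exist a unipotent group $N'$ and an $N'$-torsor $\mathcal{N}\to{_{\nu}\mathrm{Bun}_{N^-}}$ whose total space $\mathcal{N}$ is a tower of affine fibrations. It does not directly assert the acyclicity of ${_{\nu}\mathrm{Bun}_{N^-}}$; one still needs the bridging argument the paper supplies, namely that unipotence of $N'$ makes the forgetful functor $\mathrm{D}({_{\nu}\mathrm{Bun}_{N^-}})\simeq\mathrm{D}_{N'}(\mathcal{N})\to\mathrm{D}(\mathcal{N})$ fully faithful, so that $H^k({_{\nu}\mathrm{Bun}_{N^-}})\simeq H^k(\mathcal{N})$, and the latter vanishes for $k\neq0$ because $\mathcal{N}$ is a tower of affine spaces. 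Your proposal omits this step entirely, so "citing the corollary directly" leaves a gap.

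Moreover, the sketch you offer in its place (Harder--Narasimhan stratification, a $\mathbb{G}_\mathrm{m}$-contraction onto the trivial-bundle locus, a Mayer--Vietoris argument) is not the argument of \cite[\S3]{FGV}, and its key intermediate claim fails: over the locus where $\mathcal{F}_G$ is trivial you would be computing the cohomology of an affine fibration over $\mathrm{pt}/\mathrm{Aut}(\mathcal{F}_G^0)$, and on $X=\mathbb{P}^1$ one has $\mathrm{Aut}(\mathcal{F}_G^0)\simeq G$, which is reductive; the cohomology of $\mathrm{pt}/G$ with $\mathbb{k}$-coefficients is not concentrated in degree $0$, so "modding out by the unipotent part" does not yield $\mathbb{k}$. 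The whole point of the structural result you are citing is that, after accounting for the Pl\"ucker data, the residual symmetry group is \emph{unipotent}, which is exactly what allows the cohomology to collapse. I recommend replacing your sketch by the two-line descent argument above.
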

 \begin{proof}
     By \cite[Corollary 3.2.4]{FGV}, there exists a unipotent group $N'$ and a $N'$-bundle $\mathcal{N}$ over the stack $_{\nu}\mathrm{Bun}_{N^-}$. Since $N'$ is unipotent, the forgetful functor
     $$\mathrm{D}({_{\nu}\mathrm{Bun}_{N^-}})\simeq \mathrm{D}_{N'}(\mathcal{N})\to  \mathrm{D}(\mathcal{N})$$
     is fully faithful. Therefore we have an isomorphism
     $$H^k({_{\nu}\mathrm{Bun}_{N^-}})\simeq H^k(\mathcal{N})$$
     for all $k$. By \textit{loc. cit.}, the stack $\mathcal{N}$ is isomorphic to a tower of affine spaces. This concludes the proof.
 \end{proof}
	For $\nu_1\leq \nu_2\in \mathbf{Y}$, we have a natural closed embedding 
	$$_{\leq\nu_1}\overline{\mathrm{Bun}}_{N^-}'(G)\hookrightarrow {_{\leq\nu_2}\overline{\mathrm{Bun}}_{N^-}'}(G),$$
	which induces a closed embedding 
	$$_{\leq\nu_1}\overline{\mathrm{Bun}}_{N^-}\hookrightarrow {_{\leq\nu_2}\overline{\mathrm{Bun}}_{N^-}}.$$
	We define ${_{\infty}\overline{\mathrm{Bun}}_{N^-}}$ as the direct limit (in the category of ind-algebraic stacks over $\mathbb{F}$) 
	$$\varinjlim_{\nu\in \mathbf{Y}} \left({_{\leq \nu}\overline{\mathrm{Bun}}_{N^-}}\right).$$
	We denote by $\overline{i}_{\leq\nu}$, resp. $\overline{i}_{\nu}$, resp. $i_\nu$, the embedding of $_{\leq\nu}\overline{\mathrm{Bun}}_{N^-}$, resp. $_{\nu}\overline{\mathrm{Bun}}_{N^-}$, resp. $_{\nu}\mathrm{Bun}_{N^-}$, into ${_{\infty}\overline{\mathrm{Bun}}_{N^-}}$.
	
	We will denote by ${^1\mathrm{Bun}}_G$ the stack sending a test scheme $S$ to the groupoid consisting in the data of a principal $G$-bundle $\mathcal{F}_G$ over $X\times S$, together with a structure of level $1$ at $x$ for $\mathcal{F}_G$. Recall that a structure of level one at $x$ is an isomorphism of sheaves $\mathcal{F}_G|_{x\times S}\simeq \mathcal{F}^0_G|_{x\times S}$. We will denote by ${^1_{\infty}\overline{\mathrm{Bun}}_{N^-}}$ the fibered product $_{\infty}\overline{\mathrm{Bun}}_{N^-}\times_{\mathrm{Bun}_G} {^1\mathrm{Bun}}_G$. Similarly, we will let ${^1_{\leq\nu}\overline{\mathrm{Bun}}_{N^-}},{^1_{\nu}\overline{\mathrm{Bun}}_{N^-}},{^1_{\nu}\mathrm{Bun}}_{N^-}$ be the corresponding stacks obtained by base change, and still denote by $\overline{i}_{\leq\nu},\overline{i}_{\nu},i_\nu$ the inclusion of these stacks in ${^1_{\infty}\overline{\mathrm{Bun}}_{N^-}}$. In the sequel, we will consider the diagonal $T$-action on these stacks, induced by the previously defined $T$-action together with the action induced by modifying the structure of level $1$.  
 \begin{rem}
     In \cite{ABBGM}, there are versions of the above stacks which are labelled by a non-negative integer $k$, to indicate a structure of level $k$ at $x$ for $\mathcal{F}_G$. Here, we only treat the cases where $k\in\{0,1\}$. In the sequel we will sometimes adopt the notation ${^0_{\infty}\overline{\mathrm{Bun}}_{N^-}}:={_{\infty}\overline{\mathrm{Bun}}_{N^-}}$.

 \end{rem}
	\subsection{Colored effective divisors}\label{section colored divisor} Given $\mu=\sum_{i\in \check{\mathfrak{R}}_s}n_ii\in\mathbf{Y}^{pos}$, we consider as in \cite{finkelberg1997semiinfinite} the space $X^\mu:=\prod_{i\in \check{\mathfrak{R}}_s}X^{(n_i)}$ of colored effective divisors of multidegree $\mu$, where $X^{(n_i)}$ denotes the $n_i$-th symmetric power of $X$. We will write an element $D=(D_i)_{i\in \check{\mathfrak{R}}_s}\in X^\mu$ as $D=\sum_{i\in \check{\mathfrak{R}}_s}i\cdot D_i$ and, for any $\lambda\in\Lambda$, define the divisor $\langle\lambda,D\rangle:=\sum_{i\in \check{\mathfrak{R}}_s}\langle\lambda,i\rangle D_i$. A partition of $\mu$ is a decomposition $\mu=\gamma_1+\cdots+\gamma_m$ into a sum of non-zero elements $\gamma_1,\cdots,\gamma_m$ of $\mathbf{Y}^{pos}$, which can be seen as an element $\{\{\gamma_1,\cdots,\gamma_m\}\}$ of the $m$-th symmetric power of $\mathbf{Y}^{pos}$. We will denote by $\mathfrak{A}(\mu)$ the set of partitions of $\mu$.
	
	For each $\Gamma=\{\{\gamma_1,\cdots,\gamma_m\}\}\in \mathfrak{A}(\mu)$, we will define the locally closed stratum $X^\mu_\Gamma\subset X^\mu$. First, we define $'X^\mu_\Gamma$ as the subset of $X^{\gamma_1}\times \cdots\times X^{\gamma_m}$ consisting of tuples $(x_1,\cdots,x_m)$ such that
	\begin{enumerate}
		\item for each $k$, all the components of $x_k$ in $X^{\gamma_k}$ are equal, meaning that if one writes $\gamma_k=\sum_{i\in \check{\mathfrak{R}}_s}n'_ii$, $x_k=(x_k^j)_{j\in I}$, then there exists some $y_k\in X$ such that $$x_k^j=(\underbrace{y_k,\cdots,y_k}_{n'_i})\in X^{(n'_i)}~\forall i\in \check{\mathfrak{R}}_s.$$
		\item we have $y_i\neq y_j$ for $i\neq j$.
	\end{enumerate}
	The locally closed subvariety $X^\mu_\Gamma\subset X^\mu$ is then defined as the image of the canonical morphism ${'X^\mu_\Gamma}\to X^\mu$, obtained by quotienting via the action of the symmetric group. We thus obtain a stratification $X^\mu=\sqcup_{\Gamma\in\mathfrak{A}(\mu)}X^\mu_\Gamma$.
	
	We also need to introduce the moduli space description of $X^\mu$ (cf. \cite[§2.1]{BBGM}). By definition, $X^\mu$ classifies the data of a principal $T$-bundle $\mathcal{F}_T$, together with a collection of non-zero morphisms of sheaves
	$d_\lambda:\mathcal{L}_{\mathcal{F}_T}^\lambda\to \mathcal{O}_{X\times S}$ for all $\lambda\in \mathbf{X}^+$, such that for every geometric point $s$ of $S$, the divisor of zeros of $d_\lambda|_{X\times s}$ has degree $\langle\lambda,\mu\rangle$. Similarly we define ${_{\leq\nu} X^\mu}$, where this time the second data is a collection of  non-zero morphisms of sheaves $\mathcal{L}_{\mathcal{F}_T}^\lambda\to \mathcal{O}_{X\times s}(x\cdot\langle\lambda,\nu\rangle)$, such that the divisor of zeros of the restriction to $X\times s$ has degree $\langle\lambda,\mu\rangle$. We then define the ind-scheme $_\infty X^\mu$ as a direct limit.

	\subsection{Zastava spaces}\label{section Zastava spaces}
	For $\mu\in\mathbf{Y}$, we define various Zastava spaces of degree $\mu$, as displayed in \cite[§4.1.4]{ABBGM}. We let $_{\leq\nu} Z^\mu$ (resp. $^1_{\leq\nu} Z^\mu$) be the open substack of
	$$_{\leq\nu} \overline{\mathrm{Bun}}_{N^-}\times_{\mathrm{Bun}_G}\mathrm{Bun}_B^\mu \quad(\text{resp.}~^1_{\leq\nu} \overline{\mathrm{Bun}}_{N^-}\times_{\mathrm{Bun}_G}\mathrm{Bun}_B^\mu)$$
	defined by the condition that the reductions to $N^-$ and $B$ are transversal. This means that for all $\lambda\in\mathbf{X}^+$, the composition $$\kappa^{\lambda,-}_\nu\circ\kappa^\lambda:\mathcal{L}^\lambda_{\mathcal{F}_T}\to \mathcal{O}_X(x\cdot\langle\nu,\lambda\rangle)$$ is non-zero. We also define $_{\nu} Z^\mu$ (resp. $^1_{\nu} Z^\mu$) by replacing $\leq\nu$ with $\nu$ in the previous definition, and put $ Z^\mu:={_{\leq0} Z^\mu}$. For $\nu_1\leq\nu_2$, we have a canonical closed embedding $_{\leq\nu_1} Z^\mu\hookrightarrow{_{\leq\nu_2} Z^\mu}$ (resp. $_{\leq\nu_1}^1 Z^\mu\hookrightarrow{_{\leq\nu_2}^1 Z^\mu}$), so that we can let $_{\infty} Z^\mu$ (resp. $_{\infty}^1 Z^\mu$) be the direct limit (in the category of ind-algebraic stacks over $\mathbb{F}$) 
	$$\varinjlim_{\nu\in \mathbf{Y}}{_{\leq\nu} Z^\mu}\quad (\text{resp}.~~\varinjlim_{\nu\in \mathbf{Y}}{_{\leq\nu}^1 Z^\mu}).$$ 
	In the sequel, we will often have statements involving $_{?}^k Z^\mu$, for $?\in\{\nu,\leq\nu,\infty\}$, $k\in\{0,1\}$, making the convenient identification $_{?}^0 Z^\mu:={_{?} Z^\mu}$.  Finally, we will denote by $^k\mathfrak{p}^\mu:{_\infty ^kZ}^\mu\to {_\infty^k\overline{\mathrm{Bun}}_{N^-}}$ the canonical projection. 
	
	One of the main features of these spaces is their factorization property, which we now explain.  First notice that, thanks to the moduli space desription of ${_{\leq\nu} X^\mu}$ (§\ref{section colored divisor}), we have canonical projections $_{\leq\nu}\mathfrak{s}^\mu$ and $_{\infty}\mathfrak{s}^\mu$ from $_{\leq\nu} Z^\mu$ and $_{\infty} Z^\mu$ to ${_{\leq\nu} X^\mu}$ and ${_{\infty} X^\mu}$ respectively. We will denote by $_{\infty}^1\mathfrak{s}^\mu:{_{\infty}^1 Z^\mu}\to {_{\infty} X^\mu}$ the composition of $_{\infty}\mathfrak{s}^\mu$ with the canonical projection $_{\infty}^1 Z^\mu\to {_{\infty} Z^\mu}$, and similarly for $_{\leq\nu}^1\mathfrak{s}^\mu$. We put $\overset{\circ}{X}:=X-\{x\}$, and let 
 $$\smash[t]{\overset{\circ}{X}}^\mu$$ 
 be the corresponding open subscheme of $X^\mu$. For $\mu_1,\mu_2\in \mathbf{Y}$, we denote by $$(\smash[t]{\overset{\circ}{X}}^{\mu_1}\times X^{\mu_2})_{\mathrm{disj}}$$ the open substack of the product $\smash[t]{\overset{\circ}{X}}^{\mu_1}\times X^{\mu_2}$ consisting of pairs $(x_1,x_2)$ such that $x_1$ and $x_2$ have no point in common. 
	\begin{prop}{{\cite[Lemma 4.1.5]{BBGM}}}\label{prop factorization zastava}
		For  $\mu_1+\mu_2=\mu$ and $k\in\{0,1\}$, there exists a canonical isomorphism of stacks
		\begin{equation}\label{facto}
			^k_\infty Z^\mu\times_{_\infty X^\mu}(\smash[t]{\overset{\circ}{X}}^{\mu_1}\times X^{\mu_2})_{\mathrm{disj}}\simeq (Z^{\mu_1}\times {^k_\infty Z^{\mu_2}})\times_{(X^{\mu_1}\times{_\infty X^{\mu_2}})}(\smash[t]{\overset{\circ}{X}}^{\mu_1}\times X^{\mu_2})_{\mathrm{disj}}.
		\end{equation}
	\end{prop}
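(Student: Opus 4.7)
\medskip

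\textbf{Proof plan.} I will work functorially, describing an $S$-point on each side and constructing mutually inverse maps. A point of the left-hand side consists of the data defining an object of ${^k_\infty Z^\mu}(S)$, namely a $G$-bundle $\mathcal{F}_G$ on $X\times S$, a $B$-structure of degree $\mu$ given by a Plücker collection $(\kappa^\lambda)$, an $N^-$-structure with possible poles at $x$ given by a Plücker collection $(\kappa^{\lambda,-})$ (plus a level-$1$ structure at $x$ when $k=1$), together with a disjoint decomposition of the associated divisor on $X\times S$ as $D_1+D_2$, where $D_1$ has degree $\mu_1$ with support in $\overset{\circ}{X}\times S$ and $D_2$ has degree $\mu_2$, the two divisors being fiberwise disjoint over $S$.

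The key observation, which is the engine of the proof, is the following transversality trivialization: on the open $V\subset X\times S$ where the compositions $\kappa^{\lambda,-}\circ \kappa^\lambda$ are nowhere vanishing, the $B$- and $N^-$-reductions together cut out a reduction of $\mathcal{F}_G$ to $B\cap N^- = \{1\}$, so $\mathcal{F}_G|_V$ acquires a canonical trivialization; moreover, in this trivialization the sections $(\kappa^\lambda, \kappa^{\lambda,-})$ are tautological. Consequently, the entire datum $(\mathcal{F}_G,\kappa,\kappa^-)$ is completely recovered from its restrictions to any open cover of $X\times S$ containing $V$, together with transition information on overlaps inside $V$ which is determined by the trivialization.

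Given this, the factorization is constructed as follows. Choose the open cover $X\times S = U_1\cup U_2$ with $U_j := (X\times S)\setminus \mathrm{supp}(D_{3-j})$; note that $U_1\cap U_2 \subset V$, and $x\times S\subset U_2$ since $\mathrm{supp}(D_1)$ avoids $x$. Restricting $(\mathcal{F}_G,\kappa,\kappa^-)$ to $U_1$ and extending trivially across $\mathrm{supp}(D_2)$ by gluing to the canonical trivialization on $U_1\cap U_2$ produces an object of $Z^{\mu_1}(S)$ (here there is no pole at $x$, since $U_1$ contains a neighborhood of $x\times S$ on which the $N^-$-Plücker data has no zero, forcing the datum to lie in $Z^{\mu_1}$ rather than in ${_{\leq\nu}Z^{\mu_1}}$ for some positive $\nu$). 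Similarly, restricting to $U_2$ and re-gluing across $\mathrm{supp}(D_1)$ by the canonical trivialization yields an object of ${^k_\infty Z^{\mu_2}}(S)$, which carries the original level-$1$ structure at $x$ when $k=1$. This defines a morphism from the left-hand side to the right-hand side. The inverse is obtained by the converse gluing: given $(\mathcal{F}^1_G,\kappa^1,\kappa^{1,-})\in Z^{\mu_1}(S)$ and $(\mathcal{F}^2_G,\kappa^2,\kappa^{2,-})\in {^k_\infty Z^{\mu_2}}(S)$ with disjoint supports $D_1,D_2$, both bundles carry canonical trivializations on the common open $U_1\cap U_2$ (each on the transversal locus of the respective datum, which contains $U_1\cap U_2$), and these trivializations agree, allowing one to glue to an object on $X\times S$ with the required decomposition.

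The main obstacle is verifying that the gluing maps actually land in the compactified stacks ${^k_\infty\overline{\mathrm{Bun}}_{N^-}}$ (rather than in the \emph{a priori} larger auxiliary stacks $\overline{\mathrm{Bun}}'$): one needs to check that the Plücker relations are preserved, which is automatic since they are checked fiberwise and the trivialization in the gluing region carries Plücker data satisfying the relations, and that the image is cut out by the conditions inherited from $\widetilde{G}$ — this is handled exactly as in \cite[Lemma 4.1.2]{ABBGM} and Proposition \ref{prop closed immersion}, by running the entire construction first for the simply-connected cover $\widetilde{G}$ (where the Plücker description gives a genuine moduli interpretation) and then taking images, since both sides of the asserted isomorphism are compatible with the closed embeddings coming from $\widetilde{G}\to G$. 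Mutual inverseness and functoriality in $S$ are then formal consequences of the uniqueness of the canonical trivialization on the transversal locus.
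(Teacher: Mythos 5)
The paper gives no internal proof of this proposition: it is imported verbatim from \cite[Lemma 4.1.5]{BBGM}. Your argument reconstructs the standard Beilinson--Drinfeld-type gluing proof, which is indeed the one used in that reference, and its skeleton is correct: on the locus $V$ where the composites $\kappa^{\lambda,-}\circ\kappa^\lambda$ are nowhere vanishing the two reductions are pointwise transversal and cut out a reduction to $B\cap N^-=\{1\}$, hence a canonical trivialization of $\mathcal{F}_G$ (and of $\mathcal{F}_T$) with tautological Pl\"ucker data; one then glues over the two-element cover determined by the disjoint supports, and the passage through $\tilde{G}$ handles the non-simply-connected case exactly as in Proposition \ref{prop closed immersion}.

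One step is wrong as written: your justification that the first factor lies in $Z^{\mu_1}$ rather than in some ${_{\leq\nu}Z^{\mu_1}}$. You assert that $U_1=(X\times S)\setminus\mathrm{supp}(D_2)$ contains a neighborhood of $x\times S$; this holds only when $x\notin\mathrm{supp}(D_2)$, and that is precisely the case where $\kappa^{\lambda,-}$ has a pole of \emph{maximal} order $\langle\nu,\lambda\rangle$ at $x$ (non-vanishing of the twisted composite $\mathcal{L}^\lambda_{\mathcal{F}_T}\to\mathcal{O}(\langle\nu,\lambda\rangle x)$ at $x$ is the condition for a maximal-order pole, not for regularity). With your choice of charts the first glued object would then agree with the original, poled, data near $x$ and land in ${_{\leq\nu}Z^{\mu_1}}$. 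The repair is standard and should be made explicit: remove $x\times S$ from $U_1$ as well (harmless, since $x\times S\subset U_2$ and $\mathrm{supp}(D_1)$ remains inside $U_1$), so that the gluing takes place on $V\setminus(x\times S)$, where the data is untwisted and genuinely tautological in the canonical trivialization. Then the first factor is tautological, in particular pole-free, near $x$ by construction, while the entire twist at $x$ and the level structure go into the second factor, as required. With this correction the rest of your outline (inverse map, Pl\"ucker relations, functoriality in $S$) goes through.
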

	The following result is used implicitely in \cite{ABBGM}. 
	\begin{prop}{{\cite[§3.6]{BBGM}}}\label{zastava prop}
		Let $\nu\in\mathbf{Y}$, and $U$ be an open substack of finite type of ${^k_{\leq\nu}\overline{\mathrm{Bun}}_{N^-}}$ (resp. ${^k_\nu\overline{\mathrm{Bun}}_{N^-}}$). There exists $\mu\in\mathbf{Y}$ large enough so that $U$ belongs to the image of the canonical projection ${_{\leq\nu} ^kZ}^\mu\to {_{\leq\nu}^k\overline{\mathrm{Bun}}_{N^-}}$ (resp. ${_\nu ^kZ}^\mu\to {_\nu^k\overline{\mathrm{Bun}}_{N^-}}$).
	\end{prop}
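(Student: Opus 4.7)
The plan is to reduce the problem, by working at the level of geometric points, to showing that for $\mu\in\mathbf{Y}$ sufficiently large (depending only on $U$) every pair $(\mathcal{F}_G,\kappa^-_\nu)$ represented by a point of $U$ admits a $B$-reduction of $\mathcal{F}_G$ of degree $\mu$ transversal to $\kappa^-_\nu$; this immediately produces the desired lift to ${_{\leq\nu}^k Z^\mu}$, and the $_\nu$-variant then comes for free because ${_\nu^k Z^\mu}$ is the preimage of ${_\nu^k\overline{\mathrm{Bun}}_{N^-}}$ under the projection. Using Proposition~\ref{prop closed immersion} and its straightforward Zastava analogue, one further reduces to the case where $[G,G]$ is simply connected, so that the Plücker description of $\mathrm{Bun}_B$ is in force.

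Under that reduction, the desired $B$-reduction amounts to the datum of a $T$-bundle $\mathcal{F}_T$ with $\deg\mathcal{L}_{\mathcal{F}_T}^\lambda=-\langle\lambda,\mu\rangle$ together with nonzero morphisms $\kappa^\lambda:\mathcal{L}_{\mathcal{F}_T}^\lambda\to\mathcal{V}_{\mathcal{F}_G}^\lambda$ for $\lambda\in\mathbf{X}^+$ satisfying the Plücker relations, and transversality to $\kappa^-_\nu$ is the non-vanishing of $\kappa^{\lambda,-}_\nu\circ\kappa^\lambda$ for every $\lambda$. Since the Plücker data is multiplicative in $\lambda$, the whole collection is determined by its values at the fundamental weights $\varpi_1,\dots,\varpi_r$, with the Plücker relations automatic from~\eqref{canonical morphism}. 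It is therefore enough to take $\mathcal{L}_{\mathcal{F}_T}^{\varpi_i}:=\mathcal{O}_X(-d_i\cdot x)$ with $d_i:=\langle\varpi_i,\mu\rangle$ and to exhibit, for each $i$, a nonzero section of $\mathcal{V}_{\mathcal{F}_G}^{\varpi_i}(d_i\cdot x)$ whose image is not annihilated by $\kappa^{\varpi_i,-}_\nu$.

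Here the finite-type hypothesis on $U$ plays the crucial role: the family of vector bundles $\{\mathcal{V}_{\mathcal{F}_G}^{\varpi_i}\}$ as $\mathcal{F}_G$ varies over points of $U$ is bounded, so Serre vanishing on $X=\mathbb{P}^1$ supplies a uniform integer $N_i$ such that for every $d_i\ge N_i$ and every point of $U$, the twist $\mathcal{V}_{\mathcal{F}_G}^{\varpi_i}(d_i\cdot x)$ is globally generated with $\dim H^0$ growing linearly in $d_i$. Within that $H^0$, the sections landing in $\ker(\kappa^{\varpi_i,-}_\nu)$ form a proper linear subspace (as $\kappa^{\varpi_i,-}_\nu\neq 0$), so a generic choice avoids it, yielding the required transversality. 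Choosing $\mu\in\mathbf{Y}$ with $\langle\varpi_i,\mu\rangle\ge N_i$ for every $i$ then concludes the argument.

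The main obstacle is the uniformity of the bound on $\mu$ across $U$: without finite-typeness the Serre-vanishing thresholds could blow up along the family and no single $\mu$ would suffice. The bounded-family argument is thus the pivotal use of the hypothesis, and the remainder of the proof is bookkeeping via Plücker. A minor additional point is that the construction should be carried out in families, so as to recover not merely surjectivity on geometric points but factorization through the image substack; for this one replaces the pointwise generic-section argument by choosing, locally on $U$, a section of the corresponding relative $H^0$, which is possible after an étale cover by constructibility and semicontinuity.
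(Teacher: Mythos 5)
The central difficulty in this statement is the Pl\"ucker constraint, and your argument handles it incorrectly. You assert that ``the whole collection is determined by its values at the fundamental weights $\varpi_1,\dots,\varpi_r$, with the Pl\"ucker relations automatic from~\eqref{canonical morphism},'' and then choose, \emph{independently} for each $i$, a generic nonzero section $\sigma_i$ of $\mathcal{V}_{\mathcal{F}_G}^{\varpi_i}(d_i\cdot x)$. But the Pl\"ucker relations are a genuine \emph{closed condition of positive codimension} on such a tuple: they demand that at every point of the curve the evaluations of the $\sigma_i$ define a point of $G/B$ inside the product of projective spaces $\prod_i\mathbb{P}(V^{\varpi_i})$ via the Pl\"ucker embedding. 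Formula~\eqref{canonical morphism} is what one uses to \emph{state} the compatibility, not what makes it hold; given $\kappa^{\varpi_1}$ and $\kappa^{\varpi_2}$ one obtains a map $\mathcal{L}_{\mathcal{F}_T}^{\varpi_1+\varpi_2}\to\mathcal{V}^{\varpi_1}_{\mathcal{F}_G}\otimes\mathcal{V}^{\varpi_2}_{\mathcal{F}_G}$, and it is a nontrivial requirement that this lands in the subbundle $\mathcal{V}^{\varpi_1+\varpi_2}_{\mathcal{F}_G}$. Already for $G=\mathrm{SL}_3$ the Pl\"ucker locus is a codimension-one hypersurface in $\mathbb{P}^2\times\mathbb{P}^2$, so a generic pair of sections violates it at every point of $X$. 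Hence the tuple $(\sigma_i)_i$ produced by your argument does not define a $B$-reduction, i.e.\ no point of $\mathrm{Bun}_B^\mu$, and the lift to the Zastava space does not exist. (A secondary omission: each $\kappa^{\varpi_i}$ must be a morphism of vector bundles, so $\sigma_i$ must be \emph{nowhere vanishing}; this is a further open condition, harmless in ranks $\geq 2$, but should be imposed.)

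To repair this one cannot choose the $\sigma_i$ independently: one must work with sections of the flag bundle $\mathcal{F}_G/B\to X$ of prescribed degree $\mu$, equivalently with tuples constrained to lie in the Pl\"ucker locus at every point. The usual argument (implicit in the cited reference) proceeds differently: over the dense open of $X$ where $\kappa^-_\nu$ is non-degenerate one has a canonical transversal $B$-reduction (the ``opposite'' one); one extends it across all of $X$ by properness of $G/B$ over a curve; one then removes the finitely many non-transversal points by elementary modifications of the $B$-reduction, which strictly increase the degree. The finite-type hypothesis on $U$ enters only to give a uniform bound on the resulting degree $\mu$ over the whole family, which is the correct use you identify in your last paragraph. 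Your appeal to boundedness of the family and Serre vanishing on $\mathbb{P}^1$ is sound as far as it goes, but it is attached to the wrong generic-position argument.
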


	\subsection{Canonical torsors}\label{section torsors}
	Let $(\mathcal{F}_G,\kappa_0)$ be an object of $_0\overline{\mathrm{Bun}}_{N^-}(S)$. By definition, the restriction $(\mathcal{F}_G|_{\{x\}},\kappa_0|_{\{x\}})$ from $X$ to $\{x\}$ produces an $N^-$-bundle over $S$.  We introduce the stack $_0^1\mathcal{N}$ as classifying the triples $(\mathcal{F}_G,\kappa_0,\Phi)$, where $\Phi$ is a trivialization of the $N^-$-bundle  $(\mathcal{F}_G|_{\{x\}},\kappa_0|_{\{x\}})$. The canonical projection $^1_0\mathcal{N}\to {_0\overline{\mathrm{Bun}}_{N^-}}$ makes $^1_0\mathcal{N}$ an $N^-$-torsor over $_0\overline{\mathrm{Bun}}_{N^-}$. A similar construction (cf. \cite[§3.2.6]{FGV}) produces an $N^-$-torsor $^1_\nu\mathcal{N}$ over $_\nu\overline{\mathrm{Bun}}_{N^-}$. It follows directly from the constructions (and \cite[(27)]{ABBGM}) that we have a canonical isomorphism
	\begin{equation}\label{torsor twisted product}
		^1_\nu\overline{\mathrm{Bun}}_{N^-}\simeq G\times^{N^-}{^1_\nu\mathcal{N}}, 
	\end{equation}
 and that the $T$-action on $^1_\nu\overline{\mathrm{Bun}}_{N^-}$ corresponds to the diagonal action on the right-hand side. The above isomorphism induces morphisms
	\begin{equation}\label{evaluation map}
		\overline{\mathrm{ev}}_\nu:{^1_\nu\overline{\mathrm{Bun}}}_{N^-}\to G/N^{-},\quad \mathrm{ev}_\nu:{^1_\nu\mathrm{Bun}}_{N^-}\to G/N^{-}. 
	\end{equation}
where $\mathrm{ev}_\nu$ is obtained by restricting $\overline{\mathrm{ev}}_\nu$. 
 
 Let $\mu\in\mathbf{Y}$. The isomorphism \eqref{torsor twisted product} is compatible with the definition of the Zastava space $^1_{\nu} Z^\mu$. Namely, if we let $^1_\nu\widetilde{\mathcal{N}}$ be the open substack of
	$$^1_\nu\mathcal{N}\times_{\mathrm{Bun}_G}\mathrm{Bun}_B^\mu$$
	defined by the transversality condition (cf. §\ref{section Zastava spaces}), then we have an isomorphism
	\begin{equation}\label{torsor twisted product zastava}
		^1_{\nu} Z^\mu\simeq G\times^{N^-}{^1_\nu\widetilde{\mathcal{N}}}.
	\end{equation}
	
	Let $\overline{x}':=\{x'_1,\cdots,x'_m\}$ be a collection of points of $X-\{x\}$, and denote by $_\infty\overline{\mathrm{Bun}}_{N^-}^{\mathrm{n.z.}\overline{x}'}$ the open substack of $_\infty\overline{\mathrm{Bun}}_{N^-}$ defined by the condition that the divisor of zeros of the morphisms $\kappa^\lambda$ is disjoint from $\overline{x}'$, for all $\lambda\in \mathbf{X}^+$. For $i=1,\dots,m$, we denote by $t'_i$ a local coordinate on $X$ at $x'_i$. Using the same ideas as in the previous paragraph, one can then construct an $\prod_{i=1}^mN^-[[t'_i]]$-torsor $\mathcal{N}^{\overline{x}'}$ over  $_\infty\overline{\mathrm{Bun}}_{N^-}^{\mathrm{n.z.}\overline{x}'}$. Following the arguments of \cite[Lemma 3.2.7]{FGV}, one can show that the action of $\prod_{i=1}^mN^-[[t'_i]]$ on $\mathcal{N}^{\overline{x}'}$ extends to an action of $\prod_{i=1}^mN^-((t'_i))$.
	
	We will denote by $^1_\infty\overline{\mathrm{Bun}}_{N^-}^{\mathrm{n.z.}\overline{x}'}$, $^1\mathcal{N}^{\overline{x}'}$ the corresponding stacks obtained by base change through the map $^1\mathrm{Bun}_G\to \mathrm{Bun}_G$.
	
	\section{Perverse sheaves on the semi-infinite flag variety}\label{section Perverse sheaves on the semi-infinite flag variety}
	In this section, we recall the construction of certain categories of perverse sheaves on (some avatar of) the semi-infinite flag manifold. This construction comes from \cite{ABBGM} in the case where $\mathrm{char}(\mathbb{k})=0$, but the definitions also make sense for arbitrary characteristic.
	\subsection{Main definition}\label{section Main definition}
	Let $\mu\in\mathbf{Y}$ and $k\in\{0,1\}$. Notice that, by \cite[Lemma 3.7]{BBGM}, the morphism $\mathrm{Bun}_B^\mu\to \mathrm{Bun}_G$ is smooth whenever $\langle\alpha,\mu\rangle>-2$ for all positive root $\alpha$ (or more generally when $\langle\alpha,\mu\rangle>2g-2$ for all positive root $\alpha$ when $X$ is a general smooth projective curve of genus $g$). Thus,  the morphism $^k\mathfrak{p}^\mu:{_\infty ^kZ}^\mu\to {_\infty^k\overline{\mathrm{Bun}}_{N^-}}$ from §\ref{section Zastava spaces} is smooth when $\mu$ satisfies the previous condition.  For all $k\in\{0,1\}$, $\mu_1,\mu_2\in \mathbf{Y}^{pos}$ such that $\mu=\mu_1+\mu_2$, with $\langle\alpha,\mu_i\rangle>-2$, for $i=1,2$ and all positive root $\alpha$, we let $^ks_{1,2}$ (resp. $^kt_{1,2}$) denote the open inclusion of the left-hand side (resp. right-hand side) of \eqref{facto} into $^k_\infty Z^\mu$ (resp. $Z^{\mu_1}\times {^k_\infty Z^{\mu_2}}$). Also notice that, since the group $\prod_{i=1}^mN^-((t'_i))$ is an ind-pro-unipotent group scheme, one can define the equivariance condition for perverse sheaves on $^k\mathcal{N}^{\overline{x}'}$ with respect to the action of $\prod_{i=1}^mN^-((t'_i))$ (by writting the latter group as a direct limit of closed subgroup schemes, and imposing the equivariance condition for each subgroup, cf. for instance \cite[§1.2.1]{GAITSGORY2018789}). 
	
	We now define the category $\mathrm{Perv}_{G^k}(\mathcal{F}l^{\frac{\infty}{2}})$. By definition, it is the full subcategory of $\mathrm{Perv}_T({^k_\infty\overline{\mathrm{Bun}}_{N^-}})$ consisting of objects $\mathcal{F}$ satisfying the following two conditions:
	\begin{enumerate}
		\item (Factorization) For $\mu=\mu_1+\mu_2$ with $\langle\alpha,\mu_i\rangle>2g-2$ for all positive root $\alpha$, the complex $({^ks}_{1,2})^*(^k\mathfrak{p}^\mu)^*\mathcal{F}$ is isomorphic (up to a shift by the corresponding relative dimensions), under the isomorphism \eqref{facto}, to the complex
		$$({^kt}_{1,2})^*(\mathrm{IC}_{Z^{\mu_1}}\boxtimes  (^k\mathfrak{p}^{\mu_2})^*\mathcal{F}).$$
		\item (Equivariance) For all $m$-tuple $\overline{x}'$ of $X-\{x\}$, the pullback of $\mathcal{F}$ under the composition
		$$^k\mathcal{N}^{\overline{x}'}\to {_\infty^k\overline{\mathrm{Bun}}_{N^-}^{\mathrm{n.z.}\overline{x}'}}\to {_\infty^k\overline{\mathrm{Bun}}_{N^-}} $$
		is $\prod_{i=1}^mN^-((t'_i))$-equivariant.
	\end{enumerate}
 \begin{rem}
		Compared with the definition of \cite[§4.2.1]{ABBGM}, condition (3) is missing. When $k=1$ (which is our main case of interest), this condition is automatic (see Remark 4.2.2 in \textit{loc. cit.}). When $k=0$, we shall see that condition (3) is also automatic, since the obtained category turns out to be -- with or without condition (3) -- the Serre subcategory of $\mathrm{Perv}_T({_\infty\overline{\mathrm{Bun}}_{N^-}})$ generated by the IC-sheaves on each $_{\leq\nu}\overline{\mathrm{Bun}}_{N^-}$, for $\nu\in\mathbf{Y}$.  
	\end{rem}
We omit the proofs of the next two results, as one can copy the arguments of \textit{loc. cit.}.
	\begin{prop}{{\cite[Proposition 4.2.8]{ABBGM}}}
		The equivariance condition is implied by the factorization condition.
	\end{prop}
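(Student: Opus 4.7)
The strategy is to reduce the $\prod_{i=1}^{m}N^-((t'_i))$-equivariance of the pullback of $\mathcal{F}$ to $^k\mathcal{N}^{\overline{x}'}$ to the equivariance of IC-sheaves on Zastava spaces localized near the points $x'_i$, then transport this through the factorization isomorphism.

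First, I would unwind the group-theoretic reduction. Since each $N^-((t'_i))$ is a pro-unipotent group ind-scheme, written as a filtered union of closed subgroup ind-schemes, and since equivariance is tested against a single closed subgroup at a time, it suffices to check, on any quasi-compact open substack $U$ of $^k_\infty \overline{\mathrm{Bun}}_{N^-}^{\mathrm{n.z.}\overline{x}'}$, equivariance of the restriction under the truncated group $\prod_{i=1}^{m} N^-[[t'_i]]/(t'_i)^{N}$ for all $N$ sufficiently large. This is the content of the usual definition of equivariance for pro-unipotent actions.

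Next, by Proposition \ref{zastava prop}, I can choose $\mu\in\mathbf{Y}^{pos}$ so large (and with the constraint $\langle\alpha,\mu\rangle > 2g-2$ for all positive roots) that $U$ lies in the image of $^k\mathfrak{p}^{\mu}: {^k_\infty Z^\mu}\to {^k_\infty\overline{\mathrm{Bun}}_{N^-}}$, and it is enough to check the analogous equivariance for $(^k\mathfrak{p}^\mu)^*\mathcal{F}$ pulled back to the preimage of $^k\mathcal{N}^{\overline{x}'}$. I then decompose $\mu=\mu_1+\mu_2$ by choosing $\mu_1$ with very large coefficients (again satisfying the positivity bound on each summand) and thinking of $\mu_1$ as "absorbing" the colored divisor at the points $x'_i$; concretely I work over the open subscheme of $X^\mu$ obtained by pulling back $(\smash[t]{\overset{\circ}{X}}^{\mu_1}\times X^{\mu_2})_{\mathrm{disj}}$ and specializing the $X^{\mu_1}$-component to divisors supported at $\overline{x}'$, while keeping the $X^{\mu_2}$-component disjoint from $\overline{x}'$.

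On this locus, the factorization isomorphism \eqref{facto} identifies $(^k\mathfrak{p}^\mu)^*\mathcal{F}$ (up to the appropriate shift) with $\mathrm{IC}_{Z^{\mu_1}}\boxtimes (^k\mathfrak{p}^{\mu_2})^*\mathcal{F}$. The $\prod_i N^-((t'_i))$-action, transported through this isomorphism and the compatibility of the Zastava space with local modifications at the points $x'_i$, acts only on the first factor; the second factor sees no contribution because the supporting divisor of $\mu_2$ is disjoint from $\overline{x}'$. Thus equivariance of $(^k\mathfrak{p}^\mu)^*\mathcal{F}$ reduces to the analogous equivariance of $\mathrm{IC}_{Z^{\mu_1}}$ on the Zastava space, and this in turn follows from the fact that $\mathrm{IC}_{Z^{\mu_1}}$ is the intersection cohomology sheaf of an irreducible stack whose stratification is preserved by the relevant loop group action (taking $\mu_1$ large enough ensures the $N$-th congruence subgroup of $N^-((t'_i))$ acts trivially on the truncated Zastava data).

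\textbf{Main obstacle.} The delicate point is to make precise the identification between the $N^-((t'_i))$-action on $^k\mathcal{N}^{\overline{x}'}$ and the corresponding action on $Z^{\mu_1}$ compatible with the factorization, and then to verify that the chosen $\mu_1$ really does absorb the truncation level so that the action factors through a group scheme of finite type acting on $Z^{\mu_1}$ with the required stratification-preserving property. Once this geometric compatibility is nailed down, the equivariance of $\mathrm{IC}_{Z^{\mu_1}}$ is automatic.
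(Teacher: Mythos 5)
Your argument is essentially the argument of \cite[Proposition 4.2.8]{ABBGM}, which is exactly the proof the paper invokes (it explicitly omits the details and refers to \emph{loc.\ cit.}): localize at the points of $\overline{x}'$, use the factorization isomorphism to split off $\mathrm{IC}_{Z^{\mu_1}}$, and conclude from the automatic equivariance of an IC-sheaf under the action of a connected (pro-)unipotent group preserving the relevant stratum closures. The compatibility you flag as the main obstacle is supplied by the local-to-global identification of $Z^{\mu_1}$ over divisors supported at $\overline{x}'$ with intersections of semi-infinite orbits in the affine Grassmannians at the points $x'_i$ (cf. \cite[Lemma 3.2.7]{FGV}), so there is no gap in the approach.
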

	
	\begin{prop}{{\cite[Corollary 4.2.9]{ABBGM}}}
		Let $k\in\{0,1\}$. As a subcategory of the category $\mathrm{Perv}_T({^k_\infty\overline{\mathrm{Bun}}_{N^-}})$, $\mathrm{Perv}_{G^k}(\mathcal{F}l^{\frac{\infty}{2}})$ is stable under extensions.
	\end{prop}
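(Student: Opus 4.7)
The plan is to reduce immediately to checking stability of condition (1), since condition (2) follows from condition (1) by the preceding proposition. Fix $\mu = \mu_1 + \mu_2 \in \mathbf{Y}^{pos}$ with $\langle \alpha, \mu_i\rangle > 2g-2$ for every positive root $\alpha$, so that the morphism $^k\mathfrak{p}^\mu : {_\infty^k Z^\mu} \to {_\infty^k\overline{\mathrm{Bun}}_{N^-}}$ is smooth. The inclusions $^ks_{1,2}$ and $^kt_{1,2}$ are open embeddings, hence also smooth. Consequently, the two functors
\begin{align*}
F_1(-) &:= ({^ks}_{1,2})^*\, ({^k\mathfrak{p}^\mu})^*(-)\,[\text{rel.\ dim.\ shift}],\\
F_2(-) &:= ({^kt}_{1,2})^*\bigl(\mathrm{IC}_{Z^{\mu_1}} \boxtimes ({^k\mathfrak{p}^{\mu_2}})^*(-)\bigr)\,[\text{rel.\ dim.\ shift}]
\end{align*}
are exact from $\mathrm{Perv}_T({^k_\infty\overline{\mathrm{Bun}}_{N^-}})$ to the category of perverse sheaves on the common source appearing in Proposition \ref{prop factorization zastava} (the external product with the shifted IC-sheaf $\mathrm{IC}_{Z^{\mu_1}}$ on the smooth factor is $t$-exact up to a global shift).

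Next, I would upgrade the abstract isomorphism demanded by condition (1) to a canonical natural transformation $\eta : F_1 \Rightarrow F_2$ (or its reverse). Using the factorization isomorphism \eqref{facto}, both sides compute a perverse restriction of the pullback of $\mathcal{F}$ to the open locus of disjoint divisors; the natural transformation comes from the factorization property of the IC-sheaf of ${_\infty^k Z^\mu}$, namely that its restriction to the disjoint open is canonically identified with $\mathrm{IC}_{Z^{\mu_1}} \boxtimes \mathrm{IC}_{_\infty^k Z^{\mu_2}}$, which furnishes a functorial comparison map for an arbitrary $\mathcal{F}$. By construction, membership in $\mathrm{Perv}_{G^k}(\mathcal{F}l^{\frac{\infty}{2}})$ is then equivalent to $\eta_{\mathcal{F}}$ being an isomorphism.

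Granting this, the conclusion is formal. Given a short exact sequence $0 \to \mathcal{F}' \to \mathcal{F} \to \mathcal{F}'' \to 0$ in $\mathrm{Perv}_T({^k_\infty\overline{\mathrm{Bun}}_{N^-}})$ with outer terms in $\mathrm{Perv}_{G^k}(\mathcal{F}l^{\frac{\infty}{2}})$, applying $F_1$ and $F_2$ yields two short exact sequences linked by the natural transformation $\eta$, giving a commutative diagram with exact rows. The outer vertical arrows $\eta_{\mathcal{F}'}$ and $\eta_{\mathcal{F}''}$ are isomorphisms by hypothesis, so the five lemma forces $\eta_{\mathcal{F}}$ to be an isomorphism as well. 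Since this holds for every admissible decomposition $\mu = \mu_1+\mu_2$, the factorization condition is verified for $\mathcal{F}$, completing the argument.

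The main obstacle I foresee is the construction of the natural transformation $\eta$ with the required canonicity: one must check that the factorization isomorphism of Proposition \ref{prop factorization zastava} not only exists as an abstract isomorphism of stacks but also induces a well-defined morphism of functors on $\mathrm{Perv}_T$, compatible with the normalization of the IC-sheaves. This is essentially a bookkeeping check on shifts and canonical identifications, but it is the key point that turns a pointwise isomorphism condition into a property closed under extensions.
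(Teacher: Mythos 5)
Your reduction to condition (1) and your observation that $F_1$ and $F_2$ are exact functors are both correct, and the five-lemma strategy would indeed close the argument \emph{if} you could exhibit a canonical natural transformation $\eta\colon F_1\Rightarrow F_2$ whose invertibility is equivalent to the factorization condition. That last step, however, is where the proposal breaks down, and it is not a mere bookkeeping issue.

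The problem is that such an $\eta$ cannot be produced in the way you describe. The factorization isomorphism \eqref{facto} identifies two stacks over $(\smash{\overset{\circ}{X}}^{\mu_1}\times X^{\mu_2})_{\mathrm{disj}}$, but it does \emph{not} commute with the two relevant projections to ${^k_\infty\overline{\mathrm{Bun}}_{N^-}}$: the Drinfeld point underlying a degree-$\mu$ Zastava point, restricted to the disjoint locus, is not the Drinfeld point underlying the degree-$\mu_2$ factor, because the former carries the modification at the $\mu_1$-part of the divisor and the latter does not (the underlying $G$-bundles already differ). So $F_1(\mathcal{F})$ and $F_2(\mathcal{F})$ are pullbacks of $\mathcal{F}$ along \emph{different} maps to ${^k_\infty\overline{\mathrm{Bun}}_{N^-}}$, and there is no tautological comparison morphism between them. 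Invoking the factorization isomorphism of the IC-sheaf of ${_\infty^k Z^\mu}$ does not help: that is an isomorphism for one specific object, and it does not induce a morphism of functors on an arbitrary $\mathcal{F}\in\mathrm{Perv}_T({^k_\infty\overline{\mathrm{Bun}}_{N^-}})$. What you have is a condition asserting the \emph{existence} of an abstract isomorphism, and such conditions are not automatically closed under extensions. To make the five-lemma idea work one needs a genuinely equivalent reformulation of the factorization property in terms of a canonical map or a cohomological-amplitude condition (for instance, via the equivariance property of Proposition \ref{prop Hecke property}'s neighbour — the one stating that equivariance follows from factorization — together with the ``Goresky--MacPherson extension'' and constancy phenomena exploited in the proof of Proposition \ref{simple objects prop}, which \emph{are} manifestly extension-closed). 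That reformulation is the real content of the cited corollary in \cite{ABBGM}, and the present paper omits the proof precisely because it can be copied from there; your proposal does not supply it.
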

 
	\subsection{Simple objects} 
	We will now see that the structure of the category $\mathrm{Perv}_{G[[t]]}(\mathcal{F}l^{\frac{\infty}{2}})$ does \textit{not} depend on the characteristic of $\mathbb{k}$ (at least when Corollary \ref{coro stalks} holds). Let $\nu\in\mathbf{Y}$, and $\mathcal{F}\in\mathrm{Perv}_T(G/N^-)$ (the action of $T$ is induced by the right action on $G$ by translation). The isomorphism \eqref{torsor twisted product} allows us to consider the external twisted tensor product
	\begin{equation}\label{twisted}
		\mathcal{F}\widetilde{\boxtimes}\mathrm{IC}_{_{\nu}\overline{\mathrm{Bun}}_{N^-}}\in \mathrm{D}_T({^1_{\nu}\overline{\mathrm{Bun}}_{N^-}}),
	\end{equation}
	defined as the perversely shifted pullback of $\mathcal{F}\boxtimes\mathrm{IC}_{_{\nu}\overline{\mathrm{Bun}}_{N^-}}\in \mathrm{Perv}_T({G/N^-\times {_\nu\overline{\mathrm{Bun}}_{N^-}}})$ through the canonical projection $G\times^{N^-}{^1_\nu\mathcal{N}}\to G/N^-\times {_\nu\overline{\mathrm{Bun}}_{N^-}}$. In particular, the object $\eqref{twisted}$ is perverse.
	\begin{prop}{{\cite[Proposition 4.2.4]{ABBGM}}}\label{simple objects prop}
		\begin{enumerate}
			\item 	For any $\mathcal{F}\in\mathrm{Perv}_T(G/N^-)$ and $\nu\in\mathbf{Y}$, the object 
			$$(\overline{i}_\nu)_{!*}(\mathcal{F}\widetilde{\boxtimes}\mathrm{IC}_{_{\nu}\overline{\mathrm{Bun}}_{N^-}})\quad (\text{resp.}~ (\overline{i}_\nu)_{!*}(\mathrm{IC}_{_{\nu}\overline{\mathrm{Bun}}_{N^-}})) $$ is an object of $\mathrm{Perv}_{G^1}(\mathcal{F}l^{\frac{\infty}{2}})$ (resp. of $\mathrm{Perv}_{G[[t]]}(\mathcal{F}l^{\frac{\infty}{2}})$).
			\item  For any $\mathcal{G}\in \mathrm{Perv}_{G^1}(\mathcal{F}l^{\frac{\infty}{2}})$ (resp. $\mathcal{G}\in \mathrm{Perv}_{G[[t]]}(\mathcal{F}l^{\frac{\infty}{2}})$) and $\nu\in\mathbf{Y}$, the perverse cohomologies of $(\overline{i}_\nu)^*\mathcal{G}$ are of the form 
			$$\mathcal{F}\widetilde{\boxtimes}\mathrm{IC}_{_{\nu}\overline{\mathrm{Bun}}_{N^-}}\qquad \text{(resp.}~ \bigoplus_{i=1}^m\mathrm{IC}_{_{\nu}\overline{\mathrm{Bun}}_{N^-}})$$ 
			for some $\mathcal{F}\in\mathrm{Perv}_T(G/N^-)$ (resp. $m\geq1$).
		\end{enumerate}
		
	\end{prop}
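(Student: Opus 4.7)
The strategy is to mimic the proof of \cite[Proposition 4.2.4]{ABBGM}, checking that each step survives the passage to positive characteristic. Part (1) amounts to verifying the factorization condition (equivariance then follows from the preceding proposition \cite[Proposition 4.2.8]{ABBGM}), and part (2) amounts to extracting this structure from the factorization condition alone.

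For part (1), fix $\nu\in\mathbf{Y}$ and $\mathcal{F}\in\mathrm{Perv}_T(G/N^-)$. Set $\mathcal{G}:=(\overline{i}_\nu)_{!*}(\mathcal{F}\widetilde{\boxtimes}\mathrm{IC}_{_{\nu}\overline{\mathrm{Bun}}_{N^-}})$. To check factorization, pick a decomposition $\mu=\mu_1+\mu_2$ with $\langle\alpha,\mu_i\rangle>2g-2$ for all positive roots $\alpha$, so that $^1\mathfrak{p}^\mu$ and both $\mathfrak{p}^{\mu_1}$, $^1\mathfrak{p}^{\mu_2}$ are smooth by the remark preceding §\ref{section Main definition}. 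Pullback of $\mathrm{IC}$-sheaves along smooth morphisms is, up to shift, again an $\mathrm{IC}$-sheaf on the preimage, and external tensor products of $\mathrm{IC}$-sheaves yield the $\mathrm{IC}$-sheaf of the product. These two facts, combined with the factorization isomorphism of Proposition \ref{prop factorization zastava} and with the description \eqref{torsor twisted product zastava} of the Zastava space as a twisted product, identify $({^ks}_{1,2})^*(^k\mathfrak{p}^\mu)^*\mathcal{G}$ with $({^kt}_{1,2})^*(\mathrm{IC}_{Z^{\mu_1}}\boxtimes (^k\mathfrak{p}^{\mu_2})^*\mathcal{G})$ up to the expected shift. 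The $G[[t]]$-equivariant variant is identical, taking $\mathcal{F}=\mathrm{IC}_{G/N^-}$.

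For part (2), let $\mathcal{G}\in\mathrm{Perv}_{G^1}(\mathcal{F}l^{\frac{\infty}{2}})$. I would first argue that on the open dense smooth substack $^1_\nu\mathrm{Bun}_{N^-}\subset {^1_\nu\overline{\mathrm{Bun}}_{N^-}}$, the restriction of $\mathcal{G}$ is (up to shift) a local system. Using the torsor description \eqref{torsor twisted product}, this local system must be a twisted external product of a local system on $G/N^-$ with a local system on $_\nu\mathrm{Bun}_{N^-}$; the equivariance condition along $\prod N^-((t'_i))$-torsors together with the topological contractibility of $_\nu\mathrm{Bun}_{N^-}$ (Proposition \ref{prop contractible}) forces the second factor to be constant. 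This yields a candidate $\mathcal{F}\in\mathrm{Perv}_T(G/N^-)$ on the open stratum. To extend from $^1_\nu\mathrm{Bun}_{N^-}$ to $^1_\nu\overline{\mathrm{Bun}}_{N^-}$, the key is the factorization condition: pulling back $(\overline{i}_\nu)^*\mathcal{G}$ through the smooth projection $^1\mathfrak{p}^{\mu_2}$ (for $\mu_2$ sufficiently large, covering a given open quasi-compact substack by Proposition \ref{zastava prop}) and applying factorization with a large partner $\mu_1$, the factor supported on the Zastava side must be $\mathrm{IC}_{Z^{\mu_1}}$. Descending this through the smooth covers forces each perverse cohomology of $(\overline{i}_\nu)^*\mathcal{G}$ to be an $\mathrm{IC}$-extension in the $_\nu\overline{\mathrm{Bun}}_{N^-}$ direction, giving exactly the form $\mathcal{F}\widetilde{\boxtimes}\mathrm{IC}_{_{\nu}\overline{\mathrm{Bun}}_{N^-}}$. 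For the $G[[t]]$-equivariant version, $\mathcal{F}$ becomes $G$-equivariant on $G/N^-$; by the Bruhat decomposition the only such perverse sheaves (with appropriate supports) are direct sums of copies of $\mathrm{IC}_{G/N^-}$, giving the stated form $\bigoplus_{i=1}^m\mathrm{IC}_{_{\nu}\overline{\mathrm{Bun}}_{N^-}}$.

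The main obstacle I anticipate is the descent step in part (2): transferring the structural information obtained on the smooth open locus $^1_\nu\mathrm{Bun}_{N^-}$, or on the smooth Zastava covers, to the singular total stack $^1_\nu\overline{\mathrm{Bun}}_{N^-}$ in a way that is compatible across different choices of $\mu$. In characteristic zero this works by semisimplicity of pure perverse sheaves (decomposition theorem) applied to the proper map $Z^{\mu_1}\to \mathrm{pt}$ in the factorization; in positive characteristic this argument is replaced by the fact that the factorization condition is itself a rigidification and by the smoothness statement in Proposition \ref{prop open substack}, so that the $\mathrm{IC}$-extension property is preserved under smooth pullback. Once compatibility between different large $\mu$'s is established (using the transitivity of factorization under further subdivisions $\mu_2=\mu_2'+\mu_2''$), the argument concludes as in \cite{ABBGM}.
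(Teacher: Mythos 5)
Your proposal follows essentially the same route as the paper: part (1) via smoothness of the Zastava projections plus the fact that external products and smooth pullbacks of IC-sheaves are IC-sheaves, and part (2) via the combination of the factorization condition (to control the extension from the open smooth stratum) and the $\prod_i N^-((t_i'))$-equivariance together with transitivity along the fibers of the evaluation map to $G/N^-$ (to classify objects on $^1_\nu\mathrm{Bun}_{N^-}$). Two remarks. First, your intermediate claim that the restriction of $\mathcal{G}$ to $^1_\nu\mathrm{Bun}_{N^-}$ is a local system, and that it factors as a twisted product of a \emph{local system} on $G/N^-$ with one on $_\nu\mathrm{Bun}_{N^-}$, is false as stated: the correct conclusion (and what the equivariance/transitivity argument actually gives) is that the restriction is $\mathrm{ev}_\nu^*\mathcal{F}$ for an arbitrary $\mathcal{F}\in\mathrm{Perv}_T(G/N^-)$ — e.g.\ the IC-sheaf of a Schubert variety in $G/B^-$ — which is not a local system in general. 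Your subsequent reasoning does not really use this claim, so it is a fixable misstatement rather than a fatal gap, but it should be excised. Second, the "descent" step you worry about is handled in the paper more cleanly and without any appeal to the decomposition theorem (which is indeed unavailable here and is not used by ABBGM either): one argues by contradiction that if a perverse cohomology of $(\overline{i}_\nu)^*\mathcal{G}$ were not the Goresky--MacPherson extension of its restriction to $^1_\nu\mathrm{Bun}_{N^-}$, then the perverse cohomological bounds on the $*$- or $!$-restriction to the boundary would fail on some finite-type open $U$; lifting $U$ to a Zastava cover (Proposition \ref{zastava prop}) and applying the factorization isomorphism with a large partner $\mu_1$ transports this failure to the stratum $(Z^{\mu_1}-\overset{\circ}{Z^{\mu_1}})\times{^1_\nu Z^{\mu_2}}$, contradicting the fact that $\mathrm{IC}_{Z^{\mu_1}}$ does satisfy these bounds. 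Making this contradiction explicit is what your phrase "forces each perverse cohomology to be an IC-extension" needs in order to be a proof.
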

	\begin{proof}
		Let $\mu,\nu\in\mathbf{Y}$, with $\langle\alpha,\mu\rangle>2g-2$ for all positive root $\alpha$. First notice that, by considering the base change of  $^k\mathfrak{p}^\mu:{_\infty ^kZ}^\mu\to {_\infty^k\overline{\mathrm{Bun}}_{N^-}}$ (resp. $^k\mathcal{N}^{\overline{x}'}\to {_\infty^k\overline{\mathrm{Bun}}_{N^-}}$) through ${_\nu^k\overline{\mathrm{Bun}}_{N^-}}\to {_\infty^k\overline{\mathrm{Bun}}_{N^-}}$, one can define the factorization and equivariance conditions for objects of $\mathrm{Perv}_T({_\nu^k\overline{\mathrm{Bun}}_{N^-}})$. The same observation applies for objects of $\mathrm{Perv}_{T}({_\nu^k\mathrm{Bun}}_{N^-})$ (this will be useful at the end of this proof). We denote by $\mathrm{Perv}_{G^1}({_\nu^1\overline{\mathrm{Bun}}_{N^-}})$ (resp. $\mathrm{Perv}_{G[[t]]}({_\nu\mathrm{Bun}}_{N^-})$) the category of objects satisfying these two conditions. Remark that the functor $(\overline{i}_\nu)_{!*}$ sends $\mathrm{Perv}_{G^1}({_\nu^1\overline{\mathrm{Bun}}_{N^-}})$ (resp. $\mathrm{Perv}_{G[[t]]}({_\nu\mathrm{Bun}}_{N^-})$) to $\mathrm{Perv}_{G^1}(\mathcal{F}l^{\frac{\infty}{2}})$ (resp. $\mathrm{Perv}_{G[[t]]}(\mathcal{F}l^{\frac{\infty}{2}})$). So we only need to check that  $\mathcal{F}\widetilde{\boxtimes}\mathrm{IC}_{_{\nu}\overline{\mathrm{Bun}}_{N^-}}$ (resp. $\mathrm{IC}_{_{\nu}\overline{\mathrm{Bun}}_{N^-}}$) satisfies the desired properties to prove the first point.
		
		The fact that $\mathcal{F}\widetilde{\boxtimes}\mathrm{IC}_{_{\nu}\overline{\mathrm{Bun}}_{N^-}}$ (resp. $\mathrm{IC}_{_{\nu}\overline{\mathrm{Bun}}_{N^-}}$) satisfies the equivariance condition is obvious. The factorization condition follows from the fact that the projection ${_\nu ^kZ}^\mu\to {_\nu^k\overline{\mathrm{Bun}}_{N^-}}$ is smooth by the assumption on $\mu$. Indeed, when $k=0$, we deduce that the pullback of  $\mathrm{IC}_{_{\nu}\overline{\mathrm{Bun}}_{N^-}}$ through the morphism ${_\nu Z}^\mu\to {_\nu\overline{\mathrm{Bun}}_{N^-}}$ is an $IC$-sheaf, so the factorization property follows from the fact that the external tensor product of two IC-sheaves is an IC-sheaf (cf. \cite[Lemma 3.3.14]{achar2021perverse}). Similarly when $k=1$, the pullback of $\mathcal{F}\widetilde{\boxtimes}\mathrm{IC}_{_{\nu}\overline{\mathrm{Bun}}_{N^-}}$ through ${_\nu ^1Z}^\mu\to {_\nu^1\overline{\mathrm{Bun}}_{N^-}}$ identifies with $\mathcal{F}\widetilde{\boxtimes}\mathrm{IC}_{{_\nu Z}^\mu}$, so that the factorization property again follows from the fact that any $IC$-sheaf verifies it.
		
		Now we prove the second point. The factorization and equivariance properties are compatible with respect to restriction to $_{\nu}\overline{\mathrm{Bun}}_{N^-}$. So it suffices to check that the functor $\mathcal{F}\mapsto \mathcal{F}\widetilde{\boxtimes}\mathrm{IC}_{_{\nu}\overline{\mathrm{Bun}}_{N^-}}$ defines equivalences of categories
		\begin{align*}
  \mathrm{Perv}(pt)&\xrightarrow{\sim}  \mathrm{Perv}_{G[[t]]}({_\nu\overline{\mathrm{Bun}}_{N^-}})\\
		    \mathrm{Perv}_T(G/N^-)&\xrightarrow{\sim}  \mathrm{Perv}_{G^1}({_\nu^1\overline{\mathrm{Bun}}_{N^-}}).
		\end{align*}
		We only deal with the second case, since the same arguments can be adapted for the first one.  First, we claim that every object $ \mathcal{F}'\in \mathrm{Perv}_{G^1}({_\nu^1\overline{\mathrm{Bun}}_{N^-}})$ is the Goresky-Macpherson extension of its restriction to ${_\nu^1\mathrm{Bun}}_{N^-}$. Indeed, assume that it is not, then we can find an open subscheme of finite type $U$ inside the support of $\mathcal{F}'$ such that either $*$ or $!$-restriction of $\mathcal{F}'|_{U}$ to $({_\nu^1\overline{\mathrm{Bun}}_{N^-}}- {_\nu^1\mathrm{Bun}}_{N^-})\cap U$ does \textit{not} lives in negative (resp. positive) perverse cohomological degrees (cf. for instance the proof of \cite[Lemma 3.3.3]{achar2021perverse}). But thanks to Proposition \ref{zastava prop}, we can find  $\mu_1\in\mathbf{Y}$ large enough so that $U$ belongs to the image of ${_\nu ^1Z}^{\mu_1}\to {_\nu^1\overline{\mathrm{Bun}}_{N^-}}$. Therefore, using the identification \eqref{facto}, we deduce that we can find $\mu_2\in\mathbf{Y}$ large enough (for instance $\mu_2=\mu_1$) such that, for $\mu=\mu_1+\mu_2$, either $*$ or $!$-restriction (of the pullback of $ \mathcal{F}'|_{U}$ through the map ${_\nu ^1Z}^{\mu_1+\mu_2}\to {_\nu^1\overline{\mathrm{Bun}}_{N^-}}$) on 
		$$((Z^{\mu_1}-\overset{\circ}{Z^{\mu_1}})\times {^1_\nu Z^{\mu_2}})\times_{(X^{\mu_1}\times{_\infty X^{\mu_2}})}(\smash[t]{\overset{\circ}{X}}^{\mu_1}\times X^{\mu_2})_{\mathrm{disj}}$$
		does not live in negative (resp. positive) perverse cohomological degrees. However, this contradicts the factorization property.
		
		So we are now reduced (thanks to the fact that Goresky-Macpherson extension is a fully faithful functor) to showing that the functor $\mathcal{F}\mapsto \mathcal{F}\widetilde{\boxtimes}\mathrm{IC}_{{_{\nu}\mathrm{Bun}}_{N^-}}$ induces an equivalence of categories
		$$\mathrm{Perv}_T(G/N^-)\xrightarrow{\sim}  \mathrm{Perv}_{G^1}({_\nu^1\mathrm{Bun}}_{N^-}).$$
		Since ${_\nu^1\mathrm{Bun}}_{N^-}$ is smooth, this functor coincides with the pullback via $\mathrm{ev}_\nu: {_\nu^1\mathrm{Bun}}_{N^-}\to G/N^-$, so that full faithfulness follows from the fact that $\mathrm{ev}_\nu$ is smooth surjective with connected fibers. To prove the essential surjectivity, the argument of \textit{loc. cit.} is the following. Let $\mathcal{F}'\in \mathrm{Perv}_{G^1}({_\nu^1\mathrm{Bun}}_{N^-})$. For any non-empty collection of points $\overline{x}'$ distinct from $x$, the pullback of $\mathcal{F}'$ through the map 
		$${_\nu^1\mathrm{Bun}}_{N^-}\times_{{_\nu^1\overline{\mathrm{Bun}}}_{N^-}} {^1\mathcal{N}^{\overline{x}'}}\to {_\nu^1\mathrm{Bun}}_{N^-}$$
		is $\prod_{i=1}^mN^-((t'_i))$-equivariant by definition. The assertion then follows from the fact that the latter ind-group-scheme acts transitively along the fibers of the map obtained by composition
		$${_\nu^1\mathrm{Bun}}_{N^-}\times_{{_\nu^1\overline{\mathrm{Bun}}}_{N^-}} {^1\mathcal{N}^{\overline{x}'}}\to G/N^-. $$
	\end{proof}
	\begin{rem}\label{remark G/B}
		Since the action of $T$ on $G/N^-$ is free, we have an equivalence of categories $\mathrm{Perv}_T(G/N^-)\simeq \mathrm{Perv}(G/B^-)$. So the previous proof has constructed an equivalence of categories
		$$\mathrm{Perv}(G/B^-)\to  \mathrm{Perv}_{G^1}({_\nu^1\overline{\mathrm{Bun}}}_{N^-}).$$
	\end{rem}
	As a corollary of the proof of the previous proposition, we obtain:
	\begin{coro}\label{coro constructible}
		Let $\mathcal{F}\in \mathrm{Perv}_{G[[t]]}(\mathcal{F}l^{\frac{\infty}{2}})$ and $\nu\in\mathbf{Y}$. The cohomology groups of $(i_\nu)^*\mathcal{F}$ are constant sheaves.
	\end{coro}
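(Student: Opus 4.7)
The plan is to combine part (2) of Proposition \ref{simple objects prop} with the ``contractibility'' of the smooth open substack ${_\nu\mathrm{Bun}_{N^-}}$ furnished by Proposition \ref{prop contractible}. Throughout, I set $Y:={_\nu\mathrm{Bun}_{N^-}}$, let $d:=\dim(Y)$, and factor the locally closed immersion as $i_\nu=\overline{i}_\nu\circ j$, where $j:Y\hookrightarrow{_\nu\overline{\mathrm{Bun}}_{N^-}}$ denotes the inclusion of the smooth dense open substack.

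For the first step, Proposition \ref{simple objects prop}(2) asserts that each perverse cohomology sheaf ${^p\mathcal{H}}^q((\overline{i}_\nu)^*\mathcal{F})$ is a finite direct sum of copies of $\mathrm{IC}_{{_\nu\overline{\mathrm{Bun}}_{N^-}}}$, say with multiplicity $m_q$. Since $j$ is an open immersion, $j^*$ is perverse $t$-exact, and by the very definition of the intersection cohomology complex as the Goresky--MacPherson extension of the shifted constant sheaf on the smooth dense open substack, $j^*\mathrm{IC}_{{_\nu\overline{\mathrm{Bun}}_{N^-}}}\simeq\underline{\mathbb{k}}_Y[d]$. Combining these observations yields ${^p\mathcal{H}}^q\bigl((i_\nu)^*\mathcal{F}\bigr)\simeq \underline{\mathbb{k}}_Y[d]^{\oplus m_q}$.

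For the second step, I would lift this to a statement about the complex $(i_\nu)^*\mathcal{F}$ itself. By Proposition \ref{prop contractible}, $H^k(Y,\underline{\mathbb{k}})=0$ for all $k\neq 0$. Since $(i_\nu)^*\mathcal{F}$ is a bounded constructible complex, only finitely many perverse cohomology sheaves are nonzero; it can therefore be built up through successive perverse-truncation triangles, whose connecting extension classes lie in $\mathrm{Ext}^1$-groups of the form
$$\mathrm{Ext}^1\bigl(\underline{\mathbb{k}}_Y[d-q_2],\underline{\mathbb{k}}_Y[d-q_1]\bigr)\simeq H^{1+q_2-q_1}(Y,\underline{\mathbb{k}})$$
with $q_1<q_2$. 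Since $1+q_2-q_1\geq 2$, all these groups vanish by Proposition \ref{prop contractible}, so the extension triangles split, giving $(i_\nu)^*\mathcal{F}\simeq \bigoplus_{q}\underline{\mathbb{k}}_Y[d-q]^{\oplus m_q}$. Taking ordinary cohomology sheaves produces $\mathcal{H}^{q-d}\bigl((i_\nu)^*\mathcal{F}\bigr)\simeq\underline{\mathbb{k}}_Y^{\oplus m_q}$, which are constant sheaves as required.

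The expected difficulty is minor and essentially clerical: one must carefully track the shifts in the perverse-truncation triangles and verify the precise $\mathrm{Ext}^1$-vanishing. The real content is that the strong cohomology vanishing on $Y$ given by Proposition \ref{prop contractible} rules out all non-trivial extensions, so that the filtration of $(i_\nu)^*\mathcal{F}$ by its perverse cohomologies splits at the level of the complex itself.
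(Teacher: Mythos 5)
Your argument is correct and takes essentially the same route as the paper: both proofs combine Proposition \ref{simple objects prop}(2) with the contractibility of ${_\nu\mathrm{Bun}}_{N^-}$ (Proposition \ref{prop contractible}) and induct on the perverse truncations of $(i_\nu)^*\mathcal{F}$. The only (harmless) difference is that you use the cohomology vanishing to split the truncation triangles and decompose the whole complex into shifted constant sheaves, whereas the paper just reads off constancy of the ordinary cohomology sheaves from the long exact sequence.
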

	\begin{proof}
	We will prove by induction on $m$ that the cohomology groups of ${^p\tau}^{\leq m}((i_\nu)^*\mathcal{F})$ are constant sheaves, starting at some negative enough integer such that ${^p\tau}^{\leq m}((i_\nu)^*\mathcal{F})=0$. So assume that there exists $m$ such that the cohomology groups of ${^p\tau}^{\leq m}((i_\nu)^*\mathcal{F})$ are constant sheaves. We have a distinguished triangle
		$${^p\tau}^{\leq m}((i_\nu)^*\mathcal{F})\to {^p\tau}^{\leq m+1}((i_\nu)^*\mathcal{F})\to {^pH}^{m+1}((i_\nu)^*\mathcal{F})\xrightarrow{+1}{}. $$
		By the second point of Proposition \ref{simple objects prop}, ${^pH}^{m+1}((i_\nu)^*\mathcal{F})$ is a constant sheaf. From this together with Proposition \ref{prop contractible}, we deduce that the cohomology groups of ${^p\tau}^{\leq m+1}((i_\nu)^*\mathcal{F})$ are constant sheaves.
	\end{proof}
	\begin{coro}{{\cite[Proposition 4.2.4]{ABBGM}}}\label{coro prop simple}
		Let $k\in\{0,1\}$.  All the simple objects of $\mathrm{Perv}_{G^k}(\mathcal{F}l^{\frac{\infty}{2}})$ can be written as in the first point of Proposition \ref{simple objects prop}, with $\mathcal{F}$ an irreducible object of $\mathrm{Perv}_T(G/N^{-})$.
	\end{coro}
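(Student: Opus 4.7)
The plan is to argue, for both values of $k$, that any simple $\mathcal{G}\in\mathrm{Perv}_{G^k}(\mathcal{F}l^{\frac{\infty}{2}})$ is the intermediate extension of its restriction to a suitable open stratum, and then to invoke Proposition \ref{simple objects prop}(2) to identify that restriction.

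First I would locate a canonical open stratum in the support of $\mathcal{G}$. Since $\mathcal{G}$ lies in $\mathrm{Perv}_{T}({^k_\infty\overline{\mathrm{Bun}}_{N^-}})$, its support is contained in some $^k_{\leq\nu_0}\overline{\mathrm{Bun}}_{N^-}$ for $\nu_0\in\mathbf{Y}$. Using the stratification \eqref{eq strat} together with Proposition \ref{zastava prop} (which lets one reduce any finite-type open to an image of a Zastava space of sufficiently large degree, on which the factorization property bites), one sees that there is a unique $\nu\leq\nu_0$ such that the support of $\mathcal{G}$ meets $^k_{\nu}\overline{\mathrm{Bun}}_{N^-}$ in a dense open subset and does not meet any $^k_{\nu'}\overline{\mathrm{Bun}}_{N^-}$ with $\nu'\not\leq\nu$.

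Next I would identify the restriction to this open stratum. Proposition \ref{simple objects prop}(2) tells me that the perverse cohomologies of $(\overline{i}_\nu)^*\mathcal{G}$ are of the form $\mathcal{F}\widetilde{\boxtimes}\mathrm{IC}_{_{\nu}\overline{\mathrm{Bun}}_{N^-}}$ (for $k=1$) or direct sums of copies of $\mathrm{IC}_{_{\nu}\overline{\mathrm{Bun}}_{N^-}}$ (for $k=0$). Because $\mathcal{G}$ is simple and $\overline{i}_\nu$ is the inclusion of an open substack of (an open neighborhood of) the support, only one perverse cohomology of $(\overline{i}_\nu)^*\mathcal{G}$ can be non-zero, and it is in fact perverse; call it $\mathcal{G}^\circ$. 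Moreover the adjunction morphism $(\overline{i}_\nu)_!(\overline{i}_\nu)^*\mathcal{G}\to\mathcal{G}$ is non-zero (as $\mathcal{G}$ is non-zero on this stratum), and by simplicity of $\mathcal{G}$ the induced map $(\overline{i}_\nu)_{!*}\mathcal{G}^\circ\to\mathcal{G}$ is an isomorphism in $\mathrm{Perv}_{T}({^k_\infty\overline{\mathrm{Bun}}_{N^-}})$.

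It remains to check that $\mathcal{G}^\circ$ is irreducible in the appropriate sense, i.e.\ that the perverse sheaf $\mathcal{F}\in\mathrm{Perv}_T(G/N^-)$ appearing (resp.\ the multiplicity $m=1$ for $k=0$) is simple. If $\mathcal{F}$ admitted a non-trivial sub $\mathcal{F}'$ (resp.\ if $m\geq 2$), then by the equivalence of categories constructed in the proof of Proposition \ref{simple objects prop} (Remark \ref{remark G/B}), this would yield a non-trivial sub-object of $\mathcal{G}^\circ$ in $\mathrm{Perv}_{G^k}({^k_\nu\overline{\mathrm{Bun}}_{N^-}})$, and applying $(\overline{i}_\nu)_{!*}$ would contradict the simplicity of $\mathcal{G}$. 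The main technical point is thus the interaction between the intermediate extension and the factorization condition, which one handles by the same Zastava-space reduction used to prove Proposition \ref{simple objects prop}(2): the intermediate extension preserves the factorization property because both the $!$- and $*$-extensions of a factorizable perverse sheaf do.
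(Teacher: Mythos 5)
Your proposal is correct and follows essentially the same route as the paper's proof: identify the unique open stratum $^k_\nu\overline{\mathrm{Bun}}_{N^-}$ meeting the support densely via \eqref{eq strat}, observe that a simple object is the intermediate extension of its restriction there, and then apply the second point of Proposition \ref{simple objects prop} together with simplicity to force the restriction to be $\mathcal{F}\widetilde{\boxtimes}\mathrm{IC}_{_{\nu}\overline{\mathrm{Bun}}_{N^-}}$ with $\mathcal{F}$ irreducible (resp.\ $m=1$). The extra care you take with the Zastava-space reduction and with the compatibility of $(\overline{i}_\nu)_{!*}$ with the factorization condition is already contained in the first point of Proposition \ref{simple objects prop}, which the paper simply cites.
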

 \begin{proof}
     We only treat the case $k=0$, since the arguments are easily adapted to the case $k=1$. By the first point of Proposition \ref{simple objects prop}, objects that are written in the form of \textit{loc. cit.} are indeed objects of $\mathrm{Perv}_{G[[t]]}(\mathcal{F}l^{\frac{\infty}{2}})$. Conversely, let $\mathcal{F}$ be a simple object of $\mathrm{Perv}_{G[[t]]}(\mathcal{F}l^{\frac{\infty}{2}})$, and denote by $Z$ its support. We deduce from \eqref{eq strat} that there exists a unique $\nu\in\mathbf{Y}$ such that ${_{\nu}\overline{\mathrm{Bun}}_{N^-}}\cap Z$ is open in $Z$. Since $\mathcal{F}$ is an IC-sheaf (because it is a simple object), we deduce that it must be isomorphic to $(\overline{i}_\nu)_{!*}((\overline{i}_\nu)^*\mathcal{F})$. But we know that
     $$(\overline{i}_\nu)^*\mathcal{F}\simeq\bigoplus_{i=1}^m\mathrm{IC}_{_{\nu}\overline{\mathrm{Bun}}_{N^-}}$$
     for some $m\geq 1$. Since $\mathcal{F}$ is simple we must have $m=1$.
 \end{proof}
	
	\subsection{Stalks of simple objects}\label{section Intersection cohomology}
	The goal of this subsection is to prove that, when $G=\mathrm{SL}_n$, the results of \cite{BBGM} concerning the description of the stalks of $\mathrm{IC}_{\overline{\mathrm{Bun}}_{B^-}}$ do not depend on the characteristic of $\mathbb{k}$.
	
	For $\mu\in \mathbf{Y}^{pos}$, we let $^\mu\mathrm{Bun}_{B^-}$ be a stack fibered over $X^\mu$, whose fiber over $D\in X^\mu$ consists of the triples $(\mathcal{F}_G,\mathcal{F}_T,\kappa)\in\overline{\mathrm{Bun}}_{B^-}$ such that the morphism 
	$$\kappa^\lambda:\mathcal{V}_{\mathcal{F}_G}\to \mathcal{L}^\lambda_{\mathcal{F}_T}$$
	factors through the canonical morphism $\mathcal{L}^\lambda_{\mathcal{F}_T}(-\langle \lambda,D\rangle)\to \mathcal{L}^\lambda_{\mathcal{F}_T}$, with the resulting morphism $\mathcal{V}_{\mathcal{F}_G}\to\mathcal{L}^\lambda_{\mathcal{F}_T}(-\langle \lambda,D\rangle)$ being a morphism of vector bundles. We also define a morphism
	$$\varphi^\mu:\mathrm{Bun}_{B^-}\times X^\mu\to  \overline{\mathrm{Bun}}_{B^-},$$
	sending $((\mathcal{F}_G,\mathcal{F}_T,\kappa),D)$ to $(\mathcal{F}_G,\mathcal{F}_T(D),\tilde{\kappa})$, where $\tilde{\kappa}^\lambda$ is defined as the composition 
	$$\mathcal{V}_{\mathcal{F}_G}\xrightarrow{\kappa^\lambda} \mathcal{L}^\lambda_{\mathcal{F}_T}\to \mathcal{L}^\lambda_{\mathcal{F}_T}(\langle\lambda,D\rangle). $$
	This morphism $\varphi^\mu$ is representable by a locally closed immersion (\cite[Proposition 6.1.2]{BG}), and its image coincides with $^\mu\mathrm{Bun}_{B^-}$. For $\Gamma\in\mathfrak{A}(\mu)$, we let 
	$$\varphi^\mu_\Gamma:\mathrm{Bun}_{B^-}\times X^\mu_\Gamma\to  \overline{\mathrm{Bun}}_{B^-}$$ denote the locally closed immersion obtained by composing $\varphi^\mu$ with the inclusion of $\mathrm{Bun}_{B^-}\times X^\mu_\Gamma$ into $\mathrm{Bun}_{B^-}\times X^\mu$. 
	Also notice that, for $\theta\in\mathbf{Y}^{pos}$ such that $\theta\leq \mu$, restriction of $\varphi^\mu$ yields a morphism 
	$$\varphi_{\mu,\theta}:{\mathrm{Bun}}_{B^-}^{\theta-\mu}\times X^\mu\to  \overline{\mathrm{Bun}}^\theta_{B^-}, $$
	which is representable by a locally closed immersion. By Proposition 6.1.3 of \textit{loc. cit.}, we obtain a stratification
	\begin{equation}\label{strat bun}
		\overline{\mathrm{Bun}}_{B^-}^\theta=\bigsqcup_{\mu\in\mathbf{Y}^{pos},~\mu\leq \theta,\Gamma \in\mathfrak{A}(\mu)}{\mathrm{Bun}}_{B^-}^{\theta-\mu}\times X_\Gamma^\mu.
	\end{equation} 
\begin{conj}\label{conj stalks independance}
    For any $i\in\mathbb{Z}$ and closed point $x\in \overline{\mathrm{Bun}}_{B^-}$, the dimension of the $\mathbb{k}$-vector space $\mathrm{H}^i((\mathrm{IC}_{\overline{\mathrm{Bun}}_{B^-}})_x)$ does not depend on the characteristic of $\mathbb{k}$.
\end{conj}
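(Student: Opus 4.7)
The plan is to localize the question along the stratification \eqref{strat bun}, then reduce to central stalks of the Zastava spaces via factorization, and finally—in type $A$—apply Laumon's small resolution. On each stratum $\mathrm{Bun}_{B^-}^{\theta - \mu} \times X_\Gamma^\mu$ appearing in \eqref{strat bun}, the $*$-pullback of $\mathrm{IC}_{\overline{\mathrm{Bun}}_{B^-}^\theta}$ along the locally closed immersion $\varphi_{\mu,\Gamma}^\mu$ is smooth along the $\mathrm{Bun}_{B^-}^{\theta-\mu}$-direction, so it splits (up to a cohomological shift) as an external tensor product $\underline{\mathbb{k}}_{\mathrm{Bun}_{B^-}^{\theta - \mu}} \boxtimes \mathcal{K}_\Gamma^\mu$ for some complex $\mathcal{K}_\Gamma^\mu$ on $X_\Gamma^\mu$. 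Through the factorization isomorphism of Proposition \ref{prop factorization zastava} applied to the Zastava spaces $Z^\gamma$, the complex $\mathcal{K}_\Gamma^\mu$ is controlled by the stalks of $\mathrm{IC}_{Z^\gamma}$ at \emph{central} points (those whose image under $\mathfrak{s}^\gamma$ is supported at a single point of $X$), one factor for each part $\gamma$ of $\Gamma$. Hence the problem reduces to showing that the central stalks of $\mathrm{IC}_{Z^\gamma}$ are characteristic-independent for every $\gamma \in \mathbf{Y}^{pos}$.

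For $G = \mathrm{SL}_n$ I would invoke the Laumon small resolution $\pi : \widetilde{Z}^\gamma \to Z^\gamma$, where $\widetilde{Z}^\gamma$ is the smooth moduli space of complete flags of locally free subsheaves of $\mathcal{O}_X^n$ whose cokernel torsion has degree profile $\gamma$. Since $\pi$ is proper and small with $\widetilde{Z}^\gamma$ smooth and irreducible, the complex $\pi_* \underline{\mathbb{k}}_{\widetilde{Z}^\gamma}[\dim \widetilde{Z}^\gamma]$ is perverse and meets the IC support conditions on every stratum; it therefore coincides with $\mathrm{IC}_{Z^\gamma}$ in every characteristic, and this step requires no recourse to the decomposition theorem. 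The stalk of $\mathrm{IC}_{Z^\gamma}$ at a central point $z$ consequently equals $H^*(\pi^{-1}(z), \mathbb{k})$ up to a shift. The fibers $\pi^{-1}(z)$ over central points admit affine pavings—they stratify as iterated affine bundles over products of partial flag varieties indexed by the multiplicities in $\gamma$—so their cohomology is torsion-free and its $\mathbb{k}$-rank is given by an explicit characteristic-free combinatorial formula (essentially a product of Gaussian multinomials), thereby concluding the proof for $\mathrm{SL}_n$.

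The main obstacle to extending this approach to general $G$ is the absence of a small resolution of $Z^\gamma$ outside type $A$. Overcoming it is precisely the content of the forthcoming Achar-Riche-Dhillon work alluded to in the introduction; a direct geometric approach would likely require either an equivariant-cohomology specialisation argument in the spirit of Juteau-Mautner-Williamson, or a separate combinatorial control of the costalks of $\mathrm{IC}_{\overline{\mathrm{Bun}}_{B^-}}$ along each stratum of \eqref{strat bun}.
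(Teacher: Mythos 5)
Your proposal is correct and follows essentially the same route as the paper's proof of Lemma \ref{lem kuznetsov}: reduce via the factorization property of Zastava spaces to central stalks, invoke Laumon's small resolution in type $A$ (where smallness alone, without the decomposition theorem, identifies the pushforward with the IC sheaf in any coefficient characteristic), and conclude from the affine/pseudoaffine pavings of the fibers. The only cosmetic difference is that the paper passes through the quasimap space $\mathcal{Q}^\theta$ over the trivial bundle (open in $\overline{\mathrm{Bun}}_{B^-}^\theta$ since $X=\mathbb{P}^1$) and cites Kuznetsov's smallness result there, whereas you phrase everything directly on the Zastava local models.
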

	
	\begin{lem}\label{lem kuznetsov}
		If $G=\mathrm{SL}_n$, then Conjecture \ref{conj stalks independance} holds.
	\end{lem}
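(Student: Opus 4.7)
The plan is to reduce the computation of the IC stalks of $\overline{\mathrm{Bun}}_{B^-}$ to the ordinary cohomology of the fibers of a small resolution, the latter being manifestly characteristic-independent.

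When $G = \mathrm{SL}_n$ one has at one's disposal Laumon's quasi-flag compactification $\mathcal{Q}_{B^-}$, the smooth algebraic stack parametrising complete flags of locally free subsheaves of the standard rank-$n$ vector bundle on $X$. There is a natural morphism $\pi : \mathcal{Q}_{B^-} \to \overline{\mathrm{Bun}}_{B^-}$ that remembers only the associated Pl\"ucker data; Kuznetsov proved that $\pi$ is proper and small. Since smallness is a purely dimension-theoretic statement about the fibers of $\pi$ along the stratification \eqref{strat bun}, it holds over any base field and in particular in our modular setting. Consequently, for any $\mathbb{k}$, we obtain a canonical isomorphism
$$\mathrm{IC}_{\overline{\mathrm{Bun}}_{B^-}} \simeq \pi_*\underline{\mathbb{k}}_{\mathcal{Q}_{B^-}}[\dim \mathcal{Q}_{B^-}],$$
and proper base change yields, for every closed point $x \in \overline{\mathrm{Bun}}_{B^-}$, an isomorphism
$$\mathrm{H}^i\bigl((\mathrm{IC}_{\overline{\mathrm{Bun}}_{B^-}})_x\bigr) \simeq \mathrm{H}^{i + \dim \mathcal{Q}_{B^-}}(\pi^{-1}(x), \underline{\mathbb{k}}).$$

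It therefore suffices to show that the Betti numbers of each fiber of $\pi$ are independent of $\mathrm{char}(\mathbb{k})$. Using \eqref{strat bun} we may assume that $x$ lies in $\mathrm{Bun}_{B^-}^{\theta - \mu} \times X^\mu_\Gamma$ for some $\mu,\Gamma$; Kuznetsov's explicit description then exhibits $\pi^{-1}(x)$ as a product, indexed by the points in the support of the colored divisor defining the stratum, of local Quot-scheme fibers concentrated at a single point of $X$. Each such local piece admits a paving by affine spaces indexed by Kostant partitions (equivalently, tuples of partitions summing to the local defect of the Pl\"ucker data), constructed over $\mathbb{Z}$. Consequently the cohomology of the fiber is torsion-free and its rank is given by a universal combinatorial formula independent of $\mathrm{char}(\mathbb{k})$, which proves the lemma.

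The main point requiring attention is the verification that Kuznetsov's original arguments, written over the complex numbers, transfer to an algebraically closed field $\mathbb{F}$ of characteristic $p \neq \ell$: smoothness of $\mathcal{Q}_{B^-}$, the fiber-dimension estimate yielding smallness, and the affine paving of the local Quot fibers are all formal geometric statements with no characteristic obstruction, so this verification should be routine; nevertheless it is the only nontrivial checkpoint in the argument.
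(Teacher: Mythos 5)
Your overall strategy coincides with the paper's: use Laumon's compactification, smallness of the resolution, and an affine/pseudoaffine paving of the fibers to conclude that the stalks are characteristic-independent. However, there is one step where your argument leans on a claim that is stronger than what Kuznetsov actually proves. Kuznetsov's smallness theorem in \cite{kuznetsov1996laumon} concerns the morphism $^L\mathcal{Q}^\theta\to\mathcal{Q}^\theta$ between Laumon's and Drinfeld's compactifications of the space of \emph{maps} $\mathbb{P}^1\to G/B^-$, i.e.\ it is proved only over the locus where the underlying $G$-bundle is trivial. The smallness of the full stack morphism $\pi:{^L\overline{\mathrm{Bun}}_{B^-}}\to\overline{\mathrm{Bun}}_{B^-}$ is only \emph{asserted} in \cite[\S0.2.1]{BG}, without proof, which is precisely why the paper does not rely on it. As written, your appeal to proper base change at an arbitrary closed point $x$ therefore rests on an unproved global smallness statement.

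The paper fills this gap by a preliminary reduction: using the Zastava spaces attached to $\overline{\mathrm{Bun}}_{B^-}^\theta$ and their factorization property, one shows that the stalk of $\mathrm{IC}_{\overline{\mathrm{Bun}}_{B^-}^\theta}$ is constant along each defect stratum $\mathrm{Bun}_{B^-}^{\theta-\delta}$ of \eqref{strat bun} (the argument is the same as in Corollary \ref{coro constructible}). Since the trivial bundle is open in $\mathrm{Bun}_G$ for $X=\mathbb{P}^1$ (Lemma \ref{lem trivial open}), the quasimap space $\mathcal{Q}^\theta$ is open in $\overline{\mathrm{Bun}}_{B^-}^\theta$ and meets every stratum, so it suffices to compute the stalk at a point with trivial underlying $G$-bundle --- and there Kuznetsov's smallness and the pseudoaffine stratification of the fibers apply verbatim. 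To repair your proof you should either insert this reduction, or supply an actual proof of the global smallness of $\pi$ (which would then make your shorter argument valid). The remaining points you flag --- transfer of Kuznetsov's arguments from $\mathbb{C}$ to $\mathbb{F}$ of characteristic $p$, and the characteristic-independence of the cohomology of a variety stratified by pseudoaffine cells --- are handled in the paper exactly as you propose.
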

	The proof of this lemma is postponed to §\ref{section Laumon}, and will follow from the facts that: 
	\begin{enumerate}
		\item The stack $\overline{\mathrm{Bun}}_{B^-}$ admits a small resolution of singularities $\pi$ when $G=\mathrm{SL}_n$.
		\item The cohomology of the fibers of $\pi$ is independent of the characteristic.
	\end{enumerate} 
	Indeed, the first point implies that the stalk of $\mathrm{IC}_{\overline{\mathrm{Bun}}_{B^-}}$ at a point coincides with the cohomology of the fiber of this point through $\pi$, and the second point allows to conclude.

	

	\begin{prop}\label{prop stacks of BunB}Assume that Conjecture \ref{conj stalks independance} holds. Let $\mu\in \mathbf{Y}^{pos}$ and $\Gamma\in\mathfrak{A}(\mu)$. Then the complex $(\varphi_{\Gamma}^{\mu})^!\mathrm{IC}_{\overline{\mathrm{Bun}}_{B^-}}\in \mathrm{D}({\mathrm{Bun}}_{B^-}\times X_\Gamma^\mu)$ lives in perverse cohomological degrees $\geq2$.

	\end{prop}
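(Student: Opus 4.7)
The plan is to reduce the statement to an analogous $*$-restriction statement via Verdier duality, then use the factorization property of $\overline{\mathrm{Bun}}_{B^-}$ along the stratification \eqref{strat bun} to localize the problem, and finally invoke Conjecture \ref{conj stalks independance} to transfer the bound from the characteristic zero setting where it is known.

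First I would note that, since $\mathrm{IC}_{\overline{\mathrm{Bun}}_{B^-}}$ is Verdier self-dual (up to the appropriate shift and Tate twist), the statement that $(\varphi_{\Gamma}^{\mu})^!\mathrm{IC}_{\overline{\mathrm{Bun}}_{B^-}}$ lives in perverse cohomological degrees $\geq 2$ is equivalent, by taking Verdier duals on both sides, to the statement that $(\varphi_{\Gamma}^{\mu})^*\mathrm{IC}_{\overline{\mathrm{Bun}}_{B^-}}$ lives in perverse cohomological degrees $\leq -2$. Since $\mathrm{Bun}_{B^-}\times X^\mu_\Gamma$ is smooth, this translates into a concrete vanishing statement for the ordinary cohomology sheaves of the $*$-restriction in degrees above an explicit threshold determined by $\dim(\mathrm{Bun}_{B^-}\times X^\mu_\Gamma)-\dim(\overline{\mathrm{Bun}}_{B^-}) - 2$.

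Next I would invoke the factorization property of Zastava spaces (Proposition \ref{prop factorization zastava}) to reduce the computation to a product of stalks of $\mathrm{IC}_{\overline{\mathrm{Bun}}_{B^-}}$ at points of $X^{\gamma_k}$ associated with the partition $\Gamma=\{\{\gamma_1,\dots,\gamma_m\}\}$. More precisely, the locally closed immersion $\varphi^\mu_\Gamma$ factors compatibly through the factorization isomorphism, so the cohomology sheaves of $(\varphi_{\Gamma}^{\mu})^*\mathrm{IC}_{\overline{\mathrm{Bun}}_{B^-}}$ along the stratum $\mathrm{Bun}_{B^-}\times X^\mu_\Gamma$ are computed as products (via the K\"unneth formula) of the stalks of $\mathrm{IC}_{\overline{\mathrm{Bun}}_{B^-}}$ at a single closed point attached to each $\gamma_k$. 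Combining these decompositions with Conjecture \ref{conj stalks independance}, the dimensions of the cohomology sheaves of $(\varphi_{\Gamma}^{\mu})^*\mathrm{IC}_{\overline{\mathrm{Bun}}_{B^-}}$ -- in particular their vanishing in high enough degrees -- are independent of $\mathrm{char}(\mathbb{k})$.

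Finally, I would appeal to the corresponding statement in \cite{BBGM}, where the analogous bound is proved for $\mathbb{k}$ of characteristic zero (by a direct analysis using the semi-infinite picture on $\overline{\mathrm{Bun}}_{B^-}$, cf. the computation recalled in the case $G=\mathrm{SL}_n$ via Laumon's resolution in §\ref{section Laumon}). The main obstacle is then to make the K\"unneth/factorization step rigorous in the modular setting: one must check that the numerical threshold ``$\geq 2$'' (as opposed to the trivial bound ``$\geq 1$'' coming only from the IC support condition) is stable under the factorization and can be read off purely from dimensions of stalks, without appealing to characteristic-zero-specific inputs such as the Decomposition Theorem. Once this bookkeeping is done, the combination of the characteristic zero bound with Conjecture \ref{conj stalks independance} yields the desired vanishing in arbitrary characteristic.
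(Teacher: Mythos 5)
Your proposal is correct and follows essentially the same route as the paper: dualize to reduce the $!$-restriction bound to the statement that $(\varphi_{\Gamma}^{\mu})^*\mathrm{IC}_{\overline{\mathrm{Bun}}_{B^-}}$ lives in perverse degrees $\leq -2$, take that bound from \cite[Theorem 7.3]{BBGM} in characteristic zero, and transfer it to positive characteristic via Conjecture \ref{conj stalks independance}, since the perverse degree of the restriction to the (smooth) stratum is read off from the stalk dimensions. The factorization/K\"unneth discussion you add is not needed for this step (the paper simply cites the BBGM description of the $*$-restriction along the stratum), though it is the mechanism underlying that cited result.
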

	
	\begin{proof}
		In the case where $\mathrm{char}(\mathbb{k})=0$, the description of $$(\varphi_{\Gamma}^{\mu})^*\mathrm{IC}_{\overline{\mathrm{Bun}}_{B^-}}\in \mathrm{D}({\mathrm{Bun}}_{B^-}\times X_\Gamma^\mu)$$ 
  follows from \cite[Theorem 7.3]{BBGM}. It shows in particular that $(\varphi_{\Gamma}^{\mu})^*\mathrm{IC}_{\overline{\mathrm{Bun}}_{B^-}}$ lives in perverse cohomological degrees $\leq -2$. Applying Verdier duality, we deduce that $(\varphi_{\Gamma}^{\mu})^!\mathrm{IC}_{\overline{\mathrm{Bun}}_{B^-}}$ lives in perverse cohomological degrees $\geq2$ when $\mathrm{char}(\mathbb{k})=0$. Thus, the case of positive characteristic follows from Lemma \ref{lem kuznetsov}.
	\end{proof}
	For all $\nu\in\mathbf{Y}$, $\mu\in \mathbf{Y}^{pos}$ and $\Gamma\in\mathfrak{A}(\mu)$, we can similarly define a locally closed immersion
	$${_\nu\varphi^\mu_\Gamma}:{_{\nu}\mathrm{Bun}}_{N^-}\times X^\mu_\Gamma\to  {_{\leq\nu}\overline{\mathrm{Bun}}}_{N^-},$$
	which is obtained by base change of the morphism $\varphi^\mu_\Gamma$ through the morphism ${_{\leq\nu}\overline{\mathrm{Bun}}}_{N^-}\to {\overline{\mathrm{Bun}}}_{B^-}$.
		\begin{coro}\label{coro stalks}Assume that Conjecture \ref{conj stalks independance} holds. Let $\nu\in\mathbf{Y}$, $\mu\in \mathbf{Y}^{pos}$ and $\Gamma\in\mathfrak{A}(\mu)$. Then the complex $({_\nu\varphi_{\Gamma}^{\mu}})^!\mathrm{IC}_{{_{\leq\nu}\overline{\mathrm{Bun}}}_{N^-}}\in \mathrm{D}({_{\nu}\mathrm{Bun}}_{N^-}\times X_\Gamma^\mu)$ lives in perverse cohomological degrees $\geq2$.
		\end{coro}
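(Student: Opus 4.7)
The plan is to deduce this from Proposition \ref{prop stacks of BunB} by smooth base change, using the smoothness of the canonical map $f \colon {_{\leq\nu}\overline{\mathrm{Bun}}}_{N^-} \to \overline{\mathrm{Bun}}_{B^-}$ established in Proposition \ref{prop open substack}. The crucial observation is that the locally closed immersion ${_\nu\varphi^\mu_\Gamma}$ is defined by base change: by its very definition, it fits into a Cartesian square
\[
\begin{tikzcd}
{_{\nu}\mathrm{Bun}}_{N^-}\times X^\mu_\Gamma \ar[r, "{_\nu\varphi^\mu_\Gamma}"] \ar[d, "g"'] & {_{\leq\nu}\overline{\mathrm{Bun}}}_{N^-} \ar[d, "f"] \\
{\mathrm{Bun}}_{B^-}\times X^\mu_\Gamma \ar[r, "\varphi^\mu_\Gamma"'] & \overline{\mathrm{Bun}}_{B^-}
\end{tikzcd}
\]
in which both vertical arrows are smooth of the same relative dimension, say $d$.

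Since $f$ is smooth and surjects (on the relevant component) onto $\overline{\mathrm{Bun}}_{B^-}$, smooth pullback of intersection cohomology complexes gives an isomorphism $\mathrm{IC}_{{_{\leq\nu}\overline{\mathrm{Bun}}}_{N^-}}\simeq f^*\mathrm{IC}_{\overline{\mathrm{Bun}}_{B^-}}[d]$. Using the identity $f^*=f^![-2d]$ and the functoriality of $(-)^!$ around the Cartesian square (which gives $({_\nu\varphi^\mu_\Gamma})^!\, f^!\simeq g^!\,(\varphi^\mu_\Gamma)^!$), we compute
\[
({_\nu\varphi^\mu_\Gamma})^!\mathrm{IC}_{{_{\leq\nu}\overline{\mathrm{Bun}}}_{N^-}}\simeq ({_\nu\varphi^\mu_\Gamma})^!f^!\mathrm{IC}_{\overline{\mathrm{Bun}}_{B^-}}[-d]\simeq g^!(\varphi^\mu_\Gamma)^!\mathrm{IC}_{\overline{\mathrm{Bun}}_{B^-}}[-d]\simeq g^*[d]\,(\varphi^\mu_\Gamma)^!\mathrm{IC}_{\overline{\mathrm{Bun}}_{B^-}},
\]
where in the last step I used $g^!=g^*[2d]$.

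The conclusion is now immediate: by Proposition \ref{prop stacks of BunB}, the complex $(\varphi^\mu_\Gamma)^!\mathrm{IC}_{\overline{\mathrm{Bun}}_{B^-}}$ lives in perverse cohomological degrees $\geq 2$; since $g$ is smooth of relative dimension $d$, the functor $g^*[d]$ is $t$-exact for the perverse $t$-structure, so it preserves this bound, yielding the claim.

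The main subtlety I expect will be the verification of the Cartesian nature of the square. In particular, one must track carefully the effect of twisting the $T$-bundle by the divisor $D$ under $\varphi^\mu_\Gamma$: since ${_{\leq\nu}\overline{\mathrm{Bun}}}_{N^-}$ is the fiber of $\overline{\mathfrak{q}}$ at a specific $T$-bundle of degree $\nu$ at $x$, the fiber product lives over those components of $\mathrm{Bun}_{B^-}\times X^\mu_\Gamma$ whose $T$-bundle degree after twist matches $\nu$. Once this matching of components is established, the identification of the fiber product with ${_\nu\mathrm{Bun}}_{N^-}\times X^\mu_\Gamma$ reduces to comparing the open conditions defining $\mathrm{Bun}_{B^-}\subset\overline{\mathrm{Bun}}_{B^-}$ and ${_\nu\mathrm{Bun}}_{N^-}\subset{_{\leq\nu}\overline{\mathrm{Bun}}}_{N^-}$, both of which say that the Plücker morphisms are honest morphisms of vector bundles (away from $x$ for the latter).
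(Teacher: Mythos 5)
Your proof is correct and follows exactly the paper's own argument: the paper deduces the corollary from Proposition \ref{prop stacks of BunB} together with the smoothness of ${_{\leq\nu}\overline{\mathrm{Bun}}}_{N^-}\to\overline{\mathrm{Bun}}_{B^-}$ (Proposition \ref{prop open substack}), noting that the perversely shifted smooth pullback of the IC-sheaf is the IC-sheaf, and ${_\nu\varphi^\mu_\Gamma}$ is by definition the base change of $\varphi^\mu_\Gamma$ along this smooth map. You have merely spelled out the base-change and shift bookkeeping that the paper leaves implicit.
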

		\begin{proof}
			This follows from Proposition \ref{prop stacks of BunB} together with the fact that the morphism 
   $$_{\leq\nu}\overline{\mathrm{Bun}}_{N^-}\to\overline{\mathrm{Bun}}_{B^-}$$ 
   is smooth (Remark \ref{prop open substack}), so that the perversely-shifted pullback of the IC-sheaf is the IC-sheaf. 
	\end{proof}
 
	\subsection{Laumon's compactification}\label{section Laumon} Recall the classical fact that there is an equivalence of categories between the category of rank-$n$ vector bundle $\mathcal{M}$ over $X$, endowed with an isomorphism $\wedge^n\mathcal{M}\simeq\mathcal{O}_X$, and the category of principal $\mathrm{SL}_n$-bundles over $X$, defined on objects by sending a sheaf $\mathcal{M}$ to the relative spectrum of the dual sheaf $\mathcal{M}^\vee$.
	
	In this subsection, we prove Lemma \ref{lem kuznetsov}. When $G$ is $\mathrm{SL}_n$, Laumon introduced another compactification of the morphism $\mathrm{Bun}_{B^-}\to\mathrm{Bun}_G$. Namely, $^L\overline{\mathrm{Bun}}_{B^-}$ is the smooth algebraic stack classifying the data of a rank $n$ vector bundle $\mathcal{M}$ over $X$ together with an isomorphism $\wedge^n\mathcal{M}\simeq\mathcal{O}_X$ and a complete flag $0=\mathcal{M}_0\subset\mathcal{M}_1\subset\cdots\subset \mathcal{M}_n=\mathcal{M}$ of locally free subsheaves, where each inclusion $\mathcal{M}_i\subset\mathcal{M}_{i+1}$ is merely an inclusion of subsheaves, and not necessarily of vector bundles (cf. \cite[§3.3.1]{Lau}, the version where the quotients $\mathcal{M}_{i+1}/\mathcal{M}_i$ are vector bundles is isomorphic to $\mathrm{Bun}_{B^-}$). There is then a natural morphism $\pi:{^L\overline{\mathrm{Bun}}_{B^-}}\to \overline{\mathrm{Bun}}_{B^-}$, sending a complete flag $(\mathcal{M}_i)_{0\leq i\leq n}$ to the point 
 $$(\mathcal{F}_G:=(\mathcal{M},\wedge^n\mathcal{M}\simeq\mathcal{O}_X),\mathcal{F}_T,\kappa)$$ of $\overline{\mathrm{Bun}}_{B^-}$ defined as follows: $\mathcal{F}_T$ is the $T$-bundle $(\mathrm{det}(\mathcal{M}_{i+1}/\mathcal{M}_{i}))_{0\leq i\leq n-1}$, and $(\kappa^\lambda)_{\lambda\in\mathbf{X}^+}$ is the collection of morphisms determined by the inclusions of sheaves 
	$$\mathcal{L}^{\varpi_k}_{\mathcal{F}_T}:=\wedge^k\mathcal{M}_k\subset \wedge^k\mathcal{M}=\mathcal{V}^{\varpi_k}_\mathcal{M}\quad\forall~ 0\leq k\leq n.$$
	The connected components of $^L\overline{\mathrm{Bun}}_{B^-}$ are labelled by $\mathbf{Y}$. We will denote by $^L\overline{\mathrm{Bun}}_{B^-}^\theta$ the connected component associated with an element $\theta\in\mathbf{Y}$. By construction, $^L\overline{\mathrm{Bun}}_{B^-}^\theta$ is the pre-image of $\overline{\mathrm{Bun}}_{B^-}^\theta$ through $\pi$, and we will denote by 
	$$\pi_\theta:{^L\overline{\mathrm{Bun}}_{B^-}^\theta}\to \overline{\mathrm{Bun}}_{B^-}^\theta $$
	the restriction of $\pi$. Thanks to \cite[Lemme 4.2.3]{Lau}, the fibers of $\pi_\theta$ are projective smooth schemes over $\mathbb{F}$.  
 We need one last preliminary result.
 \begin{lem}\label{lem trivial open}
     The trivial bundle $\mathcal{F}^0_G$ is open inside $\mathrm{Bun}_G$.
 \end{lem}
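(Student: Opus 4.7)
The plan is to identify the locus of trivial bundles in $\mathrm{Bun}_G$ with the image of a relative scheme of sections, and to show that this image is open via a cohomology vanishing on $\mathbb{P}^1$. To prove the lemma, it suffices to check that for any $\mathbb{F}$-scheme $S$ and any family $\mathcal{F}_G \in \mathrm{Bun}_G(S)$, the locus $S^{\mathrm{triv}} := \{s \in S \mid \mathcal{F}_G|_{X \times \{s\}} \simeq \mathcal{F}^0_G\}$ is open in $S$. Since a $G$-torsor on $\mathbb{P}^1$ is trivial if and only if it admits a global section, $S^{\mathrm{triv}}$ is precisely the image of the forgetful morphism $\pi : \mathrm{Sect}_{X \times S / S}(\mathcal{F}_G) \to S$ from the relative scheme of sections of $\mathcal{F}_G \to X \times S$. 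Representability of this section scheme by a scheme locally of finite presentation over $S$ is classical, since $X = \mathbb{P}^1$ is projective and $\mathcal{F}_G \to X \times S$ is affine ($G$ being affine).

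The key step is to verify that $\pi$ is smooth at every section $\sigma_0$ lying over a point $s_0 \in S^{\mathrm{triv}}$. Because smooth morphisms between schemes locally of finite presentation are open, this will immediately yield an open neighborhood of $s_0$ in $S$ contained in the image $S^{\mathrm{triv}}$, and letting $s_0$ vary over $S^{\mathrm{triv}}$ will produce the desired openness. Smoothness at $\sigma_0$ is checked via the infinitesimal lifting criterion: the obstruction to extending $\sigma_0$ over a square-zero thickening lies in the group
\[
H^1(\mathbb{P}^1,\, \sigma_0^* T_{\mathcal{F}_G|_{X \times \{s_0\}} / \mathbb{P}^1}),
\]
where $T_{\mathcal{F}_G|_{X \times \{s_0\}} / \mathbb{P}^1}$ denotes the relative tangent bundle of the $G$-torsor $\mathcal{F}_G|_{X \times \{s_0\}}$ over $\mathbb{P}^1$. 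Triviality of $\mathcal{F}_G|_{X \times \{s_0\}}$ allows one to identify $\sigma_0^* T_{\mathcal{F}_G|_{X \times \{s_0\}} / \mathbb{P}^1}$ with the constant bundle $\mathfrak{g} \otimes \mathcal{O}_{\mathbb{P}^1}$, whose first cohomology group vanishes since $H^1(\mathbb{P}^1, \mathcal{O}_{\mathbb{P}^1}) = 0$. This vanishing, which uses crucially that $X = \mathbb{P}^1$, is the essential input.

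The main technical obstacle will be carrying out the deformation-theoretic step rigorously: one has to match the abstract obstruction for lifting a section of a smooth morphism with the concrete cohomology group displayed above, and for this the correct identification of the vertical tangent bundle of the trivial $G$-torsor (together with its pullback along $\sigma_0$) is essential. Once that identification is in place, the cohomological vanishing on $\mathbb{P}^1$ yields the smoothness of $\pi$ at $\sigma_0$, and the openness of smooth morphisms concludes the proof.
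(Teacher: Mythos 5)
Your argument is correct, but it takes a genuinely different route from the paper. The paper's proof is a two-line appeal to the uniformization theorem for $X=\mathbb{P}^1$: it invokes the isomorphism $[G[t^{-1}]\backslash \mathrm{Gr}]\simeq \mathrm{Bun}_G$ from \cite[Theorem 2.3.7]{zhu2016introduction}, identifies the trivial bundle with the $G[t^{-1}]$-orbit of the base point of $\mathrm{Gr}$, and concludes by openness of that orbit (the ``big cell'', \cite[Proposition 2.3.3]{zhu2016introduction}). You instead work directly with the moduli description: you identify the trivializable locus in a family over $S$ with the image of the relative section scheme $\mathrm{Sect}_{X\times S/S}(\mathcal{F}_G)\to S$, and prove this map is smooth (hence open) along sections over trivializable fibers, the obstruction space being $\mathfrak{g}\otimes H^1(\mathbb{P}^1,\mathcal{O}_{\mathbb{P}^1})=0$ since the vertical tangent bundle of a $G$-torsor is canonically $\mathfrak{g}\otimes\mathcal{O}$. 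Both arguments are sound and both locate the role of $X=\mathbb{P}^1$ correctly (the statement genuinely fails in higher genus, where $H^1(X,\mathcal{O}_X)\neq 0$ and, correspondingly, where uniformization by $\mathrm{Gr}$ with one puncture is unavailable). Your approach is more self-contained and explains structurally \emph{why} genus zero is needed, at the cost of the standard but nontrivial inputs on representability of section schemes and their deformation theory; the paper's approach outsources all the work to two citations and is shorter. If you keep your version, you should make explicit that $S^{\mathrm{triv}}$ is to be read at geometric points (consistent with the paper's convention for $|\mathrm{Bun}_G|$, where points are identified after field extension), and include a standard limit argument reducing to $S$ of finite presentation over $\mathbb{F}$; neither is a genuine gap.
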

 \begin{proof}
     Recall that we have assumed that $X=\mathbb{P}^1$. Thanks to \cite[Theorem 2.3.7]{zhu2016introduction}, we have an isomorphism of $\mathbb{F}$-stacks
     $$[G[t^{-1}]\backslash \mathrm{Gr}]\simeq \mathrm{Bun}_G, $$
     where we take the quotient stack on the left-hand side. The trivial bundle $\mathcal{F}^0_G$ corresponds to the $G[t^{-1}]$-orbit associated with the class of the neutral element in $\mathrm{Gr}$. Thanks to \cite[Proposition 2.3.3]{zhu2016introduction}, the latter orbit is open in $\mathrm{Gr}$.
 \end{proof}
Finally, recall the definition of a ''pseudoaffine" variety over $\mathbb{F}$ (cf. \cite[§2.3]{kuznetsov1996laumon}). Those are defined by induction on the dimension. A pseudo affine space of dimension one is $\mathbb{A}^1$. A variety $Y$ is pseudo affine if it admits a locally trivial fibration $Y\to B$, with pseudo affine fibers and pseudo affine $B$. We are now ready to prove Lemma \ref{lem kuznetsov}:
	\begin{proof}
		Let $\theta\in\mathbf{Y}^{pos}$. First, let us justify why the stalk of $\mathrm{IC}_{\overline{\mathrm{Bun}}_{B^-}^\theta}$ at a point $(\mathcal{F}_G,\mathcal{F}_T,\kappa)\in \overline{\mathrm{Bun}}_{B^-}^\theta$ depends only on the type of defect of that point, i.e. to which stratum of \eqref{strat bun} it belongs. Similarly to §\ref{section Zastava spaces} (and as in \cite{BBGM}), one can define the Zastava space
		$${'Z^{\mu-\theta}}\subset \overline{\mathrm{Bun}}_{B^-}^\theta\times_{\mathrm{Bun}_G}\mathrm{Bun}_B^\mu $$
		as an open substack of the right-hand side, for any $\mu\in\mathbf{Y}^{pos}$. The Zastava spaces defined in this way satisfy an analogous factorization property (see for instance \cite[§7]{BBGM}). The canonical projection ${'Z^{\mu-\theta}}\to \overline{\mathrm{Bun}}_{B^-}^\theta$ is smooth for $\mu$ large enough, and any open substack of finite type of $\overline{\mathrm{Bun}}_{B^-}^\theta$ belongs to its image for $\mu$ large enough (cf. {\cite[§3.6]{BBGM}}). Moreover, the stratification by type of defect \eqref{strat bun} of $\overline{\mathrm{Bun}}_{B^-}^\theta$ induces a stratification of 
		${'Z^{\mu-\theta}}$. Therefore, it is enough to prove that the dimension of the cohomology of the stalk of $\mathrm{IC}_{'Z^{\mu-\theta}}$ is constant on every stratum. Thanks to the factorization property of ${'Z^{\mu-\theta}}$, we can see that it is enough to check that the stalk of $\mathrm{IC}_{\overline{\mathrm{Bun}}_{B^-}^\theta}$ at a point of $\mathrm{Bun}_{B^-}^{\theta-\delta}$ does not depend on the point, for $0\leq\delta\leq\theta$. Here $\mathrm{Bun}_{B^-}^{\theta-\delta}$ is seen as included in $\mathrm{Bun}_{B^-}^{\theta-\delta}\times X^{\theta-\delta}_{\{\{\theta-\delta\}\}}$ via the inclusion of $\{x\}$ into $X^{\theta-\delta}_{\{\{\theta-\delta\}\}}$. But the same arguments as in the proof of Corollary \ref{coro constructible} show that the restriction of $\mathrm{IC}_{\overline{\mathrm{Bun}}_{B^-}^\theta}$ to $\mathrm{Bun}_{B^-}^{\theta-\delta}$ has constant stalks.
		
		Therefore it is enough to consider points of the form $\varphi:=(\mathcal{F}^0_G,\mathcal{F}_T,\kappa)\in \overline{\mathrm{Bun}}_{B^-}^\theta$, in which case the result follows from  \cite{kuznetsov1996laumon}. Indeed, in this case $\varphi$ belongs to the quasimap space $\mathcal{Q}^\theta:=(\overline{\mathfrak{p}}^\theta)^{-1}(\mathcal{F}^0_G)$, and $(\pi_\theta)^{-1}(\{\varphi\})$ coincides with the pre-image of $\{\varphi\}$ under the small resolution of singularities $^L\mathcal{Q}^\theta\to \mathcal{Q}^\theta$ from \textit{loc. cit.}, where $^L\mathcal{Q}^\theta$ is Laumon's compactification of the space of maps of degree $\theta$ (\cite[§1.4]{kuznetsov1996laumon}). Thus, the stalk of $\mathrm{IC}_{\mathcal{Q}^\theta}$ at $\varphi$ coincides with the the \'etale cohomology $H^\bullet((\pi_\theta)^{-1}(\{\varphi\}),\mathbf{k})$. The scheme of finite type $(\pi_\theta)^{-1}(\{\varphi\})$ is stratified by pseudoaffine spaces by \cite[Corollary 2.4.4]{kuznetsov1996laumon}, so that $H^\bullet((\pi_\theta)^{-1}(\{\varphi\}),\mathbf{k})$ is easily seen to be independent of $\mathrm{char}(\mathbb{k})$. So the dimension of the cohomology of the stalk of $\mathrm{IC}_{\mathcal{Q}^\theta}$ at $\varphi$ is independent of $\mathrm{char}(\mathbb{k})$. But $\mathcal{Q}^\theta$ is open in $\overline{\mathrm{Bun}}_{B^-}^\theta$ thanks to Lemma \ref{lem trivial open}, from which we deduce the the stalk of $\mathrm{IC}_{\overline{\mathrm{Bun}}_{B^-}^\theta}$ at $\varphi$ coincides with the stalk of $\mathrm{IC}_{\mathcal{Q}^\theta}$ at $\varphi$.
	\end{proof}
	\begin{rem}
		It is stated in \cite[§0.2.1]{BG} that the morphism $\pi$ is a small resolution of singularities, so that computing the stalks of $\mathrm{IC}_{\overline{\mathrm{Bun}}_{B^-}}$ amounts to computing the cohomology of the fibers of $\pi$. We have just explained why the cohomology of those fibers is independent of $\mathrm{char}(\mathbb{k})$. 
	\end{rem}
	\subsection{Semi-simplicity of the spherical category} 
	For $\nu\in\mathbf{Y}$, we put 
 $$\mathcal{IC}^{\frac{\infty}{2}}_\nu:=(\overline{i}_\nu)_{!*}(\mathrm{IC}_{_\nu\overline{\mathrm{Bun}}_{N^-}}).$$ By Proposition \ref{simple objects prop}, the set $\{\mathcal{IC}^{\frac{\infty}{2}}_\nu,~\nu\in \mathbf{Y}\}$ coincides with the set of isomorphism classes of simple objects of the category $\mathrm{Perv}_{G[[t]]}(\mathcal{F}l^{\frac{\infty}{2}})$.
	\begin{lem}\label{lem stalks}
		Assume that Conjecture \ref{conj stalks independance} holds. Let $\nu_1,\nu_2\in\mathbf{Y}$, with $\nu_1<\nu_2$. The complex $$i^!_{\nu_1}\mathcal{IC}^{\frac{\infty}{2}}_{\nu_2}\in \mathrm{D}({_{\nu_1}\mathrm{Bun}}_{N^-})$$ 
  lives in perverse cohomological degrees $\geq2$.
		
	\end{lem}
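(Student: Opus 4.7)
The plan is to reduce the estimate to Proposition \ref{prop stacks of BunB} (applicable because Conjecture \ref{conj stalks independance} is assumed) by transferring the question to $\overline{\mathrm{Bun}}_{B^-}$ via the smooth morphism $f:{_{\leq\nu_2}\overline{\mathrm{Bun}}_{N^-}}\to\overline{\mathrm{Bun}}_{B^-}$ of Proposition \ref{prop open substack}, and then matching the stratum ${_{\nu_1}\mathrm{Bun}_{N^-}}$ with a defect stratum of $\overline{\mathrm{Bun}}_{B^-}$ coming from \eqref{strat bun}.

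First, since ${_{\nu_2}\overline{\mathrm{Bun}}_{N^-}}$ is open and dense in ${_{\leq\nu_2}\overline{\mathrm{Bun}}_{N^-}}$, the intermediate extension $\mathcal{IC}^{\frac{\infty}{2}}_{\nu_2}=(\overline{i}_{\nu_2})_{!*}\mathrm{IC}_{_{\nu_2}\overline{\mathrm{Bun}}_{N^-}}$ coincides with $(\overline{i}_{\leq\nu_2})_\ast\mathrm{IC}_{_{\leq\nu_2}\overline{\mathrm{Bun}}_{N^-}}$. Because $\nu_1<\nu_2$, the map $i_{\nu_1}$ factors through $\overline{i}_{\leq\nu_2}$, so the claim reduces to showing that $j^!\mathrm{IC}_{_{\leq\nu_2}\overline{\mathrm{Bun}}_{N^-}}\in{^p\mathrm{D}}^{\geq 2}$, where $j:{_{\nu_1}\mathrm{Bun}_{N^-}}\hookrightarrow{_{\leq\nu_2}\overline{\mathrm{Bun}}_{N^-}}$ is the induced locally closed immersion. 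Writing $d$ for the relative dimension of $f$ (which is $\dim T$, as $f$ is the base change of $\overline{\mathrm{Bun}}_{B^-}\to\mathrm{Bun}_T$ along an $\mathbb{F}$-point), smoothness yields $\mathrm{IC}_{_{\leq\nu_2}\overline{\mathrm{Bun}}_{N^-}}\simeq f^![-d]\mathrm{IC}_{\overline{\mathrm{Bun}}_{B^-}}$, so the question becomes whether $(f\circ j)^!\mathrm{IC}_{\overline{\mathrm{Bun}}_{B^-}}[-d]\in{^p\mathrm{D}}^{\geq 2}$.

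Set $\mu:=\nu_2-\nu_1\in\mathbf{Y}^{\mathrm{pos}}$ and $\Gamma:=\{\{\mu\}\}$. The geometric heart of the proof is to identify $f\circ j$ with a composition $h\circ k\circ p$, where $p:{_{\nu_1}\mathrm{Bun}_{N^-}}\to\mathrm{Bun}_{B^-}^{\nu_1}$ is the analogue of $f$ for $\nu=\nu_1$ restricted to the non-degenerate open substack (also smooth of relative dimension $d$), $k:\mathrm{Bun}_{B^-}^{\nu_1}\hookrightarrow\mathrm{Bun}_{B^-}^{\nu_1}\times X^\mu_\Gamma$ is the closed embedding $\mathrm{id}\times\{x_\mu\}$ with $x_\mu$ the colored divisor of multidegree $\mu$ supported at $x$, and $h$ is the restriction of $\varphi^\mu_\Gamma$ to $\mathrm{Bun}_{B^-}^{\nu_1}\times X^\mu_\Gamma$. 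Granting the identification, Proposition \ref{prop stacks of BunB} gives $h^!\mathrm{IC}_{\overline{\mathrm{Bun}}_{B^-}}\in{^p\mathrm{D}}^{\geq 2}$; the closed $k^!$ preserves this bound by left $t$-exactness; smooth pullback along $p$ shifts perverse degrees by $-d$; and the outer $[-d]$ shift raises the bound back by $d$. The net result is $(f\circ j)^!\mathrm{IC}_{\overline{\mathrm{Bun}}_{B^-}}[-d]\in{^p\mathrm{D}}^{\geq 2-d+d}={^p\mathrm{D}}^{\geq 2}$, as required.

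The main obstacle is the identification of $f\circ j$ with $h\circ k\circ p$, which amounts to matching the defect stratification \eqref{strat bun} of $\overline{\mathrm{Bun}}_{B^-}$ with the stratification of ${_{\leq\nu_2}\overline{\mathrm{Bun}}_{N^-}}$ by the exact pole order at $x$: the stratum ${_{\nu_1}\mathrm{Bun}_{N^-}}$, where the pole drops from $\nu_2$ to $\nu_1$, corresponds precisely to the defect locus of $\overline{\mathrm{Bun}}_{B^-}^{\nu_2}$ whose defect divisor is the colored divisor $x_\mu$ concentrated at $x$. The dimensional bookkeeping $\dim p=\dim f$ is then automatic, since both are base changes of $\overline{\mathrm{Bun}}_{B^-}\to\mathrm{Bun}_T$ along $\mathbb{F}$-points.
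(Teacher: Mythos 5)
Your proposal is correct and takes essentially the same approach as the paper's proof. The paper applies Corollary \ref{coro stalks} directly — that corollary is precisely Proposition \ref{prop stacks of BunB} transported to the $N^-$ setting along the smooth morphism of Proposition \ref{prop open substack}, so the shift bookkeeping you carry out by hand is packaged there — and the key geometric observation in both is the same: the inclusion of ${_{\nu_1}\mathrm{Bun}}_{N^-}$ into ${_{\leq\nu_2}\overline{\mathrm{Bun}}}_{N^-}$ factors through the defect stratum $\varphi^\mu_\Gamma$ (with $\mu=\nu_2-\nu_1$, $\Gamma=\{\{\mu\}\}$) via the diagonal embedding of $\{x\}$ into $X^\mu_\Gamma$, after which left $t$-exactness of $!$-restriction along a closed immersion preserves the bound. (Minor caveat: your labelling $\mathrm{Bun}_{B^-}^{\nu_1}$ should be replaced by whichever connected component ${_{\nu_1}\mathrm{Bun}}_{N^-}$ actually maps to under Proposition \ref{prop open substack}; the sign is irrelevant to the argument.)
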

	\begin{proof}
		We want to apply Corollary \ref{coro stalks}. For that, we just need to notice that the inclusion $_{\nu_1}\mathrm{Bun}_{N^-}\subset {_{\leq\nu_2}\overline{\mathrm{Bun}}}_{N^-}$ factors through the locally closed inclusion
		$${_{\nu_1}\mathrm{Bun}}_{N^-}\times X^{\nu_2-\nu_1}_{\{\{\nu_2-\nu_1\}\}}\hookrightarrow  {_{\leq\nu_2}\overline{\mathrm{Bun}}}_{N^-}$$
		from §\ref{section Intersection cohomology}, where the inclusion ${_{\nu_1}\mathrm{Bun}}_{N^-}\hookrightarrow{_{\nu_1}\mathrm{Bun}}_{N^-}\times X^{\nu_2-\nu_1}_{\{\{\nu_2-\nu_1\}\}}$ is induced by the diagonal embedding of $\{x\}$ into $X^{\nu_2-\nu_1}_{\{\{\nu_2-\nu_1\}\}}$.
	\end{proof}
The previous Lemma allows us to take back the arguments of \cite{ABBGM}.
	\begin{thm}[{\cite[Proposition 4.3.2]{ABBGM}}]\label{thm ss}
		Assume that Conjecture \ref{conj stalks independance} holds. The category  $\mathrm{Perv}_{G[[t]]}(\mathcal{F}l^{\frac{\infty}{2}})$ is semi-simple.
	\end{thm}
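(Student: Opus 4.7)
The plan is to prove semi-simplicity by showing that $\mathrm{Ext}^1_{\mathrm{Perv}_{G[[t]]}(\mathcal{F}l^{\frac{\infty}{2}})}(\mathcal{IC}^{\frac{\infty}{2}}_{\nu_1},\mathcal{IC}^{\frac{\infty}{2}}_{\nu_2})=0$ for every pair $\nu_1,\nu_2\in\mathbf{Y}$. By Corollary~\ref{coro prop simple}, the $\mathcal{IC}^{\frac{\infty}{2}}_\nu$ exhaust the simple objects, and since $\mathrm{Perv}_{G[[t]]}(\mathcal{F}l^{\frac{\infty}{2}})$ is stable under extensions in the ambient perverse category, such an $\mathrm{Ext}^1$-vanishing suffices. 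Each $\mathcal{IC}^{\frac{\infty}{2}}_\nu$ is Verdier self-dual (up to shift), and $\mathrm{Ext}^1(A,B)\cong\mathrm{Ext}^1(\mathbb{D}B,\mathbb{D}A)$, so one may exchange the roles of $\nu_1$ and $\nu_2$ freely. The cases to treat are therefore $\nu_1<\nu_2$ (off-diagonal) and $\nu_1=\nu_2$ (diagonal); incomparable coweights are handled like the off-diagonal case after passing to a common ambient $_{\leq\nu}\overline{\mathrm{Bun}}_{N^-}$.

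For the off-diagonal case, the sheaf $\mathcal{IC}^{\frac{\infty}{2}}_{\nu_1}$ is supported in the closed complement $Z:=\bigcup_{\nu'<\nu_2}{_{\leq\nu'}\overline{\mathrm{Bun}}}_{N^-}$. Writing $i\colon Z\hookrightarrow{_{\leq\nu_2}\overline{\mathrm{Bun}}}_{N^-}$ for this closed embedding, adjunction yields
$$\mathrm{Ext}^1(\mathcal{IC}^{\frac{\infty}{2}}_{\nu_1},\mathcal{IC}^{\frac{\infty}{2}}_{\nu_2}) \cong \mathrm{Hom}(i^*\mathcal{IC}^{\frac{\infty}{2}}_{\nu_1},\, i^!\mathcal{IC}^{\frac{\infty}{2}}_{\nu_2}[1]).$$
Since $i^*\mathcal{IC}^{\frac{\infty}{2}}_{\nu_1}$ is perverse, this Hom vanishes as soon as $i^!\mathcal{IC}^{\frac{\infty}{2}}_{\nu_2}$ sits in perverse cohomological degrees $\geq 2$. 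Observing that the restriction of $\mathcal{IC}^{\frac{\infty}{2}}_{\nu_2}$ to $_{\leq\nu_2}\overline{\mathrm{Bun}}_{N^-}$ identifies with $\mathrm{IC}_{_{\leq\nu_2}\overline{\mathrm{Bun}}_{N^-}}$ by transitivity of the $!*$-extension, this perverse-degree bound may be checked stratum-by-stratum on the Laumon-style pieces ${_{\nu'}\mathrm{Bun}}_{N^-}\times X^\mu_\Gamma$, where it is precisely the content of Corollary~\ref{coro stalks} (and hence ultimately relies on Conjecture~\ref{conj stalks independance}); the recollement then propagates it globally to $Z$.

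The diagonal case $\nu_1=\nu_2=\nu$ is more delicate, and my plan combines two steps. First, restricting to the smooth open stratum $_{\nu}\mathrm{Bun}_{N^-}$, the sheaf $\mathcal{IC}^{\frac{\infty}{2}}_\nu$ reduces to a shifted constant sheaf, so the associated self-$\mathrm{Ext}^1$ equals $H^1({_{\nu}\mathrm{Bun}}_{N^-},\underline{\mathbb{k}})$, which vanishes by Proposition~\ref{prop contractible}. Second, to propagate this vanishing across the boundary of $_{\leq\nu}\overline{\mathrm{Bun}}_{N^-}$, I would apply $\mathrm{Hom}(\mathcal{IC}^{\frac{\infty}{2}}_\nu,-)$ to the short exact sequence $0\to\mathcal{IC}^{\frac{\infty}{2}}_\nu\to j_*\mathrm{IC}_{{_\nu\overline{\mathrm{Bun}}}_{N^-}}\to Q\to 0$, where $j$ is the open embedding of $_{\nu}\overline{\mathrm{Bun}}_{N^-}$ into $_{\leq\nu}\overline{\mathrm{Bun}}_{N^-}$ and $Q$ is a perverse sheaf supported on the closed complement. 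The universal property of the $!*$-extension forces $\mathrm{Hom}(\mathcal{IC}^{\frac{\infty}{2}}_\nu,Q)=0$, while Corollary~\ref{coro stalks} kills the boundary contribution to $\mathrm{Ext}^1(\mathcal{IC}^{\frac{\infty}{2}}_\nu,j_*\mathrm{IC})$, so the global computation reduces to the already-established vanishing on the smooth open.

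The main obstacle I anticipate is the last reduction inside the diagonal case, namely the clean passage from the smooth open $_{\nu}\mathrm{Bun}_{N^-}$ to the singular ambient $_{\nu}\overline{\mathrm{Bun}}_{N^-}$, whose boundary is itself a union of degenerate strata not directly covered by Corollary~\ref{coro stalks} without additional work. Handling this will presumably require invoking the factorization property enjoyed by all objects of $\mathrm{Perv}_{G[[t]]}(\mathcal{F}l^{\frac{\infty}{2}})$ (Proposition~\ref{simple objects prop}), so that the boundary contributions can be presented as external products involving IC-sheaves on Zastava spaces; on such products Corollary~\ref{coro stalks} applies in the required form, and the assembly of the above ingredients then yields the desired semi-simplicity.
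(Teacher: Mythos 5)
Your overall strategy (prove semi-simplicity via vanishing of $\mathrm{Ext}^1$ between simple objects, use Verdier duality to reduce to $\nu_1\leq\nu_2$, and invoke the stalk bounds coming from Conjecture~\ref{conj stalks independance}) coincides with the paper's, and your ingredients are the right ones. However, the way you assemble them leaves two genuine gaps, and a third minor one.

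\textbf{Incomparable coweights.} You say these are ``handled like the off-diagonal case after passing to a common ambient.'' This is not quite right. When neither $\nu_1\leq\nu_2$ nor $\nu_2\leq\nu_1$, the supports of the two IC-sheaves are distinct closed substacks neither of which contains the other, and a standard argument (the one from \cite{baumann:hal-01491529} cited in the paper) shows the $\mathrm{Ext}^1$ already vanishes for support reasons alone, with no adjunction computation. Your ``common ambient'' phrasing suggests a reduction that is not actually available.

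\textbf{Off-diagonal case.} You take the full closed boundary $Z = {_{\leq\nu_2}\overline{\mathrm{Bun}}}_{N^-}\setminus{_{\nu_2}\overline{\mathrm{Bun}}}_{N^-}$ and assert that $i_Z^!\mathcal{IC}^{\frac{\infty}{2}}_{\nu_2}$ lives in perverse degrees $\geq 2$ ``by Corollary~\ref{coro stalks} and recollement.'' But Corollary~\ref{coro stalks} controls the $!$-restriction along the specific locally closed immersions ${_{\nu}\varphi^\mu_\Gamma}$ into ${_{\leq\nu}\overline{\mathrm{Bun}}}_{N^-}$, not the $!$-restriction to $Z$ as a closed substack of ${_{\leq\nu_2}\overline{\mathrm{Bun}}}_{N^-}$. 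To deduce a bound on $i_Z^!$, you need to know that \emph{every} stratum of $Z$ in the defect stratification of ${_{\leq\nu_2}\overline{\mathrm{Bun}}}_{N^-}$ satisfies the bound, and then d\'evisser; describing the strata of $Z$ as seen from ${_{\leq\nu_2}\overline{\mathrm{Bun}}}_{N^-}$ (rather than from the smaller ${_{\leq\nu'}\overline{\mathrm{Bun}}}_{N^-}$) is not what Corollary~\ref{coro stalks} does directly, and this bookkeeping is precisely what the paper avoids. The paper's route is to work with the intermediate open
$$\mathcal{X}:={_{\leq\nu_2}\overline{\mathrm{Bun}}}_{N^-}\setminus\bigl({_{\leq\nu_1}\overline{\mathrm{Bun}}}_{N^-}\setminus{_{\nu_1}\mathrm{Bun}}_{N^-}\bigr),$$
which contains ${_{\nu_1}\mathrm{Bun}}_{N^-}$ as a \emph{closed} substack, and to apply \cite[Lemma 6.1.4]{FGV} to get an \emph{injection}
$$\mathrm{Ext}^1_{{_{\leq\nu_2}\overline{\mathrm{Bun}}}_{N^-}}(\mathcal{IC}^{\frac{\infty}{2}}_{\nu_1},\mathcal{IC}^{\frac{\infty}{2}}_{\nu_2})\hookrightarrow\mathrm{Ext}^1_{\mathcal{X}}(j'_{!*}\mathrm{IC}_{{_{\nu_1}\mathrm{Bun}}_{N^-}},\mathrm{IC}_{\mathcal{X}}).$$
After this, closed adjunction reduces everything to $\mathrm{Ext}^1_{{_{\nu_1}\mathrm{Bun}}_{N^-}}(\mathrm{IC}, i_{\nu_1}^!\mathcal{IC}^{\frac{\infty}{2}}_{\nu_2})$ — a single stratum, exactly the setting of Lemma~\ref{lem stalks}. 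That is the missing idea: reduce to a \emph{single} stratum via an intermediate open and the injectivity of restriction on $\mathrm{Ext}^1$ between intermediate extensions, rather than estimating $i^!$ on the entire boundary.

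\textbf{Diagonal case.} Your proposed short exact sequence $0\to\mathcal{IC}^{\frac{\infty}{2}}_\nu\to j_*\mathrm{IC}\to Q\to 0$ and the claim that ``Corollary~\ref{coro stalks} kills the boundary contribution to $\mathrm{Ext}^1(\mathcal{IC}^{\frac{\infty}{2}}_\nu, j_*\mathrm{IC})$'' is not workable as stated: $\mathrm{Ext}^1(\mathcal{IC}^{\frac{\infty}{2}}_\nu,j_*\mathrm{IC})\cong\mathrm{Ext}^1_{{_\nu\overline{\mathrm{Bun}}}_{N^-}}(\mathrm{IC},\mathrm{IC})$ by adjunction, and this is a self-$\mathrm{Ext}^1$ of the IC-sheaf on the \emph{singular} stack ${_\nu\overline{\mathrm{Bun}}}_{N^-}$, not the shifted constant sheaf on the smooth open ${_\nu\mathrm{Bun}}_{N^-}$, so contractibility of the latter does not apply directly, and Corollary~\ref{coro stalks} says nothing about this space. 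The paper's $\mathcal{X}$-trick again does the work: for $\nu_1=\nu_2=\nu$, the reduction lands directly in $\mathrm{Ext}^1_{{_{\nu}\mathrm{Bun}}_{N^-}}(\mathrm{IC},\mathrm{IC})$, which vanishes by Proposition~\ref{prop contractible}. You were right to flag the passage from ${_\nu\mathrm{Bun}}_{N^-}$ to ${_\nu\overline{\mathrm{Bun}}}_{N^-}$ as the main obstacle; the resolution is not to run that passage at all, but to set up the adjunction so that one never leaves ${_\nu\mathrm{Bun}}_{N^-}$.
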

	\begin{proof}
		Let $\nu_1,\nu_2\in\mathbf{Y}$. We want to prove that 
		$$\mathrm{Ext}^1_{_{\infty}\overline{\mathrm{Bun}}_{N^-}}(\mathcal{IC}^{\frac{\infty}{2}}_{\nu_1},\mathcal{IC}^{\frac{\infty}{2}}_{\nu_2})=0. $$
		Arguing as in the second case of \cite[Proof of Proposition 4.4]{baumann:hal-01491529}, we see that the support of one sheaf must be included in the other for the space $\mathrm{Ext}^1_{_{\infty}\overline{\mathrm{Bun}}_{N^-}}(\mathcal{IC}^{\frac{\infty}{2}}_{\nu_1},\mathcal{IC}^{\frac{\infty}{2}}_{\nu_2})$ to be non-trivial.	Applying Verdier duality, we may and will assume that $_{\leq\nu_1}\overline{\mathrm{Bun}}_{N^-}\subset {_{\leq\nu_2}\overline{\mathrm{Bun}}}_{N^-}$, which means that $\nu_1\leq \nu_2$.
		
		Next, consider the stack 
		$$\mathcal{X}:={_{\leq\nu_2}\overline{\mathrm{Bun}}}_{N^-}-({_{\leq\nu_1}\overline{\mathrm{Bun}}}_{N^-}-{_{\nu_1}\mathrm{Bun}}_{N^-}), $$
		and denote by $j:\mathcal{X}\hookrightarrow {_{\leq\nu_2}\overline{\mathrm{Bun}}}_{N^-}$ the inclusion. Since $\mathcal{X}$ contains the open substack ${_{\nu_2}\overline{\mathrm{Bun}}}_{N^-}$, the complex $\mathcal{IC}^{\frac{\infty}{2}}_{\nu_2}$ coincides with the image of the $\mathrm{IC}$-sheaf on $\mathcal{X}$ though the functor $j_{!*}$. On the other hand, if we let $j'$ denote the inclusion ${_{\nu_1}\mathrm{Bun}}_{N^-}\hookrightarrow \mathcal{X}$, then $\mathcal{IC}^{\frac{\infty}{2}}_{\nu_1}$ coincides with the image of $j'_{!*}(\mathrm{IC}_{{_{\nu_1}\mathrm{Bun}}_{N^-}})$ through $j_{!*}$, since $j_{!*}j'_{!*}\simeq (j\circ j')_{!*}$. Applying \cite[Lemma 6.1.4]{FGV}, we get an inclusion
		$$\mathrm{Ext}^1_{{_{\leq\nu_2}\overline{\mathrm{Bun}}}_{N^-}}(\mathcal{IC}^{\frac{\infty}{2}}_{\nu_1},\mathcal{IC}^{\frac{\infty}{2}}_{\nu_2}) \hookrightarrow \mathrm{Ext}^1_{\mathcal{X}}(j'_{!*}(\mathrm{IC}_{{_{\nu_1}\mathrm{Bun}}_{N^-}}),\mathrm{IC}_{\mathcal{X}}). $$
		Since ${_{\nu_1}\mathrm{Bun}}_{N^-}$ is closed in $\mathcal{X}$, the latter $\mathrm{Ext}^1$ is isomorphic to 
		\begin{equation}\label{ext1}
			\mathrm{Ext}^1_{{_{\nu_1}\mathrm{Bun}}_{N^-}}(\mathrm{IC}_{{_{\nu_1}\mathrm{Bun}}_{N^-}},i^!_{\nu_1}\mathcal{IC}^{\frac{\infty}{2}}_{\nu_2}). 
		\end{equation}
		If $\nu_1=\nu_2$, then the above space is $\{0\}$ since ${_{\nu_1}\mathrm{Bun}}_{N^-}$ is contractible (Proposition \ref{prop contractible}). If $\nu_1<\nu_2$, then the complex $i^!_{\nu_1}\mathcal{IC}^{\frac{\infty}{2}}_{\nu_2}$ lives in perverse cohomological degrees $\geq 2$ thanks to Lemma \ref{lem stalks}. 
	\end{proof}
 The next result will be useful in §\ref{section Iwahori-Whittaker for the semi-infinite flag variety}.
 \begin{coro}\label{coro kostant}
     Let $\nu,\nu''\in\mathbf{Y}$ with $\nu''\leq \nu$, $r\in\mathbb{Z}_{\geq0}$, and put $\mu_r:=(\ell^r-1)\zeta$. For $r$ large enough, there is an isomorphism of complexes of vector spaces
     $$\mathcal{IC}^{\frac{\infty}{2}}_{\nu}|_{{_{ \nu''}\mathrm{Bun}}_{N^-}} \simeq \mathcal{IC}^{\mu_r}|_{\mathrm{Gr}^{{\mu_r-(\nu-\nu'')}}},$$
     where $\mathcal{IC}^{\frac{\infty}{2}}_{\nu}|_{{_{ \nu''}\mathrm{Bun}}_{N^-}}$ (resp. $\mathcal{IC}^{\mu_r}|_{\mathrm{Gr}^{\mu_r-(\nu-\nu'')}}$) is meant to denote the stalk of $\mathcal{IC}^{\frac{\infty}{2}}_{\nu}$ (resp. of $\mathcal{IC}^{\mu_r}$) at any point of ${_{ \nu''}\mathrm{Bun}}_{N^-}$ (resp. of $\mathrm{Gr}^{\mu_r-(\nu-\nu'')}$)
 \end{coro}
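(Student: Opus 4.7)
The plan is to compute both sides by realizing them as stalks of IC-sheaves on a common local model, using the Zastava space as the bridge between the semi-infinite flag variety and the affine Grassmannian.

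First, since $\mathcal{IC}^{\frac{\infty}{2}}_\nu \in \mathrm{Perv}_{G[[t]]}(\mathcal{F}l^{\frac{\infty}{2}})$ satisfies the factorization and equivariance properties, an argument parallel to that of Corollary \ref{coro constructible} (applied to the stratification of $_{\leq \nu}\overline{\mathrm{Bun}}_{N^-}$ by the $_{\nu'}\mathrm{Bun}_{N^-}$ for $\nu'\leq \nu$, rather than just to the open stratum) shows that the pullback of $\mathcal{IC}^{\frac{\infty}{2}}_\nu$ to $_{\nu''}\mathrm{Bun}_{N^-}$ has constant cohomology sheaves. So the statement of the corollary concerns an intrinsic complex of $\mathbb{k}$-vector spaces, namely the stalk at any fixed point $y \in {_{\nu''}\mathrm{Bun}_{N^-}}$.

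Second, choose $\mu \in \mathbf{Y}^{pos}$ sufficiently dominant with $\mu \geq \nu - \nu''$. By Proposition \ref{zastava prop}, the smooth projection $\mathfrak{p}^\mu : {_{\leq \nu}Z^\mu} \to {_{\leq \nu}\overline{\mathrm{Bun}}_{N^-}}$ dominates a neighborhood of $y$, so the stalk of $\mathcal{IC}^{\frac{\infty}{2}}_\nu$ at $y$ coincides (up to shift by relative dimension) with the stalk of the IC-sheaf of $_{\leq \nu}Z^\mu$ at a preimage. Applying the factorization property (Proposition \ref{prop factorization zastava}) with $\mu = \mu_1 + (\nu - \nu'')$ where $\mu_1$ is large and its divisor is supported away from $x$, and using that $Z^{\mu_1}$ is smooth, we further reduce the stalk computation to the IC-sheaf of the local Zastava $Z^{\nu-\nu''}_x$ (the fiber of $\mathfrak{s}^{\nu-\nu''}$ over the divisor $(\nu-\nu'')\cdot x \in X^{\nu-\nu''}$) at its most degenerate point.

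Third, through Beauville-Laszlo uniformization, a point of the local Zastava $Z^{\nu-\nu''}_x$ defines a point of $\mathrm{Gr}$ at $x$. For $r$ large enough that $\mu_r - (\nu - \nu'') \in \mathbf{Y}^+$, one verifies that this identifies $Z^{\nu-\nu''}_x$ with a transverse slice inside $\overline{\mathrm{Gr}}^{\mu_r}$ passing through a point of $\mathrm{Gr}^{\mu_r - (\nu-\nu'')}$ (corresponding to the most degenerate point of the Zastava). Under this identification the IC-sheaf of the local Zastava corresponds to the restriction of $\mathcal{IC}^{\mu_r}$, yielding the desired isomorphism of stalks. The principal obstacle is precisely this third step: explicitly constructing the embedding of the local Zastava as a transverse slice in the correct Schubert variety, identifying which $G[[t]]$-orbit the degenerate point lands in, and checking the compatibility of IC-sheaves under the moduli interpretations on both sides.
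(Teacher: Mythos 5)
Your third step, identifying the local Zastava space with a transverse slice in $\overline{\mathrm{Gr}}^{\mu_r}$ and matching the IC-sheaves under that identification, is a genuine gap: it is not carried out, and it is precisely where the entire content of the statement lives. That identification is a geometric form of Lusztig's stabilization of Schubert-variety singularities, and to complete it you would need to construct the embedding via Beauville--Laszlo, verify transversality, determine which $G[[t]]$-orbit the degenerate point lands in, pin down the dominance threshold on $r$, and match shifts and normalizations of IC-sheaves on both sides. You flag this as the ``principal obstacle'' but do not begin to address it, so the proposal does not establish the corollary. (Your first two steps are fine, though note that Corollary~\ref{coro constructible} already applies to an arbitrary stratum, so no strengthening is needed there.)

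The paper's proof is shorter and purely numerical, and sidesteps the geometry you attempt. It shows that both stalks have the same Poincar\'e polynomial, namely the $q$-analogue of Kostant's partition function $\mathcal{K}^{\nu-\nu''}(t)$: on the semi-infinite side this follows from Lemma~\ref{lem kuznetsov} (characteristic independence of stalks), which allows one to invoke the characteristic-zero computation of \cite[\S 3]{feigin2semi} (or \cite{kuznetsov1996laumon}); on the Grassmannian side it follows from Lemma~\ref{lem Steinberg weights} (which gives $\mathcal{I}^{\mu_r}_!\simeq\mathcal{IC}^{\mu_r}\simeq\mathcal{I}^{\mu_r}_*$, hence characteristic independence) together with Lusztig's stabilization \cite[\S\S 9, 11]{lusztigsingularities} once $\mu_r$ is dominant enough. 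Since a complex of $\mathbb{k}$-vector spaces is determined up to isomorphism by the dimensions of its cohomology, the coincidence of generating functions finishes the argument. A completed version of your geometric approach would in principle be stronger --- a direct isomorphism of complexes that could plausibly avoid the hypothesis of Conjecture~\ref{conj stalks independance} --- but as written it replaces Lusztig's numerical input by an unproved geometric isomorphism, and so does not prove the corollary.
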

 \begin{proof}
     Since the dimensions of the stalks of $\mathcal{IC}^{\frac{\infty}{2}}_{\nu}$ and $\mathcal{IC}^{\mu_r}$ do not depend on the characteristic of $\mathbb{k}$, the result follows from the end of the proof of \cite[Theorem 5.3.5]{ABBGM}. Let us briefly recall the arguments.

     On the one hand, the stalks of $\mathcal{IC}^{\frac{\infty}{2}}_{\nu}$ do not depend on $\mathrm{char}(\mathbb{k})$ thanks to Lemma \ref{lem kuznetsov}, so that the generating function of $\mathcal{IC}^{\frac{\infty}{2}}_{\nu}|_{{_{ \nu''}\mathrm{Bun}}_{N^-}}$ is given by the $q$-analogue of Kostant's partition function $\mathcal{K}^{\nu-\nu''}(t)$ associated with $\nu-\nu''$ thanks to \cite[§3]{feigin2semi} (or the main result of \cite{kuznetsov1996laumon}).

     On the other hand, the generating function of $\mathcal{IC}^{\mu_r}|_{\mathrm{Gr}^{\mu_r-(\nu-\nu'')}}$ does not depend on $\mathrm{char}(\mathbb{k})$ thanks to Lemma \ref{lem Steinberg weights}. The latter coincides with $\mathcal{K}^{\mu_r-(\mu_r-(\nu-\nu''))}(t)=\mathcal{K}^{\nu-\nu''}(t)$ when $r$ is large enough (so that $\mu_r$ is dominant enough) thanks to \cite[§§9,11]{lusztigsingularities}.
 \end{proof}
	\subsection{Iwahori-equivariant objects}\label{section Iwahori-equivariant objects} We now define the main category of interest (following \cite[§4.4.3]{ABBGM}). Let $\nu\in\mathbf{Y}$, and denote by ${^{I_{\mathrm{u}}}_\nu\overline{\mathrm{Bun}}}_{N^-}$ and ${^{I_{\mathrm{u}}}_\infty\overline{\mathrm{Bun}}}_{N^-}$ (resp. ${^{I}_\nu\overline{\mathrm{Bun}}}_{N^-}$ and ${^{I}_\infty\overline{\mathrm{Bun}}}_{N^-}$) the quotient stacsk of ${^1_\nu\overline{\mathrm{Bun}}}_{N^-}$ and  ${^1_\infty\overline{\mathrm{Bun}}}_{N^-}$ by $N$ (resp. by $B$). We let $\mathrm{Perv}_{I_{\mathrm{u}}}(\mathcal{F}l^{\frac{\infty}{2}})$ (resp. $\mathrm{Perv}_{I}(\mathcal{F}l^{\frac{\infty}{2}})$) be the subcategory of 
 $$\mathrm{Perv}({^{I_{\mathrm{u}}}_\infty\overline{\mathrm{Bun}}}_{N^-})\quad \text{(resp.}~ \mathrm{Perv}({^{I}_\infty\overline{\mathrm{Bun}}}_{N^-}))$$ 
 consisting of objects whose image through the forgetful functor to $ \mathrm{Perv}({^{1}_\infty\overline{\mathrm{Bun}}}_{N^-})$ belongs to $\mathrm{Perv}_{G^1}(\mathcal{F}l^{\frac{\infty}{2}})$. 
	
	By abuse of notation, we still denote by $\overline{\mathrm{ev}}_\nu:{^{I_{\mathrm{u}}}_\nu\overline{\mathrm{Bun}}}_{N^-}\to N\backslash G/B^-$ the morphism obtained by composing the map ${^{1}_\nu\overline{\mathrm{Bun}}}_{N^-}\to  G/N^-$ (cf. \eqref{evaluation map}) with the canonical projection $G/N^-\to G/B^-$, and then quotienting by $N$. For $w\in W$, we let 
 $$N_w:=N\backslash(N\cdot w\cdot B^-)/B^-$$ 
 be the Schubert cell associated with $w$. For $x\in W_{\mathrm{ext}}$, written as $x=wt_\nu$, we let ${^{I_{\mathrm{u}}}_{x}\overline{\mathrm{Bun}}}_{N^-}$ be the preimage of $N_w$ through $\overline{\mathrm{ev}}_\nu$, and $\overline{i}_x:{^{I_{\mathrm{u}}}_x\overline{\mathrm{Bun}}}_{N^-}\hookrightarrow{^{I_{\mathrm{u}}}_\infty\overline{\mathrm{Bun}}}_{N^-}$ denote the locally closed inclusion. Let $\mathrm{IC}_{w,G/B^-},~\tilde{\nabla}_w,~\tilde{\Delta}_w\in \mathrm{Perv}_N(G/B^-)$, denote respectively the Goresky-MacPherson extension of the constant sheaf on $N_w$, the costandard (i.e. $!$-extension of the constant sheaf) and standard (i.e. $*$-extension of the constant sheaf) objects associated with $w$.  Thanks to Proposition \ref{simple objects prop}, we see that the simple objects of $\mathrm{Perv}_{I_{\mathrm{u}}}(\mathcal{F}l^{\frac{\infty}{2}})$ consist of the intersection cohomology sheaves $\mathrm{IC}_x^{\frac{\infty}{2}}\in\mathrm{Perv}(^{I_{\mathrm{u}}}_\infty\overline{\mathrm{Bun}}_{N^-})$ associated with  $^{I_{\mathrm{u}}}_x\overline{\mathrm{Bun}}_{N^-}$, for $x\in W_{\mathrm{ext}}$. More precisely, we have
	$$\mathrm{IC}_x^{\frac{\infty}{2}}:=(\overline{i}_\nu)_{!*}\left(\mathrm{IC}_{w,G/B^-}\widetilde{\boxtimes}\mathrm{IC}_{{_\nu\overline{\mathrm{Bun}}}_{N^-}}\right),\quad x=wt_\nu.$$
 Consider the functor $\mathrm{F}:{^{I_{\mathrm{u}}}_\infty\overline{\mathrm{Bun}}}_{N^-}\to {_\infty\overline{\mathrm{Bun}}}_{N^-}$
 induced by forgetting the structure of level $1$, and put  
$$\mathrm{For}^{G[[t]]}_{I_\mathrm{u}}:=\mathrm{F}^*[\mathrm{dim}(G)-\mathrm{dim}(N)]:\mathrm{D}({_\infty\overline{\mathrm{Bun}}}_{N^-})\to\mathrm{D}({^{I_{\mathrm{u}}}_\infty\overline{\mathrm{Bun}}}_{N^-}).$$
Similarly, one defines the functor
$$\mathrm{For}^{G[[t]]}_{I}:\mathrm{D}({_\infty\overline{\mathrm{Bun}}}_{N^-})\to\mathrm{D}({^{I}_\infty\overline{\mathrm{Bun}}}_{N^-}). $$
The next result will be used profusely in the sequel.
\begin{prop}\label{prop forget}
    The functors $\mathrm{For}^{G[[t]]}_{I_\mathrm{u}}, \mathrm{For}^{G[[t]]}_{I}$ induce fully faithful functors
    $$\mathrm{For}^{G[[t]]}_{I_\mathrm{u}}:\mathrm{Perv}_{G[[t]]}(\mathcal{F}l^{\frac{\infty}{2}})\to \mathrm{Perv}_{I_\mathrm{u}}(\mathcal{F}l^{\frac{\infty}{2}}),\quad \mathrm{For}^{G[[t]]}_{I}:\mathrm{Perv}_{G[[t]]}(\mathcal{F}l^{\frac{\infty}{2}})\to \mathrm{Perv}_{I}(\mathcal{F}l^{\frac{\infty}{2}})$$
    which satisfy
    $$\mathrm{For}^{G[[t]]}_{I_\mathrm{u}}(\mathcal{IC}_\nu^\frac{\infty}{2})\simeq \mathrm{IC}_\nu^\frac{\infty}{2},\quad \mathrm{For}^{G[[t]]}_{I}(\mathcal{IC}_\nu^\frac{\infty}{2})\simeq \mathrm{IC}_\nu^\frac{\infty}{2}$$
    for all $\nu\in\mathbf{Y}$, where $\mathrm{IC}_\nu^\frac{\infty}{2}$ is seen as an object of $\mathrm{Perv}_{I}(\mathcal{F}l^{\frac{\infty}{2}})$ in the second isomorphism.
\end{prop}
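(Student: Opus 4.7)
The morphism $\mathrm{F}:{^{I_\mathrm{u}}_\infty\overline{\mathrm{Bun}}}_{N^-}\to {_\infty\overline{\mathrm{Bun}}}_{N^-}$ factors as the composition of the $N$-gerbe ${^{1}_\infty\overline{\mathrm{Bun}}}_{N^-}\to {^{I_\mathrm{u}}_\infty\overline{\mathrm{Bun}}}_{N^-}$ (in the reverse direction) with the $G$-torsor ${^{1}_\infty\overline{\mathrm{Bun}}}_{N^-}\to {_\infty\overline{\mathrm{Bun}}}_{N^-}$. Consequently $\mathrm{F}$ is smooth of relative dimension $\dim(G)-\dim(N)$, so $\mathrm{F}^*[\dim(G)-\dim(N)]$ is $t$-exact on perverse sheaves and commutes with intermediate extensions along locally closed substacks.

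The plan is first to identify the image of the simple objects. Denoting by $\mathrm{F}_\nu$ the restriction of $\mathrm{F}$ to $^{I_\mathrm{u}}_\nu\overline{\mathrm{Bun}}_{N^-}\to {_\nu\overline{\mathrm{Bun}}}_{N^-}$ (still smooth of the same relative dimension), commutation of $(-)_{!*}$ with smooth pullback gives
$$\mathrm{For}^{G[[t]]}_{I_\mathrm{u}}(\mathcal{IC}_\nu^{\frac{\infty}{2}})=(\overline{i}_\nu)_{!*}\bigl(\mathrm{F}_\nu^*[\dim(G)-\dim(N)]\,\mathrm{IC}_{_\nu\overline{\mathrm{Bun}}_{N^-}}\bigr).$$
On the other hand, for $x=\nu=e\cdot t_\nu$, the simple $\mathrm{IC}_\nu^{\frac{\infty}{2}}$ of §\ref{section Iwahori-equivariant objects} equals $(\overline{i}_\nu)_{!*}(\mathrm{IC}_{e,G/B^-}\widetilde{\boxtimes}\mathrm{IC}_{{_\nu\overline{\mathrm{Bun}}}_{N^-}})$. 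Since $N_e=NB^-/B^-$ is the big (open dense) Schubert cell, $\mathrm{IC}_{e,G/B^-}$ is the shifted constant sheaf on $G/B^-$; unwinding the definition of $\widetilde{\boxtimes}$ via the isomorphism \eqref{torsor twisted product} and then descending along the $N$-quotient, one recognizes the right-hand side as the perversely shifted $\mathrm{F}_\nu$-pullback of $\mathrm{IC}_{{_\nu\overline{\mathrm{Bun}}}_{N^-}}$. Hence the two objects are canonically isomorphic, which simultaneously shows that $\mathrm{For}^{G[[t]]}_{I_\mathrm{u}}(\mathcal{IC}_\nu^{\frac{\infty}{2}})$ lies in $\mathrm{Perv}_{I_\mathrm{u}}(\mathcal{F}l^{\frac{\infty}{2}})$ (the factorization property holds because the target is the simple $\mathrm{IC}_\nu^{\frac{\infty}{2}}$).

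For well-definedness on the whole category and full faithfulness, invoke Theorem \ref{thm ss}: the category $\mathrm{Perv}_{G[[t]]}(\mathcal{F}l^{\frac{\infty}{2}})$ is semi-simple, so every object is a direct sum of the $\mathcal{IC}_\nu^{\frac{\infty}{2}}$, and its image under $\mathrm{For}^{G[[t]]}_{I_\mathrm{u}}$ is therefore a direct sum of $\mathrm{IC}_\nu^{\frac{\infty}{2}}$, hence lies in $\mathrm{Perv}_{I_\mathrm{u}}(\mathcal{F}l^{\frac{\infty}{2}})$. Full faithfulness then reduces to the computation of $\mathrm{Hom}$-spaces between simples: on the source side, semi-simplicity gives $\mathrm{Hom}(\mathcal{IC}_\nu^{\frac{\infty}{2}},\mathcal{IC}_{\nu'}^{\frac{\infty}{2}})=\mathbb{k}\,\delta_{\nu,\nu'}$; on the target side, the $\mathrm{IC}_\nu^{\frac{\infty}{2}}$ are pairwise non-isomorphic simples with endomorphism ring $\mathbb{k}$ (the endomorphisms of an IC sheaf coincide with those of the constant sheaf on the connected open smooth stratum), so $\mathrm{Hom}(\mathrm{IC}_\nu^{\frac{\infty}{2}},\mathrm{IC}_{\nu'}^{\frac{\infty}{2}})=\mathbb{k}\,\delta_{\nu,\nu'}$. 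The functor sends the identity to the identity, so it is fully faithful.

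The argument for $\mathrm{For}^{G[[t]]}_{I}$ is identical, the only change being that the relevant smooth morphism ${^{I}_\infty\overline{\mathrm{Bun}}}_{N^-}\to {_\infty\overline{\mathrm{Bun}}}_{N^-}$ has relative dimension $\dim(G)-\dim(B)$ and fibers $G/B$; in the identification of simples one uses that the closure of $N_e$ in $B\backslash G/B^-$ is still the full flag variety quotient, so $\mathrm{IC}_{e,G/B^-}$ is again a shifted constant sheaf. The step I expect to be the main obstacle is the explicit identification in the second paragraph: one must carefully match the perverse shifts coming from the twisted external product $\widetilde{\boxtimes}$ (using the $N^-$-torsor structure \eqref{torsor twisted product} and the descent by $N$) with those coming from the smooth pullback along $\mathrm{F}_\nu$, which is essentially a bookkeeping check but the one place where the geometry of the stratification of $^{I_\mathrm{u}}_\infty\overline{\mathrm{Bun}}_{N^-}$ by evaluation enters directly.
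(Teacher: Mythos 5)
Your argument is correct and follows essentially the same route as the paper: smoothness of $\mathrm{F}$ of relative dimension $\dim(G)-\dim(N)$, commutation of shifted smooth pullback with intermediate extension to identify $\mathrm{For}^{G[[t]]}_{I_\mathrm{u}}(\mathcal{IC}_\nu^{\frac{\infty}{2}})$ with $\mathrm{IC}({^{I_\mathrm{u}}_\nu\overline{\mathrm{Bun}}}_{N^-})\simeq\mathrm{IC}_\nu^{\frac{\infty}{2}}$ (using that $N_e$ is dense in $G/B^-$), and full faithfulness via semi-simplicity of $\mathrm{Perv}_{G[[t]]}(\mathcal{F}l^{\frac{\infty}{2}})$. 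The extra detail you supply on the Hom-spaces between simples is a harmless expansion of what the paper leaves implicit.
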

\begin{proof}
    The morphism $\mathrm{F}$ is induced by the morphism $\mathrm{For}:{^1_\infty\overline{\mathrm{Bun}}}_{N^-}\to {_\infty\overline{\mathrm{Bun}}}_{N^-}$ forgetting the structure of level one. The latter is a $G$-torsor, so that $\mathrm{F}$ is smooth of relative dimension $\mathrm{dim}(G)-\mathrm{dim}(N)$. Since
    $$F^{-1}({_\nu\overline{\mathrm{Bun}}}_{N^-})\simeq {^{I_\mathrm{u}}_\nu\overline{\mathrm{Bun}}}_{N^-}$$
    for all $\nu\in\mathbf{Y}$ we have an isomorphism
    $$\mathrm{For}^{G[[t]]}_{I_\mathrm{u}}(\mathcal{IC}_\nu^\frac{\infty}{2})\simeq \mathrm{IC}({^{I_\mathrm{u}}_\nu\overline{\mathrm{Bun}}}_{N^-}).$$
    But $\mathrm{IC}({^{I_\mathrm{u}}_\nu\overline{\mathrm{Bun}}}_{N^-})\simeq\mathrm{IC}_\nu^\frac{\infty}{2}$ since the $N$-orbit of the trivial element is dense in $G/B^-$. The full faithfulness then follows from the semi-simplicity of $\mathrm{Perv}_{G[[t]]}(\mathcal{F}l^{\frac{\infty}{2}})$. The proof of the assertions concerning $\mathrm{For}^{G[[t]]}_{I}$ is similar.
\end{proof}
	We also let $\nabla_x^{\frac{\infty}{2}}$,~$\Delta_x^{\frac{\infty}{2}}$ denote the corresponding costandard and standard objects. Namely we have
	$$\nabla_x^{\frac{\infty}{2}}:= (\overline{i}_\nu)_{*}\left(\tilde{\nabla}_w\widetilde{\boxtimes}\mathrm{IC}_{{_\nu\overline{\mathrm{Bun}}}_{N^-}}\right),\quad  \Delta_x^{\frac{\infty}{2}}:= (\overline{i}_\nu)_{!}\left(\tilde{\Delta}_w\widetilde{\boxtimes}\mathrm{IC}_{{_\nu\overline{\mathrm{Bun}}}_{N^-}}\right).$$
Note that the objects $\nabla_x^{\frac{\infty}{2}}$,~$\Delta_x^{\frac{\infty}{2}}$ admit canonical lifts to the category $\mathrm{D}({^{I}_\infty\overline{\mathrm{Bun}}}_{N^-})$. We will denote the corresponding objects by $\nabla_x^{\frac{\infty}{2}}$,~$\Delta_x^{\frac{\infty}{2}}\in \mathrm{D}({^{I}_\infty\overline{\mathrm{Bun}}}_{N^-})$ for all $x\in W_\mathrm{ext}$. Thus we have 
$$\mathrm{For}^I_{I_\mathrm{u}}(\Delta_x^{\frac{\infty}{2}})\simeq \Delta_x^{\frac{\infty}{2}},\quad \mathrm{For}^I_{I_\mathrm{u}}(\nabla_x^{\frac{\infty}{2}})\simeq \nabla_x^{\frac{\infty}{2}} $$
(where $\mathrm{For}^I_{I_\mathrm{u}}:\mathrm{D}({^{I}_\infty\overline{\mathrm{Bun}}}_{N^-})\to \mathrm{D}({^{I_\mathrm{u}}_\infty\overline{\mathrm{Bun}}}_{N^-})$ denotes the forgetful functor).
 
By construction we have isomorphisms in $\mathrm{D}({^{I_\mathrm{u}}_\infty\overline{\mathrm{Bun}}}_{N^-})$:
$$\nabla_x^{\frac{\infty}{2}}\simeq (\overline{i}_x)_{*}(\mathrm{IC}_{{^{I_{\mathrm{u}}}_x\overline{\mathrm{Bun}}}_{N^-}}),\quad \Delta_x^{\frac{\infty}{2}}\simeq (\overline{i}_x)_{!}(\mathrm{IC}_{{^{I_{\mathrm{u}}}_x\overline{\mathrm{Bun}}}_{N^-}}).$$ 
Moreover, thanks to the proof of \cite[Corollary 4.4.5]{ABBGM}, we know that the morphism $\overline{i}_x$ is affine,  so that $\Delta_x$ and $\nabla_x$ are perverse. We can then reuse the arguments of \cite[Corollary 4.4.7]{ABBGM} to prove the following result.
	\begin{prop}\label{prop ext}
		For all $x,x'\in W_{\mathrm{ext}},~i\in\{0,1,2\}$, we have
		$$\mathrm{Ext}^i_{\mathrm{Perv}_{I_{\mathrm{u}}}(\mathcal{F}l^{\frac{\infty}{2}})}(\Delta_x^{\frac{\infty}{2}},\nabla_{x'}^{\frac{\infty}{2}})=\begin{cases} \mathbb{k}\quad\text{if}~i=0~\text{and}~x=x',\\0\quad\text{otherwise}.
		\end{cases}$$
	\end{prop}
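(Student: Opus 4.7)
The argument splits naturally into the two cases $x\neq x'$ and $x=x'$. The first case will give vanishing for all $i\geq 0$ and follows from pure stratum-closure combinatorics, while the second will use the IC-stalk estimate from Lemma \ref{lem stalks} and Corollary \ref{coro stalks}, which is precisely what pins down the range $i\in\{0,1,2\}$.

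For $x\neq x'$, write $Z_y:={^{I_{\mathrm{u}}}_y\overline{\mathrm{Bun}}}_{N^-}$. Because the closure of every stratum is a union of strata, any non-empty intersection $Z_y\cap\overline{Z_z}$ forces $Z_y\subseteq\overline{Z_z}$; if both inclusions $Z_x\subseteq\overline{Z_{x'}}$ and $Z_{x'}\subseteq\overline{Z_x}$ held simultaneously then $\overline{Z_x}=\overline{Z_{x'}}$, hence $x=x'$. So, after possibly swapping the roles of $x$ and $x'$, we may assume $Z_{x'}\cap\overline{Z_x}=\emptyset$. Since $\Delta_x^{\frac{\infty}{2}}=(\overline{i}_x)_!\mathrm{IC}_{Z_x}$ is supported on $\overline{Z_x}$, the adjunction $(\overline{i}_{x'})^*\dashv(\overline{i}_{x'})_*$ gives
\[
\mathrm{RHom}\bigl(\Delta_x^{\frac{\infty}{2}},\nabla_{x'}^{\frac{\infty}{2}}\bigr)\simeq \mathrm{RHom}\bigl((\overline{i}_{x'})^*\Delta_x^{\frac{\infty}{2}},\mathrm{IC}_{Z_{x'}}\bigr)=0,
\]
and the dual adjunction $(\overline{i}_x)_!\dashv(\overline{i}_x)^!$ disposes of the configuration $Z_x\cap\overline{Z_{x'}}=\emptyset$.

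For $x=x'$, write $x=wt_\nu$. The identity $(\overline{i}_\nu)^!(\overline{i}_\nu)_*\simeq\mathrm{id}$ for a locally closed immersion reduces the problem to computing
\[
\mathrm{RHom}_{^{I_{\mathrm{u}}}_\nu\overline{\mathrm{Bun}}_{N^-}}\bigl(\tilde\Delta_w\widetilde{\boxtimes}\mathrm{IC}_{_\nu\overline{\mathrm{Bun}}_{N^-}},\,\tilde\nabla_w\widetilde{\boxtimes}\mathrm{IC}_{_\nu\overline{\mathrm{Bun}}_{N^-}}\bigr).
\]
I would first restrict to the smooth open substack ${^{I_{\mathrm{u}}}_\nu\mathrm{Bun}}_{N^-}$, where $\mathrm{IC}_{_\nu\overline{\mathrm{Bun}}_{N^-}}$ is just the shifted constant sheaf. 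Using the fibration induced by $\overline{\mathrm{ev}}_\nu$ and a Künneth decomposition, the open $\mathrm{RHom}$ equals
\[
\mathrm{RHom}_{N\backslash G/B^-}(\tilde\Delta_w,\tilde\nabla_w)\otimes\mathrm{R}\Gamma({_\nu\mathrm{Bun}}_{N^-},\underline{\mathbb{k}}),
\]
which is $\mathbb{k}$ concentrated in degree $0$ by the classical orthogonality of standards and costandards on the finite flag variety and by Proposition \ref{prop contractible}.

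The main obstacle, and the technically delicate step, is to promote the open computation to the full stratum in degrees $\leq 2$. Letting $j$ and $i$ denote the open and closed inclusions of ${^{I_{\mathrm{u}}}_\nu\mathrm{Bun}}_{N^-}$ and its complement $Z$ inside ${^{I_{\mathrm{u}}}_\nu\overline{\mathrm{Bun}}}_{N^-}$, the excision triangle $j_!j^*\to\mathrm{id}\to i_*i^*\xrightarrow{+1}$ produces a long exact sequence
\[
\cdots\to\mathrm{Ext}^i_Z\bigl(i^*(\tilde\Delta_w\widetilde{\boxtimes}\mathrm{IC}),\,i^!(\tilde\nabla_w\widetilde{\boxtimes}\mathrm{IC})\bigr)\to\mathrm{Ext}^i\to\mathrm{Ext}^i_U\to\cdots.
\]
Künneth-decomposing along $\overline{\mathrm{ev}}_\nu$ and using the vanishing $\mathrm{Ext}^{>0}(\tilde\Delta_w,\tilde\nabla_w)=0$ reduces the $Z$-contribution to $\mathrm{RHom}(i^*\mathrm{IC}_{_\nu\overline{\mathrm{Bun}}_{N^-}},\,i^!\mathrm{IC}_{_\nu\overline{\mathrm{Bun}}_{N^-}})$. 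By Corollary \ref{coro stalks}, applied stratum by stratum along the defect decomposition of $Z$, the complex $i^!\mathrm{IC}_{_\nu\overline{\mathrm{Bun}}_{N^-}}$ lives in perverse cohomological degrees $\geq 2$, and by Verdier self-duality of IC the complex $i^*\mathrm{IC}_{_\nu\overline{\mathrm{Bun}}_{N^-}}$ lives in degrees $\leq -2$. Consequently the $Z$-correction term lives in cohomological degrees $\geq 4$, and the long exact sequence forces $\mathrm{Ext}^i\simeq\mathrm{Ext}^i_U$ for $i\in\{0,1,2\}$. The bookkeeping of perverse-degree shifts through the Künneth decomposition (and justifying Künneth over the locally trivial fibration) is the main technical point to verify carefully.
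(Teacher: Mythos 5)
Your proposal is correct, and it reaches the statement by a route that differs in organization from the paper's, though it rests on the same essential input. The paper treats $x\neq x'$ and $x=x'$ uniformly: it first invokes the comparison of \cite[Proposition 4.2.14]{ABBGM} to replace the $!$-extension $(\overline{i}_\nu)_!\mathcal{F}'$ from the full stratum by the $!$-extension $(i_x)_!\underline{\mathbb{k}}[\ell(x)]$ from the defect-free open part (an isomorphism on $\mathrm{Ext}^{\leq1}$ and an injection on $\mathrm{Ext}^2$, valid for an \emph{arbitrary} target in the category by the factorization property — this generality is what feeds Remark \ref{rem construct}), and then concludes by the single adjunction $\mathrm{Ext}^i((i_x)_!\underline{\mathbb{k}}[\ell(x)],\nabla_{x'}^{\frac{\infty}{2}})\simeq H^{i-\ell(x)}_T({^{I_\mathrm{u}}_x\mathrm{Bun}}_{N^-},(i_x)^!\nabla_{x'}^{\frac{\infty}{2}})$, using contractibility and the vanishing of $(i_x)^!\nabla_{x'}^{\frac{\infty}{2}}$ for $x\neq x'$. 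You instead dispose of $x\neq x'$ by a pure support argument (correct, and in fact simpler than your case division: $(\overline{i}_{x'})^*(\overline{i}_x)_!(-)=0$ for any two distinct strata, with no appeal to closure relations), and for $x=x'$ you open up the black box of the ABBGM comparison: your excision triangle on the stratum together with the bounds $i^*\mathrm{IC}\in{^p\mathrm{D}}^{\leq-2}$, $i^!\mathrm{IC}\in{^p\mathrm{D}}^{\geq2}$ from Corollary \ref{coro stalks} is exactly the mechanism behind that reference, and your version even yields isomorphisms $\mathrm{Ext}^i\simeq\mathrm{Ext}^i_U$ for all $i\leq2$ rather than iso in degrees $\leq1$ plus an injection in degree $2$. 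Two small tightenings: the Künneth decomposition you flag as delicate is not needed — it suffices that a twisted external product of a complex in perverse degrees $\leq0$ with one in degrees $\leq-2$ lies in degrees $\leq-2$ (and dually), so that the boundary $\mathrm{RHom}$ automatically sits in degrees $\geq4$; and, like the paper, you silently identify Yoneda $\mathrm{Ext}^i$ in the full subcategory $\mathrm{Perv}_{I_\mathrm{u}}(\mathcal{F}l^{\frac{\infty}{2}})$ with $\mathrm{Hom}$ in the ambient derived category for $i\leq2$, which is the real reason the statement is restricted to $i\in\{0,1,2\}$.
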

 \begin{proof}
Let $\nu\in\mathbf{Y}$, $\mathcal{F}'\in \mathrm{Perv}_{G^1}({^1_\nu\overline{\mathrm{Bun}}_{N^-}})$, and $\mathcal{F}\in \mathrm{Perv}_{G^1}({^1_\infty\overline{\mathrm{Bun}}_{N^-}})$.  We first claim that, if $(\overline{i}_\nu)_!(\mathcal{F}')$ is a perverse sheaf, then the canonical morphism
     \begin{equation}\label{eq ext 1}
    \mathrm{Ext}^1_{\mathrm{D}_T({^1_\infty\overline{\mathrm{Bun}}_{N^-}})}((\overline{i}_\nu)_!(\mathcal{F}'),\mathcal{F})\to  \mathrm{Ext}^1_{\mathrm{D}_T({^1_\infty\overline{\mathrm{Bun}}_{N^-}})}((i_\nu)_!(\mathcal{F}'|_{{^1_\nu\mathrm{Bun}}_{N^-}}),\mathcal{F})
     \end{equation}
     is an isomorphism. This follows from the same arguments as in \cite[Proposition 4.2.14]{ABBGM}. The five lemma then implies that the analogue morphism \eqref{eq ext 1} for $\mathrm{Ext}^2$ is injective.

     But we have:
     \begin{align*}
         \mathrm{Ext}^i_{\mathrm{D}_T({^{I_\mathrm{u}}_\infty\overline{\mathrm{Bun}}_{N^-}})}((i_x)_!(\underline{\mathbb{k}}[\ell(x)]),\nabla_{x'}^{\frac{\infty}{2}})&\simeq  \mathrm{Ext}^i_{\mathrm{D}_T({^{I_\mathrm{u}}_x\mathrm{Bun}}_{N^-}})(\underline{\mathbb{k}}[\ell(x)],(i_x)^!\nabla_{x'}^{\frac{\infty}{2}})\\
         &\simeq H^{i-\ell(x)}_T({^{I_\mathrm{u}}_x\mathrm{Bun}}_{N^-},(i_x)^!\nabla_{x'}^{\frac{\infty}{2}}).\\
         &\simeq \begin{cases} \mathbb{k}\quad\text{if}~i=0~\text{and}~x=x',\\0\quad\text{otherwise},\end{cases}
     \end{align*}
     where the last isomorphism is due to the fact that, by definition of ${^{I_\mathrm{u}}_x\mathrm{Bun}}_{N^-}$ together with Proposition \ref{prop contractible}, the $k$-th cohomology group of the quotient $T\backslash{^{I_\mathrm{u}}_x\mathrm{Bun}}_{N^-}$ vanishes is when  $k\neq 0$, and is isomorphic to $\mathbb{k}$ otherwise. 
 \end{proof}
 \begin{coro}\label{coro simple socle IC}
     For any $w\in W_\mathrm{ext}$, the object $\nabla^{\frac{\infty}{2}}_w$ has a simple socle, isomorphic to $\mathrm{IC}_w^{\frac{\infty}{2}}$.
 \end{coro}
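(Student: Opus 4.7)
The plan is to compute $\mathrm{Hom}(\mathrm{IC}_x^{\frac{\infty}{2}},\nabla_w^{\frac{\infty}{2}})$ for each simple object, using Proposition \ref{prop ext} as the key input. Since $\mathrm{Perv}_{I_\mathrm{u}}(\mathcal{F}l^{\frac{\infty}{2}})$ is stable under extensions in $\mathrm{Perv}({^{I_\mathrm{u}}_\infty\overline{\mathrm{Bun}}}_{N^-})$ and its simple objects are the $\mathrm{IC}_x^{\frac{\infty}{2}}$ (for $x\in W_\mathrm{ext}$) by Proposition \ref{simple objects prop} applied in the Iwahori setting, it suffices to show that this Hom space is one-dimensional when $x=w$ and zero otherwise.

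For the upper bound, recall that $\mathrm{IC}_x^{\frac{\infty}{2}}$ is by construction the image of the canonical morphism $\Delta_x^{\frac{\infty}{2}}\to \nabla_x^{\frac{\infty}{2}}$, so there is a surjection $\Delta_x^{\frac{\infty}{2}}\twoheadrightarrow \mathrm{IC}_x^{\frac{\infty}{2}}$. Applying the left-exact functor $\mathrm{Hom}(-,\nabla_w^{\frac{\infty}{2}})$ yields an injection
\begin{equation*}
\mathrm{Hom}(\mathrm{IC}_x^{\frac{\infty}{2}},\nabla_w^{\frac{\infty}{2}})\hookrightarrow \mathrm{Hom}(\Delta_x^{\frac{\infty}{2}},\nabla_w^{\frac{\infty}{2}}).
\end{equation*}
By Proposition \ref{prop ext}, the right-hand side is $\mathbb{k}$ when $x=w$ and zero otherwise. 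This immediately implies that any simple subobject of $\nabla_w^{\frac{\infty}{2}}$ is isomorphic to $\mathrm{IC}_w^{\frac{\infty}{2}}$, and that the multiplicity of $\mathrm{IC}_w^{\frac{\infty}{2}}$ in $\mathrm{soc}(\nabla_w^{\frac{\infty}{2}})$ is at most one.

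For the existence of such a subobject, I would use that the non-zero canonical morphism $\Delta_w^{\frac{\infty}{2}}\to \nabla_w^{\frac{\infty}{2}}$ (non-zero again by Proposition \ref{prop ext}) restricts on the open stratum ${^{I_\mathrm{u}}_w\overline{\mathrm{Bun}}}_{N^-}$ to the identity of $\mathrm{IC}_{w,G/B^-}\widetilde{\boxtimes}\mathrm{IC}_{{_\nu\overline{\mathrm{Bun}}}_{N^-}}$, so its image is a perverse sheaf whose restriction to that open stratum is the corresponding IC-sheaf and which is supported on the closure; by the characterization of IC, this image is $\mathrm{IC}_w^{\frac{\infty}{2}}$. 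This exhibits $\mathrm{IC}_w^{\frac{\infty}{2}}$ as a (hence \emph{the}) simple subobject of $\nabla_w^{\frac{\infty}{2}}$, completing the proof.

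No step is particularly deep — the bulk of the work has been done in establishing Proposition \ref{prop ext}. The only point requiring minor care is the identification of the image of $\Delta_w^{\frac{\infty}{2}}\to\nabla_w^{\frac{\infty}{2}}$ with $\mathrm{IC}_w^{\frac{\infty}{2}}$, which one handles by restricting to ${^{I_\mathrm{u}}_w\overline{\mathrm{Bun}}}_{N^-}$ and invoking the universal property of intermediate extension.
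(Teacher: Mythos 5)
Your proof is correct and follows exactly the same route as the paper: both precompose a nonzero map $\mathrm{IC}_{x}^{\frac{\infty}{2}}\to\nabla_w^{\frac{\infty}{2}}$ with the surjection $\Delta_x^{\frac{\infty}{2}}\twoheadrightarrow\mathrm{IC}_x^{\frac{\infty}{2}}$ and invoke Proposition~\ref{prop ext} to force $x=w$. Your version is marginally more explicit in spelling out the multiplicity-one and existence steps (via the image of $\Delta_w^{\frac{\infty}{2}}\to\nabla_w^{\frac{\infty}{2}}$), which the paper leaves implicit, but there is no substantive difference in the argument.
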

 \begin{proof}
     Let $w'\in W_\mathrm{ext}$ be such that we have a non-zero morphism $\mathrm{IC}_{w'}^{\frac{\infty}{2}}\to \nabla^{\frac{\infty}{2}}_w$. Composing this morphism with the surjective morphism $\Delta^{\frac{\infty}{2}}_{w'}\to \mathrm{IC}_{w'}^{\frac{\infty}{2}}$, we obtain a non-zero morphism $\Delta^{\frac{\infty}{2}}_{w'}\to \nabla^{\frac{\infty}{2}}_w$. By Proposition \ref{prop ext}, we must have $w'=w$.
 \end{proof}
 \begin{rem}\label{rem construct}
     The same comments as in \cite[Remark 4.4.7]{ABBGM} apply here. Namely, one deduces from Proposition \ref{prop ext} that the Ext group between a standard and costandard object vanishes for all $i>0$. Moreover, for any $\mathcal{F}\in \mathrm{Perv}_{I_\mathrm{u}}(\mathcal{F}l^{\frac{\infty}{2}})$, we have an isomorphism
     $$\mathrm{Ext}^i_{\mathrm{Perv}_{I_{\mathrm{u}}}(\mathcal{F}l^{\frac{\infty}{2}})}(\Delta_x^{\frac{\infty}{2}},\mathcal{F})\simeq H^i({^{I_\mathrm{u}}_x\mathrm{Bun}}_{N^-},i_x^!\mathcal{F}),$$
     and one deduces from the second point of Proposition \ref{simple objects prop} that the cohomology groups of $i_x^!\mathcal{F}$ are local systems. Since ${^{I_\mathrm{u}}_x\mathrm{Bun}}_{N^-}$ is contractible according to \textit{loc. cit.}, those cohomology groups are in fact constant sheaves. 
 \end{rem}
	\begin{rem}\label{rem conv flag}   
		Notice that the group $G$ acts on ${^1_\infty\overline{\mathrm{Bun}}}_{N^-}$. This allows us to define a convolution product 
		$$(-)\star(-):\mathrm{D}_{N}(G/B)\times\mathrm{D}({^{I}_\infty\overline{\mathrm{Bun}}}_{N^-})\to \mathrm{D}({^{I_{\mathrm{u}}}_\infty\overline{\mathrm{Bun}}}_{N^-}). $$
		Namely, for $(\mathcal{F},\mathcal{G})\in \mathrm{D}_{N}(G/B)\times\mathrm{D}({^{I}_\infty\overline{\mathrm{Bun}}}_{N^-})$, one can form the twisted external tensor product $\mathcal{F}\widetilde{\boxtimes}\mathcal{G}\in\mathrm{D}_{N}(G\times^{B}{^{1}_\infty\overline{\mathrm{Bun}}}_{N^-})$, and define $\mathcal{F}\star\mathcal{G}$ to be the pushforward of $\mathcal{F}\widetilde{\boxtimes}\mathcal{G}$ through the morphism $G\times^{B}{^{1}_\infty\overline{\mathrm{Bun}}}_{N^-}\to {^{1}_\infty\overline{\mathrm{Bun}}}_{N^-}$ induced by the action of $G$. One can then easily check that, for all $w_1,w_2\in W$, $\lambda\in\mathbf{Y}$ such that $\ell(w_1w_2w_0)=\ell(w_1)+\ell(w_2w_0)$, we have 
		\begin{equation}\label{add length}
			\mathcal{N}_{w_1}\star\nabla_{w_2t_\lambda}^{\frac{\infty}{2}}\simeq \nabla_{w_1w_2t_\lambda}^{\frac{\infty}{2}}\qquad  \mathcal{D}_{w_1}\star\Delta_{w_2t_\lambda}^{\frac{\infty}{2}}\simeq \Delta_{w_1w_2t_\lambda}^{\frac{\infty}{2}},
		\end{equation}
  where $\mathcal{N}_{w_1}$ , resp. $\mathcal{D}_{w_1}$, is seen as the costandard object (resp. standard object) associated with $w$ in $\mathrm{D}_N(G/B)$ through the fully faithful functor $\mathrm{D}_N(G/B)\to \mathrm{D}_{I_\mathrm{u}}(\mathrm{Fl})$ induced by pullback through the canonical projection $\mathrm{Fl}\to G/B$.
	\end{rem}
	\section{Convolution}
	\subsection{Global affine Grassmannian }
	Recall (§\ref{section The affine Grassmannian and the affine flag variety}) that the affine Grassmaninan $\mathrm{Gr}$ is an ind-scheme which can be defined as the fppf-quotient $G((t))/G[[t]]$. From this definition, one can show that for any test $\mathbb{F}$-algebra $R$, $\mathrm{Gr}(\mathrm{Spec}(R))$ is the set of isomorphism classes of pairs $(\mathcal{F}_G,\beta)$, where $\mathcal{F}_G$ is a $G$-torsor over $\mathrm{Spec}(R[[t]])$ and $\beta$ a trivialization over $\mathrm{Spec}(R((t)))$.  We will need the global interpretation of $\mathrm{Gr}$ -- due to Beauville and Laszlo \cite{BL} -- which can be found for instance in \cite[§1.4]{zhu2016introduction}. Namely, the result of \textit{loc. cit.} shows that $\mathrm{Gr}$ classifies the data of $(\mathcal{F}_G,\beta)$, where $\mathcal{F}_G\in\mathrm{Bun}_G(S)$ and $\beta$ is a trivialization on $(X-\{x\})\times S$. 
	
	For each coweight $\lambda\in\mathbf{Y}$, one can give the following description of the Schubert variety $\overline{\mathrm{Gr}}^\lambda$, as in \cite[§3.2.1]{BG}. It is the closed subscheme of $\mathrm{Gr}$ classifying those $(\mathcal{F}_G,\beta)$ such that, for each $\mu\in\mathbf{X}^+$ and $G$-module $V$ whose weights are $\leq\mu$, we have
	$$\mathcal{V}_{\mathcal{F}^0_G}(-\langle\lambda,\mu\rangle\cdot(x\times S))\subset \mathcal{V}_{\mathcal{F}_G} $$
	(or in other words, the meromorphic map $\beta_\mathcal{V}:\mathcal{V}_{\mathcal{F}^0_G}\to \mathcal{V}_{\mathcal{F}_G}$ induced by $\beta$ has a pole of order at most $\langle\lambda,\mu\rangle$ at $x$). Notice that, by passing to the dual representation, the above condition is equivalent to $\mathcal{V}_{\mathcal{F}_G}\subset \mathcal{V}_{\mathcal{F}^0_G}(-\langle w_0(\lambda),\mu\rangle\cdot(x\times S))$ for all $V$. Each Schubert variety is a finite dimensional projective scheme which is stable under the $G[[t]]$-action. For all $\lambda,\lambda'\in \mathbf{Y}$, we have
	$$\overline{\mathrm{Gr}}^\lambda\subset\overline{\mathrm{Gr}}^{\lambda'}\Longleftrightarrow \lambda\leq\lambda' $$
	and we put $\mathrm{Gr}^\lambda:=\overline{\mathrm{Gr}}^\lambda-(\cup_{\mu<\lambda}\overline{\mathrm{Gr}}^\mu)$. Then $\mathrm{Gr}^\lambda(\mathbb{F})$ coincides with $G[[t]]\cdot [\lambda]$.
	\subsection{Hecke stacks} Next, we denote by $\mathcal{H}_{G}$ the Hecke stack for $G$ at $x$. It classifies the data $(\mathcal{F}_1,\mathcal{F}_2,\beta)$ of two bundles $\mathcal{F}_1,\mathcal{F}_2\in\mathrm{Bun}_G(S)$ together with an isomorphism $$\beta:\mathcal{F}_1|_{(X-\{x\})\times S}\simeq \mathcal{F}_2|_{(X-\{x\})\times S}.$$
	Recall that $^{1}\mathrm{Bun}_G$ was introduced at the end of §\ref{section Drinfeld's compactification and variants}, and let $^{I_\mathrm{u}}\mathrm{Bun}_G$ denote the quotient stack of $^{1}\mathrm{Bun}_G$ by $N$. For each coweight $\lambda\in\mathbf{Y}$, we will denote by $\overline{\mathcal{H}}_{G}^\lambda$ the closed substack of $\mathcal{H}_{G}$ classifying those $(\mathcal{F}_1,\mathcal{F}_2,\beta)$ such that, for each $\mu\in\Lambda_+$ and $G$-module $V$ whose weights are $\leq\mu$, we have
	$$\mathcal{V}_{\mathcal{F}_1}(-\langle\lambda,\mu\rangle\cdot(x\times S))\subset \mathcal{V}_{\mathcal{F}_2}\subset  \mathcal{V}_{\mathcal{F}_1}(-\langle w_0(\lambda),\mu\rangle\cdot(x\times S)).$$
	As above, the second inclusion is implied by the first one.
	
	Consider the diagram
	$$\mathrm{Bun}_G\xleftarrow{\overset{\leftarrow}{h}}\mathcal{H}_{G}\xrightarrow{\overset{\rightarrow}{h}}\mathrm{Bun}_G $$
	defined by $\overset{\leftarrow}{h}(\mathcal{F}_1,\mathcal{F}_2)=\mathcal{F}_1,\overset{\rightarrow}{h}(\mathcal{F}_1,\mathcal{F}_2)=\mathcal{F}_2$. Let $\mathfrak{G}$ denote the canonical $G[[t]]$-torsor over $\mathrm{Bun}_G$ associated with $x$. By definition, $\mathfrak{G}$ classifies the data of a $G$-bundle $\mathcal{F}_G$ over $X$ together with a trivialization $\mathcal{F}_G|_{D_x}\simeq\mathcal{F}^0_G|_{D_x}$. The projection $\overset{\leftarrow}{h}$ (and also $\overset{\rightarrow}{h}$) realizes $\mathcal{H}_{G}$ as a fibration over $\mathrm{Bun}_G$ with typical fiber $\mathrm{Gr}$ (cf. \cite[§3.2.4]{BG}):
	$$\mathcal{H}_{G}\simeq \mathrm{Gr}\times^{G[[t]]}\mathfrak{G}, $$
 where $G[[t]]$ acts on the right on $\mathfrak{G}$, and by left translation on $\mathrm{Gr}$.
 
	Under this identification we have, $\overline{\mathcal{H}}_{G}^\lambda\simeq \overline{\mathrm{Gr}}^\lambda\times^{G[[t]]}\mathfrak{G}$ (and $\overline{\mathcal{H}}_{G}^{-w_0(\lambda)}\simeq \overline{\mathrm{Gr}}^\lambda\times^{G[[t]]}\mathfrak{G}$ if we project via $\overset{\rightarrow}{h}$). 
	
	For $k_1,k_2\in\{0,1\}$, we let $^{k_1,k_2}\mathcal{H}_{G}$ denote the base change of $\mathcal{H}_{G}$ through the morphism
	$$^{k_1}\mathrm{Bun}_G\times{^{k_2}\mathrm{Bun}_G }\to \mathrm{Bun}_G\times\mathrm{Bun}_G $$
 (where $^{0}\mathrm{Bun}_G:=\mathrm{Bun}_G$). We still denote by $\overset{\leftarrow}{h}$ and $\overset{\rightarrow}{h}$ the canonical projections from $^{k_1,k_2}\mathcal{H}_{G}$ to $^{k_1}\mathrm{Bun}_G$ and ${^{k_2}\mathrm{Bun}_G }$ respectively. If we let $\mathfrak{G}^k_x$ denote the canonical $G^k$-torsor over $^k\mathrm{Bun}_G$ associated with $x$ for $k\in\{0,1\}$, then the projection $\overset{\rightarrow}{h}$ realizes $^{k_1,k_2}\mathcal{H}_{G}$ as a fibration over ${^{k_2}\mathrm{Bun}_G }$ with typical fiber $G((t))/G^{k_1}$ (\cite[§5.1.1]{ABBGM}). This means that we get a canonical isomorphism
	\begin{equation}\label{iden 1}
		^{k_1,k_2}\mathcal{H}_{G}\simeq G((t))/G^{k_1}\times^{G^{k_2}}\mathfrak{G}^{k_2},
	\end{equation}
where $\mathfrak{G}^0:=\mathfrak{G}$. Similarly, let us denote by $^{I_{\mathrm{u}},0}\mathcal{H}_{G}$ the Hecke stack obtained by replacing $^{k_1}\mathrm{Bun}_G$ with $^{I_{\mathrm{u}}}\mathrm{Bun}_G$ in the previous construction, and letting $k_2=0$. Via $\overset{\rightarrow}{h}$, we then get a canonical isomorphism
	\begin{equation}\label{iden 2}
		^{I_{\mathrm{u}},0}\mathcal{H}_{G}\simeq G((t))/I_{\mathrm{u}}\times^{G[[t]]}\mathfrak{G}.
	\end{equation}
	Moreover, we obtain the following isomorphism via $\overset{\leftarrow}{h}$:
	\begin{equation}\label{iden 3}
		^{I_{\mathrm{u}},0}\mathcal{H}_{G}\simeq \mathrm{Gr}\times^{G[[t]]}{^{I_\mathrm{u}}\mathfrak{G}},
	\end{equation}
	where ${^{I_\mathrm{u}}\mathfrak{G}}$ is the canonical $G[[t]]$-torsor over $^{I_{\mathrm{u}}}\mathrm{Bun}_G$.
	
	We then consider the following diagram, in which both squares are Cartesian
	\begin{equation*}
		\xymatrix{
			^{k_1}_\infty\overline{\mathrm{Bun}}_{N^-}\ar[d]_{\mathfrak{p}} &^{k_1,k_2}\mathcal{H}_{G,N^{-}}\ar[l]^{\overset{\leftarrow}{h'}} \ar[r]^{\overset{\rightarrow}{h'}}\ar[d] & ^{k_2}_\infty\overline{\mathrm{Bun}}_{N^-}\ar[d]_{\mathfrak{p}} \\
			^{k_1}\mathrm{Bun}_{G}&^{k_1,k_2}\mathcal{H}_{G}\ar[l]^{\overset{\leftarrow}{h}} \ar[r]^{\overset{\rightarrow}{h}}& ^{k_2}\mathrm{Bun}_{G}.
		}
	\end{equation*}
	Similarly we have the following diagram
	\begin{equation*}
		\xymatrix{
			^{I_{\mathrm{u}}}_\infty\overline{\mathrm{Bun}}_{N^-}\ar[d]_{\mathfrak{p}} &^{I_{\mathrm{u}},0}\mathcal{H}_{G,N^{-}}\ar[l]^{\overset{\leftarrow}{h'}} \ar[r]^{\overset{\rightarrow}{h'}}\ar[d] & _\infty\overline{\mathrm{Bun}}_{N^-}\ar[d]_{\mathfrak{p}} \\
			^{I_{\mathrm{u}}}\mathrm{Bun}_{G}&^{I_{\mathrm{u}},0}\mathcal{H}_{G}\ar[l]^{\overset{\leftarrow}{h}} \ar[r]^{\overset{\rightarrow}{h}}& \mathrm{Bun}_{G}.
		}
	\end{equation*}
	Now, the previous diagrams together with the identifications \eqref{iden 1} and \eqref{iden 2} yield the following canonical isomorphisms:
	\begin{equation}\label{ident conv prod}
		^{0,0}\mathcal{H}_{G,N^{-}}\simeq G((t))/G[[t]]\times^{G[[t]]}\widetilde{\mathfrak{G}},\qquad ^{I_{\mathrm{u}},0}\mathcal{H}_{G,N^{-}}\simeq G((t))/I_{\mathrm{u}}\times^{G[[t]]}\widetilde{\mathfrak{G}}, 
	\end{equation}
	where $\widetilde{\mathfrak{G}}$ denotes the canonical $G[[t]]$-torsor over $_\infty\overline{\mathrm{Bun}}_{N^-}$. The identification \eqref{iden 3} also yield the isomorphism
	$$^{I_{\mathrm{u}},0}\mathcal{H}_{G,N^{-}}\simeq \mathrm{Gr}\times^{G[[t]]}{^{I_\mathrm{u}}\widetilde{\mathfrak{G}}}, $$
where ${^{I_\mathrm{u}}\widetilde{\mathfrak{G}}}$ denotes the canonical $G[[t]]$-torsor over $^{I_\mathrm{u}}_\infty\overline{\mathrm{Bun}}_{N^-}$.	For any $\lambda\in \mathbf{Y}$, we define the locally closed substack 
$$^{I_{\mathrm{u}},0}\mathcal{H}_{G,N^{-}}^\lambda:=\mathrm{Gr}^\lambda\times^{G[[t]]}{^{I_\mathrm{u}}\widetilde{\mathfrak{G}}}.$$
	\subsection{Definition of convolution}\label{section Definition of convolution}  
	
	Recall (\cite[§5.1.1]{ABBGM}) that the inversion map $\mathrm{sw}:G((t))\to G((t)),~g\mapsto g^{^-1}$ induces an equivalence of categories
	$$\mathcal{S}\mapsto\mathrm{sw}^*(\mathcal{S}):\mathrm{D}_{G^{k_1}}(G((t))/G^{k_2})\to \mathrm{D}_{G^{k_2}}(G((t))/G^{k_1}). $$
	
	We are now ready to define the convolution product for any $\mathcal{S}\in \mathrm{D}_{G[[t]]}(\mathrm{Gr})$:
	$$\mathcal{F}\mapsto \mathcal{S}\star\mathcal{F}: \mathrm{D}(_\infty\overline{\mathrm{Bun}}_{N^-})\to \mathrm{D}(_\infty\overline{\mathrm{Bun}}_{N^-}),~\mathcal{F}\mapsto (\overset{\leftarrow}{h'})_*(\mathrm{sw}^*(\mathcal{S})\widetilde{\boxtimes}\mathcal{F}),$$
	where $\mathrm{sw}^*(\mathcal{S})\widetilde{\boxtimes}\mathcal{F}$ is seen as an object of $\mathrm{D}(^{0,0}\mathcal{H}_{G,N^{-}})$ thanks to \eqref{ident conv prod}. And respectively for any $\mathcal{S}\in \mathrm{D}_{I_\mathrm{u}}(\mathrm{Gr})$:
	$$\mathcal{F}\mapsto \mathcal{S}\star\mathcal{F}:\mathrm{D}({_\infty\overline{\mathrm{Bun}}_{N^-}})\to \mathrm{D}(^{I_\mathrm{u}}_\infty\overline{\mathrm{Bun}}_{N^-}),~\mathcal{F}\mapsto (\overset{\leftarrow}{h'})_*(\mathrm{sw}^*(\mathcal{S})\widetilde{\boxtimes}\mathcal{F}).$$
	Similarly, one can define convolution functors
	\begin{align*}
 &\mathrm{D}_I(\mathrm{Gr})\times\mathrm{D}({_\infty\overline{\mathrm{Bun}}}_{N^-})\to  \mathrm{D}({^{I}_\infty\overline{\mathrm{Bun}}}_{N^-}),~(\mathcal{S},\mathcal{F})\mapsto\mathcal{S}\star\mathcal{F},\\
	    &\mathrm{D}_{I_\mathrm{u}}(\mathrm{Fl})\times\mathrm{D}({^{I_\mathrm{u}}_\infty\overline{\mathrm{Bun}}}_{N^-})\to  \mathrm{D}({^{I_\mathrm{u}}_\infty\overline{\mathrm{Bun}}}_{N^-}),~(\mathcal{S},\mathcal{F})\mapsto\mathcal{S}\star\mathcal{F},\\
     &\mathrm{D}_{I}(\mathrm{Fl})\times\mathrm{D}({^{I}_\infty\overline{\mathrm{Bun}}}_{N^-})\to  \mathrm{D}({^{I}_\infty\overline{\mathrm{Bun}}}_{N^-}),~(\mathcal{S},\mathcal{F})\mapsto\mathcal{S}\star\mathcal{F}.
	\end{align*}
The next result follows easily from the constructions.
\begin{lem}\label{lem conv for}
    For any $\mathcal{F}\in\mathrm{D}_{I}(\mathrm{Fl})$, $\mathcal{G}\in\mathrm{D}({_\infty\overline{\mathrm{Bun}}}_{N^-})$, there exists a canonical isomorphism in $\mathrm{D}({^{I}_\infty\overline{\mathrm{Bun}}}_{N^-})$:
    $$\mathcal{F}\star\mathrm{For}_I^{G[[t]]}(\mathcal{G})\simeq \pi_*(\mathcal{F})\star\mathcal{G}, $$
    where we recall that $\pi:\mathrm{Fl}\to\mathrm{Gr}$ is the canonical projection.
\end{lem}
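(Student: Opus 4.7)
The lemma is an instance of the compatibility between convolution and the pushforward $\pi_*\colon\mathrm{D}_I(\mathrm{Fl})\to\mathrm{D}_I(\mathrm{Gr})$, directly analogous to the identity used in establishing \eqref{eq standard} via \cite[Lemma 2.5]{JEP_2019__6__707_0}. I would adapt that argument to our setting by realising both sides as pushforwards of a common twisted product from a suitable Hecke-type stack.

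The first step is to introduce the Hecke stack $^{I,I}\mathcal{H}_{G,N^-}$ carrying $I$-level structure on both $G$-bundles (and the $N^-$-compactification data on the right factor) and to verify two identifications in the style of \eqref{iden 2}--\eqref{iden 3} and \eqref{ident conv prod}: via $\overset{\rightarrow}{h'}$, one obtains $G((t))/I\times^I{}^I\widetilde{\mathfrak{G}}$ over ${}^I_\infty\overline{\mathrm{Bun}}_{N^-}$; and via $\overset{\leftarrow}{h'}$, after the swap $\mathrm{sw}$, a dual description $\mathrm{Fl}\times^I{}^I\widetilde{\mathfrak{G}}$, where ${}^I\widetilde{\mathfrak{G}}$ denotes the canonical $I$-torsor on ${}^I_\infty\overline{\mathrm{Bun}}_{N^-}$. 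The key geometric input is the natural forgetful map $\Pi\colon {}^{I,I}\mathcal{H}_{G,N^-}\to {}^{I,0}\mathcal{H}_{G,N^-}$ obtained by dropping the $I$-level on the right factor; one checks that it fits into a cartesian square with $\pi_{\mathrm{Bun}}\colon{}^I_\infty\overline{\mathrm{Bun}}_{N^-}\to{}_\infty\overline{\mathrm{Bun}}_{N^-}$ via the $\overset{\rightarrow}{h'}$-projections, that it satisfies $\overset{\leftarrow}{h'}_{I,0}\circ\Pi=\overset{\leftarrow}{h'}_{I,I}$, and, dually, that via the $\overset{\leftarrow}{h'}$-identifications (where ${}^{I,0}\mathcal{H}_{G,N^-}\simeq\mathrm{Gr}\times^I{}^I\widetilde{\mathfrak{G}}$) it is realised as $\pi\times^I\mathrm{id}\colon\mathrm{Fl}\times^I{}^I\widetilde{\mathfrak{G}}\to\mathrm{Gr}\times^I{}^I\widetilde{\mathfrak{G}}$, i.e.\ it is induced by $\pi\colon\mathrm{Fl}\to\mathrm{Gr}$ on the first factor.

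Combining these facts, the left-hand side $\mathcal{F}\star\mathrm{For}^{G[[t]]}_I\mathcal{G}=(\overset{\leftarrow}{h'}_{I,I})_*(\mathrm{sw}^*\mathcal{F}\,\widetilde{\boxtimes}\,\mathrm{For}^{G[[t]]}_I\mathcal{G})$ is rewritten, via $\overset{\leftarrow}{h'}_{I,I}=\overset{\leftarrow}{h'}_{I,0}\circ\Pi$, as $(\overset{\leftarrow}{h'}_{I,0})_*\Pi_*(\mathrm{sw}^*\mathcal{F}\,\widetilde{\boxtimes}\,\mathrm{For}^{G[[t]]}_I\mathcal{G})$; proper base change applied to $\Pi=\pi\times^I\mathrm{id}$, together with the identity $\mathrm{For}^{G[[t]]}_I\mathcal{G}=\pi_{\mathrm{Bun}}^*\mathcal{G}[\mathrm{dim}(G)-\mathrm{dim}(B)]$, then yields the identity $\Pi_*(\mathrm{sw}^*\mathcal{F}\,\widetilde{\boxtimes}\,\mathrm{For}^{G[[t]]}_I\mathcal{G})\simeq\mathrm{sw}^*(\pi_*\mathcal{F})\,\widetilde{\boxtimes}\,\mathcal{G}$ on ${}^{I,0}\mathcal{H}_{G,N^-}$, whence the announced isomorphism after one further application of $(\overset{\leftarrow}{h'}_{I,0})_*$. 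The principal obstacle lies in the bookkeeping of the previous paragraph: pinning down the two identifications of ${}^{I,I}\mathcal{H}_{G,N^-}$ (connected via $\mathrm{sw}$) precisely enough to realise $\Pi$ as $\pi\times^I\mathrm{id}$ in the $\overset{\leftarrow}{h'}$-picture. Once this geometric matching is secured, the remaining manipulations are routine applications of base change and the projection formula.
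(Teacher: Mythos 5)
The paper gives no argument for this lemma beyond the remark that it ``follows easily from the constructions,'' so your proposal is supplying the proof the paper omits rather than diverging from it; and the strategy you describe --- factoring $\overset{\leftarrow}{h'}_{I,I}$ as $\overset{\leftarrow}{h'}_{I,0}\circ\Pi$ for the map $\Pi$ forgetting the level structure on the second bundle, identifying $\Pi$ fibrewise with $\pi$, and exploiting that $\mathrm{For}^{G[[t]]}_I\mathcal{G}$ is pulled back from ${}_\infty\overline{\mathrm{Bun}}_{N^-}$ --- is the correct one, exactly parallel to the proof of \eqref{eq standard} via \cite[Lemma 2.5]{JEP_2019__6__707_0}. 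Two caveats on the ``routine'' final step. First, the identification $\Pi\simeq\pi\times\mathrm{id}$ holds in the $\overset{\leftarrow}{h'}$-presentation, whereas the twisted products $\mathrm{sw}^*\mathcal{F}\,\widetilde{\boxtimes}\,(-)$ are formed in the $\overset{\rightarrow}{h'}$-presentation, and the object $\mathrm{sw}^*\mathcal{F}\,\widetilde{\boxtimes}\,\mathrm{For}^{G[[t]]}_I\mathcal{G}$ does not decompose as a twisted product relative to $\overset{\leftarrow}{h'}$; so one cannot literally ``apply proper base change to $\pi\times^I\mathrm{id}$'' but must instead either compute on a common cover such as $(G((t))/I)\times\widetilde{\mathfrak{G}}_I$ or check fibrewise that the restriction of $\mathrm{sw}^*\mathcal{F}\,\widetilde{\boxtimes}\,\mathrm{For}^{G[[t]]}_I\mathcal{G}$ to a fibre of $\Pi$ is the restriction of $\mathrm{sw}^*\mathcal{F}$ to the corresponding fibre of $\pi$ tensored with a constant complex. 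Second, the shift $[\mathrm{dim}(G)-\mathrm{dim}(B)]$ in $\mathrm{For}^{G[[t]]}_I$ does not cancel for free: with the naive (unshifted) descent convention for both twisted products, a stalkwise computation (e.g.\ with $\mathcal{F}$ the skyscraper at the base point of $\mathrm{Fl}$, where the left side gives $\mathrm{For}^{G[[t]]}_I\mathcal{G}$ and the right side gives the unshifted pullback of $\mathcal{G}$) produces a discrepancy of $[\mathrm{dim}(G/B)]$, so the normalisation of $\widetilde{\boxtimes}$ over ${}^{I,I}\mathcal{H}_{G,N^-}$ must be fixed to absorb it. These are precisely the bookkeeping points you flag; they do not invalidate the approach, but the second one is where the asserted isomorphism can silently fail to be $t$-exact if the conventions are not pinned down.
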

	In the cases we consider, the convolution product commutes with Verdier duality.
	\begin{prop}
		The morphism $\overset{\leftarrow}{h'}$ from $^{0,0}\mathcal{H}_{G,N^{-}}$ (resp. $^{I_{\mathrm{u}},0}\mathcal{H}_{G,N^{-}}$) to $_\infty\overline{\mathrm{Bun}}_{N^-}$ (resp. $^{I_{\mathrm{u}}}_\infty\overline{\mathrm{Bun}}_{N^-}$)  is ind-proper.
	\end{prop}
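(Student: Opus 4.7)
The proof will be a formal consequence of the fact that $\overset{\leftarrow}{h}$ is already ind-proper at the level of the un-decorated Hecke stacks, combined with the Cartesian diagrams introduced just above the statement. Since ind-properness is stable under base change, the plan is to prove that
$$\overset{\leftarrow}{h}:\mathcal{H}_G\to \mathrm{Bun}_G\qquad\text{and}\qquad \overset{\leftarrow}{h}:{^{I_\mathrm{u},0}\mathcal{H}_G}\to{^{I_\mathrm{u}}\mathrm{Bun}_G}$$
are ind-proper, and then pull back along $\mathfrak{p}:{_\infty\overline{\mathrm{Bun}}_{N^-}}\to\mathrm{Bun}_G$ (resp. along the corresponding map out of ${^{I_\mathrm{u}}_\infty\overline{\mathrm{Bun}}_{N^-}}$) to obtain the desired statement for $\overset{\leftarrow}{h'}$.

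To prove the first assertion I would exhibit $\mathcal{H}_G$ as a filtered colimit of the closed substacks $\overline{\mathcal{H}}_G^\lambda\simeq \overline{\mathrm{Gr}}^\lambda\times^{G[[t]]}\mathfrak{G}$ indexed by $\lambda\in\mathbf{Y}^+$ (the Cartan stratification of $\mathrm{Gr}$), and check that each restricted morphism $\overline{\mathcal{H}}_G^\lambda\to\mathrm{Bun}_G$ is proper. To see this last point, I would pull back along the smooth cover $\mathfrak{G}\to\mathrm{Bun}_G$: the resulting morphism identifies with the trivial projection $\overline{\mathrm{Gr}}^\lambda\times\mathfrak{G}\to\mathfrak{G}$, which is proper because $\overline{\mathrm{Gr}}^\lambda$ is a projective $\mathbb{F}$-scheme of finite type. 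Since properness is smooth-local on the target, it then descends back to $\mathrm{Bun}_G$. The same argument applies verbatim in the Iwahori case, using the identification $^{I_\mathrm{u},0}\mathcal{H}_G\simeq \mathrm{Gr}\times^{G[[t]]}{^{I_\mathrm{u}}\mathfrak{G}}$ coming from $\overset{\leftarrow}{h}$ in \eqref{iden 3}, and the filtration by $\overline{\mathrm{Gr}}^\lambda\times^{G[[t]]}{^{I_\mathrm{u}}\mathfrak{G}}$.

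Finally, the two Cartesian squares displayed just before \eqref{ident conv prod} exhibit $\overset{\leftarrow}{h'}$ as the base change of $\overset{\leftarrow}{h}$ along $\mathfrak{p}$, and the closed substacks $\overline{\mathcal{H}}_G^\lambda\times_{\mathrm{Bun}_G}{_\infty\overline{\mathrm{Bun}}_{N^-}}$ (resp. with ${^{I_\mathrm{u}}_\infty\overline{\mathrm{Bun}}_{N^-}}$) form an exhaustive filtration of the source whose maps to the target are proper by base change.

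The only delicate point I anticipate is justifying the reduction of the $G[[t]]$-torsor $\mathfrak{G}$ to a genuine smooth presentation of $\mathrm{Bun}_G$: strictly speaking $G[[t]]$ is pro-finite type, so the check that properness descends along $\mathfrak{G}\to\mathrm{Bun}_G$ requires passing to a sufficiently deep congruence subgroup $G^k$ acting trivially on $\overline{\mathrm{Gr}}^\lambda$, so that everything takes place between algebraic stacks of finite type over $\mathbb{F}$. Once that bookkeeping is done the argument goes through formally, and no new geometric input beyond the projectivity of the Schubert varieties $\overline{\mathrm{Gr}}^\lambda$ is required.
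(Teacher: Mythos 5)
Your proof is correct and takes essentially the same approach as the paper: both rest on realizing the Hecke stack, via the left projection, as a fibration over the target with ind-projective fiber (filtered by the Schubert varieties $\overline{\mathrm{Gr}}^\lambda$), with you merely spelling out the base-change and congruence-subgroup bookkeeping that the paper leaves implicit. Note that your identification of the fiber of $\overset{\leftarrow}{h'}$ as $\mathrm{Gr}$ in the Iwahori case (via the analogue of \eqref{iden 3}) is the accurate one; the paper's mention of the affine flag variety there corresponds to the fibration via $\overset{\rightarrow}{h'}$, not $\overset{\leftarrow}{h'}$.
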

	\begin{proof}
		This follows from the fact that $\overset{\leftarrow}{h'}$ realizes $^{0,0}\mathcal{H}_{G,N^{-}}$ (resp. $^{I_{\mathrm{u}},0}\mathcal{H}_{G,N^{-}}$) as a fibration over $_\infty\overline{\mathrm{Bun}}_{N^-}$ (resp. $^{I_{\mathrm{u}}}_\infty\overline{\mathrm{Bun}}_{N^-}$) with typical fiber an ind-projective ind-scheme (namely the affine Grassmannian in the first case, and the affine flag variety in the second case).
	\end{proof}
	\begin{coro}\label{verdier coro}
		For any $\mathcal{F}\in\mathrm{D}_{G[[t]]}(\mathcal{F}l^{\frac{\infty}{2}})$ and $\mathcal{S}\in \mathrm{D}_{G[[t]]}(\mathrm{Gr})$ (resp. $\mathcal{S}\in  \mathrm{D}_{I_\mathrm{u}}(\mathrm{Gr})$), we have 
		$$\mathbb{D}(\mathcal{S}\star\mathcal{F})=\mathbb{D}(\mathcal{S})\star\mathbb{D}(\mathcal{F}).$$
	\end{coro}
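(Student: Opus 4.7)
The plan is to reduce the statement to three compatibilities of Verdier duality with the pieces out of which the convolution product is built: the pushforward along $\overset{\leftarrow}{h'}$, the twisted external product $\widetilde{\boxtimes}$, and the pullback along $\mathrm{sw}$. Recall that by definition
$$\mathcal{S}\star\mathcal{F}\;=\;(\overset{\leftarrow}{h'})_{*}\bigl(\mathrm{sw}^{*}(\mathcal{S})\,\widetilde{\boxtimes}\,\mathcal{F}\bigr),$$
so it is enough to verify that Verdier duality passes through each of these three operations, and then to conjugate.

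First, by the preceding proposition the morphism $\overset{\leftarrow}{h'}$ is ind-proper, in both the spherical case ($^{0,0}\mathcal{H}_{G,N^{-}}\to {_\infty\overline{\mathrm{Bun}}}_{N^-}$) and the Iwahori case ($^{I_\mathrm{u},0}\mathcal{H}_{G,N^{-}}\to {^{I_\mathrm{u}}_\infty\overline{\mathrm{Bun}}}_{N^-}$). Ind-properness forces $(\overset{\leftarrow}{h'})_{*}=(\overset{\leftarrow}{h'})_{!}$, so the base-change/duality formalism of \cite{LO} for stacks locally of finite type yields a canonical isomorphism $\mathbb{D}\circ(\overset{\leftarrow}{h'})_{*}\simeq (\overset{\leftarrow}{h'})_{*}\circ\mathbb{D}$.

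Next, the twisted external tensor product $\mathrm{sw}^{*}(\mathcal{S})\,\widetilde{\boxtimes}\,\mathcal{F}$ is, by construction, characterized by pullback along the smooth surjection $q$ in the standard convolution diagram, namely $q^{*}\bigl(\mathrm{sw}^{*}(\mathcal{S})\widetilde{\boxtimes}\mathcal{F}\bigr)\simeq p^{*}\bigl(\mathrm{sw}^{*}(\mathcal{S})\boxtimes\mathcal{F}\bigr)$ up to the relative-dimension shift of $q$. Since $q$ is smooth of fixed relative dimension, pullback along $q$ commutes with $\mathbb{D}$ up to a shift which is absorbed in the definition of $\widetilde{\boxtimes}$; combining this with the standard compatibility $\mathbb{D}(\mathcal{A}\boxtimes\mathcal{B})\simeq \mathbb{D}(\mathcal{A})\boxtimes\mathbb{D}(\mathcal{B})$ for the ordinary external product gives a canonical isomorphism
$$\mathbb{D}\bigl(\mathrm{sw}^{*}(\mathcal{S})\widetilde{\boxtimes}\mathcal{F}\bigr)\;\simeq\;\mathbb{D}(\mathrm{sw}^{*}\mathcal{S})\,\widetilde{\boxtimes}\,\mathbb{D}(\mathcal{F}).$$
Finally, $\mathrm{sw}$ is an automorphism of $G((t))$, in particular an isomorphism of ind-schemes, so pullback along it commutes with $\mathbb{D}$ on the nose: $\mathbb{D}\circ\mathrm{sw}^{*}\simeq \mathrm{sw}^{*}\circ\mathbb{D}$.

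Concatenating these three compatibilities yields
$$\mathbb{D}(\mathcal{S}\star\mathcal{F})\;=\;(\overset{\leftarrow}{h'})_{*}\mathbb{D}\bigl(\mathrm{sw}^{*}\mathcal{S}\widetilde{\boxtimes}\mathcal{F}\bigr)\;\simeq\;(\overset{\leftarrow}{h'})_{*}\bigl(\mathrm{sw}^{*}\mathbb{D}(\mathcal{S})\widetilde{\boxtimes}\mathbb{D}(\mathcal{F})\bigr)\;=\;\mathbb{D}(\mathcal{S})\star\mathbb{D}(\mathcal{F}),$$
which is the desired statement. I expect the only mildly delicate point to be the bookkeeping of shifts in the $\widetilde{\boxtimes}$ step (making sure that the relative-dimension shift arising from $q^{!}=q^{*}[2\dim q]$ matches the convention chosen to define $\widetilde{\boxtimes}$); once the conventions are fixed as in \cite[§6.3]{baumann:hal-01491529}, the argument is entirely formal and applies verbatim to both the spherical and the Iwahori cases since the ind-properness of $\overset{\leftarrow}{h'}$ has been established in either case.
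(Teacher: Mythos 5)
Your argument is exactly the one the paper intends: the corollary is stated as an immediate consequence of the preceding proposition on the ind-properness of $\overset{\leftarrow}{h'}$, and the paper leaves implicit precisely the three compatibilities you spell out (properness giving $(\overset{\leftarrow}{h'})_*=(\overset{\leftarrow}{h'})_!$, duality for $\widetilde{\boxtimes}$ via smooth descent along $q$, and $\mathrm{sw}$ being an isomorphism). Your write-up is correct and matches the paper's approach.
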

	\subsection{Exactness of convolution}
	We will explain the proof of the following key property.
	\begin{thm}{{\cite[Theorem 5.2.2]{ABBGM}}}\label{thm exact}
		For any $\nu\in\mathbf{Y}$, the convolution functor 
		$$\mathcal{S}\mapsto \mathcal{S}\star\mathcal{IC}^{\frac{\infty}{2}}_\nu $$
		from $\mathrm{D}_{G[[t]]}(\mathrm{Gr})$ (resp.   $\mathrm{D}_{I_\mathrm{u}}(\mathrm{Gr})$) to $\mathrm{D}_{G[[t]]}(\mathcal{F}l^{\frac{\infty}{2}})$ (resp. $\mathrm{D}_{I_\mathrm{u}}(\mathcal{F}l^{\frac{\infty}{2}})$) is $t$-exact.
	\end{thm}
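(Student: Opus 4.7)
The plan is to reduce $t$-exactness to an already-known $t$-exactness statement for the ordinary Satake convolution on $\mathrm{Gr}$, using the stalk description of $\mathcal{IC}^{\frac{\infty}{2}}_\nu$ established in Corollary \ref{coro kostant}.

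\textbf{Step 1 (reduction to standards).} The convolution functor is triangulated, so to prove exactness it suffices to check that $\mathcal{S}\star\mathcal{IC}^{\frac{\infty}{2}}_\nu$ is perverse when $\mathcal{S}$ runs through a generating family. Using Corollary \ref{verdier coro} together with the Verdier self-duality of $\mathcal{IC}^{\frac{\infty}{2}}_\nu$ (as an intermediate extension of an IC sheaf), it is enough to check perversity in degrees $\leq 0$ on standard objects and then deduce the $\geq 0$ bound for costandards by duality. For the $G[[t]]$-equivariant case the generators are $\mathcal{I}^\lambda_!$, $\lambda\in\mathbf{Y}^+$. For the $I_\mathrm{u}$-equivariant case, Lemma \ref{lem conv standard}(2) lets one write any standard $\Delta_y$ as $\mathcal{D}_v\star^I\Delta_{y'}$ with $y'\in W_\mathrm{ext}^S$ pure translation, and associativity of convolution (together with the exactness of $\mathcal{D}_v\star^I(-)$) reduces this case to the previous one.

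\textbf{Step 2 (geometric description of convolution).} Use the identifications from \eqref{ident conv prod} to describe $\mathcal{I}^\lambda_!\star\mathcal{IC}^{\frac{\infty}{2}}_\nu$ as $(\overset{\leftarrow}{h'})_{*}(\mathrm{sw}^*\mathcal{I}^\lambda_!\,\widetilde{\boxtimes}\,\mathcal{IC}^{\frac{\infty}{2}}_\nu)$, where the twisted product is supported on $\overline{\mathrm{Gr}}^{-w_0(\lambda)}\times^{G[[t]]}\widetilde{\mathfrak{G}}$. The map $\overset{\leftarrow}{h'}$ is ind-proper, and along each stratum ${_{\nu''}\mathrm{Bun}}_{N^-}$ of ${_\infty\overline{\mathrm{Bun}}_{N^-}}$ it restricts to a proper morphism whose fibers are closed subschemes of the finite-dimensional Schubert variety $\overline{\mathrm{Gr}}^{-w_0(\lambda)}$.

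\textbf{Step 3 (stratum-wise check via Corollary \ref{coro kostant}).} To verify that $\mathcal{I}^\lambda_!\star\mathcal{IC}^{\frac{\infty}{2}}_\nu$ lies in perverse degrees $\leq 0$, it is enough, by Proposition \ref{simple objects prop}, to bound the cohomological degrees of the $*$-stalks along each stratum ${_{\nu''}\mathrm{Bun}}_{N^-}$, with $\nu''\leq\nu$. Apply proper base change to $\overset{\leftarrow}{h'}$ restricted over this stratum: the calculation is reduced to a product of a stalk of $\mathrm{sw}^*\mathcal{I}^\lambda_!\simeq\mathcal{I}^{-w_0(\lambda)}_!$ and a stalk of $\mathcal{IC}^{\frac{\infty}{2}}_\nu$. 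Now Corollary \ref{coro kostant} identifies this stalk of $\mathcal{IC}^{\frac{\infty}{2}}_\nu$ (for $r$ large) with the corresponding stalk of $\mathcal{IC}^{\mu_r}$ on $\mathrm{Gr}^{\mu_r-(\nu-\nu'')}$. Consequently, the same stalk computation compares term-by-term with that of $\mathcal{I}^{-w_0(\lambda)}_!\star\mathcal{IC}^{\mu_r}\in\mathrm{Perv}_{G[[t]]}(\mathrm{Gr})$, which is perverse by the $t$-exactness of Satake convolution (Theorem \ref{satake equivalence1}). The bound in perverse degrees $\leq 0$ follows, and dualizing yields the $\geq 0$ bound for costandards, hence perversity.

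\textbf{Main obstacle.} The crux is Step 3: one must ensure that the local comparison between the fiber of $\overset{\leftarrow}{h'}$ over a point of ${_{\nu''}\mathrm{Bun}}_{N^-}$ and the corresponding Schubert cell in $\mathrm{Gr}$ is compatible with the twisted external product structure, so that the stalk identification of Corollary \ref{coro kostant} upgrades to an identification of the two convolutions at the level of stalks. This compatibility uses the factorization property of $\mathcal{IC}^{\frac{\infty}{2}}_\nu$ and the trivialization of $\widetilde{\mathfrak{G}}$ over a formal neighborhood of $x$, together with the fact that the defect of a point of ${_{\nu''}\mathrm{Bun}}_{N^-}$ records precisely the singularity type seen on Schubert strata. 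Once this geometric compatibility is in hand, the characteristic-independence conclusion of Lemma \ref{lem kuznetsov} is what allows the argument of \cite{ABBGM} to go through verbatim in the modular setting.
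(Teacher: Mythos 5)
Your proposal takes a genuinely different route from the paper, and the route has a real gap at its crux.

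The paper's proof of Theorem \ref{thm exact} does not pass through Corollary \ref{coro kostant}, does not reduce to standard objects, and does not need any characteristic-independence input (in particular it is unconditional, whereas your Step~3 would import a dependence on Conjecture \ref{conj stalks independance} via Lemma \ref{lem kuznetsov}). The paper instead stratifies $(\overset{\leftarrow}{h'})^{-1}({_\mu\overline{\mathrm{Bun}}_{N^-}})$ by the pieces $X_{\mu',\lambda}$, identifies the fibers of $\overset{\leftarrow}{h'}|_{X_{\mu',\lambda}}$ with MV-type intersections $\mathrm{Gr}^{\lambda}\cap N^-((t))\cdot(\mu-\mu')$ twisted over ${_\mu\mathcal{N}}$, and then only needs two soft facts: (i) the $*$-restriction of an intermediate extension to a lower stratum lies in strictly negative perverse degrees, applied to $\mathcal{IC}^{\frac{\infty}{2}}_\nu|_{{_{\mu'}\overline{\mathrm{Bun}}_{N^-}}}$; and (ii) the codimension bound $\mathrm{codim}(\mathrm{Gr}^{-w_0(\lambda)}\cap N^-((t))\cdot(\mu'-\mu),\,\mathrm{Gr}^{-w_0(\lambda)})\geq\langle\mu'-\mu+\lambda,\rho\rangle$ coming from the dimension theory of MV-cycles. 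These two estimates together immediately give the perverse degree bound on $\mathrm{sw}^*(\mathcal{S})\widetilde{\boxtimes}\mathcal{IC}^{\frac{\infty}{2}}_\nu$ over each $X_{\mu',\lambda}$, and the result follows by ind-properness of $\overset{\leftarrow}{h'}$. Nothing about the actual values of the stalks of $\mathcal{IC}^{\frac{\infty}{2}}_\nu$ is used — only that they live in strictly negative perverse degrees, which is automatic for any intermediate extension.

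The gap in your proposal is exactly what you name as the ``Main obstacle,'' and it is not a cosmetic one. Corollary \ref{coro kostant} only compares \emph{stalks at points}, as graded vector spaces. To conclude anything about the stalks of the convolution $\mathcal{I}^{-w_0(\lambda)}_!\star\mathcal{IC}^{\frac{\infty}{2}}_\nu$, you additionally need to compare the \emph{geometry of the convolution fibers}: the fiber of $\overset{\leftarrow}{h'}$ over a point of ${_{\nu''}\mathrm{Bun}}_{N^-}$ versus the fiber of the multiplication map $\overline{\mathrm{Gr}}^{-w_0(\lambda)}\widetilde{\times}\overline{\mathrm{Gr}}^{\mu_r}\to\mathrm{Gr}$ over a point of $\mathrm{Gr}^{\mu_r-(\nu-\nu'')}$, compatibly with the twisted-product structures and the stratifications. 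The paper's proof of Theorem \ref{prop clean IC} does invoke such an identification (via Remark 5.3.3 of \cite{ABBGM}), but that identification is established in the Iwahori–Whittaker setting, in the special range where the argument of \textit{loc.\ cit.}\ applies, and is used only at the level of Euler characteristics because the Whittaker averaging is exact. For the $t$-exactness statement you need genuine perverse degree bounds, not Euler characteristics, so the Euler-characteristic comparison that rescues the argument in Theorem \ref{prop clean IC} is unavailable here. Your appeal to ``factorization plus trivialization of $\widetilde{\mathfrak{G}}$'' gestures at the right circle of ideas but does not supply the fiber identification, which is exactly the nontrivial step.

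Two smaller points. First, in Step 1 the reduction to ``pure translation'' standards $\Delta_{t_\lambda}$ does not hand you back the $G[[t]]$-equivariant case: $\Delta_{t_\lambda}$ is the $!$-extension from the $I_\mathrm{u}$-orbit $\mathrm{Gr}_{t_\lambda}$, which is strictly smaller than $\mathrm{Gr}^\lambda$, so $\Delta_{t_\lambda}\neq\mathcal{I}^\lambda_!$ and the $G[[t]]$ case does not directly apply. Second, because you are already restricting to generators, note that the paper handles arbitrary $\mathcal{S}\in{^p\mathrm{D}}^{\leq0}$ directly (the codimension bound makes no use of $\mathcal{S}$ being standard), which is cleaner and avoids the associativity bookkeeping.
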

	\begin{proof}
		Thanks to Corollary \ref{verdier coro}, it is enough to prove that the convolution functor is right $t$-exact. Let $\mathcal{S}\in {^p\mathrm{D}_{G[[t]]}(\mathrm{Gr})^{\leq0}}$ (resp. $\mathcal{S}\in {^p\mathrm{D}_{I_\mathrm{u}}(\mathrm{Gr})^{\leq0}}$) and $\mu\in \mathbf{Y}$. We need to prove that $\overline{i}_\mu^*(\mathcal{S}\star \mathcal{IC}^{\frac{\infty}{2}}_\nu ) $ lives in non-positive cohomological degrees. 
		
		The preimage $(\overset{\leftarrow}{h'})^{-1}({_\mu\overline{\mathrm{Bun}}_{N^-}})$ decomposes into the locally closed substacks
		$$X_{\mu',\lambda}:=(\overset{\leftarrow}{h'})^{-1}({_\mu\overline{\mathrm{Bun}}_{N^-}})\bigcap (\overset{\rightarrow}{h'})^{-1}({_{\mu'}\overline{\mathrm{Bun}}_{N^-}})\bigcap {^{?,0}\mathcal{H}^\lambda_{G,N^-}},\quad\mu'\in\mathbf{Y},\lambda\in\mathbf{Y}^+, $$
		where $?=0$ (resp. $?=I_\mathrm{u}$). In order to conclude, it will be enough to show that:
		\begin{enumerate}
			\item The dimension of the fibers of $\overset{\leftarrow}{h'}|_{X_{\mu',\lambda}}$ is bounded by $\langle \mu'-\mu+\lambda,\rho\rangle$. 
			\item The restriction of $\mathrm{sw}^*(\mathcal{S})\widetilde{\boxtimes}\mathrm{IC}_\nu$ to $X_{\mu',\lambda}$ lives in perverse cohomological degrees less than $ -\langle \mu'-\mu+\lambda,\rho\rangle$.
		\end{enumerate}
		For the first point, the argument is that $\overset{\leftarrow}{h'}|_{X_{\mu',\lambda}}$ realizes $X_{\mu',\lambda}$ as the following fibration over ${^?_\mu\overline{\mathrm{Bun}}_{N^-}}$:  
		$$(\mathrm{Gr}^{\lambda}\cap N^-((t))\cdot(\mu-\mu'))\times^{N^-[[t]]} {^?_{\mu}\mathcal{N}},$$
		where, in the case $?=I_\mathrm{u}$, the stack ${^{I_\mathrm{u}}_{\mu}\mathcal{N}}$ is the $N^-[[t]]$-torsor ${^{1}_{\mu}\mathcal{N}}/N$ over ${^{I_\mathrm{u}}_\mu\overline{\mathrm{Bun}}_{N^-}}$.
		
		Let us explain the proof of the second point.
		Let $p_?$ be the identity $\mathrm{Gr}\to\mathrm{Gr}$ when $?=0$, and the canonical projection $G((t))/I_{\mathrm{u}}\to\mathrm{Gr}$ when $?=I_\mathrm{u}$. Projecting via $\overset{\rightarrow}{h'}$, the locally closed substack $X_{\mu',\lambda}$  identifies with
		$$p_?^{-1}(\mathrm{Gr}^{-w_0(\lambda)}\cap N^-((t))\cdot(\mu'-\mu))\times^{N^-[[t]]} {_{\mu'}\mathcal{N}},$$
		so that the restriction of $\mathrm{sw}^*(\mathcal{S})\widetilde{\boxtimes}\mathrm{IC}_\nu$ to $X_{\mu',\lambda}$ identifies with
		$$\mathrm{sw}^*(\mathcal{S})|_{p_?^{-1}(\mathrm{Gr}^{-w_0(\lambda)}\cap N^-((t))\cdot(\mu'-\mu))}\widetilde{\boxtimes}\mathcal{IC}^{\frac{\infty}{2}}_\nu|_{{_{\mu'}\overline{\mathrm{Bun}}_{N^-}}}. $$
		It is a property of intersection cohomology that the complex $\mathcal{IC}^{\frac{\infty}{2}}_\nu|_{{_{\mu'}\overline{\mathrm{Bun}}_{N^-}}}$ lives in negative (and in particular non-positive) perverse cohomological degrees. So we only need to check that the restriction of $\mathrm{sw}^*(\mathcal{S})$ to ${p_?^{-1}(\mathrm{Gr}^{-w_0(\lambda)}\cap N^-((t))\cdot(\mu'-\mu))}$ lives in perverse cohomological degrees $\leq -\langle \mu'-\mu+\lambda,\rho\rangle$. Since $*$-restriction is right $t$-exact, the complex $\mathrm{sw}^*(\mathcal{S})|_{p_?^{-1}(\mathrm{Gr}^{-w_0(\lambda)})}$ lives in non-positive perverse cohomological degrees. When $?=0$, the complex $\mathrm{sw}^*(\mathcal{S})|_{\mathrm{Gr}^{-w_0(\lambda)}}$ is constant since it is $G[[t]]$-equivariant and $\mathrm{Gr}^{-w_0(\lambda)}$ is an orbit. So $\mathrm{sw}^*(\mathcal{S})|_{\mathrm{Gr}^{-w_0(\lambda)}\cap N^-((t))\cdot(\mu'-\mu)}$ lives in perverse cohomological degrees less than
		$$-\mathrm{codim}(\mathrm{Gr}^{-w_0(\lambda)}\cap N^-((t))\cdot(\mu'-\mu),\mathrm{Gr}^{-w_0(\lambda)})\leq -\langle \mu'-\mu+\lambda,\rho\rangle. $$
		Finally, assume that $?=I_\mathrm{u}$. Let $H$ be the stabilizer of $t^{-w_0(\lambda)}$ in $G[[t]]$ and  $n$ be a large enough integer such that we have an isomorphism $G_n/H\simeq \mathrm{Gr}^{-w_0(\lambda)}$ (where $G_n$ was defined in §\ref{section Group arc schemes}). Then we have an isomorphism of $\mathbb{F}$-schemes
		$$p_{I_\mathrm{u}}^{-1}(\mathrm{Gr}^{-w_0(\lambda)})\simeq  G_n\times^{H}G[[t]]/I_\mathrm{u}\simeq G_n\times^{H}G/U.$$
		Since $\mathrm{sw}^*(\mathcal{S})|_{p_{I_\mathrm{u}}^{-1}(\mathrm{Gr}^{-w_0(\lambda)})}$ is $G[[t]]$-equivariant, we deduce that it is of the form 
  $$\underline{\mathbb{k}}_{\mathrm{Gr}^{-w_0(\lambda)}}[d]\widetilde{\boxtimes} \mathcal{F}$$ 
  for some $\mathcal{F}\in\mathrm{D}(G/U)$ and integer $d$. Since $\mathrm{sw}^*(\mathcal{S})|_{p_?^{-1}(\mathrm{Gr}^{-w_0(\lambda)})}$ lives in non-positive perverse cohomological degrees, we must have $d\geq \mathrm{dim}(\mathrm{Gr}^{-w_0(\lambda)})$ and $\mathcal{F}\in{^p\mathrm{D}}(G/U)^{\leq0}$. The identification $$\mathrm{sw}^*(\mathcal{S})|_{p_{I_\mathrm{u}}^{-1}(\mathrm{Gr}^{-w_0(\lambda)}\cap N^-((t))\cdot(\mu'-\mu))}\simeq \underline{\mathbb{k}}_{\mathrm{Gr}^{-w_0(\lambda)}\cap N^-((t))\cdot(\mu'-\mu))}[d]\widetilde{\boxtimes} \mathcal{F}$$ then allows to conclude.
	\end{proof}
	\subsection{Convolution in the spherical case} From here and until the end, we assume that Conjecture \ref{conj stalks independance} holds.  The present subsection -- which is copied from \cite[§5.2.3]{ABBGM} -- settles some results which will be essential when defining the functor from the Hecke category to $\mathrm{Perv}_{I_\mathrm{u}}(\mathcal{F}l^\frac{\infty}{2})$. For any $V\in\mathrm{Rep}(G^\vee_\mathbb{k})$ and coweight $\mu\in\mathbf{Y}$, we will denote by $V(\mu)$ the associated $\mu$-weight space. For any morphism of $\mathbb{F}$-schemes of finite type $i:X\to Y$  and $\mathcal{F}\in\mathrm{D}(Y)$, we put
 $$H^k_Y(X,\mathcal{F}):=H^k(Y,i^!\mathcal{F}) $$
 for all $k\in \mathbb{Z}$. Recall that,  by \cite{Mirkovic2004GeometricLD}, we have a canonical isomorphism of complexes of vector spaces for any $\mathcal{G}\in\mathrm{Perv}_{G^\vee_\mathbb{k}[[t]]}(\mathrm{Gr})$
	\begin{equation}\label{can iso}
		H_{N^-((t))\cdot \mu}^\bullet (\mathrm{Gr} ,\mathcal{G})\simeq H_{N^-((t))\cdot \mu}^{\langle{2\rho,\mu\rangle}}(\mathrm{Gr},\mathcal{G})\simeq \mathrm{Sat}(\mathcal{G})(\mu).
	\end{equation}
	
	\begin{prop}{{\cite[Proposition 5.2.4]{ABBGM}}}\label{prop hecke geometric} For any $\mathcal{G}\in\mathrm{Perv}_{G[[t]]}(\mathrm{Gr})$ and $\nu\in\mathbf{Y}$, there exist canonical isomorphisms
		$$\mathcal{G}\star\mathcal{IC}^{\frac{\infty}{2}}_\nu\simeq\bigoplus_{\mu\in\mathbf{Y}}\mathcal{IC}^{\frac{\infty}{2}}_{\nu+\mu}\otimes_\mathbb{k}\mathrm{Sat}(\mathrm{sw}^*\mathcal{G})(-\mu)\simeq\bigoplus_{\mu\in\mathbf{Y}}\mathcal{IC}^{\frac{\infty}{2}}_{\nu+\mu}\otimes_\mathbb{k}\mathrm{Sat}(\mathcal{G})(w_0(\mu)). $$
		Moreover, for any $\mathcal{V},\mathcal{U}\in \mathrm{Perv}_{G[[t]]}(\mathrm{Gr})$, the following diagram commutes
		\begin{equation}\label{diagram prop hecke}
			\xymatrix{
				(\mathcal{U}\star\mathcal{V})\star\mathcal{IC}^{\frac{\infty}{2}}_\nu\ar[d]^{\sim} \ar[r]^{\sim}&\oplus_{\mu'}(\mathcal{U}\star\mathcal{IC}^{\frac{\infty}{2}}_{\nu+\mu'})\otimes \mathrm{Sat}(\mathcal{V})(\mu')\ar[d]^{\sim}  \\
				\oplus_{\mu}\mathcal{IC}^{\frac{\infty}{2}}_{\nu+\mu}\otimes(\mathrm{Sat}(\mathcal{U})\otimes \mathrm{Sat}(\mathcal{V}))(\mu')\ar[r]^-{\sim}&\oplus_{\mu'+\mu''}\mathcal{IC}^{\frac{\infty}{2}}_{\nu+\mu'+\mu''}\otimes \mathrm{Sat}(\mathcal{U})(\mu'')\otimes \mathrm{Sat}(\mathcal{V})(\mu').}
		\end{equation}
		
	\end{prop}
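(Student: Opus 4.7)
The plan is to adapt the argument of \cite[Proposition 5.2.4]{ABBGM}, which becomes available in the modular setting thanks to the $t$-exactness of the convolution functor (Theorem \ref{thm exact}) and the semi-simplicity of $\mathrm{Perv}_{G[[t]]}(\mathcal{F}l^\frac{\infty}{2})$ (Theorem \ref{thm ss}); both rely on Conjecture \ref{conj stalks independance}, which is in force. The second isomorphism in the statement follows from the first by Proposition \ref{prop Chevalley inv}, so I focus on the first.

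By Theorem \ref{thm exact}, the convolution $\mathcal{G}\star\mathcal{IC}^\frac{\infty}{2}_\nu$ is a perverse sheaf in $\mathrm{Perv}_{G[[t]]}(\mathcal{F}l^\frac{\infty}{2})$. Theorem \ref{thm ss} together with Corollary \ref{coro prop simple} then yields a canonical decomposition
$$\mathcal{G}\star\mathcal{IC}^\frac{\infty}{2}_\nu \;\simeq\; \bigoplus_{\mu \in \mathbf{Y}} \mathcal{IC}^\frac{\infty}{2}_{\nu+\mu} \otimes_\mathbb{k} V_\mu,$$
where $V_\mu := \mathrm{Hom}(\mathcal{IC}^\frac{\infty}{2}_{\nu+\mu},\, \mathcal{G}\star\mathcal{IC}^\frac{\infty}{2}_\nu)$ is functorial in $\mathcal{G}$. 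The task reduces to constructing natural isomorphisms $V_\mu \simeq \mathrm{Sat}(\mathrm{sw}^*\mathcal{G})(-\mu)$.

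The heart of the argument computes $V_\mu$ as the stalk (up to a known shift) of $\mathcal{G}\star\mathcal{IC}^\frac{\infty}{2}_\nu$ at a geometric point of the open stratum ${_{\nu+\mu}\mathrm{Bun}}_{N^-}$, which has constant (one-dimensional) cohomology by Proposition \ref{prop contractible}. To compute this stalk I apply proper base change along $\overset{\leftarrow}{h'}$ in the convolution diagram, using the identification $^{0,0}\mathcal{H}_{G,N^-}\simeq G((t))/G[[t]]\times^{G[[t]]}\widetilde{\mathfrak{G}}$ from \eqref{ident conv prod}. Because $\mathcal{IC}^\frac{\infty}{2}_\nu$ is supported on $_{\leq\nu}\overline{\mathrm{Bun}}_{N^-}$ and $\mathrm{sw}^*\mathcal{G}$ is $G[[t]]$-equivariant, the $N^-$-torsor bookkeeping of §\ref{section torsors} identifies the relevant piece of the fiber of $\overset{\leftarrow}{h'}$ over a point of ${_{\nu+\mu}\mathrm{Bun}}_{N^-}$ with the $N^-((t))$-orbit in $\mathrm{Gr}$ corresponding to the relative defect $-\mu$. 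After tracking the perverse shifts (notably the $\langle 2\rho,\mu\rangle$ shift coming from the IC-sheaf on the smooth open $_\nu\mathrm{Bun}_{N^-}$), the stalk is exactly $H^{\langle 2\rho,\mu\rangle}_{N^-((t))\cdot(-\mu)}(\mathrm{Gr},\mathrm{sw}^*\mathcal{G})$, and the canonical isomorphism \eqref{can iso} identifies this with $\mathrm{Sat}(\mathrm{sw}^*\mathcal{G})(-\mu)$, concentrated in a single degree.

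Finally, the commutativity of \eqref{diagram prop hecke} reduces to the associativity of the $\star$-action of $\mathrm{Perv}_{G[[t]]}(\mathrm{Gr})$ on $\mathrm{Perv}_{G[[t]]}(\mathcal{F}l^\frac{\infty}{2})$, together with the monoidality of $\mathrm{Sat}$ from Theorem \ref{satake equivalence1} and the compatibility of the isomorphism \eqref{can iso} with iterated convolution. The main obstacle is the geometric identification in the stalk computation: producing a clean description of the $\overset{\leftarrow}{h'}$-fiber as a translated $N^-((t))$-orbit, together with aligning all perverse shifts so that \eqref{can iso} applies in a single cohomological degree, is where the bulk of the technical bookkeeping lies.
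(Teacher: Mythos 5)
Your proposal follows the paper's proof essentially verbatim: semi-simplicity of $\mathrm{Perv}_{G[[t]]}(\mathcal{F}l^{\frac{\infty}{2}})$ gives the decomposition, the multiplicity spaces are computed by applying proper base change along the ind-proper map $\overset{\leftarrow}{h'}$ and then identifying (via a stratification and an induction on strata) the contributions of the fiber with local cohomology along the relevant $N^-((t))$-orbit, the answer is matched with the Satake weight space via \eqref{can iso}, and the commutativity of \eqref{diagram prop hecke} comes from monoidality of $\mathrm{Sat}$. Two small corrections to your bookkeeping: the paper runs the computation through $!$-restrictions (costalks) rather than the $*$-restriction your ``stalk'' phrasing suggests, and the degree in which \eqref{can iso} recovers $\mathrm{Sat}(\mathrm{sw}^*\mathcal{G})(-\mu)$ on the $N^-((t))\cdot(-\mu)$-orbit is $-\langle 2\rho,\mu\rangle$, not $\langle 2\rho,\mu\rangle$.
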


	\begin{proof}
		 Since the convolution product is $t$-exact (Theorem \ref{thm exact}) and the category $\mathrm{Perv}_{G[[t]]}(\mathcal{F}l^\frac{\infty}{2})$ is semi-simple (Theorem \ref{thm ss}), we know that there exist vector spaces $L_\mu$ such that 
     \begin{equation}\label{hom space}
			\mathcal{G}\star\mathcal{IC}^{\frac{\infty}{2}}_\nu\simeq \bigoplus_{\mu\in\mathbf{Y}}\mathcal{IC}^{\frac{\infty}{2}}_{\nu+\mu}\otimes_\mathbb{k}L_\mu.  
		\end{equation}
Thanks to Lemma \ref{lem stalks} (or more easily by the usual theory of perverse sheaves), we have an isomorphism of vector spaces	between $L_\mu$ and	
		\begin{equation}\label{hom space 2}
			\mathrm{Hom}(\mathrm{IC}_{_{\nu+\mu}\mathrm{Bun}_{N^-}},(i_{\nu+\mu})^!(\mathcal{G}\star\mathcal{IC}^{\frac{\infty}{2}}_\nu)). 
		\end{equation}
		Let $\lambda$ be a dominant cocharacter such that the support of $\mathrm{sw}^*(\mathcal{G})$ is included in $\overline{\mathrm{Gr}}^{-w_0(\lambda)}$. By the proper base change theorem, we have an isomorphism $$(i_{\nu+\mu})^!(\mathcal{G}\star\mathcal{IC}^{\frac{\infty}{2}}_\nu)\simeq (\overset{\leftarrow}{h'})_*(\tilde{i}_{\nu+\mu})^!(\mathrm{sw}^*(\mathcal{G})\widetilde{\boxtimes}\mathcal{IC}^{\frac{\infty}{2}}_\nu), $$
		where $\tilde{i}^{\nu+\mu}$ is the locally closed inclusion 
		$$Y:=(\overset{\leftarrow}{h'})^{-1}({_{\nu+\mu}\mathrm{Bun}}_{N^-})\bigcap (\overset{\rightarrow}{h'})^{-1}({_{\leq \nu}\overline{\mathrm{Bun}}_{N^-}})\bigcap {^{0,0}\overline{\mathcal{H}}^\lambda_{G,N^-}}\subset  (\overset{\rightarrow}{h'})^{-1}({_{\leq \nu}\overline{\mathrm{Bun}}_{N^-}})\bigcap {^{0,0}\overline{\mathcal{H}}^\lambda_{G,N^-}}$$
		(by abuse of notation, we still denote by $\overset{\leftarrow}{h'}$ the restriction of $\overset{\leftarrow}{h'}$ to $Y\subset {^{0,0}\mathcal{H}_{G,N^{-}}}$).
		
		For $\beta\leq \lambda$, $\beta\in\mathbf{Y}^+$, denote by $Y_\beta$ the locally closed substack of $Y$ defined by replacing ${^{0,0}\overline{\mathcal{H}}^\lambda_{G,N^-}}$ with ${^{0,0}\mathcal{H}}^\beta_{G,N^-}$ in the definition of $Y$, by $j_\beta:Y_\beta\hookrightarrow {^{0,0}\mathcal{H}_{G,N^{-}}}$ the locally closed inclusion, and by $(\overset{\leftarrow}{h'_\beta})_*$ the restriction of $(\overset{\leftarrow}{h'})_*$ to $Y_\beta$. Then we have the finite stratification $Y=\cup_\beta Y_\beta$, and projecting via $\overset{\rightarrow}{h'}$ induces an isomorphism
		\begin{equation*}\label{iden Y_b}
			Y_\beta\simeq \left(\mathrm{Gr}^{-w_0(\beta)}\cap N^-((t))\cdot (-\mu)\right)\times^{N^-[[t]]}{_\nu\overset{\circ}{\mathcal{N}}},
		\end{equation*}
		where ${_\nu\overset{\circ}{\mathcal{N}}}$ denotes the preimage of ${_{\nu}\mathrm{Bun}_{N^-}}$ under the canonical projection $_\nu\mathcal{N}\to {_{ \nu}\overline{\mathrm{Bun}}_{N^-}}$. Let us denote by $j:\mathrm{Gr}^{-w_0(\beta)}\cap N^-((t))\cdot (-\mu)\hookrightarrow\mathrm{Gr}$ the locally closed embedding. Now, notice that for all $\beta$, we have the isomorphism
		\begin{align*}
			(\overset{\leftarrow}{h'_\beta})_*(j_{\beta})^!(\mathrm{sw}^*(\mathcal{G})\widetilde{\boxtimes}\mathcal{IC}^{\frac{\infty}{2}}_\nu)
			&\simeq  (\overset{\leftarrow}{h'_\beta})_* (j^!(\mathrm{sw}^*(\mathcal{G}))\widetilde{\boxtimes} i_\nu^!\mathcal{IC}^{\frac{\infty}{2}}_\nu).
		\end{align*}
		By adjunction, we have the isomorphism 
		\begin{equation}\label{equ split}
			\begin{split}
				&\mathrm{Hom}(\mathrm{IC}_{_{\nu+\mu}\mathrm{Bun}_{N^-}},(\overset{\leftarrow}{h'_\beta})_* (j^!(\mathrm{sw}^*(\mathcal{G}))\widetilde{\boxtimes} i_\nu^!\mathcal{IC}^{\frac{\infty}{2}}_\nu))\\&\simeq \mathrm{Hom}(\mathrm (\overset{\leftarrow}{h'_\beta})^*(\mathrm{IC}_{_{\nu+\mu}\mathrm{Bun}_{N^-}}), (j^!(\mathrm{sw}^*(\mathcal{G}))\widetilde{\boxtimes} i_\nu^!\mathcal{IC}^{\frac{\infty}{2}}_\nu))
			\end{split}
		\end{equation}
		Since $_{\nu+\mu}\mathrm{Bun}_{N^-}$ is smooth, $\mathrm{IC}_{_{\nu+\mu}\mathrm{Bun}_{N^-}}$ is the constant perverse sheaf of rank one. Moreover the relative dimension of $_{\nu+\mu}\mathrm{Bun}_{N^-}$ compared with $_{\nu}\mathrm{Bun}_{N^-}$ is $2\langle\rho,\mu\rangle$. Therefore, the pullback $(\overset{\leftarrow}{h'_\beta})^*(\mathrm{IC}_{_{\nu+\mu}\mathrm{Bun}_{N^-}})$ is isomorphic to $\underline{\mathbb{k}}_{Y'_\beta}[2\langle\rho,\mu\rangle]\widetilde{\boxtimes}\mathrm{IC}_{_{\nu}\mathrm{Bun}_{N^-}}$, where we have put $Y'_\beta:=\mathrm{Gr}^{-w_0(\beta)}\cap N^-((t))\cdot (-\mu)$. Thus, \eqref{equ split} coincides with
		$$\mathrm{Hom}(\underline{\mathbb{k}}_{Y'_\beta}[2\langle\rho,\mu\rangle],j^!(\mathrm{sw}^*(\mathcal{G})))\simeq H_{Y'_\beta}^{-2\langle\rho,\mu\rangle}(\mathrm{Gr},\mathrm{sw}^*(\mathcal{G})). $$
		Now, using distinguished triangles associated with open and closed immersions together with an induction on the number of strata of $Y$, we deduce that \eqref{hom space 2} coincides with
		$$H_{N^{-}((t))\cdot(-\mu)}^{-2\langle\rho,\mu\rangle}(\mathrm{Gr},\mathrm{sw}^*(\mathcal{G})). $$
		Finally, the above vector space is canonically isomorphic to $\mathrm{Sat}(\mathrm{sw}^*(\mathcal{G}))(-\mu)$ thanks to \eqref{can iso}. By Proposition \ref{prop Chevalley inv}, the latter vector space is isomorphic to $\mathrm{Sat}(\mathcal{G})(w_0(\mu))$.

  The commutativity of the diagrams follows from the fact that the geometric Satake equivalence is a tensor functor.
	\end{proof}
 \subsection{Action of the regular perverse sheaf}\label{section Action of the regular perverse sheaf}
We define the $\mathbf{Y}$-graded object of $\mathrm{Perv}_{G[[t]]}(\mathcal{F}l^{\frac{\infty}{2}})$:
$$\mathcal{IC}^{\frac{\infty}{2}}:=\bigoplus_{\lambda\in\mathbf{Y}}\mathcal{IC}^{\frac{\infty}{2}}_\lambda.$$
For any $\lambda,\mu\in\mathbf{Y}$, we will define a morphism 
 $$H_{\mu,\lambda}:\mathcal{R}_{\mu}\star \mathcal{IC}^{\frac{\infty}{2}}_{\lambda}\to\mathcal{IC}^{\frac{\infty}{2}}_{\lambda+\mu}.  $$
 Recall (§\ref{section The regular perverse sheaf}) that $\mathcal{R}_\mu:=\varinjlim_{\delta}  \mathcal{I}^{w_0(\mu)+\delta}_*\star\mathcal{I}^{-w_0(\delta)}_*\in\mathrm{Ind}(\mathrm{Perv}_{G[[t]]}(\mathrm{Gr})).$ For any $\delta\in \mathbf{Y}^+\cap(-w_0(\mu)+\mathbf{Y}^+)$, we define
 \begin{align*}
     H_{\mu,\lambda}^\delta:(\mathcal{I}^{w_0(\mu)+\delta}_*\star\mathcal{I}^{-w_0(\delta)}_*)\star\mathcal{IC}^{\frac{\infty}{2}}_\lambda&\xrightarrow[\text{Prop.\ref{prop hecke geometric}}]{\sim}\bigoplus_{\mu'}\mathcal{I}^{w_0(\mu)+\delta}_*\star\mathcal{IC}^{\frac{\infty}{2}}_{\lambda+\mu'}\otimes_\mathbb{k}\mathrm{Sat}(\mathcal{I}^{-w_0(\delta)}_*)(w_0(\mu')) \\
     &\to\mathcal{I}^{w_0(\mu)+\delta}_*\star\mathcal{IC}^{\frac{\infty}{2}}_{\lambda-w_0(\delta)} \\
     &\xrightarrow[\text{Prop.\ref{prop hecke geometric}}]{\sim}\bigoplus_{\mu'}\mathcal{IC}^{\frac{\infty}{2}}_{\lambda-w_0(\delta)+\mu'}\otimes_\mathbb{k}\mathrm{Sat}(\mathcal{I}^{w_0(\mu)+\delta}_*)(w_0(\mu'))\\
     &\to\mathcal{IC}^{\frac{\infty}{2}}_{\lambda+\mu},
 \end{align*}
 where the second (resp. last) morphism is obtained by projecting onto the $1$-dimensional $(-\delta)$-weight space (resp. $(w_0(\mu)+\delta)$-weight space), identified with $\mathbb{k}$ via the weight vector $\dot{w_0}\cdot v_{-w_0(\delta)}$ (resp. $v_{w_0(\mu)+\delta}$). Thanks to the commutativity of the diagram \eqref{diagram prop hecke}, the collection of morphisms $(H_{\mu,\lambda}^\delta)_{\delta\in \mathbf{Y}^+\cap(-w_0(\mu)+\mathbf{Y}^+)}$ is compatible with the transition morphisms in the limit defining $\mathcal{R}_\mu$. So we obtain the desired morphism $H_{\mu,\lambda}$. Moreover, the commutativity of  \eqref{diagram prop hecke} implies that the morphisms $(H_{\mu,\lambda})_{\mu,\lambda\in\mathbf{Y}}$ satisfy obvious associativity and unit axioms. Namely:
 \begin{prop}\label{prop action IC}
    The morphisms $(H_{\mu,\lambda})_{\mu,\lambda\in\mathbf{Y}}$ endow $\mathcal{IC}^{\frac{\infty}{2}}$ with a structure of left $\mathcal{R}$-module.
\end{prop}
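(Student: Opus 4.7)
The goal is to verify two axioms: the unit axiom $H_{0,\lambda}\circ(\eta\star\mathrm{id}_{\mathcal{IC}^{\frac{\infty}{2}}_\lambda})=\mathrm{id}_{\mathcal{IC}^{\frac{\infty}{2}}_\lambda}$ and the associativity of the action, i.e.\ the commutativity of
\[
\xymatrix{
\mathcal{R}_{\mu_1}\star\mathcal{R}_{\mu_2}\star\mathcal{IC}^{\frac{\infty}{2}}_\lambda \ar[r]^-{\mathrm{id}\star H_{\mu_2,\lambda}} \ar[d]_{m_{\mu_1,\mu_2}\star\mathrm{id}} & \mathcal{R}_{\mu_1}\star\mathcal{IC}^{\frac{\infty}{2}}_{\mu_2+\lambda} \ar[d]^{H_{\mu_1,\mu_2+\lambda}} \\
\mathcal{R}_{\mu_1+\mu_2}\star\mathcal{IC}^{\frac{\infty}{2}}_\lambda \ar[r]^-{H_{\mu_1+\mu_2,\lambda}} & \mathcal{IC}^{\frac{\infty}{2}}_{\mu_1+\mu_2+\lambda}.
}
\]

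The plan is to check both axioms at the level of the inductive systems defining $\mathcal{R}_\mu$. For the unit, take the term $\delta=0$ in the limit defining $\mathcal{R}_0$, so that $\eta:\mathcal{IC}^0\to\mathcal{R}_0$ factors through $\mathcal{IC}^0\simeq\mathcal{I}^0_*\star\mathcal{I}^0_*$. The composition $H^0_{0,\lambda}\circ(\eta\star\mathrm{id})$ then reduces to the identity morphism after unwinding the isomorphism in Proposition \ref{prop hecke geometric}: both projections onto weight spaces are at weight $0$, and the weight vectors $v_0$ and $\dot w_0\cdot v_0$ are the canonical basis vectors of the trivial representation.

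For associativity, I would fix $\delta_1\in \mathbf{Y}^+\cap(-w_0(\mu_1)+\mathbf{Y}^+)$ and $\delta_2\in \mathbf{Y}^+\cap(-w_0(\mu_2)+\mathbf{Y}^+)$, consider the corresponding terms $\mathcal{I}^{w_0(\mu_i)+\delta_i}_*\star\mathcal{I}^{-w_0(\delta_i)}_*$ of $\mathcal{R}_{\mu_i}$, and verify commutativity of the square obtained by tracking the two paths. Both sides decompose, via repeated application of Proposition \ref{prop hecke geometric}, into direct sums indexed by $\mathbf{Y}$-tuples of weights, and the diagram \eqref{diagram prop hecke} guarantees that reordering the applications of the Hecke property produces the \emph{same} weight decomposition up to the canonical isomorphism induced by $\mathrm{Sat}$. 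The morphisms $m_{\mu_1,\mu_2}$ (defined in §\ref{section The regular perverse sheaf} via \eqref{can0} and \eqref{can1}) and $H$ involve projections onto one-dimensional weight spaces identified through the canonical vectors $v_{\lambda_1+\lambda_2}\leftrightarrow v_{\lambda_1}\otimes v_{\lambda_2}$ and $\dot w_0\cdot v_{-w_0(\lambda_1+\lambda_2)}\leftrightarrow(\dot w_0\cdot v_{-w_0(\lambda_1)})\otimes(\dot w_0\cdot v_{-w_0(\lambda_2)})$; the associativity then amounts to the compatibility of these tensor factorizations with the multiplication \eqref{canonical morphism} and its dual, which is standard.

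The main subtlety will be the bookkeeping: one must pass from $\mathcal{R}_{\mu_1}\star\mathcal{R}_{\mu_2}$ to $\mathcal{R}_{\mu_1+\mu_2}$ via \eqref{can0} (inserting an extra $\mathcal{I}^\nu_*\star\mathcal{I}^{-w_0(\nu)}_*$ between the two factors) and \eqref{can1} (collapsing adjacent factors $\mathcal{I}^\delta_*\star\mathcal{I}^{\delta'}_*\to\mathcal{I}^{\delta+\delta'}_*$), and simultaneously track the projections onto the highest/lowest weight lines appearing in the definition of $H$. The hard part is showing that the two routes yield the same net morphism on the chosen weight vectors; this reduces, after applying $\mathrm{Sat}\circ\mathrm{sw}^*$, to the statement that in the commutative Hopf algebra $\mathcal{O}(\check{\mathbf{G}}^{(1)})$ the multiplication respects weight decompositions, which is precisely encoded by the commutativity of \eqref{diagram prop hecke} together with the tensor-functor property of $\mathrm{Sat}$. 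Passing to the direct limit over $\delta_1$ and $\delta_2$ (which is legitimate because all morphisms in sight are compatible with the transition maps, by the same commutativity) then yields the desired associativity for $H_{\mu_1,\mu_2}$ itself.
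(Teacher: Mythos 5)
Your proposal is correct and takes essentially the same approach as the paper, which simply asserts that the associativity and unit axioms follow from the commutativity of diagram \eqref{diagram prop hecke} (i.e.\ the tensor-functor property of geometric Satake) without spelling out details. Your write-up expands on exactly this: checking the axioms at the level of the inductive systems via \eqref{can0}, \eqref{can1} and using the commutativity of \eqref{diagram prop hecke} together with the compatibility of the distinguished weight vectors $v_{\lambda_1+\lambda_2}\leftrightarrow v_{\lambda_1}\otimes v_{\lambda_2}$ from \eqref{canonical morphism} to compare the two routes.
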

 
	\subsection{Action of convolution on standard and simple objects}
	\begin{prop}{{\cite[Proposition 5.4.2]{ABBGM}}}\label{Prop Action of convolution on standard objects}
		For $\lambda\in\mathbf{Y}^+$ and $\nu\in\mathbf{Y}$, there are canonical isomorphisms
		\begin{equation*}
		    \mathcal{D}_\lambda\star\Delta^{\frac{\infty}{2}}_\nu\simeq\Delta^{\frac{\infty}{2}}_{\nu+\lambda},\quad \mathcal{N}_\lambda\star\nabla^{\frac{\infty}{2}}_\nu\simeq\nabla^{\frac{\infty}{2}}_{\nu+\lambda}.
		\end{equation*}
	\end{prop}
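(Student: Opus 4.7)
The plan is to first reduce to a single isomorphism via Verdier duality. By Corollary~\ref{verdier coro} together with the fact that Verdier duality interchanges $\mathcal{N}_\lambda \leftrightarrow \mathcal{D}_\lambda$ and $\nabla^{\frac{\infty}{2}}_\nu \leftrightarrow \Delta^{\frac{\infty}{2}}_\nu$, the two statements are equivalent; I will therefore construct a canonical isomorphism $\mathcal{N}_\lambda \star \nabla^{\frac{\infty}{2}}_\nu \xrightarrow{\sim} \nabla^{\frac{\infty}{2}}_{\nu+\lambda}$. To exploit the geometric machinery, I would work with the canonical $I$-equivariant lift of $\nabla^{\frac{\infty}{2}}_\nu$ to $\mathrm{D}({^{I}_\infty\overline{\mathrm{Bun}}_{N^-}})$, so that the convolution is computed as $(\overset{\leftarrow}{h'})_*(\mathrm{sw}^*(\mathcal{N}_\lambda) \, \widetilde{\boxtimes} \, \nabla^{\frac{\infty}{2}}_\nu)$ through the Hecke correspondence whose middle stack has fiber $\mathrm{Fl}$ over the second leg.

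The central step is the following geometric claim. Recall that $\nabla^{\frac{\infty}{2}}_\nu$ is the $*$-pushforward of a shifted constant sheaf along the inclusion of the open Schubert stratum $^{I_\mathrm{u}}_{t_\nu}\overline{\mathrm{Bun}}_{N^-}$ inside $^{I_\mathrm{u}}_\nu\overline{\mathrm{Bun}}_{N^-}$ (corresponding to the big cell of $G/B^-$), and $\mathrm{sw}^*(\mathcal{N}_\lambda)$ is the $*$-pushforward of a shifted constant sheaf along the inclusion of a Schubert cell in $\mathrm{Fl}$ of dimension $\ell(t_\lambda) = \langle 2\rho, \lambda \rangle$. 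I would show that restricting the Hecke correspondence to the preimage of this cell at the first leg and to $^{I}_{t_\nu}\overline{\mathrm{Bun}}_{N^-}$ at the second leg yields, via $\overset{\leftarrow}{h'}$, an isomorphism of schemes onto $^{I_\mathrm{u}}_{t_{\nu+\lambda}}\overline{\mathrm{Bun}}_{N^-}$. Geometrically, modifying a bundle at $x$ by $t^\lambda$ (with $\lambda$ dominant) shifts the exact pole order of the Plücker data from $\nu$ to $\nu+\lambda$ -- this is the generic direction of modification, since $\lambda$ is dominant -- while the extra $\mathrm{Fl}_{t_\lambda}$-parameter is exactly absorbed by the passage from $I$-level to $I_\mathrm{u}$-level structure at $x$.

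Granting this geometric isomorphism, both sides of the desired identification become the $*$-extension to $^{I_\mathrm{u}}_\infty\overline{\mathrm{Bun}}_{N^-}$ of the same constant perverse sheaf on this open stratum; the shift $\langle 2\rho,\lambda\rangle$ coming from $\mathrm{Fl}_{t_\lambda}$ matches the relative dimension of $_{\nu+\lambda}\mathrm{Bun}_{N^-}$ over $_\nu\mathrm{Bun}_{N^-}$, so the perverse degrees agree. The main obstacle lies in a rigorous verification of the geometric isomorphism: one must track carefully how $t^\lambda$-Hecke modifications interact simultaneously with the pole-order stratification of $_\infty\overline{\mathrm{Bun}}_{N^-}$ (in terms of the Plücker data) and with the Schubert stratification coming from $G/B^-$, checking in particular that no ``wrong'' strata of $^{I_\mathrm{u}}_\infty\overline{\mathrm{Bun}}_{N^-}$ are hit and that the map is a scheme-theoretic isomorphism. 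I expect one can localize this check -- via Proposition~\ref{zastava prop} and the factorization property of Proposition~\ref{prop factorization zastava} -- to an explicit computation on Zastava spaces, where the Plücker data admit a concrete description and the modification by $t^\lambda$ becomes a transparent operation on vector bundles near $x$.
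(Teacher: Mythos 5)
Your Verdier duality reduction matches the paper (which proves the $\Delta$-version; you pick the $\nabla$-version, a cosmetic choice), and the geometric heart of your plan -- that the $t^\lambda$-Hecke modification with $\lambda$ dominant shifts the exact pole-order datum additively from $\nu$ to $\nu+\lambda$, with the Schubert parameter absorbed by the passage from $I$-level to $I_\mathrm{u}$-level -- is indeed the substance of the argument, inherited from \cite[Proposition 5.4.2]{ABBGM}. However, the logical bridge from this geometric picture to the stated conclusion has a real gap, and a preliminary step is missing entirely.

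The missing step is perversity and constructibility: the paper first verifies (via the analogue of \cite[Proposition 5.1.5]{ABBGM}) that the perverse cohomologies of $\mathcal{D}_\lambda\star\Delta^{\frac{\infty}{2}}_\nu$ lie in $\mathrm{Perv}_{I}(\mathcal{F}l^{\frac{\infty}{2}})$, which by the second point of Proposition \ref{simple objects prop} forces their restrictions to each stratum $_{wt_\delta}^{I}\overline{\mathrm{Bun}}_{N^-}$ to be twisted products $\mathcal{F}\widetilde{\boxtimes}\mathrm{IC}$. Only after this is the problem reduced to a stalk check on each stratum. Your argument jumps directly to ``both sides become the $*$-extension,'' which is a statement about the full object, not a consequence of knowing the restriction to the single open stratum. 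The gap is concrete: the pushforward along $\overset{\leftarrow}{h'}$ of a $*$-extension (on the Hecke stack) is not automatically a $*$-extension on the target, because $\mathrm{sw}^*(\mathcal{N}_\lambda)$ and $\nabla^{\frac{\infty}{2}}_\nu$ are both supported on the \emph{closures} of the respective cells, and the fibers of $\overset{\leftarrow}{h'}$ over points of other strata $_{wt_\delta}^{I}\overline{\mathrm{Bun}}_{N^-}$ do a priori meet these closures. What actually has to be shown -- and what the paper's stalk computation (following \cite[Proposition 5.4.2]{ABBGM}) establishes -- is that the fiber of $\overset{\leftarrow}{h'}$ over such a point intersected with the support of the box product is \emph{empty} for $wt_\delta\neq t_{\nu+\lambda}$, and a single point for $wt_\delta = t_{\nu+\lambda}$. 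Your phrase ``no wrong strata are hit'' gestures at this, but your plan restricts attention to the preimage of the open cell in the first leg and the open stratum in the second leg, which is exactly the locus you \emph{don't} need to worry about; the nontrivial content lies in analyzing the contributions from the boundary of the two closures, and that is precisely the computation you should carry out before asserting that the pushforward is a clean extension.
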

	\begin{proof}
  The second isomorphism will follow from the first one by applying Verdier duality togehter with Corollary \ref{verdier coro}. So we only need to prove that
 $$ \mathcal{D}_\lambda\star\Delta^{\frac{\infty}{2}}_\nu\simeq\Delta^{\frac{\infty}{2}}_{\nu+\lambda}.$$
 
		One can check (same proof as in \cite[Proposition 5.1.5]{ABBGM}) that the perverse cohomologies of $\mathcal{D}_\lambda\star\Delta^{\frac{\infty}{2}}_\nu$ are objects of $\mathrm{Perv}_{I}(\mathcal{F}l^{\frac{\infty}{2}})$. Moreover, we can deduce from the second point of Proposition \ref{simple objects prop} that, for all $w\in W,~\delta\in\mathbf{Y}$, the perverse cohomologies of the restriction of an object of $\mathrm{Perv}_{I}(\mathcal{F}l^{\frac{\infty}{2}})$ to $_{wt_\delta}^{I}\mathrm{Bun}_{N^-}$ is of the form $\mathcal{F}\widetilde{\boxtimes}\mathrm{IC}_{{_\nu\overline{\mathrm{Bun}}}_{N^-}}$, for some $\mathcal{F}\in \mathrm{Perv}_N(G/B^-)$. Thus, to prove that $\mathcal{D}_\lambda\star\Delta^{\frac{\infty}{2}}_\nu\simeq\Delta^{\frac{\infty}{2}}_{\nu+\lambda}$, it suffices to check that the stalk of $\mathcal{D}_\lambda\star\Delta^{\frac{\infty}{2}}_\nu$ at any point of $_{wt_\delta}^{I}\overline{\mathrm{Bun}}_{N^-}$ is zero when $wt_\delta\neq t_{\nu+\lambda}$, and is the (perversly shifted) constant sheaf otherwise. The computation of the latter stalk can then be conducted as in the proof of \cite[Proposition 5.4.2]{ABBGM}, where the authors show that it amounts to computing the cohomology of an empty set when $wt_\delta\neq t_{\nu+\lambda}$, and of a point scheme otherwise.
	\end{proof}

	Next, we exhibit the commutativity of certain diagrams (cf. \cite[(49),(50)]{ABBGM}), which will turn out to be useful in §\ref{section equivalence}. Below, those diagrams will involve the following canonical morphisms.  For any $\mu,\lambda\in\mathbf{Y}^+,~\nu\in\mathbf{Y}$, we have:
 \begin{equation}\label{eq can}
     \nabla_{\lambda}\star\mathcal{IC}^{\frac{\infty}{2}}_\nu\xrightarrow{\sim}\mathcal{N}_\lambda\star\mathrm{IC}^{\frac{\infty}{2}}_\nu\to\mathcal{N}_\lambda\star\nabla^{\frac{\infty}{2}}_\nu,
 \end{equation}
 where the first isomorphism follows from Lemma \ref{lem conv for}. Moreover, we will denote by 
$$b_\lambda:\mathcal{I}^{\lambda}_*\to \nabla_{\lambda}$$
the morphism established in \eqref{eq convo}, and by
$$a_\nu:\mathcal{IC}^{\frac{\infty}{2}}_\nu\to \nabla_{\nu}^{\frac{\infty}{2}} $$
the canonical morphism in $\mathrm{Perv}_{I_\mathrm{u}}(\mathcal{F}l^{\frac{\infty}{2}})$, where we have identified $\mathcal{IC}^{\frac{\infty}{2}}_\nu$ with $\mathrm{IC}^{\frac{\infty}{2}}_\nu$ through the functor $\mathrm{For}_{I_\mathrm{u}}^{G[[t]]}$ (cf. Proposition \ref{prop forget}).
 \begin{prop}\label{prop diagram 1}
     For any $\mu,\lambda\in\mathbf{Y}^+,~\nu\in\mathbf{Y}$, the following diagrams  are commutative:
	\begin{equation}\label{diagram 50}
		\xymatrix{
			 \mathcal{I}^{\lambda}_*\star\mathcal{IC}^{\frac{\infty}{2}}_\nu\ar[d]^{\sim}_{\text{Prop.\ref{prop hecke geometric}}} \ar[r]^{b_\lambda\star\mathrm{id}}&\nabla_{\lambda}\star\mathcal{IC}^{\frac{\infty}{2}}_\nu\ar[r]^{\eqref{eq can}}&\mathcal{N}_\lambda\star\nabla^{\frac{\infty}{2}}_\nu\ar[d]^{\sim}_{\text{Prop. \ref{Prop Action of convolution on standard objects}}}  \\
			\oplus_{\nu'}\mathcal{IC}^{\frac{\infty}{2}}_{\nu+\nu'}\otimes \mathrm{Sat}(\mathcal{I}^{\lambda}_*)(w_0(\nu')) \ar[r]&\mathcal{IC}^{\frac{\infty}{2}}_{\nu+\lambda}\ar[r]_{a_{\nu+\lambda}}&\nabla_{\nu+\lambda}^{\frac{\infty}{2}}}
	\end{equation}
	\begin{equation}\label{diagram 51}
		\xymatrix{
			\nabla_{\lambda}\star\mathcal{IC}^{\frac{\infty}{2}}_\nu\ar[d]^{\eqref{eq can}} \ar[r]&\nabla_{\lambda}\star\mathcal{I}^{\mu}_*\star\mathcal{I}^{-w_0(\mu)}_*\star\mathcal{IC}^{\frac{\infty}{2}}_\nu\ar[d]^{\text{Prop.\ref{prop hecke geometric}}} \\
			\mathcal{N}_{\lambda}\star\nabla_\nu^{\frac{\infty}{2}}\ar[d]^{\sim} & \nabla_{\lambda}\star\mathcal{I}^{\mu}_*\star\mathcal{IC}^{\frac{\infty}{2}}_{\nu-\mu}\ar[d]^{(\mathrm{id}\star b_\mu)\star\mathrm{id}}\\
			\nabla_{\lambda+\nu}^{\frac{\infty}{2}} &\nabla_{\lambda+\mu}\star\mathcal{IC}^{\frac{\infty}{2}}_{\nu-\mu},\ar[l]}
	\end{equation}
 where the bottom arrow $\nabla_{\lambda+\mu}\star\mathcal{IC}^{\frac{\infty}{2}}_{\nu-\mu}\to \nabla_{\lambda+\nu}^{\frac{\infty}{2}}$ is the composition 
 $$\nabla_{\lambda+\mu}\star\mathcal{IC}^{\frac{\infty}{2}}_{\nu-\mu}\xrightarrow{\eqref{eq can}}\mathcal{N}_{\lambda+\mu}\star\nabla_{\nu-\mu}^{\frac{\infty}{2}}\xrightarrow[\text{Prop.}~\ref{Prop Action of convolution on standard objects}]{\sim}  \nabla_{\lambda+\nu}^{\frac{\infty}{2}}. $$
 \end{prop}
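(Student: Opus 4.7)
The overall strategy is to reduce both commutativities to the compatibility diagram \eqref{diagram prop hecke} between the Hecke decomposition of Proposition \ref{prop hecke geometric} and convolution, together with the fact that the canonical morphisms $b_\lambda$ and $a_\nu$ are fully controlled by highest/lowest weight-vector normalizations.

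For diagram \eqref{diagram 50}, my plan is to first observe that $\mathrm{Hom}(\mathcal{I}^\lambda_*\star\mathcal{IC}^{\frac{\infty}{2}}_\nu,\nabla_{\nu+\lambda}^{\frac{\infty}{2}})$ is one-dimensional. Indeed, using Proposition \ref{prop hecke geometric} together with Proposition \ref{prop forget} to view the left-hand side in $\mathrm{Perv}_{I_\mathrm{u}}(\mathcal{F}l^{\frac{\infty}{2}})$, we have a decomposition
$$\mathcal{I}^\lambda_*\star\mathcal{IC}^{\frac{\infty}{2}}_\nu\simeq\bigoplus_{\mu\in\mathbf{Y}}\mathcal{IC}^{\frac{\infty}{2}}_{\nu+\mu}\otimes_\mathbb{k}\mathrm{Sat}(\mathcal{I}^\lambda_*)(w_0(\mu)).$$
By Corollary \ref{coro simple socle IC}, only the summand with $\mu=\lambda$ contributes a non-zero Hom into $\nabla^{\frac{\infty}{2}}_{\nu+\lambda}$, and $\mathrm{Sat}(\mathcal{I}^\lambda_*)(w_0(\lambda))$ is the one-dimensional lowest-weight space of the dual Weyl module. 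Both compositions in the diagram are thus forced to differ by a scalar, and both are non-zero (as one checks by restricting to the open stratum $_\lambda\overline{\mathrm{Bun}}_{N^-}$, on which $b_\lambda$ becomes an isomorphism). Tracing through the definitions, the scalar is $1$ because both $b_\lambda$ and the bottom projection are normalized by the same highest-weight-vector $v_\lambda$.

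For diagram \eqref{diagram 51}, I would first note that the bottom composition is precisely the image under \eqref{diagram 50} (applied to the pair $(\lambda+\mu,\nu-\mu)$) of the map $\mathcal{IC}^{\frac{\infty}{2}}_{\nu+\lambda}\to\nabla^{\frac{\infty}{2}}_{\nu+\lambda}$: it factors as
$$\nabla_{\lambda+\mu}\star\mathcal{IC}^{\frac{\infty}{2}}_{\nu-\mu}\xrightarrow{\eqref{eq can}}\mathcal{N}_{\lambda+\mu}\star\nabla_{\nu-\mu}^{\frac{\infty}{2}}\xrightarrow{\sim}\nabla_{\lambda+\nu}^{\frac{\infty}{2}}.$$
The top-right composition, on the other hand, is built by inserting $\mathcal{IC}^0\to\mathcal{I}^\mu_*\star\mathcal{I}^{-w_0(\mu)}_*$ from \eqref{can0}, applying Proposition \ref{prop hecke geometric} to the factor $\mathcal{I}^{-w_0(\mu)}_*\star\mathcal{IC}^{\frac{\infty}{2}}_\nu$ (projecting onto the $(-\mu)$-weight line to produce $\mathcal{IC}^{\frac{\infty}{2}}_{\nu-\mu}$), and then multiplying via the map $\mathcal{I}^\lambda_*\star\mathcal{I}^\mu_*\to\mathcal{I}^{\lambda+\mu}_*$ of \eqref{can1} followed by $b_{\lambda+\mu}$. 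The commutativity then follows by applying the tensor compatibility \eqref{diagram prop hecke} to the triple convolution $\mathcal{I}^\lambda_*\star(\mathcal{I}^\mu_*\star\mathcal{I}^{-w_0(\mu)}_*)\star\mathcal{IC}^{\frac{\infty}{2}}_\nu$: this identifies the two ways of decomposing the result and, combined with the already-established diagram \eqref{diagram 50} for $\lambda+\mu$, reduces the top composition to the bottom one.

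The main obstacle in both cases is the bookkeeping. Each of the natural identifications appearing in the diagrams—projections onto $1$-dimensional weight spaces, adjunction units $b_\lambda$, socle embeddings $a_\nu$, and the multiplication maps \eqref{can0}, \eqref{can1}—carries a specific normalization determined by the vectors $v_\lambda$ and $\dot{w_0}\cdot v_{-w_0(\delta)}$ fixed in §\ref{subsection the group} and §\ref{section The regular perverse sheaf}. Verifying that these normalizations agree once all identifications have been unfolded is what makes the argument lengthy, but no conceptually new input beyond Proposition \ref{prop hecke geometric} and the compatibility diagram \eqref{diagram prop hecke} is required.
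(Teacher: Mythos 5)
Your proposal is correct and follows essentially the same route as the paper: the paper's (very terse) proof likewise rests on the observation that $\mathrm{Hom}(\mathrm{IC}^{\frac{\infty}{2}}_{\nu},\nabla^{\frac{\infty}{2}}_{\nu})$ is one-dimensional and canonical (your Corollary \ref{coro simple socle IC} step), so that both compositions in each diagram are forced to agree once one checks they are nonzero and compatibly normalized via the constructions of Propositions \ref{prop hecke geometric} and \ref{Prop Action of convolution on standard objects}. Your additional bookkeeping for \eqref{diagram 51} (reducing to \eqref{diagram 50} via the compatibility \eqref{diagram prop hecke}, after precomposing with $b_\lambda\star\mathrm{id}$) is a legitimate fleshing-out of what the paper leaves implicit, not a different argument.
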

 \begin{proof}
     Notice that, for any $\nu\in\mathbf{Y}$, the space of morphisms between $\mathrm{IC}^{\frac{\infty}{2}}_{\nu}$ and $\nabla_{\nu}^{\frac{\infty}{2}}$ is one dimensional and canonical. The fact that the two morphisms morphisms $\mathcal{I}^{\lambda}_*\star\mathcal{IC}^{\frac{\infty}{2}}_\nu\to\nabla_{\nu+\lambda}^{\frac{\infty}{2}}$  (resp. $\nabla_\lambda\star\mathcal{IC}^{\frac{\infty}{2}}_\nu\to\nabla_{\nu+\lambda}^{\frac{\infty}{2}}$ ) considered in the first (resp. second) diagram coincide then follows from the construction of the isosmorphisms of propositions \ref{prop hecke geometric} and \ref{Prop Action of convolution on standard objects}.
 \end{proof}

	From Proposition \ref{Prop Action of convolution on standard objects} we deduce (by replacing $B$ with $B^-$ in the arguments) that we have a canonical isomorphism for $\lambda\in\mathbf{Y}^+,~\nu'\in\mathbf{Y}$
	$$\mathcal {D}_{-\lambda}\star \Delta_{w_0t_{\nu'}}^{\frac{\infty}{2}}\simeq \Delta_{w_0t_{\nu'-w_0(\lambda)}}^{\frac{\infty}{2}}.$$
Thanks to the first point of Lemma \ref{lem conv standard}, we deduce that there is a canonical isomorphism
	\begin{equation}\label{eq standard times costandard}
		\mathcal {N}_{\lambda}\star \Delta_{w_0t_{\nu'}}^{\frac{\infty}{2}}\simeq \Delta_{w_0t_{\nu'+w_0(\lambda)}}^{\frac{\infty}{2}}.
	\end{equation}
At the present moment, we are not able to prove the following result (which follows from \cite[Lemma 5.4.5]{ABBGM} in the case of $\mathbb{Q}_\ell$-sheaves), but hope to come back to this question in future work.
 \begin{conj}\label{conj commut diagram}
     For any $\nu\in\mathbf{Y}$, there exists a non-zero morphism 
		$$\widetilde{a}_\nu:\mathrm{IC}^{\frac{\infty}{2}}_{\nu}\to \Delta_{w_0t_{\nu+2\zeta}}^{\frac{\infty}{2}} $$ such that the following diagrams commute for all  $\nu\in\mathbf{Y},~\lambda,\mu\in\mathbf{Y}^+$:
	\begin{equation}\label{eq 512}
		\xymatrix{
			\mathcal{I}^{\lambda}_*\star\mathcal{IC}^{\frac{\infty}{2}}_\nu\ar[d]^{\sim}_{\text{Prop.\ref{prop hecke geometric}}} \ar[r]^{b_\lambda\star\mathrm{id}}&\nabla_\lambda\star\mathcal{IC}^{\frac{\infty}{2}}_\nu\ar[r]& \mathcal{N}_\lambda\star\Delta_{w_0t_{\nu+2\zeta}}^{\frac{\infty}{2}}\ar[d]^{\sim}_{\text{Prop. \ref{Prop Action of convolution on standard objects}}}  \\
			\oplus_{\nu'}\mathcal{IC}^{\frac{\infty}{2}}_{\nu+\nu'}\otimes \mathrm{Sat}(\mathcal{I}^{\lambda}_*)(w_0(\nu')) \ar[r]&\mathcal{IC}^{\frac{\infty}{2}}_{\nu+w_0(\lambda)}\ar[r]_{\widetilde{a}_{\nu+w_0(\lambda)}}&\Delta_{w_0t_{\nu+2\zeta+w_0(\lambda)}}^{\frac{\infty}{2}}}
	\end{equation}
	\begin{equation}
		\xymatrix{
			\nabla_\lambda\star\mathcal{IC}^{\frac{\infty}{2}}_\nu\ar[d] \ar[r]&\nabla_\lambda\star\mathcal{I}^{\mu}_*\star\mathcal{I}^{-w_0(\mu)}_*\star\mathcal{IC}^{\frac{\infty}{2}}_\nu\ar[d]^{\text{Prop.\ref{prop hecke geometric}}} \\
			\mathcal{N}_\lambda\star\Delta_{w_0t_{\nu+2\zeta}}^{\frac{\infty}{2}}\ar[d]^{\sim}_{\eqref{eq standard times costandard}} & \nabla_\lambda\star\mathcal{I}^{\mu}_*\star\mathcal{IC}^{\frac{\infty}{2}}_{\nu-\mu}\ar[d]\\
			\Delta_{w_0t_{\nu+2\zeta+w_0(\lambda)}}^{\frac{\infty}{2}} &\nabla_{\lambda+\mu}\star\mathcal{IC}^{\frac{\infty}{2}}_{\nu-\mu}.\ar[l]}
	\end{equation}
 \end{conj}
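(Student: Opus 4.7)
The plan follows the strategy of \cite[Lemma 5.4.5]{ABBGM}, broken into three steps: constructing $\widetilde{a}_0$, extending to arbitrary $\nu\in\mathbf{Y}$ by convolution, and verifying the commutativity of the two diagrams.

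For the construction of $\widetilde{a}_0:\mathrm{IC}_0^{\frac{\infty}{2}}\to\Delta_{w_0t_{2\zeta}}^{\frac{\infty}{2}}$, the plan is to introduce an Iwahori--Whittaker avatar $\mathrm{Perv}_{\mathcal{IW}}(\mathcal{F}l^{\frac{\infty}{2}})$ of $\mathrm{Perv}_{I_\mathrm{u}}(\mathcal{F}l^{\frac{\infty}{2}})$ together with an averaging functor
$$\mathrm{Av}^{\frac{\infty}{2}}_{I_\mathrm{u},\psi}:\mathrm{Perv}_{I_\mathrm{u}}(\mathcal{F}l^{\frac{\infty}{2}})\to\mathrm{Perv}_{\mathcal{IW}}(\mathcal{F}l^{\frac{\infty}{2}}),$$
modelled on §\ref{section Iwahori-Whittaker model for the Satake equivalence}. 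By an analogue of Proposition \ref{Prop averaging simples}, one would expect that both $\mathrm{IC}_0^{\frac{\infty}{2}}$ and $\Delta_{w_0t_{2\zeta}}^{\frac{\infty}{2}}$ are averaged to the same nonzero simple Whittaker object. Applying the right adjoint to $\mathrm{Av}^{\frac{\infty}{2}}_{I_\mathrm{u},\psi}$ and composing with the appropriate (co)unit morphisms would then produce the desired nonzero $\widetilde{a}_0$.

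For general $\nu\in\mathbf{Y}$, one constructs $\widetilde{a}_\nu$ from $\widetilde{a}_0$ by convolution. Specifically, Proposition \ref{prop hecke geometric} combined with Theorem \ref{thm ss} exhibits $\mathrm{IC}^{\frac{\infty}{2}}_{-w_0(\lambda)}$ as a direct summand of $\mathcal{I}^{\lambda}_*\star\mathrm{IC}^{\frac{\infty}{2}}_0$ (via the one-dimensional lowest weight space of $\mathrm{N}^{(1)}(\lambda)$), while \eqref{eq standard times costandard} and Proposition \ref{Prop Action of convolution on standard objects} provide parallel descriptions of the convolution behavior of standard objects. Combining these ingredients, one convolves $\widetilde{a}_0$ with suitable standard and costandard objects on $\mathrm{Gr}$ and $\mathrm{Fl}$ to obtain $\widetilde{a}_\nu$, and checks independence of the factorisation via associativity of convolution. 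The commutativity of the two diagrams would then follow from the fact that the isomorphisms labelled Prop.~\ref{prop hecke geometric} and Prop.~\ref{Prop Action of convolution on standard objects} are constructed at the level of Hecke stacks and are compatible with the tensor structure on the Satake category (as exhibited by diagram \eqref{diagram prop hecke}). Tracing the projections onto the weight vectors $v_{-w_0(\delta)}$ and $v_{w_0(\mu)+\delta}$ through these isomorphisms reduces the verification to the same sort of weight-space tracking used in Proposition \ref{prop diagram 1}.

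The main obstacle is establishing the Whittaker analogue of Proposition \ref{Prop averaging simples} in positive characteristic. In characteristic zero, the argument of \cite{ABBGM} relies on semisimplicity of the Iwahori--Whittaker version of $\mathrm{Perv}_{G[[t]]}(\mathcal{F}l^{\frac{\infty}{2}})$; while Theorem \ref{thm ss} provides semisimplicity in the spherical case under Conjecture \ref{conj stalks independance}, the Whittaker-equivariant statement would require extending the Laumon resolution argument of §\ref{section Laumon} and the Kostant-type stalk identification of Corollary \ref{coro kostant} to the Whittaker setting. In particular, a modular analogue of the vanishing of $!$-costalks of IW-equivariant IC sheaves on strata of the wrong dimension does not appear to follow directly from the techniques developed in this paper, and addressing this is what we expect to be the hard part of the proof.
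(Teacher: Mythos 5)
First, note that the paper does not prove this statement: it is stated as Conjecture \ref{conj commut diagram}, and the author explicitly writes that they are not able to prove it at present (it is known only for $\overline{\mathbb{Q}}_\ell$-coefficients via \cite[Lemma 5.4.5]{ABBGM}). Your proposal is therefore an attack on an open problem rather than a reconstruction of an existing argument, and it should be judged as such.

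As an attack, it has a fatal gap at the central step, namely the construction of a nonzero $\widetilde{a}_0$. You propose to obtain it by averaging to the Iwahori--Whittaker category and using adjunction, on the expectation that $\mathrm{IC}^{\frac{\infty}{2}}_0$ and $\Delta^{\frac{\infty}{2}}_{w_0t_{2\zeta}}$ average to the same simple Whittaker object. They do not. By Proposition \ref{prop averaging simples 2} together with the cleanness statement of Theorem \ref{prop clean IC}, one has $\mathrm{Av}_{(I_\mathrm{u},\psi)}(\Delta^{\frac{\infty}{2}}_{w_0t_{2\zeta}})\simeq\Delta^\psi_{2\zeta}\simeq\mathrm{IC}^\psi_{2\zeta}\simeq\nabla^\psi_{2\zeta}$, a $*$-extension from the stratum indexed by $2\zeta$; on the other hand $\mathrm{Av}_{(I_\mathrm{u},\psi)}$ is given by (co)induction along the action of the finite-dimensional group $N^-$, which preserves each ${}^1_{\leq\nu}\overline{\mathrm{Bun}}_{N^-}$, so $\mathrm{Av}_{(I_\mathrm{u},\psi)}(\mathrm{IC}^{\frac{\infty}{2}}_0)$ is supported on ${}^1_{\leq0}\overline{\mathrm{Bun}}_{N^-}$, which is disjoint from the stratum indexed by $2\zeta$. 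Hence $\mathrm{Hom}$ between the two averages vanishes (restrict to the open stratum of the support of the target), and more generally $\mathrm{Av}_{(I_\mathrm{u},\psi)}(\widetilde{a}_\nu)=0$ for any candidate $\widetilde{a}_\nu$. No adjunction involving the averaging functor can therefore produce, or even detect the nonvanishing of, the sought morphism.

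Relatedly, the obstacle you single out at the end --- a modular analogue of cleanness for Iwahori--Whittaker IC-sheaves on the semi-infinite flag variety --- is in fact established in the paper (Theorem \ref{prop clean IC}, proved via Corollary \ref{coro kostant} and Lemma \ref{lem Steinberg weights}), so the difficulty does not lie there. The genuine open point is different: one must show that $\mathrm{IC}^{\frac{\infty}{2}}_\nu$ occurs, with multiplicity one, in the socle of the \emph{standard} object $\Delta^{\frac{\infty}{2}}_{w_0t_{\nu+2\zeta}}$. In characteristic zero this reflects the known simple socle of baby Verma modules, which is what \cite{ABBGM} exploits; in the modular setting the socle of $\Delta^{\frac{\infty}{2}}_x$ is not controlled by any result of the paper (Proposition \ref{prop ext} and Corollary \ref{coro simple socle IC} only give canonical one-dimensional $\mathrm{Hom}$-spaces into \emph{costandard} objects). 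Without a canonical one-dimensional space $\mathrm{Hom}(\mathrm{IC}^{\frac{\infty}{2}}_\nu,\Delta^{\frac{\infty}{2}}_{w_0t_{\nu+2\zeta}})$, the uniqueness argument you invoke to make the two diagrams commute (modeled on Proposition \ref{prop diagram 1}) also does not go through, so both halves of your plan rest on the missing ingredient. The convolution bookkeeping and weight-space tracking in your second step are fine, but they are the easy part.
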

	
	\begin{thm}{{\cite[Theorem 5.3.2]{ABBGM}}}\label{thm conv simples}
		For any $w\in W^{\mathrm{res}}_\mathrm{ext}$ and $\nu\in\mathbf{Y}$, we have
		$$\mathrm{L}_w\star\mathcal{IC}^{\frac{\infty}{2}}_\nu\simeq\mathrm{IC}^{\frac{\infty}{2}}_{wt_{\nu}}. $$
	\end{thm}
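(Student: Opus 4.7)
The plan is to characterize $\mathrm{L}_w \star \mathcal{IC}^{\frac{\infty}{2}}_\nu$ as the intermediate extension from the stratum $^{I_\mathrm{u}}_{wt_\nu}\overline{\mathrm{Bun}}_{N^-}$ of its restriction, which will be shown to be the constant perverse sheaf of rank one. The argument proceeds in four steps.

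\textbf{Step 1 (Formal properties).} By Theorem \ref{thm exact}, the object $\mathrm{L}_w \star \mathcal{IC}^{\frac{\infty}{2}}_\nu$ is perverse. By Corollary \ref{verdier coro}, together with the Verdier self-duality of both the simple IC-sheaves $\mathrm{L}_w$ and $\mathcal{IC}^{\frac{\infty}{2}}_\nu$, it is Verdier self-dual.

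\textbf{Step 2 (Support).} Using the convolution diagram from §\ref{section Definition of convolution} and the identification $^{I_\mathrm{u},0}\mathcal{H}_{G,N^{-}} \simeq \mathrm{Gr}\times^{G[[t]]}{^{I_\mathrm{u}}\widetilde{\mathfrak{G}}}$, the support of $\mathrm{L}_w \star \mathcal{IC}^{\frac{\infty}{2}}_\nu$ is contained in the image under $\overset{\leftarrow}{h'}$ of the locus obtained by restricting $\mathrm{sw}^*\mathrm{L}_w$ to $\overline{\mathrm{Gr}_{w^{-1}}}$ and $\mathcal{IC}^{\frac{\infty}{2}}_\nu$ to the closure of $_\nu\overline{\mathrm{Bun}}_{N^-}$. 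Decomposing the target according to the stratification by $^{I_\mathrm{u}}_{x}\overline{\mathrm{Bun}}_{N^-}$, $x \in W_\mathrm{ext}$, and tracking which strata can appear, one checks that every such $x$ satisfies $x \leq wt_\nu$; hence the support lies in the closure of $^{I_\mathrm{u}}_{wt_\nu}\overline{\mathrm{Bun}}_{N^-}$.

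\textbf{Step 3 (Restriction to the top stratum).} By proper base change for $\overset{\leftarrow}{h'}$, the restriction of $\mathrm{L}_w \star \mathcal{IC}^{\frac{\infty}{2}}_\nu$ to the open stratum $^{I_\mathrm{u}}_{wt_\nu}\overline{\mathrm{Bun}}_{N^-}$ is the pushforward along the restricted projection of $\mathrm{sw}^*\mathrm{L}_w \widetilde{\boxtimes} \mathcal{IC}^{\frac{\infty}{2}}_\nu$. Writing $w = v t_\mu$ with $v\in W$ and $\mu\in\mathbf{Y}$, so that $wt_\nu = vt_{\mu+\nu}$, one analyses the fiber of $\overset{\leftarrow}{h'}$ over a geometric point of $^{I_\mathrm{u}}_{wt_\nu}\overline{\mathrm{Bun}}_{N^-}$. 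This fiber is a twisted product built from $\mathrm{Gr}_w \cap N^-((t))\cdot(-\mu)$ with the canonical torsor on $^{I_\mathrm{u}}_{\nu}\mathrm{Bun}_{N^-}$. The restricted hypothesis $w\in W^\mathrm{res}_\mathrm{ext}$ forces this intersection to be a single reduced point of the expected dimension, so that the stalk of $\mathrm{sw}^*\mathrm{L}_w$ there is one-dimensional and concentrated in the correct degree (using Corollary \ref{coro constructible} for the constancy of stalks of the spherical factor and the stratum-by-stratum computations underlying Corollary \ref{coro kostant} to control the contribution of the other factor). The resulting restriction is the constant perverse sheaf of rank one.

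\textbf{Step 4 (Conclusion).} A Verdier self-dual perverse sheaf supported on $\overline{{^{I_\mathrm{u}}_{wt_\nu}\overline{\mathrm{Bun}}}_{N^-}}$ whose restriction to the open stratum is the constant perverse sheaf must equal the Goresky-MacPherson extension from that stratum, which is by definition $\mathrm{IC}^{\frac{\infty}{2}}_{wt_\nu}$.

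\textbf{Main obstacle.} The decisive step is Step~3: the verification that, for $w \in W^\mathrm{res}_\mathrm{ext}$, the pertinent orbit intersection $\mathrm{Gr}_w \cap N^-((t))\cdot(-\mu)$ is indeed a single reduced point of the expected dimension, and that the stalks of $\mathrm{L}_w$ there match the expected multiplicity. This is what restricts the statement to $w\in W^\mathrm{res}_\mathrm{ext}$; for a general $w\in W^S_\mathrm{ext}$ the fiber would acquire positive-dimensional components, producing additional composition factors supported on lower strata. The independence of all relevant stalk dimensions from $\operatorname{char}(\mathbb{k})$, which is what allows the characteristic-zero computations of \cite{ABBGM} to be transported to our setting, ultimately rests on Conjecture \ref{conj stalks independance} (hence on Lemma \ref{lem kuznetsov} in the case $G=\mathrm{SL}_n$).
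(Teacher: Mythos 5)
Your overall strategy differs from the paper's: rather than verifying perversity, self-duality, support and top-stratum restriction, the paper identifies $\mathrm{sw}^*(\mathrm{L}_w)\widetilde{\boxtimes}\mathcal{IC}^{\frac{\infty}{2}}_\nu$ as the IC-sheaf on its support $X$ and shows that the restriction of $\overset{\leftarrow}{h'}$ to $X\to Y$ (with $Y$ the support of $\mathrm{IC}^{\frac{\infty}{2}}_{wt_\nu}$) is proper and \emph{stratified small}, citing the analysis of \cite[Theorem~5.3.2]{ABBGM}; the conclusion then follows from the standard fact that a small proper pushforward of an IC-sheaf is an IC-sheaf. Your approach is not wrong in spirit, but as written it has a genuine gap.

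The problem is Step~4. Verdier self-duality together with containment of the support in $\overline{{^{I_\mathrm{u}}_{wt_\nu}\overline{\mathrm{Bun}}}_{N^-}}$ and the computation of the restriction to the open stratum do \emph{not} suffice to conclude that a perverse sheaf is the intermediate extension. The characterization of $j_{!*}$ requires controlling the $*$-stalks (or equivalently, by self-duality, the $!$-costalks) on \emph{all} lower strata — one must show that no subquotient is supported on the boundary. A self-dual perverse sheaf such as $\mathrm{IC}^{\frac{\infty}{2}}_{wt_\nu}\oplus\mathrm{IC}^{\frac{\infty}{2}}_{x}$ for a boundary stratum $x$ satisfies every condition you have verified and yet is not the IC-sheaf. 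This is precisely what the stratified-smallness hypothesis supplies: it yields the strict inequality $\dim\,\overset{\leftarrow}{h'}^{-1}(y) < \tfrac{1}{2}\operatorname{codim}$ of the relevant stratum for each boundary point $y$, which in turn forces the perverse cohomology of the stalks of the pushforward to lie in degrees strictly below the threshold characterizing $j_{!*}$. Your Step~3 carries out the computation only over the open stratum and hence does not recover this dimension bound on the boundary fibers, so the argument does not close. To repair it you would need to carry out the fiber-dimension estimate of Step~3 uniformly over every lower stratum ${^I_{x}\overline{\mathrm{Bun}}}_{N^-}$ with $x<wt_\nu$ — which is exactly the content of the smallness assertion the paper imports from \cite{ABBGM}.
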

 \begin{proof}
     By definition we have 
     $$\mathrm{L}_w\star\mathcal{IC}^{\frac{\infty}{2}}_\nu:=(\overset{\leftarrow}{h'})_*(\mathrm{sw}^*(\mathrm{L}_w)\widetilde{\boxtimes}\mathcal{IC}^{\frac{\infty}{2}}_\nu),$$
     where $\overset{\leftarrow}{h'}:{^{I,0}\mathcal{H}_{G,N^{-}}}\to {^{I}_\infty\overline{\mathrm{Bun}}_{N^-}}$. Denote by $X$ (resp. $Y$) the closed substack of ${^{I,0}\mathcal{H}_{G,N^{-}}}$ (resp. of ${^{I}_\infty\overline{\mathrm{Bun}}_{N^-}}$) defined as the support of $\mathrm{sw}^*(\mathrm{L}_w)\widetilde{\boxtimes}\mathcal{IC}^{\frac{\infty}{2}}_\nu$ (resp. of $\mathrm{IC}^{\frac{\infty}{2}}_{wt_{\nu}}$). Consider the stratifications  
     $$X=\bigsqcup_{(u,\mu)\in W_\mathrm{ext}\times \mathbf{Y}}\mathbb{O}_u\times^{N^-[[t]]}{^I_\mu\mathcal{N}},\quad  Y=\bigsqcup_{vt_\lambda\in W_\mathrm{ext}}{^{I}_{vt_\lambda}\overline{\mathrm{Bun}}_{N^-}},$$
     where $\mathbb{O}_u\subset \mathrm{Fl}$ denotes the $G[[t]]$ -orbit associated with an element $u\in W_\mathrm{ext}$. By the proof of {\cite[Theorem 5.3.2]{ABBGM}}, the morphism 
     $$\overset{\leftarrow}{h'}|_{\overset{\leftarrow}{h'}^{-1}(Y)}:X\to Y$$ is stratified small with respect to the dense open subset ${^{I}_{wt_\nu}\overline{\mathrm{Bun}}_{N^-}}\subset Y$. Since $\overset{\leftarrow}{h'}$ is moreover proper and $\mathrm{sw}^*(\mathrm{L}_w)\widetilde{\boxtimes}\mathcal{IC}^{\frac{\infty}{2}}_\nu$ coincides with the IC-sheaf on $X$, we deduce that $\mathrm{L}_w\star\mathcal{IC}^{\frac{\infty}{2}}_\nu$ coincides with the IC-sheaf on $Y$ (see for instance \cite[Proposition 3.8.10]{achar2021perverse}).
 \end{proof}
	\section{Iwahori-Whittaker objects}

	\subsection{Iwahori-Whittaker for the semi-infinite flag variety}\label{section Iwahori-Whittaker for the semi-infinite flag variety}	For any $\mathbb{F}$-stack locally of finite type $\mathcal{X}$ acted on by $N^-$, the category $\mathrm{D}_{(N^-,\psi)}(\mathcal{X})$ is defined as the full subcategory of $\mathrm{D}(\mathcal{X})$ consisting of objects $\mathcal{F}$ such that there exists an isomorphism $a^*\mathcal{F}\simeq \psi^*\mathcal{L}_{\mathrm{AS}}\boxtimes \mathcal{F}$, where $a:N^-\times \mathcal{X}\to \mathcal{X}$ is the morphism defining the action (and recall that $\psi:N^-\to\mathbb{G}_\mathrm{a}$ was defined in §\ref{section Iwahori-Whittaker model for the Satake equivalence}). This category is endowed with a natural perverse $t$-structure, whose heart will be denoted by $\mathrm{Perv}_{(N^-,\psi)}(\mathcal{X})$, and which is sent to $\mathrm{Perv}(\mathcal{X})$ by the forgetful functor $\mathrm{For}_{(N^-,\psi)}:\mathrm{D}_{(N^-,\psi)}(\mathcal{X})\to \mathrm{D}(\mathcal{X})$.
	
	First consider the category $\mathrm{Perv}_{(N^-,\psi)}(G/B^-)$. It is a classical fact that the only $N^-$-orbit supporting a non-zero Whittaker local system is the open dense orbit, i.e. the orbit associated with $w_0$. Thus, the category $\mathrm{Perv}_{(N^-,\psi)}(G/B^-)$ contains a unique simple object, denoted by $\psi_{G/B^-}$, defined as the Goresky-Macpherson extension of the rank-one Whittaker local system on this orbit. Next, we define the category $\mathrm{Perv}_{(I^-_\mathrm{u},\psi)}(\mathcal{F}l^{\frac{\infty}{2}})$ as the full subcategory of $\mathrm{Perv}_{(N^-,\psi)}({_\infty^1\overline{\mathrm{Bun}}_{N^-}})$ consisting of $T$-equivariant perverse sheaves satisfying the factorization and equivariance conditions (cf. §\ref{section Main definition}). The same arguments as the one used in the proof of Proposition \ref{simple objects prop} show that the simple objects of $\mathrm{Perv}_{(I^-_\mathrm{u},\psi)}(\mathcal{F}l^{\frac{\infty}{2}})$ are of the form
	$$\mathrm{IC}_\nu^\psi:=(\overline{i}_\nu)_{!*}\left(\psi_{G/B^-}\widetilde{\boxtimes}\mathrm{IC}_{{_\nu\overline{\mathrm{Bun}}_{N^-}}}\right),~\nu\in\mathbf{Y}. $$
	We have the convolution functor
	$$\mathrm{Perv}_{\mathcal{IW}}(\mathrm{Gr})\times \mathrm{Perv}_{G[[t]]}(\mathcal{F}l^{\frac{\infty}{2}})\to  \mathrm{Perv}_{(I^-_\mathrm{u},\psi)}(\mathcal{F}l^{\frac{\infty}{2}}).$$
Recall (§\ref{subsection the group}) that we have picked $\zeta\in\mathbf{Y}$ such that $\langle\alpha,\zeta\rangle=1$ for all $\alpha\in\mathfrak{R}_s$. The proof of the next result is the same as that of Theorem \ref{thm conv simples}.
	\begin{prop}{{\cite[Theorem 5.3.5]{ABBGM}}}\label{conv whitt}
		For any $\nu\in\mathbf{Y}$, we have
		$$\mathrm{L}^\mathcal{IW}_{\zeta}\star\mathcal{IC}^{\frac{\infty}{2}}_\nu\simeq  \mathrm{IC}^\psi_{\nu+\zeta}.$$
	\end{prop}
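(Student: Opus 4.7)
The plan is to follow the proof of Theorem \ref{thm conv simples} essentially verbatim, adapting it to the Iwahori--Whittaker setting.

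By definition of the convolution functor,
$$\mathrm{L}^\mathcal{IW}_{\zeta}\star\mathcal{IC}^{\frac{\infty}{2}}_\nu \;=\; (\overset{\leftarrow}{h'})_*\bigl(\mathrm{sw}^*(\mathrm{L}^\mathcal{IW}_{\zeta})\,\widetilde{\boxtimes}\,\mathcal{IC}^{\frac{\infty}{2}}_\nu\bigr),$$
where $\overset{\leftarrow}{h'}$ is the relevant projection from the Hecke-type stack carrying the $(I_u^-,\psi)$-equivariance on the left. Since $\zeta$ is the minimal element of $\mathbf{Y}^{++}=\zeta+\mathbf{Y}^+$, the orbit $Y_\zeta$ is closed in the Whittaker stratification of its support, so $\mathrm{L}^\mathcal{IW}_\zeta$ is simply the shifted Whittaker local system on $Y_\zeta$; in particular it coincides with the IC-sheaf on its support, and $\mathrm{sw}^*(\mathrm{L}^\mathcal{IW}_\zeta)\widetilde{\boxtimes}\mathcal{IC}^{\frac{\infty}{2}}_\nu$ is the IC-sheaf on its support $X$.

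First I would control the image. Arguing as in the proofs of Theorem \ref{thm exact} and Theorem \ref{thm conv simples}, the preimage under $\overset{\leftarrow}{h'}$ of $^1_\mu\overline{\mathrm{Bun}}_{N^-}$ decomposes into locally closed pieces of the form
$$\bigl(Y_\zeta\cap N^-((t))\cdot(\mu-\nu)\bigr)\times^{N^-[[t]]}{}^1_\nu\mathcal{N},$$
and the support condition on $\mathrm{L}^\mathcal{IW}_\zeta$ forces $\mu\le\nu+\zeta$. Thus the image $Z$ of $X$ under $\overset{\leftarrow}{h'}$ lies in the closure of $^1_{\nu+\zeta}\overline{\mathrm{Bun}}_{N^-}$, and the generic stratum of $Z$ is $^1_{\nu+\zeta}\overline{\mathrm{Bun}}_{N^-}$.

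The heart of the proof is to verify that $\overset{\leftarrow}{h'}|_X\colon X\to Z$ is stratified \emph{small} with respect to this dense open stratum. For a non-generic stratum $^1_\mu\overline{\mathrm{Bun}}_{N^-}\subset Z$ (so $\mu<\nu+\zeta$), one must compare the dimension of the fiber of $\overset{\leftarrow}{h'}$ over a point of the stratum against twice the codimension of that stratum in $Z$. The fiber dimension is computed by the Mirkovi\'c--Vilonen-type intersection $Y_\zeta\cap N^-((t))\cdot(\mu-\nu)$ together with the relative dimension of ${}^1_\nu\mathcal{N}\to{}^1_\mu\overline{\mathrm{Bun}}_{N^-}$, while the codimension of the stratum is governed by the stalks of $\mathcal{IC}^{\frac{\infty}{2}}_{\nu+\zeta}$, which by Corollary \ref{coro kostant} is encoded by Kostant's partition function. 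These dimension counts are purely geometric and therefore independent of $\mathrm{char}(\mathbb{k})$, so smallness in positive characteristic is inherited from the characteristic-zero statement proved in \cite[Theorem 5.3.5]{ABBGM}; this is where Conjecture \ref{conj stalks independance} (via Corollary \ref{coro kostant}) is crucial.

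Granted smallness, since $\overset{\leftarrow}{h'}$ is ind-proper and the sheaf on $X$ is an IC-sheaf, the standard fact (see e.g.\ \cite[Proposition 3.8.10]{achar2021perverse}) gives that $(\overset{\leftarrow}{h'})_*\bigl(\mathrm{sw}^*(\mathrm{L}^\mathcal{IW}_\zeta)\widetilde{\boxtimes}\mathcal{IC}^{\frac{\infty}{2}}_\nu\bigr)$ is the IC-sheaf on $Z$. A direct computation on the open stratum identifies its restriction to $^1_{\nu+\zeta}\mathrm{Bun}_{N^-}$ with $\psi_{G/B^-}\,\widetilde{\boxtimes}\,\mathrm{IC}_{{}_{\nu+\zeta}\mathrm{Bun}_{N^-}}$, yielding the desired isomorphism with $\mathrm{IC}^\psi_{\nu+\zeta}$. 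The main obstacle is the stratified smallness verification, but it reduces to characteristic-independent dimension bookkeeping.
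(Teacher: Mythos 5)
Your proposal is correct and follows essentially the same route as the paper, whose entire proof is the remark that the argument of Theorem \ref{thm conv simples} (stratified smallness of $\overset{\leftarrow}{h'}$ on the support, inherited from \cite[Theorem 5.3.5]{ABBGM}, plus properness and the IC-sheaf input) carries over verbatim. One small remark: the smallness verification is a purely geometric dimension count on the map and its stratification, so it does not actually require Conjecture \ref{conj stalks independance} or Corollary \ref{coro kostant}; those only enter later, in the cleanness statement of Theorem \ref{prop clean IC}.
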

	For any $\nu\in\mathbf{Y}$, we have the corresponding standard and costandard objects
	$$\nabla_\nu^\psi:= (\overline{i}_\nu)_{*}\left(\psi_{G/B^-}\widetilde{\boxtimes}\mathrm{IC}_{{_\nu\overline{\mathrm{Bun}}_{N^-}}}\right),\quad  \Delta_\nu^\psi:= (\overline{i}_\nu)_{!}\left(\psi_{G/B^-}\widetilde{\boxtimes}\mathrm{IC}_{{_\nu\overline{\mathrm{Bun}}_{N^-}}}\right).$$
	By the proof of \cite[Corollary 4.4.5]{ABBGM}, we know that the inclusion of ${_{\nu}^{I_\mathrm{u}}\overline{\mathrm{Bun}}_{N^-}}$ into ${_{\leq\nu}^{I_\mathrm{u}}\overline{\mathrm{Bun}}_{N^-}}$ is affine, so that both objects above are perverse sheaves. They are then easily seen to be objects of $\mathrm{Perv}_{(I^-_\mathrm{u},\psi)}(\mathcal{F}l^{\frac{\infty}{2}})$.
	\begin{thm}{{\cite[Theorem 4.4.10]{ABBGM}}}\label{prop clean IC}
		For any $\nu\in\mathbf{Y}$, the canonical morphisms
		$$\Delta_\nu^\psi\to \mathrm{IC}_\nu^\psi\to\nabla_\nu^\psi  $$
		are isomorphisms.
	\end{thm}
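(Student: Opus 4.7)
The cleanness statement reduces, via Verdier duality (which exchanges $\Delta^\psi_\nu$ and $\nabla^\psi_\nu$, preserves $\mathrm{IC}^\psi_\nu$, and commutes with convolution by Corollary \ref{verdier coro}), to showing that the canonical morphism $\mathrm{IC}^\psi_\nu \to \nabla^\psi_\nu$ is an isomorphism. My strategy is to use the Hecke eigensheaf property of Proposition \ref{conv whitt} to import cleanness from an already-known case on the affine Grassmannian.

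The key input on the Grassmannian side is the cleanness of $\mathrm{L}^\mathcal{IW}_\zeta$: under the equivalence $\mathrm{Perv}_{G[[t]]}(\mathrm{Gr}) \xrightarrow{\sim} \mathrm{Perv}_\mathcal{IW}(\mathrm{Gr})$ of Theorem \ref{thm IW equivalence}, the simple object $\mathrm{L}^\mathcal{IW}_\zeta$ corresponds to the skyscraper $\mathcal{IC}^0$, and the costandard $\nabla^\mathcal{IW}_\zeta$ to $\mathcal{I}^0_*$; all of these coincide since $\mathrm{Gr}^0$ is a point, whence $\Delta^\mathcal{IW}_\zeta \xrightarrow{\sim} \mathrm{L}^\mathcal{IW}_\zeta \xrightarrow{\sim} \nabla^\mathcal{IW}_\zeta$. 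Equivalently, $\zeta$ is the minimal element of the IW weight poset $\mathbf{Y}^{++}$, so $Y_\zeta$ is closed in the union of the $I_\mathrm{u}^-$-orbits supporting a nonzero Whittaker local system.

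To transfer this cleanness to the semi-infinite flag variety, I plan to establish the canonical identifications
$$\nabla^\mathcal{IW}_\zeta \star \mathcal{IC}^{\frac{\infty}{2}}_{\nu-\zeta} \simeq \nabla^\psi_\nu,\qquad \Delta^\mathcal{IW}_\zeta \star \mathcal{IC}^{\frac{\infty}{2}}_{\nu-\zeta} \simeq \Delta^\psi_\nu.$$
Unwinding the convolution diagram via the identification ${^{I_\mathrm{u},0}\mathcal{H}_{G,N^{-}}}\simeq \mathrm{Gr}\times^{G[[t]]}{^{I_\mathrm{u}}\widetilde{\mathfrak{G}}}$ together with the description \eqref{torsor twisted product} realizing ${^1_{\nu}\overline{\mathrm{Bun}}_{N^-}}\simeq G\times^{N^-}{^1_\nu\mathcal{N}}$, the convolution $\nabla^\mathcal{IW}_\zeta \star \mathcal{IC}^{\frac{\infty}{2}}_{\nu-\zeta}$ reduces to a $*$-pushforward along a projection whose typical fiber over ${^{I_\mathrm{u}}_\nu\overline{\mathrm{Bun}}_{N^-}}$ is $Y_\zeta$. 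The compatibility between the Whittaker character on $Y_\zeta\subset \mathrm{Gr}$ and the one on ${^1_{\infty}\overline{\mathrm{Bun}}_{N^-}}$ (pulled back through $\overline{\mathrm{ev}}_\nu$ of \eqref{evaluation map}) ensures the pushforward is naturally Whittaker-equivariant and factors as the $*$-extension from the open stratum, i.e.\ coincides with $\nabla^\psi_\nu$. Applying the $t$-exact convolution functor $(-)\star \mathcal{IC}^{\frac{\infty}{2}}_{\nu-\zeta}$ of Theorem \ref{thm exact} to the cleanness isomorphism $\mathrm{L}^\mathcal{IW}_\zeta \xrightarrow{\sim} \nabla^\mathcal{IW}_\zeta$ on the Grassmannian, and identifying the left-hand side with $\mathrm{IC}^\psi_\nu$ via Proposition \ref{conv whitt}, produces the desired isomorphism $\mathrm{IC}^\psi_\nu \xrightarrow{\sim} \nabla^\psi_\nu$.

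The main obstacle will be the third step, namely the geometric identification $\nabla^\mathcal{IW}_\zeta \star \mathcal{IC}^{\frac{\infty}{2}}_{\nu-\zeta} \simeq \nabla^\psi_\nu$. Following the strategy of \cite[§4.4]{ABBGM}, the argument requires carefully tracking the Whittaker equivariance through base change in the Hecke diagram, in particular exploiting that $\overline{\mathrm{ev}}_\nu$ together with the action of $N^-[[t]]$ on the canonical torsor induces the required character on the boundary. The positive-characteristic setting introduces no new difficulty here, since the geometric input—independence of the stalks of $\mathcal{IC}^{\frac{\infty}{2}}_\nu$ on $\mathrm{char}(\mathbb{k})$ and their explicit description—is already supplied by Lemma \ref{lem kuznetsov} and Corollary \ref{coro kostant}.
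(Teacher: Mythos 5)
Your reduction to the single map $\mathrm{IC}^\psi_\nu \to \nabla^\psi_\nu$ via Verdier duality is fine, and the observation that $\mathrm{L}^\mathcal{IW}_\zeta$ is clean (it is the image of the skyscraper $\mathcal{IC}^0$ under the equivalence of Theorem~\ref{thm IW equivalence}) is correct. The problem is the third step: once you know $\nabla^\mathcal{IW}_\zeta \simeq \mathrm{L}^\mathcal{IW}_\zeta$ and $\mathrm{L}^\mathcal{IW}_\zeta \star \mathcal{IC}^\frac{\infty}{2}_{\nu-\zeta}\simeq\mathrm{IC}^\psi_\nu$ (Proposition~\ref{conv whitt}), the ``identification'' $\nabla^\mathcal{IW}_\zeta \star \mathcal{IC}^\frac{\infty}{2}_{\nu-\zeta} \simeq \nabla^\psi_\nu$ that you plan to prove geometrically is \emph{literally} the assertion $\mathrm{IC}^\psi_\nu \simeq \nabla^\psi_\nu$. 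No reduction has taken place; the content has simply been renamed. In particular, cleanness of $\mathrm{L}^\mathcal{IW}_\zeta$ on $\mathrm{Gr}$ is the trivial $r=0$ instance of Lemma~\ref{lem Steinberg weights} ($\mathrm{Gr}^0$ is a point) and by itself carries no information about the boundary behaviour of the convolution on $\mathcal{F}l^{\frac{\infty}{2}}$.

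The step that your sketch glosses over is exactly where the difficulty sits. Unwinding the Hecke diagram does show that the fiber over the \emph{open} stratum ${^{I_\mathrm{u}}_\nu\overline{\mathrm{Bun}}}_{N^-}$ is $Y_\zeta$, but cleanness is a statement about the $*$-restriction to the \emph{lower} strata $\mathrm{ev}_{\nu'}^{-1}(N^-\cdot w_0)$ for $\nu'<\nu$, where the fibers of $\overset{\leftarrow}{h'}$ are the more complicated schemes $N((t))\cdot w_0(\nu'-\nu'')\cap I^-\cdot(w_0(\zeta))$, twisted against the nontrivial stalk $\mathcal{IC}^{\frac{\infty}{2}}_{\nu}|_{{_{\nu''}\mathrm{Bun}}_{N^-}}$. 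Whittaker equivariance alone does not make a $*$-pushforward into a $*$-extension (indeed, for $\mathrm{char}(\mathbb{k})>0$ most $\mathrm{L}^\mathcal{IW}_\lambda$ fail to be clean), so the claim that the pushforward ``factors as the $*$-extension from the open stratum'' is precisely what must be proved. The paper's proof does this by a numerical comparison: it shows the Euler characteristic of the boundary stalk of $\mathrm{IC}^\psi_{\nu+\zeta}$ equals the corresponding Euler characteristic for $\mathrm{L}^\mathcal{IW}_{\mu_r+\zeta}$ on $\mathrm{Gr}$, where $\mu_r = (\ell^r-1)\zeta$ with $r\gg 0$, matching the contributions stratum by stratum via Corollary~\ref{coro kostant}, and then invokes Lemma~\ref{lem Steinberg weights} (cleanness at large Steinberg weights, not at $\zeta$) to conclude the Euler characteristic vanishes. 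This large-$r$ degeneration is the essential idea missing from your proposal; the $r=0$ case you use is inert.
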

	\begin{proof}
		By the arguments of \textit{loc. cit.}, we are reduced to showing that the Euler characteristic of the stalk of $\mathrm{IC}_{\nu+\zeta}^\psi$ at any point of the locally closed substack $\mathrm{ev}_{\nu'}^{-1}(N^-\cdot w_0)$, for $\nu'<\nu+\zeta$, is zero. By Proposition \ref{conv whitt}, this is equivalent to computing the stalk of $\mathrm{L}^\mathcal{IW}_{\zeta}\star\mathcal{IC}^{\frac{\infty}{2}}_{\nu}$. Using similar arguments as in the proof of Proposition \ref{prop hecke geometric}, this stalk identifies with the extension of a finite number of certain complexes of $\mathbb{k}$-vector spaces $K_{\nu''}$, $\nu''<\nu$. Each $K_{\nu''}$ is defined as the stalk of the direct image under
		$$\overset{\leftarrow}{h'}:(\overset{\leftarrow}{h'})^{-1}({^1_{\nu'}\overline{\mathrm{Bun}}}_{N^-})\bigcap (\overset{\rightarrow}{h'})^{-1}({_{ \nu''}\overline{\mathrm{Bun}}}_{N^-})\bigcap {^{1,0}\overline{\mathcal{H}}^{\zeta}_{G,N^-}}\to {^1_{\nu'}\overline{\mathrm{Bun}}}_{N^-}$$
		of the restriction of $\mathrm{L}^\mathcal{IW}_{\zeta}\widetilde{\boxtimes}\mathcal{IC}^{\frac{\infty}{2}}_{\nu}$ to the above locally closed substack. Moreover, it follows from the constructions that the projection $\overset{\rightarrow}{h'}$ identifies that same locally closed substack with 
		$$\left(N((t))\cdot w_0(\nu'-\nu'')\cap I^-\cdot(w_0(\zeta))\right)\times^{N^-[[t]]}{_{\nu''}{\mathcal{N}}}.$$ 
		Therefore 
		\begin{align*}
		    K_{\nu''}&\simeq H^\bullet_c(N((t))\cdot w_0(\nu'-\nu'')\cap I^-\cdot(w_0(\zeta)),\mathcal{L}\otimes_\mathbb{k} \mathcal{IC}^{\frac{\infty}{2}}_{\nu}|_{{_{ \nu''}\mathrm{Bun}}_{N^-}})\\
      &\simeq H^\bullet_c(N((t))\cdot w_0(\nu'-\nu'')\cap I^-\cdot(w_0(\zeta)),\mathcal{L})\otimes_\mathbb{k} \mathcal{IC}^{\frac{\infty}{2}}_{\nu}|_{{_{ \nu''}\mathrm{Bun}}_{N^-}}
		\end{align*}
		where $\mathcal{L}$ denotes the rank-one Iwahori-Whittaker local system on $I^-\cdot( w_0(\zeta))$, and $\mathcal{IC}^{\frac{\infty}{2}}_{\nu}|_{{_{ \nu''}\mathrm{Bun}}_{N^-}}$ is meant to be the stalk of $\mathcal{IC}^{\frac{\infty}{2}}_{\nu}$ at any point of ${_{ \nu''}\mathrm{Bun}}_{N^-}$. 

  On the other hand, let us put $\mu_r:=(\ell^r-1)\zeta$ for any integer $r\geq0$. The canonical morphisms
  $$\mathcal{I}^{\mu_r}_!\to\mathcal{IC}^{\mu_r} \to\mathcal{I}^{\mu_r}_*$$
  are isomorphisms for any $r$ thanks to Lemma \ref{lem Steinberg weights}, so that (by applying the functor $\Delta^{\mathcal{IW}}_{\zeta}\star(-)$) the canonical morphisms
$$\Delta^{\mathcal{IW}}_{\mu_r+\zeta}\to\mathrm{L}_{\mu_r+\zeta}^\mathcal{IW}\to\nabla^{\mathcal{IW}}_{\mu_r+\zeta}$$
are also isomorphisms. In particular, the IC-sheaf $\mathrm{L}_{\mu_r+\zeta}^\mathcal{IW}\in\mathrm{Perv}_{\mathcal{IW}}(\mathrm{Gr})$ is clean. 

Notice that the map 
$$f:\nu''\mapsto \mu_r-(\nu-\nu'') $$
induces a bijection between the sets $\{\nu'',~\nu''\leq \nu\}$ and $\{\mu'',~\mu''\leq \mu_r\}$. 
Put $\mu'_r:=f(\nu')$. The stalk of $\mathrm{L}_{\mu_r+\zeta}^\mathcal{IW}\simeq \mathrm{L}_{\zeta}^\mathcal{IW}\star \mathcal{IC}^{\mu_r}$ at $[t^{\mu'_r}]\in Y_{\mu'_r}$ can be written as an extension of certain complexes $\widetilde{K}_{\mu''}$, for $\mu''\leq\mu_r$ (only a finite number of complexes will be non-zero). Each $\widetilde{K}_{\mu''}$ is defined as the stalk at $[t^{\mu'_r}]$ of the direct image under the multiplication morphism
\begin{equation}\label{eq multi morphism}
    m:I^-\cdot(w_0(\zeta))\times^{G((t))} \mathrm{Gr}^{\mu''}\to  \mathrm{Gr}
\end{equation}
of the restriction of $\mathrm{L}_{\zeta}^\mathcal{IW}\widetilde{\boxtimes}\mathcal{IC}^{\mu_r}$ to $I^-\cdot(w_0\zeta)\times^{G((t))} \mathrm{Gr}^{\mu''}$. By \cite[Remark 5.3.3]{ABBGM}, the preimage of $[t^{\mu'_r}]$ by the morphism \eqref{eq multi morphism} is isomorphic to $N((t))\cdot w_0(\mu'_r-\mu'')\cap I^-\cdot(w_0(\zeta))$ when $r$ is large enough (so that $\mu_r$ is dominant enough). Thus
\begin{align*}
		    \widetilde{K}_{\mu''}&\simeq H^\bullet_c(N((t))\cdot w_0(\mu'_r-\mu'')\cap I^-\cdot(w_0(\zeta)),\mathcal{L}\otimes_\mathbb{k} \mathcal{IC}^{\mu_r}|_{\mathrm{Gr}^{\mu''}})\\
      &\simeq H^\bullet_c(N((t))\cdot w_0(\mu'_r-\mu'')\cap I^-\cdot(w_0(\zeta)),\mathcal{L})\otimes_\mathbb{k} \mathcal{IC}^{\mu_r}|_{\mathrm{Gr}^{\mu''}}
		\end{align*}
 where $\mathcal{IC}^{\mu_r}|_{\mathrm{Gr}^{\mu''}}$ is meant to be the stalk of $\mathcal{IC}^{\mu_r}$ at any point of $\mathrm{Gr}^{\mu''}$. 

Since $\mu_r-f(\nu'')=\nu-\nu''$ by construction, we can take $r$ large enough so that 
$$\mathcal{IC}^{\frac{\infty}{2}}_{\nu}|_{{_{ \nu''}\mathrm{Bun}}_{N^-}} \simeq \mathcal{IC}^{\mu_r}|_{\mathrm{Gr}^{f(\nu'')}}$$
for all $\nu''<\nu$ such that $K_{\nu''}$ is non-zero by Corollary \ref{coro kostant}. Moreover $\mu'_r-f(\nu'')=\nu'-\nu''$ by definition of $f$. So we get an isomorphism
$$K_{\nu''}\simeq \widetilde{K}_{f(\nu'')}\quad\forall \nu''\leq\nu. $$

So the Euler characteristic of the stalk of $\mathrm{IC}_{\nu+\zeta}^\psi$ at any point of $\mathrm{ev}_{\nu'}^{-1}(N^-\cdot w_0)$ coincides with the Euler characteristic of the stalk of $\mathrm{L}_{\mu_r+\zeta}^\mathcal{IW}$ at $[t^{\mu'_r}]$. But $\mu'_r<\mu_r+\zeta$, so that the latter stalk is zero by cleanness. This concludes the proof.
	\end{proof}
	\subsection{Partial integrability and consequences}
	We denote by
	\begin{equation}\label{av 1}
		\mathrm{av}_{!,(N^-,\psi)},~\mathrm{av}_{*,(N^-,\psi)}:\mathrm{D}({_{ \infty}^1\overline{\mathrm{Bun}}}_{N^-})\to \mathrm{D}_{(I^-_\mathrm{u},\psi)}({_{ \infty}^1\overline{\mathrm{Bun}}}_{N^-}) 
	\end{equation}
	the functors which are respectively left and right adjoint to the forgetful functor. They are defined by
 $$\mathrm{av}_{!,(N^-,\psi)}(\mathcal{F}):=a_!(\psi^*\mathcal{L}_{\mathrm{AS}}\boxtimes\mathcal{F})[2\mathrm{dim}(N^-)],\quad  \mathrm{av}_{*,(N^-,\psi)}(\mathcal{F}):=a_*(\psi^*\mathcal{L}_{\mathrm{AS}}\boxtimes\mathcal{F}),$$
 where $a:N^-\times {_{ \infty}^1\overline{\mathrm{Bun}}}_{N^-}\to {_{ \infty}^1\overline{\mathrm{Bun}}}_{N^-}$ is the action morphism.
 Moreover we denote by
	$$\mathrm{Av}_{!,(I_\mathrm{u},\psi)},~\mathrm{Av}_{*,(I_\mathrm{u},\psi)}:\mathrm{D}({_{ \infty}^{I_\mathrm{u}}\overline{\mathrm{Bun}}}_{N^-})\to \mathrm{D}_{(I^-_\mathrm{u},\psi)}({_{ \infty}^1\overline{\mathrm{Bun}}}_{N^-}) $$
	the functors obtained by composing \eqref{av 1} with the forgetful functor $\mathrm{D}({_{ \infty}^{I_\mathrm{u}}\overline{\mathrm{Bun}}}_{N^-})\to \mathrm{D}({_{ \infty}^1\overline{\mathrm{Bun}}}_{N^-}) $. 

 Let us denote by 
 $$\mathrm{For}^I_{I_\mathrm{u}}:\mathrm{D}({_{ \infty}^{I}\overline{\mathrm{Bun}}}_{N^-})\to \mathrm{D}({_{ \infty}^{I_\mathrm{u}}\overline{\mathrm{Bun}}}_{N^-})$$
 the forgetful functor. From the definitions, we have isomorphisms of functors from $\mathrm{D}({_{ \infty}^{I}\overline{\mathrm{Bun}}}_{N^-})$ to $ \mathrm{D}_{(I^-_\mathrm{u},\psi)}({_{ \infty}^1\overline{\mathrm{Bun}}}_{N^-})$:
  \begin{equation}\label{eq averaging}
      \mathrm{Av}_{!,(I_\mathrm{u},\psi)}[-2\mathrm{dim}(N^-)]\circ\mathrm{For}^I_{I_\mathrm{u}}\simeq \Delta_\psi\star(-),\quad \mathrm{Av}_{*,(I_\mathrm{u},\psi)}\circ\mathrm{For}^I_{I_\mathrm{u}}\simeq \nabla_\psi\star(-) 
  \end{equation}
     where $\Delta_\psi\in \mathrm{Perv}_{(N^-,\psi)}(G/B)$ (resp. $\nabla_\psi$ ) is the $!$-extension (resp. $*$-extension) of the rank one Whittaker local system on the dense $N^-$-orbit in $G/B$  (and the convolution product $\star$ is defined as in Remark \ref{rem conv flag}).
	\begin{lem}{{\cite[Lemma 4.4.12]{ABBGM}}}\label{lem averaging}
		The canonical morphism of functors
		$$\mathrm{Av}_{!,(I_\mathrm{u},\psi)}[-\mathrm{dim}(N^-)]\to \mathrm{Av}_{*,(I_\mathrm{u},\psi)}[\mathrm{dim}(N^-)]$$
		is an isomorphism. Moreover, the resulting functor
		$$\mathrm{D}({_{ \infty}^{I_\mathrm{u}}\overline{\mathrm{Bun}}}_{N^-})\to  \mathrm{D}_{(I^-_\mathrm{u},\psi)}({_{ \infty}^1\overline{\mathrm{Bun}}}_{N^-})$$
		is $t$-exact.
	\end{lem}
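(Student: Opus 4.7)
The plan is to prove this by the standard partial integrability argument, exploiting the fact that an object $\mathcal{F} \in \mathrm{D}({_{\infty}^{I_\mathrm{u}}\overline{\mathrm{Bun}}}_{N^-})$ is, by definition, $N$-equivariant (through the structure of level one), whereas we are averaging against the \emph{opposite} unipotent radical $N^-$ with a non-degenerate character $\psi$. The two groups meet only in the identity, but their interaction through the $\mathrm{SL}_2$'s generated by simple root subgroups is what makes the averaging clean.

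First I would reduce to a finite-type statement by restricting to a finite-type open substack ${^1_{\leq \nu}\overline{\mathrm{Bun}}}_{N^-}$, on which only a finite-type unipotent quotient $\bar{N}$ of $N^-$ acts non-trivially. I would then use a filtration of $\bar{N}$ by normal subgroups obtained from a height ordering on positive roots, whose successive quotients are all isomorphic to $\mathbb{G}_\mathrm{a}$, and decompose each averaging functor as an iteration of one-parameter $(\mathbb{G}_\mathrm{a}, \psi_\alpha)$-averagings. The core input is the $\mathrm{SL}_2$-calculation that, for any sheaf equivariant under $N_\alpha$, the canonical morphism
$$\mathrm{av}_{!, (N_{-\alpha}, \psi_\alpha)}[-1] \longrightarrow \mathrm{av}_{*, (N_{-\alpha}, \psi_\alpha)}[1]$$
is an isomorphism (a standard Artin--Schreier/Fourier--Deligne computation on~$\mathbb{A}^1$, using that $\psi_\alpha$ is non-trivial). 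For simple roots this equivariance is immediate from $N$-equivariance; for non-simple roots I would proceed by induction on height, noting that the partially averaged sheaf acquires additional equivariance at each step because of the normality of the filtration pieces together with the fact that $[N_\alpha, N_\beta] \subset N_{\alpha+\beta}$ already lies in the already-averaged part. Summing the shifts $[\mp 1]$ over the $\mathrm{dim}(N^-)$ positive roots yields exactly the shift $[\pm \mathrm{dim}(N^-)]$ of the statement.

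The $t$-exactness follows formally once the isomorphism is established: the common value of the two functors is at once a shift of a $!$-pushforward (right $t$-exact after $[-\mathrm{dim}(N^-)]$) and a shift of a $*$-pushforward (left $t$-exact after $[\mathrm{dim}(N^-)]$) along the $N^-$-action map, which forces it into the heart of the perverse $t$-structure. I expect the main obstacle to lie in the inductive step for non-simple roots: verifying that the intermediate averaged sheaves acquire the needed equivariance is a standard but delicate root-combinatorial argument, and extra care is needed to ensure that the ind-pro structure on $N^-$ and the passage to the direct limit over $\nu \in \mathbf{Y}$ pose no compatibility issue (each step depends on a large enough truncation, but the shifts and the resulting isomorphism are uniform once we are above a threshold determined by the support of $\mathcal{F}$).
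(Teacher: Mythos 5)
Your proposal takes a genuinely different route from the paper, and while the high-level idea (root-by-root Fourier--Deligne reduction) has a valid core, as written it contains two gaps.

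The paper's proof is substantially shorter and more structural: it first observes that $\mathrm{D}({_{\infty}^{I_\mathrm{u}}\overline{\mathrm{Bun}}}_{N^-})$ is generated, as a triangulated category, by the essential image of the forgetful functor $\mathrm{For}^I_{I_\mathrm{u}}$ from the $I$-equivariant category, so it suffices to check the isomorphism on that subcategory. For such objects, the isomorphisms \eqref{eq averaging} identify the two averaging functors (up to shift) with convolution by $\Delta_\psi$ and $\nabla_\psi$ on $G/B$; the canonical comparison map then reduces to $\Delta_\psi \to \nabla_\psi$, which is an isomorphism because both sides are the simple object $\psi_{G/B}$ (cleanness of the finite Whittaker sheaf). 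The $t$-exactness is exactly the formal BBD argument you also give, and that part of your proposal is correct.

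Your approach instead tries to reprove cleanness directly on the semi-infinite flag variety by iterated one-parameter averagings. Two concrete problems: first, for a non-simple positive root $\alpha$ the character $\psi$ restricted to $N_{-\alpha}$ is \emph{trivial} ($\psi$ is a homomorphism $N^-\to\mathbb{G}_\mathrm{a}$ and therefore kills the derived subgroup $[N^-,N^-]$, which contains every non-simple root subgroup). So the Artin--Schreier/Fourier--Deligne calculation ``using that $\psi_\alpha$ is non-trivial'' does not apply to those steps; they are plain unipotent averagings, and cleanness for them requires a different mechanism (typically that once the support is confined to the relevant cell the action becomes free, so the averaging is a pullback). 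Second, the inductive claim that ``the partially averaged sheaf acquires additional equivariance'' is problematic: already after one step against $(N_{-\alpha},\psi_\alpha)$ with $\psi_\alpha\neq 0$, the output carries non-degenerate $(N_{-\alpha},\psi_\alpha)$-equivariance, which is \emph{incompatible} with $N_\alpha$-equivariance (this is the standard Whittaker-vs-unipotent incompatibility seen through the Bruhat decomposition of the relevant $\mathrm{SL}_2$). So one cannot keep feeding the $\mathrm{SL}_2$ lemma the $N_\alpha$-equivariance it requires for subsequent roots in the way you sketch. The paper's reduction to $I$-equivariant generators and to convolution on $G/B$ sidesteps both issues by packaging the entire $N^-$-averaging into a single finite-dimensional cleanness statement.
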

 We will denote by 
	$$\mathrm{Av}_{(I_\mathrm{u},\psi)}:\mathrm{D}({_{ \infty}^{I_\mathrm{u}}\overline{\mathrm{Bun}}}_{N^-})\to  \mathrm{D}_{(I^-_\mathrm{u},\psi)}({_{ \infty}^1\overline{\mathrm{Bun}}}_{N^-}) $$
	the obtained $t$-exact functor.
 \begin{proof}
     First, notice that the category $\mathrm{D}({_{ \infty}^{I_\mathrm{u}}\overline{\mathrm{Bun}}}_{N^-})$ is generated (as a triangulated category) by the essential image of the forgetful functor 
     $$\mathrm{For}^I_{I_\mathrm{u}}:\mathrm{D}({_{ \infty}^{I}\overline{\mathrm{Bun}}}_{N^-})\to \mathrm{D}({_{ \infty}^{I_\mathrm{u}}\overline{\mathrm{Bun}}}_{N^-}).$$
   Thus, the first point can be proved for the corresponding functors composed with $\mathrm{For}^I_{I_\mathrm{u}}$. In this case, we have the isomorphisms of functors \eqref{eq averaging}, so that the morphism 
     $$\mathrm{Av}_{!,(I_\mathrm{u},\psi)}[-\mathrm{dim}(N^-)]\circ\mathrm{For}^I_{I_\mathrm{u}}\to \mathrm{Av}_{*,(I_\mathrm{u},\psi)}[\mathrm{dim}(N^-)]\circ\mathrm{For}^I_{I_\mathrm{u}} $$
     is induced by the canonical morphism $\Delta_\psi\to\nabla_\psi$. But the latter morphism is an isomorphism, as both objects are easily seen to be isomorphic to the simple perverse sheaf $\psi_{G/B}$.

     Finally, we deduce that the resulting functor is $t$-exact. Indeed, $\mathrm{av}_{*,(N^-,\psi)}$ is right $t$-exact since it is defined as the $*$-pushforward along an affine morphism \cite[Th\'eor\`eme 4.1.1]{beilinson2018faisceaux}, and $\mathrm{av}_{!,(N^-,\psi)}$ is left $t$-exact since it is defined as the $!$-pushforward along an affine morphism \cite[Corollaire 4.1.2]{beilinson2018faisceaux}.
 \end{proof}

	An object of $\mathrm{Perv}_{I_\mathrm{u}}(\mathcal{F}l^{\frac{\infty}{2}})$ is called \textit{partially integrable} if all of its irreducible sub-quotients are of the form $\mathrm{IC}^{\frac{\infty}{2}}_{wt_\nu}$, with $\nu\in\mathbf{Y}$ and $w\neq w_0$. We let $^f\mathrm{Perv}_{I_\mathrm{u}}(\mathcal{F}l^{\frac{\infty}{2}})$ denote the resulting Serre quotient category obtained by quotienting $\mathrm{Perv}_{I_\mathrm{u}}(\mathcal{F}l^{\frac{\infty}{2}})$ by the sub-category of partially integrable objects. The next result parallels Proposition \ref{Prop averaging simples}.
 \begin{prop}\label{prop averaging simples 2}
      For any $wt_\nu\in W_\mathrm{ext}$ we have isomorphisms
   $$\mathrm{Av}_{I_\mathrm{u},\psi}(\nabla^{\frac{\infty}{2}}_{wt_\nu})\simeq\nabla^{\psi}_{\nu},\quad \mathrm{Av}_{I_\mathrm{u},\psi}(\Delta^{\frac{\infty}{2}}_{wt_\nu})\simeq\Delta^{\psi}_{\nu}.$$
 \end{prop}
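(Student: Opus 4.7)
The plan is to reduce the proposition to a finite-dimensional statement on $G/B^-$, in direct analogy with the proof of Proposition \ref{Prop averaging simples}. Recall the descriptions
\[
\nabla^{\frac{\infty}{2}}_{wt_\nu}\simeq(\overline{i}_\nu)_*\bigl(\tilde{\nabla}_w\widetilde{\boxtimes}\mathrm{IC}_{_\nu\overline{\mathrm{Bun}}_{N^-}}\bigr),\quad\nabla^{\psi}_\nu\simeq(\overline{i}_\nu)_*\bigl(\psi_{G/B^-}\widetilde{\boxtimes}\mathrm{IC}_{_\nu\overline{\mathrm{Bun}}_{N^-}}\bigr),
\]
and the analogous expressions with $(\overline{i}_\nu)_!$ for the $\Delta$-objects.

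First I would verify that $\mathrm{Av}_{I_\mathrm{u},\psi}$ commutes with the locally closed pushforwards $(\overline{i}_\nu)_*$ and $(\overline{i}_\nu)_!$, which is proper base change since the $N^-$-action preserves the stratification by $\nu$. Second, I would show that $\mathrm{Av}_{I_\mathrm{u},\psi}$ commutes with the twisted external product with $\mathrm{IC}_{_\nu\overline{\mathrm{Bun}}_{N^-}}$. This is the main geometric input: under the isomorphism ${^1_\nu\overline{\mathrm{Bun}}_{N^-}}\simeq G\times^{N^-}{^1_\nu\mathcal{N}}$ from \eqref{torsor twisted product}, the $N^-$-action used to define the averaging is left multiplication on the $G$-factor only, so that
\[
\mathrm{Av}_{I_\mathrm{u},\psi}\bigl(\mathcal{F}\widetilde{\boxtimes}\mathrm{IC}_{_\nu\overline{\mathrm{Bun}}_{N^-}}\bigr)\simeq \mathrm{Av}_G(\mathcal{F})\widetilde{\boxtimes}\mathrm{IC}_{_\nu\overline{\mathrm{Bun}}_{N^-}},
\]
where $\mathrm{Av}_G:\mathrm{Perv}_N(G/B^-)\to\mathrm{Perv}_{(N^-,\psi)}(G/B^-)$ is the finite-dimensional Whittaker averaging functor, defined by the same canonical-shift recipe as in Lemma \ref{lem averaging} and $t$-exact for the same reason.

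These two commutations reduce the proposition to the classical finite-dimensional identities
\[
\mathrm{Av}_G(\tilde{\nabla}_w)\simeq\psi_{G/B^-}\simeq\mathrm{Av}_G(\tilde{\Delta}_w)\qquad\forall\, w\in W.
\]
The category $\mathrm{Perv}_{(N^-,\psi)}(G/B^-)$ is semi-simple with a unique simple object $\psi_{G/B^-}$, because only the open $N^-$-orbit on $G/B^-$ supports a non-trivial Whittaker local system and cleanness of the resulting sheaf forces $!$-, $*$- and $\mathrm{IC}$-extensions to coincide. Since $\mathrm{Av}_G$ is $t$-exact, both $\mathrm{Av}_G(\tilde{\nabla}_w)$ and $\mathrm{Av}_G(\tilde{\Delta}_w)$ are therefore finite direct sums of copies of $\psi_{G/B^-}$. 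To pin down the multiplicity as exactly one, I would compute the stalk of $\mathrm{Av}_G(\tilde{\Delta}_w)$ at a point $p$ of the open $N^-$-orbit: the fiber of the action map $a:N^-\times G/B^-\to G/B^-$ over $p$ identifies with the Bruhat intersection $C_w\cap N^-\cdot p$, and the resulting computation reduces to a standard Whittaker integral whose evaluation equals $1$. Verdier duality then interchanges the $\nabla$- and $\Delta$-statements.

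The main obstacle will be the careful bookkeeping of perverse shifts in the commutation of $\mathrm{Av}_{I_\mathrm{u},\psi}$ with the twisted external product, together with the multiplicity-one verification in the finite-dimensional step; both are direct transcriptions to the present semi-infinite setting of the corresponding steps in the proof of \cite[Lemma 6.3]{achar2022geometric}.
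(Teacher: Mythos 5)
Your proposal is correct and follows essentially the same route as the paper: reduce the averaging to the finite-dimensional flag variety by commuting it past $(\overline{i}_\nu)_?$ and the twisted external product with $\mathrm{IC}_{_\nu\overline{\mathrm{Bun}}_{N^-}}$, then invoke the cleanness/uniqueness of the Whittaker perverse sheaf on $G/B^-$. The only cosmetic difference is that the paper packages the averaging as convolution $\nabla_\psi\star(-)$ via the $I$-equivariant lifts of $\nabla^{\frac{\infty}{2}}_{wt_\nu}$ and $\Delta^{\frac{\infty}{2}}_{wt_\nu}$ (equation \eqref{eq averaging}) and cites the finite-dimensional identity $\nabla_\psi\star\tilde{\nabla}_w\simeq\psi_{G/B}$ from the literature, whereas you work with the action-map description directly and sketch a proof of that identity.
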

 \begin{proof}
 By \eqref{eq averaging}, we have 
 $$\mathrm{Av}_{I_\mathrm{u},\psi}(\nabla^{\frac{\infty}{2}}_{wt_\nu})\simeq \nabla_\psi\star \nabla^{\frac{\infty}{2}}_{wt_\nu},$$
 where $\nabla^{\frac{\infty}{2}}_{wt_\nu}$ is seen as an object of $\mathrm{D}({_{ \infty}^{I}\overline{\mathrm{Bun}}}_{N^-})$. Recall that, by definition 
 $$\nabla^{\frac{\infty}{2}}_{wt_\nu}=(\overline{i}_\nu)_{*}\left(\tilde{\nabla}_w\widetilde{\boxtimes}\mathrm{IC}_{{_\nu\overline{\mathrm{Bun}}}_{N^-}}\right).$$
 So one can check that 
 $$\nabla_\psi\star \nabla^{\frac{\infty}{2}}_{wt_\nu}\simeq (\overline{i}_\nu)_{*}\left((\nabla_\psi\star\tilde{\nabla}_w)\widetilde{\boxtimes}\mathrm{IC}_{{_\nu\overline{\mathrm{Bun}}}_{N^-}}\right). $$
 As in \cite[Lemma 4.4.8]{MR3003920}, we have an isomorphism  $\nabla_\psi\star\tilde{\nabla}_w\simeq \psi_{G/B}$, concluding the proof of the first isomorphism. The proof of the second one is similar.
 \end{proof}
 The proof of the next result is done exactly as in \cite[Proposition 4.4.13]{ABBGM}.
	\begin{prop}\label{prop functor average}
		\begin{enumerate}
			\item The functor 
			$$\mathrm{Av}_{(I_\mathrm{u},\psi)}:\mathrm{Perv}_{I_\mathrm{u}}(\mathcal{F}l^{\frac{\infty}{2}})\to  \mathrm{Perv}_{(I^-_\mathrm{u},\psi)}(\mathcal{F}l^{\frac{\infty}{2}})  $$
			factors through $^f\mathrm{Perv}_{I_\mathrm{u}}(\mathcal{F}l^{\frac{\infty}{2}})$.
			\item The resulting functor 
			$$^f\mathrm{Perv}_{I_\mathrm{u}}(\mathcal{F}l^{\frac{\infty}{2}})\to  \mathrm{Perv}_{(I^-_\mathrm{u},\psi)}(\mathcal{F}l^{\frac{\infty}{2}}) $$
			is exact and faithful.
		\end{enumerate}
	\end{prop}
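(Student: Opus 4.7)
The plan is to establish the following dichotomy: for any simple object $\mathrm{IC}_x^{\frac{\infty}{2}}$ with $x = wt_\nu \in W_\mathrm{ext}$, where $w \in W$ and $\nu \in \mathbf{Y}$,
$$\mathrm{Av}_{(I_\mathrm{u},\psi)}(\mathrm{IC}_x^{\frac{\infty}{2}}) \simeq \begin{cases} 0 & \text{if } w \neq w_0, \\ \mathrm{IC}_\nu^\psi & \text{if } w = w_0. \end{cases}$$
From this, both parts of the proposition follow formally. For part (1), the $t$-exactness of $\mathrm{Av}_{(I_\mathrm{u},\psi)}$ from Lemma \ref{lem averaging} combined with the Serre property of partially integrable objects reduces the vanishing to the simple case covered by the dichotomy. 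For part (2), exactness descends to the Serre quotient automatically, and faithfulness amounts to the kernel of $\mathrm{Av}_{(I_\mathrm{u},\psi)}$ on finite-length objects coinciding exactly with the partially integrable subcategory: by Jordan--H\"older and exactness, any $\mathcal{F}$ with a non-partially-integrable simple subquotient $\mathrm{IC}_{w_0 t_\nu}^{\frac{\infty}{2}}$ inherits a nonzero composition factor $\mathrm{IC}_\nu^\psi$ in $\mathrm{Av}_{(I_\mathrm{u},\psi)}(\mathcal{F})$.

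To establish the dichotomy, I would start from Proposition \ref{prop averaging simples 2}, which supplies $\mathrm{Av}_{(I_\mathrm{u},\psi)}(\Delta_x^{\frac{\infty}{2}}) \simeq \Delta_\nu^\psi$ and $\mathrm{Av}_{(I_\mathrm{u},\psi)}(\nabla_x^{\frac{\infty}{2}}) \simeq \nabla_\nu^\psi$, both canonically isomorphic to the simple object $\mathrm{IC}_\nu^\psi$ by Theorem \ref{prop clean IC}. Applying the exact functor $\mathrm{Av}_{(I_\mathrm{u},\psi)}$ to the canonical factorization $\Delta_x^{\frac{\infty}{2}} \twoheadrightarrow \mathrm{IC}_x^{\frac{\infty}{2}} \hookrightarrow \nabla_x^{\frac{\infty}{2}}$ produces
$$\mathrm{IC}_\nu^\psi \xtwoheadrightarrow{\alpha} \mathrm{Av}_{(I_\mathrm{u},\psi)}(\mathrm{IC}_x^{\frac{\infty}{2}}) \xhookrightarrow{\beta} \mathrm{IC}_\nu^\psi,$$
whose composition is the image of the canonical morphism $\Delta_x^{\frac{\infty}{2}} \to \nabla_x^{\frac{\infty}{2}}$ under the averaging. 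Simplicity of $\mathrm{IC}_\nu^\psi$ forces $\mathrm{Av}_{(I_\mathrm{u},\psi)}(\mathrm{IC}_x^{\frac{\infty}{2}})$ to be either zero (when $\beta \circ \alpha = 0$) or $\mathrm{IC}_\nu^\psi$ (when $\beta \circ \alpha$ is an iso).

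The remaining task is to decide this composition. Unwinding the definitions
$$\Delta_x^{\frac{\infty}{2}} = (\overline{i}_\nu)_!\bigl(\tilde{\Delta}_w \widetilde{\boxtimes} \mathrm{IC}_{_\nu\overline{\mathrm{Bun}}_{N^-}}\bigr), \quad \mathrm{IC}_x^{\frac{\infty}{2}} = (\overline{i}_\nu)_{!*}\bigl(\mathrm{IC}_{w,G/B^-} \widetilde{\boxtimes} \mathrm{IC}_{_\nu\overline{\mathrm{Bun}}_{N^-}}\bigr), \quad \nabla_x^{\frac{\infty}{2}} = (\overline{i}_\nu)_*\bigl(\tilde{\nabla}_w \widetilde{\boxtimes} \mathrm{IC}_{_\nu\overline{\mathrm{Bun}}_{N^-}}\bigr),$$
and exploiting the $N^-$-torsor structure of \eqref{torsor twisted product} together with compatibility of $\mathrm{Av}_{(I_\mathrm{u},\psi)}$ with smooth pullback along $\overline{\mathrm{ev}}_\nu$ and proper pushforward, the question reduces to whether the $(N^-,\psi)$-averaging of the canonical morphism $\tilde{\Delta}_w \to \tilde{\nabla}_w$ on $G/B^-$ vanishes or is an isomorphism. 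Classical Whittaker genericity settles this: for $w \neq w_0$, the stabilizer $N^- \cap wBw^{-1}$ of $wB$ contains the root subgroup $U_{-\alpha_i}$ for some simple root $\alpha_i$, on which $\psi$ is non-trivial by construction, forcing $\mathrm{Av}_{(N^-,\psi)}(\mathrm{IC}_{w, G/B^-}) = 0$; for $w = w_0$, the orbit $N^- \cdot w_0 B^-/B^-$ is the open cell and supports the Whittaker IC $\psi_{G/B^-}$, yielding an isomorphism.

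The main technical obstacle will be a rigorous justification of the reduction to $G/B^-$ across the infinitely many Schubert strata of $^{I_\mathrm{u}}_\infty \overline{\mathrm{Bun}}_{N^-}$, and in particular the verification that the intermediate extension $(\overline{i}_\nu)_{!*}$ does not spoil the sandwich identification. Fortunately, one only needs to test whether a map between two copies of the simple $\mathrm{IC}_\nu^\psi$ is zero, which can be checked at a single geometric point of $_\nu\mathrm{Bun}_{N^-}$, where the averaging computation reduces to the finite-dimensional Bruhat-cell calculation on $G/B^-$.
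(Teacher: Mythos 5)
Your overall architecture is sound and is essentially the expected one: the sandwich $\mathrm{Av}_{(I_\mathrm{u},\psi)}(\Delta_x^{\frac{\infty}{2}})\twoheadrightarrow \mathrm{Av}_{(I_\mathrm{u},\psi)}(\mathrm{IC}_x^{\frac{\infty}{2}})\hookrightarrow \mathrm{Av}_{(I_\mathrm{u},\psi)}(\nabla_x^{\frac{\infty}{2}})$ obtained from Proposition \ref{prop averaging simples 2}, Theorem \ref{prop clean IC} and $t$-exactness reduces everything to a zero-versus-isomorphism dichotomy, and your formal deductions of (1) and (2) from that dichotomy (exactness $+$ Jordan--H\"older, and ``exact and kills no nonzero object $\Rightarrow$ faithful'') are correct, granting that objects of $\mathrm{Perv}_{I_\mathrm{u}}(\mathcal{F}l^{\frac{\infty}{2}})$ have finite length.

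The genuine gap is in the only non-formal step, the vanishing $\mathrm{Av}_{(N^-,\psi)}(\mathrm{IC}_{w,G/B^-})=0$ for $w\neq w_0$. The argument you give -- that the stabilizer in $N^-$ of the point $wB^-$ contains a simple negative root subgroup on which $\psi$ is nontrivial -- proves a different statement, namely that the non-open $N^-$-orbits of $G/B^-$ support no nonzero $(N^-,\psi)$-equivariant local system. This constrains where a $(N^-,\psi)$-equivariant complex can be nonzero, but it does not force the averaging of $\mathrm{IC}_{w,G/B^-}$ to vanish: the averaging spreads the support over $N^-$-orbits, and since the closed point $w_0B^-$ lies in $\overline{N\cdot w\cdot B^-}$ for every $w$, the $N^-$-saturation of the support of $\mathrm{IC}_{w,G/B^-}$ always contains the open $N^-$-orbit $N^-\cdot w_0B^-$, which does support the Whittaker local system. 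So the support/genericity argument alone cannot rule out that $\mathrm{Av}_{(N^-,\psi)}(\mathrm{IC}_{w,G/B^-})$ is a nonzero multiple of $\psi_{G/B^-}$. The correct (and standard) argument is the one the paper itself uses in the proof of Lemma \ref{lem indecomposable}: for $w\neq w_0$ the simple object $\mathrm{IC}_{w,G/B^-}$ is equivariant under a minimal parabolic $P_s$ for some simple reflection $s$ (equivalently, $\mathrm{IC}_x^{\frac{\infty}{2}}$ is partially integrable in the precise sense of being $P_s$-equivariant), and averaging against $\psi$ kills any $U_{-\alpha_s}$-equivariant object because $\psi|_{U_{-\alpha_s}}$ is a nontrivial additive character and $H^\bullet_c(\mathbb{G}_\mathrm{a},\mathcal{L}_{\mathrm{AS}})=0$. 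With that substitution (and noting that your nonvanishing claim for $w=w_0$ is fine, since the stabilizer of $w_0B^-$ in $N^-$ is trivial so the averaging of the skyscraper at $w_0B^-$ is exactly $\Delta_\psi\simeq\psi_{G/B^-}$), your proof goes through.
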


	\section{The convolution functor}\label{section equivalence}
	\subsection{Construction of the functor}\label{Construction of the functor and first properties}
	Here we follow \cite[§6.1.1]{ABBGM}. We recall the construction of a functor 
	$$\mathrm{Conv}^{\mathrm{Hecke}}:\mathrm{Mod}^\mathbf{Y}_{I_\mathrm{u}}(\mathcal{R})\to\mathrm{Ind}(\mathrm{Perv}_{{I_\mathrm{u}}}(\mathcal{F}l^{\frac{\infty}{2}})), $$
	which will (conjecturally) restrict to an equivalence on the categories of finite length objects. Let $\mathcal{S}\in \mathrm{Mod}^\mathbf{Y}_{I_\mathrm{u}}(\mathcal{R})$. For any $\lambda,\mu\in\mathbf{Y}$, consider the morphisms 
	\begin{align*}
	    &\alpha_{\lambda,\mu}:\mathcal{S}_\lambda\star\mathcal{R}_{-\mu}\star\mathcal{IC}^{\frac{\infty}{2}}_{\mu-\lambda}\to  \mathcal{S}_{\lambda-\mu}\star\mathcal{IC}^{\frac{\infty}{2}}_{\mu-\lambda}\\
     &\beta_{\lambda,\mu}:\mathcal{S}_\lambda\star\mathcal{R}_{-\mu}\star\mathcal{IC}^{\frac{\infty}{2}}_{\mu-\lambda}\to  \mathcal{S}_{\lambda}\star\mathcal{IC}^{\frac{\infty}{2}}_{-\lambda},
	\end{align*}
	induced respectively by the right $\mathcal{R}$-module structure on $\mathcal{S}$ and the left $\mathcal{R}$-module structure on $\mathcal{IC}^{\frac{\infty}{2}}$ (cf. §\ref{section Action of the regular perverse sheaf}). 
 
 We then define $\mathrm{Conv}^{\mathrm{Hecke}}(\mathcal{S})$ as the coequalizer of the following diagram:
	$$  \bigoplus_{\lambda,\mu}\mathcal{S}_\lambda\star\mathcal{R}_{-\mu}\star\mathcal{IC}^{\frac{\infty}{2}}_{\mu-\lambda}\rightrightarrows\bigoplus_\nu \mathcal{S}_{\nu}\star\mathcal{IC}^{\frac{\infty}{2}}_{-\nu},$$
	where the two arrows correspond to $\alpha_{\lambda,\mu}$ and $\beta_{\lambda,\mu}$. 
	
	In other words, the object $\mathrm{Conv}^{\mathrm{Hecke}}(\mathcal{S})$ represents the covariant functor which, to any $\mathcal{F}\in \mathrm{Ind}(\mathrm{Perv}_{{I_\mathrm{u}}}(\mathcal{F}l^{\frac{\infty}{2}}))$ , associates the set of collections of morphisms $(\mathcal{S}_{\nu}\star\mathcal{IC}^{\frac{\infty}{2}}_{-\nu}\to \mathcal{F})_{\nu\in\mathbf{Y}}$ such that, for any $\lambda,\mu\in\mathbf{Y}$, the following diagram commutes:
	\begin{equation}\label{diagram coeq}
		\mathcal{S}_\lambda\star\mathcal{R}_{-\mu}\star\mathcal{IC}^{\frac{\infty}{2}}_{\mu-\lambda}\rightrightarrows\mathcal{F}, 
	\end{equation}
	where the top (resp. bottom) arrow is the composition of $\alpha_{\lambda,\mu}$ (resp. $\beta_{\lambda,\mu}$) with the morphism $ \mathcal{S}_{\lambda-\mu}\star\mathcal{IC}^{\frac{\infty}{2}}_{\mu-\lambda}\to \mathcal{F}$ (resp. $ \mathcal{S}_{\lambda}\star\mathcal{IC}^{\frac{\infty}{2}}_{-\lambda}\to \mathcal{F}$).
\subsection{First properties}\label{section first properties}
 Recall that $\Phi:\mathrm{Perv}_{I_\mathrm{u}}(\mathrm{Gr})\to \mathrm{mod}^\mathbf{Y}_{I_\mathrm{u}}(\mathcal{R})$ was defined in §\ref{section Modules over the regular perverse sheaf}.

	\begin{prop}{{\cite[Proposition 3.1.2]{ABBGM}}}\label{prop can isom}
		For any $\mathcal{S}\in\mathrm{Perv}_{I_\mathrm{u}}(\mathrm{Gr})$ and $\mu\in\mathbf{Y}$, we have a canonical isomorphism
		$$\mathrm{Conv}^{\mathrm{Hecke}}(\Phi(\mathcal{S})\langle\mu\rangle)\simeq \mathcal{S}\star\mathcal{IC}^{\frac{\infty}{2}}_{-\mu}. $$
	\end{prop}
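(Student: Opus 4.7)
The plan is to show that both $\mathrm{Conv}^\mathrm{Hecke}(\Phi(\mathcal{S})\langle\mu\rangle)$ and $\mathcal{S}\star\mathcal{IC}^{\frac{\infty}{2}}_{-\mu}$ represent the same functor on $\mathrm{Ind}(\mathrm{Perv}_{I_\mathrm{u}}(\mathcal{F}l^{\frac{\infty}{2}}))$, using the left $\mathcal{R}$-module structure on $\mathcal{IC}^{\frac{\infty}{2}}$ supplied by Proposition~\ref{prop action IC} together with the unit $\eta:\mathcal{IC}^0\to\mathcal{R}_0$ of the algebra $\mathcal{R}$.

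First I would construct the forward morphism. Since $(\Phi(\mathcal{S})\langle\mu\rangle)_\nu=\mathcal{S}\star\mathcal{R}_{\nu-\mu}$, the action morphism $H_{\nu-\mu,-\nu}$ of Proposition~\ref{prop action IC} yields, for each $\nu\in\mathbf{Y}$,
$$\mathcal{S}\star\mathcal{R}_{\nu-\mu}\star\mathcal{IC}^{\frac{\infty}{2}}_{-\nu}\xrightarrow{\mathrm{id}\star H_{\nu-\mu,-\nu}}\mathcal{S}\star\mathcal{IC}^{\frac{\infty}{2}}_{-\mu}.$$
The collection of these morphisms coequalizes $\alpha_{\lambda,\mu'}$ and $\beta_{\lambda,\mu'}$ acting on $\mathcal{S}\star\mathcal{R}_{\lambda-\mu}\star\mathcal{R}_{-\mu'}\star\mathcal{IC}^{\frac{\infty}{2}}_{\mu'-\lambda}$: the two resulting compositions differ by either first using the multiplication $\mathcal{R}\star\mathcal{R}\to\mathcal{R}$ followed by one application of the left action on $\mathcal{IC}^{\frac{\infty}{2}}$, or by two successive applications of the left action; these agree by the associativity axiom built into the $\mathcal{R}$-module structure on $\mathcal{IC}^{\frac{\infty}{2}}$. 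This produces a canonical morphism
$$\varphi:\mathrm{Conv}^\mathrm{Hecke}(\Phi(\mathcal{S})\langle\mu\rangle)\to \mathcal{S}\star\mathcal{IC}^{\frac{\infty}{2}}_{-\mu}.$$

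To see that $\varphi$ is an isomorphism, I would verify, via the universal property of the coequalizer, that for any $\mathcal{F}\in\mathrm{Ind}(\mathrm{Perv}_{I_\mathrm{u}}(\mathcal{F}l^{\frac{\infty}{2}}))$ pullback along $\varphi$ is a bijection between $\mathrm{Hom}(\mathcal{S}\star\mathcal{IC}^{\frac{\infty}{2}}_{-\mu},\mathcal{F})$ and the set of coherent families $(f_\nu)_{\nu\in\mathbf{Y}}$ of morphisms $f_\nu:\mathcal{S}\star\mathcal{R}_{\nu-\mu}\star\mathcal{IC}^{\frac{\infty}{2}}_{-\nu}\to\mathcal{F}$. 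The candidate inverse sends such a family to $f:=f_\mu\circ(\mathrm{id}\star\eta\star\mathrm{id})$; injectivity of pullback along $\varphi$ is immediate from the unit identity $H_{0,-\mu}\circ(\eta\star\mathrm{id})=\mathrm{id}_{\mathcal{IC}^{\frac{\infty}{2}}_{-\mu}}$.

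The main obstacle is surjectivity, namely showing that $f$ recovers every $f_\nu$ via precomposition with $\mathrm{id}\star H_{\nu-\mu,-\nu}$. For this I would invoke the coequalizer relation coming from the index pair $(\lambda,\mu')=(\mu,\mu-\nu)$, which asserts that
$$f_\nu\circ(\mathrm{id}_\mathcal{S}\star\mathrm{mult}_{\mathcal{R}_0,\mathcal{R}_{\nu-\mu}}\star\mathrm{id})\;=\;f_\mu\circ(\mathrm{id}_\mathcal{S}\star\mathrm{id}_{\mathcal{R}_0}\star H_{\nu-\mu,-\nu})$$
as morphisms out of $\mathcal{S}\star\mathcal{R}_0\star\mathcal{R}_{\nu-\mu}\star\mathcal{IC}^{\frac{\infty}{2}}_{-\nu}$, and then precompose both sides with $\mathrm{id}\star\eta\star\mathrm{id}$. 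The unit axiom for $\mathcal{R}$ collapses the left-hand side to $f_\nu$, while the commutativity of $\eta$-insertion with $H$-application (they act on disjoint tensor factors) turns the right-hand side into $f\circ(\mathrm{id}\star H_{\nu-\mu,-\nu})$, as required. The argument is bookkeeping rather than new geometric input, so the proof parallels that of \cite[Proposition~3.1.2]{ABBGM}.
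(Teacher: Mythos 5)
Your argument is correct and is essentially the paper's own proof: both sides establish the same Yoneda-type bijection between $\mathrm{Hom}(\mathrm{Conv}^{\mathrm{Hecke}}(\Phi(\mathcal{S})\langle\mu\rangle),\mathcal{F})$ and $\mathrm{Hom}(\mathcal{S}\star\mathcal{IC}^{\frac{\infty}{2}}_{-\mu},\mathcal{F})$, with the forward map given by restriction to the degree-$\mu$ component followed by insertion of the unit $\eta$, and the inverse given by the family $\mathrm{id}\star H_{\nu-\mu,-\nu}$. Your explicit verification of surjectivity via the coequalizer relation for the index pair $(\mu,\mu-\nu)$ supplies the detail the paper dismisses as ``readily checked''.
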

	\begin{proof}
		We first construct a morphism of functors 
		\begin{equation}\label{functor hecke}
			\mathrm{Hom}_{\mathrm{Ind}(\mathrm{Perv}_{{I_\mathrm{u}}}(\mathcal{F}l^{\frac{\infty}{2}}))}\left(\mathrm{Conv}^{\mathrm{Hecke}}(\Phi(\mathcal{S})\langle\mu\rangle),-\right)\to  \mathrm{Hom}_{\mathrm{Ind}(\mathrm{Perv}_{{I_\mathrm{u}}}(\mathcal{F}l^{\frac{\infty}{2}}))}\left(\mathcal{S}\star\mathcal{IC}^{\frac{\infty}{2}}_{-\mu},-\right).
		\end{equation}
		For any morphism $\mathrm{Conv}^{\mathrm{Hecke}}(\Phi(\mathcal{S})\langle\mu\rangle)\to \mathcal{F}$, we take the component 
		$$(\Phi(\mathcal{S})\langle\mu\rangle)_{\mu}\star \mathcal{IC}^{\frac{\infty}{2}}_{-\mu}\to \mathcal{F}.$$
		Composing with the canonical morphism $\mathcal{S}\to\mathcal{S}\star\mathcal{R}_0=(\Phi(\mathcal{S})\langle\mu\rangle)_{\mu}$, we obtain the desired map $\mathcal{S}\star \mathcal{IC}^{\frac{\infty}{2}}_{-\mu}\to \mathcal{F}$.
		
		Next, we construct an inverse of $\eqref{functor hecke}$. Consider a morphism $\varphi:\mathcal{S}\star \mathcal{IC}^{\frac{\infty}{2}}_{-\mu}\to \mathcal{F}$. For any $\lambda\in\mathbf{Y}$, we construct a morphism $(\Phi(\mathcal{S})\langle\mu\rangle)_{\lambda}\star \mathcal{IC}^{\frac{\infty}{2}}_{-\lambda}\to \mathcal{F}$ in the following way:
		$$(\Phi(\mathcal{S})\langle\mu\rangle)_{\lambda}\star \mathcal{IC}^{\frac{\infty}{2}}_{-\lambda}=\mathcal{S}\star\mathcal{R}_{\lambda-\mu}\star \mathcal{IC}^{\frac{\infty}{2}}_{-\lambda}\xrightarrow{\mathrm{id}\star H_{\lambda-\mu,-\lambda}}\mathcal{S}\star\mathcal{IC}^{\frac{\infty}{2}}_{-\mu}\xrightarrow{\varphi} \mathcal{F}. $$
		
		The fact that this collection of morphisms makes the diagram \eqref{diagram coeq} commute follows from the associativity property of the action of $\mathcal{R}$ on $\mathcal{IC}^{\frac{\infty}{2}}$ (Proposition \ref{prop action IC}). The fact that we indeed obtain an inverse of \eqref{functor hecke} is readily checked.
	\end{proof}
	\begin{coro}\label{coro simples}
		For any $w\in W_\mathrm{ext}$, we have an isomorphism
		$$\mathrm{Conv}^{\mathrm{Hecke}}(\widehat{\mathcal{L}}_w)\simeq\mathrm{IC}^{\frac{\infty}{2}}_w. $$
	\end{coro}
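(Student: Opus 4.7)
The plan is to combine Proposition \ref{prop can isom} with Theorem \ref{thm conv simples} in essentially one step. Given $w \in W_\mathrm{ext}$, I would begin by using the definition of $\widehat{\mathcal{L}}_w$ recalled before Theorem \ref{thm simple g1t}: choose a decomposition $w = yt_\lambda$ with $y \in W^\mathrm{res}_\mathrm{ext}$ and $\lambda \in \mathbf{Y}$, so that
$$\widehat{\mathcal{L}}_w = \Phi(\mathrm{L}_y)\langle -\lambda\rangle.$$
The choice of such a pair $(y,\lambda)$ is non-unique, but as noted after the definition, the resulting object is independent of this choice up to canonical isomorphism, so I may fix one throughout.

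Next I would apply Proposition \ref{prop can isom} with $\mathcal{S} = \mathrm{L}_y$ and $\mu = -\lambda$, which yields a canonical isomorphism
$$\mathrm{Conv}^{\mathrm{Hecke}}(\widehat{\mathcal{L}}_w) = \mathrm{Conv}^{\mathrm{Hecke}}(\Phi(\mathrm{L}_y)\langle -\lambda\rangle) \simeq \mathrm{L}_y \star \mathcal{IC}^{\frac{\infty}{2}}_{\lambda}.$$
Since $y \in W^\mathrm{res}_\mathrm{ext}$, Theorem \ref{thm conv simples} then identifies the right-hand side with $\mathrm{IC}^{\frac{\infty}{2}}_{yt_\lambda} = \mathrm{IC}^{\frac{\infty}{2}}_{w}$, which produces the desired isomorphism.

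There is essentially no obstacle here, since all the hard work has been done in the two results being combined (the semi-simplicity of $\mathrm{Perv}_{G[[t]]}(\mathcal{F}l^{\frac{\infty}{2}})$ from Theorem \ref{thm ss}, the Hecke eigensheaf property of Proposition \ref{prop hecke geometric}, and the smallness argument underlying Theorem \ref{thm conv simples}). The only minor point to check is that the isomorphism is independent of the chosen factorization $w = yt_\lambda$, but this reduces to the well-definedness of $\widehat{\mathcal{L}}_w$ already recorded in \cite[§5.5]{achar2022geometric}, combined with the functoriality of $\mathrm{Conv}^{\mathrm{Hecke}}$.
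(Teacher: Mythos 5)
Your proof is correct and follows exactly the same route as the paper's: decompose $w = yt_\lambda$ with $y \in W^{\mathrm{res}}_{\mathrm{ext}}$, apply Proposition \ref{prop can isom} to reduce to $\mathrm{L}_y \star \mathcal{IC}^{\frac{\infty}{2}}_{\lambda}$, then invoke Theorem \ref{thm conv simples}. The extra remark about independence of the factorization is harmless but not needed, since the statement only asserts the existence of an isomorphism.
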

	\begin{proof}
		Let $w\in W_\mathrm{ext}$, and write it as $w=yt_\lambda$ for $(y,\lambda)\in W^{\mathrm{res}}_\mathrm{ext}\times\mathbf{Y}$. By construction, $\widehat{\mathcal{L}}_w=\Phi(\mathrm{L}_y)\langle-\lambda\rangle$, so that Proposition \ref{prop can isom} yields an isomorphism
		$$\mathrm{Conv}^{\mathrm{Hecke}}(\widehat{\mathcal{L}}_w)\simeq \mathrm{L}_y\star \mathcal{IC}^{\frac{\infty}{2}}_{\lambda}. $$
		By Theorem \ref{thm conv simples}, the right-hand side is isomorphic to $\mathrm{IC}^{\frac{\infty}{2}}_{yt_{\lambda}}$.
	\end{proof}
 	\begin{prop}{{\cite[Proposition 6.2.2]{ABBGM}}}\label{lem exact}
		The functor $\mathrm{Conv}^{\mathrm{Hecke}}$ is exact.
	\end{prop}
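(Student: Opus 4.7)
The functor $\mathrm{Conv}^{\mathrm{Hecke}}(\mathcal{S})$ is by construction the cokernel in $\mathrm{Ind}(\mathrm{Perv}_{I_\mathrm{u}}(\mathcal{F}l^{\frac{\infty}{2}}))$ of a natural transformation between the two functors
\begin{equation*}
A(\mathcal{S}):=\bigoplus_{\lambda,\mu}\mathcal{S}_\lambda\star\mathcal{R}_{-\mu}\star\mathcal{IC}^{\frac{\infty}{2}}_{\mu-\lambda},\qquad B(\mathcal{S}):=\bigoplus_\nu\mathcal{S}_{\nu}\star\mathcal{IC}^{\frac{\infty}{2}}_{-\nu}.
\end{equation*}
My plan is to first deduce right exactness formally from this presentation, and then to establish left exactness by reducing, via a free resolution argument and the snake lemma, to the case of free graded $\mathcal{R}$-modules where the statement follows from Proposition \ref{prop can isom}.

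\textbf{Right exactness.} The assignment $\mathcal{S}\mapsto\mathcal{S}_\lambda$ (taking a graded component) is exact on $\mathbf{Y}$-graded ind-objects. For each fixed pair $(\lambda,\mu)$, resp.\ each fixed $\nu$, the convolution with the ind-object $\mathcal{R}_{-\mu}\star\mathcal{IC}^{\frac{\infty}{2}}_{\mu-\lambda}$, resp.\ $\mathcal{IC}^{\frac{\infty}{2}}_{-\nu}$, is $t$-exact by Theorem \ref{thm exact}. Since direct sums are exact in the ind-category, both $A$ and $B$ are exact functors of $\mathcal{S}$, so $\mathrm{Conv}^{\mathrm{Hecke}}$ is right exact as the cokernel of a natural transformation between exact functors.

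\textbf{Exactness on free modules and snake lemma reduction.} By Proposition \ref{prop can isom} we have the canonical isomorphism
\begin{equation*}
\mathrm{Conv}^{\mathrm{Hecke}}(\Phi(\mathcal{F})\langle\mu\rangle)\simeq\mathcal{F}\star\mathcal{IC}^{\frac{\infty}{2}}_{-\mu},
\end{equation*}
which, combined with Theorem \ref{thm exact} and the exactness of $\Phi$, shows that $\mathrm{Conv}^{\mathrm{Hecke}}$ is exact on the full subcategory of free graded $\mathcal{R}$-modules. For a general short exact sequence $0\to\mathcal{S}'\to\mathcal{S}\to\mathcal{S}''\to 0$ I would choose compatible two-term free presentations of $\mathcal{S}'$, $\mathcal{S}$, $\mathcal{S}''$ and apply the snake lemma to the $3\times 3$ diagram obtained after applying $\mathrm{Conv}^{\mathrm{Hecke}}$. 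Right exactness already yields
\begin{equation*}
\mathrm{Conv}^{\mathrm{Hecke}}(\mathcal{S}')\to\mathrm{Conv}^{\mathrm{Hecke}}(\mathcal{S})\to\mathrm{Conv}^{\mathrm{Hecke}}(\mathcal{S}'')\to 0,
\end{equation*}
so what remains is to show that the connecting morphism from the snake lemma vanishes, i.e.\ that the first left derived functor $L_1\mathrm{Conv}^{\mathrm{Hecke}}$ is identically zero.

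\textbf{Main obstacle.} The real content of the proof is this vanishing of $L_1\mathrm{Conv}^{\mathrm{Hecke}}$. The conceptually cleanest route is to interpret $\mathrm{Conv}^{\mathrm{Hecke}}(\mathcal{S})$ as a relative tensor product $\mathcal{S}\otimes_\mathcal{R}\mathcal{IC}^{\frac{\infty}{2}}$, where the left $\mathcal{R}$-module structure on $\mathcal{IC}^{\frac{\infty}{2}}$ is the one supplied by Proposition \ref{prop action IC}, and to prove that $\mathcal{IC}^{\frac{\infty}{2}}$ is flat as a left $\mathcal{R}$-module. Proposition \ref{prop hecke geometric} computes $\mathcal{R}_\mu\star\mathcal{IC}^{\frac{\infty}{2}}_\lambda$ as a direct sum of the $\mathcal{IC}^{\frac{\infty}{2}}_{\lambda+\mu'}$ weighted by the weight spaces of $\mathrm{Sat}(\mathrm{sw}^*\mathcal{R}_\mu)\simeq\mathcal{O}(\check{\mathbf{G}}^{(1)})_\mu$ (Proposition \ref{Proposition regular perverse sheaf}), which should allow the flatness question to be transported, via the geometric Satake equivalence, to the flatness of the regular $\mathcal{O}(\check{\mathbf{G}}^{(1)})$-module over itself, which is automatic. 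Carrying out this transfer carefully in the setting of $\mathbf{Y}$-graded ind-perverse sheaves, and checking that the snake-lemma connecting morphism indeed corresponds to a Tor term that vanishes under this identification, is the technical heart of the argument.
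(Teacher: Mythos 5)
Your right-exactness argument and your treatment of free modules (via Proposition \ref{prop can isom} and Theorem \ref{thm exact}) are correct and coincide with the inputs the paper uses. The gap is in the passage to general modules: you reduce everything to the vanishing of $L_1\mathrm{Conv}^{\mathrm{Hecke}}$ and then only sketch a flatness argument for $\mathcal{IC}^{\frac{\infty}{2}}$ over $\mathcal{R}$, explicitly deferring ``the technical heart''. This is not a proof, and the route you propose faces real obstacles: the free graded $\mathcal{R}$-modules $\Phi(\mathcal{F})\langle\mu\rangle$ are not projective (by Proposition \ref{prop iso morphisms} such an object is projective only when $\mathcal{F}$ is projective in $\mathrm{Perv}_{I_\mathrm{u}}(\mathrm{Gr})$), so left derived functors computed from free resolutions are not well defined without further argument; and the proposed transport through $\mathrm{Sat}$ does not apply directly, since the module $\mathcal{S}$ lives in the Iwahori-equivariant category rather than the Satake category, and no theory of relative Tor over the algebra object $\mathcal{R}$ is set up in this context.

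The paper's proof avoids all of this with an elementary reduction you are missing: after restricting to $\mathrm{mod}^\mathbf{Y}_{I_\mathrm{u}}(\mathcal{R})$ and inducting on the length of $\mathcal{S}_2$, one may assume the cokernel $\mathcal{S}$ of $\mathcal{S}_1\hookrightarrow\mathcal{S}_2$ is simple, hence of the form $\Phi(\mathcal{F})\langle\mu\rangle$, i.e.\ itself a free module (Theorem \ref{thm simple g1t}). Lifting a generator produces a surjection $\mathcal{S}_1\oplus\Phi(\mathcal{F}')\langle\mu\rangle\twoheadrightarrow\mathcal{S}_2$ whose kernel is the free module $\Phi(\mathcal{F}'')\langle\mu\rangle$ with $\mathcal{F}''=\ker(\mathcal{F}'\to\mathcal{F})$. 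A direct diagram chase then deduces the injectivity of $\mathrm{Conv}^{\mathrm{Hecke}}(\mathcal{S}_1)\to\mathrm{Conv}^{\mathrm{Hecke}}(\mathcal{S}_2)$ from right exactness together with the injectivity of $\mathrm{Conv}^{\mathrm{Hecke}}$ applied to the two inclusions into $\mathcal{S}_1\oplus\Phi(\mathcal{F}')\langle\mu\rangle$; the first is split, and the second reduces to $\mathcal{F}''\star\mathcal{IC}^{\frac{\infty}{2}}_{\mu}\to\mathcal{F}'\star\mathcal{IC}^{\frac{\infty}{2}}_{\mu}$, which is exactly your free-module case. If you want to salvage your write-up, replace the $L_1$/flatness discussion with this reduction.
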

\begin{proof}
	By construction, the functor $\mathrm{Conv}^{\mathrm{Hecke}}$ is right exact. So it remains to check that it is left exact. Since $\mathrm{Mod}^\mathbf{Y}_{I_\mathrm{u}}(\mathcal{R})$ is the ind-completion of $\mathrm{mod}^\mathbf{Y}_{I_\mathrm{u}}(\mathcal{R})$, it suffices to check left exactness for the restriction of $\mathrm{Conv}^{\mathrm{Hecke}}$ to $\mathrm{mod}^\mathbf{Y}_{I_\mathrm{u}}(\mathcal{R})$. So let 
	$$0\to\mathcal{S}_1\xrightarrow{u}\mathcal{S}_2\to\mathcal{S}\to 0 $$
	be an exact sequence in  $\mathrm{mod}^\mathbf{Y}_{I_\mathrm{u}}(\mathcal{R})$. We have to prove that $\mathrm{Conv}^{\mathrm{Hecke}}(\mathcal{S}_1)\to \mathrm{Conv}^{\mathrm{Hecke}}(\mathcal{S}_2)$ is injective. By an inductive argument on the length of $\mathcal{S}_2$, it is sufficient to prove the result in the case where $\mathcal{S}$ is simple. In particular (thanks to Theorem \ref{thm simple g1t}), we may and will assume that there exist $\mathcal{F}\in\mathrm{Perv}_{I_\mathrm{u}}(\mathrm{Gr})$ and $\mu\in\mathbf{Y}$ such that $\mathcal{S}\simeq\Phi(\mathcal{F})\langle\mu\rangle$. Consider the morphism
 $$\mathcal{F}\to \mathcal{F}\star\mathcal{R}_0\simeq \mathcal{S}_{-\mu}.  $$
 Since $(\mathcal{S}_2)_{-\mu},~\mathcal{S}_{-\mu}$ are ind-objects of the category $\mathrm{Perv}_{I_\mathrm{u}}(\mathrm{Gr})$, there exists $\mathcal{F}'\in \mathrm{Perv}_{I_\mathrm{u}}(\mathrm{Gr})$ together with a morphism $\varphi:\mathcal{F}'\to (\mathcal{S}_2)_{-\mu}$ and a surjective morphism $\mathcal{F}'\to\mathcal{F}$ such that the following diagram commutes
 \begin{equation*}
			\xymatrix{
				\mathcal{F}'\ar[d] \ar[r]&\mathcal{F}\ar[d]  \\
				(\mathcal{S}_2)_{-\mu}\ar[r]&\mathcal{S}_{-\mu}.}
		\end{equation*}
  In particular, we obtain a morphism $a:\Phi(\mathcal{F}')\langle\mu\rangle\to \mathcal{S}_2$ by Proposition \ref{prop iso morphisms}. We put $\mathcal{F}'':=\mathrm{ker}(\mathcal{F}'\to\mathcal{F})$. We have a commutative diagram
  \begin{equation}
		\xymatrix{
   &&0\ar[d]&0\ar[d]& \\
			&&\Phi(\mathcal{F}'')\langle\mu\rangle\ar[d] \ar[r]^{\mathrm{id}}&\Phi(\mathcal{F}'')\langle\mu\rangle\ar[d]& \\
			0\ar[r]&\mathcal{S}_1\ar[d]^{\mathrm{id}}\ar[r]&\mathcal{S}_1\oplus(\Phi(\mathcal{F}')\langle\mu\rangle)\ar[r]\ar[d]^{u\oplus a}&\Phi(\mathcal{F}')\langle\mu\rangle\ar[d]\ar[r] &0\\ 0\ar[r]&\mathcal{S}_1\ar[r]&\mathcal{S}_2\ar[r]\ar[d]&\Phi(\mathcal{F})\langle\mu\rangle\ar[d]\ar[r] &0\\ 
			&&0&0.&}
	\end{equation}
 To prove the desired result, we only need to prove that the maps $ \mathrm{Conv}^{\mathrm{Hecke}}(\mathcal{S}_1)\to \mathrm{Conv}^{\mathrm{Hecke}}(\mathcal{S}_1\oplus(\Phi(\mathcal{F}')\langle\mu\rangle))$ and $ \mathrm{Conv}^{\mathrm{Hecke}}(\Phi(\mathcal{F}'')\langle\mu\rangle)\to \mathrm{Conv}^{\mathrm{Hecke}}(\mathcal{S}_1\oplus(\Phi(\mathcal{F}')\langle\mu\rangle))$ are injective. For the first one this is obvious, since  $\mathrm{Conv}^{\mathrm{Hecke}}$ transforms direct sums into direct sums. For the second one, it suffices to check that 
 $$ \mathrm{Conv}^{\mathrm{Hecke}}(\Phi(\mathcal{F}'')\langle\mu\rangle)\to \mathrm{Conv}^{\mathrm{Hecke}}(\Phi(\mathcal{F}')\langle\mu\rangle)$$ 
 is injective. But one deduces from Proposition \ref{prop can isom} that the latter morphism coincides with 
 $$\mathcal{F}''\star\mathcal{IC}^{\frac{\infty}{2}}_{\mu}\to\mathcal{F}'\star\mathcal{IC}^{\frac{\infty}{2}}_{\mu}, $$
 obtained by convolving the injection $\mathcal{F}''\to\mathcal{F}'$ with $\mathcal{IC}^{\frac{\infty}{2}}_{\mu}$. This morphism is injective thanks to exactness of convolution (Theorem \ref{thm exact}).
\end{proof}
 Notice that, thanks to Proposition \ref{lem exact} and Corollary \ref{coro simples}, we immediately obtain:
\begin{prop}
    The functor $\mathrm{Conv}^{\mathrm{Hecke}}$ sends finite length objects to finite length objects. 
\end{prop}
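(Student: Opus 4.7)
The strategy is a straightforward induction on the length of the object in $\mathrm{mod}^\mathbf{Y}_{I_\mathrm{u}}(\mathcal{R})$, using the two inputs highlighted in the paragraph preceding the statement: exactness of $\mathrm{Conv}^{\mathrm{Hecke}}$ (Proposition~\ref{lem exact}) and the identification $\mathrm{Conv}^{\mathrm{Hecke}}(\widehat{\mathcal{L}}_w)\simeq \mathrm{IC}^{\frac{\infty}{2}}_w$ for all $w\in W_\mathrm{ext}$ (Corollary~\ref{coro simples}).

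First I would handle the base case. Let $\mathcal{S}\in\mathrm{mod}^\mathbf{Y}_{I_\mathrm{u}}(\mathcal{R})$ be simple. By Theorem~\ref{thm simple g1t}, $\mathcal{S}\simeq \widehat{\mathcal{L}}_w$ for some $w\in W_\mathrm{ext}$, and Corollary~\ref{coro simples} gives $\mathrm{Conv}^{\mathrm{Hecke}}(\mathcal{S})\simeq \mathrm{IC}^{\frac{\infty}{2}}_w$. This is an honest object of $\mathrm{Perv}_{I_\mathrm{u}}(\mathcal{F}l^{\frac{\infty}{2}})$, simple (hence of length one), so in particular it lies in the essential image of $\mathrm{Perv}_{I_\mathrm{u}}(\mathcal{F}l^{\frac{\infty}{2}})\hookrightarrow \mathrm{Ind}(\mathrm{Perv}_{I_\mathrm{u}}(\mathcal{F}l^{\frac{\infty}{2}}))$ and is of finite length there.

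For the inductive step, suppose $\mathcal{S}$ has length $n\geq 2$, and assume the claim is established for all objects of length $<n$. Pick any simple subobject $\mathcal{S}'\hookrightarrow \mathcal{S}$ and form the exact sequence
\begin{equation*}
0\to \mathcal{S}'\to \mathcal{S}\to \mathcal{S}''\to 0
\end{equation*}
in $\mathrm{mod}^\mathbf{Y}_{I_\mathrm{u}}(\mathcal{R})$; both $\mathcal{S}'$ and $\mathcal{S}''$ have length strictly less than $n$. Applying $\mathrm{Conv}^{\mathrm{Hecke}}$, which is exact by Proposition~\ref{lem exact}, yields an exact sequence
\begin{equation*}
0\to \mathrm{Conv}^{\mathrm{Hecke}}(\mathcal{S}')\to \mathrm{Conv}^{\mathrm{Hecke}}(\mathcal{S})\to \mathrm{Conv}^{\mathrm{Hecke}}(\mathcal{S}'')\to 0
\end{equation*}
in $\mathrm{Ind}(\mathrm{Perv}_{I_\mathrm{u}}(\mathcal{F}l^{\frac{\infty}{2}}))$. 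By induction the outer terms are of finite length in $\mathrm{Perv}_{I_\mathrm{u}}(\mathcal{F}l^{\frac{\infty}{2}})$, so the middle term sits in a short exact sequence between two finite length objects of the abelian category $\mathrm{Perv}_{I_\mathrm{u}}(\mathcal{F}l^{\frac{\infty}{2}})$ (viewed inside its ind-completion). Hence $\mathrm{Conv}^{\mathrm{Hecke}}(\mathcal{S})$ is itself an object of $\mathrm{Perv}_{I_\mathrm{u}}(\mathcal{F}l^{\frac{\infty}{2}})$ of length at most $\mathrm{length}(\mathcal{S}') + \mathrm{length}(\mathcal{S}'')=n$.

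There is essentially no obstacle here: the content is entirely concentrated in the two previously established results, and the induction just assembles them. The only mild point to check is that an extension in $\mathrm{Ind}(\mathrm{Perv}_{I_\mathrm{u}}(\mathcal{F}l^{\frac{\infty}{2}}))$ of two objects of $\mathrm{Perv}_{I_\mathrm{u}}(\mathcal{F}l^{\frac{\infty}{2}})$ lies again in $\mathrm{Perv}_{I_\mathrm{u}}(\mathcal{F}l^{\frac{\infty}{2}})$, which is a standard property of the inclusion $\mathcal{C}\hookrightarrow \mathrm{Ind}(\mathcal{C})$ of an abelian category into its ind-completion (the essential image is closed under extensions).
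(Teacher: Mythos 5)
Your argument is exactly the one the paper intends (and leaves implicit with the phrase "thanks to Proposition~\ref{lem exact} and Corollary~\ref{coro simples}, we immediately obtain"): exactness of $\mathrm{Conv}^{\mathrm{Hecke}}$ plus the fact that it carries simples to the simple objects $\mathrm{IC}^{\frac{\infty}{2}}_w$ gives the result by induction on length. Your write-up is correct and spells out the same reasoning the paper treats as immediate.
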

So restriction yields a functor
$$\mathrm{Conv}^{\mathrm{Hecke}}:\mathrm{mod}^\mathbf{Y}_{I_\mathrm{u}}(\mathcal{R})\to\mathrm{Perv}_{{I_\mathrm{u}}}(\mathcal{F}l^{\frac{\infty}{2}}). $$
Notice that, by replacing $\mathrm{Mod}^\mathbf{Y}_{I_\mathrm{u}}(\mathcal{R})$ with $\mathrm{Mod}^\mathbf{Y}_{\mathcal{IW}}(\mathcal{R})$ in the definition of $\mathrm{Conv}^{\mathrm{Hecke}}$, we obtain a functor 
$$\mathrm{Conv}^{\mathrm{Hecke}}_\mathcal{IW}:\mathrm{Mod}^\mathbf{Y}_{\mathcal{IW}}(\mathcal{R})\to\mathrm{Ind}(\mathrm{Perv}_{(I^-_\mathrm{u},\psi)}(\mathcal{F}l^{\frac{\infty}{2}})) $$
satisfying properties analogous to the previously stated results concerning $\mathrm{Conv}^{\mathrm{Hecke}}$. In particular, $\mathrm{Conv}^{\mathrm{Hecke}}_\mathcal{IW}$ sends simple objects to simple objects.
 \begin{prop}\label{prop commutes ave}
     The functor $\mathrm{Conv}^{\mathrm{Hecke}}$ commutes with averaging functors. Namely, for any $\mathcal{S}\in \mathrm{mod}^\mathbf{Y}_{I_\mathrm{u}}(\mathcal{R})$ we have a functorial isomorphism
     $$\mathrm{Av}_{(I_\mathrm{u},\psi)}(\mathrm{Conv}^{\mathrm{Hecke}}(\mathcal{S}))\simeq \mathrm{Conv}^{\mathrm{Hecke}}_\mathcal{IW}(\mathrm{Av}_{(I_\mathrm{u},\psi)}(\mathcal{S})).$$
 \end{prop}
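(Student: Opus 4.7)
The plan is to exploit the fact that $\mathrm{Conv}^{\mathrm{Hecke}}(\mathcal{S})$ is constructed as a coequalizer, together with the exactness of the averaging functor, in order to interchange $\mathrm{Av}_{(I_\mathrm{u},\psi)}$ with this coequalizer. More precisely, since $\mathrm{Av}_{(I_\mathrm{u},\psi)}$ is $t$-exact by Lemma \ref{lem averaging} and the coequalizer defining $\mathrm{Conv}^{\mathrm{Hecke}}(\mathcal{S})$ is a cokernel in the abelian category $\mathrm{Ind}(\mathrm{Perv}_{I_\mathrm{u}}(\mathcal{F}l^{\frac{\infty}{2}}))$, we have
\[
\mathrm{Av}_{(I_\mathrm{u},\psi)}(\mathrm{Conv}^{\mathrm{Hecke}}(\mathcal{S}))\simeq\mathrm{coeq}\bigl(\mathrm{Av}_{(I_\mathrm{u},\psi)}(\alpha_{\lambda,\mu}),\mathrm{Av}_{(I_\mathrm{u},\psi)}(\beta_{\lambda,\mu})\bigr).
\]
The problem therefore reduces to identifying $\mathrm{Av}_{(I_\mathrm{u},\psi)}$ applied to each term of the coequalizer with the corresponding term of the coequalizer defining $\mathrm{Conv}^{\mathrm{Hecke}}_{\mathcal{IW}}(\mathrm{Av}_{I_\mathrm{u},\psi}(\mathcal{S}))$, in a way compatible with the structural morphisms $\alpha_{\lambda,\mu}$ and $\beta_{\lambda,\mu}$.

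The key technical lemma to establish is a compatibility of the form
\[
\mathrm{Av}_{(I_\mathrm{u},\psi)}(\mathcal{F}\star\mathcal{IC}^{\frac{\infty}{2}}_{\nu})\simeq\mathrm{Av}_{I_\mathrm{u},\psi}(\mathcal{F})\star\mathcal{IC}^{\frac{\infty}{2}}_{\nu}
\]
for $\mathcal{F}\in\mathrm{Perv}_{I_\mathrm{u}}(\mathrm{Gr})$ and $\nu\in\mathbf{Y}$, functorial in $\mathcal{F}$. To prove this, I plan to use \eqref{eq averaging}, which identifies $\mathrm{Av}_{*,(I_\mathrm{u},\psi)}\circ\mathrm{For}^I_{I_\mathrm{u}}$ with the operation $\nabla_\psi\star(-)$, together with the analogous description of $\mathrm{Av}_{I_\mathrm{u},\psi}$ on $\mathrm{Gr}$ (namely convolution on the left with $\nabla_\psi$, viewed as a perverse sheaf on $G/B$, through the natural forgetful/pullback procedure). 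The identification then reduces to associativity of the convolution diagram:
\[
\nabla_\psi\star(\mathcal{F}\star\mathcal{IC}^{\frac{\infty}{2}}_{\nu})\simeq(\nabla_\psi\star\mathcal{F})\star\mathcal{IC}^{\frac{\infty}{2}}_{\nu},
\]
which is proved by the standard base change argument on the Hecke/convolution diagrams of §\ref{section Definition of convolution}, using the fact that the $G$-action on $^1_\infty\overline{\mathrm{Bun}}_{N^-}$ commutes with the $G((t))$-action that defines convolution with objects from $\mathrm{Gr}$.

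Once this key lemma is in place, applying it termwise yields isomorphisms
\[
\mathrm{Av}_{(I_\mathrm{u},\psi)}(\mathcal{S}_\nu\star\mathcal{IC}^{\frac{\infty}{2}}_{-\nu})\simeq\mathrm{Av}_{I_\mathrm{u},\psi}(\mathcal{S})_\nu\star\mathcal{IC}^{\frac{\infty}{2}}_{-\nu},
\]
and similarly for $\mathcal{S}_\lambda\star\mathcal{R}_{-\mu}\star\mathcal{IC}^{\frac{\infty}{2}}_{\mu-\lambda}$, where I use that $\mathrm{Av}_{I_\mathrm{u},\psi}$ commutes with $\Phi$ (recalled just before Proposition \ref{Prop averaging simples}) and that $\mathcal{R}_{-\mu}$ lies in the $G[[t]]$-equivariant category, so convolving first with $\mathcal{R}_{-\mu}$ and then averaging yields the same as averaging first. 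The last step is to check the naturality of these isomorphisms with respect to $\alpha_{\lambda,\mu}$ and $\beta_{\lambda,\mu}$: on the $\alpha$-side this is the compatibility with the right $\mathcal{R}$-module structure on $\mathcal{S}$, which is automatic since $\mathrm{Av}_{I_\mathrm{u},\psi}$ is a functor of $\mathcal{R}$-modules; on the $\beta$-side this is the compatibility with the action of $\mathcal{R}$ on $\mathcal{IC}^{\frac{\infty}{2}}$ constructed in §\ref{section Action of the regular perverse sheaf}, which is preserved under averaging since all the morphisms defining it via Proposition \ref{prop hecke geometric} are induced by convolution with $\mathcal{G}\in\mathrm{Perv}_{G[[t]]}(\mathrm{Gr})$.

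The main obstacle will be the careful bookkeeping behind the key compatibility lemma of the second paragraph: the two averaging operations in question are defined on different ambient stacks, and the identification has to pass through the twisted products appearing in the Hecke description of convolution. Once one has carefully set up the isomorphism via the description of averaging as convolution with $\nabla_\psi$ and invoked associativity of the triple convolution on $^{I_\mathrm{u},0}\mathcal{H}_{G,N^-}$, the rest of the argument is formal.
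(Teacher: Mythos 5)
Your proposal matches the paper's proof: the paper likewise reduces the statement to the termwise compatibility $\mathrm{Av}_{(I_\mathrm{u},\psi)}(\mathcal{S}\star\mathcal{IC}^{\frac{\infty}{2}}_\lambda)\simeq \mathrm{Av}_{(I_\mathrm{u},\psi)}(\mathcal{S})\star\mathcal{IC}^{\frac{\infty}{2}}_\lambda$ (which it leaves as "easily checked from the constructions") and then concludes by exactness of the averaging functor, which preserves coequalizers. One small caveat: \eqref{eq averaging} describes $\mathrm{Av}_{*,(I_\mathrm{u},\psi)}$ only after precomposition with $\mathrm{For}^I_{I_\mathrm{u}}$, so for a general $I_\mathrm{u}$-equivariant object you should rely on the base-change argument on the convolution diagram (which you also invoke) rather than on the $\nabla_\psi\star(-)$ description.
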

 \begin{proof}
 First, one can easily check from the constructions of the averaging functors that, for any $\lambda\in\mathbf{Y}$, we have an isomorphism functorial in $\mathcal{S}\in\mathrm{Mod}^\mathbf{Y}_{I_\mathrm{u}}(\mathcal{R})$:
 $$\mathrm{Av}_{(I_\mathrm{u},\psi)}(\mathcal{S}\star\mathcal{IC}^\frac{\infty}{2}_\lambda)\simeq \mathrm{Av}_{(I_\mathrm{u},\psi)}(\mathcal{S})\star\mathcal{IC}^\frac{\infty}{2}_\lambda. $$
 The conclusion then follows from the fact that $\mathrm{Av}_{(I_\mathrm{u},\psi)}$ is an exact functor, so it preserves coequalizers.
 \end{proof}
 \begin{lem}\label{lem verdier commute}
	The functor $\mathrm{Conv}^{\mathrm{Hecke}}$ commutes with Verdier duality.
\end{lem}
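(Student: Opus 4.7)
The plan is to reduce the statement to free graded $\mathcal{R}$-modules, where $\mathbb{D}$ has the explicit prescription \eqref{eq prescribe D}, and then extend via presentations. First, for $\mathcal{M}=\Phi(\mathcal{F})\langle\mu\rangle$ with $\mathcal{F}\in\mathrm{Perv}_{I_\mathrm{u}}(\mathrm{Gr})$ and $\mu\in\mathbf{Y}$, combining \eqref{eq prescribe D} with Proposition \ref{prop can isom} yields
$$\mathrm{Conv}^{\mathrm{Hecke}}(\mathbb{D}\mathcal{M})\simeq\mathrm{Conv}^{\mathrm{Hecke}}(\Phi(\mathbb{D}\mathcal{F})\langle\mu\rangle)\simeq\mathbb{D}\mathcal{F}\star\mathcal{IC}^{\frac{\infty}{2}}_{-\mu},$$
while Corollary \ref{verdier coro} together with the Verdier self-duality of the IC-sheaf $\mathcal{IC}^{\frac{\infty}{2}}_{-\mu}$ gives
$$\mathbb{D}(\mathrm{Conv}^{\mathrm{Hecke}}\mathcal{M})\simeq\mathbb{D}(\mathcal{F}\star\mathcal{IC}^{\frac{\infty}{2}}_{-\mu})\simeq\mathbb{D}\mathcal{F}\star\mathcal{IC}^{\frac{\infty}{2}}_{-\mu}.$$
This produces the isomorphism at the level of objects of the form $\Phi(\mathcal{F})\langle\mu\rangle$.

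The delicate step will be checking that the isomorphism just obtained is natural with respect to morphisms between free graded $\mathcal{R}$-modules. The construction of $\mathbb{D}_{\mathrm{fr}}$ from §\ref{section Verdier} unfolds a morphism in $\mathrm{Hom}_{\mathrm{Free}^\mathbf{Y}_{I_\mathrm{u}}(\mathcal{R})}((\mathcal{F},\lambda),(\mathcal{G},\mu))$ through a chain of natural identifications: the description of $\mathcal{R}_{\lambda-\mu}$ as a colimit of convolutions $\mathcal{I}^{w_0(\lambda-\mu)+\nu}_*\star\mathcal{I}^{-w_0(\nu)}_*$, adjunctions of the form $(-)\star\mathcal{I}^\bullet_! \dashv (-)\star\mathcal{I}^\bullet_*$, and the fact that Verdier duality on $\mathrm{Perv}_{G[[t]]}(\mathrm{Gr})$ swaps standard and costandard objects. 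I plan to show that convolving each link of this chain with $\mathcal{IC}^{\frac{\infty}{2}}_{?}$ yields the analogous chain of identifications on the semi-infinite side: the Hecke-property isomorphisms of Proposition \ref{prop hecke geometric}, whose compatibility with iterated convolution is provided by the commutative diagrams of Proposition \ref{prop diagram 1}, combined with Corollary \ref{verdier coro}, should ensure that the two chains match.

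Finally, I would extend the isomorphism to an arbitrary $\mathcal{S}\in\mathrm{mod}^\mathbf{Y}_{I_\mathrm{u}}(\mathcal{R})$. By Proposition \ref{prop finite length}, pick a presentation $\mathcal{F}_1\xrightarrow{\alpha}\mathcal{F}_2\to\mathcal{S}\to 0$ with $\mathcal{F}_1,\mathcal{F}_2$ free graded $\mathcal{R}$-modules of finite type. By Remark \ref{rem duality}, $\mathbb{D}\mathcal{S}\simeq\ker(\mathbb{D}\alpha)$; applying the exact functor $\mathrm{Conv}^{\mathrm{Hecke}}$ (Proposition \ref{lem exact}) together with the natural isomorphism established on free modules, one computes
$$\mathrm{Conv}^{\mathrm{Hecke}}(\mathbb{D}\mathcal{S})\simeq\ker\bigl(\mathbb{D}\,\mathrm{Conv}^{\mathrm{Hecke}}(\alpha)\bigr)\simeq\mathbb{D}\bigl(\mathrm{coker}(\mathrm{Conv}^{\mathrm{Hecke}}(\alpha))\bigr)\simeq\mathbb{D}(\mathrm{Conv}^{\mathrm{Hecke}}\mathcal{S}),$$
where the middle identification uses that $\mathbb{D}$ on $\mathrm{Perv}_{I_\mathrm{u}}(\mathcal{F}l^{\frac{\infty}{2}})$ sends cokernels to kernels. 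The main obstacle will be the functoriality check in the second paragraph: tracking the naturality of the duality isomorphism through the several layers of adjunctions and colimits in the construction of $\mathbb{D}_{\mathrm{fr}}$, and verifying that the convolution-duality comparison respects the transition morphisms defining $\mathcal{R}_\mu$ as an ind-object.
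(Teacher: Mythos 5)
Your proposal follows essentially the same route as the paper: reduce to a presentation by free graded $\mathcal{R}$-modules, establish the duality isomorphism on objects of the form $\Phi(\mathcal{F})\langle\mu\rangle$ via Proposition \ref{prop can isom} and Corollary \ref{verdier coro}, check compatibility with morphisms of free modules by unwinding the adjunction/colimit construction of $\mathbb{D}_{\mathrm{fr}}$ against the Hecke-property isomorphisms, and conclude by exactness and the kernel/cokernel description of Remark \ref{rem duality}. The paper carries out your ``delicate step'' concretely by writing $\mathrm{Conv}^{\mathrm{Hecke}}(\alpha)$ and $\mathrm{Conv}^{\mathrm{Hecke}}(\mathbb{D}(\alpha))$ as explicit two-step compositions (via $\tilde\alpha\star\mathrm{id}$ followed by $\mathrm{id}\star H^{\nu}_{-\mu,0}$, and its dual) and observing they are exchanged by Verdier duality; the compatibility with the transition morphisms of $\mathcal{R}_\mu$ that you worry about is supplied by the commutativity of diagram \eqref{diagram prop hecke} in Proposition \ref{prop hecke geometric} (rather than Proposition \ref{prop diagram 1}, which concerns the costandard-object diagrams).
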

\begin{proof}
    Let $\mathcal{S}\in \mathrm{mod}^\mathbf{Y}_{I_\mathrm{u}}(\mathcal{R})$. Recall the construction of the Verdier duality on $ \mathrm{mod}^\mathbf{Y}_{I_\mathrm{u}}(\mathcal{R})$ from subsection \ref{section Verdier}, and let $\alpha:\mathcal{S}_1\to \mathcal{S}_2$ be a morphism between two free graded $\mathcal{R}$-modules such that $\mathcal{S}$ is the cokernel of $\alpha$. By Remark \ref{rem duality}, $\mathbb{D}(\mathcal{S})$ coincides with the kernel of $\mathbb{D}(\alpha):\mathbb{D}(\mathcal{S}_2)\to \mathbb{D}(\mathcal{S}_1)$. Let us spell out how $\mathbb{D}(\alpha)$ was constructed. By additivity of $\mathbb{D}$, we may and will assume that $\mathcal{S}_1\simeq\Phi(\mathcal{F}_1)$ and $\mathcal{S}_2\simeq\Phi(\mathcal{F}_2)\langle\mu\rangle$, for some $\mathcal{F}_1,\mathcal{F}_2\in \mathrm{Perv}_{I_\mathrm{u}}(\mathrm{Gr})$, and $\mu\in\mathbf{Y}$. So (using Proposition \ref{prop iso morphisms}) $\alpha$ comes from a map
    $$\tilde{\alpha}:\mathcal{F}_1\to \mathcal{F}_2\star\mathcal{I}^{-w_0(\mu)+\nu}_*\star\mathcal{I}^{-w_0(\nu)}_*$$
    in $\mathrm{Perv}_{I_\mathrm{u}}(\mathrm{Gr})$, for some large enough $\nu\in \mathbf{Y}^+\cap(w_0(\mu)+\mathbf{Y}^+)$. Thus (up to taking a larger $\nu$), $\mathbb{D}(\alpha)$ comes from the dual morphism
    $$\beta:\mathbb{D}(\mathcal{F}_2)\to \mathbb{D}(\mathcal{F}_1)\star\mathcal{I}^{\mu-w_0(\nu)}_*\star\mathcal{I}^{\nu}_*.$$
    
Now, the object $\mathrm{Conv}^{\mathrm{Hecke}}(\mathbb{D}(\mathcal{S}))$ is the kernel of $\mathrm{Conv}^{\mathrm{Hecke}}(\mathbb{D}(\alpha))$ thanks to Proposition \ref{lem exact}.
    By Proposition \ref{prop can isom}, the latter morphism is a morphism between the objects $\mathbb{D}(\mathcal{F}_2)\star\mathcal{IC}^{\frac{\infty}{2}}_{-\mu}$ and $\mathbb{D}(\mathcal{F}_1)\star\mathcal{IC}^\frac{\infty}{2}_0$. It is constructed as the composition
    \begin{align*}
        \mathbb{D}(\mathcal{F}_2)\star\mathcal{IC}^{\frac{\infty}{2}}_{-\mu}&\xrightarrow{\beta\star\mathrm{id}}\mathbb{D}(\mathcal{F}_1)\star(\mathcal{I}^{\mu-w_0(\nu)}_*\star\mathcal{I}^{\nu}_*\star\mathcal{IC}^{\frac{\infty}{2}}_{-\mu})\\
        &\xrightarrow{\mathrm{id}\star H'}\mathbb{D}(\mathcal{F}_1)\star\mathcal{IC}^\frac{\infty}{2}_0, 
    \end{align*}
    where $H'$ is the image of $H^\nu_{-\mu,0}$ through the composition
    \begin{align*}
        \mathrm{Hom}((\mathcal{I}^{-w_0(\mu)+\nu}_*\star\mathcal{I}^{-w_0(\nu)}_*)\star\mathcal{IC}^\frac{\infty}{2}_0,\mathcal{IC}^{\frac{\infty}{2}}_{-\mu})&\xrightarrow[\sim]{\text{adjunction}}\mathrm{Hom}(\mathcal{IC}^\frac{\infty}{2}_0,(\mathcal{I}^{\mu-w_0(\nu)}_!\star\mathcal{I}^{\nu}_!)\star\mathcal{IC}^{\frac{\infty}{2}}_{-\mu})\\
        &\xrightarrow[\sim]{\mathbb{D}}\mathrm{Hom}((\mathcal{I}^{\mu-w_0(\nu)}_*\star\mathcal{I}^{\nu}_*)\star\mathcal{IC}^{\frac{\infty}{2}}_{-\mu},\mathcal{IC}^\frac{\infty}{2}_0).
    \end{align*}

    On the other hand, $\mathrm{Conv}^{\mathrm{Hecke}}(\mathcal{S})$ is the cokernel of $\mathrm{Conv}^{\mathrm{Hecke}}(\alpha)$ thanks to Proposition \ref{lem exact}. The latter morphism coincides with the composition
    \begin{align*}
        \mathcal{F}_1\star\mathcal{IC}^\frac{\infty}{2}_0&\xrightarrow{\tilde{\alpha}\star\mathrm{id}}\mathcal{F}_2\star(\mathcal{I}^{-w_0(\mu)+\nu}_*\star\mathcal{I}^{-w_0(\nu)}_*\star\mathcal{IC}^\frac{\infty}{2}_0)\\
        &\xrightarrow{\mathrm{id}\star H_{-\mu,0}^{\nu}} \mathcal{F}_2\star\mathcal{IC}^{\frac{\infty}{2}}_{-\mu}. 
    \end{align*}

    To conclude, we just need to notice that $\mathrm{Conv}^{\mathrm{Hecke}}(\mathbb{D}(\alpha))$ and $\mathrm{Conv}^{\mathrm{Hecke}}(\alpha)$ are transformed into one another by Verdier duality.
\end{proof}	

\begin{conj}\label{main conjecture}
		The functor 
		$$\mathrm{Conv}^{\mathrm{Hecke}}:\mathrm{mod}^\mathbf{Y}_{I_\mathrm{u}}(\mathcal{R})\to\mathrm{Perv}_{{I_\mathrm{u}}}(\mathcal{F}l^{\frac{\infty}{2}}) $$
		induces an equivalence of categories between $\mathrm{mod}^\mathbf{Y}_{I_\mathrm{u}}(\mathcal{R})$ and the subcategory of finite-length objects in $\mathrm{Perv}_{{I_\mathrm{u}}}(\mathcal{F}l^{\frac{\infty}{2}})$.
	\end{conj}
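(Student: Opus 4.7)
The plan is to promote the exact functor $\mathrm{Conv}^{\mathrm{Hecke}}$ to an equivalence by combining three ingredients already at hand---exactness (Proposition \ref{lem exact}), the bijection on simples (Corollary \ref{coro simples}), and compatibility with Verdier duality (Lemma \ref{lem verdier commute})---with a new identification of baby co-Verma modules. Following the strategy of \cite{ABBGM}, the crux is to establish
$$\mathrm{Conv}^{\mathrm{Hecke}}(\widehat{\mathcal{Z}}'_w) \simeq \nabla_w^{\frac{\infty}{2}} \qquad \forall\, w \in W_\mathrm{ext}.$$
The analogous statement for baby Verma modules, $\mathrm{Conv}^{\mathrm{Hecke}}(\widehat{\mathcal{Z}}_w) \simeq \Delta_w^{\frac{\infty}{2}}$, will then follow formally: $\mathbb{D}$ interchanges $*$- and $!$-extensions while preserving the factor $\mathrm{IC}_{_\nu\overline{\mathrm{Bun}}_{N^-}}$, giving $\mathbb{D}(\nabla_w^{\frac{\infty}{2}}) \simeq \Delta_w^{\frac{\infty}{2}}$; combined with $\widehat{\mathcal{Z}}_w := \mathbb{D}(\widehat{\mathcal{Z}}'_w)$ and Lemma \ref{lem verdier commute}, this yields the Verma identification.

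To identify the co-Vermas, I would first verify that $\mathrm{Conv}^{\mathrm{Hecke}}$ commutes with left convolution by $\mathcal{N}_v$ for appropriate $v \in W_\mathrm{ext}$: the coequalizer defining the functor is compatible with $\mathcal{N}_v \star^I (-)$ on the source and $\mathcal{N}_v \star (-)$ on the target (in the sense of Remark \ref{rem conv flag}), yielding a natural isomorphism $\mathrm{Conv}^{\mathrm{Hecke}}(\mathcal{N}_v \star^I \mathcal{F}) \simeq \mathcal{N}_v \star \mathrm{Conv}^{\mathrm{Hecke}}(\mathcal{F})$. Combined with Propositions \ref{prop convolution costandard}-\ref{prop convolution costandard 2} and \eqref{add length}, this reduces the identification to the case $w = 0$, i.e.\ $\mathrm{Conv}^{\mathrm{Hecke}}(\widehat{\mathcal{Z}}'_0) \simeq \nabla_0^{\frac{\infty}{2}}$. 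This direct computation is the main obstacle. The Hecke eigensheaf property (Proposition \ref{prop hecke geometric}) rewrites $\mathcal{I}^{-w_0(\lambda)}_* \star \mathcal{IC}^{\frac{\infty}{2}}_{-\nu}$ as a direct sum indexed by weights, and the commutativity of diagrams \eqref{diagram 50}-\eqref{diagram 51} ensures that the transition morphisms defining the ind-system $\widehat{\mathcal{Z}}'_0$ match the $\mathcal{R}$-module structure on $\mathcal{IC}^{\frac{\infty}{2}}$ from \S\ref{section Action of the regular perverse sheaf}; collecting terms using the morphism \eqref{eq 513}, the coequalizer should collapse to $\nabla_0^{\frac{\infty}{2}}$.

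With both identifications in place, fully faithfulness follows from an Ext-comparison argument. Propositions \ref{prop ext ind objects} and \ref{prop ext} show that $\mathrm{Conv}^{\mathrm{Hecke}}$ induces isomorphisms $\mathrm{Ext}^i(\widehat{\mathcal{Z}}_w, \widehat{\mathcal{Z}}'_{w'}) \xrightarrow{\sim} \mathrm{Ext}^i(\Delta_w^{\frac{\infty}{2}}, \nabla_{w'}^{\frac{\infty}{2}})$ for $i \in \{0,1\}$, both sides being $\delta_{w,w'}\mathbb{k}$ when $i=0$ and zero when $i=1$. Since every object of $\mathrm{mod}^{\mathbf{Y}}_{I_\mathrm{u}}(\mathcal{R})$ (resp.\ of the finite-length subcategory of $\mathrm{Perv}_{I_\mathrm{u}}(\mathcal{F}l^{\frac{\infty}{2}})$) admits a two-term presentation by baby Vermas (resp.\ standards) and a two-term copresentation by baby co-Vermas (resp.\ costandards)---consequences of the highest-weight structure on both sides, with the representation-theoretic side benefiting from Theorem \ref{thm BR}---a standard double application of the five lemma bootstraps the Ext-comparison first to $\mathrm{Hom}(\widehat{\mathcal{Z}}_w, \mathcal{S}')$ for arbitrary $\mathcal{S}'$, then to arbitrary $\mathrm{Hom}(\mathcal{S}, \mathcal{S}')$. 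Essential surjectivity onto the finite-length subcategory is then automatic: Corollary \ref{coro simples} puts every simple in the essential image, and exactness together with fully faithfulness guarantees closure under extensions.
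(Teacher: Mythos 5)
Your overall architecture matches the paper's: commute $\mathrm{Conv}^{\mathrm{Hecke}}$ with $\mathcal{N}_v\star^I(-)$ to reduce the co-Verma identification to $w=0$, pass the result to all $w$, dualize for baby Vermas, and finish via the Ext-vanishing highest-weight machinery (\cite[Lemma 6.2.7]{ABBGM}). But there is a serious gap at precisely the point you flag as ``the main obstacle.'' You assert that the Hecke eigensheaf property together with the commutativity of diagrams \eqref{diagram 50}--\eqref{diagram 51} makes ``the coequalizer $\dots$ collapse to $\nabla_0^{\frac{\infty}{2}}$.'' This is not a direct computation, and the paper does \emph{not} prove it unconditionally. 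What those diagrams give you is a single natural morphism $\varphi:\mathrm{Conv}^{\mathrm{Hecke}}(\widehat{\mathcal{Z}}'_0)\to\nabla_0^{\frac{\infty}{2}}$, obtained by projecting each term $\nabla_{\mu+\lambda}\star\mathcal{I}^{-w_0(\lambda)}_*\star\mathcal{IC}^{\frac{\infty}{2}}_{-\mu}$ onto its lowest-weight component and composing with \eqref{eq can} and Proposition~\ref{Prop Action of convolution on standard objects}. Having a compatible system of arrows out of the coequalizer does not make the coequalizer isomorphic to the target: the objects $\nabla_{\mu+\lambda}\star\mathcal{IC}^{\frac{\infty}{2}}_{\nu-\mu}$ for $\nu\neq-\lambda$ contribute large pieces to the domain, and there is no formal reason why the two parallel arrows $\alpha,\beta$ identify all of them modulo the image.

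The paper instead runs a two-sided argument that \emph{cannot} be avoided. To show $\varphi$ is an isomorphism, one needs a morphism $\gamma:\nabla_0^{\frac{\infty}{2}}\to\mathrm{Conv}^{\mathrm{Hecke}}(\widehat{\mathcal{Z}}'_0)$ going the other way. Constructing $\gamma$ requires a non-zero map $\widetilde{a}_\nu:\mathrm{IC}^{\frac{\infty}{2}}_{\nu}\to \Delta_{w_0 t_{\nu+2\zeta}}^{\frac{\infty}{2}}$ together with compatibilities stated precisely as Conjecture~\ref{conj commut diagram} --- which the paper explicitly says it is unable to prove at present. Once $\gamma$ exists, one checks that $\mathrm{Av}_{(I_\mathrm{u},\psi)}(\varphi)$ and $\mathrm{Av}_{(I_\mathrm{u},\psi)}(\gamma)$ are isomorphisms (using Propositions~\ref{prop commutes ave},~\ref{Prop averaging simples},~\ref{prop averaging simples 2},~\ref{prop semi-simple IW},~\ref{prop clean IC}), and then the partial-integrability and indecomposability arguments (Lemma~\ref{lem indecomposable}) combined with the locality of the endomorphism ring of an indecomposable object force $\varphi$ to be invertible. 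None of this machinery appears in your proposal. Because you drop Conjecture~\ref{conj commut diagram}, your argument purports to prove the statement unconditionally, which is stronger than what the paper claims; the statement remains a conjecture in the paper precisely because this step is open. The Ext-comparison and closure-under-extensions endgame you sketch is essentially correct (note that Proposition~\ref{prop ext} supplies the degree-$2$ vanishing you will need for the five-lemma bootstrap), but without the costandard identification in hand it does not get off the ground.
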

\section{Strategy of proof for Conjecture \ref{main conjecture}}\label{section Strategy of proof for the conjecture}
We give a strategy of proof for Conjecture \ref{main conjecture}, which follows \cite{ABBGM} and will work provided Conjecture \ref{conj commut diagram} holds.
\subsection{First steps}
The strategy is to prove that $\mathrm{Conv}^{\mathrm{Hecke}}$ respects the highest weight structures on both sides. 

\begin{lem}\label{lem commute convo}
    Let $\gamma\in\mathbf{Y}^+$, $\nu\in\mathbf{Y}$ and  $w,w'\in W$ such that $\ell(ww_0w')=\ell(w)+\ell(w_0w')$. We have canonical isomorphisms
    $$ \mathcal{N}_\gamma\star^I\mathrm{Conv}^{\mathrm{Hecke}}(\widehat{\mathcal{Z}}'_\nu)\simeq \mathrm{Conv}^{\mathrm{Hecke}}(\widehat{\mathcal{Z}}'_{\gamma+\nu}),\quad\mathcal{N}_w\star^I\mathrm{Conv}^{\mathrm{Hecke}}(\widehat{\mathcal{Z}}'_{w't_\nu})\simeq \mathrm{Conv}^{\mathrm{Hecke}}(\widehat{\mathcal{Z}}'_{ww't_\nu}).$$
\end{lem}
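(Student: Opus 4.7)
The plan is to reduce both isomorphisms to a single intertwining property of $\mathrm{Conv}^{\mathrm{Hecke}}$ with the left convolution action of the affine Hecke category. Specifically, I would first establish that for any $\mathcal{F} \in \mathrm{D}_{I_\mathrm{u}}(\mathrm{Fl})$ and any $\mathcal{S} \in \mathrm{Mod}^\mathbf{Y}_{I_\mathrm{u}}(\mathcal{R})$, there is a bifunctorial isomorphism
\begin{equation}\label{eq plan intertwine}
\mathcal{F} \star^I \mathrm{Conv}^{\mathrm{Hecke}}(\mathcal{S}) \simeq \mathrm{Conv}^{\mathrm{Hecke}}(\mathcal{F} \star^I \mathcal{S}),
\end{equation}
where $\mathcal{F} \star^I \mathcal{S}$ denotes the $\mathbf{Y}$-graded $\mathcal{R}$-module defined componentwise by $(\mathcal{F} \star^I \mathcal{S})_\lambda := \mathcal{F} \star^I \mathcal{S}_\lambda$, the $\mathcal{R}$-action passing across $\mathcal{F}$ via associativity since $\mathcal{R}$ acts on the opposite side. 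Granting \eqref{eq plan intertwine}, the first claimed isomorphism follows by taking $\mathcal{F} = \mathcal{N}_\gamma$ and $\mathcal{S} = \widehat{\mathcal{Z}}'_\nu$ and combining with Proposition \ref{prop convolution costandard}; the second follows by taking $\mathcal{F} = \mathcal{N}_w$ and $\mathcal{S} = \widehat{\mathcal{Z}}'_{w't_\nu}$ and combining with Proposition \ref{prop convolution costandard 2}.

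To prove \eqref{eq plan intertwine}, I would apply the endofunctor $\mathcal{F} \star^I -$ to the coequalizer diagram
$$\bigoplus_{\lambda,\mu}\mathcal{S}_\lambda\star\mathcal{R}_{-\mu}\star\mathcal{IC}^{\frac{\infty}{2}}_{\mu-\lambda}\rightrightarrows\bigoplus_\nu \mathcal{S}_{\nu}\star\mathcal{IC}^{\frac{\infty}{2}}_{-\nu}$$
defining $\mathrm{Conv}^{\mathrm{Hecke}}(\mathcal{S})$. A triple-associativity isomorphism $\mathcal{F} \star^I (\mathcal{S}_\lambda \star \mathcal{G}) \simeq (\mathcal{F} \star^I \mathcal{S}_\lambda) \star \mathcal{G}$, for $\mathcal{G}$ an object over the relevant stack, obtained from the standard formalism of iterated convolution diagrams involving $^{I_\mathrm{u},0}\mathcal{H}_{G,N^-}$, $\mathrm{Fl}$ and their fibered products, should transform this into the coequalizer diagram defining $\mathrm{Conv}^{\mathrm{Hecke}}(\mathcal{F} \star^I \mathcal{S})$. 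The structure maps $\alpha_{\lambda,\mu}$ and $\beta_{\lambda,\mu}$ remain intact: both the right $\mathcal{R}$-action on $\mathcal{S}$ and the left $\mathcal{R}$-action on $\mathcal{IC}^{\frac{\infty}{2}}$ (Proposition \ref{prop action IC}) operate on the factor to the right of $\mathcal{F}$, hence commute with $\mathcal{F} \star^I -$. Finally, for $\mathcal{F} = \mathcal{N}_w$ (including the case $\mathcal{F} = \mathcal{N}_\gamma$), the functor $\mathcal{F} \star^I -$ admits $\mathcal{D}_{w^{-1}} \star^I -$ as a two-sided inverse up to the length considerations in Lemma \ref{lem conv standard}, so it preserves colimits, and in particular coequalizers, yielding \eqref{eq plan intertwine}.

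The main obstacle will be the rigorous verification of the triple-associativity isomorphism. The left-hand side of \eqref{eq plan intertwine} involves successive convolutions along the Hecke stack of $G((t))/I_\mathrm{u}$ and along ${^{I_\mathrm{u},0}\mathcal{H}_{G,N^-}}$, and interchanging the two requires introducing a three-fold Hecke stack together with base-change compatibilities, in the spirit of §\ref{section Definition of convolution}. This is technical but formal; once in place, applying the resulting commutation of $\mathcal{N}_\gamma \star^I -$ (resp. $\mathcal{N}_w \star^I -$) with $\mathrm{Conv}^{\mathrm{Hecke}}$ to Propositions \ref{prop convolution costandard} and \ref{prop convolution costandard 2} immediately yields the two canonical isomorphisms of the lemma.
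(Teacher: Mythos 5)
Your proposal is correct and takes essentially the same approach as the paper: both rest on Proposition \ref{prop convolution costandard} (resp.\ Proposition \ref{prop convolution costandard 2}) together with the invertibility of $\mathcal{N}_\gamma\star^I(-)$ from Lemma \ref{lem conv standard}(1), and both exploit compatibility of $\mathcal{N}_\gamma\star^I(-)$ with the coequalizer defining $\mathrm{Conv}^{\mathrm{Hecke}}$. The paper phrases this through the representable-functor description of the coequalizer rather than applying the equivalence $\mathcal{N}_\gamma\star^I(-)$ term-by-term to the defining diagram, but the content is identical; your explicit flagging of the triple-associativity isomorphism is a point the paper leaves implicit.
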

\begin{proof}
   Recall that  $\mathrm{Conv}^{\mathrm{Hecke}}(\widehat{\mathcal{Z}}'_{\gamma+\nu})$ represents the covariant functor which, to any $\mathcal{F}\in \mathrm{Ind}(\mathrm{Perv}_{{I_\mathrm{u}}}(\mathcal{F}l^{\frac{\infty}{2}}))$ , associates the set of collections of morphisms $$((\widehat{\mathcal{Z}}'_{\gamma+\nu})_\lambda\star\mathcal{IC}^{\frac{\infty}{2}}_{-\lambda}\to \mathcal{F})_{\nu\in\mathbf{Y}}$$ such that, for any $\lambda,\mu\in\mathbf{Y}$, the following diagram commutes (cf. \eqref{diagram coeq}):
	\begin{equation*}
		(\widehat{\mathcal{Z}}'_{\gamma+\nu})_\lambda\star\mathcal{R}_{-\mu}\star\mathcal{IC}^{\frac{\infty}{2}}_{\mu-\lambda}\rightrightarrows\mathcal{F}. 
	\end{equation*} 
 Applying Proposition \ref{prop convolution costandard} together with the first point of Lemma \ref{lem conv standard}, we obtain that $\mathrm{Conv}^{\mathrm{Hecke}}(\widehat{\mathcal{Z}}'_{\gamma+\nu})$ represents the covariant functor which, to any $\mathcal{F}\in \mathrm{Ind}(\mathrm{Perv}_{{I_\mathrm{u}}}(\mathcal{F}l^{\frac{\infty}{2}}))$ , associates the set of collections of morphisms $$((\widehat{\mathcal{Z}}'_{\nu})_\lambda\star\mathcal{IC}^{\frac{\infty}{2}}_{-\lambda}\to \mathcal{D}_{-\gamma}\star\mathcal{F})_{\nu\in\mathbf{Y}}$$ such that, for any $\lambda,\mu\in\mathbf{Y}$, the following diagram commutes (cf. \eqref{diagram coeq}):
	\begin{equation*}
		(\widehat{\mathcal{Z}}'_{\nu})_\lambda\star\mathcal{R}_{-\mu}\star\mathcal{IC}^{\frac{\infty}{2}}_{\mu-\lambda}\rightrightarrows\mathcal{D}_{-\gamma}\star\mathcal{F}. 
	\end{equation*} 
 By the first point of Lemma \ref{lem conv standard} again, this functor is represented by $ \mathcal{N}_\gamma\star^I\mathrm{Conv}^{\mathrm{Hecke}}(\widehat{\mathcal{Z}}'_\nu)$.

  The second isomorphism is proved in the same way, using that $\mathcal{N}_w\star^I\widehat{\mathcal{Z}}'_{w't_\nu}\simeq \widehat{\mathcal{Z}}'_{ww't_\nu}$ (Proposition \ref{prop convolution costandard 2}). 
\end{proof}
  We have the following key result.
\begin{prop}\label{proposition costandrad 0}
   Assume that Conjecture \ref{conj commut diagram} holds. There is an isomorphism
    \begin{equation}\label{eq costandrad 0}
        \mathrm{Conv}^{\mathrm{Hecke}}(\widehat{\mathcal{Z}}'_{0})\simeq \nabla_0^\frac{\infty}{2}.
    \end{equation}
\end{prop}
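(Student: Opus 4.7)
The plan is to construct a canonical morphism $\eta \colon \mathrm{Conv}^{\mathrm{Hecke}}(\widehat{\mathcal{Z}}'_0) \to \nabla_0^{\frac{\infty}{2}}$ and then show that it is an isomorphism. By the universal property of the coequalizer defining $\mathrm{Conv}^{\mathrm{Hecke}}(\widehat{\mathcal{Z}}'_0)$, constructing $\eta$ reduces to producing a compatible family of morphisms
$$\nabla_{t_{\nu+\lambda}} \star \mathcal{I}^{-w_0(\lambda)}_* \star \mathcal{IC}^{\frac{\infty}{2}}_{-\nu} \longrightarrow \nabla_0^{\frac{\infty}{2}}$$
for $\nu \in \mathbf{Y}$ and $\lambda \in \mathbf{Y}^+$ sufficiently dominant so that $t_{\nu+\lambda} \in W^S_{\mathrm{ext}}$. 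Each such map is built by first projecting via the Hecke eigensheaf property (Proposition \ref{prop hecke geometric}) onto the highest-weight component $\mathcal{IC}^{\frac{\infty}{2}}_{-\nu-\lambda}$, then using \eqref{eq can} to pass to $\mathcal{N}_{t_{\nu+\lambda}} \star \nabla^{\frac{\infty}{2}}_{-\nu-\lambda}$, and finally invoking Proposition \ref{Prop Action of convolution on standard objects} to identify this with $\nabla_0^{\frac{\infty}{2}}$. Compatibility with the transition morphisms in the inductive limit defining $(\widehat{\mathcal{Z}}'_0)_\nu$ will follow from the commutativity of diagram \eqref{diagram 51}, while compatibility with the coequalizer relation — equating the right $\mathcal{R}$-action on $\widehat{\mathcal{Z}}'_0$ (built from \eqref{eq 513}) with the left $\mathcal{R}$-action on $\mathcal{IC}^{\frac{\infty}{2}}$ (§\ref{section Action of the regular perverse sheaf}) — relies precisely on the diagrams in Conjecture \ref{conj commut diagram}.

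To show that $\eta$ is an isomorphism, I would first reduce to a strictly antidominant parameter. For $\gamma \in \mathbf{Y}^{++}$, convolution on the left with $\mathcal{N}_\gamma$ carries $\mathrm{Conv}^{\mathrm{Hecke}}(\widehat{\mathcal{Z}}'_{-\gamma})$ to $\mathrm{Conv}^{\mathrm{Hecke}}(\widehat{\mathcal{Z}}'_0)$ (Lemma \ref{lem commute convo}) and $\nabla_{-\gamma}^{\frac{\infty}{2}}$ to $\nabla_0^{\frac{\infty}{2}}$ (Proposition \ref{Prop Action of convolution on standard objects}), so it suffices to prove $\mathrm{Conv}^{\mathrm{Hecke}}(\widehat{\mathcal{Z}}'_{-\gamma}) \simeq \nabla_{-\gamma}^{\frac{\infty}{2}}$ for such $\gamma$. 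Strict dominance ensures $t_{-\gamma} = w_0 \cdot (w_0 t_{-\gamma})$ with $w_0 t_{-\gamma} \in {}^S W_{\mathrm{ext}}$, so Proposition \ref{Prop averaging simples} combined with Proposition \ref{prop semi-simple IW} identifies $\mathrm{Av}_{I_\mathrm{u},\psi}(\widehat{\mathcal{Z}}'_{-\gamma})$ with the simple object $\widehat{\mathcal{L}}^{\mathcal{IW}}_{w_0 t_{-\gamma}}$, while Proposition \ref{prop averaging simples 2} and Theorem \ref{prop clean IC} give $\mathrm{Av}_{I_\mathrm{u},\psi}(\nabla_{-\gamma}^{\frac{\infty}{2}}) \simeq \mathrm{IC}_{-\gamma}^\psi$. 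Using Proposition \ref{prop commutes ave} and the Iwahori-Whittaker analog of Corollary \ref{coro simples} (based on Proposition \ref{conv whitt}), both averages are identified with the same simple object of $\mathrm{Perv}_{(I_\mathrm{u}^-,\psi)}(\mathcal{F}l^{\frac{\infty}{2}})$, and the averaged morphism is a nonzero map between simples, hence an isomorphism. By Proposition \ref{prop functor average}, $\ker(\eta)$ and $\mathrm{coker}(\eta)$ are then partially integrable, and a socle/Ext argument using Proposition \ref{prop head}, Corollaries \ref{coro simple socle IC} and \ref{coro simples}, and Proposition \ref{prop ext} forces them to vanish.

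The main obstacle is the coequalizer-compatibility required to define $\eta$: verifying that the projections to the highest-weight component of Proposition \ref{prop hecke geometric} intertwine correctly the two $\mathcal{R}$-actions is precisely what the diagrams of Conjecture \ref{conj commut diagram} provide, and nothing weaker is visibly sufficient. A secondary difficulty is the bookkeeping of dominance conditions — in particular the fact that $0 \notin {}^S W_\mathrm{ext}$, which obstructs a direct application of Proposition \ref{Prop averaging simples} to $\widehat{\mathcal{Z}}'_0$ — handled by the reduction to a strictly antidominant parameter via Lemma \ref{lem commute convo} and Proposition \ref{Prop Action of convolution on standard objects} outlined above.
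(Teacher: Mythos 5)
Your overall structure — construct a canonical morphism via the universal property of the coequalizer, then show it is an isomorphism by averaging — is the right one, and mirrors the paper. But there are two substantive problems, one of which is a genuine gap.

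First, you have misplaced the role of Conjecture~\ref{conj commut diagram}. You assert that the coequalizer-compatibility needed to \emph{define} $\eta$ is ``precisely what the diagrams of Conjecture~\ref{conj commut diagram} provide.'' This is not the case: the compatibilities needed to build the morphism towards the \emph{costandard} object $\nabla_0^{\frac{\infty}{2}}$ — both with the transition maps in the inductive limit defining $(\widehat{\mathcal{Z}}'_0)_\mu$ and with the two $\mathcal{R}$-actions entering the coequalizer — are exactly the diagrams \eqref{diagram 50} and \eqref{diagram 51}, which are the content of the \emph{proved} Proposition~\ref{prop diagram 1}, built from the canonical map $a_\nu\colon \mathcal{IC}^{\frac{\infty}{2}}_\nu \to \nabla_\nu^{\frac{\infty}{2}}$. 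Conjecture~\ref{conj commut diagram} concerns instead the map $\widetilde{a}_\nu\colon \mathrm{IC}^{\frac{\infty}{2}}_\nu \to \Delta_{w_0 t_{\nu+2\zeta}}^{\frac{\infty}{2}}$ towards a shifted \emph{standard} object, and this is precisely what one needs to build the candidate inverse $\gamma\colon \nabla_0^{\frac{\infty}{2}} \to \mathrm{Conv}^{\mathrm{Hecke}}(\widehat{\mathcal{Z}}'_0)$ via its Verdier dual $\mathbb{D}(\gamma)\colon \mathrm{Conv}^{\mathrm{Hecke}}(\widehat{\mathcal{Z}}_0) \to \Delta_0^{\frac{\infty}{2}}$, using the expression of $\widehat{\mathcal{Z}}_0$ as a twisted co-Verma from Corollary~\ref{coro ind verma}.

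Second, and this is the real gap: having shown $\mathrm{Av}_{(I_\mathrm{u},\psi)}(\eta)$ is an isomorphism, you claim $\ker(\eta)$ and $\mathrm{coker}(\eta)$ are forced to vanish by ``a socle/Ext argument.'' Partial integrability of the kernel and cokernel, together with indecomposability of both source and target, is not enough by itself: a map that becomes an isomorphism after averaging can still have nontrivial partially integrable kernel or cokernel (e.g.\ a projection of an indecomposable extension onto an indecomposable quotient). Corollary~\ref{coro simple socle IC} identifies the socle of $\nabla_0^{\frac{\infty}{2}}$ as $\mathrm{IC}_0^{\frac{\infty}{2}}$, which is itself partially integrable, so it does not immediately obstruct a nonzero cokernel; and the Ext vanishing of Proposition~\ref{prop ext} controls maps from standards, not the full composition series. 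The paper resolves this precisely by constructing the inverse morphism $\gamma$ (using the conjecture), showing $\mathrm{Av}_{(I_\mathrm{u},\psi)}(\gamma)$ is an isomorphism too, and then using indecomposability of $\mathrm{Conv}^{\mathrm{Hecke}}(\widehat{\mathcal{Z}}'_0)$ (Lemma~\ref{lem indecomposable}(3)) so that $\mathrm{End}$ is local and $\gamma\circ\varphi$, $\varphi\circ\gamma$ must be invertible. Your outline never constructs $\gamma$, so this step is missing.

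A minor point: the reduction to a strictly antidominant parameter to apply Proposition~\ref{Prop averaging simples} is unnecessary. Writing $0 = e\cdot e$ with $e$ minimal in the coset $We$, the proposition applies directly, and $\mathrm{Av}_{(I_\mathrm{u},\psi)}(\widehat{\mathcal{Z}}'_0)\simeq \widehat{\mathcal{Z}}'^{\mathcal{IW}}_0\simeq \widehat{\mathcal{L}}^{\mathcal{IW}}_0$ by Proposition~\ref{prop semi-simple IW}. This extra step is harmless but superfluous.
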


\subsection{Proof of Proposition \ref{proposition costandrad 0}} Recall (cf. Proposition \ref{prop head}) that, for any $w\in W_\mathrm{ext}$, the object $\widehat{\mathcal{Z}}_{w}$ has a simple head, isomorphic to $\widehat{\mathcal{L}}_{w}$. We start with a lemma (inspired by {\cite[Corollary 4.4.14]{ABBGM}}).
\begin{lem}\label{lem indecomposable} Let $\nu\in\mathbf{Y}$.
		\begin{enumerate}
			\item The kernel of 
   $$f:\mathrm{Conv}^{\mathrm{Hecke}}(\widehat{\mathcal{Z}}_{w_0t_\nu}\to\widehat{\mathcal{L}}_{w_0t_\nu})$$ 
   is partially integrable.
			\item For any $w\in W$, $\mathrm{IC}_{w_0t_\nu}^{\frac{\infty}{2}}$ is the only non-partially integrable irreducible sub-quotient of $\mathrm{Conv}^{\mathrm{Hecke}}(\widehat{\mathcal{Z}}_{wt_\nu})$.
			\item The object $\mathrm{IC}_{w_0t_\nu}^{\frac{\infty}{2}}$ is the head of $\mathrm{Conv}^{\mathrm{Hecke}}(\widehat{\mathcal{Z}}'_{\nu})$ and the socle of $\mathrm{Conv}^{\mathrm{Hecke}}(\widehat{\mathcal{Z}}_{\nu})$. In particular, $\mathrm{Conv}^{\mathrm{Hecke}}(\widehat{\mathcal{Z}}'_{\nu})$ and $\mathrm{Conv}^{\mathrm{Hecke}}(\widehat{\mathcal{Z}}_{\nu})$ are indecomposable.
		\end{enumerate}
	\end{lem}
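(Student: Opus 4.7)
The common strategy is to apply the exact Iwahori--Whittaker averaging $\mathrm{Av}_{(I_\mathrm{u},\psi)}$ on both sides of $\mathrm{Conv}^{\mathrm{Hecke}}$ via Proposition~\ref{prop commutes ave}. Since $\mathrm{Av}_{(I_\mathrm{u},\psi)}$ is faithful modulo partially integrable objects (Proposition~\ref{prop functor average}) and lands in the semi-simple category $\mathrm{mod}^{\mathbf{Y}}_{\mathcal{IW}}(\mathcal{R})$ (Proposition~\ref{prop semi-simple IW}), the non-partially-integrable content of $\mathrm{Conv}^{\mathrm{Hecke}}$-images reduces to counting IW simples. For any $w\in W_{\mathrm{ext}}$, writing $w=vu$ with $v\in W$ and $u\in{^SW_{\mathrm{ext}}}$, Propositions~\ref{Prop averaging simples} and~\ref{prop semi-simple IW} give $\mathrm{Av}_{(I_\mathrm{u},\psi)}(\widehat{\mathcal{Z}}'_{w})\simeq\widehat{\mathcal{L}}^{\mathcal{IW}}_{u}$; since $\widehat{\mathcal{Z}}_{w}=\mathbb{D}(\widehat{\mathcal{Z}}'_w)$ has the same composition factors and the IW target is semi-simple, we also get $\mathrm{Av}_{(I_\mathrm{u},\psi)}(\widehat{\mathcal{Z}}_{w})\simeq\widehat{\mathcal{L}}^{\mathcal{IW}}_{u}$. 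Moreover, Corollary~\ref{coro simples} combined with Proposition~\ref{prop commutes ave} shows $\mathrm{Av}_{(I_\mathrm{u},\psi)}(\widehat{\mathcal{L}}_{w})$ is non-zero precisely when $\mathrm{IC}^{\frac{\infty}{2}}_w$ is not partially integrable, i.e. when $v=w_0$; I will use this non-vanishing criterion repeatedly.

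For part~(1), exactness of $\mathrm{Conv}^{\mathrm{Hecke}}$ (Proposition~\ref{lem exact}) identifies $\ker(f)$ with $\mathrm{Conv}^{\mathrm{Hecke}}(K)$ for $K:=\ker(\widehat{\mathcal{Z}}_{w_0 t_\nu}\twoheadrightarrow\widehat{\mathcal{L}}_{w_0 t_\nu})$. Applying $\mathrm{Av}_{(I_\mathrm{u},\psi)}$, both $\mathrm{Av}_{(I_\mathrm{u},\psi)}(\widehat{\mathcal{Z}}_{w_0 t_\nu})$ and $\mathrm{Av}_{(I_\mathrm{u},\psi)}(\widehat{\mathcal{L}}_{w_0 t_\nu})$ are isomorphic to $\widehat{\mathcal{L}}^{\mathcal{IW}}_{u}$ (for the latter, using the $v=w_0$ criterion above), so the averaged surjection is an isomorphism of simples and $\mathrm{Av}_{(I_\mathrm{u},\psi)}(K)=0$. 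Propositions~\ref{prop commutes ave} and~\ref{prop functor average} then give that $\mathrm{Conv}^{\mathrm{Hecke}}(K)$ is partially integrable, as required.

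For part~(2), the same averaging applied to $\widehat{\mathcal{Z}}_{wt_\nu}$ (with $wt_\nu=v'u'$) gives $\mathrm{Av}_{(I_\mathrm{u},\psi)}(\widehat{\mathcal{Z}}_{wt_\nu})\simeq\widehat{\mathcal{L}}^{\mathcal{IW}}_{u'}$; exactness of $\mathrm{Av}_{(I_\mathrm{u},\psi)}\circ\mathrm{Conv}^{\mathrm{Hecke}}$ and semi-simplicity on the IW side force every non-partially-integrable composition factor of $\mathrm{Conv}^{\mathrm{Hecke}}(\widehat{\mathcal{Z}}_{wt_\nu})$ to average to one and the same IW simple. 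The bijection between non-partially-integrable simples $\mathrm{IC}^{\frac{\infty}{2}}_{w_0 t_\mu}$ and IW simples (via the $v=w_0$ criterion) then forces them all to equal $\mathrm{IC}^{\frac{\infty}{2}}_{w_0 t_\nu}$, the identification $u'\leftrightarrow t_\nu$ going through the length-additive factorization $w_0 t_\nu=w_0\cdot t_\nu$ supplied by Lemma~\ref{lem length}. For part~(3), Lemma~\ref{lem verdier commute} combined with self-duality of $\mathrm{IC}^{\frac{\infty}{2}}_{w_0 t_\nu}$ reduces the head/socle assertions for $\widehat{\mathcal{Z}}'_{\nu}$ and $\widehat{\mathcal{Z}}_{\nu}$ to identifying the socle of $\mathrm{Conv}^{\mathrm{Hecke}}(\widehat{\mathcal{Z}}_{\nu})$. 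Part~(2) with $w=e$ pins down $\mathrm{IC}^{\frac{\infty}{2}}_{w_0 t_\nu}$ as its only non-partially-integrable composition factor (with multiplicity one by IW semi-simplicity); to exclude partially integrable simples $\mathrm{IC}^{\frac{\infty}{2}}_{w't_\mu}\simeq\mathrm{Conv}^{\mathrm{Hecke}}(\widehat{\mathcal{L}}_{w't_\mu})$ ($w'\neq w_0$) from the socle, I would resolve $\widehat{\mathcal{L}}_{w't_\mu}$ by free graded $\mathcal{R}$-modules, reinterpret the resolution on the semi-infinite side via Proposition~\ref{prop can isom}, and use Proposition~\ref{prop iso morphisms} to translate the Hom into $\mathrm{mod}^{\mathbf{Y}}_{I_\mathrm{u}}(\mathcal{R})$ where Proposition~\ref{prop ext ind objects} supplies the required vanishing. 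Indecomposability of $\mathrm{Conv}^{\mathrm{Hecke}}(\widehat{\mathcal{Z}}_{\nu})$ and $\mathrm{Conv}^{\mathrm{Hecke}}(\widehat{\mathcal{Z}}'_{\nu})$ is then immediate from the simplicity of the socle/head.

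\textbf{Main obstacle.} The sensitive step is the last one: averaging is completely blind to partially integrable simples, so eliminating them from the socle of $\mathrm{Conv}^{\mathrm{Hecke}}(\widehat{\mathcal{Z}}_{\nu})$ must be carried out inside $\mathrm{Perv}_{I_\mathrm{u}}(\mathcal{F}l^{\frac{\infty}{2}})$ itself. No clean adjunction for $\mathrm{Conv}^{\mathrm{Hecke}}$ is available (Proposition~\ref{prop iso morphisms} governs $\Phi$, not $\mathrm{Conv}^{\mathrm{Hecke}}$), so one must manufacture one by free resolutions, and control the resulting spectral sequence. Following \cite{ABBGM}, I expect this is exactly where Conjecture~\ref{conj commut diagram} becomes essential: its morphism $\widetilde{a}_\nu\colon\mathrm{IC}^{\frac{\infty}{2}}_{\nu}\to\Delta^{\frac{\infty}{2}}_{w_0 t_{\nu+2\zeta}}$ is what links the baby Verma side to standard objects on the semi-infinite side, enabling Hom computations against $\mathrm{Conv}^{\mathrm{Hecke}}(\widehat{\mathcal{Z}}_{\nu})$ to be replaced by Ext-vanishing against $\Delta^{\frac{\infty}{2}}$-objects via Proposition~\ref{prop ext}.
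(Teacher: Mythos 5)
Your treatments of parts~(1) and~(2) are essentially sound and use the same core strategy as the paper — applying $\mathrm{Av}_{(I_\mathrm{u},\psi)}$ through $\mathrm{Conv}^{\mathrm{Hecke}}$ via Proposition~\ref{prop commutes ave} and exploiting the semi-simplicity of the Iwahori--Whittaker side together with the faithfulness modulo partially integrable objects from Proposition~\ref{prop functor average}. A small cosmetic difference: for~(2) the paper reduces to $w=w_0$ by explicitly fabricating a comparison morphism $\mathrm{Conv}^{\mathrm{Hecke}}(\widehat{\mathcal{Z}}_{w_0t_\nu})\to\mathrm{Conv}^{\mathrm{Hecke}}(\widehat{\mathcal{Z}}_{wt_\nu})$ from the map $\mathcal{D}_0\to\mathcal{D}_{w'}$, then uses part~(1); you instead average $\widehat{\mathcal{Z}}_{wt_\nu}$ directly and count. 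Both work, but the paper's comparison-map route makes the reduction to the $w_0$ case completely transparent and avoids having to re-argue the identification of the relevant $\mathrm{IC}^{\frac{\infty}{2}}_{w_0t_\mu}$ for each $w$.

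For part~(3), however, your proposal has a real gap, and you correctly flag where it is: averaging cannot rule out partially integrable constituents of the head of $\mathrm{Conv}^{\mathrm{Hecke}}(\widehat{\mathcal{Z}}'_\nu)$. Your proposed remedy — free resolutions, Proposition~\ref{prop iso morphisms}, Proposition~\ref{prop ext ind objects}, and ultimately Conjecture~\ref{conj commut diagram} — is not what the paper does and is not needed here. The paper's argument is geometric and unconditional: suppose $\mathrm{Conv}^{\mathrm{Hecke}}(\widehat{\mathcal{Z}}'_{\nu})$ surjects onto a partially integrable simple $\mathcal{S}$. Since $\mathcal{S}$ is an IC-sheaf and partially integrable, its support forces it to be equivariant under a minimal parabolic $P_s$ for some simple reflection $s$, so $\mathcal{N}_s\star\mathcal{S}$ sits in perverse cohomological degree~$1$, i.e.\ is not perverse. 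On the other hand, $\mathcal{N}_s\star\mathrm{Conv}^{\mathrm{Hecke}}(\widehat{\mathcal{Z}}'_{\nu})\simeq\mathrm{Conv}^{\mathrm{Hecke}}(\widehat{\mathcal{Z}}'_{st_\nu})$ (via Lemma~\ref{lem commute convo}/Proposition~\ref{prop convolution costandard 2}), which is perverse. Therefore
$$\mathrm{Hom}\bigl(\mathcal{N}_s\star\mathrm{Conv}^{\mathrm{Hecke}}(\widehat{\mathcal{Z}}'_{\nu}),\;\mathcal{N}_s\star\mathcal{S}\bigr)=0,$$
contradicting the fact that $\mathcal{N}_s\star(-)$ is an equivalence (inverse $\mathcal{D}_s\star(-)$ by Lemma~\ref{lem conv standard}), so it must carry the assumed surjection to a non-zero map. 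This argument stays entirely inside $\mathrm{Perv}_{I_\mathrm{u}}(\mathcal{F}l^{\frac{\infty}{2}})$ and, crucially, requires no input from Conjecture~\ref{conj commut diagram}; the conjecture enters only later, for Proposition~\ref{proposition costandrad 0}, when one needs the full highest-weight identification rather than just indecomposability. You should replace your tentative Ext-vanishing scheme for the socle with this parabolic-equivariance trick.
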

 \begin{proof}
 \begin{enumerate}
			\item By exactness of the functor $\mathrm{Av}_{(I_\mathrm{u},\psi)}$ (second point of Proposition \ref{prop functor average}), it suffices to show that the image of $f$ through $\mathrm{Av}_{(I_\mathrm{u},\psi)}$ is an isomorphism. But we have
   $$\mathrm{Av}_{(I_\mathrm{u},\psi)}(f)\simeq \mathrm{Conv}^{\mathrm{Hecke}}_\mathcal{IW}(\mathrm{Av}_{(I_\mathrm{u},\psi)}(\widehat{\mathcal{Z}}_{w_0t_\nu}\to\widehat{\mathcal{L}}_{w_0t_\nu})) $$
   thanks to Proposition \ref{prop commutes ave}, and the morphism $\mathrm{Av}_{(I_\mathrm{u},\psi)}(\widehat{\mathcal{Z}}_{w_0t_\nu}\to\widehat{\mathcal{L}}_{w_0t_\nu})$ is easily seen to be an isomorphism thanks to Proposition \ref{prop semi-simple IW}.
			\item Let $w'\in W$ be such that $w=w'w_0$. We have a canonical morphism 
   $\mathcal{D}_0\to \mathcal{D}_{w'}$, whose image through $\mathrm{Av}_{(I_\mathrm{u},\psi)}$ is an isomorphism. Convolving it with $\mathrm{id}:\mathrm{Conv}^{\mathrm{Hecke}}(\widehat{\mathcal{Z}}_{w_0t_\nu})\to \mathrm{Conv}^{\mathrm{Hecke}}(\widehat{\mathcal{Z}}_{w_0t_\nu})$ we obtain a morphism
   \begin{align*}
       \mathrm{Conv}^{\mathrm{Hecke}}(\widehat{\mathcal{Z}}_{w_0t_\nu})\to \mathcal{D}_{w'}\star\mathrm{Conv}^{\mathrm{Hecke}}(\widehat{\mathcal{Z}}_{w_0t_\nu})&\overset{\text{Lem. \ref{lem commute convo}}}{\simeq} \mathrm{Conv}^{\mathrm{Hecke}}(\mathcal{D}_{w'}\star\widehat{\mathcal{Z}}_{w_0t_\nu})\\
       &\overset{\text{Prop. \ref{prop convolution costandard 2}}}{\simeq}\mathrm{Conv}^{\mathrm{Hecke}}(\widehat{\mathcal{Z}}_{wt_\nu}).
   \end{align*}
   The image of the above map through $\mathrm{Av}_{(I_\mathrm{u},\psi)}$ is an isomorphism. So we conclude thanks to the first point and the second point of Proposition \ref{prop functor average}.
			\item Since Verdier duality swaps $\mathrm{Conv}^{\mathrm{Hecke}}(\widehat{\mathcal{Z}}'_{\nu})$ and $\mathrm{Conv}^{\mathrm{Hecke}}(\widehat{\mathcal{Z}}_{\nu})$ (Lemma \ref{lem verdier commute}) and preserves $\mathrm{IC}_{w_0t_\nu}^{\frac{\infty}{2}}$, it suffices to show that $\mathrm{IC}_{w_0t_\nu}^{\frac{\infty}{2}}$ is the head of $\mathrm{Conv}^{\mathrm{Hecke}}(\widehat{\mathcal{Z}}'_{\nu})$. In view of the second point, it suffices to show that, if $\mathrm{Conv}^{\mathrm{Hecke}}(\widehat{\mathcal{Z}}'_{\nu})$ maps to an irreducible object $\mathcal{S}$, then $\mathcal{S}$ must be non-partially integrable. Here we repeat the argument of \cite[Proposition 2.3.2]{ABBGM}. So assume that $\mathrm{Conv}^{\mathrm{Hecke}}(\widehat{\mathcal{Z}}'_{\nu})$ maps to a partially integrable irreducible object $\mathcal{S}\in \mathrm{Perv}_{I_\mathrm{u}}(\mathcal{F}l^{\frac{\infty}{2}})$. Since $\mathcal{S}$ is partially integrable and is an IC-sheaf, it must be equivariant with respect to a minimal parabolic subgroup $P_s\subset G$ associated with a simple reflection $s$. Then $\mathcal{N}_s\star \mathcal{S}$ lives in perverse cohomological degree $1$. However, we have an isomorphism $$\mathcal{N}_s\star\mathrm{Conv}^{\mathrm{Hecke}}(\widehat{\mathcal{Z}}'_{\nu})\simeq \mathrm{Conv}^{\mathrm{Hecke}}(\widehat{\mathcal{Z}}'_{st_\nu})$$ 
   by Proposition \ref{prop convolution costandard 2}, so that the above complex is perverse. Therefore 
   $$\mathrm{Hom}(\mathcal{N}_s\star\mathrm{Conv}^{\mathrm{Hecke}}(\widehat{\mathcal{Z}}'_{\nu}),\mathcal{N}_s\star \mathcal{S})=0,$$ which is a contradiction since the functor $\mathcal{N}_s\star(-)$ is an equivalence (with inverse $\mathcal{D}_s\star(-)$ by the first point of Lemma \ref{lem conv standard}).
		\end{enumerate}\end{proof}

We now construct a morphism 
\begin{equation}\label{eq morphism constandard}
  \varphi: \mathrm{Conv}^{\mathrm{Hecke}}(\widehat{\mathcal{Z}}'_0)\to\nabla^{\frac{\infty}{2}}_0. 
\end{equation}
Let $\mu\in\mathbf{Y}$. Recall (§\ref{section Baby Verma and co-Verma modules}) that by  $(\widehat{\mathcal{Z}}'_0)_\mu=\varinjlim_{\lambda\in\mathbf{Y}^+-\mu} \nabla_{\mu+\lambda}\star \mathcal{I}^{-w_0(\lambda)}_*.$ For any $\lambda\in\mathbf{Y}^+-\mu$ we define a morphism
\begin{align*}
    \nabla_{\mu+\lambda}\star \mathcal{I}^{-w_0(\lambda)}_*\star\mathcal{IC}^{\frac{\infty}{2}}_{-\mu}&\xrightarrow[\sim]{\text{Prop. \ref{prop hecke geometric}}}\bigoplus_{\nu}\nabla_{\mu+\lambda}\star\mathcal{IC}^{\frac{\infty}{2}}_{\nu-\mu}\otimes\mathrm{Sat}(\mathcal{I}^{-w_0(\lambda)}_*)(w_0(\nu)) \\
    &\to \nabla_{\mu+\lambda}\star\mathcal{IC}^{\frac{\infty}{2}}_{-\mu-\lambda}\xrightarrow{\eqref{eq can}} \mathcal{N}_{\mu+\lambda}\star\nabla^{\frac{\infty}{2}}_{-\mu-\lambda}\\
    &\xrightarrow[\sim]{\text{Prop. \ref{Prop Action of convolution on standard objects}}}\nabla^{\frac{\infty}{2}}_{0}.
\end{align*}
Thanks to the commutativity of the diagrams \eqref{diagram 50} and \eqref{diagram 51}, the above morphisms are compatible with the transition morphisms in the limit defining $(\widehat{\mathcal{Z}}'_0)_\mu$. We therefore obtain a collection of morphisms $((\widehat{\mathcal{Z}}'_0)_\mu\star\mathcal{IC}^{\frac{\infty}{2}}_{-\mu}\to \nabla^{\frac{\infty}{2}}_{0})_{\mu\in\mathbf{Y}}$. The fact that this collection of morphisms factors through $\mathrm{Conv}^{\mathrm{Hecke}}(\widehat{\mathcal{Z}}'_0)$ follows from the construction of the structures of right $\mathcal{R}$-module on $\widehat{\mathcal{Z}}'_0$ (§\ref{section Baby Verma and co-Verma modules}) and left $\mathcal{R}$-module on $\mathrm{IC}^{\frac{\infty}{2}}$ (§\ref{section Action of the regular perverse sheaf}). So we have defined a map $\varphi:\mathrm{Conv}^{\mathrm{Hecke}}(\widehat{\mathcal{Z}}'_0)\to\nabla^{\frac{\infty}{2}}_0$.
\begin{prop}\label{prop varphi iso}
    The morphism $\mathrm{Av}_{(I_\mathrm{u},\psi)}(\varphi)$ is an isomorphism.
\end{prop}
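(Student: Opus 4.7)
The plan is to compute both the source and target of $\mathrm{Av}_{(I_\mathrm{u},\psi)}(\varphi)$ using the results assembled so far, show they are both isomorphic to the simple object $\mathrm{IC}^\psi_0$, and then verify that the map $\mathrm{Av}_{(I_\mathrm{u},\psi)}(\varphi)$ is non-zero.

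For the source, Proposition~\ref{prop commutes ave} gives $\mathrm{Av}_{(I_\mathrm{u},\psi)}(\mathrm{Conv}^{\mathrm{Hecke}}(\widehat{\mathcal{Z}}'_0)) \simeq \mathrm{Conv}^{\mathrm{Hecke}}_\mathcal{IW}(\mathrm{Av}_{(I_\mathrm{u},\psi)}(\widehat{\mathcal{Z}}'_0))$. Writing $0 \in W_\mathrm{ext}$ as the product $e \cdot 0$ with $e \in W$ and $0 \in {^SW_\mathrm{ext}}$, Proposition~\ref{Prop averaging simples} identifies $\mathrm{Av}_{(I_\mathrm{u},\psi)}(\widehat{\mathcal{Z}}'_0) \simeq \widehat{\mathcal{Z}}'^\mathcal{IW}_0$, and Proposition~\ref{prop semi-simple IW} gives $\widehat{\mathcal{Z}}'^\mathcal{IW}_0 \simeq \widehat{\mathcal{L}}^\mathcal{IW}_0$, a simple object. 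Applying the Iwahori--Whittaker analog of Corollary~\ref{coro simples} yields $\mathrm{Conv}^{\mathrm{Hecke}}_\mathcal{IW}(\widehat{\mathcal{L}}^\mathcal{IW}_0) \simeq \mathrm{IC}^\psi_0$. For the target, Proposition~\ref{prop averaging simples 2} gives $\mathrm{Av}_{(I_\mathrm{u},\psi)}(\nabla^{\frac{\infty}{2}}_0) \simeq \nabla^\psi_0$, which by Theorem~\ref{prop clean IC} coincides with $\mathrm{IC}^\psi_0$. Hence $\mathrm{Av}_{(I_\mathrm{u},\psi)}(\varphi)$ is a morphism between two simple objects both isomorphic to $\mathrm{IC}^\psi_0$, and is therefore either zero or an isomorphism.

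To rule out the zero case, I plan to exploit the faithfulness of the averaging functor on the Serre quotient $^f\mathrm{Perv}_{I_\mathrm{u}}(\mathcal{F}l^{\frac{\infty}{2}})$ given by Proposition~\ref{prop functor average}: it suffices to show that $\varphi$ remains non-zero modulo partially integrable objects. Combining the non-vanishing of $\mathrm{Av}_{(I_\mathrm{u},\psi)}(\nabla^{\frac{\infty}{2}}_0) \simeq \mathrm{IC}^\psi_0$ with the faithfulness of $\mathrm{Av}_{(I_\mathrm{u},\psi)}$, the image of $\nabla^{\frac{\infty}{2}}_0$ in $^f\mathrm{Perv}$ is the simple object $\mathrm{IC}^{\frac{\infty}{2}}_{w_0}$; similarly, Lemma~\ref{lem indecomposable}(3) together with the same faithfulness argument identifies the image of $\mathrm{Conv}^{\mathrm{Hecke}}(\widehat{\mathcal{Z}}'_0)$ in $^f\mathrm{Perv}$ with $\mathrm{IC}^{\frac{\infty}{2}}_{w_0}$. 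By the naturality of the construction of $\varphi$ and the commutation square of Proposition~\ref{prop commutes ave}, the morphism $\mathrm{Av}_{(I_\mathrm{u},\psi)}(\varphi)$ corresponds to the Iwahori--Whittaker analog $\varphi^\mathcal{IW}: \mathrm{Conv}^{\mathrm{Hecke}}_\mathcal{IW}(\widehat{\mathcal{Z}}'^\mathcal{IW}_0) \to \nabla^\psi_0$ obtained by the same colimit recipe with $\mathrm{Perv}_{I_\mathrm{u}}(\mathrm{Gr})$ replaced by $\mathrm{Perv}_\mathcal{IW}(\mathrm{Gr})$.

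The main obstacle is verifying that this IW-analog $\varphi^\mathcal{IW}$ is non-zero. In the IW setting the object $\widehat{\mathcal{Z}}'^\mathcal{IW}_0 \simeq \widehat{\mathcal{L}}^\mathcal{IW}_0$ is already simple; using the IW analog of Proposition~\ref{prop can isom} together with Proposition~\ref{conv whitt}, one obtains $\mathrm{Conv}^{\mathrm{Hecke}}_\mathcal{IW}(\widehat{\mathcal{L}}^\mathcal{IW}_0) \simeq \mathrm{IC}^\psi_0$. The strategy is then to trace through the colimit defining $(\widehat{\mathcal{Z}}'^\mathcal{IW}_0)_\mu$ and verify that for some stable choice of $(\mu,\lambda)$ the assembled map
\[
\nabla^\mathcal{IW}_{\mu+\lambda}\star \mathcal{I}^{-w_0(\lambda)}_*\star\mathcal{IC}^{\frac{\infty}{2}}_{-\mu}\;\longrightarrow\;\nabla^\psi_0
\]
is a composition of canonical non-zero arrows (unit $\mathcal{IC}^0 \to \mathcal{I}^\lambda_*\star\mathcal{I}^{-w_0(\lambda)}_*$, Hecke eigensheaf isomorphism of Proposition~\ref{prop hecke geometric}, canonical morphism $\mathcal{IC}^{\frac{\infty}{2}}_\nu \to \nabla^{\frac{\infty}{2}}_\nu$), whose composition is non-zero by compatibility with the projection to the one-dimensional weight space. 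Carefully tracking scalars through the compatibility diagrams \eqref{diagram 50} and \eqref{diagram 51} of Proposition~\ref{prop diagram 1} (and, where needed, through Conjecture~\ref{conj commut diagram}) is the technical core of the argument; semi-simplicity in the IW setting (Proposition~\ref{prop semi-simple IW}) makes this tractable since no zero composition factor can intervene.
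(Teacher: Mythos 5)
Your reduction is exactly the paper's: you identify the source with $\mathrm{Conv}^{\mathrm{Hecke}}_{\mathcal{IW}}(\widehat{\mathcal{Z}}'^{\mathcal{IW}}_0)\simeq\mathrm{Conv}^{\mathrm{Hecke}}_{\mathcal{IW}}(\widehat{\mathcal{L}}^{\mathcal{IW}}_0)\simeq\mathrm{IC}^\psi_0$ via Propositions \ref{prop commutes ave}, \ref{Prop averaging simples}, \ref{prop semi-simple IW} and (the IW analogue of) Corollary \ref{coro simples}, identify the target with $\mathrm{IC}^\psi_0$ via Proposition \ref{prop averaging simples 2} and Theorem \ref{prop clean IC}, and conclude that it suffices to show $\mathrm{Av}_{(I_\mathrm{u},\psi)}(\varphi)\neq 0$. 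Up to that point the argument is correct and coincides with the paper's.

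The non-vanishing step, however, is a genuine gap in your write-up: you describe it as ``the technical core'' and propose to verify that the assembled map is ``a composition of canonical non-zero arrows, whose composition is non-zero,'' invoking semi-simplicity of the IW category so that ``no zero composition factor can intervene.'' That is not an argument — a composite of non-zero morphisms between semisimple objects can perfectly well vanish (project to one summand, include a different one), so semi-simplicity by itself closes nothing, and ``tracking scalars through the compatibility diagrams'' is precisely the work that remains to be done. The paper closes this differently and more economically: on each $\mu$-component the map $\varphi$ factors as a projection onto a direct summand (the Hecke eigensheaf decomposition of Proposition \ref{prop hecke geometric} followed by projection onto the one-dimensional lowest weight space) composed with the morphism \eqref{eq can}, i.e.\ $\nabla_{\mu+\lambda}\star\mathcal{IC}^{\frac{\infty}{2}}_{-\lambda-\mu}\to\mathcal{N}_{\mu+\lambda}\star\nabla^{\frac{\infty}{2}}_{-\mu-\lambda}\simeq\nabla^{\frac{\infty}{2}}_0$. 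A split surjection survives averaging, so everything reduces to showing that $\mathrm{Av}_{(I_\mathrm{u},\psi)}$ applied to \eqref{eq can} is non-zero; since averaging commutes with the convolution action, this map is $\mathrm{Av}_{(I_\mathrm{u},\psi)}(\mathcal{N}_{\mu+\lambda})\star a_{-\lambda-\mu}$, i.e.\ convolution of a non-zero object with the canonical non-zero map $\mathrm{IC}^{\frac{\infty}{2}}_{-\lambda-\mu}\to\nabla^{\frac{\infty}{2}}_{-\lambda-\mu}$, which is non-zero by construction of the convolution product. You should replace your scalar-tracking plan by this isolation of the single non-formal arrow in the composite; without it the proof is incomplete.
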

\begin{proof}
    We have an isomorphism $\mathrm{Av}_{(I_\mathrm{u},\psi)}(\mathrm{Conv}^{\mathrm{Hecke}}(\widehat{\mathcal{Z}}'_0))\simeq \mathrm{Conv}^{\mathrm{Hecke}}(\widehat{\mathcal{Z}}'^{\mathcal{IW}}_0)$ thanks to propositions \ref{prop commutes ave} and \ref{Prop averaging simples}. But $\widehat{\mathcal{Z}}'^{\mathcal{IW}}_0\simeq\widehat{\mathcal{L}}^{\mathcal{IW}}_0$, so that we have an isomorphism  
    $$\mathrm{Av}_{(I_\mathrm{u},\psi)}(\mathrm{Conv}^{\mathrm{Hecke}}(\widehat{\mathcal{Z}}'_0))\simeq \mathrm{IC}_0^\psi$$ by Corollary \ref{coro simples}. On the other hand, we also have an isomorphism 
    $$\mathrm{Av}_{(I_\mathrm{u},\psi)}( \nabla^{\frac{\infty}{2}}_{0})\overset{\text{Prop. \ref{prop averaging simples 2}}}{\simeq}\nabla^{\psi}_{0}\overset{\text{Prop. \ref{prop clean IC}}}{\simeq} \mathrm{IC}_0^\psi.$$ Therefore it suffices to check that  $\mathrm{Av}_{(I_\mathrm{u},\psi)}(\varphi)$ is nonzero to conclude. For that, it suffices to check that the image of
    $$\nabla_{\mu+\lambda}\star\mathcal{IC}_{-\lambda-\mu}^\frac{\infty}{2}\to \mathcal{N}_{\mu+\lambda}\star\nabla^{\frac{\infty}{2}}_{-\mu-\lambda}\simeq \nabla^{\frac{\infty}{2}}_{0}$$
    through $\mathrm{Av}_{(I_\mathrm{u},\psi)}$ is nonzero for every $\mu\in\mathbf{Y},~\lambda\in\mathbf{Y}^{++}-\mu$, which brings back to checking that 
    $$\mathrm{Av}_{(I_\mathrm{u},\psi)}(\mathcal{N}_{\mu+\lambda}\star\mathrm{IC}_{-\lambda-\mu}^\frac{\infty}{2}\to \mathcal{N}_{\mu+\lambda}\star\nabla^{\frac{\infty}{2}}_{-\mu-\lambda})$$
    is nonzero. But the latter is isomorphic to 
    $$\mathrm{Av}_{(I_\mathrm{u},\psi)}(\mathcal{N}_{\mu+\lambda})\star\mathrm{IC}_{-\lambda-\mu}^\frac{\infty}{2}\to \mathrm{Av}_{(I_\mathrm{u},\psi)}(\mathcal{N}_{\mu+\lambda})\star\nabla^{\frac{\infty}{2}}_{-\mu-\lambda},$$
    where $\mathrm{Av}_{(I_\mathrm{u},\psi)}:\mathrm{Perv}_{I_\mathrm{u}}(\mathrm{Fl})\to \mathrm{Perv}_{\mathcal{IW}}(\mathrm{Fl})$ is the averaging functor. The above morphism is obviously nonzero by construction of the convolution product.
\end{proof}
Next, we construct a morphism
$$\gamma:\nabla^{\frac{\infty}{2}}_0\to\mathrm{Conv}^{\mathrm{Hecke}}(\widehat{\mathcal{Z}}'_0),$$
assuming that Conjecture \ref{conj commut diagram} holds. Equivalently, we construct a morphism
$$\mathbb{D}(\gamma):\mathrm{Conv}^{\mathrm{Hecke}}(\widehat{\mathcal{Z}}_0)\to\Delta^{\frac{\infty}{2}}_0. $$
By Corollary \ref{coro ind verma}, we need to construct (for all $\mu\in\mathbf{Y}$) the last arrow below
\begin{equation}\label{eq last morphism}
    (\widehat{\mathcal{Z}}_0)_\mu\star\mathcal{IC}^{\frac{\infty}{2}}_{-\mu}\simeq ({^{w_0}(\widehat{\mathcal{Z}}'_{w_0t_{-2\zeta}})})_\mu\star\mathcal{IC}^{\frac{\infty}{2}}_{-\mu}\to \Delta^{\frac{\infty}{2}}_0.
\end{equation} 
One can easily deduce from Proposition \ref{prop convolution costandard} that we have an isomorphism 
$$({^{w_0}(\widehat{\mathcal{Z}}'_{w_0t_{-2\zeta}})})_\mu\simeq \mathcal{N}_{w_0}\star^I({^{w_0}(\widehat{\mathcal{Z}}'_{t_{-2\zeta}})})_\mu.$$
Thus, by the first point of Lemma \ref{lem conv standard}, constructing the morphism \eqref{eq last morphism} is equivalent to constructing a morphism between $({^{w_0}(\widehat{\mathcal{Z}}'_{t_{-2\zeta}})})_\mu\star\mathcal{IC}^{\frac{\infty}{2}}_{-\mu}$ and $\mathcal{D}_{w_0}\star^I\Delta^{\frac{\infty}{2}}_0\simeq \Delta^{\frac{\infty}{2}}_{w_0}$. Now, recall that 
$$({^{w_0}(\widehat{\mathcal{Z}}'_{t_{-2\zeta}})})_\mu:=  \varinjlim_{\lambda} \nabla_{w_0(\mu)+\lambda-2\zeta}\star \mathcal{I}^{-w_0(\lambda)}_*$$
and let us write $\delta:=w_0(\mu)+\lambda-2\zeta$ for concision. We define a morphism
\begin{align*}
   \nabla_{\delta}\star \mathcal{I}^{-w_0(\lambda)}_*\star\mathcal{IC}^{\frac{\infty}{2}}_{-\mu}&\xrightarrow[\sim]{\text{Prop. \ref{prop hecke geometric}}}\bigoplus_{\nu}\nabla_{\delta}\star\mathcal{IC}^{\frac{\infty}{2}}_{\nu-\mu}\otimes\mathrm{Sat}(\mathcal{I}^{-w_0(\lambda)}_*)(w_0(\nu)) \\
    &\to \nabla_{\delta}\star\mathcal{IC}^{\frac{\infty}{2}}_{-w_0(\lambda)-\mu}\xrightarrow{\text{Conj. \ref{conj commut diagram}}} \mathcal{N}_{\delta}\star\Delta^{\frac{\infty}{2}}_{w_0t_{-w_0(\lambda)-\mu+2\zeta}}\\
    &\xrightarrow[\sim]{\text{\eqref{eq standard times costandard}}}\Delta^{\frac{\infty}{2}}_{w_0}.
\end{align*}
Thanks to the commutativity of the diagrams \eqref{eq 513} and \eqref{eq 512}, the above morphisms are compatible with the transition morphisms in the limit defining $(\widehat{\mathcal{Z}}_0)_\mu$. We therefore obtain a collection of morphisms $((\widehat{\mathcal{Z}}_0)_\mu\star\mathcal{IC}^{\frac{\infty}{2}}_{-\mu}\to \Delta^{\frac{\infty}{2}}_{0})_{\mu\in\mathbf{Y}}$. Once again, the fact that this collection of morphisms factors through $\mathrm{Conv}^{\mathrm{Hecke}}(\widehat{\mathcal{Z}}_0)$ follows follows from the construction of the structures of right $\mathcal{R}$-module on $\widehat{\mathcal{Z}}'_0$ (§\ref{section Baby Verma and co-Verma modules}) and left $\mathcal{R}$-module on $\mathcal{IC}^{\frac{\infty}{2}}$ (§\ref{section Action of the regular perverse sheaf}).

\begin{prop}
Assume that Conjecture \ref{conj commut diagram} holds. The morphism $\varphi$ is an isomorphism.
\end{prop}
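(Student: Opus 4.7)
The plan is to show that the composition $\gamma\circ\varphi\in\mathrm{End}(\mathrm{Conv}^{\mathrm{Hecke}}(\widehat{\mathcal{Z}}'_0))$ is an isomorphism, and then to deduce that $\varphi$ itself is an isomorphism using indecomposability.

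The first step is to establish that $\mathrm{Av}_{(I_\mathrm{u},\psi)}(\gamma)$ is an isomorphism, in complete analogy with Proposition \ref{prop varphi iso}. Via Lemma \ref{lem verdier commute} together with Propositions \ref{prop commutes ave}, \ref{Prop averaging simples} and \ref{prop semi-simple IW}, one identifies
$$\mathrm{Av}_{(I_\mathrm{u},\psi)}(\mathrm{Conv}^{\mathrm{Hecke}}(\widehat{\mathcal{Z}}_0))\simeq\mathrm{Conv}^{\mathrm{Hecke}}_{\mathcal{IW}}(\widehat{\mathcal{L}}^{\mathcal{IW}}_0)\simeq\mathrm{IC}^{\psi}_0,$$
and similarly $\mathrm{Av}_{(I_\mathrm{u},\psi)}(\Delta_0^{\frac{\infty}{2}})\simeq\Delta_0^{\psi}\simeq\mathrm{IC}_0^\psi$, using Proposition \ref{prop averaging simples 2} and Theorem \ref{prop clean IC}. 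Since $\mathrm{End}(\mathrm{IC}_0^{\psi})=\mathbb{k}$, it is then enough to verify the nonvanishing of $\mathrm{Av}_{(I_\mathrm{u},\psi)}(\mathbb{D}(\gamma))$; this is done by unwinding the explicit construction of $\mathbb{D}(\gamma)$ in the same spirit as the final paragraph of the proof of Proposition \ref{prop varphi iso}.

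Combining this with Proposition \ref{prop varphi iso} gives that $\mathrm{Av}_{(I_\mathrm{u},\psi)}(\gamma\circ\varphi)=\mathrm{Av}_{(I_\mathrm{u},\psi)}(\gamma)\circ\mathrm{Av}_{(I_\mathrm{u},\psi)}(\varphi)$ is a nonzero scalar in $\mathrm{End}(\mathrm{IC}_0^\psi)=\mathbb{k}$. Now $\mathrm{Conv}^{\mathrm{Hecke}}(\widehat{\mathcal{Z}}'_0)$ is indecomposable by Lemma \ref{lem indecomposable}(3) and of finite length by Proposition \ref{lem exact}, so its endomorphism algebra is a finite-dimensional local $\mathbb{k}$-algebra whose maximal ideal is nilpotent. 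If $\gamma\circ\varphi$ were not a unit, it would be nilpotent, and then $\mathrm{Av}_{(I_\mathrm{u},\psi)}(\gamma\circ\varphi)$ would also be a nonzero nilpotent element of the one-dimensional algebra $\mathrm{End}(\mathrm{IC}_0^\psi)$, a contradiction. Hence $\gamma\circ\varphi$ is an isomorphism.

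It follows that $\varphi$ admits a left inverse $(\gamma\circ\varphi)^{-1}\circ\gamma$, so $\varphi$ is a split monomorphism and $\nabla_0^{\frac{\infty}{2}}$ decomposes as $\mathrm{Conv}^{\mathrm{Hecke}}(\widehat{\mathcal{Z}}'_0)\oplus\mathcal{Q}$ in $\mathrm{Perv}_{I_\mathrm{u}}(\mathcal{F}l^{\frac{\infty}{2}})$. Since $\nabla_0^{\frac{\infty}{2}}$ has a simple socle by Corollary \ref{coro simple socle IC}, it is indecomposable, whence $\mathcal{Q}=0$ and $\varphi$ is an isomorphism. The main obstacle in the plan is the verification that $\mathrm{Av}_{(I_\mathrm{u},\psi)}(\mathbb{D}(\gamma))$ is nonzero, since the construction of $\gamma$ relies on Conjecture \ref{conj commut diagram} and requires a careful step-by-step tracking of each morphism through the averaging functor to confirm that none of the factors entering $\mathbb{D}(\gamma)$ becomes zero; a minor subsidiary point is the compatibility of $\mathrm{Av}_{(I_\mathrm{u},\psi)}$ with Verdier duality used in the identification of $\mathrm{Av}_{(I_\mathrm{u},\psi)}(\mathrm{Conv}^{\mathrm{Hecke}}(\widehat{\mathcal{Z}}_0))$, which follows from the $t$-exactness in Lemma \ref{lem averaging}.
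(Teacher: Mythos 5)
Your proposal is correct and follows essentially the same strategy as the paper: establish that $\mathrm{Av}_{(I_\mathrm{u},\psi)}(\gamma)$ is an isomorphism, deduce that $\gamma\circ\varphi$ is a unit in the local endomorphism ring of the indecomposable finite-length object $\mathrm{Conv}^{\mathrm{Hecke}}(\widehat{\mathcal{Z}}'_0)$, and conclude. The only (harmless) divergence is in the endgame: the paper additionally checks that $\varphi\circ\gamma$ is a nonzero scalar multiple of the identity on $\nabla_0^{\frac{\infty}{2}}$ so as to produce a two-sided inverse of $\varphi$, whereas you use only the resulting left inverse together with the indecomposability of $\nabla_0^{\frac{\infty}{2}}$ furnished by its simple socle (Corollary \ref{coro simple socle IC}); both arguments are valid.
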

\begin{proof}
    First, notice that the same arguments as in the proof of Proposition \ref{prop varphi iso} show that the morphism $\mathrm{Av}_{(I_\mathrm{u},\psi)}(\gamma)$ is an isomorphism.

    Thus, the morphism $\mathrm{Av}_{(I_\mathrm{u},\psi)}(\varphi\circ\gamma)\simeq \mathrm{Av}_{(I_\mathrm{u},\psi)}(\gamma)\circ\mathrm{Av}_{(I_\mathrm{u},\psi)}(\varphi)$ is an isomorphism thanks to the previous claim and Proposition \ref{prop varphi iso}. In particular, the composition
    $$\nabla^{\frac{\infty}{2}}_0\xrightarrow{\gamma}\mathrm{Conv}^{\mathrm{Hecke}}(\widehat{\mathcal{Z}}'_0)\xrightarrow{\varphi}\nabla^{\frac{\infty}{2}}_0$$
    is nonzero. Therefore $\varphi\circ\gamma$ is the identity up to a nonzero scalar.
    
   On the other hand, the morphism $\mathrm{Av}_{(I_\mathrm{u},\psi)}(\gamma\circ\varphi)\simeq \mathrm{Av}_{(I_\mathrm{u},\psi)}(\varphi)\circ\mathrm{Av}_{(I_\mathrm{u},\psi)}(\gamma)$ is a nonzero endomorphism of 
   $$\mathrm{Av}_{(I_\mathrm{u},\psi)}(\mathrm{Conv}^{\mathrm{Hecke}}(\widehat{\mathcal{Z}}'_0))\simeq\mathrm{Conv}^{\mathrm{Hecke}}_\mathcal{IW}(\mathrm{Av}_{(I_\mathrm{u},\psi)}(\widehat{\mathcal{Z}}'_0))\simeq\mathrm{Conv}^{\mathrm{Hecke}}_\mathcal{IW}(\widehat{\mathcal{L}}^\mathcal{IW}_0)\simeq\mathrm{IC}^\psi_0,$$ 
   so there exists $\alpha\in\mathbb{k}-\{0\}$ such that $\mathrm{Av}_{(I_\mathrm{u},\psi)}(\alpha\cdot\gamma\circ\varphi)=\mathrm{id}$. Assume that $\gamma\circ\varphi$ is not invertible. Thus $\alpha\cdot\gamma\circ\varphi$ is not invertible. Since $\mathrm{Conv}^{\mathrm{Hecke}}(\widehat{\mathcal{Z}}'_0)$ is indecomposable by the third point of Lemma \ref{lem indecomposable}, its endomorphism ring is local. Therefore $\mathrm{id}-\alpha\cdot\gamma\circ\varphi$ must be invertible. But
   $$\mathrm{Av}_{(I_\mathrm{u},\psi)}(\mathrm{id}-\alpha\cdot\gamma\circ\varphi)=\mathrm{id}- \mathrm{Av}_{(I_\mathrm{u},\psi)}(\alpha\cdot\gamma\circ\varphi)=0,$$
   which is a contradiction. So  $\gamma\circ\varphi$ is invertible.

   From the two previous paragraphs we deduce that $\varphi$ has both a left and a right inverse, so that it is invertible.
\end{proof}
\subsection{Proof of Conjecture \ref{main conjecture} assuming Conjecture \ref{conj commut diagram}}\label{section proof of conjecture} Until the end, we assume that  Conjecture \ref{conj commut diagram} holds.  We can then finally conclude.
\begin{coro}
    For all $w\in W_\mathrm{est}$, we have isomorphisms
\begin{equation*}
    \mathrm{Conv}^{\mathrm{Hecke}}(\widehat{\mathcal{Z}}'_w)\simeq \nabla^{\frac{\infty}{2}}_{w},\quad \mathrm{Conv}^{\mathrm{Hecke}}(\widehat{\mathcal{Z}}_w)\simeq \Delta^{\frac{\infty}{2}}_{w}.
\end{equation*}
\end{coro}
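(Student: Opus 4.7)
The plan is to propagate the base case $\mathrm{Conv}^{\mathrm{Hecke}}(\widehat{\mathcal{Z}}'_0)\simeq\nabla^{\frac{\infty}{2}}_0$ of Proposition \ref{proposition costandrad 0} to an arbitrary $w\in W_{\mathrm{ext}}$ by exploiting the fact that $\mathrm{Conv}^{\mathrm{Hecke}}$ intertwines the appropriate left convolutions by $\mathcal{N}_?$, as established in Lemma \ref{lem commute convo}, together with Proposition \ref{Prop Action of convolution on standard objects} and the identity \eqref{add length}, which describe the analogous behaviour on the target objects $\nabla^{\frac{\infty}{2}}_?$. The statement for baby Verma modules will then fall out of applying Verdier duality, using Lemma \ref{lem verdier commute}, the identification $\widehat{\mathcal{Z}}_w:=\mathbb{D}(\widehat{\mathcal{Z}}'_w)$, and the fact that $\mathbb{D}(\nabla^{\frac{\infty}{2}}_w)\simeq\Delta^{\frac{\infty}{2}}_w$.

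The first step would be to treat the pure translation case $w=\nu\in\mathbf{Y}$. Choose $\gamma\in\mathbf{Y}^+$ large enough that $\gamma+\nu\in\mathbf{Y}^+$. The first isomorphism of Lemma \ref{lem commute convo}, applied twice, together with Proposition \ref{proposition costandrad 0} and Proposition \ref{Prop Action of convolution on standard objects}, would yield the chain
$$\mathcal{N}_\gamma\star^I\mathrm{Conv}^{\mathrm{Hecke}}(\widehat{\mathcal{Z}}'_\nu)\simeq\mathrm{Conv}^{\mathrm{Hecke}}(\widehat{\mathcal{Z}}'_{\gamma+\nu})\simeq\mathcal{N}_{\gamma+\nu}\star^I\nabla^{\frac{\infty}{2}}_0\simeq\nabla^{\frac{\infty}{2}}_{\gamma+\nu}\simeq\mathcal{N}_\gamma\star^I\nabla^{\frac{\infty}{2}}_\nu.$$
Since $\mathcal{N}_\gamma\star^I(-)$ has an inverse $\mathcal{D}_{-\gamma}\star^I(-)$ by the first point of Lemma \ref{lem conv standard}, cancelling gives $\mathrm{Conv}^{\mathrm{Hecke}}(\widehat{\mathcal{Z}}'_\nu)\simeq\nabla^{\frac{\infty}{2}}_\nu$.

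Step two would treat $w=w_0t_\nu$ and then the general case $w=vt_\nu$ with $v\in W$. The length equality $\ell(w_0\cdot w_0\cdot w_0)=\ell(w_0)+\ell(w_0\cdot w_0)$ reduces to $\ell(w_0)=\ell(w_0)$, so the second isomorphism of Lemma \ref{lem commute convo} (with $w=w'=w_0$) and the identity \eqref{add length} (with $w_1=w_2=w_0$) both apply, producing
$$\mathcal{N}_{w_0}\star^I\mathrm{Conv}^{\mathrm{Hecke}}(\widehat{\mathcal{Z}}'_{w_0t_\nu})\simeq\mathrm{Conv}^{\mathrm{Hecke}}(\widehat{\mathcal{Z}}'_\nu)\simeq\nabla^{\frac{\infty}{2}}_\nu\simeq\mathcal{N}_{w_0}\star^I\nabla^{\frac{\infty}{2}}_{w_0t_\nu},$$
and inverting $\mathcal{N}_{w_0}\star^I(-)$ yields $\mathrm{Conv}^{\mathrm{Hecke}}(\widehat{\mathcal{Z}}'_{w_0t_\nu})\simeq\nabla^{\frac{\infty}{2}}_{w_0t_\nu}$. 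For general $v\in W$, set $v':=vw_0$; the length condition $\ell(v'\cdot w_0\cdot w_0)=\ell(v')+\ell(w_0\cdot w_0)$ again reduces to $\ell(v')=\ell(v')$, so Lemma \ref{lem commute convo}, \eqref{add length}, and the previous step together deliver
$$\mathrm{Conv}^{\mathrm{Hecke}}(\widehat{\mathcal{Z}}'_{vt_\nu})\simeq\mathcal{N}_{v'}\star^I\mathrm{Conv}^{\mathrm{Hecke}}(\widehat{\mathcal{Z}}'_{w_0t_\nu})\simeq\mathcal{N}_{v'}\star^I\nabla^{\frac{\infty}{2}}_{w_0t_\nu}\simeq\nabla^{\frac{\infty}{2}}_{vt_\nu}.$$
Applying $\mathbb{D}$ would then produce $\mathrm{Conv}^{\mathrm{Hecke}}(\widehat{\mathcal{Z}}_w)\simeq\Delta^{\frac{\infty}{2}}_w$ for every $w\in W_{\mathrm{ext}}$. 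No serious obstacle remains at this stage: the difficult work sits in Proposition \ref{proposition costandrad 0}, which is precisely where Conjecture \ref{conj commut diagram} is needed, while the argument above is essentially combinatorial bookkeeping with the length function.
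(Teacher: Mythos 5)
Your proposal is correct and follows essentially the same route as the paper's proof: reduce to the costandard case via Verdier duality (Lemma \ref{lem verdier commute}), propagate Proposition \ref{proposition costandrad 0} to arbitrary translations by convolving with a dominant $\mathcal{N}_\gamma$ and cancelling via the first point of Lemma \ref{lem conv standard}, then reach $w_0t_\nu$ and general $vt_\nu$ using Lemma \ref{lem commute convo}, \eqref{add length} and Proposition \ref{prop convolution costandard 2}. The only cosmetic difference is that you merge the paper's two translation steps (dominant $t_\lambda$ first, then arbitrary $t_\lambda$) into one.
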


\begin{proof}
    By Lemma \ref{lem verdier commute} and the fact that Verdier duality sends standard objects to costandard objects, it is enough to construct isomorphisms
\begin{equation*}\label{eq costandard}
    \mathrm{Conv}^{\mathrm{Hecke}}(\widehat{\mathcal{Z}}'_w)\simeq \nabla^{\frac{\infty}{2}}_{w}
\end{equation*}
for all $w\in W_{\mathrm{ext}}$. First, if $w=t_\lambda$ for some $\lambda\in \mathbf{Y}^+$, then the isomorphism is obtained from $\varphi$ by convolution with $\mathcal{N}_\lambda$ thanks to Lemma \ref{lem commute convo} and Proposition \ref{Prop Action of convolution on standard objects}.

Next, let $w:=xt_\lambda\in W_\mathrm{ext}$ be an arbitrary element, and pick $\mu\in\mathbf{Y}^+$ dominant enough so that $\lambda+\mu\in\mathbf{Y}^+$. Then we have
$$\mathcal{N}_\mu\star \mathrm{Conv}^{\mathrm{Hecke}}(\widehat{\mathcal{Z}}'_\lambda)\xrightarrow[\text{Lem. \ref{lem commute convo}}]{\sim}\mathrm{Conv}^{\mathrm{Hecke}}(\widehat{\mathcal{Z}}'_{\lambda+\mu})\simeq  \nabla^{\frac{\infty}{2}}_{\lambda+\mu},$$
where the last isomorphism follows from the first paragraph. Thanks to the first point of Lemma \ref{lem conv standard}, we can apply $\mathcal{D}_{-\mu}\star ()$ to obtain an isomorphism 
$$ \mathrm{Conv}^{\mathrm{Hecke}}(\widehat{\mathcal{Z}}'_\lambda)\simeq \mathcal{D}_{-\mu}\star \nabla^{\frac{\infty}{2}}_{\lambda+\mu}.$$
But thanks to Proposition \ref{Prop Action of convolution on standard objects} (and the first point of Lemma \ref{lem conv standard}), we know that 
$$\mathcal{D}_{-\mu}\star \nabla^{\frac{\infty}{2}}_{\lambda+\mu}\simeq  \nabla^{\frac{\infty}{2}}_{\lambda},$$
so that we have an isomorphism $\mathrm{Conv}^{\mathrm{Hecke}}(\widehat{\mathcal{Z}}'_\lambda)\simeq  \nabla^{\frac{\infty}{2}}_{\lambda}$.

Finally, notice that we have isomorphisms $\mathcal{N}_{w_0}\star\mathrm{Conv}^{\mathrm{Hecke}}(\widehat{\mathcal{Z}}'_{w_0t_\lambda})\simeq \mathrm{Conv}^{\mathrm{Hecke}}(\widehat{\mathcal{Z}}'_\lambda)$ (Proposition \ref{prop convolution costandard 2}) and $\mathcal{N}_{w_0}\star \nabla^{\frac{\infty}{2}}_{w_0t_\lambda}\simeq \nabla^{\frac{\infty}{2}}_{\lambda}$ (equation \eqref{add length}). Thanks to the first point of Lemma \ref{lem conv standard}, we deduce that there exists an isomorphism $\mathrm{Conv}^{\mathrm{Hecke}}(\widehat{\mathcal{Z}}'_{w_0t_\lambda})\simeq  \nabla^{\frac{\infty}{2}}_{w_0t_\lambda}$. Applying the functor $\mathcal{N}_{xw_0}\star()$ to the latter isomorphism, we obtain the desired isomorphism thanks to Proposition \ref{prop convolution costandard 2} and equation \eqref{add length}.
\end{proof}
We can conclude the proof of Theorem \ref{main conjecture}. Combining the results of the two previous subsections, we obtain that the functor $\mathrm{Conv}^{\mathrm{Hecke}}$ is exact and satisfies 
$$\mathrm{Conv}^{\mathrm{Hecke}}(\widehat{\mathcal{Z}}_w)\simeq \Delta^{\frac{\infty}{2}}_{w},~\mathrm{Conv}^{\mathrm{Hecke}}(\widehat{\mathcal{Z}}'_w)\simeq \nabla^{\frac{\infty}{2}}_{w},~\mathrm{Conv}^{\mathrm{Hecke}}(\widehat{\mathcal{L}}_w)\simeq \mathrm{IC}^{\frac{\infty}{2}}_{w}$$
for all $w\in W_{\mathrm{ext}}$. Since both categories $\mathrm{mod}^\mathbf{Y}_{I_\mathrm{u}}(\mathcal{R})$ and $\mathrm{Perv}_{{I_\mathrm{u}}}(\mathcal{F}l^{\frac{\infty}{2}})$ are Artinian and satisfy the Ext vanishing property between standard and costandard objects (cf. Proposition \ref{prop ext} and Proposition \ref{prop ext ind objects}), we can apply \cite[Lemma 6.2.7]{ABBGM} to conclude that $\mathrm{Conv}^{\mathrm{Hecke}}$ is an equivalence of categories.

	\bibliographystyle{alphaurl}
	\bibliography{Semi_infi}

@phdthesis{riche,
	TITLE = {{Geometric Representation Theory in positive characteristic}},
	AUTHOR = {Riche, Simon},
	URL = {https://tel.archives-ouvertes.fr/tel-01431526},
	SCHOOL = {{Universit{\'e} Blaise Pascal (Clermont Ferrand 2)}},
	YEAR = {2016},
	KEYWORDS = {Geometric Representation Theory ; Th{\'e}orie g{\'e}om{\'e}trique des repr{\'e}sentations},
	TYPE = {Habilitation {\`a} diriger des recherches},
	HAL_ID = {tel-01431526},
	HAL_VERSION = {v1},
}

@article{achar2021geometric,
	title={{A geometric Steinberg formula}}, 
	author={Pramod N. Achar and Simon Riche},
	year={2021},
	eprint={2109.11980},
	archivePrefix={arXiv},
	primaryClass={math.RT}
}

@misc{ciappara2021hecke,
	title={{Hecke category actions via Smith-Treumann theory}}, 
	author={Joshua Ciappara},
	year={2021},
	eprint={2103.07091},
	archivePrefix={arXiv},
	primaryClass={math.RT}
}

@article{JEP_2019__6__707_0,
	author = {Roman Bezrukavnikov and Dennis Gaitsgory and Ivan Mirkovi\'c and Simon Riche and Laura Rider},
	title = {An {Iwahori-Whittaker} model for the {Satake} category},
	journal = {J. \'E. polytech. {\textemdash} Math.},
	pages = {707--735},
	publisher = {\'Ecole polytechnique},
	volume = {6},
	year = {2019},
	doi = {10.5802/jep.104},
	zbl = {07114040},
	language = {en},
	howpublished = "\url{https://jep.centre-mersenne.org/articles/10.5802/jep.104/}"
}

@book{jantzen2003representations,
	title={{Representations of Algebraic Groups}},
	author={Jantzen, Jens Carsten},
	isbn={9780821835272},
	lccn={03058381},
	series={Math. Surveys Monogr.},
	year={2003},
	publisher={American Mathematical Society}
}

@article{Mirkovic2004GeometricLD,
	title={{Geometric Langlands duality and representations of algebraic groups over commutative rings}},
	author={Ivan Mirkovi\'c and Karl Vilonen},
	journal={Ann. of Math.},
	year={2004},
	volume={166},
	pages={95-143}
}

@incollection{baumann:hal-01491529,
	TITLE = {{Notes on the geometric Satake equivalence}},
	AUTHOR = {Baumann, Pierre and Riche, Simon},
	URL = {https://hal.archives-ouvertes.fr/hal-01491529},
	BOOKTITLE = {{Relative aspects in Representation Theory, Langlands Functoriality and Automorphic Forms}},
	EDITOR = {V. Heiermann, D. Prasad},
	PUBLISHER = {{Springer}},
	SERIES = {Lecture Notes in Mathematics},
	VOLUME = {2221},
	PAGES = {1-134},
	YEAR = {2018},
	DOI = {10.1007/978-3-319-95231-4\_1},
	PDF = {https://hal.archives-ouvertes.fr/hal-01491529v3/file/Satake-luminy.pdf},
	HAL_ID = {hal-01491529},
	HAL_VERSION = {v3},
}

@book{achar2021perverse,
	title={Perverse {S}heaves and {A}pplications to {R}epresentation {T}heory},
	author={Achar, Pramod N.},
	volume={258},
	year={2021},
	publisher={American Mathematical Soc.}
}

@article{beilinson2018faisceaux,
	title={Faisceaux pervers},
	author={Beilinson, Alexander and Bernstein, Joseph and Deligne, Pierre and Gabber, Ofer},
	journal={Ast{\'e}risque},
	volume={100},
	year={2018}
}

@article{achar2022geometric,
	title={A geometric model for blocks of {F}robenius kernels},
	author={Achar, Pramod N. and Riche, Simon},
	eprint={2203.03530},
	archivePrefix={arXiv},
	year={2022}
}

@misc{stacks-project,
	author       = {The {Stacks project authors}},
	title        = {The Stacks project},
	howpublished = {\url{https://stacks.math.columbia.edu}},
	year         = {2022},
}

@article{finkelberg1997semiinfinite,
	title={Semi--infinite flags {I}, the case of a global curve $\mathbb{P}^1$},
	author={Finkelberg, Michael and Mirkovi{\'c}, Ivan},
	journal={{D}ifferential topology, infinite-dimensional {L}ie algebras, and applications: {DB} {F}uchs’ 60th {A}nniversary {C}ollection, 81-112. Amer. Math. Soc. Transl. Ser},
	volume={2},
	year={1999}
}

@article{feigin2semi,
	title={Semi-infinite flags. {II}. {L}ocal and global intersection cohomology of quasimaps’ spaces},
	author={Feigin, Boris and Finkelberg, Michael and Kuznetsov, Alexander and Mirkovic, Ivan},
	journal={{D}ifferential topology, infinite-dimensional {L}ie algebras, and applications: {DB} {F}uchs’ 60th {A}nniversary {C}ollection, 113-148. Amer. Math. Soc. Transl. Ser},
	volume={2},
	year={1999}
}

@article{kuznetsov1996laumon,
	title={The {L}aumon's resolution of {D}rinfeld's compactification is small},
	author={Kuznetsov, Alexander},
	journal={Math. Res. Lett.},
	volume={4},
	number={2-3},
	pages={349--364},
	year={1996}
}

@article{lusztigsingularities,
	title={Singularities, character formulas, and a q-analogue of weight multiplicities, {A}nalysis and topology on singular spaces, {II}, {III}},
	author={Lusztig, George},
	journal={Ast{\'e}risque},
	year={1981}
}

@misc{zabeth2022block,
	title={Block decomposition via the geometric {S}atake equivalence}, 
	author={Emilien Zabeth},
	year={2022},
	eprint={2207.06184},
	archivePrefix={arXiv},
	primaryClass={math.RT}
}

@article {FGV,
	AUTHOR = {Frenkel, E. and Gaitsgory, D. and Vilonen, K.},
	TITLE = {Whittaker patterns in the geometry of moduli spaces of bundles
		on curves},
	JOURNAL = {Ann. of Math. (2)},
	FJOURNAL = {Annals of Mathematics. Second Series},
	VOLUME = {153},
	YEAR = {2001},
	NUMBER = {3},
	PAGES = {699--748},
	ISSN = {0003-486X,1939-8980},
	MRCLASS = {11R39 (11F70 11G45 14D20 22E50)},
	MRNUMBER = {1836286},
	MRREVIEWER = {Igor\ Yu.\ Potemine},
	DOI = {10.2307/2661366},
	URL = {https://doi.org/10.2307/2661366},
}

@article {ABBGM,
	AUTHOR = {Arkhipov, S. and Braverman, A. and Bezrukavnikov, R. and
		Gaitsgory, D. and Mirkovi\'{c}, I.},
	TITLE = {Modules over the small quantum group and semi-infinite flag
		manifold},
	JOURNAL = {Transform. Groups},
	FJOURNAL = {Transformation Groups},
	VOLUME = {10},
	YEAR = {2005},
	NUMBER = {3-4},
	PAGES = {279--362},
	ISSN = {1083-4362,1531-586X},
	MRCLASS = {14F05 (14M15 17B37)},
	MRNUMBER = {2183116},
	DOI = {10.1007/s00031-005-0401-5},
	URL = {https://doi.org/10.1007/s00031-005-0401-5},
}

@article {BBGM,
	AUTHOR = {Braverman, A. and Finkelberg, M. and Gaitsgory, D. and
		Mirkovi\'{c}, I.},
	TITLE = {Intersection cohomology of {D}rinfeld's compactifications},
	JOURNAL = {Selecta Math. (N.S.)},
	FJOURNAL = {Selecta Mathematica. New Series},
	VOLUME = {8},
	YEAR = {2002},
	NUMBER = {3},
	PAGES = {381--418},
	ISSN = {1022-1824,1420-9020},
	MRCLASS = {14H60 (14D20 14F43 32S35)},
	MRNUMBER = {1931170},
	MRREVIEWER = {Thomas\ Nevins},
	DOI = {10.1007/s00029-002-8111-5},
	URL = {https://doi.org/10.1007/s00029-002-8111-5},
}

@article {BG,
	AUTHOR = {Braverman, A. and Gaitsgory, D.},
	TITLE = {Geometric {E}isenstein series},
	JOURNAL = {Invent. Math.},
	FJOURNAL = {Inventiones Mathematicae},
	VOLUME = {150},
	YEAR = {2002},
	NUMBER = {2},
	PAGES = {287--384},
	ISSN = {0020-9910,1432-1297},
	MRCLASS = {11G45 (11F60 14F43)},
	MRNUMBER = {1933587},
	MRREVIEWER = {Igor\ Yu.\ Potemine},
	DOI = {10.1007/s00222-002-0237-8},
	URL = {https://doi.org/10.1007/s00222-002-0237-8},
}

@article {LO,
	AUTHOR = {Laszlo, Yves and Olsson, Martin},
	TITLE = {Perverse {$t$}-structure on {A}rtin stacks},
	JOURNAL = {Math. Z.},
	FJOURNAL = {Mathematische Zeitschrift},
	VOLUME = {261},
	YEAR = {2009},
	NUMBER = {4},
	PAGES = {737--748},
	ISSN = {0025-5874,1432-1823},
	MRCLASS = {14A20 (14D20)},
	MRNUMBER = {2480756},
	MRREVIEWER = {Gerhard\ Pfister},
	DOI = {10.1007/s00209-008-0348-z},
	URL = {https://doi.org/10.1007/s00209-008-0348-z},
}

@article{riche2022smith,
	title={{S}mith--{T}reumann theory and the linkage principle},
	author={Riche, Simon and Williamson, Geordie},
	journal={Publications math{\'e}matiques de l'IH{\'E}S},
	volume={136},
	number={1},
	pages={225--292},
	year={2022},
	publisher={Springer}
}

@article{riche2021simple,
	title={A simple character formula},
	author={Riche, Simon and Williamson, Geordie},
	journal={Annales Henri Lebesgue},
	volume={4},
	pages={503--535},
	year={2021}
}

@article{lusztigintersection,
	title={Intersection cohomology methods in representation theory: plenary address, {I}nternational {C}ongress of {M}athematicians, {K}yoto, {J}apan, {A}ugust, 1990},
	author={Lusztig, George},
	journal={Proceedings of the International Congress of Mathematicians (Kyoto, 1990)},
	volume={I,II},
	publisher= {Math. Soc. Japan}
}

@misc{bezrukavnikov2024modular,
	title={On two modular geometric realizations of an affine Hecke algebra}, 
	author={Roman Bezrukavnikov and Simon Riche},
	year={2024},
	eprint={2402.08281},
	archivePrefix={arXiv},
	primaryClass={math.RT}
}

@misc{zhu2016introduction,
	title={An introduction to affine Grassmannians and the geometric Satake equivalence}, 
	author={Xinwen Zhu},
	year={2016},
	eprint={1603.05593},
	archivePrefix={arXiv},
	primaryClass={math.AG}
}

@book {MR1299527,
	AUTHOR = {Bernstein, Joseph and Lunts, Valery},
	TITLE = {Equivariant sheaves and functors},
	SERIES = {Lecture Notes in Mathematics},
	VOLUME = {1578},
	PUBLISHER = {Springer-Verlag, Berlin},
	YEAR = {1994},
	PAGES = {iv+139},
	ISBN = {3-540-58071-9},
	MRCLASS = {55N91 (14M25 18E30 54B40 55N30)},
	MRNUMBER = {1299527},
	MRREVIEWER = {Yi\ Hu},
	DOI = {10.1007/BFb0073549},
	URL = {https://doi.org/10.1007/BFb0073549},
}

@article {BL,
	AUTHOR = {Beauville, Arnaud and Laszlo, Yves},
	TITLE = {Un lemme de descente},
	JOURNAL = {C. R. Acad. Sci. Paris S\'{e}r. I Math.},
	FJOURNAL = {Comptes Rendus de l'Acad\'{e}mie des Sciences. S\'{e}rie I.
		Math\'{e}matique},
	VOLUME = {320},
	YEAR = {1995},
	NUMBER = {3},
	PAGES = {335--340},
	ISSN = {0764-4442},
	MRCLASS = {14J60},
	MRNUMBER = {1320381},
}

@article{ARKHIPOV2003114,
	title = {Another realization of the category of modules over the small quantum group},
	journal = {Advances in Mathematics},
	volume = {173},
	number = {1},
	pages = {114-143},
	year = {2003},
	issn = {0001-8708},
	doi = {https://doi.org/10.1016/S0001-8708(02)00016-6},
	url = {https://www.sciencedirect.com/science/article/pii/S0001870802000166},
	author = {Sergey Arkhipov and Dennis Gaitsgory}
}

@article {MR1961134,
	AUTHOR = {Faltings, Gerd},
	TITLE = {Algebraic loop groups and moduli spaces of bundles},
	JOURNAL = {J. Eur. Math. Soc. (JEMS)},
	FJOURNAL = {Journal of the European Mathematical Society (JEMS)},
	VOLUME = {5},
	YEAR = {2003},
	NUMBER = {1},
	PAGES = {41--68},
	ISSN = {1435-9855,1435-9863},
	MRCLASS = {14D20 (14M15 17B67 22E67)},
	MRNUMBER = {1961134},
	MRREVIEWER = {Harry\ Tamvakis},
	DOI = {10.1007/s10097-002-0045-x},
	URL = {https://doi.org/10.1007/s10097-002-0045-x},
}

@misc{fargues2024geometrization,
	title={Geometrization of the local Langlands correspondence}, 
	author={Laurent Fargues and Peter Scholze},
	year={2024},
	eprint={2102.13459},
	archivePrefix={arXiv},
	primaryClass={math.RT}
}

@book {Kash,
	AUTHOR = {Kashiwara, Masaki and Schapira, Pierre},
	TITLE = {Categories and sheaves},
	SERIES = {Grundlehren der mathematischen Wissenschaften [Fundamental
		Principles of Mathematical Sciences]},
	VOLUME = {332},
	PUBLISHER = {Springer-Verlag, Berlin},
	YEAR = {2006},
	PAGES = {x+497},
	ISBN = {978-3-540-27949-5; 3-540-27949-0},
	MRCLASS = {18-02 (14F05 18F20)},
	MRNUMBER = {2182076},
	MRREVIEWER = {Corrado\ Marastoni},
	DOI = {10.1007/3-540-27950-4},
	URL = {https://doi.org/10.1007/3-540-27950-4},
}

@article{GAITSGORY2018789,
	title = {The semi-infinite intersection cohomology sheaf},
	journal = {Advances in Mathematics},
	volume = {327},
	pages = {789-868},
	year = {2018},
	note = {Special volume honoring David Kazhdan},
	issn = {0001-8708},
	doi = {https://doi.org/10.1016/j.aim.2017.08.007},
	author = {D. Gaitsgory}
}

@incollection {Lau,
	AUTHOR = {Laumon, G.},
	TITLE = {Faisceaux automorphes li\'{e}s aux s\'{e}ries d'{E}isenstein},
	BOOKTITLE = {Automorphic forms, {S}himura varieties, and {$L$}-functions,
		{V}ol. {I} ({A}nn {A}rbor, {MI}, 1988)},
	SERIES = {Perspect. Math.},
	VOLUME = {10},
	PAGES = {227--281},
	PUBLISHER = {Academic Press, Boston, MA},
	YEAR = {1990},
	ISBN = {0-12-176651-9},
	MRCLASS = {11R39 (11F70 22E50)},
	MRNUMBER = {1044822},
}

@article {MR3003920,
	AUTHOR = {Bezrukavnikov, Roman and Yun, Zhiwei},
	TITLE = {On {K}oszul duality for {K}ac-{M}oody groups},
	JOURNAL = {Represent. Theory},
	FJOURNAL = {Representation Theory. An Electronic Journal of the American
		Mathematical Society},
	VOLUME = {17},
	YEAR = {2013},
	PAGES = {1--98},
	ISSN = {1088-4165},
	MRCLASS = {20G44 (14F05 14M15)},
	MRNUMBER = {3003920},
	MRREVIEWER = {Dongwen\ Liu},
	DOI = {10.1090/S1088-4165-2013-00421-1},
	URL = {https://doi.org/10.1090/S1088-4165-2013-00421-1},
}
\end{document}